\colorlet{mygrey}{black!50!white}
\colorlet{symbols}{black}
\colorlet{treesDarkblue}{blue!80!black}
\colorlet{treesDarkred}{red!70!black}
\colorlet{treesDarkgreen}{green!50!black}
\colorlet{darkblue}{blue!90!black}
\colorlet{darkred}{red!90!black}
\colorlet{darkgreen}{green!75!black}
\colorlet{lightblue}{blue!25!white}
\colorlet{lightred}{red!60!white}
\colorlet{lightgreen}{green!60!white}
\def\gCT{{\color{darkgreen}{\CT}}}
\def\gCM{{\color{darkgreen}{\CM_0}}}
\def\gCTm{{\color{darkgreen}{\CT_-}}}
\def\gR{{\color{darkgreen}{R}}}
\def\gXi{{\color{darkgreen}{\Xi}}}
\def\gFLm{{\color{darkgreen}{\FL_-}}}
\def\gFLp{{\color{darkgreen}{\FL_+}}}
\def\gFL{{\color{darkgreen}{\FL}}}
\def\DeclareSymbol#1#2#3{%
	\expandafter\gdef\csname MH@symb@#1\endcsname{\tikzsetnextfilename{symbol#1}%
	\tikz[baseline=#2,scale=0.15,draw=symbols,line join=round]{#3}}%
	\expandafter\gdef\csname MH@symb@#1s\endcsname{\scalebox{0.75}{\tikzsetnextfilename{symbol#1}%
	\tikz[baseline=#2,scale=0.15,draw=symbols,line join=round]{#3}}}%
	\expandafter\gdef\csname MH@symb@#1ss\endcsname{\scalebox{0.65}{\tikzsetnextfilename{symbol#1}%
	\tikz[baseline=#2,scale=0.15,draw=symbols,line join=round]{#3}}}%
	}
\def\<#1>{\ifthenelse{\boolean{mmode}}{\mathchoice{\csname MH@symb@#1\endcsname}{\csname MH@symb@#1\endcsname}{\csname MH@symb@#1s\endcsname}{\csname MH@symb@#1ss\endcsname}}{\csname MH@symb@#1\endcsname}}
\newcolumntype{L}{>{$}c<{$}} 
\def\emptyset{{\centernot\ocircle}}
\def\restr{\mathord{\upharpoonright}}
\numberwithin{equation}{section}
\setlist{nosep}
\def\ex{\mathrm{ex}}
\def\PPi{\mathbf\Pi}
\def\one{\mathbf 1}
\def\Zcan{Z_{\mathrm{c}}}
\def\Nabla_#1{\nabla_{\!#1}}
\def\Moll{\mathord{\mathrm{Moll}}}
\def\geo{{\text{\rm\tiny geo}}}
\def\BPHZ{{\text{\rm\tiny BPHZ}}}
\def\Vec{\mathop{\mathrm{Vec}}}
\def\Nabla_#1{\nabla_{\!#1}}
\def\s{\mathbf{s}}
\def\p{\mathbf{p}}
\def\q{\mathbf{q}}
\def\n{\mathbf{n}}
\def\linspace#1{\Vec #1}
\def\cl{{\text{\rm\tiny cl}}}
\long\def\maybeNotNeeded#1{}
\def\SS{\mathscr{S}}
\def\f#1#2{\textstyle{#1\over #2}}
\let\d\partial
\def\spacetime{{\DD}}
\newtheorem{assumption}{Assumption}
\newtheorem{example}[lemma]{Example}
\begin{document}

\begin{Frontmatter}

\title{The support of singular stochastic PDEs}

\author[1]{Martin Hairer}\orcid{0000-0002-2141-6561}
\author[1]{Philipp Sch\"onbauer}
\authormark{Martin Hairer and Philipp Sch\"onbauer}

\address[1]{\orgname{Imperial College London}, \orgaddress{\street{180 Queen's Gate}, \city{London} \postcode{SW7 2AZ}, \country{United Kingdom}};\email{m.hairer@imperial.ac.uk, philipp.schoenbauer@gmx.net}}

\received{12 December 2019}
\accepted{31 October 2021}

\keywords{singular SPDEs, regularity structures, support, ergodicity}
\keywords[MSC Codes]{\codes[Primary]{60H15}; \codes[Secondary]{60L30}}

\abstract{We obtain a generalisation of the Stroock--Varadhan support theorem for a large class of
systems of subcritical singular stochastic PDEs driven by a noise that is either white or 
approximately self-similar. The main problem that we face is the presence of renormalisation.
In particular, it may happen in general that different renormalisation procedures yield solutions with
different supports. One of the main steps in our construction is the identification of a 
subgroup $\CH$ of the renormalisation group such that any renormalisation procedure determines 
a unique coset $g\circ\CH$. The support of the solution then only depends on this coset and 
is obtained by taking the closure of all solutions obtained by replacing the driving 
noises by smooth functions in the equation that is renormalised by some element of $g\circ\CH$.

One immediate corollary of our results is that the $\Phi^4_3$ measure in finite volume
has full support and that the associated Langevin dynamic is exponentially ergodic.}

\end{Frontmatter}

\localtableofcontents

\vspace*{14pt}

\section{Introduction}

The purpose of this article is to provide a far-reaching generalisation of the 
support theorem of Stroock and Varadhan \cite{SupportThm}.
Recall that this result can be formulated as follows. Let 
$\{V_i\}_{i=0}^m$ be a finite collection of vector fields on $\R^n$
that have bounded first and second derivatives and consider the solution
$x$ to the system of stochastic differential equations given by
\begin{equ}[e:SDE]
dX = V_0(X)\,dt + \sum_{i=1}^m V_i(X)\circ dW_i(t)\;,
\end{equ}
where the $W_i$ are i.i.d.\ standard Wiener processes and $\circ$ denotes 
Stratonovich integration \cite{Strat}. Write $\P_x$ for the law of the solution
to \eqref{e:SDE} with initial condition $X_0 = x$ on $\CC(\R_+, \R^n)$.
It follows from the Wong--Zakai theorem that, if we write $X^{(\eps)}$
for the solution to the random ODE
\begin{equ}[e:SDEeps]
\dot X^{(\eps)} = V_0(X^{(\eps)}) + \sum_{i=1}^m V_i(X^{(\eps)})\, \dot W_i^{(\eps)}\;,
\end{equ}
for $W^{(\eps)}$ a smooth approximation to $W$ (for example convolution with 
a smooth mollifier), then $X^{(\eps)} \to X$ in probability. On the other hand, 
for any fixed $\eps > 0$, the topological support of the law $\P_x^{(\eps)}$
of $X^{(\eps)}$ is contained in the closure $R_x$ of the range of the 
continuous map $\CI_x \colon \CC^1(\R_+, \R^m) \to \CC(\R_+, \R^n)$
which maps any $\CC^1$ function $W^{(\eps)}$ to the solution to \eqref{e:SDEeps}.

Since the topological support is lower semi-continuous under weak convergence, this
immediately implies that one also has $\supp \P_x \subset R_x$. What 
Stroock and Varadhan proved in \cite{SupportThm} is that one actually has
$\supp \P_x = R_x$. Our aim is to generalise such a statement to a wide class of
singular stochastic PDEs. 

The general framework used in this article is that 
of \cite{BrunedHairerZambotti2016,BrunedChandraChevyrecHairer2017}. Loosely, speaking, we
consider systems of SPDEs of the form
\begin{align}\label{eq:singular:SPDE}
\partial_t u_i = \CL_i u_i
+
F_i(u, \nabla u, \ldots)
+
\sum_{j\le n} F_i^j(u, \nabla u, \ldots)\xi_j \,,
\qquad{i \le m}
\end{align}
where the $\CL_i$ denote homogeneous differential operators on $\R^d$,
the spatial variable takes values in the torus $\T^d$, and the $\xi_i$ 
denote driving noises that are of the form $\xi_i = \CK_i \star \eta_i$
where $\eta_i$ denotes space-time white noise (or possibly noise that is white in
space and constant in time) and $\CK_i$ is a kernel
which is self-similar in a neighbourhood of the origin and smooth otherwise.
The $F_i^j$ are local nonlinearities in the sense that the value of 
$F_i^j(u, \nabla u, \ldots)$ at a given space-time point is a smooth function of
$u$ and finitely many of its derivatives evaluated at that same point.
We will assume throughout that the system \eqref{eq:singular:SPDE} is locally
subcritical in the sense of \cite{BrunedHairerZambotti2016}.
\begin{remark}
The choice 	$\xi_i = \CK_i \star \eta_i$ covers many interesting examples in which $\xi_i$ is the solution of a linear equation driven by $\eta_i$; in this case $\CK_i$ should be chosen as the Green's function. For our support theorem we do not need that $\CK_i$ is actually the Green's function of a PDE, but we do need the kernel to be homogeneous under rescaling. This assumption will be used heavily throughout this article, compare Assumption~\ref{ass:kernelhomo}.
\end{remark}

It was shown in \cite{BrunedHairerZambotti2016} that one can associate to such 
an equation in a  natural way a nilpotent Lie group
$\CG_-$\label{idx:RG}, usually called the \textit{renormalisation group} in this context,
as well as a construction of the following type.
Write $\CX$ for a suitable space of right-hand sides for \eqref{eq:singular:SPDE} (i.e.\ an element of $\CX$
consists of the nonlinearities $F_i$ as well as $F_i^j$ that can be described by a regularity structure
built from a fixed complete subcritical ``rule'' as in \cite[Sec.~5]{BrunedHairerZambotti2016}) and write $\CX_0 \subset \CX$ for 
the ``deterministic right-hand sides'', i.e.\ those elements such that $F_i^j \equiv 0$.
 
One then has a map $\Upsilon\colon \CG_- \times \CX \to \CX_0$
such that $(g,F) \mapsto F + \Upsilon(g,F)$ yields a representation of $\CG_-$ on $\CX$.
(See Remark~\ref{rem:reprOmegaF} for more details.)

Furthermore, given any natural regularisation $\xi^\eps$ of $\xi$, one can find a sequence
of elements $g_\eps \in \CG_-$ such that the solutions to 
\begin{align}\label{eq:singular:SPDE:reg}
\partial_t u^\eps_{i} = \CL_i u^\eps_i
+
F_i(u^\eps, \nabla u^\eps, \ldots)
+
\sum_{j\le n} F_i^j(u^\eps, \nabla u^\eps, \ldots)\xi^\eps_j
+
\bigl(\Upsilon (g_\eps,F) \bigr)_i(u^\eps, \nabla u^\eps, \ldots)\;,
\end{align}
subject to suitable initial conditions $u^\eps_i(0,\cdot) = u^{\eps,(0)}_i$, 
converge to a limit $u$. (The convergence takes place in probability in a
space of Hölder continuous trajectories with possible finite-time blow-up.)
These limits have a restricted uniqueness property in the sense that, for any other
regularisation $\tilde \xi^\eps$ of $\xi$ one can find a sequence of elements
$\tilde g_\eps \in \CG_-$ such that the solutions to \eqref{eq:singular:SPDE:reg} 
with $\xi^\eps$ replaced
by $\tilde \xi^\eps$ and $g_\eps$ replaced by $\tilde g_\eps$ converge to the
same limit. 

\begin{remark}\label{rmk:initialcondition}
As in \cite[Sec.~2.7]{BrunedChandraChevyrecHairer2017}, the initial condition $u^{\eps,(0)}_i$ is dependent on $\eps$ and 
taken of the form $u^{\eps,(0)} = v^{(0)} + \CS^-_\eps(\xi)(0,\cdot)$, where $\CS^-_\eps(\xi)$ is a stationary process
representing the rough part (i.e.\ the non function-valued part) of the solution. 
In particular, it is in general not possible to choose as initial condition a deterministic smooth function unless solutions
themselves are function-valued in which case $\CS^-_\eps \equiv 0$. An interesting equation where this happens is 
the so-called $\Phi^4_{4-\delta}$ equation, 
see \cite[Sec.~2.8.2]{BrunedChandraChevyrecHairer2017} and 
Sections~\ref{sec:Phi4} and \ref{sec:Phi44}.
For many interesting examples, including generalised KPZ and generalised PAM, 
this issue is not apparent and the initial condition can be chosen as any deterministic function 
(or even distribution) with sufficient regularity. An exceptional case is $\Phi^4_3$ where $\CS^-_\eps \ne 0$ but one can compensate this by choosing $v^{(0)}$ appropriately, compare Section~\ref{sec:Phi43}.
\end{remark}

\begin{remark}\label{rem:reprOmegaF}
Writing $\TT_-$ for the set of trees of negative degree associated to the class of SPDEs under consideration
(see Section~\ref{sec:trees} below), 
one can explicitly set 
\begin{equ}
\Upsilon\colon (g,F) \mapsto \sum_{\tau \in \TT_-} {g(\tau) \over S(\tau)} \Upsilon^F[\tau]\;,
\end{equ}
where the space of nonlinearities $\CP^{\FL_+}$, combinatorial factor $S(\tau)$ and
evaluation map $\Upsilon^F$ are defined in \cite[Sec.~2.7]{BrunedChandraChevyrecHairer2017}.
In the right-hand side, we identify elements of $\CG_-$ with maps $\TT_- \to \R$ (characters on the 
free unital algebra generated by $\TT_-$). 

In this context, the purpose of $\Upsilon$ is to provide a formula for the 
counterterms required to renormalise our equation. As already noted in 
\cite{BrunedChandraChevyrecHairer2017}, the same map also provides an expression for the 
expansion of the ``abstract solution'' to our SPDE in the corresponding regularity 
structure. This is strongly reminiscent of the expression of the 
Taylor expansion of the solution to an ODE in terms of a sum over trees \cite{Butcher}.
\end{remark}

We call a choice of $(\xi^\eps, g_\eps)_{\eps > 0}$ a \textit{renormalisation procedure}
and we consider two such procedures to be equivalent if they yield 
the same limit process for any system of SPDEs driven by $\xi^\eps$
belonging to a suitable class of systems of the same form as the original one.
(See \cite{BrunedChandraChevyrecHairer2017} for the definition of this class of equations given a ``rule''
in the sense of \cite{BrunedHairerZambotti2016}.)
Given two renormalisation procedures $(\xi^\eps, g_\eps)$ and 
$(\tilde \xi^\eps, \tilde g_\eps)$, it turns out that it is always possible to find one single element $f \in \CG_-$
such that $(\tilde \xi^\eps, \tilde g_\eps)$ is equivalent to $(\xi^\eps, f \circ g_\eps)$.
Given any fixed choice of compactly supported kernel $\CK_i$ such that $(\d_t - \CL_i)\CK_i = \delta$ in a 
neighbourhood of the origin and any choice $\xi^\eps$ of smooth approximation to $\xi$ (by
convolution with a compactly supported mollifier, but this could in principle be more general),
there is a distinguished choice of $g^{(\eps)}_\BPHZ$ (depending on $\xi^\eps$), which we call the ``BPHZ renormalisation'',
see \cite{BrunedHairerZambotti2016}. In particular, this has the property that the $(\xi^\eps, g^{(\eps)}_\BPHZ)$ are
all equivalent for different choices of $\xi^\eps$, so that we can talk about ``the'' BPHZ 
solution to \eqref{eq:singular:SPDE}.

At first sight, the natural generalisation of Stroock and Varadhan's result
for a system of equations of the type \eqref{eq:singular:SPDE} may be that the support
of the solutions starting at $u$ coincides with the closure $R_u$ of the 
set of all solutions to \eqref{eq:singular:SPDE} with the $\xi_j$ replaced by smooth 
controls.
A moment of thought reveals that this cannot be the case
for the simple reason that the formal expression \eqref{eq:singular:SPDE} only determines
a solution theory up to a choice of renormalisation procedure and different
renormalisation procedures may produce solutions with different supports. This is already
apparent in the case of SDEs where an expression like
\begin{equ}
\dot x = V_0(x) + V_i(x)\,\xi_i\;,
\end{equ}
(summation over repeated indices is implicit)
may be interpreted either in the Itô sense or in the Stratonovich sense, yielding
solution theories with distinct supports in general.

It is also not difficult to see that in general one cannot hope to obtain the support of 
\eqref{eq:singular:SPDE} as the closure $R_u^g$ of the 
set of all solutions to \eqref{eq:singular:SPDE:reg} with the $\xi_j^\eps$ 
replaced by smooth controls and $g_\eps$ replaced by some fixed element
$g$ of the renormalisation group. Indeed, consider the system of SPDEs given by
\begin{equ}[e:example]
\d_t u = \d_x^2 u + \xi\;,\qquad \d_t v = \d_x^2 v + (\d_x u)^2\;.
\end{equ}
The relevant part of the renormalisation group for this equation is
simply $(\R,+)$, with the renormalised equation being of the form
\begin{equ}[e:exampleRenorm]
\d_t u = \d_x^2 u + \xi\;,\qquad \d_t v = \d_x^2 v + \bigl((\d_x u)^2 - c\bigr)\;.
\end{equ}
For any fixed value of $c$, solutions to \eqref{e:exampleRenorm} with 
smooth $\xi$ and vanishing initial condition are such that $v$ is
bounded below by $-ct$. However, the solution to \eqref{e:example}
should really be interpreted as the limit as $\eps \to 0$ to the 
solution to \eqref{e:exampleRenorm} with $\xi$ replaced by $\xi_\eps$ and
$c$ replaced by $c_\eps$ for a suitable choice of $c_\eps \to +\infty$.

Furthermore, it was already remarked in \cite{Hairer2012} (in a slightly different
setting) that, for any fixed smooth $h$,  the solutions to 
\begin{equ}[e:oscillation]
\d_t u = \d_x^2 u + h + a \eps^{-1}\cos(\eps^{-1} x)\;,\qquad 
\d_t v = \d_x^2 v + (\d_x u)^2 - c\;,
\end{equ}
converge as $\eps \to 0$ to those of
\begin{equ}
\d_t u = \d_x^2 u + h \;,\qquad 
\d_t v = \d_x^2 v + (\d_x u)^2 - (c - \tilde c a^2)\;,
\end{equ}
for some fixed positive constant $\tilde c$. In other words, it is possible
to emulate a \textit{decrease} in the renormalisation constant $c$ (but not an increase!)
by adding a small (in a distri\-bu\-tional sense) highly oscillatory term
to $h$. This suggests that the support of the solution to \eqref{e:example}
is given by the closure of the set of all solutions to 
\begin{equ}[e:supportKPZ2]
\d_t u = \d_x^2 u + h \;,\qquad 
\d_t v = \d_x^2 v + (\d_x u)^2 - c\;,
\end{equ}
for any choice of smooth function $h$ and any choice of constant $c \in \R$.
As a matter of fact, by considering perturbations of $h$ of the type
\eqref{e:oscillation}, but with an additional modulation of the highly oscillatory term,
we will see in Theorem~\ref{theo:gKPZ} below that, whatever the choice of renormalisation procedure, solutions
to \eqref{e:example} have full support, so that this example exhibits some weak
form of ``hypoellipticity''.

\subsection{The main theorem}

We consider subcritical SPDEs of the form \eqref{eq:singular:SPDE} 
such that Assumptions~\ref{ass:main:reg} and~\ref{ass:noises:derivatives:products} below hold.
Subcriticality ensures that one can construct a problem dependent regularity structure as in \cite{BrunedHairerZambotti2016}, and Assumptions~\ref{ass:main:reg} and~\ref{ass:noises:derivatives:products} guarantee by \cite[Thm.~2.33]{ChandraHairer2016} the convergence of the sequence of admissible models $\hat Z^\eps$ to a random limit model~$\hat Z$, where $\hat Z^\eps$ denotes the renormalised canonical lift of the regularised noise~$\xi^\eps$, see Section~\ref{sec:kernels}.
Furthermore, we can only expect a support theorem to hold if the integration kernels associated to our equations are homogeneous on small scales, and in order to not overcomplicate the presentation, we assume that our Green's functions are self-similar under rescaling, compare Assumption~\ref{ass:kernelhomo}. 
For convenience we also restrict to the case of independent (space or space-time) Gaussian white noises $\xi_i$ (but compare Remark~\ref{rem:noises}). 
Our assumptions ensure that equation (\ref{eq:singular:SPDE}) can be lifted to an abstract fixed point problem as in \cite[Thm.~7.8]{Hairer2014}. 
%
Finally, we need a technical assumption on the trees that appear in our regularity structure, which for ease of this introduction we will not comment on and instead refer the interested reader to Assumptions~\ref{ass:technical} and~\ref{ass:CVz} in Section~\ref{sec:technical:assumpation}.

In order to have a well behaved solution map,
it is convenient to be in the slightly more restrictive setting of \cite{BrunedChandraChevyrecHairer2017}, 
which guarantees in particular that the reconstructed solution to the abstract fixed point problem for $\hat Z^\eps$ 
satisfies the regularised and renormalised SPDE (\ref{eq:singular:SPDE:reg}).
We thus assume for the sake of the main results, Theorems~\ref{thm:mainThm} and \ref{thm:main:sv}, that the full assumptions of \cite{BrunedChandraChevyrecHairer2017} are satisfied. 
\begin{assumption}\label{ass:main:thm}
We assume that 
\cite[Eqn.~2.5, Ass.~2.6, Ass.~2.8, Ass.~2.13, Ass.~2.15, Ass.~2.16]{BrunedChandraChevyrecHairer2017} are satisfied and that
Assumptions~\ref{ass:main:reg}--\ref{ass:zero-homo} given in Section~\ref{sec:assumptions} and Assumptions~\ref{ass:CJHopfIdeal} and \ref{ass:CHBPHZcharacters} 
given in Section~\ref{sec:CJ} hold.
\end{assumption}

\begin{remark}
We will show in Section~\ref{sec:constraints} below that Assumptions~\ref{ass:CJHopfIdeal} and \ref{ass:CHBPHZcharacters} 
are implied by Assumptions~\ref{ass:main:reg}--\ref{ass:CVz} (without requiring the additional assumptions
of \cite{BrunedChandraChevyrecHairer2017}).
\end{remark}

Our main result then is a support theorem for the BPHZ renormalised model $\hat Z$ or indeed
any model differing from $\hat Z$ by the action of an element of the renormalisation
group $\CG_-$ associated to the class of equations under consideration. If we denote 
by $Z(h)$ the canonical lift of any $h \in \CC_0^\infty$, a slightly informal version of
our main result reads as follows

\begin{theorem}
There exists a subgroup $\CH \subset \CG_-$ and a left coset $g\CH$ of $\CH$ such that
\begin{equ}
\supp \hat Z = \overline{\{ \CR^h Z(f)\,:\, f \in \CC_0^\infty,\, h \in g\CH\}}\;.
\end{equ}
\end{theorem}

One important remark here is that $\CH$ is \textit{not} determined solely by the
regularity structure of the problem. Instead, it also incorporates information about
the symmetries satisfied by the integration kernels associated to the problem. 
Extracting this ``rigid'' algebraic data out of ``soft'' analytic data is one
of the main difficulties of this article.

We also have a more concrete statement at the level of solutions which we state now.
Regarding solutions, our support theorem applies for $u$ 
in any space $\CX= \bigoplus_{i} \CX_i$ such that 
the solution operator (mapping the space of admissible models for the regularity structure $\CT$ into $\CX$) is continuous. For instance, one could define the space $\CX_i$ as a version of the usual H\"older spaces allowing for finite time blow up as in \cite{BrunedChandraChevyrecHairer2017}. 
In situations where we know a priori that the solution survives until some deterministic
time $T>0$ almost surely, 
one can take alternatively for $\CX_i$ the usual H\"older--Besov spaces $\CC^{-\shalf + \beta_i -\kappa}_\fsL((0,T) \times \T^e)$. (Here $\beta_i>0$ and the scaling $\fsL : \{0, \ldots, e\} \to \N$ are determined by the linear part $\partial_t - \CL_i$ of our equations, see Assumption~\ref{ass:kernelhomo}. The statement holds for any $\kappa>0$.)
The main theorem of this article is the following description of the topological support of $u$.

\begin{theorem}\label{thm:mainThm}
Under Assumption~\ref{ass:main:thm}, let $u^\eps$ denote the classical solutions to the regularised and renormalised equation (\ref{eq:singular:SPDE:reg}) with noise $\xi^\eps$ and renormalisation constants $c^\eps_\tau = h\circ g^\eps_\BPHZ(\tau)$ for some fixed $h \in \CG_-$, and let $u := \lim_{\eps \to 0}u^\eps$. Then, one has the identity
\[
\supp u=\bigcap_{\eps>0} \overline{\bigcup_{\delta<\eps} \supp u^\delta}
\]
in $\CX$. 
\end{theorem}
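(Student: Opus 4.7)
The plan is to deduce Theorem~\ref{thm:mainThm} from the model-level support theorem (Theorem~\ref{thm:main}) via the continuous solution map $\CS\colon \CM\to\CX$, using that $u=\CS(\hat Z)$ and $u^\delta=\CS(\hat Z^\delta)$ on the same probability space.

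The easy inclusion $\supp u\subseteq \bigcap_{\eps>0}\overline{\bigcup_{\delta<\eps}\supp u^\delta}$ follows from lower semicontinuity of the support under convergence in probability: given $x\in\supp u$ and an open neighbourhood $U$ of $x$ in $\CX$, the portmanteau theorem yields $\liminf_{\delta\to 0}\P(u^\delta\in U)\ge\P(u\in U)>0$, so $U\cap \supp u^\delta\ne\emptyset$ for all sufficiently small $\delta$, and hence $x\in\overline{\bigcup_{\delta<\eps}\supp u^\delta}$ for every $\eps>0$.

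For the reverse inclusion I would invoke Theorem~\ref{thm:main} to obtain the analogous identity at the level of random models,
\begin{equ}[e:modelsupp]
\supp \hat Z = \bigcap_{\eps>0}\overline{\bigcup_{\delta<\eps}\supp \hat Z^\delta}\;.
\end{equ}
Lemma~\ref{lem:supportcontinuousmap} applied to $\CS$ gives $\supp u=\overline{\CS(\supp\hat Z)}$ and $\supp u^\delta=\overline{\CS(\supp\hat Z^\delta)}$. Using the elementary identity $\overline{\CS(\overline A)}=\overline{\CS(A)}$ (valid for any continuous $\CS$) and setting $K_\eps:=\overline{\bigcup_{\delta<\eps}\supp\hat Z^\delta}$, so that $\bigcap_\eps K_\eps=\supp\hat Z$ by \eqref{e:modelsupp}, the problem thereby reduces to showing
\[
\overline{\CS\bigl(\bigcap_{\eps>0} K_\eps\bigr)} \supseteq \bigcap_{\eps>0}\overline{\CS(K_\eps)},
\]
the opposite inclusion being automatic from the monotonicity of $A\mapsto\overline{\CS(A)}$.

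This last inclusion is the main obstacle: it would fail for generic continuous maps on non-compact spaces, so the argument must genuinely exploit the probabilistic structure rather than just continuity of $\CS$. The cleanest route is to work directly on the underlying probability space: given $x$ in the right-hand side and an open neighbourhood $V$ of $x$ in $\CX$, for each $\eps>0$ pick $\delta_\eps<\eps$ with $\P(\hat Z^{\delta_\eps}\in\CS^{-1}(V))>0$. The convergence $\hat Z^\delta\to\hat Z$ in probability in the Polish space $\CM$, together with the tightness encoded by \eqref{e:modelsupp}, then allows one to extract via a subsequential/diagonal argument a positive-probability event on which $\hat Z\in\CS^{-1}(\overline V)$. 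This gives $\P(u\in\overline V)>0$ for every open neighbourhood $V$ of $x$, hence $x\in\supp u$, completing the proof.
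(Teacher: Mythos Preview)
You correctly isolate the difficulty: the inclusion
\[
\bigcap_{\eps>0}\overline{\CS(K_\eps)}\subseteq\overline{\CS\Bigl(\bigcap_{\eps>0}K_\eps\Bigr)}
\]
is \emph{not} a consequence of continuity, even when the $K_\eps$ are nested closed sets. However, your proposed ``subsequential/diagonal'' argument does not close this gap. The hypotheses you allow yourself --- convergence $\hat Z^\delta\to\hat Z$ in probability together with the set identity $\bigcap_\eps K_\eps=\supp\hat Z$ --- are too weak. A concrete counterexample: let $\hat Z$ be uniform on $[0,1]$ and let $\hat Z^\delta$ equal $\hat Z$ with probability $1-\delta$ and $1/\delta+\hat Z$ with probability $\delta$. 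Then $\hat Z^\delta\to\hat Z$ in probability, $K_\eps=[0,1]\cup[1/\eps,\infty)$, and $\bigcap_\eps K_\eps=[0,1]=\supp\hat Z$. For $\CS(x)=(1+|x|)^{-1}$ one finds $0\in\bigcap_\eps\overline{\CS(K_\eps)}$ but $0\notin\overline{\CS([0,1])}=[\tfrac12,1]$. Knowing only that $\P(\hat Z^{\delta_\eps}\in\CS^{-1}(V))>0$ for each $\eps$ gives no control when these probabilities tend to zero.

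The paper's route avoids this entirely by exploiting the \emph{second} identity \eqref{eq:metatheoremmodel2} of Theorem~\ref{thm:main} rather than \eqref{eq:metatheoremmodel1}. That identity says $\supp\renorm{h}\hat Z=\overline{\bigcup_{\eps\in(0,1)}\supp\renorm{l^\eps\circ h}\hat Z^\eps}$ for suitable $l^\eps\to\one^*$, which in particular gives the \emph{pointwise} inclusion $\supp\renorm{l^\eps\circ h}\hat Z^\eps\subseteq\supp\renorm{h}\hat Z$ for each $\eps$. Pointwise inclusions push through continuous maps trivially via Lemma~\ref{lem:supportcontinuousmap}, yielding $\supp\tilde u^\eps\subseteq\supp u$ for the ``tweaked'' solutions $\tilde u^\eps$ (this is the content of the remark following Theorem~\ref{thm:mainThm}). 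The passage from $\tilde u^\eps$ back to $u^\eps$, i.e.\ removing the tweak $l^\eps$, is then a continuity argument exactly as in the derivation of \eqref{eq:metatheoremmodel1} from \eqref{eq:metatheoremmodel2} in the proof of Theorem~\ref{thm:main}, now performed at the level of $\CX$ rather than $\CM_0$.
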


In Theorem~\ref{thm:main} below we show that Assumptions~\ref{ass:main:reg}--\ref{ass:zero-homo} 
and \ref{ass:CJHopfIdeal}, \ref{ass:CHBPHZcharacters} imply an analogous support theorem for the random models
associated to $u$ and $u^\delta$. More precisely, we show
that
\begin{equ}[e:suppModel]
\supp \hat Z=\bigcap_{\eps>0}\overline{\bigcup_{\delta<\eps} \supp \hat Z^{\delta}}\;,
\end{equ}
where $\hat Z^\delta$ denotes the BPHZ model associated to the noise $\xi^\delta$
and $\hat Z$ denotes its limit, namely the BPHZ model associated to the limiting white 
noise $\xi$.

Once we know \eqref{e:suppModel}, Theorem~\ref{thm:mainThm} is a direct consequence of the 
continuity of the solution operator given in \cite[Thm~2.21]{BrunedChandraChevyrecHairer2017},
combined with the fact that, given a measure $\mu$ and a continuous map $F$, $\supp F^*\mu$
is given by the closure of $F(\supp \mu)$.
It will become clear from our proof that for a ``tweaked'' choice of renormalisation constants $\tilde c^\eps_\tau = k^\eps \circ h \circ g^\eps_\BPHZ(\tau)$ with $k^\eps \to \one^*$ as $\eps \to 0$ one can show that, denoting by $\tilde u^\eps$ the classical solution to the system (\ref{eq:singular:SPDE}) with renormalisation constants $\tilde c^\eps$, one still has $\tilde u^\eps \to u$ in probability in $\CX$, but one has the stronger statement
\[
\supp \tilde u^\eps\subseteq \supp u
\]
for any $\eps>0$. 

We also have a characterisation of the support in the spirit of Stroock and Varadhan's support theorem for 
SDEs \cite{SupportThm}.
The ``correct'' way to resolve the issue of divergent renormalisation constants 
in such a description turns out to be the following.

\begin{theorem}\label{thm:main:sv}
Under Assumption~\ref{ass:main:thm}, let $h \in \CG_-$ and $u$ be as in Theorem~\ref{thm:mainThm}.
There exists a Lie subgroup $\CH \ssq \CGm$ of the renormalisation group and a character $f \in \CGm$ independent
of the choice of $h$ in Theorem~\ref{thm:mainThm}
such that the following holds. The support $\supp u$ is given by the closure of all solutions $\vphi$ to 
\begin{align}\label{eq:control:problem}
\partial_t \vphi_{i} = \CL_i \vphi_i
+
F_i(\vphi, \nabla \vphi, \ldots)
+
\sum_{j\le n} F_i^j(\vphi, \nabla \vphi, \ldots) \psi_j
+
\bigl(\Upsilon_i k \bigr)(\vphi, \nabla \vphi, \ldots)\;,
\end{align}
for any character $k \in h \circ f \circ \CH$, initial condition $\vphi(0,\cdot)  \in \Phi_0^k$\footnote{See Remark~\ref{rmk:initialcondition:controlproblem} below for the definition of this set.},
and smooth deterministic functions $\psi_j$, $j=1, \ldots, n$ 
(depending only on space if $\xi_j$ is purely spatial white noise).
Here we write $\Upsilon_i k := (\Upsilon M^{k}\Omega_F )_i$ for simplicity.
%
\end{theorem}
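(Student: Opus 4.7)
The plan is to deduce Theorem~\ref{thm:main:sv} from the model-level support theorem (Theorem~\ref{thm:main}) by pulling back through the continuous solution map guaranteed by Assumption~\ref{ass:main:thm}. The strategy is parallel to the derivation of Theorem~\ref{thm:mainThm} from Theorem~\ref{thm:main}, but requires the extra bookkeeping needed to identify which classical equation is solved by the reconstruction of each smoothly-driven model, as well as a precise description of the ambiguity intrinsic to this identification.

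First I would invoke Theorem~\ref{thm:main} to describe the topological support of the random limit model $\hat Z$ as the closure of an explicit family of admissible models obtained by taking canonical lifts of smooth drivers $\psi_j$ and renormalising by limits of sequences in $\CG_-$ along which these lifts converge. The Lie subgroup $\CH \subseteq \CGm$ is then defined as the group of renormalisation shifts that can be achieved ``for free'' by highly oscillatory perturbations of the drivers, in the spirit of~\eqref{e:oscillation}: it consists of those $g \in \CG_-$ such that one can find $\psi^\eps \to 0$ (in the relevant H\"older--Besov topology of the $\xi_j$) whose canonical lifts, suitably renormalised, converge to the model-level element associated to $g$. The character $f \in \CGm$ is extracted as a fixed representative of the coset encoding the deterministic difference between the BPHZ sequence $g^\eps_\BPHZ$ applied to canonical lifts of vanishing smooth drivers and the identity. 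Its $h$-independence is a direct consequence of~\eqref{e:niceIdentity}: composing the full renormalisation on the left by $h$ (as happens when we use the character $h \circ g^\eps_\BPHZ$) only translates the cosets without affecting the class $f \circ \CH$.

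Given this model-level description, the support of $u$ in $\CX$ is obtained as the closure of the image of $\supp \hat Z$ under the continuous solution map, by Lemma~\ref{lem:supportcontinuousmap}. In the framework of~\cite{BrunedChandraChevyrecHairer2017}, the reconstruction of the abstract fixed point driven by the canonical lift of a smooth tuple $\psi_j$, renormalised by an element whose limit lies in $h \circ f \circ \CH$, is precisely a classical solution of~\eqref{eq:control:problem} for the corresponding character $k$, driven by $\psi_j$ and with initial datum in $\Phi_0^k$. That this description does not depend on the representative $k$ of the coset $h \circ f \circ \CH$ follows again from~\eqref{e:niceIdentity}: reshuffling $k$ within its coset modifies the nonlinearity of~\eqref{eq:control:problem} by a term lying in the image of $\Upsilon$ that is exactly absorbed into the renormalisation contribution $\Upsilon_i k$. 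The treatment of the initial condition $\vphi(0,\cdot) \in \Phi_0^k$ parallels Remark~\ref{rmk:initialcondition}: in function-valued cases it reduces to arbitrary deterministic data, whereas in genuinely distributional cases such as $\Phi^4_3$ the class $\Phi_0^k$ encodes the deterministic compensation needed to cancel the rough part $\CS^-_\eps(\xi)(0,\cdot)$ of the solution.

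The main obstacle I anticipate is the precise construction of $\CH$ as a Lie subgroup and the proof of both inclusions of the support identity. The ``easy'' inclusion---that every solution to~\eqref{eq:control:problem} lies in $\supp u$---should follow from convergence-of-models arguments in the spirit of~\eqref{e:oscillation}: one adds a well-chosen highly oscillatory perturbation to the smooth driver so that the corresponding canonical lift, properly renormalised, converges to the desired model. This is where Assumptions~\ref{ass:technical}--\ref{ass:CVz} intervene, to guarantee that the set of renormalisation shifts reachable in this way spans a Lie subalgebra of the Lie algebra of $\CGm$ rather than a mere cone. The ``hard'' inclusion is a genuine analogue of the non-trivial half of Stroock--Varadhan: one must show that no admissible model in $\supp \hat Z$ lies outside the closure of the cosets $h \circ f \circ \CH$. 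This will presumably proceed through a Fourier or wavelet decomposition of admissible noise perturbations, combined with a careful analysis of the dependence of the BPHZ characters on these perturbations, and is where most of the technical work will be concentrated.
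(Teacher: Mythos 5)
Your high-level architecture matches the paper's: Theorem~\ref{thm:main:sv} is indeed deduced from the model-level statement (Theorem~\ref{thm:main} / Proposition~\ref{prop:constantsInSupport}) together with the shift operator of Theorem~\ref{thm:translationoperator} and the continuity of the solution map, and your treatment of the initial condition and of the $h$-independence of $f$ via \eqref{e:niceIdentity} is consistent with Remark~\ref{rmk:initialcondition:controlproblem} and the remark following the theorem. However, there are two genuine gaps. First, your definition of $\CH$ as ``the group of renormalisation shifts achievable for free by oscillatory perturbations'' is circular: nothing in that definition makes it a Lie subgroup, nor does it connect to the BPHZ characters in the way the upper bound requires. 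The paper instead defines $\CH$ \emph{analytically} as the annihilator of an explicit ideal $\CJ \subset \CT_-$ generated by linear combinations of trees whose homogeneous kernel evaluations $\hat\CK_\sym$ vanish (Definition~\ref{def:CH}); the group property is then equivalent to $\CJ$ being a Hopf ideal (Assumption~\ref{ass:CJHopfIdeal}), and the relation $g^\eps \in f^\eps \circ \CH$ with $f^\eps \to \fxi$ (Assumption~\ref{ass:CHBPHZcharacters}) is a theorem about the BPHZ characters, proved in Section~\ref{sec:constraints}. Without this concrete description you cannot even state, let alone prove, that the reachable set is a coset of a group.

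Second, you have inverted where the difficulty lies. The inclusion $\supp u \subseteq \overline{\{\text{control solutions}\}}$ is the \emph{soft} direction: it follows from lower semicontinuity of the support under convergence in probability, the Cameron--Martin identification $\supp \hat Z^\eps = \overline{\{T_h\,\mfc(g^\eps)\}}$, and the decomposition $g^\eps = f^\eps \circ \hat g^\eps$ with $\hat g^\eps \in \CH$ — no Fourier or wavelet analysis of ``all admissible perturbations'' is needed or performed. The genuinely hard direction is the one you label easy, namely that every $\mfc(k)$ with $k$ in the coset lies in $\supp \hat Z$ (Proposition~\ref{prop:constantsInSupport}): this occupies Sections~\ref{sec:renormalisationgroupargument}, \ref{sec:constraints} and~\ref{sec:shift}, and requires constructing a \emph{non-Gaussian} shift $\zeta_\delta = -\xi^\delta + k^\delta$ with $k^\delta$ in a prescribed homogeneous Wiener chaos for each tree in $\FT_-$, solving a fixed point problem for the amplitudes $a^\delta_\tau$, choosing the scales $\lambda^\delta_\tau$ according to a carefully designed order on $\FT_-$, and using the group property of $\CH$ again (in Proposition~\ref{prop:sequencetoconstant}) to control the trees in $\TT_- \setminus \FT_-$. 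A reference to \eqref{e:oscillation} does not suffice: deterministic or Gaussian oscillations cannot control trees with more than two leaves (cf.\ \eqref{eq:Phi43vanish}), which is precisely why the Wiener-chaos construction is needed.
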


Theorem~\ref{thm:main:sv} follows from Proposition~\ref{prop:constantsInSupport} below, the properties of the shift operator, Theorem~\ref{thm:translationoperator}, and the continuity of the solution operator. The Lie subgroup $\CH$ is given as the annihilator of a finite number of linear ``constraints'' 
between the renormalisation constants. We refer the reader to Definition~\ref{def:CH} for a
precise definition. The tweaking by $f$ is necessary, since the BPHZ characters only respect 
these constraints up to order $1$ (a by-product of the fact that we use truncated integration kernels 
for its definition.)
%
%


\begin{remark}\label{rmk:initialcondition:controlproblem}
In case that we are in a situation in which we are allowed to choose the initial condition $u^{\eps,(0)} = v^{(0)}$ deterministically and independent of $\eps$, the initial condition of the control problem (\ref{eq:control:problem}) has to coincide with this choice, so that we have to set $\Phi_0^k = \{ v^{(0)} \}$.

In order to cover also the case that the initial condition $u^{\eps,(0)}$ is a perturbation to $\CS^-_\eps(\xi)(0,\cdot)$, compare Remark~\ref{rmk:initialcondition}, we make use of the fact that $\CS^-_\eps$ can be written as an explicit continuous function of 
the model $\hat Z^\eps$, compare \cite[Prop.~5.22, Eqn.~6.10]{BrunedChandraChevyrecHairer2017}. In the notation of that paper we 
define $\Phi_0^k$ as the set of all functions of the form 
$v^{(0)} + (\CR^Z \CP^Z \tilde U)(0,\cdot) \in (\CC^\infty(\T^e))^m$ 
where $Z$ is a renormalised canonical lift
$Z = \CR^k \Zcan(\psi)$ with 
$\psi \in \CC_c^\infty(\R\times\T^e)^n$.
(The fact that we can choose the initial condition independently of the $\psi_j$'s appearing in (\ref{eq:control:problem}) comes from the fact that $(\CR^Z \CP^Z \tilde U)(0,\cdot)$ only depends on the value of $\psi$ on negative times, while in the equation (\ref{eq:control:problem}) only the behaviour of $\psi$ for positive times matters.)
\end{remark}

\begin{remark}
It suffices to prove Theorems~\ref{thm:mainThm} and~\ref{thm:main:sv} for $h=\one^*$. This follows, since by \cite[Thm.~2.13]{BrunedChandraChevyrecHairer2017} there exists a action $(F,h) \mapsto h \circ F$ of the renormalisation group $\CGm$ onto the collection of vector fields $F=(F_i)$, which leaves the class of vector fields consider in \cite{BrunedChandraChevyrecHairer2017} invariant, and is such that $F_i + \Upsilon_i (h \circ g) = h \circ F_i  + \Upsilon_i g$ for any $h,g \in \CGm$. Therefore, changing renormalisation can simply be viewed as changing the non-linearity.
\end{remark}

\begin{remark}\label{rmk:a-priori}
The set $f \circ \CH$ used in Theorem~\ref{thm:main:sv} is in some sense the largest set of characters such that we can guarantee that the solution to (\ref{eq:control:problem}) is in the support of $u$. In many situations we know a-priori that there exists a smooth approximation $\xi^\eps = \xi\star \rho^\eps$ as above with the property that the BPHZ characters $g^\eps$ take values in a fixed subset $K \ssq f \circ \CH$. In this case, combining Theorem~\ref{thm:mainThm} and Theorem~\ref{thm:main:sv} implies that the support $\supp u$ is given by the closure of the set of all solutions to the control problem (\ref{eq:control:problem}) with $k \in K$.
\end{remark}

\begin{remark}
The classical Stroock--Varadhan support theorem can be viewed as the case $d=0$ of our result
with $\CL_i = 0$.
In this case, one has $\CG_- \simeq (\R,+)$\footnote{Strictly speaking one has
$\CG_- \simeq (\R^{n \times n},+)$, but only multiples of the identity matrix preserve 
the natural symmetries given by invariance under permutation 
of indices.} and, in the notations of Theorem~\ref{thm:main:sv}, 
\begin{equ}
(\Upsilon_i c)(u) = c F_k^j(u) \d_k F_i^j(u) \;,\qquad c \in \R \simeq \CG_-\;,
\end{equ}
with summation over $j$ and $k$ implied. Furthermore, using the BPHZ model (and therefore setting
$h = 0$) leads to solutions in the Itô sense. Since there is only one renormalisation constant
in this case and the ``heat kernel'' is given by the Heaviside function which is non-trivial,
Definition~\ref{def:CH} readily leads us to the conclusion that $\CJ$ is the unit ideal, so
that its annihilator is given by $\CH = \{0\}$. 

Theorem~\ref{thm:main:sv} then states that there exists some constant $c$ such that 
the support of the Itô solutions to
\begin{equ}
du_i = F_i(u)\,dt + \sum_{j \le n} F_i^j(u)\,dW_j\;,
\end{equ}
is given by the closure of all solutions to
\begin{equ}[e:supportSDEs]
\dot u_i = F_i(u) + \sum_{j \le n} F_i^j(u)\,\psi_j + c \sum_{k \le m} \sum_{j \le n} F_k^j(u) \d_k F_i^j(u)\;,
\end{equ}
with smooth controls $\psi_j$. Note that the correct value of $c$ (corresponding to the
character $f$ in the statement) is not specified by the theorem. 
On the other hand, one can explicitly compute the ``BPHZ character'' in this case
and show that (again identifying $\CG_-$ with $\R$) it converges as $\eps \to 0$ to $-\f12$,
and we conclude from Remark~\ref{rmk:a-priori} 
that $c = -\f12$ in \eqref{e:supportSDEs}, thus recovering the
Stroock--Varadhan support theorem.
\end{remark}

Before we proceed, let us briefly discuss how these results compare to the existing literature.
There are of course many support theorems for stochastic PDEs that do \textit{not} require renormalisation,
see for example \cite{BMS,support3,support1,support2}. In all of these cases, 
the statement is the one that one would expect,
namely that the support is given by the closure of all solutions obtained by replacing the noises
by suitable controls. In the case of singular SPDEs, information on the support follows
in some special cases. For example, Jona-Lasinio and Mitter \cite{Jona} construct solutions 
to a type of Langevin equation for the
$\Phi^4_2$ measure by using Girsanov's theorem, which yields full support as an immediate byproduct.
One of the earliest result on the support in cases that cannot be dealt with in this way is the work
\cite{Friz2016} by Chouk and Friz where the authors consider a generalised parabolic Anderson model
of the form $\d_t u = \Delta u + g(u) \xi$ in dimension~$2$ and show that a suitably renormalised
version of it has support given by the closure of all solutions to control problems of 
the type $\d_t u = \Delta u + g(u) \phi + c (gg')(u)$ with $\phi$ a smooth function (constant in time) and
$c$ an arbitrary constant. This can be viewed as a special case of our result in a situation
where $\CH = \CG \approx (\R,+)$. The way we deal with the presence of renormalisation,
while inspired by \cite{Friz2016}, substantially differs from the construction given there.
See the discussion at the start of Section~\ref{sec:shift} for more details.

Using similar techniques, Tsatsoulis and Weber \cite{Tsatsoulis2018} showed that the $\Phi^4_2$ dynamic has full 
support.  
Finally, proofs of support theorems for stochastic \emph{ordinary} differential equations based on rough path techniques are by now very classical. It was already mentioned in \cite{Lyons1998} that the continuity properties of the solution map can be used for a straightforward proof of a support theorem, provided one has a support theorem for the enhanced Brownian motion. The latter was shown in a series of results, see for instance  \cite{LEDOUX2002} (for a support theorem for rough paths in the p-variation topology), \cite{Friz2005} (in H\"older topology), \cite{Friz2006} (for enhanced fractional Brownian motions) and \cite{friz2010} (for an implementation using deterministic shifts). For an introduction to the topic and more details see \cite[Sec.~9.3]{FrizHairer2014} or \cite[Cha.~19]{friz_victoir_2010}.

\subsection{Applications}

\subsubsection{The \texorpdfstring{$\Phi^p_d$}{Phi p d} equation}\label{sec:Phi4}

The $\Phi^p_d$ equation formally is given by
\begin{align}\label{eq:phip2}
\partial_t u = \Delta u + \sum_{1 \le k \le p-1} a_k u^k + \xi
\end{align}
with space-time white noise $\xi$ on $\domain = \R \times \T^d$. This equation is subcritical in the
sense of \cite{Hairer2014,BrunedHairerZambotti2016} provided that $p < 2d/(d-2)$. As pointed out above, in a formal sense, 
one can also consider \eqref{eq:phip2} in dimension ``$d-\eps$'', either by replacing 
$\Delta$ by $-(-\Delta)^{1+\eps}$ or by convolving $\xi$ with a slightly regularising Riesz kernel.
We will restrict ourselves here to the cases $d=2$ and $p$ even, $d =3$ and $p=4$, as well
as $d = 4-\eps$ and $p=4$. We denote by ``the'' solution to \eqref{eq:phip2} the BPHZ solution in
the sense of \cite{BrunedHairerZambotti2016,ChandraHairer2016} for any fixed truncation $K$ of the heat kernel. All statements below are
independent of the choice of cutoff.

Note that in dimension $d=2$ Assumption~\ref{ass:CVz} below is violated, 
but as pointed out in Remark~\ref{rmk:assumptions}, Assumption~\ref{ass:CVz} can be replaced by Assumptions~\ref{ass:CJHopfIdeal} and~\ref{ass:CHBPHZcharacters}, which are trivially true in this case (one has $\CJ:=\{ 0 \}$ and $\CH = \CG_-$).
In dimension $d=3$ all assumptions are satisfied. However, the `black-box' theorem of
\cite{BrunedChandraChevyrecHairer2017} only allows us to start the approximate equation at a perturbation of $\CS^-_\eps(\xi)(0,\cdot)$, compare Remark~\ref{rmk:initialcondition} (in this case $\CS^-_\eps(\xi)(0,\cdot)$ is in law a smooth approximation to the Gaussian free field). As was already noticed in \cite[Sec.~9.4]{Hairer2014}, this issue can be circumvented, but this requires to work with a model topology which is slightly stronger than the usual one. We show in Section~\ref{sec:Phi43} that the support theorem still holds for this topology.
If we emulate dimension 
$d=4-\eps$ by slightly regularising the noises, then our 
assumptions on the noises are violated (since they are no longer white), but it is again possible 
to resolve this issue, see Section~\ref{sec:Phi44}. 
We will be interested in showing ergodicity of \eqref{eq:phip2}, so that we will
always assume that $a_{p-1} <0$. Under this condition, we have the following consequence of Theorem~\ref{thm:main:sv}.
\begin{theorem}\label{thm:Phi4}
Let $u_0 \in \CC^{\eta}(\T^3)$, where $\eta > -\frac{2}{3}$ if $d=2,3$ and $\eta>-(\eps\wedge {2\over 3})$ if $d=4-\eps$.
Let $u$ denote the solution to the $\Phi^p_d$ equation with the combinations of $p$ and $d$ mentioned above, with initial condition
$u_0 + \CS^-(\xi)(0,\cdot)$ (in the sense of Remark~\ref{rmk:initialcondition}).
Then for any $T>0$
 $u$ has full support in $\CC^\alpha_\fs((0,T) \times \T^d)$ for $\alpha = \frac{2-d}{2}-\kappa$ for any 
$\kappa>0$.

For $d=2,3$, let $\alpha$ as above and consider the solution $u$ with fixed initial condition 
$u_0 \in \CC^{\eta}(\T^3)$ for some $\eta \in (-\frac{2}{3},\alpha]$. Then,  $u$ has support in $\CC([0,T],\CC^\eta(\T^d))$
given by all functions with value $u_0$ at time $0$.
\end{theorem}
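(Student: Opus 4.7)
The plan is to invoke Theorem~\ref{thm:main:sv} and reduce both parts of the theorem to a density statement about the associated control problem. For the $\Phi^p_d$ equation the noise enters with multiplier $F^1\equiv 1$ and the renormalisation terms $\Upsilon_i k$ are polynomials in $\vphi$ of degree strictly less than $p$, so the control problem (\ref{eq:control:problem}) reduces to
\begin{equ}[e:phi4control]
\partial_t \vphi = \Delta \vphi + P_k(\vphi) + \psi\;,
\end{equ}
where $\psi$ is an arbitrary smooth function on space-time and $P_k$ is a polynomial whose coefficients depend on the character $k \in h\circ f\circ \CH$. The crucial observation is that any smooth $v$ is itself a classical solution of (\ref{e:phi4control}) with the choice $\psi := \partial_t v - \Delta v - P_k(v)$, so the question reduces to producing enough smooth functions $v$ compatible with an admissible initial condition.

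For the first part, the topology $\CC^\alpha_\fs((0,T)\times \T^d)$ excludes $t=0$, so we have complete freedom at the initial time. Given any $v \in \CC^\alpha_\fs$ and $\delta>0$, mollify $v$ to obtain a smooth $\tilde v$ with $\|v-\tilde v\|_{\CC^\alpha((\delta,T)\times \T^d)}$ small. For $t\in[\delta,T]$ realise $\tilde v$ as a solution of (\ref{e:phi4control}) by the choice of $\psi$ above; for $t\in[0,\delta]$ prescribe any smooth path in $\CC^\infty(\T^d)$ from an admissible element $\vphi_0\in \Phi_0^k$ to $\tilde v(\delta,\cdot)$ and read off $\psi$ from the equation. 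Since $\delta>0$ is arbitrary, this shows that $v$ lies in the support of $u$ via Theorem~\ref{thm:main:sv}, giving full support.

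For the second part, we use that in the setting considered (where one can arrange $u_0 \in \Phi_0^k$ as discussed in Remark~\ref{rmk:initialcondition}) the initial condition is genuinely fixed at $u_0$. Given $\gamma\in \CC([0,T],\CC^\eta)$ with $\gamma(0,\cdot)=u_0$, decompose $\gamma = e^{t\Delta}u_0 + r$ with $r\in \CC([0,T],\CC^\eta)$ vanishing at $t=0$. Approximate $r$ in $\CC([0,T],\CC^\eta)$ by smooth $r_n$ with $r_n(0,\cdot)=0$, and set $\gamma_n := e^{t\Delta}u_0 + r_n$ together with $\psi_n := \partial_t r_n - \Delta r_n - P_k(\gamma_n)$. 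One then checks that $\gamma_n$ solves (\ref{e:phi4control}) with initial condition $u_0$, control $\psi_n$, and converges to $\gamma$ in $\CC([0,T],\CC^\eta)$. A final smoothing of $\psi_n$ (combined with continuity of the solution map) yields the required density.

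\textbf{Main obstacle.} The main technical difficulty lies in the second part: the interaction of the polynomial $P_k$ with the rough initial datum $u_0 \in \CC^\eta$ for negative $\eta$, where powers of $u_0$ are not classically defined. This is resolved by exploiting parabolic regularisation: $e^{t\Delta}u_0$ is smooth for every $t>0$, so that $P_k(e^{t\Delta}u_0 + r_n)$ makes sense as a smooth function on $(0,T]\times\T^d$, with the required singular behaviour at $t=0$ absorbed into the heat extension. Since the target topology only demands continuity (not differentiability) at $t=0$, this suffices to close the argument. A secondary point to verify is that the control $\psi_n$ can be taken genuinely smooth on $[0,T]$ (rather than only on $(0,T]$); this is achieved by mollifying in time away from $t=0$ and invoking continuity of the solution map of (\ref{e:phi4control}) in $\psi$.
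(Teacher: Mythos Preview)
Your argument for the first statement is essentially the paper's: the noise is additive, so the control $\psi$ can be chosen freely and every smooth trajectory is a solution of the control problem, giving full support in $\CC^\alpha_\fs((0,T)\times\T^d)$ via Theorem~\ref{thm:main:sv}.

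For the second statement, however, there is a genuine gap. Theorem~\ref{thm:main:sv} (and behind it Theorem~\ref{thm:main}) transfers the support characterisation from the space of models to the solution space $\CX$ \emph{only if} the solution map is continuous from $\CM_0$ into $\CX$. For $\CX = \CC([0,T],\CC^\eta(\T^d))$ this continuity \emph{fails} with the standard model topology: the map $Z \mapsto \CS^-(\xi)(0,\cdot)$, which enters the initial condition, requires evaluating a space-time distribution of regularity $-\tfrac12-\kappa$ at a fixed time, and this is not continuous in the usual semi-norms. Your density argument for the control problem is fine \emph{once} one knows the support is described by the control problem, but you have not established that description in this topology. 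The paper resolves this by strengthening the model topology (adding the semi-norm $\|K\star\PPi^Z\Xi - K\star\PPi^{\tilde Z}\Xi\|_{\CC([-T,T],\CC^{\mathrm{ireg}})}$, see Section~\ref{sec:Phi43}) so that the solution map becomes continuous, and then checking that the entire machinery behind Theorem~\ref{thm:main} still goes through; this last step is nontrivial and uses a Kolmogorov-type argument for the shift $\zeta_\delta$.

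A secondary issue: in your construction for the second part, the control $\psi_n = \partial_t r_n - \Delta r_n - P_k(e^{t\Delta}u_0 + r_n)$ is \emph{not} smooth at $t=0$ when $\eta<0$, since $P_k(e^{t\Delta}u_0)$ blows up like a negative power of $t$. Your proposed fix (``mollifying in time away from $t=0$'') does not remove this singularity. One would need instead to approximate $u_0$ itself by smooth data in $\CC^\eta$ and invoke continuous dependence on the initial condition; but again, that continuous dependence in the right topology is exactly what has to be established.
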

\begin{proof}
Global existence for these equation was shown in \cite{Tsatsoulis2018} in $d=2$ and \cite{Weber2016,MoinatWeber2018} in $d=3$. 
For $d=4-\eps$ it will be a consequence of a forthcoming paper \cite{CMW2019}. The first statement then follows 
directly from Theorem~\ref{thm:main:sv},
which shows that any trajectory can be realised since the equation is driven by additive noise.

The second statement does not follow immediately since the topology of our model space is too weak for the solution
map to be continuous as a map with values in $\CC([0,T],\CC^\eta(\T^d))$. We show in Section~\ref{sec:Phi43} below that 
one can endow it with a slightly stronger topology in such a way that the solution map becomes continuous and
our support theorem still holds.
\end{proof}

A particular application of our support theorem in dimension $d \le 3$  is to the uniqueness of the invariant measure and exponential convergence to this measure. 

\begin{corollary}\label{cor:Phi4}
Assume that $p$, $d\le 3$ and $a_{p-1}$ are as above. Then the $\Phi^p_d$ equation
admits a unique invariant measure $\mu$ on $\CC^\alpha(\T^d)$. 

Moreover, if $p \ge 4$, then we have uniform exponential convergence of the dynamical model 
to the invariant measure in the following sense. Let $u$ be the solution starting from $u_0$ as 
in Theorem~\ref{thm:Phi4}. Then
\begin{equ}[e:SG]
\| (u_t)_*\P - \mu \|_{\TV} \le 1 \wedge C\exp (- \lambda t) \;,
\end{equ}
for some $C,\lambda>0$, uniformly over $t \ge 0$ and $u_0 \in \CC^\alpha(\T^d)$. (Here, $f_*\P$ denotes the
pushforward of the measure $\P$ under the random variable $f$.)
\end{corollary}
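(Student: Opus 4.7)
The plan is to derive Corollary~\ref{cor:Phi4} from three ingredients: (i) the full support statement of Theorem~\ref{thm:Phi4}, (ii) a strong Feller property for the Markov semigroup generated by the $\Phi^p_d$ dynamics, and (iii) uniform a priori estimates on the solution coming from the dissipativity of the nonlinearity $a_{p-1}u^{p-1}$ with $a_{p-1}<0$.

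For the uniqueness claim, I would first recall that the Markov semigroup $P_t$ associated to the $\Phi^p_d$ equation is known to be strong Feller on $\CC^\alpha(\T^d)$; for $d=2$ this is classical \cite{Tsatsoulis2018}, for $d=3$ it follows from the work of \cite{Weber2016,HairerMattinglyPhi43} (I would cite whichever is currently available). Combining strong Feller with the full support statement from Theorem~\ref{thm:Phi4} yields irreducibility of $P_t$ in the sense that $P_t(u_0, U) > 0$ for every open $U$ and every $u_0 \in \CC^\alpha(\T^d)$. Doob's theorem then implies that $P_t$ admits at most one invariant measure, and existence follows from a standard Krylov--Bogoliubov argument using the moment bounds obtained from Lyapunov techniques applied to $u \mapsto \int u^2$ (augmented by the stochastic renormalisation terms, as carried out in the cited global existence papers).

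For the quantitative bound \eqref{e:SG}, the restriction $p \ge 4$ is crucial because in that regime the nonlinearity dominates sufficiently strongly to produce a ``coming down from infinity'' estimate: there exist constants $C,\gamma>0$ independent of the initial condition such that
\begin{equ}
\E\|u(t)\|_{\CC^\alpha} \le C(1 + t^{-\gamma}) \quad \text{for all } t \in (0,1],\ u_0 \in \CC^\alpha(\T^d).
\end{equ}
Such estimates are established in \cite{MoinatWeber2018} for $d=3$ and are much easier for $d=2$. In particular, after a fixed time $t_\ast$, the law of $u(t_\ast)$ concentrates on a bounded set $B \subset \CC^\alpha(\T^d)$ with probability arbitrarily close to $1$, uniformly in the initial condition. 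The plan is then to combine this with a minorisation estimate: using strong Feller together with the full support statement and a compactness argument on $B$, there exist $\eta > 0$, $t_1 > 0$ and a probability measure $\nu$ on $\CC^\alpha(\T^d)$ such that
\begin{equ}
P_{t_1}(u_0, \cdot) \ge \eta\, \nu(\cdot) \quad \text{for all } u_0 \in B.
\end{equ}
Iterating this and using coming-down-from-infinity to re-enter $B$ quickly from any starting point then yields \eqref{e:SG} in total variation by the Doeblin/Harris scheme.

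The main obstacle I expect is bridging between the abstract support theorem (which gives qualitative irreducibility) and the \emph{uniform} minorisation on bounded sets required by Harris. Full support alone only gives $P_{t_1}(u_0,U) > 0$ pointwise; upgrading this to a joint lower bound uniform over a bounded set $B$ requires either a quantitative version of the control-theoretic construction underlying Theorem~\ref{thm:main:sv} (tracking how the ``cost'' of steering from $u_0$ to a target neighbourhood depends on $u_0$) or a soft compactness argument exploiting lower semicontinuity of $u_0 \mapsto P_{t_1}(u_0, U)$ on the strong Feller regime. I would pursue the latter: covering $B$ by finitely many neighbourhoods on which strong Feller gives lower semicontinuity of the density, and choosing $\nu$ as a suitable mixture. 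The coming-down-from-infinity then guarantees that, from any starting point, the process reaches $B$ within a geometrically distributed number of steps, yielding the exponential rate $\lambda$ and constant $C$ in \eqref{e:SG}.
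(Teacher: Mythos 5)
Your proposal is correct and follows essentially the same route as the paper: coming down from infinity to confine the dynamics, the strong Feller property to get continuity of transition probabilities in total variation, Theorem~\ref{thm:Phi4} for positivity, and a Doeblin/Harris minorisation to conclude; in particular, the paper resolves your ``main obstacle'' exactly via your second option, namely continuity of $v \mapsto \P[u_1^{v} \in B_\eps]$ combined with compactness so that the infimum over the confining set is attained and hence positive. The only point to tighten is that your moment bound $\E\|u(t)\|_{\CC^\alpha} \lesssim 1+t^{-\gamma}$ only yields a \emph{bounded} set, whereas the covering/infimum argument needs a \emph{compact} set $K \subset \CC^\alpha(\T^d)$, which the cited coming-down-from-infinity estimates provide by bounding a norm that embeds compactly into $\CC^\alpha$.
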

\begin{proof}
It follows from Doeblin's theorem (see for instance \cite[Thm.~3.6]{Hairer2016notes} with $V=0$)
that it suffices to show that for some $t>0$ one has\footnote{We normalise the total variation norm so that mutually singular probability measures have distance $1$.}
\begin{align}\label{eq:harris:app}
\| (u_t^{v})_*\P - (u_t^{w})_*\P \|_{\TV} \le 1-\delta 
\end{align}
for some $\delta>0$ and all $v,w \in \CC^\alpha(\T^d)$. Here $u^{v}$ denotes the solution to (\ref{eq:phip2}) with initial condition $v$.

As a consequence of the ``coming down from infinity'' property, see \cite[Eq.~3.24]{Tsatsoulis2018} for $d=2$, \cite[Eq.~1.27]{Weber2016} for $d=3$ (see also \cite{MoinatWeber2018}), there exists a compact set $K \ssq \CC^\alpha(\T^d)$ such that
\[
\inf_{v \in \CC^\alpha(\T^d)} \P[u_1^{v} \in K] \ge \half\;.
\]
By the strong Feller property for $\Phi^p_d$ shown in \cite{HairerMattingly2016} (see also \cite{Tsatsoulis2018} for $d=2$),  the transition probabilities are continuous in the total variation norm, so that for some $\eps>0$ one has
\[
\| (u_1^{v})_*\P - (u_1^{w})_*\P \|_{\TV} \le \half 
\]
for any $v$, $w$ in the centred $\eps$-ball $B_\eps$ in $\CC^\alpha(\T^d)$. Again by continuity of the transition probabilities and compactness of $K$ the infimum
\[
\rho:=\inf_{v \in K} \P[u^{v}_1 \in B_\eps]
\]
is attained for some $\bar v \in K$ and, by Theorem~\ref{thm:Phi4}, one has $\rho > 0$. It follows that (\ref{eq:harris:app}) holds for $t=3$ with $\delta = \frac{1}{4} \rho$.
\end{proof}

\begin{remark}
We have to restrict to $d \le 3$ in Corollary~\ref{cor:Phi4} since it is not known that the solution to $\Phi^4_{4-\eps}$ is a Markov process (although it is expected). 
Actually, at the current state it is even unclear if one can start the equation at a fixed deterministic initial condition (compare Remark~\ref{rmk:initialcondition} for a discussion of this issue) or evaluate the solution at a fixed positive time.
\end{remark}

\subsubsection{The generalised KPZ equation}
\label{sec:gKPZ}

A natural analogue to the class of SDEs
\eqref{e:SDE} is given by the class of stochastic PDEs recently studied in
\cite{String,BrunedHairer2019} that can formally be written as
\begin{align}\label{e:gKPZ}
\d_t u = \d_x^2 u + \Gamma(u)(\d_xu, \d_x u) + h(u) + \sum_{i=1}^m \sigma_i(u)\,\xi_i\;,
\end{align}
where $u \colon \R_+ \times S^1 \to \R^n$, the $\xi_i$ denote independent space-time white noises, $h:\R^n \to \R^n$ and $\sigma_i :\R^n \to \R^n$ are smooth functions and
$\Gamma$ is a smooth map from $\R^n$ into the space of symmetric bilinear maps 
$\R^n \times \R^n \to \R^n$. 
This should be viewed as a connection on $\R^n$, which
is why we use the customary symbol $\Gamma$ for it, and it gives rise to a notion of covariant differentiation:
\begin{equ}[e:connectionGamma]
(\nabla_X Y)^i(u) = X^j(u)\d_j Y^i(u) + \Gamma^{i}_{j,k}(u)X^j(u)Y^k(u)\;,
\end{equ}
for any two smooth vector fields $X,Y \colon \R^n \to \R^n$.

One problem when trying to even guess 
the form of a support theorem for an equation like \eqref{e:gKPZ} is that there is 
typically no canonical notion of solution associated to it. Instead, one has a whole family
of solution theories that can be parametrised by a \textit{renormalisation group} $\CG_-$.
This already happens for SDEs where one has a natural one-parameter family of solution theories
which include solutions in the sense of It\^o, Stratonovich, backwards It\^o, etc, so that 
$\CG_- = (\R,+)$ in this case.  
While $\CG_-$ is always a finite-dimensional Lie group, it can be quite large in general: even
after taking the $x \leftrightarrow -x$ symmetry and the fact that the noises $\xi_i$ are
Gaussian and i.i.d.\ into account, one has $\CG_- = (\R^{54},+)$ in the case of \eqref{e:gKPZ}
(at least for $n$ large enough, see \cite[Prop.~6.8]{BrunedHairer2019}). Furthermore, there is typically
no na\"ive analogue of the Wong-Zakai theorem: if one simply replaces $\xi$ by a mollified
version $\xi^{(\eps)}$, the resulting sequence of solutions $u^{(\eps)}$ typically fails
to converge to any limit whatsoever. Instead, one needs to modify the right-hand side of the equation
in an $\eps$-dependent way in order to obtain a well-defined limit.

In some cases, imposing additional desirable properties on the solution theory
results in a reduction of the number of degrees of freedom, but still leads to mollifier-dependent 
counterterms.
For example, it is shown in \cite{BrunedHairer2019} that \eqref{e:gKPZ} admits a natural \textit{one}-parameter 
family of 
solution theories, all of them which satisfy all of the following properties simultaneously:
\begin{itemize}
\item The usual chain rule holds in the sense that, if $u$ solves \eqref{e:gKPZ}
and $v = \phi(u)$ for some diffeomorphism $\phi\colon \R^n \to \R^n$, then $v$ solves
the equation obtained from \eqref{e:gKPZ} by formally performing the corresponding change 
of variables as if the $\xi_i$ were smooth. (This is analogous to the property of Stratonovich 
solutions to SDEs.)
\item If $\{\tilde \sigma_j\}_{j=1}^{\tilde m}$ is a collection of smooth vector fields on $\R^n$ such that
\begin{equ}
\sum_{i=1}^m \sigma_i(u) \otimes \sigma_i(u)
= \sum_{j=1}^{\tilde m} \tilde \sigma_j(u) \otimes \tilde \sigma_j(u)\;,
\end{equ}
then the solution to \eqref{e:gKPZ} is identical in law to the solution with the $\sigma_i$ 
replaced by the $\tilde \sigma_i$. (This is analogous to the property of It\^o 
solutions to SDEs.)
\item Given \eqref{e:gKPZ}, there exists a collection of $12$ vector fields\footnote{The number $12$ is the
dimension of the space $\CV^{\mathrm{nice}}$ in \cite[Sec.~1.2, Rem.~3.13]{BrunedHairer2019}} $W_i$ on $\R^n$
such that, for any mollifier $\rho$, there exist constants $c_i^{(\eps)}$ such that,
setting $\xi_i^{(\eps)} = \rho_\eps \star \xi_i$, solutions to \eqref{e:gKPZ} are 
given by $u = \lim_{\eps \to 0}u_\eps$ with
\begin{equ}[e:gKPZ:reg]
\d_t u_\eps = \d_x^2 u_\eps + \Gamma(u_\eps)(\d_xu_\eps, \d_x u_\eps) + h(u_\eps) + \sum_{i=1}^m \sigma_i(u_\eps)\,\xi_i^{(\eps)} - \sum_{j=1}^{12}c_j^{(\eps)} W_j(u_\eps) \;.
\end{equ}
Furthermore, the $W_j$ are such that, for every $u_\star \in \R^n$ such that 
$\Gamma(u_\star) = 0$ and $D \sigma_i(u_\star) = 0$ (for $i > 0$), one has 
$W_j(u_\star) = 0$ for every $j$.
\end{itemize}


Given \eqref{e:gKPZ}, we then define a number of auxiliary vector fields. First,
for $\mu,\nu=1,\ldots,m$, we set 
$$
X_{\mu\nu}(u) = (\nabla_{\sigma_\mu}\sigma_\nu)(u)\;,
$$
and we also write $V_\star$ for the vector field $H_{\Gamma,\sigma}$ defined in \cite[Eq.~1.9]{BrunedHairer2019}.
We then use the $X_{\mu\nu}$ to define two additional vector fields as follows:
\begin{equ}
V = X_{\mu\mu}\;,\quad
\hat V = \nabla_{X_{\mu\nu}} X_{\mu\nu}\;,\quad
\end{equ}
with implied summation over repeated indices.

As already mentioned above, this class of equations admits a one-parameter canonical family
of solution theories that combine the formal properties of both ``Stratonovich'' and ``Itô'' solutions.
We fix once and for all one of these solution theories and call it henceforth ``the'' solution to
\eqref{e:gKPZ}. Again, our statement is independent of the precise choice of solution theory
as long as it belongs to the canonical family. (Actually, this can be further weakened, see Section~\ref{sec:gKPZproof}.) 
Under the assumption that $\Gamma$, 
$h$ and $\sigma$ are smooth functions, we have the following result, the proof of which
is postponed to Section~\ref{sec:gKPZproof}.

\begin{theorem}\label{theo:gKPZ}
Let $u$ be the solution to (\ref{e:gKPZ}) with deterministic 
initial condition $u(0) = u_0 \in \CC^\alpha(\T)$ 
for some $\alpha \in (0, \half)$. Then, there exists a constant $\hat c$ 
such that the support of the law of $u$ in $\CC^\alpha(\R_+\times \T)$ 
is given by the closure of all solutions to 
\begin{equs}\label{e:controlKPZ}
\partial_t u^i
&=
\partial_x^2 u^i
+
\Gamma^{i}_{j,k}(u) \partial_x u^j \partial_x u^k
+
h^i ( u ) \\
&\quad +
\hat c \hat V^i(u) + K_\star V_\star^i(u) + K V^i(u)
+
\sigma_\mu^i ( u ) \eta^\mu
\end{equs}
for arbitrary smooth controls $\eta^\mu$ and arbitrary constants $K, K_\star$.
\end{theorem}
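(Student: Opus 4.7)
The plan is to apply Theorem~\ref{thm:main:sv} to \eqref{e:gKPZ} and then match the resulting family of allowed counterterms against the three-parameter family appearing in \eqref{e:controlKPZ}. By Theorem~\ref{thm:main:sv}, $\supp u$ is the closure of all solutions to
\[
\partial_t \varphi^i = \partial_x^2 \varphi^i + \Gamma^i_{j,k}(\varphi)\partial_x\varphi^j\partial_x\varphi^k + h^i(\varphi) + \sigma^i_\mu(\varphi)\eta^\mu + (\Upsilon_i k)(\varphi,\partial_x\varphi),
\]
as $k$ ranges over the coset $h\circ f\circ\CH\subseteq\CGm$, where $h\in\CGm$ encodes the chosen canonical solution theory, $f$ is the tweak character from Theorem~\ref{thm:main:sv}, and $\CH\subseteq\CGm$ is the Lie subgroup from Definition~\ref{def:CH}. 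It therefore suffices to show that, as $k$ ranges over this coset, the vector field $(\Upsilon_i k)_i$ ranges exactly over $\hat c\hat V+KV+K_\star V_\star$ with $K,K_\star\in\R$ arbitrary and a single fixed $\hat c$.

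For the inclusion ``$\subseteq$'' I invoke the classification of counterterms for equations of gKPZ type from \cite{BrunedHairer2019}: for any character $g\in\CGm$, the counterterm $\Upsilon_i g$ lies in the span of $13$ explicit vector fields built out of $\Gamma,\sigma_\mu,h$ and their covariant derivatives, and the further requirement that our solution theory belongs to the canonical one-parameter family collapses this span, modulo contributions absorbed into $h(u)$, to the three directions $V,\hat V,V_\star$; the remaining directions are forbidden by the chain-rule and It\^o-type symmetries built into the canonical family. Since $h\circ f\circ\CH$ consists of characters compatible with our chosen canonical theory, every associated counterterm lies in the three-dimensional affine subspace $\hat c'\hat V+KV+K_\star V_\star$ for some $\hat c',K,K_\star$ depending on $k$.

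For the reverse inclusion I need to show that $K$ and $K_\star$ can be tuned arbitrarily and independently while $\hat c$ is pinned to a single value determined by $(h,f)$. The freedom comes from $\CH$: by Definition~\ref{def:CH} together with Theorem~\ref{thm:translationoperator}, $\CH$ is cut out by the linear constraints enforcing consistency with the shift operator, and I would exhibit, for each of $V$ and $V_\star$, an explicit tree (or small combination) $\tau\in\TT_-$ with $\Upsilon^F[\tau]$ equal to a non-zero multiple of that vector field while being transparent to all the other twelve directions of the \cite{BrunedHairer2019} classification, and verify directly that the corresponding characters $k$ lie in $\CH$. The direction $\hat V$ on the other hand is pinned because the trees producing it carry a logarithmic subdivergence whose BPHZ subtraction is fixed by $h$ and modified only by $f$, yielding the single constant $\hat c$.

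The hard part is the explicit tree-by-tree bookkeeping: one has to identify which trees of $\TT_-$ contribute to each of $V,\hat V,V_\star$, separate the free directions from the pinned one by matching the $13$-dimensional classification of \cite{BrunedHairer2019} against the linear constraints defining $\CH$, and confirm that no additional constraints collapse the two free parameters. Much of the tree classification is already available in \cite{BrunedHairer2019}; the novelty is the compatibility check with the shift-operator constraints introduced in the present paper, and the fact that exactly one linear combination of the counterterms is ``logarithmic'' and hence pinned, which is consistent with the canonical family being one-parameter rather than three-parameter.
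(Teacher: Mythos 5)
There is a genuine gap in the first half of your argument. You claim that for $k$ in the coset $h\circ f\circ\CH$ the counterterm $\Upsilon_i k$ lies (modulo terms absorbed into $h$) in the three-dimensional span of $V,\hat V,V_\star$, because the canonical family "collapses" the classification of \cite{BrunedHairer2019}. This is false: what one gets from Theorem~\ref{thm:mainThm} and Remark~\ref{rmk:a-priori} is a counterterm $\bar\tau + K_\star\tau_\star + K\,\Nabla_{\<blue>}\<blue>$ with $\bar\tau$ an \emph{unknown} element of the full $15$-dimensional space $\CS_\geo$ (the one-parameter nature of the canonical family constrains which solution theory you fix, not the ambient space in which $\bar\tau$ lives, and the coset $h\circ f\circ\CH$ is far larger than three-dimensional). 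The reduction from $15$ to $3$ directions is not a consequence of any classification; it is achieved analytically, by showing that the twelve directions of the form $\nabla_A B$, $\nabla_B A$ with $A,B$ iterated covariant derivatives of the $\sigma_\mu$ can be \emph{generated by the smooth controls themselves}. This requires a Wong--Zakai-type lemma (Lemma~\ref{lem:oscillating}) in which one perturbs the controls by carefully chosen, asymmetrically mollified, rescaled white noises so that the products $A(u)\eta_\eps\cdot B(u)\hat\eta_\eps$ produce $\nabla_AB\,\zeta+\nabla_BA\,\hat\zeta$ in the limit with arbitrary prescribed modulations $\zeta,\hat\zeta$; consequently $\CU(H,\CA,\bar\CA)=\CU(H,\CA\cup\{\nabla_AB,\nabla_BA\},\bar\CA)$ whenever $\nabla_AA\in\bar\CA$, and the unknown $12$ components of $\bar\tau$ get absorbed into the closure over controls via \eqref{e:trivialIdentity}. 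Nothing in your proposal supplies this mechanism, and without it the argument does not close.

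Two further points. First, your account of which directions are free and which is pinned is backwards: the direction $\tau_\star$ carries the $\log\eps$ divergence in \eqref{e:gKPZ:reg}-type renormalisation and is \emph{free} precisely because of that divergence (together with the oscillation trick for decreasing constants), whereas $\hat V$ is pinned because it is reached by none of the available mechanisms — it is neither divergent, nor generated by controls, nor eliminable through $\CH$. Second, the role of $\CH$ in the paper's proof is narrower and more delicate than you suggest: one needs Lemma~\ref{lem:CH:Sgeo} ($\CH\subseteq\CS_\geo$, proved by a diffeomorphism-invariance argument on limits of shifted models) and then a single explicit pairing computation — using that $2\<I1Xitwomixed>-\<Xi2mixed>-\<Xi2mixed2>\in\CJ$ while $\sigma_1=\<2I1Xi4c2>\notin\CJ$ — to produce one element $\tilde\sigma\in\CH$ killing the one residual direction $\tilde\tau=\Nabla_{\Nabla_{\<blue>}\<blue>}\Nabla_{\<dblue>}\<dblue>$. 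Your proposed "tree transparent to the other twelve directions" for each of $V$ and $V_\star$ is not how either of those constants becomes free, and it is not clear such trees exist inside $\CH$.
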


\begin{remark}
The appearance of the additional constants $K$ and $\hat c$ in \eqref{e:controlKPZ}
may seem strange at first, although we have of course already seen in the discussion
preceding \eqref{e:supportKPZ2} that one cannot expect to obtain the support of $u$
by simply replacing noises by smooth controls in \eqref{e:controlKPZ}.
\end{remark}

\begin{remark}
At this stage, we do not know whether one actually has $\hat c = 0$ (which would be natural)
or whether the description given above even depends on the value of $\hat c$. We do however know that
both terms $V_\star$ and $V$ are required for the result to hold, as follows from the example
\begin{equ}
\d_t u_1 = \d_x^2 u_1 + \xi\;,\quad
\d_t u_2 = \d_x^2 u_2 + (\d_x u_1)^2\;,\quad
\d_t u_3 = \d_x^2 u_3 + (\d_x u_2)^2\;,
\end{equ}
with $u(0) = 0$ say.
In this case, $V \propto (0,1,0)$ and $V_\star \propto (0,0,1)$, so that 
Theorem~\ref{theo:gKPZ} (when combined with Lemma~\ref{lem:oscillating} below) shows that 
the law of $u$ has full support, while we would have 
$u_2(t) \ge C_2 t$, $u_3(t) \ge C_3 t$ if we placed some constraints on the possible
values of $K_\star$ and $K$.
\end{remark}


\subsection{Outline}

All equations in our setting can be lifted to abstract fixed point problems \cite[Thm.~7.8]{Hairer2014} in a problem dependent regularity structure $\CT$. Exploiting the continuity of the solution map (mapping the space of admissible models $\CM_0$, see Section~\ref{sec:trees}, continuously into some solution space $\CX$), we can redirect our focus towards showing Theorem~\ref{thm:main}, which gives a characterisation of the topological support of random models in complete analogy with Theorem~\ref{thm:mainThm}.   
We are interested in random models $\hat Z$ obtained as the limit of a sequence of smooth random models $\hat Z = \lim_{\eps \to 0} \hat Z^\eps$.  The upper bound for the support of $\hat Z$ then follows from elementary probability theory arguments. 
The basic idea to show the lower bound is to fix a deterministic model $Z$ (for which we want to show $Z \in \supp \hat Z$) and to construct a sequence of ``shifts'' $\xi+\zeta_\delta$ of the underlying Gaussian noise $\xi$ by a smooth random function $\zeta_\delta = \zeta_\delta(\xi)$ such that the ``shifted model'' $\hat Z_\delta$, formally given by $\hat Z_\delta(\xi) = \hat Z(\xi + \zeta_\delta)$, converges to $Z$ almost surely as $\delta \to 0$. Since $\supp \hat Z_\delta \ssq \supp \hat Z$ for any $\delta>0$ 
(this is not completely obvious since $\zeta_\delta$ is not adapted in general so Girsanov's theorem need not apply, but see Lemma~\ref{lem:supporttranslation} for a proof) and $\supp \hat Z$ is closed, this shows that $Z \in \supp \hat Z$. While this is the broad strategy already used in \cite{BMS,Friz2016,Tsatsoulis2018}, the identification
of a suitable shift $\zeta_\delta$ is significantly more involved in this case.

We want to consider random shifts for reasons outlined in detail below (most crucially, our shifted noises are still of the type considered \cite{ChandraHairer2016}). It is then not even clear a priori what we mean by ``shifted model'', since the law of $\xi + \zeta_\delta(\xi)$ is not necessarily absolutely continuous with respect to the law of $\xi$, so that simply evaluating the random limit model $\hat Z$ at $\xi + \zeta_\delta(\xi)$ is in general not well-defined. 
Instead we rely on a purely analytic shift operator $T_f$ (Theorem~\ref{thm:translationoperator}, see also \cite[Thm.~3.1]{HairerMattingly2016}), acting continuously on the space of admissible models and satisfying $\hat Z(\xi + f) = T_f \hat Z(\xi)$ for \emph{deterministic}, smooth, compactly supported functions $f$
(in which case $\hat Z(\xi + f)$ is well-defined by the Cameron-Martin theorem), and we call $\hat Z_\delta(\xi):=T_{\zeta_\delta(\xi)} \hat Z(\xi)$ the shifted model.
From the deterministic continuity of the shift operator we infer in particular that any shift maps the support of $\hat Z$ into itself (this also works for random shifts, see Lemma~\ref{lem:supporttranslation}), so that we are left to find the set of models $Z$ for which a shift as above can be constructed.

For the type of statement we are looking for, it suffices to consider models $Z$ of the form $Z = \CR^h \Zcan(f)$ for some tuple of smooth functions $f = (f_i)_{i \le m}$, where  $f_i \in \CC_c^\infty(\R \times \T^e)$ for any $i \le m$, and some character $h$ in the renormalisation group $\CGm$. (See Section~\ref{sec:reg:structures} for the notation used here; $f \mapsto \Zcan(f)$ denotes the canonical lift, $\CR:\CGm \times \CMz \to \CMz$ denotes the action of the renormalisation group onto the set of admissible models.) In fact, since the shift operator commutes with the action of the renormalisation group (Theorem~\ref{thm:translationoperator}), it suffices to consider $f=0$ in the sense that we aim to find a set $\suppH \ssq \CGm$ which is as large as possible such that for any $h \in \suppH$ one can find
a sequence of smooth random shifts $\zeta_\delta$ such that
\begin{equ}[e:wantedConvergence]
\lim_{\delta \to 0} T_{\zeta_\delta} \hat Z(\xi) = \CR^h \Zcan(0)\;,
\end{equ}
where the limit is taken in the sense of convergence in probability in the space of models. Actually, since our proof draws on the results of \cite{ChandraHairer2016}, we will automatically have convergence in $L^p$ for any $p\ge1$.

Since the limit we aim for as $\delta\to 0$ is deterministic, we are left to choose $\zeta_\delta$ 
in such a way that the variance of the models goes to zero, while the expected value has the correct 
behaviour in the limit. The first point is ensured if $\xi+\zeta_\delta \to 0$ in a strong enough sense,
which will be formalised in Definition~\ref{def:smooth:noise}. Note that the space of noises introduced
there is a subset of the one used in \cite{ChandraHairer2016}, and our distance (\ref{eq:noise:norm}) is 
stronger, see Lemma~\ref{lem:bound:eta}. Our noises always live in a fixed inhomogeneous Wiener chaos with 
respect to some fixed Gaussian noise, which in particular allows us to work with a \emph{linear} space 
of noises and our distance is an actual norm on this space. 
The main issue is then to obtain (\ref{eq:shiftedtreeexpectationA}), namely to ``control'' the expected value $\hat\Upsilon^{\delta}\tau := \E T_{\zeta_\delta} \hat {\PPi}^{\xi}\tau(0)$ of the finite number of trees $\TT_-$ of negative homogeneity, so that in the limit $\delta \to 0$ they equal $h(\tau)$. Here $\hat {\PPi}^{\xi}$ denotes the renormalised canonical lift of $\xi$ and $T_{\zeta_\delta}$ is as above the shift operator acting on the space of admissible models. 

These two properties are obviously necessary for the convergence 
\eqref{e:wantedConvergence} in $L^2$ in the space of models. To see this, note that if we write $\PPi^g$ 
for the model $\CR^g \Zcan(0)$, then one has $\PPi^g \tau (0) = g(\tau)$ for any $\tau \in \TT_-$. 
With a bit more effort (Proposition~\ref{prop:sequencetoconstant}) it is possible to see that they are also sufficient.
At this stage there are two main problems left to be solved, which we address respectively in Sections~\ref{sec:constraints} and~\ref{sec:shift}.
\begin{enumerate}
\item What is the set $\suppH$ of characters $h$ such that we can find a shift $\zeta_\delta$ as above? In particular, we have to show that this set is large enough to ``almost'' contain the BPHZ character $g^\eps$ (up to an $o(1)$ tweaking, see the remark below Theorem~\ref{thm:mainThm} or the second statement of Theorem~\ref{thm:main}).
\item Given $h \in \suppH$, how does one construct a shift $\zeta_\delta$ such that 
$\xi+\zeta_\delta \to 0$ in some suitable space of admissible noises $\SMz$ 
(see Definition~\ref{def:smooth:noise} below) and
such that $\lim_{\delta \to 0} \E T_{\zeta_\delta} \hat {\PPi}^{\xi}\tau(0) = 
h(\tau)$ for every $\tau \in \TT_-$? 
\end{enumerate}
Let us first discuss the second question, since our solution to this problem motivates the choice of $\suppH$. It is natural to make the ansatz $\zeta_\delta = -\xi^{\delta} + k^\delta$, see Section~\ref{sec:ext:reg:str}, where $\xi^{\delta}$ is a smooth approximation of $\xi$ at scale $\delta$ and $k^\delta$ is a random, centred, stationary, and smooth function living only on high frequencies, or equivalently on small scales (think of scales much smaller than $\delta$). The last property will ensure weak convergence of $k^\delta$ to $0$ as $\delta \to 0$. 
If we simply chose $k^\delta=0$, then the quantity $\hat\Upsilon^\delta\tau$ of some fixed tree $\tau \in \TT_-$ would in general blow up, as shown in the following example.

\begin{example}
Consider the ``cherry'' $\tau=\cherry$ appearing in the regularity structure associated to the $\Phi^4_3$ 
equation. Setting $\zeta_\delta = -\xi^{\delta}$ (so $k^\delta = 0$)
and using the fact that by definition of the BPHZ character one has 
$\E \hat {\PPi} ^\xi \cherry (0)  = 0$, one has
\begin{align}\label{eq:ex:cerry}
\hat\Upsilon^\delta \cherry = -2\, \introCherryA + \introCherryB \simeq -\delta^{-1}.
\end{align}
Here we use Feynman diagrams on the right-hand side to encode real constants in the
same way as for example in \cite{Hairer2017} or \cite[Sec.~10.5]{Hairer2014}. 
Straight lines represent the heat kernel, 
dotted lines represent the $\delta_0$-distribution, and wavy lines represent an 
approximation to $\delta_0$ at scale $\delta>0$.

To see how a ``high frequency perturbation'' can solve this issue, consider adding a term of the form $k^\delta = a^{\delta} \xi^{\lambda}$ with $\lambda=\lambda_\delta \ll \delta$ and $a^\delta \in \R$. Similar to (\ref{eq:ex:cerry}) one obtains
\begin{align*}
\hat\Upsilon^\delta \cherry 
&= -2\, \introCherryA + \introCherryB + 
	2 a^\delta \introCherryC -
	2 a^\delta \introCherryD +
	(a^\delta)^2 \introCherryE
\\
&\simeq
	-2\delta^{-1} + \delta^{-1} + 2 a^\delta \lambda^{-1} - 2 a^\delta \delta^{-1} + (a^\delta)^2 \lambda^{-1}.
\end{align*}
Fix now a number $h(\cherry) \in \R$. 
Then provided $\lambda \ll \delta$ one can find $a^\delta$ such that $\hat\Upsilon^\delta \cherry = h(\cherry)$. 
To see this, observe that in the regime
$\lambda \ll \delta$ and $a^\delta \ll 1$ one has 
$\delta^{-1} \ll \lambda^{-1}$ and $(a^\delta)^2 \ll a^\delta$, so that the third term above dominates all other terms, and one can solve the fixed point problem
\[
a^\delta 
=
\half
\big ( \introCherryC \big)^{-1}
\big(
	h(\cherry)
	+ 2\introCherryA - \introCherryB
	+ 2 a^\delta \introCherryD 
	-(a^\delta)^2 \introCherryE
\big).
\]
\end{example}
\begin{remark}
In the above example the term that ended up dominating the quantity $\hat\Upsilon^\delta \cherry$ was the tree in which exactly one white noise was replaced by the highly oscillating perturbation $k^\delta$, while all other noises remained white. We will tailor our shift so that the trees with this property will always represent the dominating part, see Sections~\ref{sec:shift:WC} and~\ref{sec:fpa}, in particular Lemma~\ref{lem:dominatingpart} and Lemma~\ref{lem:Fthenondominatingpart}. 
\end{remark}

This strategy is complicated by two hurdles. Firstly, one has to control various trees simultaneously, and it is a priori not clear that a perturbation designed to control one tree does not destroy the desired expected value of another. Indeed, it is not hard to see that with our strategy we are in general not able to control all trees $\tau \in \TT_-$ at the same time to arbitrary values $h(\tau)$, but we have to respect certain linear constraints between them. See Examples~\ref{ex:const:first}--\ref{ex:const:last} for examples of such linear constrains in the context of various interesting SPDEs. (It is a crucial insight that these constraints are ``almost'' satisfied by the BPHZ character, see the outline below and Assumption~\ref{ass:CHBPHZcharacters}.)

The second problem comes from the fact that we also have to bound the expected values of trees with more than two leaves. If one tries to use high frequency perturbations which are \emph{Gaussian}, then in general trees with one white noise replaced by such a perturbation would not dominate the expression $\hat\Upsilon^\delta\tau$. There are even trees for which these expressions vanish identically for any Gaussian shift $\srn_\delta$. An example is the tree $\quadcherry$ from the $\Phi^5_2$ equation, for which we obtain (in case of a \emph{Gaussian} shift $\srn_\delta$)
\begin{align}\label{eq:Phi43vanish}
\hat\Upsilon^\delta \,\quadcherry
=
\enUpsilon^\delta \left( 4\,\quadcherryA + \quadcherryB \right).
\end{align}
Here, red nodes are new noise types and should be thought of as  placeholders for the shift $\srn_\delta$.
 Formally, the trees on the right-hand side of \eqref{eq:Phi43vanish}, which we call ``shifted trees'',
 are elements of an enlarged regularity structure $\sCT$, see Section~\ref{sec:ext:reg:str}. 
The renormalisation group $\CGm$ acts naturally on 
$\sCT$ by only considering contractions of original trees. 
In this way one can build for any $\eps,\delta>0$ a ``renormalised'' model $\hat{\PPi}^{\xi_\eps,\zeta_\delta}_{\mathrm{en}}$, which converges in the limit $\eps \to 0$ to a model $\hat{\PPi}^{\xi,\zeta_\delta}_{\mathrm{en}}$, and we introduce the notation $\enUpsilon^\delta \tau := \E\hat{\PPi}^{\xi, \zeta_\delta}_{\mathrm{en}} \tau (0)$. (Note that $\hat{\PPi}^{\xi, \zeta_\delta}_{\mathrm{en}}$ is very different from the BPHZ renormalisation $\hat{\PPi}^{\xi_\eps,\zeta_\delta}$ on the large regularity structure, in which case these quantities would vanish by definition of the BPHZ character.)
We will define just after \eqref{e:defShiftOperator} below a shift operator $\SS: \CT \to \sCT$, formally given by replacing blue nodes with red nodes in all possible ways, and we will show in Lemma~\ref{lem:shiftoperator} that $\hat\Upsilon^\delta = \enUpsilon^\delta \SS$.

In the above example $\enUpsilon^\delta$ vanishes on any ``shifted'' tree which does not appear on the right-hand side of (\ref{eq:Phi43vanish}). To clarify why, let us write $\eUpsilon^{\eps,\delta} \tau := \E \PPi^{\xi_\eps,\srn_\delta} \tau (0)$, where $\PPi^{\xi_\eps,\srn_\delta}$ denotes the canonical lift of 
$(\xi_\eps,\srn_\delta)$ (think of $\eps \ll \delta$) to a model in the enlarged regularity structure. Using 
Eq.~\ref{lem:shift:operator:co:product} below one shows that 
\begin{align}\label{eq:ex:quadcherry}
\enUpsilon^\delta \quadcherryC = \lim_{\eps \to 0} \bigl(\eUpsilon^{\eps,\delta} \quadcherryC - 3 \eUpsilon^{\eps,\delta} \quadcherryD \,\eUpsilon^{\eps,\delta} \quadcherryE\bigr)
=
0.
\end{align}
The second identity in (\ref{eq:ex:quadcherry}) only holds if $\srn_\delta$ is Gaussian in general. This can be seen by using Wick's rule of calculating the expected values of all trees involved, which shows that it
identically vanishes for any fixed $\eps > 0$.
Note also that the renormalisation constant of this tree vanishes identically, i.e.\ one has $g^\eta(\quadcherry)=0$ for any smooth Gaussian noises $\eta$ and the BPHZ character $g^\eta$, but the expectation after shifting the noise \emph{does not} vanish and with the choice $\zeta_\delta=-\xi^\delta$ would blow up as $\delta\to 0$.

One could now try to use shifted trees with more than one shifted noise to dominate the expression, which however leads to two issues which seem difficult to resolve. First, in general it would now be \emph{subtrees} of $\tau$ that dominate the behaviour of the shifted tree (in the example above, it would be $\cherry$), and one may see constraints between these trees. Contrary to the constraints we end up with, such constraints (between trees of different homogeneity with different number of leaves) are not seen at the level of the BPHZ characters. Second, while the equation we needed to solve above for $\cherry$ was a perturbation of a \emph{linear} equation, we would now have to solve a polynomial equation, which introduces non-linear constraints (for example $(a^\delta)^2$ is always positive) and it is not clear if these polynomial equation can be solved (to worsen the matter, recall that we need to control various trees simultaneously, so that we end up with a \emph{system} of polynomial equations).

We opt for a different way. We introduce a shift $k^\delta$ such that trees with one noise replaced by a shifted noise gives a non-vanishing contribution. We ensure this by choosing $k^\delta$ such that the cumulant of $(k^\delta,\xi,\ldots,\xi)$, with $m(\tau):=\#L(\tau)-1$ instances of white noise $\xi$, does not vanish.
(Here $L(\tau)$ denotes the number of ``leaves'' of $\tau$.) The easiest way to guarantee this is to choose  $k^\delta$ in the $m(\tau)$-th homogeneous Wiener chaos with respect to $\xi$.

\begin{example}\label{ex:shift:dominating:intro}
Consider the tree $\tau=\treeExampleKPZaa$ from the generalised KPZ equation, where we draw $\leafs$ and $\leafsa$ to distinguish two different (hence independent) noise types. In this case we would choose our shift
\[
k^\delta := a^\delta \sintf_{[\leafslow,\,\leafsalow,\,\leafsalow]}(\triplekernel)\,,
\]
where $\triplekernel \in \bar\CC_c^\infty(\bar \domain \times \bar\domain^3)$ is a suggestive way to write a kernel of the form \[\triplekernel(x;x_1, x_2, x_3) = {\color{darkgreen}K}(x - x_1){\color{blue}K}(x-x_2) {\color{red}K}(x-x_3)\] for some kernels ${\color{darkgreen}K},{\color{blue}K},{\color{red}K} \in \CC_c^\infty(\bar\domain)$, and $\sintf_{[\leafslow,\,\leafsalow,\,\leafsalow]}$ denotes a third order stochastic integral with respect to the joint law of $(\xi_{\leafslow}, \xi_{\leafsalow})$, see \eqref{eq:stochasticintegral_cm}. 
One then has the following graphical representation 
\begin{align}\label{eq:ex:shift:dominating:intro}
\enUpsilon^\delta
\treeExampleKPZaashiftz
=
a^\delta
\treeExampleKPZaacontractionz + a^\delta \treeExampleKPZaacontractionbz \;.
\end{align}
(Here, a dark red node represents an instance of $k^\delta$.)
We would now rescale the kernels ${\color{darkgreen}K},{\color{blue}K},{\color{red}K}$ to a scale $\lambda=\lambda_\delta \ll \delta$ at a homogeneity $\alpha$ which is determined by the homogeneity $|\treeExampleKPZaa|_\fs= -\kappa$ and $m(\treeExampleKPZaa)=3$, see (\ref{eq:alpha}).

(See Section~\ref{sec:notation} for the definition of the domain $\bar\domain$.)
\end{example}

The strategy outlined above is implemented in Section~\ref{sec:shift} as follows. In Section~\ref{sec:ext:reg:str} we will construct an enlarged regularity structure, containing additional noise types \eqref{idx:sFLm}, large enough to be able to represent the regularised noise $\xi^\delta_\Xi$ (for any noise type $\Xi$) and the highly oscillating perturbation $k_{(\Xi,\tau)}^\delta$ (for any tree $\tau$ and noise type $\Xi \in \ft(L(\tau))$ appearing in $\tau$). We will will so construct the \emph{shift operator} $\SS:\CT \to \sCT$ as in \eqref{e:defShiftOperator}
below. We 
determine the set of trees $\oSS[\tau]$ in the image of the shift operator which will dominate the expected value in Definition~\ref{def:oSS}.
In Section~\ref{sec:shift:WC} we construct in (\ref{eq:eta}) a ``highly oscillating perturbation'' 
$\eta_{(\Xi,\tau)}^\delta$ in the $m(\tau)$-th Wiener chaos for any tree $\tau\in\FT_-$ (see below for 
the definition of $\FT_- \ssq \TT_-$) and any $\Xi \in \ft(L(\tau))$. The kernel $K_{(\Xi,\tau)}^\delta$ 
(with respect to Gaussian integration) of this perturbation will be a rescaled version of a fixed 
kernel $\Phi_{(\Xi,\tau)}$, see (\ref{eq:rescalingoperator}), at a homogeneity $\alpha_{(\Xi,\tau)}$, 
see (\ref{eq:alpha}), to a scale $\lambda^\delta_\tau$ (we will discuss shortly the choice of these scales). 
The kernels $\Phi_{(\Xi,\tau)}$ will 
be chosen along the lines of Example~\ref{ex:shift:dominating:intro} above (there is a 
slight subtlety here in case of 
log-divergencies, see Example~\ref{ex:log} below, which we ignore for the sake of this introduction).

A key result is Lemma~\ref{lem:dominatingpart} which determines the behaviour of the ``dominating'' trees $\taua \in \oSS[\tau]$. 
It will be useful to introduce the function $F_\tau(a,\lambda) := \hat\Upsilon^\delta \tau$ for 
$\tau \in \FT_-$,  see (\ref{eq:Ftau}), where $a=a^\delta_\tau$ and $\lambda=\lambda^\delta_\tau$, 
$\tau \in \FT_-$. In Proposition~\ref{prop:choice_of_a} in Section~\ref{sec:fpa} we will then, for 
fixed $\lambda$, recast the equation $F_\tau(a,\lambda) = h(\tau)$ for $\tau \in \FT_-$ into a fixed point 
problem for $a$. This problem will be a small perturbation of a solvable linear problem (\emph{linear} 
because of the definition of $k^\delta$, \emph{solvable} thanks to Lemma~\ref{lem:dominatingpart}, 
\emph{small perturbation} thanks to Lemma~\ref{lem:Fthenondominatingpart}) which is therefore straightforward to 
solve. The tricky issue is that in order for Lemma~\ref{lem:Fthenondominatingpart} to hold one needs 
to choose the scales $\lambda_\tau^\delta$ carefully. In Section~{\ref{sec:monotonestatements}} we 
will determine an order $\le$ on the set of trees $\FT_-$, and we will choose the scales such that
$\lambda_\tau^\delta \ll \lambda_{\taua} ^\delta$ whenever $\tau,\taua \in \FT_-$ with $\tau \le \taua$. 
To formalise this idea, we introduce in Definition~\ref{def:attainable} the notion of an \emph{attainable} 
statement, and we show at the end of Section~\ref{sec:fpa} that the necessary bound of 
Lemma~\ref{lem:Fthenondominatingpart} is attainable in this sense.

We now outline how we will address the first point above, i.e.\ how to define the set $H$, which we will do in Section~\ref{sec:constraints}. Every tree $\tau \in \CT$ can be mapped onto a function $\CK\tau: \R^{(d+1)L(\tau)} \to \R$, see (\ref{eq:CK}). One should think of $\CK\tau$ as the function obtained by anchoring the root to the origin and integrating out all other vertices, except for the leaves.
\begin{example}
In the case of the $\Phi^4_3$ equation, one has for instance 
$$
	\left(\CK\phiCA\right)(x_1, \ldots, x_4) = \int dy K(x_1 ) K(y) K(x_2 - y) K(x_3 - y) K(x_4 - y),
$$
where we identify the set of leaves $L(\phiCA)\simeq\{1,2,3,4\}$ with `$1$' denoting the leaf directly 
attached to the root, and where $K$ denotes a truncation of the heat kernel.
\end{example}
Denote now by $\hat\CK\tau$ the function defined in same way, but with $K$ replaced by the actual (i.e.\ not 
truncated) heat kernel $\hat K$ (we will later write $\CK_{\hat K}\tau$ for this). It is a priori not 
clear that these integrals are well-defined on large scales, but we will show in 
Theorem~\ref{thm:evaluation:trees:largescale} that at least for trees $\tau$ of non-positive homogeneity 
this is always the case. Let us furthermore write $\CK_\sym \tau$ and $\hat\CK_\sym\tau$ for the 
kernels obtained from $\CK \tau$ and $\hat \CK \tau$ by symmetrisation under spatial reflections $(t,x) \mapsto (t,-x)$ and permutation of the variables. (If $\tau$ contains more 
than one noise type, one should only symmetrise variables corresponding to the same noise type.)

From the discussion above, it is clear that we cannot hope to control two trees $\tau,\taua$ independently if $\hat\CK_\sym\tau$ and $\hat\CK_\sym\taua$ are linearly dependent. To make this more clear, consider the following example.
\begin{example}
Continuing Example~\ref{ex:shift:dominating:intro}, one has 
\begin{align}\label{ex:shift:dominating:intro:2}
\enUpsilon^\delta
\treeExampleKPZaashiftz
=
2a^\delta \int dx_1\cdots dx_4 
\left(
	\CK_\sym\treeExampleKPZaa
\right)
( x_1 , \ldots , x_4 )
\triplekernel(x_3;x_1,x_2,x_4)
\;.
\end{align}
where we identify the leaves of $\tau:=\treeExampleKPZaa$ with $\{1,2,3,4\}$ from left to right. Since one should think of $\triplekernel$ as being rescaled to scales $\lambda \ll \delta$, only the small scale behaviour of $\CK_\sym\tau$ matters, which is (essentially) the behaviour of the self-similar kernel $\hat\CK_\sym\tau$. (The last statement is justified by Lemma~\ref{lem:dominatingpart}, where we show that the difference between (\ref{ex:shift:dominating:intro:2}) with $\CK_\sym$ and $\hat\CK_\sym$ vanishes in the limit $\delta\to 0$.) It follows that if $\taua \in\TT_-$ is another tree carrying the same noise types as $\tau$ and such that $\hat\CK_\sym\taua = c \hat\CK_\sym \tau$ for some $c \in \R$, then the shifted trees which are dominating (i.e.\ elements of $\oSS[\tau]$ and $\oSS[\taua]$) satisfy the same linear relation in the limit $\delta\to 0$.
\end{example}

Motivated by this example, we introduce in Definition~\ref{def:CH} an ideal $\CJ\subset \CTm$ 
generated by linear combination of trees $\sigma \in \CT$ carrying the same noise types and such 
that $\hat\CK_\sym \sigma=0$. Here we introduce the notation $\CTm$ for the free, unital, 
commutative algebra generated by $\TT_-$. We recall at this point \cite{BrunedHairerZambotti2016} 
that $\CTm$ is naturally endowed with a Hopf algebra structure with coproduct $\cpmh$ (the character 
group of $\CTm$ is precisely the renormalisation group $\CGm$ already mentioned above), 
see Section~\ref{sec:trees} for details and precise references.

We show in Section~\ref{sec:constraints} that $\CJ$ is a Hopf ideal, see Assumption~\ref{ass:CJHopfIdeal}. 
The crucial implication is that its annihilator $\CH$ is a Lie subgroup of $\CGm$. 
We show further Assumption~\ref{ass:CHBPHZcharacters}, which states that the BPHZ character $g^\eps$ of the 
regularised noise $\xi^\eps$ ``almost'' belongs to this group, in the sense that one has 
$g^\eps \in f^\eps \circ \CH$, for a sequence of characters $f^\eps \in \CGm$ which converges to a 
finite limit $\fxi$ as $\eps \to 0$. 
It is crucial to note that we show this also for a class of
non-Gaussian approximations $\xi^\eps$ which is rich enough to contain the shift $\zeta_\delta$.
Assumption~\ref{ass:CHBPHZcharacters}  finally justifies 
the assertion made above that $g^\eps$ ``almost'' satisfies the linear constraints. (In a perfect world, 
$g^\eps$ would satisfy these constraints precisely. 
The discrepancy stems from the fact that we 
use truncated kernels to define $g^\eps$.)
Moreover, we have identified that the set $H \ssq \CGm$ for which we can construct a shift as above is equal to the coset $\fxi \circ \CH$. It may be useful to observe that while the character $\fxi$ is not uniquely defined, the coset $\fxi \circ \CH$ is unique. 

Section~\ref{sec:constraints} shows that a under a technical Assumption~\ref{ass:CVz} the Assumptions~\ref{ass:CJHopfIdeal} and~\ref{ass:CHBPHZcharacters} always hold. 
The latter two are formulated as assumptions (rather than theorems), 
since there are a range of interesting equations in which Assumption~\ref{ass:CVz} is violated, while one can simply show Assumptions~\ref{ass:CJHopfIdeal} and~\ref{ass:CHBPHZcharacters} by hand. 
(Examples are the $\Phi_2^p$ equations discussed in Section~\ref{sec:Phi4} and the 2D parabolic Anderson model.)
The general proof, assuming Assumption~\ref{ass:CVz} and given in Section~\ref{sec:constraints}, is motivated and outlined at the beginning of this section. 

We are left to link the two constructions outlined above. In Definition~\ref{def:FT} we will define a set $\FT_- \ssq \TT_-$ which is a maximal set with the property that $\linspace{\FT_-}$ and $\CJ$ are linearly independent (in other words, one has $\linspace{\FT_-} \oplus \CJ = \CT_-$ where $\oplus$ denotes the direct sum of vector spaces). 
For any fixed character $h \in \fxi \circ \CH$ we will tailor a shift of the noise in Section~\ref{sec:shift} (see outline above) such that $\hat\Upsilon^\delta \tau \to h(\tau)$ as $\delta \to 0$ for any $\tau \in \FT_-$. Using the fact that $\linspace{\FT_-}$ has a complement in $\linspace{\TT_-}$ which is a subset of the ideal $\CJ$, we will show in Proposition~\ref{prop:sequencetoconstant} that the sequence of shifted models converge to $\CR^h \Zcan(0)$ almost surely, which shows in particular that 
\[
\{ \CR^h \Zcan(f) : h \in \fxi \circ \CH , f \in \CC_c^\infty \} \ssq \supp \hat Z.
\]
Philosophically, Proposition~\ref{prop:sequencetoconstant} fills in the ``gap'' between $\FT_-$ and $\TT_-$, in the sense that we do not need any a priori information how the shifted models behaves on trees $\tau \in \TT_- \backslash \FT_-$. This step relies of course on the relation between the set $\FT_-$ and the ideal $\CJ$, and the fact that we choose $h \in \fxi \circ \CH$, where $\CH$ is the annihilator of $\CJ$. What is less obvious, it also uses crucially the fact that $\CH$ is indeed a sub\emph{group} of $\CGm$ (see Assumption~\ref{ass:CJHopfIdeal}).
By Assumption~\ref{ass:CHBPHZcharacters}, the ``tweaked'' BPHZ character $\fxi \circ (f^\eps)^{-1} \circ g^\eps$ is an element of $\fxi\circ \CH$ for any $\eps>0$, and using that $\fxi \circ (f^\eps)^{-1} \to \one^*$ as $\eps \to 0$ concludes the proof.

\section{Notations and assumptions}\label{sec:assumptions}

\subsection{Conventions on notation} \label{sec:notation}\label{sec:multisets}

For any integer $M\in\N$ we write $[M]:=\{1,\ldots, M\}$ with the convention that $[0] = \emptyset$. We fix a spatial dimension $e \ge 1$ and a space-time domain \label{idx:lambda}$\spacetime:= \R \times \T^{e}$.
 We assume that either all noises $\xi_j$ in \eqref{eq:singular:SPDE} are space-time white noises, or they are all purely spatial white noises. 
In the first case, we define \label{idx:domain}$\domain:=\spacetime$ as the space-time domain with dimension $d:=e+1$, while in the second case we let $\domain := \T^{e}$ be the purely spatial domain with dimension $d:=e$. In either case, we define $\bar\domain := \R^d$, so that $\domain$ can be identified with the factor space of $\bar\domain$ modulo a suitable discrete group of translations. Given a distribution $u$ on $\domain$ we can naturally view $u$ as a distribution on \label{idx:bardomain}$\bar\domain$ by periodic extension.

For any integer $m\in \N$ we write $\CD'(\R^m)$ for the space of distributions and $\CC_c^\infty(\R^m)$ for the space of compactly supported, smooth functions on $\R^m$. For any distribution $u$ and any multiindex  $k\in\N^d$ we denote by $D^k u$ the $k$th distributional derivative of $u$. 
In the sequel, test functions that are compactly supported in the difference of their variables but invariant under simultaneous translations of all their arguments will play an important role. We capture this in the following definition.

\begin{definition}\label{def:barCinfty}
For any finite set $L$ we define the space $\bar\CC^\infty_c(\bar\domain^{L})$ as the set of smooth functions $\phi\in\CC^\infty(\bar\domain^L)$ such that both of the following properties are satisfied.
\begin{enumerate}
\item The function $\phi$ is invariant under simultaneous translation of all variables by any vector $h\in\bar\domain$. In other words, we postulate that one has the identity
\[
\phi((x_u)_{u\in L})=\phi((x_u+h)_{u\in L})
\]
for any $h\in\bar\domain$ and any $x\in \bar\domain^{L}$.
\item There exists $R>0$ such that $\phi((x_u)_{u\in L})=0$ for any $x\in\bar\domain^{L}$  such that for some $u,v\in L$ one has $|x_u-x_v|> R$.
\end{enumerate}
We will consider the usual topology of test-functions on this space.
\end{definition}

\subsubsection*{Scalings}

We write $\fsL$ for the scaling on $\spacetime$ (which we used already in the formulation of our main results, Theorems~\ref{thm:mainThm} and~\ref{thm:main:sv}). Here $\fsL$ is determined by the integration kernels, see Assumption~\ref{ass:kernelhomo}. We will mostly work with the scaling \label{idx:fs}$\fs:[d] \to \N$, defined by restricting $\fsL$ to $\domain$.
We write $|\fs|:=\sum_{i=1}^d \fs(i)$ for the effective dimension. 
For a multi-index $k \in \N^{\{1, \ldots d \}}$ we write $|k|_\fs := \sum_{i =1}^d \fs(i)k_i$, and for $z \in \domain$ we write $|z|_\fs := \sum_{i=1}^d |z_i|^{\frac{1}{ \fs(i) } }$. We use the convention that sums of the form 
\[
\sum_{|k|_\fs\le r} (\cdots)
\]
always run over all multi-indices $k \in \N^d$ with $|k|_\fs \le r$.
Finally, for any $x \in \bar\domain$, $\phi \in \CC^\infty(\bar\domain)$ and $\lambda>0$ we define $\lambda^{-\fs} x \in \bar\domain$ and $\phi^{(\lambda)} \in \CC^\infty(\bar\domain)$ by 
\begin{align}\label{eq:rescaling}
(\lambda^{-\fs}x)_i := \lambda^{-\fs(i)} x_i\,, \;\; i \le d
\qquad\text{ and }\qquad
\phi^{(\lambda)}(x) := \lambda^{-|\fs|} \phi( \lambda^{-\fs} x ).
\end{align}

\subsubsection*{Multisets}

Let $A$ be a finite set. A \emph{multiset} $\cm$ with values in $A$ is an element of $\N^A$ (i.e.\ a map
$A \to \N$ counting the number of occurrences of each element). Given two multisets $\cm , \cn \in \N^A$ we write $(\cm \setminus \cn)_a := (\cm_a-\cn_a) \lor 0$ for any $a \in A$.  We also naturally identify a subset $B\ssq A$ with the multiset $\I_B : A \to \{0,1\}$. Given a function $f: A \to \R$ we write $f(\cm):=\sum_{a \in \cm}f(a):=\sum_{a \in A}\cm(a)f(a)$.
Given any finite set $I$ and a map $\varphi : I \to A$ we write $[I,\varphi]$ for the multiset with values in $A$ given by
\label{idx:multiset[,]}
\begin{align}\label{eq:notation:multiset}
[I,\varphi]_a := \#\{ i \in I : \varphi(i) = a \}
\end{align}
for any $a \in A$.
Given a finite multiset $\cm$, it will be useful to define the index set
\label{idx:multisetd}
\begin{align}\label{eq:notation:multiset:as:set}
\td(\cm):=\{(a,k) : a \in A, 1\le k \le \cm(a) \}\subset A\times \N\;.
\end{align}

It will be useful to consider functions $f$ with the property that their domain is intuitively given by $M^{\cm}$ for some set $M$ and some multiset $\cm$. Given sets $M$ and $N$, we write $f: M^{\cm} \to N$ as
a shorthand for a function $f: M^{\td(\cm)}  \to N$ which is symmetric in the sense that 
$f(x_j) = f(x_{\sigma(j)})$ for every permutation $\sigma$ of $\td(\cm)$  
preserving the ``fibres'' $\{a\} \times \N$ for all $a \in A$.
Note that if $\cm = [I,\varphi]$, then any $f:M^{[I,\varphi]} \to N$ can be identified with a function 
$f_I : M^I \to N$ by choosing any bijection $\psi: I \to \td(\cm)$ with the property that $\psi_1 = \varphi$, 
and setting $f_I((x_i)_{i \in I}) := f ( (x_{\psi^{-1}(a,k)})_{(a,k) \in \td(\cm)})$. The symmetry of $f$ guarantees that $f_I$ is
independent of the choice of bijection $\psi$.
If $M$ and $N$ are subsets of the Euclidean space, we use the notation $\CC^\infty(M^\cm,N)$, etc., with the obvious meaning.

Another way of viewing a multiset $\cm : A \to \N$ is to fix an arbitrary total order $\preceq$ on $A$ and implicitly identify $\cm$ with the tuple \label{idx:multisettilde}$\tilde\cm \in A^{\#\cm}$ defined as the (unique) order preserving map $\tilde\cm:[\#\cm] \to A$ such that $\#\{ i: \tilde\cm_i = a \} = \cm(a)$
for every $a \in A$.

\begin{remark}\label{rmk:multisets}
We now have three equivalent representations of multisets: $\cm : A \to \N$, $\td(\cm) \ssq A \times \N$ and $\tilde\cm : [\#\cm] \to A$. We will mostly working with the first, but depending on the context, it will be helpful to have the notations $\td(\cm)$ and $\tilde\cm$ at hand.
\end{remark}

\subsection{Regularity structures}
\label{sec:reg:structures}

Our driving noises $\xi$ are indexed by a finite sets of noise types $\FL_-$. These noises $\xi_\Xi$, $\Xi \in \FL_-$, should be thought of as independent Gaussian noises whose law is self-similar under rescaling. For simplicity, we will restrict to Gaussian space or space-time white noises (but see Remark~\ref{rem:noises}). The components of our equation are indexed by a finite set of kernel-types $\FL_+$ and to any component $\ft \in \FL_+$ we associate an integration kernel $K_\ft \in \CC_c^\infty( \bar\domain \setminus \{0\})$ satisfying the ``usual'' assumptions, see Section~\ref{sec:kernels}. 
We equip \label{idx:FL}$\FL := \FL_+ \sqcup \FL_-$ with two homogeneity assignments \label{idx:homo}$|\cdot|_\fs:\FL_\star \to \R_\star$ and \label{idx:fancyhomo}$\fancynorm{\cdot}_\fs : \FL_\star \to \R_\star \sqcup \{0\}$ for $\star \in \{+,-\}$, where we think of $\fancynorm{\cdot}_\fs$ as the ``real homogeneity'' of the noises (for instance $|\Xi|_\fs = -\shalf$ for space-time white noise), and we assume that
\[
|\ft|_\fs = \fancynorm{\ft}_\fs \,, \quad \text{ for } \ft \in \FL_+ \,
\qquad
|\Xi|_\fs = \fancynorm{\Xi}_\fs - \kappa \,, \quad \text{ for } \ft \in \FL_+
\]
for some $\kappa>0$ (small enough).

Recall \cite[Def.~5.7]{BrunedHairerZambotti2016} that a rule $R$ is a collection $(R(\ft))_{\ft \in \FL_+}$ that assigns to any kernel-type $\ft \in \FL_+$ a set $R(\ft)$ of multisets with values in $\FL\times \N^{d}$.
In order to lift our problem to the abstract level of regularity structures, we assume that we are given a normal, subcritical (with respect to $|\cdot|_\fs$) and complete (c.f.\ \cite[Def.~5.7, Def.~5.14,  Def.~5.22]{BrunedHairerZambotti2016}) rule $R$ which is ``rich enough'' to treat the system at hand. (Such a rule is not hard to work out by hand in situations which are simple enough. For more involved examples we refer the reader to \cite{BrunedChandraChevyrecHairer2017}.)

In \cite[Def.~5.26]{BrunedHairerZambotti2016} the authors constructed an (extended) regularity structure \label{idx:CTex}$\CTex$ based on the rule $R$. We also write \label{idx:CT}$\CT \ssq \CTex$ for the reduced regularity structure obtained as in \cite[Sec.~6.4]{BrunedHairerZambotti2016}.
(We will actually work with a slightly simplified extended decoration, compare Section~\ref{sec:trees} below.) We extend the homogeneity assignments $|\cdot|_\fs$ and $\fancynorm{\cdot}_\fs$ to homogeneity assignments $|\cdot|_+$  and $\fancynorm{\cdot}_+$ (respectively $|\cdot|_-$ and $\fancynorm{\cdot}_-$) on $\CT^\ex$ in the usual way, taking into account (respectively neglecting) the extended decoration. On the reduced structure $\CT$ we set $|\cdot|_\fs:=|\cdot|_+=|\cdot|_-$ and $\fancynorm{\cdot}_\fs := \fancynorm{\cdot}_+ = \fancynorm{\cdot}_-$.
We also write \label{idx:TT}$\TT^\ex$ and $\TT$ for the set of trees in $\CT^\ex$ and $\CT$, respectively, so that $\CT^\ex$ and $\CT$ are freely generated by $\TT^\ex$ and $\TT$ as linear spaces. 

\subsubsection{Trees and algebras}\label{sec:trees}

Given a rooted tree $T$, we define a total order $\le$ on the vertex set $V(T)$ of $T$ by setting $u \le v$ if and only if $u$ lies on the unique shortest path from $v$ to the root $\rho_T$, and we write edges $e \in E(T)$ as order pairs $e=(e^\uparrow, e^\downarrow)$ with $e^\uparrow \ge e^\downarrow$. If $u \in V(T) \backslash\{\rho_T \}$, then there exists a unique edge $e \in E(T)$ such that $u = e^\uparrow$, and in this case we write $u^\downarrow := e$.  

Basis elements $\tau \in \TT^\ex$ can be written as typed, decorated trees $\tau=(T^{\fn,\fo}_\fe, \ft)$, where $T$ is a rooted tree with vertex set $V(T)$, edge set $E(T)$ and root $\rho_T$, the map $\ft : E(T) \to \FL$ assigns types to edges, and the decorations $\fn,\fe,\fo$ are maps $\fn:N(T) \to \N^d$, $\fe : E(T) \to \N^d$ and $\fo : N(T) \to (-\infty,0]$. We call $\fo$ the \emph{extended decoration}. Here we define the decomposition of the set of edges into \label{idx:L(T)}\label{idx:K(T)}$E(T) = L(T) \sqcup K(T)$ with $e \in L(T)$ (resp. $e \in K(T)$) if and only if $\ft(e) \in \FL_-$ (resp. $\ft(e) \in \FL_+$), and we write $N(T) \ssq V(T)$ 
for the set of $u \in V(T)$ such that there does not exist $e \in L(\tau)$ such that $u=e^\uparrow$.
We will often abuse notation slightly and leave the type map $\ft$ and the root $\rho_\tau$ implicit. 
Recall that it follows from the fact that $R$ is normal (c.f.\ \cite[Def.~5.7]{BrunedHairerZambotti2016}) that elements $u \in V(T)\backslash N(T)$ are leaves of the tree $T$. 

Given a typed, decorated tree $\tau$ as above, $k \in \N^{d}$ and $\ft \in \FL_+$ we write
$
\CJ_\ft^k \tau
$
for the planted, decorated, typed tree obtained from $\tau$ by attaching an edge $e = (\rho(\tau), \rho({\CJ_\ft^k \tau}))$ with type $\ft$ to the root $\rho(\tau)$ and $\fe(e)=k$, and moving the root $\rho({\CJ_\ft^k \tau})$ to the new vertex. 

We frequently use the Hopf algebras \label{idx:CTm}$\CTm$ and \label{idx:CTmex}$\CTmex$ associated to negative renormalisation \cite[Eq.~5.23, Sec.~6.4]{BrunedHairerZambotti2016}. The character group \label{idx:CGm}$\CGm$ of $\CTm$ is called \emph{renormalisation group}, and we write $\circ$ for the group product. We denote by \label{idx:TTmex}$\TTm$ the set of trees of $\tau \in \TT$ with $|\tau|_\fs < 0$ and such that $\tau$ is not planted, so that $\CTm$ is freely generated as a unital, commutative algebra from $\TTm$. 
We will also frequently use the algebras \label{idx:CTmhat}$\CTmhat$ and \label{idx:CTmhatex}$\CTmhatex$  \cite[Def.~5.26]{BrunedHairerZambotti2016} which are freely generated as a unital, commutative algebra by $\TT$ and $\TTex$, respectively.

Recall \cite[Prop.~5.35, Cor.~6.37]{BrunedHairerZambotti2016} that the algebras $\CTm$ and $\CTmex$ endowed with the coproduct \label{idx:coproduct}$\cpmh$ are Hopf algebras, and $\CTmhatex$ with the coaction $\cpm : \CTmhatex \to \CTmex \otimes \CTmhatex$ is a comodule. Finally, we write $\antipode : \CT_-^\ex \to \hat\CT_-^\ex$ for the twisted antipode \cite[Prop.~6.6]{BrunedHairerZambotti2016}.


With this notation, we make the following assumption, which guarantees that the analytic BPHZ theorem of \cite{ChandraHairer2016} can be applied.

\begin{assumption}\label{ass:main:reg}\label{ass:first}
For any tree $\tau \in \TT$ with $K(\tau)\ne\emptyset$ one has 
\begin{align}
|\tau|_\fs > 
	\big( -\shalf \big) 
	\lor \max_{u \in L(\tau)} |\ft(u)|_\fs
	\lor \big( - |\fs| - \min_{\Xi \in \FL_-} |\Xi|_\fs \big).
\end{align}
We also impose that for any $\tau \in \TT$ and any $e \in K(\tau)$ one has $|\ft(e)|_\fs - |\fe(e)|_\fs > 0$.
\end{assumption}

We also make the simplifying assumption on the rule that we do not allow products or derivatives of noises to appear on the right-hand side of the equation. As was already remarked in \cite{ChandraHairer2016} and \cite{BrunedChandraChevyrecHairer2017}, such an assumption does not seem to be crucial but simplifies certain arguments.

\begin{assumption}\label{ass:noises:derivatives:products}
We assume that for any $\ft \in \FL$ and any $N \in R(\ft)$ there exists at most one pair $(\Xi,k) \in \FL_- \times \N^{d+1}$ such that $N_{(\Xi,k)} \ne 0$, and in this case $k=0$ and $N_{(\Xi,0)}=1$. 
\end{assumption}

\subsubsection{Kernels and models}
\label{sec:kernels}

We assume that for any $\ft \in \FL_+$ we are given a Green's function $\greens_\ft \in \CC^\infty(\bar\domain \backslash \{0\})$, and we make the following assumption.

\begin{assumption} \label{ass:kernelhomo}
We assume that for any kernel-type $\ft \in \FL_+$ the kernel $P_\ft$ is invariant under rescaling in the sense that 
\[
 \lambda^{-|\fsL| + |\ft|_\fs} \greens_\ft(\lambda^{-\fsL} \cdot) = \greens_\ft.
\]
for any $\lambda>0$. Furthermore, in case that the $\xi_\Xi$'s are purely spatial white noises,  we assume that $|\fs|-|\ft|_\fs>\fs_0$.
\end{assumption}

The last property ensures that in case of purely spatial white noise the time integral 
${\hat K} _\ft(x):= \int_{-\infty}^\infty \greens_\ft(t,x) dt$ is well-defined and  self-similar under scaling $\lambda^{-|\fs| + |\ft|} {\hat K}_\ft( \lambda^\fs \cdot) = \hat K_\ft$ for any $\lambda>0$.
To avoid case distinctions, we set \label{idx:hatK}$\hat K:= \greens$ in case of space-time white noise.

It follows from Assumption~\ref{ass:kernelhomo} that ${\hat K}_\ft$ can be decomposed into ${\hat K}_\ft = K_\ft + R_\ft$ with $R_\ft \in \CC^\infty(\bar\domain)$ 
and such that $K_\ft \in \CC_c^\infty(\bar\domain\backslash\{0\})$ (smooth functions with bounded support) satisfies \cite[Ass.~5.1, Ass.~5.4]{Hairer2014}. It will be convenient in Section~\ref{sec:rigidities} to assume that $K_\ft = \hat K_\ft \phi$, where $\phi \in \CC_c^\infty(\bar\domain)$ is symmetric under $x_i \to -x_i$ for any $i\le d$ and equal to $1$ in a neighbourhood of the origin. 
Given the kernel assignment $(K_\ft)_{\ft \in \FL_+}$ we recall the definition of admissible models \cite[Def.~2.7,~Def.~8.29]{Hairer2014}. 
We call a model $Z=(\Pi,\Gamma)$ \emph{smooth} if $\Pi_x \tau \in \CC^\infty(\domain)$ for any $\tau \in \TT^\ex$ and some (and therefore any) $x \in \domain$, and we call $Z$ \emph{reduced} if $\Pi_x \tau$ does not depend on the extended decoration of $\tau$. 

Given an admissible \cite[Def.~6.8]{BrunedHairerZambotti2016} and reduced linear map 
$\PPi : \CT^\ex \to \CC^\infty(\domain)$ we write \label{idx:CZ}$\CZ(\PPi)$ for the model constructed as in \cite[Eqs~6.11, 6.12]{BrunedHairerZambotti2016}, whenever this is well-defined, and we write \label{idx:CMs}$\CM_\infty$ for the set of smooth, reduced, admissible models for $\CT^\ex$. We write \label{idx:CMz}$\CM_0$ for the closure of $\CM_\infty$ in the space of models. We write \label{idx:Omegainfty}$\Omega_\infty:= \Omega_\infty(\FL_-):= \CC^\infty(\domain)^{\FL_-}$ and, given $f \in \Omega_\infty$, we write \label{idx:canlift}$\Zcan(f)=Z^f= \CZ(\PPi^f)$ for the canonical lift of $f$ to a model $Z^f \in \CM_\infty$, c.f.\ \cite[Rem.~6.12]{BrunedHairerZambotti2016}.

%

\subsubsection{Renormalised models}
\label{sec:BPHZ:theorem}
Recall \cite[Eq.~6.23]{BrunedHairerZambotti2016} that for a smooth noise $\eta$ (which we assume to be stationary and centred, with all its derivatives having moments of all orders) we can define a character \label{idx:Upsilon}$\Upsilon^\eta$ on $\hat\CT_-^\ex$ by setting $\Upsilon^\eta := \E(\PPi^\eta \tau)(0)$ for any tree $\tau\in\hat\CT_-^\ex$, and extending this linearly and multiplicatively, where $\PPi^\eta$ denotes the canonical lift of $\eta$ to an admissible random model.
The BPHZ character $g^\eta \in \CG_- \subset (\CT_-^\ex)^*$ is then given by
\label{idx:BPHZcharacter}
\begin{align}\label{eq:BPHZ:character}
g^\eta  := \Upsilon^\eta \antipode \;,
\end{align}
with $\antipode \colon \CT_-^\ex \to \hat\CT_-^\ex$ denoting the ``twisted antipode'' as given in \cite[Eq.~6.8]{BrunedHairerZambotti2016}.
A character $g \in \CG_-$ defines a renormalisation map $M^g:\CTex \to \CTex$ by
\label{idx:Mg}
\[
M^g := (g \otimes \Id) \cpm, 
\]
and we recall that the BPHZ renormalised model $\hat Z^\eta = \CZ(\hat {\PPi}^\eta)$ for a smooth noise $\eta$ is given by \cite[Thm.~6.17]{BrunedHairerZambotti2016}
\begin{align}\label{eq:lift:BPHZ}
\hat{\PPi}^\eta \tau := \PPi^\eta M^{g^\eta}\tau
\end{align}
for any $\tau \in \CT^\ex$. 
Finally, note that one has a continuous action \label{idx:CRg}$g \mapsto \CR^g$ of the renormalisation group $\CG_-$ onto the space $\CM_0$ of admissible models, given by 
\begin{align}\label{eq:CR:M}
\CR^g \CZ(\PPi) := \CZ( \PPi M^g).
\end{align}
(The fact that $\CM_0$ is stable under this action is not obvious but was shown in \cite[Thm.~6.15]{BrunedHairerZambotti2016}.) 

\begin{remark}\label{rem:renormgroup}
We will work with the convention that the renormalisation group product on $\CGm$ is given by 
\[
g \circ h := (g \otimes h) \cpmh.
\]
With this convention one obtains $M^{h \circ g} = M^g M^h$ for any $g,h \in \CGm$, which follows from a quick computation
\begin{multline*}
M^{h \circ g} = \left( (h \circ g) \otimes \Id \right) \cpmh
=
(h \otimes g \otimes \Id) (\cpmh \otimes \Id) \cpmh
=
\\
(h \otimes g \otimes \Id) ( \Id \otimes \cpmh ) \cpmh
=
(h \otimes M^g) \cpmh
=
M^g M^h,
\end{multline*}
so that the group $\mathfrak{R}$ of ``matrices'' $M^g$ acting on $\CT$ is naturally 
identified with the opposite group $\CG_-^{\mathrm{op}}$.
Note however that the action $\CR$ of $\CG_-$ onto the space of models 
satisfies $\CR^{g \circ h} = \CR^g \CR^h$ for any $g,h \in \CGm$.
\end{remark}


A central role will be played by the following ``shift operator''.
\begin{theorem}\label{thm:translationoperator}
	For any $h\in\Omega_\infty$ there exists a continuous operator $T_h:\CM_0\to\CM_0$ with the property that for any $f\in\Omega_\infty$ and any $g\in\CG_-$ the canonical lift $\Zcan(f)$ of $f$ satisfies 
	\begin{equ}[e:propsTh]
	\renorm{g}  \Zcan(f+h)= T_{h} \renorm g \Zcan(f) = \renorm g T_h \Zcan(f).
	\end{equ}
	
	Moreover, this operator is continuous as a map $\Omega_\infty\times\CM_0\to\CM_0:(h,Z)\mapsto T_h Z$ where we endow the space $\Omega_\infty\times\CM_0$ with the product topology. 
	We call $T_h$ the \emph{shift operator}.
\end{theorem}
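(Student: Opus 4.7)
The plan is to define $T_h$ first on the dense subset $\CM_\infty$ by an explicit algebraic formula dictated by the identity $T_h \Zcan(f) = \Zcan(f+h)$, and then extend to $\CM_0$ by continuity. This mirrors the blueprint of \cite[Thm.~3.1]{HairerMattingly2016} for gPAM; the main additional burden here is the combinatorial and analytic bookkeeping for the full class of BPHZ regularity structures associated to the rule $R$.

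\textbf{Algebraic construction.} The canonical admissible map $\PPi^f$ is built recursively: multiplicatively on forests, by convolution with truncated kernels $K_\ft$ under abstract integration $\CI_\ft^k$, and by $\PPi^f_x \Xi = f_\Xi$ on a noise leaf of type $\Xi$. I would expand $\PPi^{f+h}_x\tau$ by distributing $f+h$ at each noise leaf of $\tau$ and grouping the $2^{\#L(\tau)}$ resulting terms according to the subset $A \subseteq L(\tau)$ of leaves on which $h$ is selected. Since $h$ is smooth, the contributions from leaves in $A$ can be evaluated in closed form and absorbed into smooth coefficient functions. This yields an identity
\[
\PPi^{f+h}_x \tau = \sum_{\sigma} \PPi^f_x \sigma \cdot C_{\tau,\sigma}(h,x)\,,
\]
where $\sigma$ ranges over a finite set of trees with strictly fewer noise leaves than $\tau$ (hence of strictly larger homogeneity) and each $C_{\tau,\sigma}(h,\cdot) \in \CC^\infty(\domain)$ depends multilinearly and continuously on $h$ through convolutions against truncated integration kernels. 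Since the right-hand side uses $\PPi^f$ only on simpler trees, the formula makes sense for \emph{any} admissible $\PPi \colon \CT^\ex \to \CC^\infty(\domain)$; I then define $T_h \PPi$ by it and set $T_h \CZ(\PPi) := \CZ(T_h \PPi)$. Admissibility and reducedness are preserved by inspection.

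\textbf{Commutation with $\CR^g$.} The shift $T_h$ touches only noise leaves of trees, while the negative renormalisation $M^g = (g \otimes \Id)\cpm$ contracts internal sub-trees of strictly negative homogeneity. By Assumption~\ref{ass:noises:derivatives:products} combined with the lower bound on homogeneities in Assumption~\ref{ass:main:reg}, these contracted sub-trees always contain at least one integration edge and are never reduced to isolated noise leaves, so the leaf-level operation defining $T_h$ and the contraction of $\cpm$ act on disjoint combinatorial factors. This gives the commutation $(T_h \PPi) M^g = T_h(\PPi M^g)$ at the level of admissible maps, and applying $\CZ$ yields both $T_h \CR^g = \CR^g T_h$ and $T_h \CR^g \Zcan(f) = \CR^g \Zcan(f+h)$ asserted by the theorem.

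\textbf{Continuity and extension.} The main obstacle is the uniform analytic control needed to extend the algebraic definition to a continuous map on $\Omega_\infty \times \CM_0$. Testing $T_h \Pi_x \tau$ against a rescaled test function $\phi_x^\lambda$ with $\lambda \in (0,1]$, each summand obeys a bound of the form
\[
|\langle \Pi_x \sigma \cdot C_{\tau,\sigma}(h,\cdot),\, \phi_x^\lambda\rangle|
\lesssim
\|C_{\tau,\sigma}(h,\cdot)\|_{\CC^r}\,\|\Pi\|\, \lambda^{|\sigma|_\fs}
\le P(\|h\|,\|Z\|)\, \lambda^{|\tau|_\fs}
\]
for a continuous $P$, using $|\sigma|_\fs > |\tau|_\fs$ and $\lambda \le 1$. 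An analogous bound for $\Gamma$ follows from the positive-renormalisation coaction in the same way. Telescoping between two pairs $(h,Z)$ and $(h',Z')$ yields joint continuity of $(h,Z) \mapsto T_h Z$ on $\Omega_\infty \times \CM_\infty$; density of $\CM_\infty$ in $\CM_0$ then permits a unique continuous extension to $\Omega_\infty \times \CM_0$, and the three identities together with the inclusion $T_h(\CM_0) \subseteq \CM_0$ pass to the limit.
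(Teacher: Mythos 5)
Your overall architecture (define the shift on canonical lifts of smooth functions, extend by density, then check commutation with $\CR^g$) is the same as the paper's, which delegates the existence and joint continuity to \cite[Thm.~3.1]{HairerMattingly2016} and only supplies the commutation argument. The problem is that your central algebraic identity
$\PPi^{f+h}_x \tau = \sum_\sigma \PPi^f_x\sigma\cdot C_{\tau,\sigma}(h,x)$,
with $\sigma$ a tree of the \emph{original} structure and $C_{\tau,\sigma}(h,\cdot)$ a smooth function multiplying $\PPi^f_x\sigma$ pointwise, is false as soon as a noise leaf sits at an internal vertex. Take the tree whose canonical lift is $y\mapsto\int K(y-z)\,f(z)\,(K\star f)(z)\,dz$; the cross term in which $h$ replaces the outer noise is $\int K(y-z)\,h(z)\,(K\star f)(z)\,dz$, and since $h$ is evaluated at the integration variable $z$ rather than at the base point, this is not of the form $C(h,y)\cdot\int K(y-z)(K\star f)(z)\,dz$ for any smooth $C$. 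The $h$-contributions cannot be "absorbed into coefficients": they must be carried as a new, highly regular noise type attached at the same internal vertex. This is exactly why the paper (and Hairer--Mattingly) introduce the enlarged regularity structure $\sCT$ with a copy $\tilde\Xi$ of each noise type, define the combinatorial operator $\SS$ that sums over all ways of recolouring noise leaves, and write $T_h\PPi^f = \PPi^{f,h}\SS$, invoking the canonical-lift machinery on the enlarged structure for the analytic bounds. Your continuity estimate, which bounds $\langle\Pi_x\sigma\cdot C(h,\cdot),\phi_x^\lambda\rangle$ by $\|C\|_{\CC^r}\|\Pi\|\lambda^{|\sigma|_\fs}$, therefore rests on a decomposition that does not exist.

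The justification of the commutation with $M^g$ is also off. You claim the contracted subtrees of $\cpm$ and the noise leaves touched by $T_h$ are "disjoint combinatorial factors" because contracted subtrees are never isolated noise leaves; but divergent subtrees certainly \emph{do} contain noise leaves (that is what makes them divergent), so the shift does act on them. The commutation actually holds because the character $g$ is applied to the left tensor factor as an abstract symbol (so it is insensitive to any realisation of the noise), together with the compatibility $(\SS\otimes\SS)\cpm=\cpm\SS$ of Lemma~\ref{lem:shift:operator:co:product} and the fact that $g$, extended to $\sCT$ by zero on trees containing $\tilde\Xi$, satisfies $g\,\SS=g$ on $\CTm$. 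Your conclusion is correct but the stated reason would not survive scrutiny; the coproduct compatibility is the missing ingredient.
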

\begin{proof}
The construction of $T_h$ and the verification of \eqref{e:propsTh} as well as its continuity
are obtained very similarly to the verification of Assumption~10 in the proof
of \cite[Thm.~5.1]{HairerMattingly2016}, so we only give a sketch of the proof.

Consider first an enlarged set of types $\gFL$ such that $\gFLp = \FL_+$,
but $\gFLm = \{\Xi, \gXi\,:\, \Xi \in \FL\}$. In other words, every original noise-type 
$\Xi$ comes with a new ``shifted'' noise type $\gXi$.
We then define a regularity structure $\gCT$ in the same way as $\CT$, but from an
enlarged rule $\gR$ obtained from $R$ by allowing to replace any number of 
noises by their corresponding ``shifted'' noises.
We also write $\gCM$ for the space of admissible models analogous to $\CM_0$, but for $\gCT$.
Finally, we fix $a > 0$ sufficiently large so that, setting $\deg \gXi = a$ for 
every shifted noise (the degrees of the original noises and kernels remain unchanged), 
one has $\gCTm = \CT_-$, i.e.\ all newly added basis vectors
of $\gCT$ have strictly positive degree. 

As a consequence of this last condition, it is straightforward to show by repeated
invocations of \cite[Prop.~3.31]{Hairer2014} and \cite[Thm.~5.14]{Hairer2014} that, given any
model $\CZ(\PPi) \in \CM_0$ and any $h \in \Omega_\infty$, 
there exists a unique model $\CY(h,\PPi) \in \gCM$ with the property that
\begin{equ}
\CY(h,\PPi)\tau = \PPi \tau \;,\quad \forall \tau \in \gCTm\;,\qquad
\CY(h,\PPi)\gXi = h_\Xi \;,\quad \forall \Xi \in \FL_-\;.
\end{equ}
Exactly as in \cite[Eq.~5.9]{HairerMattingly2016}, one can then construct a continuous
map $\CZ \colon \gCM \to \CM_0$ which has the effect of ``adding the function represented
by $\gXi$ to the distribution represented by $\Xi$''. Setting $T_h Z = \CZ(\CY(h,Z))$, the
claim now follows from \cite[Eqns~5.10 \&\ 5.11]{HairerMattingly2016}, combined with the
continuity of both $\CY$ and $\CZ$.
\end{proof}

\subsection{Driving noises}\label{sec:noises}

For simplicity we restrict to the case that our noises $(\xi_\Xi)_{\Xi \in \FL_-}$ are independent Gaussian white noises on $\domain$, so that one has
\[
\E[\xi_\Xi (\vphi) \xi_{\Xi'}(\vphi')] =  \langle\vphi, \vphi' \rangle_{L^2(\domain)} \I_{\Xi = \Xi'}\;,
\]
and we set $\fancynorm{\Xi}_\fs:= -\shalf$. We fix a smooth and compactly support function \label{idx:rho}$\rho\in\CC_c^\infty(\bar\domain)$ such that $\int\rho(x)dx=1$, and, recalling for any $\eps>0$ the notation $\rho^{(\eps)}$ from \eqref{eq:rescaling}, we define the random smooth noise \label{idx:noisereg}$\xi^\eps$ by setting 
\[
\xi_\ft^\eps:=\xi_\ft \star\rho^{(\eps)}
\]
for any $\ft\in\FL_-$. .

\begin{remark}\label{rem:noises}
We do this in order to not complicate the presentation unnecessarily. In principle the proof we give in this paper will hold (modulo some minor modifications) in the case that $\xi$ is a family of independent, stationary, centred Gaussian noises with ``self-similar'' covariance structure and the property that all smooth, compactly supported functions are included in the Cameron-Martin space. One can often relate these situations back to our setting by introducing a new kernel type, see for instance Section~\ref{sec:Phi44} where this is made precise for the $\Phi^4_{4-\eps}$ equation.
\end{remark}

It is well known that $\xi$ admits a version which is a random element of 
\label{idx:Omega}
\[
\Omega:=\bigoplus_{\Xi\in\FL_-} \CC_\fs^{|\Xi|_\fs}(\domain)\;.
\]
We denote the law of $\xi_\Xi$ on $\CC_\fs^{|\Xi|_\fs}(\domain)$ by $\Pwn$, and we write $\P:= \bigotimes_{\Xi \in \FL_-} \Pwn$ for the law of $\xi$ on $\Omega$. Since only the law of $\xi_\Xi$ is relevant in order to establish a support theorem, there is no loss of generality to assume that $\xi:\Omega\to\Omega$ denotes the canonical process.
We write $H:= L^2(\domain)^{\FL_-}\subseteq\Omega$ for the Cameron-Martin space of $\P$ and we recall the following well-known theorem.
 \begin{theorem}\label{thm:cameron-martin}(Cameron-Martin)
 	For any fixed $h\in H$, the laws of $\xi$ and $\xi+h$ under $\P$ are equivalent.
 \end{theorem}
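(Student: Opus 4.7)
The plan is to deduce Theorem~\ref{thm:cameron-martin} from the classical Cameron-Martin theorem applied componentwise to each Gaussian white noise $\xi_\Xi$, and to give an explicit Radon-Nikodym derivative which will make the two-sided equivalence immediate.

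First, since $\P = \bigotimes_{\Xi \in \FL_-} \Pwn$ and $H = \bigoplus_\Xi L^2(\domain)$, a shift by $h = (h_\Xi)_\Xi \in H$ acts diagonally and preserves independence. It therefore suffices to prove equivalence of $\Pwn$ and its translate by $h_\Xi \in L^2(\domain)$ separately for each noise-type $\Xi$, with the Radon-Nikodym derivative for $\P$ obtained as the product of the componentwise densities.

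Second, fix $\Xi \in \FL_-$ and an orthonormal basis $(e_n)_{n \in \N}$ of $L^2(\domain)$. Then $(\xi_\Xi(e_n))_n$ is, under $\Pwn$, an i.i.d.\ sequence of standard Gaussians. Expanding $h_\Xi = \sum_n c_n e_n$ with $\sum_n c_n^2 = \|h_\Xi\|_{L^2}^2 < \infty$, I would introduce the Wiener integral $\xi_\Xi(h_\Xi) := \sum_n c_n \xi_\Xi(e_n)$, whose partial sums converge in $L^2(\Pwn)$ by Parseval. The finite-dimensional Cameron-Martin theorem applied to $(\xi_\Xi(e_n))_{n\le N}$ gives that the shift by $\sum_{n \le N} c_n e_n$ is absolutely continuous with density
\[
Z_N := \exp\Bigl(\textstyle\sum_{n \le N} c_n \xi_\Xi(e_n) - \shalf \sum_{n \le N} c_n^2\Bigr).
\]
The sequence $(Z_N)_N$ is a non-negative martingale; using that $\E Z_N = 1$ and standard exponential moment estimates for Gaussians, it is uniformly integrable and converges $\Pwn$-a.s.\ and in $L^1(\Pwn)$ to
\[
Z_{h_\Xi} := \exp\bigl(\xi_\Xi(h_\Xi) - \shalf \|h_\Xi\|_{L^2}^2\bigr).
\]
Testing against bounded cylindrical functionals of $\xi_\Xi$ and passing to the limit identifies $Z_{h_\Xi}$ as the density of the law of $\xi_\Xi + h_\Xi$ with respect to $\Pwn$. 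Since $Z_{h_\Xi}>0$ almost surely, equivalence in both directions follows (alternatively, applying the same argument to $-h_\Xi$ gives the reverse density $Z_{-h_\Xi}$).

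Third, I would check that the construction is consistent with realising $\xi$ as a random element of $\Omega = \bigoplus_\Xi \CC_\fs^{|\Xi|_\fs}(\domain)$: since $|\Xi|_\fs = -\shalf - \kappa < 0$, the continuous embedding $L^2(\domain) \hookrightarrow \CC_\fs^{|\Xi|_\fs}(\domain)$ ensures that the shift map $\omega \mapsto \omega + h$ is a well-defined measurable transformation of $\Omega$, so that the abstract identification $\xi_\Xi(h_\Xi) = \langle \xi_\Xi, h_\Xi \rangle$ is unambiguous. The only potential obstacle is this bookkeeping step of matching the Hilbert space and distributional pictures, but both correspond to the same object by construction of the canonical process, and no subtlety arises. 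This gives the explicit density
\[
\frac{d (\xi + h)_* \P}{d\P} = \prod_{\Xi \in \FL_-} \exp\bigl(\xi_\Xi(h_\Xi) - \shalf \|h_\Xi\|_{L^2}^2\bigr),
\]
and hence the stated equivalence.
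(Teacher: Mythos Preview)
The paper does not prove this statement at all: it is simply recalled as ``the following well-known theorem'' with no argument given. Your proof is a correct and standard derivation of the Cameron-Martin theorem via the explicit exponential density, and nothing in it is in tension with the paper's setup. One small remark: the embedding $L^2(\domain) \hookrightarrow \CC_\fs^{|\Xi|_\fs}(\domain)$ you invoke is only local when $\domain = \R \times \T^e$ is non-compact, but this is harmless since the shift map $\omega \mapsto \omega + h$ is well-defined on $\Omega$ simply because any $L^2$ function defines a distribution, which is all that is needed here.
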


Since smooth noises $\Omega_\infty$ are in general not in the Cameron-Martin space, we define the space of compactly supported smooth noises $\Omega_{\infty,c}:=\bigoplus_{\Xi\in\FL_-}\CC_c^\infty(\domain)$. It will often be convenient to identify functions $h \in \CC_c^\infty(\bar\domain)$ with the element of $\CC_c^\infty(\domain)$ obtained by symmetrisation.
We endow $\CC_c^\infty(\domain)$ with the usual topology (which induces convergence in the sense of test functions), and we define the seminorms
\[
\|f\|_{\alpha,K}:=\sup_{x\in K}|D^\alpha f(x)|
\]
for $K\subseteq\domain$ compact and $\alpha\in\N^d$. 

Recall \cite{Bogachev,Nualart2006} that there is a canonical isomorphism $h \mapsto \sintsimple(h)$ between the Cameron-Martin space
$H$ and a closed subspace $\CH_1$ of $L^2(\Omega,\P)$ with the property that $(
\sintsimple(h))_{h \in H}$ are jointly Gaussian
random variables. This extends to 
isomorphisms $\sintsimple_m$ between the symmetric tensor product $H^{\otimes_s m}$ and subspaces $\CH_m$ of $L^2(\Omega,\P)$ by setting
$\sintsimple_m(h\otimes\ldots\otimes h) = H_m(\sintsimple(h),\|h\|_H)$, where $H_m(x,c)$ denotes the $m$th Hermite polynomial
with parameter $c$. These maps extend to contractions on the full tensor product spaces $H^{\otimes m}$ by setting $\sintsimple_m(h_1 \otimes \ldots \otimes h_m):= \sintsimple_m (h_1 \otimes_s \ldots \otimes_s h_m)$.
We call $\sintsimple_m(h)$ the \emph{iterated integral} of $h\in H^{\otimes m}$ with respect to $\P$, and we write $\sintsimple_m(h)[\xi]$ if we want to emphasise the dependence of $\sintsimple_m(h)$ on the noise $\xi$.

We write $\pi_\Xi: L^2(\domain) \to H$ for the isometry given by $(\pi_\Xi h) _{\tilde\Xi} = h \one_{\tilde\Xi = \Xi}$ for any $h \in L^2(\domain)$. More generally, given $m \ge 1$ and a map $\tilde\cm: [m] \to \FL_-$ we write $\pi_{\tilde\cm} : \bigotimes^m L^2(\domain) \to \bigotimes^m H$ for the isometry which satisfies
\[
\pi_{\tilde\cm} (h_1 \otimes \ldots \otimes h_m) := (\pi_{\tilde\cm_1} h_1) \otimes \ldots \otimes (\pi_{\tilde\cm_m} h_m).
\]
%
We then introduce the notation
\begin{align}\label{eq:stochasticintegral_cm}
\sintsimple_{\tilde\cm}(h):=\sintsimple_m( \pi_{\tilde\cm} h)\;.
\end{align}
for any $h \in \bigotimes^m L^2(\domain)$.
We will mostly need a stochastic integral whose output is a smooth \emph{stationary function} on $\domain$ rather than just a number, and we define
\label{idx:sint}
\begin{align}
\sintf_{\tilde\cm} (h)(z) := \sintsimple_{\tilde\cm} \big(h(z- (\cdot)_1, \ldots, z- (\cdot)_m) \big).
\end{align}

Finally, recall from Remark~\ref{rmk:multisets} that given a total order $\preceq$ on $\FL_-$ (which we assume to be fixed once and for all) we obtain a map $\tilde\cm : [\#\cm] \to \FL_-$ for any multiset $\cm$. We then abuse notation slightly and write $\sintf_\cm := \sintf_{\tilde\cm}$.

\subsection{Non-Gaussian noises}\label{sec:non-gaussian-noises}

In this section, let $\sFLm$ be a finite set of noise types such that $\FLm \ssq \sFLm$. 
A possible choice is of course $\sFLm=\FLm$, but we do not require this here. 
One should rather think of $\sFLm$ as an enlarged set of noise types, see Section~\ref{sec:shift}.
The noises $(\eta_\snt)_{\snt \in \sFLm}$ which we will consider always take values in a fixed inhomogeneous Wiener chaos with respect to the (fixed) family of independent Gaussian white noises $\xi = (\xi_\Xi)_{\Xi \in \FLm}$. 
For technical reasons we restrict ourselves to a class of noises $\eta$ such that the kernels of $\eta_\Xi$ (in the Wiener chaos decomposition) has a relatively simple structure. For this we write $\CCone \ssq \CCinf$ for the space of smooth functions $\vphi \in \CCinf$ which are supported in a neighbourhood of $|\cdot|_\fs$ radius $1$ around the origin.
We also fix an integer $r \in \N$ larger than $\shalf$, and given a homogeneity $\alpha<0$ and a kernel $K \in \CC_c^\infty(\bar\domain\backslash\{0\})$ we write $\|K\|_\alpha \in [0,\infty]$ for the smallest constant such that 
\begin{align}\label{eq:norm:blowup:1}
|D^k K(x)| \le \|K\|_\alpha |x|_\fs^{\alpha - |k|_\fs}
\end{align}
for any $x \in \bar\domain \backslash\{0\}$ and multi-index $k \in \N^d$ with $|k|_\fs <r$, and such that
\begin{align}\label{eq:norm:blowup:2}
\int x^k K(x) dx \le \|K\|_\alpha 
\end{align}
for any $k \in \N^d$ with $|k|_\fs \le \roundup{-\alpha-|\fs| }$.

\begin{definition}\label{def:CYsimp}
For $n \in \N$ let $\CYsimp$ denote the space
\begin{align}\label{eq:CYsimp}
\CYsimp = \bigotimes_{i=0}^n \CCone.
\end{align}
For any $\bar\alpha = (\bar\alpha_i)_{i =0, \ldots, n} \in \R_-^{n+1}$ we define a norm on $\CYsimp$ by 
\begin{align}\label{eq:CYsimp:norm}
\| K_0 \otimes \ldots \otimes K_n \|_{\bar\alpha} := \prod_{i = 0}^n \| K_i \|_{\bar\alpha_i},
\end{align}
Finally, given $\alpha <0$ we define for $n\ge 2$ the norm
\begin{align}\label{eq:CYsimp:norm:2}
\| K \|_{\alpha} := \sup_{\bar\alpha} \| K \|_{\bar\alpha},
\end{align}
where the supremum on the right hand side runs over all $\bar\alpha \in \R_-^{n+1}$ such that $\sum_{i=0}^n \bar\alpha_i = \alpha - |\fs|$, $\bar\alpha_0>-|\fs|-1$ and $\bar\alpha_i>-|\fs|$ for $i = 1 , \ldots , n$.
For $n=1$ we define $\| K_0 \otimes K_1 \|_{\alpha} := \| K_0 \star K_1 \|_{\alpha}$.
\end{definition}

Elements $K \in \CYsimp$ define kernels $\CU K \in \CCinff{n}$ in the following way.

\begin{definition}\label{def:simple:kernel}
We define a linear map $\CU : \CYsimp \to \CCinff{n}$ by setting
\begin{align}\label{eq:simple:kernel}
\CU K (x_1, \ldots , x_n) = \int_{\bar\domain} dy K_0(y) K_1 (x_1 - y) \ldots K_n (x_n - y).
\end{align}
We call kernels of the form $\CU K$ \emph{simple kernels}, 
and we write $\CKsimp^n$\label{idx:CKsimpn} for the linear space generated by simple kernels in $n$ variables. 
\end{definition}

One should think of $\CU K$ as a kernel with respect to stochastic integration, see Definition~\ref{def:smooth:noise} below. 

\begin{remark}
One has an obvious isomorphism between $\CCinff{n}$ and $\barCCinff{n}$ given by identifying $K$ and $(x, x_1, \ldots, x_n) \mapsto K(x-x_1, \ldots, x-x_n)$. It will sometimes be useful to view simple kernels as elements of $\barCCinff{n}$ in this way, which we will do implicitly below.
\end{remark}

\begin{remark}
The ``kernels'' $K_i$ that we have in mind for $i = 1, \ldots , n$ are of the form $K_i = \lambda^{\beta + |\fs|}\phi_i^{(\lambda)}$ for some fixed test function $\phi_i$ and some $\lambda>0$, where $\beta := n^{-1} \alpha$, while $K_0$ will be of the form $K_0 := \phi_0^{(\lambda)}$ for some fixed test function $\phi_0$ integrating to zero. 
One then has $\|K_i\|_{\bar\alpha_i} \simeq \lambda^{\beta-\bar\alpha_i}$ and 
$\|K_0\|_{\bar\alpha_0} \simeq \lambda^{-\bar\alpha_0 -|\fs|}$ uniformly in 
$0 > \bar\alpha_i > -|\fs|$, $0 > \bar\alpha_0 > -|\fs|-1$ and $\lambda>0$, and thus
\begin{align}\label{eq:norm:simple:kernel}
\| K_0 \otimes \ldots \otimes  K_n \|_\alpha \lesssim 1
\end{align}
uniformly in $\lambda>0$. This is the type of kernel we will use when we define the shift of the noise in Section~\ref{sec:shift}. 

But we want the space of noises to be rich enough to  encode not only the shifts, but also an approximation to white noise itself. In this case one cannot choose $K_0$ to integrate to zero, which explains the slightly different definition of the norm $\|\cdot\|_\alpha$ on $\CYsimpp{1}$.
\end{remark}

We fix a homogeneity $\sfs:\sFLm \to \R_-$ with $\sfs \ge -\shalf-\kappa$ for $\kappa>0$ small enough, and we set $\beta^\snt_\cm := \sfs(\snt) - \#\cm \shalf$ for any $\snt \in \sFLm$ and any multiset $\cm$.

\begin{definition}\label{def:CYN}
For $N \in \N$ we denote by $\CYN$ the space of all families $K=(K_\cm^\snt)$ where $\snt \in \sFLm$ and $\cm$ runs over all multisets with values in $\FLm$ such that $\# \cm \le N$, and such that $K_\cm^\snt \in \CYsimpp{\#\cm}$.
On $\CYN$ we define the norm $\CYNnorm{\cdot}$ by setting
\begin{align}\label{eq:CYNnorm}
\CYNnorm{K} := \sum_{\cm,\snt} \| K_\cm^\snt \|_{\beta_\cm^\snt} 
\end{align}
We write $\CYNz$ for the closure of $\CYN$ under this norm.
\end{definition}
\begin{remark}
We will shortly interpret the kernels $K_\cm^\Xi$ as ``stochastic integration kernels'' which define a translation invariant noise in a fixed Wiener chaos, see Definition \ref{def:smooth:noise}. The norm defined in (\ref{eq:CYNnorm}) is then the natural norm to put on elements of $\CYN$. In particular,
\begin{align}\label{eq:varnorm}
\varnorm{K} := 
	\max_{\snt,\cm} \int_{\bar\domain^{\#\cm} } dx \int_{\bar\domain} dy
	\CU K_\cm^\snt(0, x_1, \ldots, x_{\#\cm}) \CU K_\cm^\snt(y, x_1, \ldots, x_{\#\cm}),
\end{align}
which corresponds to \cite[Eq.~A.15]{ChandraHairer2016}, is automatically bounded by $\CYNnorm{K}$. See the proof of Lemma~\ref{lem:bound:eta} for more details.
\end{remark}

\begin{definition}\label{def:smooth:noise}
For $N \in \N$ we denote by $\SMinfN = \SMinfN(\sFLm)$ the space of tuples $\eta=(\eta_\snt)_{\snt \in \sFLm}$ given by
\begin{align}\label{eq:smooth:noise}
\eta_\snt := \FKtN_\snt (K) := \sum_\cm \sintf_\cm(\CU K_\cm^\snt) 
\end{align}
for some $K \in \CYN$ with $\sintf_\cm$ as in \eqref{idx:sint}.
We call any $\eta \in \SMinfN$ a \emph{smooth noise}.
On $\SMinfN$ we define the norm
\begin{align}\label{eq:noise:norm}
\| \eta \|_{\sfs}
:=
\inf_{K} \| K \|_{\CYN},
\end{align}
where the infimum runs over all $K \in \CYN$ such that \eqref{eq:smooth:noise} holds,
and we denote by $\SMzN = \SMzN(\sFLm)$\label{idx:SMz} the closure of the set of simple smooth noises under this norm. 
(The space $\SMzN$ depends on $\sfs$, but we hide this dependence in the notation.)
It will be convenient to write $\SMinf := \bigcup_N \SMinfN$\label{idx:SMinf} and $\SMz := \bigcup_N \SMzN$.
\end{definition}

\begin{remark}
We will see in Lemma~\ref{lem:bound:eta} below that any smooth noise in our setting is a smooth noise in the sense of \cite{ChandraHairer2016}, and the distance $\|\cdot;\cdot\|_{\fc}$ considered there is dominated by $\|\cdot\|_\sfs$ (provided the \emph{cumulant homogeneity} $\fc$ is chosen appropriately, see below). One advantage of the restricted setting introduced here is that the spaces $\SMinf$ and $\SMz$ form \emph{linear spaces} and \eqref{eq:noise:norm} is indeed a \emph{norm} (this is very different from \cite{ChandraHairer2016}, where $\|\cdot;\cdot\|_{\fc}$ is not even a distance in the metric sense.) 
\end{remark}
\begin{remark}
One motivation behind this definition is that cumulants formed by noises of this type are represented by Feynman diagrams so we can use the results of \cite{Hairer2017}. This is of particular importance whenever we need results not covered in \cite{ChandraHairer2016} (for instance bounds on their large scale behaviour or conditions under which one does not see a $\log$-divergence for the renormalisation constant of $0$-order trees).
\end{remark}
In order to apply the results from \cite{ChandraHairer2016} we will have to bound cumulants of orders
higher than two. The assumptions in \cite{ChandraHairer2016} are formulated on objects called \emph{cumulant homogeneities}, see \cite[Def. A.14]{ChandraHairer2016}. We define now such a cumulant homogeneity $\fc$ consistent with $\sfs$. (Later on we will show that the shift of our noise is bounded uniformly by this cumulant homogeneity.)

Given a homogeneity assignment $\sfs$ we define a cumulant homogeneity \label{idx:sfsfc}$\fc=\phantom{}^{\sfs}\fc$ as follows. For any $M\in\N$, any map $\ft:[M]\to \sFLm$, any spanning tree $\T$ for $[M]$ and any interior vertex $\nu\in\interior\T$ we define the quantity
\begin{align}\label{eq:cumulanthomogeneity}
\fc^{(\ft,[M])}_\T(\nu):= -\bigg(\sum_{\mu\in\CC_\T(\nu)}\max_{u\in L(\T_\mu)}
\sfs{(\ft_u)} \bigg)+\max_{u\in L(\T_\nu)} \sfs{(\ft_u)} \I_{\nu\ne\rho_\T},
\end{align}
where $\CC_\T(\nu)$ denote the set of children of $\nu$ in $\T$ and $L(\T_\mu)$ denote the set of leaves $u$ of $\T$ such that $u\ge\mu$  with respect to the tree order. Note that in particular $L(\T_u)=\{u\}$ for any leaf $u\in L(\T)$.

%
%

\begin{remark}
In the notation of \cite{ChandraHairer2016}, we always set $\FL_{\cum}:=\FL_{\cum}^{\all}$.
\end{remark}

As a first result we check consistency \cite[Def. A.16]{ChandraHairer2016} of $\sfs$ and $^{\sfs}\fc$, and super-regularity of the shifted trees $\tau$. Here we call a tree $\tau$ ``shifted tree'' if there exists $\taua=(T^\fn_\fe,\ft) \in \CT$ such that $\tau = (T^\fn_\fe,\ft')$ where $\ft(e)=\ft'(e)$ for kernel-type edges $e \in K(\tau)$ and $\ft'(L(\tau)) \ssq \sFLm$. (The basis vectors of the larger regularity structure which we will construct in Section~\ref{sec:ext:reg:str} will be shifted trees in this sense.) The next lemma applies in particular in case of $\sFLm = \FLm$ and $\sfs = \fs$.

\begin{lemma}\label{lem:cumu_homo_consistent}
The cumulant homogeneity $^{\sfs}\fc$ is consistent with ${\sfs}$. Moreover, provided that $\sfs(\snt) \ge -\shalf-\kappa$ for any $\snt \in \sFLm$, any  shifted tree  is $(\phantom{}^{\sfs}\fc, |\cdot|_{\sfs})$-super-regular.
\end{lemma}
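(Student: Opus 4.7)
The plan has two independent parts: checking consistency of $\phantom{}^{\sfs}\fc$ with $\sfs$, and then super-regularity of shifted trees.

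First I would verify consistency by direct inspection of the formula \eqref{eq:cumulanthomogeneity}. The consistency condition in \cite[Def.~A.16]{ChandraHairer2016} amounts to a lower bound on $\fc^{(\ft,[M])}_\T$ summed along paths in the spanning tree $\T$. The crucial observation is that the expression for $\fc_\T(\nu)$ has been tailored precisely so that the sum $\sum_{\nu \in \T_\mu} \fc_\T(\nu)$ over any rooted subtree $\T_\mu$ telescopes: at each non-leaf vertex $\nu$, the contribution $+\max_{u \in L(\T_\nu)} \sfs(\ft_u)$ cancels exactly one of the children's $-\max_{u \in L(\T_\mu)} \sfs(\ft_u)$ contributions at the parent. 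What remains after telescoping is a single sum $-\sum_{u \in L(\T_\mu)}\sfs(\ft_u)$ of leaf homogeneities, which is exactly the quantity appearing on the right-hand side of the consistency condition. Each line of this verification is a small combinatorial computation on the definition of $\fc$ — no real analysis is needed.

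For the super-regularity statement, the required condition asks that for every subtree $S$ of the shifted tree $\tau$ containing the root and every allowed spanning-tree structure $\T$ on its leaves, the sum of kernel homogeneities over $K(S)$, plus $\sum_{\nu \in \mathrm{int}(\T)} \fc_\T(\nu)$, plus a boundary term, exceeds a suitable negative multiple of $|\fs|$. I would split this contribution into the edge part and the cumulant part. The cumulant part telescopes, by the same argument as above, to a clean sum of leaf homogeneities $\sum_{u \in L(S)}\sfs(\ft(u))$. Combined with the edge-kernel homogeneities from $K(S)$, the left-hand side then coincides (up to the boundary term) with the homogeneity $|S|_\fs$ of $S$ regarded as a tree in $\CT$ with its noise leaves re-typed according to $\sfs$ rather than $\fs$.

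The main step is therefore to reduce this quantity to the original rule-based tree homogeneity and invoke subcriticality. Since $\tau$ is a shifted tree, by definition it arises from some $\taua \in \CT$ by replacing each noise type $\Xi \in \FL_-$ with some $\snt \in \sFLm$ without changing any edge structure, and $S$ corresponds to a subtree of $\taua$. The hypothesis $\sfs(\snt) \ge -\shalf - \kappa = \fs(\Xi)$ for all $\snt \in \sFLm$ implies $|S|_{\sfs} \ge |S|_\fs$, so Assumption~\ref{ass:main:reg} applied to $\taua$ (which gives $|\sigma|_\fs > -\shalf$ for all its non-planted negative subtrees $\sigma$, together with $|\ft(e)|_\fs - |\fe(e)|_\fs > 0$ on kernel edges) yields the strict lower bound required for super-regularity.

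The only real subtlety is to track which vertices of $S$ are treated as ``noise vertices'' versus ``interior vertices'' when $S$ is a proper subtree of $\tau$ (so that some internal vertices of $\tau$ become leaves of $S$, and their subtrees get integrated out): one must check that the convention in \cite[Def.~A.14]{ChandraHairer2016} for how a pruned branch contributes matches the telescoping bookkeeping above. This is the step where I expect the main obstacle to lie, as it involves carefully aligning the combinatorial conventions of \cite{ChandraHairer2016} with those of \cite{BrunedHairerZambotti2016} used throughout the rest of the paper.
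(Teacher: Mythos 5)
Your consistency check follows the paper's route: the formula for ${}^{\sfs}\fc$ is designed so that partial sums over rooted subtrees of the spanning tree telescope to (minus) sums of leaf homogeneities, and the conditions of \cite[Def.~A.16]{ChandraHairer2016} then drop out from $\sfs<0$ and $\sfs>-|\fs|$. That half is fine.

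The super-regularity half has a genuine gap. Write $\tau=(T^\fn_\fe,\ft)$ for the shifted tree and $\taua=(T^\fn_\fe,\ft')$ for the underlying tree in $\CT$. After your telescoping reduction, the inequality you must verify for a subtree $S$ is
\begin{equ}
|(S^0_\fe,\ft)|_{\sfs} \;>\; -\hbar_{{}^{\sfs}\fc}\bigl(\ft(L(S))\bigr) \;=\; \max_{u\in L(S)}|\ft(u)|_{\sfs}\,,
\end{equ}
where the maximum is taken over the \emph{shifted} noise types. Your chain of inequalities ($\sfs\ge\fs$ on noises, hence $|(S^0_\fe,\ft)|_{\sfs}\ge|(S^0_\fe,\ft')|_{\fs}$, then Assumption~\ref{ass:main:reg} for $\taua$) only yields $|(S^0_\fe,\ft)|_{\sfs}>\max_{u}|\ft'(u)|_\fs$, i.e.\ a comparison against the threshold computed with the \emph{original} types (or against $-\shalf$). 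These are not the same: a shifted noise type $\snt$ may have $\sfs(\snt)$ arbitrarily close to $0$, in which case the true target $\max_u|\ft(u)|_{\sfs}$ exceeds $\max_u|\ft'(u)|_\fs$ by up to $\shalf$, and "the left-hand side only increases" does not close the estimate because the right-hand side increases too. The missing step --- which is the actual content of the paper's proof --- is to pick a leaf $v$ attaining $\max_{u\in L(S)}|\ft(u)|_{\sfs}$ and observe that when $\ft(v)\ne\ft'(v)$ the homogeneity gain of $S$ contains the summand $|\ft(v)|_{\sfs}-|\ft'(v)|_\fs$ coming from that very leaf, so that
\begin{equ}
|(S^0_\fe,\ft)|_{\sfs} \ge |(S^0_\fe,\ft')|_\fs + \bigl(|\ft(v)|_{\sfs}-|\ft'(v)|_\fs\bigr) > |\ft'(v)|_\fs + \bigl(|\ft(v)|_{\sfs}-|\ft'(v)|_\fs\bigr) = |\ft(v)|_{\sfs}\,,
\end{equ}
the strict inequality being super-regularity of the unshifted tree $\taua$; the case $\ft(v)=\ft'(v)$ is then immediate. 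Finally, the obstacle you flag as the main one (aligning the pruning conventions of \cite{ChandraHairer2016} with those of \cite{BrunedHairerZambotti2016}) is not where the difficulty lies; the paper works directly with $(S^0_\fe,\ft)$ and the whole subtlety is the type-dependence of the threshold $\hbar_{{}^{\sfs}\fc}$ just described.
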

\begin{proof}
We first check consistency in the sense of \cite[Def~A.16]{ChandraHairer2016}. Let $M\in\N$ and  $\ft:[M]\to\sFLm$. The fact that
\[
\sum_{\nu\in\interior\T} \phantom{}^{\sfs}\fc^{\ft,[M]}_\T(\nu)=-|\ft([M])|_{\sfs}
\]
follows directly from the definition. 
To see point 3 of \cite[Def. A.16]{ChandraHairer2016}, let $\nu\in\interior\T$ such that $\nu\ne\rho_\T$. Then we have
\[
\sum_{\mu\in\interior\T,\mu\ge\nu}\phantom{}^{\sfs}\fc^{\ft,[M]}_\T(\mu)
=
-\sum_{i\in L(\T_\nu)} |\ft_i|_{\sfs}+\max_{i\in L(\T_\nu)} |\ft_i|_{\sfs}
<
-|\ft(L(\T_\nu))|_{\sfs}.
\]
To see the last point, let $M\ge 3$ and $\nu\in\interior\T$ with $|L(\T_\nu)|\le 3$. Then
\[
\sum_{\mu\in\interior\T,\mu\ge\nu}\phantom{}^{\sfs}\fc^{\ft,[M]}_\T(\mu)<|\scale|(|L(\T_\nu)|-1)
\]
since by assumption one has $|\ft|_{\sfs}>-|\fs|$ for any noise type $\ft \in \sFLm$.

We show next that any shifted tree $\tau = (T^\fn_\fe,\ft)$ is super-regular. Let $\taua  = (T^\fn_\fe,\ft')$ be as in the definition of shifted trees.
Since the tree $\tau \in \CT_-$ is $\fs$-super-regular by assumption, one has for any subtree $S\subseteq T$ with the property that $\# K(S)>1$ the estimate
\[
|(S^0_\fe,\ft)|_{ \sfs }\ge|(S^0_\fe,\ft')|_{\fs}>-\shalf.
\]
Furthermore, we have in the notation of \cite[Def~A.24]{ChandraHairer2016} the identity
\begin{align}\label{eq:cumulant:homo:hbar}
\hbar_{^{\sfs}\fc}(\ft(L(S)))=
-\max_{\substack{u\in L(S)}} |\ft(u)|_{\sfs}.
\end{align}
Choose now a noise type edge $v\in L(S)$ with the property that the maximum on the right-hand side of (\ref{eq:cumulant:homo:hbar}) is attained for $v$. If $v$ is such that $\ft(v) \ne \ft'(v)$, then one has
\[
|(S^0_\fe,\ft)|_{\sfs}
\ge
|(S^0_\fe,\ft')|_{\fs} + (|\ft(v)|_{\sfs}- |\ft'(v)|_{\fs})
>-\hbar_{^{\sfs}\fc}(L(S)),
\]
where we use the fact that by super-regularity of $\tau$ one has $|(S^0_\fe,\ft)|_{\fs}> |\ft(v)|_\fs$.

Finally, in the notation of \cite[Def. 2.26]{ChandraHairer2016} we have for any leaf-typed sets $A$ and $B$ 
\[
j_A(B)\ge\shalf-\kappa
\]
and for $\kappa>0$ small enough we have  $-\shalf+\kappa<|(S_\fe^0,\ft)|_{\sfs}$.
\end{proof}


%
%

We recall the notation $\|\eta \|_{N,\fc}$ and $\|\eta ; \bar \eta \|_{N,\fc}$ from \cite[Def. A.18 \& A.19]{ChandraHairer2016}. 

\begin{lemma}\label{lem:bound:eta}
Fix $N, \bar N \in \N$. Let $\sfss : \sFLm \to \R_-$ be a second homogeneity assignment such that $-\shalf-(\cumN+1)\kappa < \sfss < \sfs - \cumN\kappa$ and let $\fc:=\phantom{}^{\sfss}\fc$.
For any $\eta \in \SMinfN$ and $C>0$ one has
\begin{align}\label{eq:noise:kernel:bound}
\|\eta\|_{\cumN,\fc} \lesssim 
\| \eta \|_{\sfs}
\end{align}
uniformly over all noises $\eta \in \SMinfN$ with $\|\eta\|_{\sfs} \le C$.

If $\bar \eta \in \SM_\infty^N$ is another smooth noise, then one has
\begin{align}\label{eq:noise:kernel:bound:2}
\| \eta ; \bar \eta \|_{\cumN,\fc} \lesssim 
\| \eta ; \bar \eta \|_{\sfs}
\end{align}
uniformly over all noises $\eta,\bar \eta$ with $\|\eta\|_{\sfs} \lor \|\bar\eta\|_{\sfs} \le C$.
\end{lemma}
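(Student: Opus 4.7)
The plan is to reduce the estimate (\ref{eq:noise:kernel:bound}) to a direct bound on every joint cumulant of $\eta$, for an arbitrary fixed representation $\eta = \FKtN_\cdot(K)$ with $K \in \CYN$, and then take the infimum over representations to conclude. Since each component $\eta_\snt$ is by construction a finite sum $\sum_\cm \sintf_\cm(\CU K_\cm^\snt)$ of iterated Wiener integrals of order at most $N$ with respect to the Gaussian noise $\xi$, the $n$-fold joint cumulant $\kappa_n(\eta_{\snt_1}(z_1), \ldots, \eta_{\snt_n}(z_n))$ is given by the classical product/diagram formula for multiple Wiener–Itô integrals (see e.g.\ Peccati–Taqqu). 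Concretely, it is a finite sum indexed by \emph{connected} contraction patterns, each of which pairs all but the ``external'' arguments of the kernels $\CU K_{\cm_i}^{\snt_i}$; connectedness is forced because disconnected contributions cancel out under the cumulant expansion.

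Next, I would translate this contraction sum into the Feynman-diagram language of \cite[App.~A]{ChandraHairer2016}. Because every simple kernel $\CU K_\cm^\snt$ has the star-shaped representation (\ref{eq:simple:kernel}) — a single internal ``hub'' vertex carrying $K_0$, from which $\#\cm$ legs carrying $K_1,\ldots,K_{\#\cm}$ emanate — each Feynman graph appearing in the cumulant has exactly $n$ hub vertices (one per instance of $\eta$), pairwise joined by noise edges (one per Wiener contraction). Thus it lies in the class of diagrams that are covered by the moment/cumulant bounds of \cite[Thm.~A.3]{ChandraHairer2016} together with the power-counting estimates of \cite{Hairer2017}. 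The definition of $\CYsimpp{\#\cm}$ and the norm (\ref{eq:CYsimp:norm:2}) are tailored so that, no matter how a spanning tree $\T$ on $[n]$ organises the hierarchy of scales, one can find an admissible splitting $\bar\alpha_0+\bar\alpha_1+\cdots+\bar\alpha_{\#\cm}=\beta_\cm^\snt-|\fs|$ matching that hierarchy: (\ref{eq:norm:blowup:1}) controls the small-scale blow-up along each contraction edge, while (\ref{eq:norm:blowup:2}) controls the large-scale behaviour through $K_0$, and the supremum in (\ref{eq:CYsimp:norm:2}) guarantees that either choice is dominated by $\|K_\cm^\snt\|_{\beta_\cm^\snt}$.

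With this splitting fixed for a given spanning tree $\T$, I would carry out the standard multiscale integration-by-parts argument to bound the contribution of the diagram by a product of the simple-kernel norms $\|K_{\cm_i}^{\snt_i}\|_{\beta_{\cm_i}^{\snt_i}}$, times an integrable factor in the scale parameters whose exponent is exactly $\sum_{\nu \in \interior \T}\fc^{\ft,[n]}_\T(\nu)$. The slack $\sfs - \sfss \ge N\kappa$ between the two homogeneity assignments in the statement of the lemma provides precisely enough ``wiggle room'' in the exponents to absorb the (finitely many) combinatorial factors arising from summing over contraction patterns, spanning trees $\T$, and interior vertices $\nu$, as well as the usual logarithmic losses inherent in the definition of $\|\cdot\|_{N,\fc}$ in \cite[Def.~A.18]{ChandraHairer2016}. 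Consistency and super-regularity of $\fc=\phantom{}^{\sfss}\fc$ needed here are supplied by Lemma~\ref{lem:cumu_homo_consistent}. Taking the infimum over representations $K$ of $\eta$ on both sides yields (\ref{eq:noise:kernel:bound}).

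For the difference bound (\ref{eq:noise:kernel:bound:2}), I would exploit the multilinearity of joint cumulants: writing $\eta - \bar\eta = \FKtN(K-\bar K)$ and expanding $\kappa_n(\eta_{\snt_1},\ldots,\eta_{\snt_n}) - \kappa_n(\bar\eta_{\snt_1},\ldots,\bar\eta_{\snt_n})$ by a telescoping sum in which exactly one slot is replaced by a $(K-\bar K)$-factor while the remaining slots carry either $K$ or $\bar K$, each term is again a connected Feynman diagram to which the same power-counting argument applies, producing a factor of $\|K-\bar K\|_{\CYN}$ multiplied by a bounded polynomial in $\|K\|_{\CYN}$ and $\|\bar K\|_{\CYN}$. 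I expect the main obstacle will be the bookkeeping in the third step — in particular, verifying uniformly over all spanning trees $\T$ that the exponent $\sum \fc^{\ft,[n]}_\T(\nu)$ produced by our chosen splitting matches the cumulant homogeneity $\fc$ prescribed in the statement, and ensuring that the special treatment of the $n=1$ case in Definition~\ref{def:CYsimp} (where $K_0$ need not integrate to zero and the norm is defined via convolution) is compatible with the noise-edge estimates in the $\#\cm=1$ chaos component, which is the one that accommodates the approximation of white noise itself.
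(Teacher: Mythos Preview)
Your proposal is correct and follows essentially the same approach as the paper: expand cumulants via connected pairings of the Wiener-chaos legs, exploit the star shape of simple kernels to reduce to per-edge bounds governed by a splitting $\bar\alpha$ of the total homogeneity, and then match that splitting to the scale hierarchy encoded by a spanning tree $\T$.  The paper resolves exactly the ``main obstacle'' you flag --- verifying uniformly over $\T$ that the chosen splitting reproduces the exponent $\sum_{\mu\ge\nu}\fc^{\ft,[M]}_\T(\mu)$ --- by an explicit recursive assignment $\bar k\colon\topcirc\T\setminus\{\rho_\T\}\to[M]$ (at each node pick an unused leaf index minimising $\sfss\circ\ft$) together with the concrete choice $\beta_i^{(k)}=-\shalf+(\sfs(\ft(k))+\kappa)\I_{i=i(k)}-\kappa\I_{i=j(k)}$, where $i(k)$ indexes a leg contracting at the coarsest scale seen by $k$; the gap $\sfs-\sfss>\bar N\kappa$ is then used exactly as you anticipate.
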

\begin{proof}
We only show (\ref{eq:noise:kernel:bound}), the bound (\ref{eq:noise:kernel:bound:2}) follows similarly.
Let $K \in \CYN$ be such that (\ref{eq:smooth:noise}) holds and such that $\CYNnorm{K} \le 2 \|\eta\|_\sfs$. To continue the proof, we introduce some notation from \cite{ChandraHairer2016}. Given $M \in \N$ we call $\T$ a \emph{spanning tree} for $M$ if $\T$ is a binary, rooted tree with set of leaves given by $L(\T) = [M]$. We denote by $\topcirc\T$ the set of interior nodes of $\T$ and we call an order-preserving map $\s:\topcirc\T \to \N$ a \emph{labelling}. 
Given a labelled spanning tree $(\T,\s)$ and a map $\ft : [M] \to \sFLm$ we introduce the notation
\[
\langle \fc^{\ft,[M]}_\T,\s \rangle := \sum_{\nu \in \topcirc \T} \fc^{\ft,[M]}_\T (\nu) \s(\nu),
\]
and the set $D(\T,\fs) \ssq \bar\domain^M$ as the set of $x \in \bar\domain^M$ such the
\[
C^{-1} 2^{-\s (k \land_\T l)} \le |x_k - x_l| \le
C 2^{-\s (k \land_\T l)}
\]
for any $1 \le k,l \le M$ and for some constant $C>0$ large enough. (Here $C>0$ is fixed but large enough so that the sets $D_{(\T,\s)}$ cover all of $\bar\domain^M$.)
With this notation one has 
\[
\| \eta \|_{\cumN,\fc} 
\le 
\varnorm{K} + 
\max_{M \le \cumN} \max_{\ft : [M] \to \sFLm} 
\sup{(\T,\s)}
\sup_{x \in D(\T,\s)}
|\E^c[ ( \eta _{ \ft(k) } (x_k) )_{k \le M} ]|
2^{- \langle \fc^{\ft,[M]}_\T,\s \rangle },
\]
where the first supremum runs over all labelled spanning trees $(\T,\s)$ for $M$.

We fix from now on $M \le \cumN$, a type map $\ft:[M] \to \sFLm$ and a spanning tree $\T$ for $M$. 
Writing $\E^c (X_k)_{k\le M}$ for the $M$th joint cumulant of a collection of random variables $X_k$, 
the cumulant of the noises $\eta_{\ft(k)} (x_k)$ can be bounded by 
\[
|\E^c[ ( \eta _{ \ft(k) } (x_k) )_{k \le M} ]|
\le
\sum_{\cm} 
|\E^c[ \sintf_{\cm(k)}( \CU K _{ \cm(k) }^{\ft(k)} (x_k) )_{k \le M} ]|
\] 
where the sum runs over all families $(\cm_k)_{k \in [M]}$ where each $\cm_k$ is a multiset with values in $\FLm$. We fix such a family from now on.
We then write $K^{(k)} := K_{\cm_k}^{\ft(k)} \in \CYsimpp{\#\cm(k)}$, so that it suffices to show that
\[
|\E^c[ \sintf_{\cm_k}(\CU  K ^{(k)} (x_k) )_{k \le M} ]|
\lesssim 
\prod_{k=1}^M \| K^{(k)} \|_{\beta^{(k)}}
2^{\langle \fc^{\ft,[M]}_\T,\s \rangle },
\]
uniformly over all labelling $\s$ and $x \in D_{(\T,\s)}$, where $\beta^{(k)} := \beta_{\cm_k}^{\snt(k)}$ is as in \eqref{eq:CYNnorm}.
It suffices to show this bound uniformly over all simple tensors $K^{(k)} = K^{(k)}_0 \otimes \ldots \otimes K^{(k)}_{m_k}$, where $m_k := \# \cm_k$, the general case follows from the definition of the tensor norm. 

We define $\Lambda:=\{ (k,l): 1 \le k \le M, 1 \le l \le m(k) \}$.
We think of $\Lambda$ as indexing the variables of the kernels $K^{(k)}$ which are integrated out by stochastic integration. We define $\CP$ as the set of pairings $P$ of $\Lambda$ with the following properties. We require that for any $\{(k,l), (m,n)\} \in P$ one has $k \ne m$ and $\snt(P):= \tilde\cm_{k} (l) = \tilde\cm_{m}(n)$. (The first condition reflects the fact that our noises take values in homogeneous Wiener Chaoses, so that self contractions do not need to be considered, the second condition reflects the fact that the Gaussian noises $\xi_\Xi$ are independent.) We also require that the pairing is connected, in the sense that if $\sim$ denotes the smallest equivalence relation on $[M]$ with the property that $k\sim l$ whenever there exists some $i,j$ such that $\{(k,i), (l,j)\} \in P$, then all elements of $[M]$ are equivalent.

The cumulant can then be written as
\[
\E^c[ \sintf_{\cm_k}(\CU  K ^{(k)} (x_k) )_{k \le M} ]
=
\sum_{P \in \CP} E_P(x),
\]
where
\begin{align}\label{eq:EP}
E_P(x) := \int_{\bar\domain^\Lambda} dy_\Lambda
	\prod_{k=1}^M
	\CU K^{(k)} ( ( x_k - y _{ (k,l) } ) _{l \le m_{\ft(k)} } )
	\prod_{P=\{a,b\} \in \CP}
	\delta(y_a - y_b),
\end{align}
and we will show that for any $P \in \CP$ one has
\begin{align}\label{eq:EP:bound}
|E_P(x)| \lesssim \prod_{k=1}^M \| K^{(k)}\|_{\beta^{(k)}}  \, 2^{\langle \fc^{\ft,[M]}_\T,\s\rangle}.
\end{align}

Let $J \ssq [M]$ denote the set of indices $k \le M$ with $m_k >1$, and we  write 
\[
E_P(x) = \int_{\bar \domain^M} dz_M \prod_{k \in J} K_0^{(k)}(x_k - z_k) \prod_{k \notin J} \delta_0(x_k - z_k) \tilde E_P(z) dz,
\]
with
\[
\tilde E_P(z) := 
\int_{\bar\domain^\Lambda} dy_\Lambda
	\prod_{k=1}^M
	\tilde\CU K^{(k)} ( ( x_k - y _{ (k,l) } ) _{l \le m_{\ft(k)} } )
	\prod_{P=\{a,b\} \in \CP}
	\delta(y_a - y_b),
\]
where we set
\begin{align*}
\tilde\CU K^{(k)} := 
\begin{cases}
\CU( \delta_0 \otimes K_1^{(k)} \otimes \ldots \otimes K_{m_k}^{(k)})  & \text{ if } k \in J \\
\CU(K_0^{(k)} \otimes K_1^{(k)})		& \text{ if } k \notin j.
\end{cases}
\end{align*}

It suffices to show that bound uniformly over kernels with $\| K_0^{(k)} \|_{-|\fs|} = 1$ for any $k$. Then, it suffices to show \eqref{eq:EP:bound} with $E_P$ replaced by $\tilde E_P$, the bound for $E_P$ can  be argued as in \cite[Sec.~B]{ChandraHairer2016}. 


By definition, for every choice of homogeneities $\beta^{(k)}_i$, $k = 1 , \ldots, M$, $i = 1, \ldots, m_k$, with $-|\fs| -\I_{k \notin J}< \beta^{(k)}_i < 0$  and $\sum_{i=1}^{m_k} \beta^{(k)}_i = \beta^{(k)}$ one has the bound
\[
|\tilde E_P(x)| \lesssim 
\left(
	\prod_{k=1}^M \prod_{i=1}^{m_k} \| \tilde K_i^{(k)} \|_{\beta^{(k)}_i}
\right)
\prod_{\{(k,i),(l,j)\} \in \CP}
	2^{- \s(k \land_\T l) \big((\beta^{(k)}_i + \beta^{(l)}_j + |\fs|) \land 0 \big) }.
\]
Here, we set $\tilde K^{(k)}_i := K^{(k)}_i$ if $k \in J$ and $\tilde K^{(k)}_1 := K^{(k)}_0 \star K^{(k)}_1 = \tilde\CU(K^{(k)})$ if $k \notin J$. 

Since by definition one has the estimate
\[
\prod_{k=1}^M \prod_{i=1}^{m_k} \| \tilde K_i^{(k)} \|_{\beta^{(k)}_i} \le \prod_{k=1}^M \|
  K^{(k)}\|_{\beta^{(k)}},
\]
it remains to find a choice of $\beta^{(k)}_i$ as above with the property that 
\begin{align}\label{eq:beta:fc}
-\sum_{\mu \ge \nu}\sum_{k,l: (k \land_\T l) = \mu } \Big((\beta^{(k)}_i + \beta^{(l)}_j + |\fs|) \land 0 \Big)
 \le
\sum_{\mu \ge \nu}
\fc^{t,[M]}_\T(\mu)
\end{align}
for any $\nu \in \topcirc\T$. Let $\bar k: \topcirc\T \backslash\{\rho_\T\} \to [M]$ be the injective map defined recursively by setting\footnote{If the $\argmin$ is not unique, we choose a minimizer arbitrarily.} $\bar k(\nu) := \argmin_{k \in L(\T_\nu)} \sfss({\ft(k)})$ if $\nu$ is maximal in $\topcirc\T$, and
\begin{align}\label{eq:k:bar}
\bar k(\nu) := \argmin\left( \sfss({\ft(k)}) : k \in L(\T_\nu) \backslash\{ \bar k(\mu) : \mu > \nu \}\right)
\end{align}
otherwise. (Recall that $L(\T_\nu)$ denotes the set of leaves $u \in L(\T)$ such that $u \ge \nu$.) Note that $\fc^{\ft,[M]}_\T(\nu) = -\sfss({\ft(\bar k(\nu))})$ for $\nu \in \topcirc\T \backslash\{\rho_\T\}$. Denote moreover by $k_1, k_2 \in[M]$ the two distinct elements of $[M]$ not in the range of $\bar k$, so that
$\fc^{t,[M]}_\T(\rho_\T) = -\sfss({k_1}) - \sfss({k_2})$. 

Conversely, denote by $\bar \nu(k) \in \topcirc\T$ the interior node of $\T$ with the property that $K^{(k)}$ ``collapses'' at $\bar\nu(k)$, i.e.\ $\bar\nu(k)$ is the maximum node $\nu$ with the property that whenever $\{ (k,i), (l,j)\} \in P$ one has $k\land_\T l \ge \nu$. Since we only have to consider ``connected'' pairings, it is clear that 
\[
\#\{ k \in [M] : \bar\nu(k) \ge \mu \} \le \# L(\T_\mu) -1
\]
for any $\mu \in \topcirc\T \backslash\{ \rho_\T \}$. Let finally $i(k) \in \{1 ,\ldots, m(k) \}$ denote some index such that $\{ (k,i(k)), (l,j) \} \in P$ for some $(l,j) \in \Lambda$ such $k \land_\T l = \bar\nu(k)$.

We also choose an arbitrary index $j(k) \in \{1 ,\ldots, m(k)\}$ such that $j(k) \ne i(k)$ whenever $k \in J$ (and hence $m(k)>1$).
With the choice
\[
\beta_i^{(k)} := - \shalf + (\sfs({ \ft(k) }) + \kappa) \I_{i = i(k)} - \kappa \I_{i = j(k)}
\]
one has $\sum_i \beta_i^{(k)} = \beta^{(k)}$ and $\beta_i^{(k)} > -|\fs| - \I_{k \notin J}$, so that it remains to show (\ref{eq:beta:fc}), which follows once we show that
\[
\sum_{k: \bar\nu(k) \ge \nu} \sfs({\ft(k)}) 
- \kappa M
\ge
\sum_{\mu \ge \nu} \sfss({\ft(\bar k(\mu))})
\]
It is clear from the fact that the numbers $\bar k(\mu)$ where recursively chosen to maximise $\sfs({\ft(k)})$, so that for any $A \ssq L(\T_\nu)$ with $\# A \le \#L(\T_\nu)-1$ one has
\[
\sum_{\mu \ge \nu} \sfss({\ft(\bar k(\mu))}) 
\le
\sum_{k \in A} \sfss({\ft(k)})
\le
\sum_{k \in A} \sfs({\ft(k)}) + \kappa \cumN.
\]
Since $\bar\nu(k) \ge \nu$ implies $k \in L(\T_\nu)$, the proof is finished.
\end{proof}

\subsection{Additional technical assumptions}\label{sec:technical:assumpation}

For the main result of this article we need a technical assumption that guarantees that ``logarithmic'' trees which appear (modulo polynomial decoration) as a subtree of another ``logarithmic'' tree are such that the BPHZ character vanishes automatically. This should also hold after we shift the noise. It turns out that in some examples (for instance generalised KPZ, see Section~\ref{sec:gKPZproof}), this is not true if we would consider arbitrary shifts. Instead we exploit certain (anti-)symmetries of our integration kernels, and for this we need the expectation of our noise to be invariant under these symmetries. To make this more concrete, we fix a finite symmetry group $\groupD \ssq \GL(d)$\label{symmetry}\label{idx:groupD} in $d$ dimensions.
The typical case one should have in mind (and suffices for our purpose) is when $\groupD$ is generated by finitely many
spatial reflections.

To incorporate this symmetry into our definitions, we make the following definition.

\begin{definition}\label{def:CYshift}\label{def:CYzsimp}
We denote by $\CYshift \ssq \CYsimp$ the set of $K \in \CYsimp$ such that $\CU K$ is invariant under simultaneous transformation of all variables by any $A \in \groupD$. We also write $\CYNshift \ssq \CYN$ for the space of all $K=(K_\cm^\Xi)$ such that $K_\cm^\Xi \in \CYshiftt{\#\cm}$ for any $\cm$ and any $\Xi$, and we write $\CYNzshift \ssq \CYNz$ for the closure of $\CYNshift$ under the norm (\ref{eq:CYNnorm}).
\end{definition}

Later on it will be convenient to also introduce the notation $\CYzsimpp{n} \ssq \CYshift$ for the linear space spanned by $K_0 \otimes \ldots \otimes K_n \in \CYshift$ such that $\int K_0 = 0$.
Note that for any $K \in \CYshift$ one can view $\CU K$ as an element $\barCCng$. Here, we let $\groupD$ act on $\bar\domain^n$ via $A (x_i)_{i \le n} := (A x_i)_{i \le n}$ for any $A \in \groupD$. The following definition will play an important role.

\begin{definition}\label{def:shifted:noise}
We write $\SMsinf \ssq \SMinf$\label{idx:SMsinf} for the subspace of noises given as in (\ref{eq:smooth:noise}) for some $K \in \CYNshift$, and we write $\SMsz \ssq \SMz$\label{idx:SMzinf} for the closure of $\SMsinf$ under the norm (\ref{eq:noise:norm}).
We call a smooth noise $\eta = (\eta_\Xi)_{\Xi \in \FL_-} \in \SMsinf$ 
a ``shifted smooth noise''.
\end{definition}



The terminology ``shifted noise'' will become clear in Section~\ref{sec:shift:WC}. 
In order to formulate our assumption, let \label{idx:CV}$\CV$  denote the set of trees $\tau \in \TT_-$ with $\fancynorm{\tau}_\fs=0$ and which are ``subtrees'' (modulo polynomial decoration) of a larger tree of zero homogeneity. More precisely, for any $\tau \in \CV$ there exists another tree $\sigma= S^\fn_\fe \in \TT_-$ with $\fancynorm{\sigma}_\fs=0$, a proper sub tree $\taua =\tilde T^{\fn}_{\fe}\ssq \sigma$ of 
$\sigma$ (``proper'' means that $E(\taua) \subsetneq E(\sigma)$) and a decoration $\tilde n: N(\taua) \to \N^d$  such that $\tau = {\tilde T}^{\tilde \fn}_\fe$. We also assume that $\tilde\tau$ is connected to its complement in $\sigma$ with more than one node, so that $\# \{ u \in N(\tilde\tau) : \exists e \in E(\sigma) \setminus E(\tilde\tau) \text{ with } u \in e\} > 1$.

\begin{assumption}\label{ass:zero-homo}\label{ass:technical}
We assume that for any $\tau \in \CV$ and (not necessarily Gaussian)  shifted smooth noise $\eta$ one has $g^{\eta} (\tau) = 0$. 
\end{assumption}

\begin{remark}
The only place where Assumption~\ref{ass:technical} is used is the proof of Lemma~\ref{lem:blowuprenormconstantzerohomo} below. 
Loosely speaking, it ensures that if $\tau$ is a tree of $0$ homogeneity and only one of its noises is made slightly more regular, then the renormalisation constant does not present any logarithmic divergences anymore. We need this to ensure that renormalisation constants of ``shifted'' trees are bounded by a constant only depending on the largest scale involved (we will have various shifts which are regularised on different scales).
The strategy we employ below relies on upper \emph{and lower} bounds of the blow-up behaviour of renormalisation constants, 
from which we deduce exactly which ``shifted'' tree is dominant. 
We do not show such a lower bound for log-divergences, which is why we need an additional assumption ensuring that there is only the ``main'' log-divergence and no log-subdivergence. 
\end{remark}

Finally, denote by $\CV_0$\label{idx:CV0} the set of $\tau \in \TT_-$ with $\fancynorm{\tau}_\fs = 0$ and $\# L(\tau) = 2$.
\begin{assumption}\label{ass:CVz}\label{ass:last}
We assume that for any $\tau \in \CV_0$ and (not necessarily Gaussian)  shifted smooth noise $\eta$ one has $g^{\eta} (\tau) = 0$. 
\end{assumption}

Assumption~\ref{ass:CVz} is needed in Section~\ref{sec:constraints} since the stability under removing the large-scale cutoff given in Theorem~\ref{thm:evaluation:trees:largescale} fails in general for $\tau \in \CV_0$. Note that for $\tau \in \CV_0$ one has $\E \PPi^\eta \tau (0) = -g^\eta(\tau)  = 0$.

\begin{remark}
We give an informal reason why the previous assumption is needed in Theorem~\ref{thm:evaluation:trees:largescale}. 
In this theorem we consider the evaluation from Definition~\ref{def:bar:Upsilon:large:scale}, which defines a constant based on the idea of integrating a tree $\tau$ with leaves $u_1, \ldots, u_n$ against a test function $\phi(u_1, \ldots, u_n)$.
We will assume in this context that $\phi$ is a function of the differences of its arguments 
and compactly supported in these differences (i.e. there exists $R>0$ so that $\phi(u_1, \ldots, u_n)=0$ whenever there exists $i,j \le n$ such that $|u_i - u_j|>R$).
Under this assumption we will show that this evaluation remains bounded as one removes 
the large-scale cutoff from the integration kernels. 
The proof relies on a counting argument, which we use to apply the results from the last section of \cite{Hairer2017}. One can think of this as a generalisation of the fact that $\int_{\R^d} (1+|x|)^p dx$ exists if and only if $p<-d$ to the case of generalised convolutions. 
In this analogy, the case $\tau \in \CV_0$ is similar to the situation of trying to integrate 
$(1+|x|)^{-d}$, which diverges on large scales.

Note that we do not have this problem for trees with more than two leaves, even if they are logarithmically divergent. This is because we assume that $\phi$ is compactly supported in all differences between its arguments, which in some sense means that we ``gain'' a degree $\frac{|\fs|}{2}$ for every leaf, as far as the power counting argument is concerned (equation \eqref{eq:psi_estimate} makes this more clear).
\end{remark}
\begin{remark}\label{rmk:assumptions}
One can replace Assumption~\ref{ass:CVz} by the weaker Assumptions~\ref{ass:CJHopfIdeal} and~\ref{ass:CHBPHZcharacters} introduced in Section~\ref{sec:CJ} below. We will show in Section~\ref{sec:constraints} that 
the former really implies the latter two. 
In some interesting examples, including SDEs, $\Phi^p_2$, Yang--Mills and the parabolic Anderson in two spatial dimensions, Assumptions~\ref{ass:CJHopfIdeal} and~\ref{ass:CHBPHZcharacters} can be shown ``by hand'' relatively easily, even though all of these examples violate Assumption~\ref{ass:CVz} above. 
However, for many more convoluted examples, including $\Phi^4_{4-\eps}$, generalised KPZ or the parabolic Anderson model in three dimensions, it seems difficult to show these assumptions by hand. 
We actually expect Assumptions~\ref{ass:CJHopfIdeal} and \ref{ass:CHBPHZcharacters} 
always to hold, so that one should be able to drop 
Assumption~\ref{ass:CVz} with a little more technical effort. 
\end{remark}

\section{A support theorem for random models}
\label{sec:support:thm:models}

Recall that we fix a Gaussian (space or space-time) white noise $\xi = (\xi_\Xi)_{\Xi \in \FL_-}$, which we can view as an element of $\SMsz$, see Definition~\ref{def:shifted:noise}. We also fix a smooth mollifier $\rho\in \CCinfg$ with $\int \rho = 1$, so that $\xi^{\eps}:=\xi \star \rho^{(\eps)} \in \SMsinf$ for any $\eps>0$ and one has $\xi^\eps \to \xi$ in $\SMsz$. We write $g^\eps := g^{\xi^\eps}$ for the BPHZ character \eqref{eq:BPHZ:character} and $\hat Z^\eps:= \hat Z^{\xi^\eps}$ and $\hat {\PPi}^\eps := \hat {\PPi}^{\xi^\eps}$ for the BPHZ-renormalised lift (\ref{eq:lift:BPHZ}) of $\xi^\eps$

\subsection{The ideal \texorpdfstring{$\CJ$}{J}}\label{sec:CJ}

Let us first introduce the following notation, which we will use heavily in the forthcoming sections. Given a kernel assignment $(G_\ft)_{\ft \in \FL_+}$ with $G_\ft \in \CC_c^\infty(\bar\domain\backslash \{0\})$ absolutely integrable, we define for any tree $\tau \in \CT$ a function $\CK_G\tau : \bar\domain^{L(\tau)} \to \R$ by
\label{ind:CK_G}
\begin{multline}\label{eq:CK}
\CK_G \tau ( x_ { L (\tau) } )
:=
 \int_{\bar\domain^{N(\tau)}} dx \,
	\delta(x_{\rho_\tau})
	\prod_{e\in K(\tau)} D^{\fe(e)} G_{\ft(e)}(x_{e^\downarrow}-x_{e^\uparrow}) 
\\
	\times \prod_{u\in N(\tau)} x_u^{\fn(u)} 
	\prod_{e \in L(\tau)} \delta ( x_e - x_{e^\downarrow} )\;.
\end{multline}
We also write $\CKhat := \CK_{\hat K}$.
Although $\hat K$ does not have bounded support, this is well-defined as a limiting
distribution obtained by removing a cutoff, see Theorem~\ref{thm:evaluation:trees:largescale} below.
Given additionally a smooth function $\vphi\in \bar\CC^\infty_c( \bar\domain^{L(\tau)} )$ it will be useful to introduce the notation
\begin{align}\label{eq:CK:ctr}
\ctrL{\tau}{G}{\vphi}
:=
\int_{\bar\domain^{L(\tau)}} dx
	\CK_G\tau ( x ) \vphi(x) \in \R\;.
\end{align}

\begin{example}
We can graphically represent the action of $\CK_G\tau$. For instance, we write (slightly informally)
\[
\Big(\CK_G \treeExampleKPZa\Big) (x_1, \ldots, x_4)
=
\treeExampleKPZaKernel,
\]
where we leave $G$ implicit on the right-hand side.
\end{example}

For two different trees $\tau,\taua$ one has by definition $L(\tau) \cap L(\taua) = \emptyset$, so that $\CK_G\tau$ and $\CK_G\taua$ have disjoint domains of definition. However if $\cm:=[L(\tau),\ft] = [L(\taua),\ft]$, then after symmetrising one can naturally view $\CK_G\tau$ and $\CK_G\taua$ as being defined on the same space $\bar\domain^\cm$. In particular, the notation \eqref{eq:CK:ctr} extends naturally to $\vphi \in \barCCcm$. This motivates the following definition.

\begin{definition}\label{def:tildePsi}
We write $\cutoffspace$ for the set of all families of test functions $(\psi_\cm)_\cm$, indexed by multisets $\cm$ with values in $\FLm$, such that  $\psi_\cm \in \barCCcmg$. We also write $\cutoffspacez$ for the set of $\psi \in \cutoffspace$ such that $\int\psi_\cm := \int_{\bar\domain^\cm} \delta(x_p)\psi_\cm(x_\cm) = 0$ for any $\cm$. Here we fix some arbitrary $p \in \td(\cm)$ (it is clear that this definition does not depend on the  choice of $p$). 
We then define an evaluation $\ctrL{\tau}{G}{\psi}$ for $\tau \in \CT$ and $\psi \in \cutoffspace$ by setting
\[
\ctrL{\tau}{G}{\psi}
:=
\ctrL{\tau}{G}{\psi_{[L(\tau),\ft]}}.
\]
\end{definition}
With this notation we now define an ideal $\CJcon$ as follows.

\begin{definition}\label{def:CJcon}\label{def:CH}
We define $\CJcon \ssq \CT_-$ as the ideal generated by all elements $\tau \in \Vec \TT_-$ such that
$\ctrLhat{\tau}{\psi} = 0$ for any $\psi \in \cutoffspacez$. We then denote by $\CH \ssq \CG_-$ the annihilator of $\CJ$ (given by the set of all characters $g\in\CG_-$ with the property that $g(\sigma)=0$ for all $\sigma\in\CJ$).
\end{definition}

Note that by definition $\CJcon$ is generated by linear combinations of trees (rather than linear combinations of products of trees). We will use this fact heavily below.

\begin{remark}
There is a natural norm on $\|\cdot\|_{\CK^+}$ on large scale kernel assignments $R$, see \eqref{eq:CKp:norm}, and writing $\CK^+_0$ for the closure of the space of smooth, compactly supported functions under this norm, one has indeed $\hat K-K \in \CK_0^+$. Moreover, it is not hard to show that $\ctrL{\tau}{K+R}{\psi}$ extends continuously to $R \in \CK^+_0$ for any $\tau \in \CT_-$ and any $\psi \in \cutoffspacez$, so that $\ctrLhat{\tau}{\psi}$ is well defined. 
The last claim follows from a straightforward counting argument as in \cite[Sec.~4]{Hairer2017}, which is carried out in Lemma~\ref{lem:super:regularity:implies:large:scale:bound} below. 
\end{remark}
\begin{remark}
We choose $\psi$ in the definition of $\cutoffspacez$ to integrate to zero, since the cumulants of our ``shifts'' will satisfy this property. This is needed to ensure weak convergence of the shift to zero.
\end{remark}

\begin{example}
Consider as an example the KPZ equation $\partial_t h = \Delta h + |\partial_x h|^2 + \xi$ where $\CT_-$ is generated by the trees
\[
\treeKPZa,
\treeKPZb,
\treeKPZc,
\treeKPZd,
\treeKPZe,
\treeKPZf.
\]
We show that $\CJ$ is the ideal generated by $\{ \leafs\}$.
First note that we have $\leafs \in \CJ$ since $\CK_G \leafs = 0$. Next, we recall that $\CJ$ is generated by linear combinations of trees with the same number of leaves. Furthermore, by simply rescaling $\phi(x) \mapsto \phi(\eps^{-1}x)$, we see that $\CJ$ is generated by linear combinations of trees of same number of leaves and same homogeneity.
It follows that no linear combination involving any of $\treeKPZb$, $\treeKPZc$ or 
$\treeKPZd$ belongs to $\CJ$.
The only non-trivial part is to deal with the remaining 2 trees $\treeKPZe, \treeKPZf$. We sketch the proof that they can not form a linear combination that takes values in $\CJ$.
For this fix test functions $\psi_{i,j} : \bar\D \to \R$ and consider test functions $\phi_\eps$ of the form $\phi_\eps(x_1, x_2, x_3, x_4) = \sum_{\sigma} \sum_{1 \le i < j \le 4} \psi_{i,j}^{\eps}(x_{\sigma(i)} - x_{\sigma(j)})$ where the first sum runs over all permutations $\sigma$ of $\{1,2,3,4\}$. 
Here $\psi_{i,j}^{\eps}(x) := \eps^{-\frac{3}{2}} \psi_{i,j}(\eps^{-\fs} x)$ if $(i,j)\in \{(1,2), (3,4)\}$ and $\psi_{i,j}^{\eps}(x) := \psi_{i,j}(x)$ otherwise. 
It is then not difficult to see that
\[
\langle\hat\CK\treeKPZe, \phi_\eps \rangle \sim \eps^{-1}
\qquad \text{ and }
\qquad
\langle\hat\CK\treeKPZf, \phi_\eps \rangle \sim \eps^{-2}\;.
\]
The reason for this is that the first tree only contains one subdivergence $\treeKPZb$ of degree $-1$, while the second tree contains two of them.
From this it follows that no linear combination of these two trees can be element of $\CJ$,
thus leading to the claim.
\end{example}


We now state the two assumptions that we are going to need for this section. The first assumes that the annihilator $\CH$ of $\CJ$ forms indeed a group.

\begin{assumption}\label{ass:CJHopfIdeal}
The ideal $\CJ$ is a Hopf ideal in $\CT_-$. In particular, its annihilator $\CH$ is a Lie subgroup of the renormalisation group $\CG_-$.
\end{assumption}

The next assumption relates the subgroup $\CH$ to the BPHZ characters associated to smooth shifted noise. We recall the notation $\SMsinf$ for the space of smooth shifted noises and $\SMsz$ for its closure under the norm (\ref{eq:noise:norm}).
In the following assumption we do not require the noise $\eta$ to be Gaussian.

\begin{assumption}\label{ass:CHBPHZcharacters}
There exists a continuous map $\SMsz \ni \eta \mapsto f^\eta \in \CG_-$ with the property that $f^{\mathbf 0} = \one^*$, and such that $g^\eta \in f^\eta \circ \CH$ for any $\eta \in \SMshifted$. Here $\mathbf 0$ denotes the $0$-noise.
\end{assumption}
We will see in Corollary~\ref{cor:CHsmallestgroup} below that $\CH$ is in fact the smallest Lie subgroup of $\CG_-$ that has the property described in Assumption~\ref{ass:CHBPHZcharacters}.
As was already pointed out in Remark~\ref{rmk:assumptions}, we will show in Section~\ref{sec:constraints} that the two assumptions given above are implied by Assumption~\ref{ass:CVz} (which is the only argument in the paper where Assumption~\ref{ass:CVz} is needed).


\subsection{A support theorem for random models}\label{sec:proof:CJ:hopf:ideal}\label{sec:supp:models}

From now on, we will always assume that Assumptions~\ref{ass:main:reg}--\ref{ass:zero-homo} 
and \ref{ass:CJHopfIdeal}, \ref{ass:CHBPHZcharacters} hold, except when
specified explicitly. The only exception is Section~\ref{sec:constraints}, where we prove that Assumptions~\ref{ass:CJHopfIdeal} 
and~\ref{ass:CHBPHZcharacters} are implied by Assumptions~\ref{ass:main:reg}--\ref{ass:CVz}.

Setting $\hat Z^\eps = \renorm{g^\eps} \Zcan(\xi^\eps)$ for the renormalised approximate model
and $\hat Z = \lim_{\eps \to 0}\hat Z^\eps$ for its limit, we can rewrite it as
\[
\hat Z^\eps=T_{\xi^\eps} \renorm{g^\eps} \Zcan(0).
\]
Models obtained by acting on the canonical lift of $0$ with the renormalisation operators will later play an important role, so we introduce the following notation.
\begin{definition}
For any character $g\in\CG_-$ we define the model $\mfc(g)$ by letting the renormalisation operator act on the canonical lift of $0$ to a model, i.e.\ we set
\label{idx:mfc}
\[
\mfc(g):=\renorm g \Zcan(0).
\]
\end{definition}

We will see in Lemma~\ref{lem:supporttranslation} below that the action of the translation operator maps the support into itself, so that the main part of the proof consists in understanding the set of characters $g\in\CG_-$ such that $\renorm g \Zcan(0)\in\supp \hat Z$. We will show that this set is a coset $f\circ\CH$ of the Lie subgroup $\CH$ of $\CG_-$ constructed above. 

\begin{proposition}\label{prop:constantsInSupport}
Let $\CH$ be the Lie subgroup of $\CG_-$ defined in Definition~\ref{def:CH} and let $\fxi\in\CG_-$ be the character defined in Assumption~\ref{ass:CHBPHZcharacters}. Then for any character $g\in \fxi\circ\CH$ in the left coset determined by $\fxi$ and $\CH$ one has
\[
\mfc(g)\in\supp \hat Z.
\]
\end{proposition}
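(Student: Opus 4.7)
The plan is to exhibit, for each $h \in \fxi \circ \CH$, a sequence of random shifts $\zeta_\delta = \zeta_\delta(\xi) \in \SMsinf$ such that $\zeta_\delta + \xi \to 0$ in $\SMsz$ as $\delta \to 0$ and such that the shifted models $T_{\zeta_\delta}\hat Z$ converge in probability in $\CM_0$ to $\mfc(h)$. Since $\supp\hat Z$ is closed and stable under random translations by elements of $\SMsz$ (the random-shift version of Lemma~\ref{lem:supporttranslation}), this would immediately give $\mfc(h)\in\supp\hat Z$. Following the heuristic of Section~\ref{sec:shift:WC}, I would take the ansatz $\zeta_\delta = -\xi^\delta + k^\delta$, where $\xi^\delta$ is a smooth mollification of $\xi$ at scale $\delta$ (so that $\xi+\zeta_\delta = (\xi-\xi^\delta)+k^\delta \to 0$ weakly) and $k^\delta$ is a "high-frequency" centred stationary perturbation living at much smaller scales $\lambda_\tau^\delta \ll \delta$.

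The second step is to design $k^\delta$ so that the renormalised expectations $\hat\Upsilon^\delta\tau := \E T_{\zeta_\delta}\hat{\PPi}^\xi\tau(0)$ converge to $h(\tau)$ for every $\tau\in\FT_-$. For each such $\tau$ and each $\Xi\in\ft(L(\tau))$ I would introduce a centred stationary noise $\eta^\delta_{(\Xi,\tau)}$ in the $(\#L(\tau)-1)$-st homogeneous Wiener chaos whose kernel is the rescaling of a fixed template $\Phi_{(\Xi,\tau)}$ at homogeneity $\alpha_{(\Xi,\tau)}$ and scale $\lambda^\delta_\tau$, and set $k^\delta=\sum_{\Xi,\tau}a^\delta_{(\Xi,\tau)}\eta^\delta_{(\Xi,\tau)}$. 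The Wiener-chaos order is chosen so that the dominating shifted diagrams in $\oSS[\tau]$ are precisely those in which a single leaf of $\tau$ is replaced by an $\eta^\delta_{(\Xi,\tau)}$; by Lemma~\ref{lem:shiftoperator} one has $\hat\Upsilon^\delta\tau = \enUpsilon^\delta \SS\tau$, and Lemma~\ref{lem:dominatingpart} identifies a nondegenerate linear functional of $a^\delta$ as the leading term. After ordering the scales $\lambda^\delta_\tau$ according to the partial order on $\FT_-$ from Section~\ref{sec:monotonestatements} so that the remainder bound of Lemma~\ref{lem:Fthenondominatingpart} becomes an \emph{attainable} statement, Proposition~\ref{prop:choice_of_a} would recast the system $\hat\Upsilon^\delta\tau=h(\tau)$, $\tau\in\FT_-$, as a small perturbation of a solvable linear system and produce coefficients $a^\delta\to 0$ with the required property.

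The main obstacle is upgrading convergence of expectations restricted to $\FT_-$ to convergence of the full model in $\CM_0$. This is exactly the content of Proposition~\ref{prop:sequencetoconstant}: the vanishing of the variance of the shifted models follows from $\zeta_\delta+\xi \to 0$ in $\SMsz$ together with the continuity part of \cite{ChandraHairer2016} (applicable in our setting by Lemma~\ref{lem:bound:eta}), while the expected values on $\TT_-\setminus\FT_-$ are determined by those on $\FT_-$ through the direct-sum decomposition $\linspace{\TT_-}=\linspace{\FT_-}\oplus(\CJ\cap\linspace{\TT_-})$ and the fact that $h\in\fxi\circ\CH$. This last step is where Assumptions~\ref{ass:CJHopfIdeal} and~\ref{ass:CHBPHZcharacters} enter crucially: the former ensures that $\CH$ is a \emph{subgroup} of $\CGm$, so that the coset $\fxi\circ\CH$ is preserved under the group-theoretic combination of renormalisations and shifts governed by Theorem~\ref{thm:translationoperator} and identity~\eqref{e:niceIdentity}, while the latter guarantees that the BPHZ characters of all the noises entering the shifted expectations sit in $\fxi\circ\CH$ modulo an $o(1)$ correction. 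Combining these ingredients would yield $T_{\zeta_\delta}\hat Z\to\mfc(h)$ in probability and conclude the proof.
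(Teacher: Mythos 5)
Your proposal follows essentially the same route as the paper: the shift ansatz $\zeta_\delta=-\xi^\delta+k^\delta$ with $k^\delta$ built from rescaled Wiener-chaos perturbations (Proposition~\ref{prop:shiftednoise}, via Lemmas~\ref{lem:dominatingpart} and~\ref{lem:Fthenondominatingpart} and the fixed-point argument of Proposition~\ref{prop:choice_of_a}), followed by the upgrade to convergence of the full model in Proposition~\ref{prop:sequencetoconstant} using the decomposition of $\linspace{\TT_-}$ against $\CJ$ and the group property of $\CH$, and finally Lemma~\ref{lem:supporttranslation}. This is exactly the paper's argument; the only cosmetic quibble is that the relevant group-theoretic identity in Proposition~\ref{prop:sequencetoconstant} is $h^{\eps,\delta}\circ g^{\eps,\delta}=g^\eps$ rather than \eqref{e:niceIdentity}, which lives at the level of nonlinearities.
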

Proposition~\ref{prop:constantsInSupport} follows from Proposition~\ref{prop:sequencetoconstant} below, which in turn relies on the constructions carried out in Sections~\ref{sec:renormalisationgroupargument} and~\ref{sec:shift}. Before we prove Proposition~\ref{prop:constantsInSupport} we show now that it implies a support theorem for random models, see Theorem~\ref{thm:main}.

\begin{remark}\label{rem:skew}
The converse of Proposition~\ref{prop:constantsInSupport} is not true in general. An example of this is given by the rough paths $\mathbf{B} = (B,\B + \M)$ where $\B^{i,j}_{s,t}:= \int_s^t (B^i_r- B^i_s) \circ dB^j_r$ denotes the Stratonovich lift and $\M_{s,t} = (t-s)M$ for a constant
(in time) and skew-symmetric matrix $M$. These rough paths are known to have support independent of $M$ but $\CH=\{\one^*\}$ is trivial in this case. We just sketch the argument here that the support is really independent of $M$. Consider deterministic smooth shifts $B^i_t \to B^i_t + \eps^{\half}\cos(\eps^{-1} t)$ and $B^j_t \to B^j_t + \eps^{\half} \sin(\eps^{-1} t)$. The translation operator transforms the second component into
\begin{multline*}
\B^{i,j,\eps}_{s,t} = \B^{i,j}_{s,t} + \eps^{-\half}\int_s^t (B^i_u - B^i_s) \cos(\eps^{-1} u) du
+
\\
\eps^\half\int_s^t  \cos(\eps^{-1} u ) - \cos(\eps^{-1} s) dB_u
+
\int_s^t ( \cos(\eps^{-1} u ) - \cos(\eps^{-1} s) ) \cos(\eps^{-1} u) du.
\end{multline*}
The last term converges to a positive constant times $(t-s)$ as $\eps \to 0$, while a quick computation shows that the two terms in the centre vanish in this limit.
\end{remark}

\begin{remark}
Proposition~\ref{prop:constantsInSupport} also gives information about limit models obtained from a different choice of renormalisation: For any $k\in\CG_-$ and any $g\in k\circ \fxi\circ \CH$ one has
\[
\mfc(g)\in \supp \renorm{k}\hat Z.
\]
\end{remark}

\maybeNotNeeded{
The following Lemma is immediate.
\begin{lemma}\label{lem:CZg}
Let $g\in\CG_-$ and denote by $(\Pi,\Gamma)=\CZ(\PPi)$ the model $\ZZ(g)$. Then $\ZZ(g)$ is the unique admissible model with the property that for any symbol $\tau=T^\fn_\fe\in\TT_-$ and any $x,z\in\R^d$ one has
\begin{align}\label{eq:ZZg}
\Pi_z T^\fn_\fe (x)=\sum_{\tilde \fn\le \fn}  
{\fn \choose \tilde\fn}
g(T^{\tilde\fn}_\fe) (x-z)^{\sum_{u\in N(T)} \fn(u)-\tilde\fn(u)}.
\end{align}
Moreover, one has the identity $\PPi \tau(x)=\Pi_0 \tau(x)$ for any $\tau\in\TT_-$ and any $x\in\R^d$.
\end{lemma}
\begin{proof}
If we denote by $(\Pi^\circ,\Gamma^\circ):=\CZ^{\ex}(\PPi^\circ):=Z^{\ex}(0)$ and by $f^\circ_z$ the character defined in \cite[Eq.~6.11]{BrunedHairerZambotti2016}, then it is easy to see that for any $\tau\in\CT$ such that $\tau\ne X^k$ for some $k\in\N^d$, we have $\Pi^\circ_z\tau=\PPi^\circ\tau\equiv 0$, and similarly for any $\tau\in \CT^{\ex}_+$ such that $\tau\ne X^k$, we have $f_z^\circ(\tau)=0$. Using the identity 
\[
\Pi_z \tau=(g\otimes\PPi^{\circ}\otimes f_z^\circ)(\Id\otimes\Delta^+_{\ex})\Delta^-_{\ex} \tau
=(g\otimes \Pi^\circ_z)\Delta^-_{\ex}\tau,
\]
the relation (\ref{eq:ZZg}) follows from the definition of the coproduct $\Delta^-_{\ex}$.

The uniqueness part follows as in \cite{Hairer2014} as a consequence of \cite[Thm.~3.31]{Hairer2014} and the definition of admissible models. 
\end{proof}
}
Before we state the main theorem, we derive some immediate identities.

\begin{lemma}\label{lem:translationoperatorZZ}
	For any character $g\in\CG_-$ and any smooth noise $h\in\SM_\infty$ one has the equality 
	\[
	\renorm{g} \Zcan(h)=T_{h} \mfc(g).
	\]
	In particular, one has the identity
	\[
	\hat Z^{\eps}[\xi]=T_{\xi^\eps} \mfc(g^{\eps})
	\]
	almost surely for any $\eps>0$.
\end{lemma}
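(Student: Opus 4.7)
The plan is to derive both identities as essentially immediate consequences of Theorem~\ref{thm:translationoperator}, which already contains the commutation relation between the shift operator $T_h$ and the renormalisation action $\renorm{g}$. The only work is to unpack the definitions of $\mfc(g)$ and of the BPHZ-renormalised lift $\hat Z^\eps$.

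First, for the general identity, I would apply Theorem~\ref{thm:translationoperator} with $f = 0 \in \Omega_\infty$ and with the given $g \in \CG_-$ and $h \in \SM_\infty$. Strictly speaking, the theorem is stated for $h \in \Omega_\infty$, but any smooth noise in $\SM_\infty$ has (stationary) smooth realisations and is therefore naturally an element of $\Omega_\infty$, so the theorem applies. This yields
\begin{equ}
\renorm{g}\Zcan(h) \;=\; \renorm{g}\Zcan(0+h) \;=\; T_h \renorm{g}\Zcan(0) \;=\; T_h\, \mfc(g)\;,
\end{equ}
where the final equality is just the definition of $\mfc(g)$.

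For the second identity, I would unpack the BPHZ construction. By equation~\eqref{eq:lift:BPHZ} one has $\hat{\PPi}^\eps \tau = \PPi^{\xi^\eps} M^{g^\eps}\tau$, and by the definition of the action of $\CG_-$ on admissible models given in~\eqref{eq:CR:M}, this means
\begin{equ}
\hat Z^\eps \;=\; \CZ(\hat{\PPi}^\eps) \;=\; \CZ(\PPi^{\xi^\eps} M^{g^\eps}) \;=\; \renorm{g^\eps}\CZ(\PPi^{\xi^\eps}) \;=\; \renorm{g^\eps}\Zcan(\xi^\eps)\;.
\end{equ}
Since $\xi^\eps \in \SM_\infty$ almost surely, I can now apply the deterministic identity from the previous paragraph pointwise in $\omega$ (with $h = \xi^\eps(\omega)$ and $g = g^\eps(\omega)$) to conclude that $\hat Z^\eps = T_{\xi^\eps}\,\mfc(g^\eps)$ almost surely.

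There is essentially no obstacle here; the only point requiring a moment of care is the measurability/almost-sure nature of the second identity, but this follows because the first identity holds for every fixed choice of $(g,h)$, so inserting the random variables $(g^\eps, \xi^\eps)$ and using the continuity of both $T$ and $\renorm{\cdot}\Zcan(\cdot)$ gives the identity on a set of full measure.
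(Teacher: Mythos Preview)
Your proof is correct and follows essentially the same approach as the paper: apply Theorem~\ref{thm:translationoperator} with $f\equiv 0$ for the first identity, then unpack $\hat Z^\eps = \renorm{g^\eps}\Zcan(\xi^\eps)$ and invoke the first identity for the second. One minor remark: $g^\eps$ is deterministic (it is defined through an expectation), so there is no need to write $g^\eps(\omega)$.
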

\begin{proof}
	In order to see the first identity, it is enough to apply Theorem~\ref{thm:translationoperator} to $f\equiv 0$.
	The second claim follows from the fact that $\hat Z^\eps = \renorm{g^{\eps}} Z^\eps$ and
	$
	Z^\eps[\xi]=T_{\xi^\eps}\Zcan(0)
	$.
\end{proof}
The next lemma crucially states that shifting the noises by a \emph{random} smooth function maps the support into itself. 
\begin{lemma}\label{lem:supporttranslation}
	Let $h\in\Omega_\infty$. Then one has the identity
	\[
	T_h \hat Z[\xi]=\hat Z[\xi+h]
	\]
	almost surely. Moreover, if $h\in \SM_\infty$ is any smooth random noise, then one has the identity
	\[
	\supp T_h \hat Z\subseteq \supp \hat Z.
	\]
\end{lemma}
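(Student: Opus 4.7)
The plan is to deduce both statements by combining Theorem~\ref{thm:translationoperator}, Lemma~\ref{lem:translationoperatorZZ}, and the Cameron--Martin theorem. Part~1 will be handled by approximating the random model $\hat Z$ via the BPHZ lifts $\hat Z^\eps$, while Part~2 will be reduced first to deterministic shifts and then to Cameron--Martin shifts via a density argument.

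The key preliminary observation is the additivity $T_{f+h}=T_f T_h$ on all of $\CM_0$, for any $f,h\in\Omega_\infty$. Indeed, on any renormalised canonical lift $\CR^g\Zcan(\eta)\in\CM_\infty$ one has, by three applications of Theorem~\ref{thm:translationoperator},
\begin{equ}
T_{f+h}\CR^g\Zcan(\eta)
=\CR^g\Zcan(\eta+f+h)
=T_h\CR^g\Zcan(\eta+f)
=T_h T_f\CR^g\Zcan(\eta)\,,
\end{equ}
and this extends from $\CM_\infty$ to $\CM_0$ by density and joint continuity of $T$. In particular, each $T_h$ is a homeomorphism of $\CM_0$ with inverse $T_{-h}$. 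For Part~1, combining this additivity with Lemma~\ref{lem:translationoperatorZZ} gives, for $h^\eps := h\ast\rho^{(\eps)}$,
\begin{equ}
\hat Z^\eps[\xi+h] = T_{(\xi+h)^\eps}\mfc(g^\eps) = T_{h^\eps}T_{\xi^\eps}\mfc(g^\eps) = T_{h^\eps}\hat Z^\eps[\xi]\,.
\end{equ}
Sending $\eps\to 0$, joint continuity of $T$ together with $\hat Z^\eps[\xi]\to\hat Z[\xi]$ in probability and $h^\eps\to h$ in $\Omega_\infty$ yields the first identity.

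For Part~2, first treat a deterministic shift $h^\star \in \Omega_\infty$. Approximate $h^\star$ by smooth cut-offs $h_n \in \Omega_{\infty,c}$ converging to $h^\star$ in the Fréchet topology of $\Omega_\infty$ (on any compact $K \ssq \domain$ one eventually has $h_n = h^\star$). Since $\Omega_{\infty,c} \subseteq H$, Theorem~\ref{thm:cameron-martin} combined with Part~1 yields that $T_{h_n}\hat Z$ and $\hat Z$ have equivalent laws, hence $\supp T_{h_n}\hat Z = \supp\hat Z$ for every $n$. Joint continuity of $T$ gives $T_{h_n}\hat Z \to T_{h^\star}\hat Z$ in probability, and for any open $U$ disjoint from $\supp\hat Z$ the portmanteau inequality yields
\begin{equ}
\P(T_{h^\star}\hat Z\in U) \le \liminf_n \P(T_{h_n}\hat Z\in U) = 0\,.
\end{equ}
Hence $\supp T_{h^\star}\hat Z \subseteq \supp\hat Z$, which in turn gives the equivalent statement $T_{h^\star}(\supp\hat Z) \subseteq \supp\hat Z$ (since $T_{h^\star}$ is a homeomorphism).

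Finally, for random $h\in\SMinf$, realisations of $h$ are smooth almost surely so $h(\omega)\in\Omega_\infty$ for $\P$-a.e.\ $\omega$. Applying the deterministic inclusion pointwise gives
\begin{equ}
T_{h(\omega)}\hat Z(\omega) \in T_{h(\omega)}(\supp\hat Z) \subseteq \supp\hat Z
\end{equ}
for $\P$-a.e.\ $\omega$, so that $T_h\hat Z\in\supp\hat Z$ almost surely and $\supp T_h\hat Z\subseteq\supp\hat Z$ follows. The only slightly delicate point is the additivity of the shift operator, but this is essentially routine given the commutation with the renormalisation group already encoded in Theorem~\ref{thm:translationoperator}.
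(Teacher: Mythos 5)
Your proof is correct and, for the first identity and for the final reduction from random to deterministic shifts, follows essentially the same route as the paper: write $\hat Z^\eps[\xi+h]=T_{h^\eps}\hat Z^\eps[\xi]$ via Lemma~\ref{lem:translationoperatorZZ}, pass to the limit using joint continuity of $T$, and then apply the deterministic inclusion pointwise in $\omega$ using that $\hat Z(\omega)\in\supp\hat Z$ almost surely. Where you genuinely diverge is the deterministic case of the second statement. The paper applies the Cameron--Martin theorem directly to $h\in\Omega_\infty$ to get equivalence of the laws of $\hat Z$ and $T_h\hat Z$, hence equality of supports, and then invokes Lemma~\ref{lem:supportcontinuousmap}; strictly speaking this requires $h\in H$, which a general element of $\Omega_\infty$ need not satisfy when $\domain=\R\times\T^e$ (the paper itself flags this when introducing $\Omega_{\infty,c}$). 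Your detour --- truncate to $h_n\in\Omega_{\infty,c}\ssq H$, apply Cameron--Martin there, and pass to the limit via portmanteau --- closes that gap, at the cost of yielding only the inclusion $\supp T_{h^\star}\hat Z\ssq\supp\hat Z$ rather than equality of supports, which is all the lemma requires.

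Two small remarks. First, your additivity $T_{f+h}=T_fT_h$ is verified only on models of the form $\CR^g\Zcan(\eta)$, while $\CM_0$ is defined as the closure of \emph{all} smooth admissible models, not just of renormalised canonical lifts; the ``by density'' extension is therefore not automatic. This is harmless here, since you only ever apply the identity to $\mfc(g^\eps)$ and its shifts, and the step from $\supp T_{h^\star}\hat Z\ssq\supp\hat Z$ to $T_{h^\star}(\supp\hat Z)\ssq\supp\hat Z$ already follows from Lemma~\ref{lem:supportcontinuousmap} together with closedness of $\supp\hat Z$, without needing $T_{h^\star}$ to be a homeomorphism. Second, in the first part you should say what the symbol $\hat Z[\xi+h]$ means: the paper uses Cameron--Martin to argue that the right-hand side is well defined $\P$-almost surely, whereas your argument implicitly identifies it with $\lim_{\eps\to 0}\hat Z^\eps[\xi+h]$. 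Either reading is defensible, but the point deserves a sentence.
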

\begin{proof}
	We show the first statement. By Cameron-Martin's Theorem~\ref{thm:cameron-martin} it follows that the laws of $\xi$ and $\xi+h$ are equivalent. In particular, the right-hand side is well-defined $\P$-almost surely. To see the identity claimed in the statement, we use the fact that $T$ is jointly continuous in $h$ and $Z$. We then have
	\[
	\hat Z[\xi+h]=\lim_{\eps\to 0}\hat Z^\eps[\xi+h]=\lim_{\eps\to 0}T_{h^\eps}\hat Z^\eps[\xi]=T_h \hat Z[\xi].
	\]
	In order to see the second statement, let first $h\in\Omega_\infty$ be deterministic. In this case we exploit again the fact that the laws of $\xi$ and $\xi+h$ are equivalent, so that the laws of $\hat Z[\xi]$ and $T_h \hat Z[\xi]=\hat Z[\xi+h]$ are equivalent as well and $\supp \hat Z=\supp T_h \hat Z$. Using Lemma~\ref{lem:supportcontinuousmap} below, it follows that the continuous operator $T_h$ maps the support of $\hat Z$ into itself. 
		
	Let now $h\in \SM_\infty$ be random and let $A\subseteq \Omega$ be the set of full $\P$-measure with the property that $\hat Z(\omega)\in \supp\hat  Z$ for any $\omega\in A$. It then follows for $\omega\in A$ that $T_{h(\omega)} \hat Z(\omega)\in\supp \hat Z$. In particular we have $T_h \hat Z\in\supp \hat Z$ almost surely.
\end{proof}

In the previous proof we used the following lemma.
\begin{lemma}\label{lem:supportcontinuousmap}
	Let $X,Y$ be two Polish spaces, let $T:X\to Y$ be a continuous map and let $\mu$ be a probability measure on $X$. Then $\supp T_*\mu = \overline{T(\supp \mu)}$.
\end{lemma}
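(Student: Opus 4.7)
The plan is to prove the two inclusions separately, relying on the fact that in a Polish space $X$ the complement of $\supp\mu$ is a $\mu$-null set. The latter holds because $X$ is second countable: the complement of $\supp\mu$ is by definition the union of all open $\mu$-null sets, and a Lindelöf argument reduces this to a countable union, hence is $\mu$-null.

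For the inclusion $\overline{T(\supp\mu)} \subseteq \supp T_*\mu$, since the right-hand side is closed it suffices to show $T(\supp\mu) \subseteq \supp T_*\mu$. Pick $x \in \supp\mu$ and any open neighbourhood $V \ni T(x)$ in $Y$. By continuity $T^{-1}(V)$ is an open neighbourhood of $x$ in $X$, so $\mu(T^{-1}(V)) > 0$ by definition of the support, which gives $T_*\mu(V) > 0$. Hence $T(x) \in \supp T_*\mu$.

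For the reverse inclusion $\supp T_*\mu \subseteq \overline{T(\supp\mu)}$, I would argue by contradiction. Suppose $y \in \supp T_*\mu$ but $y \notin \overline{T(\supp\mu)}$. Then $V := Y \setminus \overline{T(\supp\mu)}$ is an open neighbourhood of $y$, so $T_*\mu(V) > 0$, i.e.\ $\mu(T^{-1}(V)) > 0$. On the other hand, any $x \in T^{-1}(V)$ satisfies $T(x) \notin T(\supp\mu)$, hence $x \notin \supp\mu$. Thus $T^{-1}(V) \subseteq X \setminus \supp\mu$, which is a $\mu$-null set by the second-countability observation above, contradicting $\mu(T^{-1}(V)) > 0$.

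There is no real obstacle here; the only subtle point worth spelling out is the second-countability step that ensures $\mu(X\setminus \supp\mu)=0$, since without it the reverse inclusion could fail for general topological measure spaces. Polishness of $X$ (and in fact just separability and metrizability) is exactly what is needed.
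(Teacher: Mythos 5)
Your proof is correct and follows essentially the same route as the paper: the forward inclusion via continuity of $T$ and the definition of support, and the reverse inclusion via the fact that $\mu(X\setminus\supp\mu)=0$ (which the paper states as ``$x\in\supp\mu$ for $\mu$-almost every $x$'' without spelling out the Lindel\"of argument you give). Your explicit justification of that null-set fact is a welcome extra detail but does not change the argument.
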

\begin{proof}
Since $x \in \supp \mu$ for $\mu$-almost every $x \in X$, it follows that $T(x) \in T(\supp  \mu)$ $\mu$-almost surely, and hence $\supp T_* \mu \ssq \overline{T(\supp \mu)}$. To see the inverse inclusion, let $y=T(x)\in T(\supp \mu)$ with $x\in\supp \mu$, and let $U$ be a neighbourhood of $y$ in $Y$. By continuity it follows that $T^{-1}(U)$ is a neighborhood of $x$ in $X$ and by the definition of the support, it follows that $\mu\{x:T(x)\in U\}=\mu(T^{-1}(U))>0$, and thus $y \in\supp T_* \mu$. This show that $T(\supp \mu) \ssq \supp T_*\mu$ and concludes the proof.
\end{proof}

Assuming Proposition~\ref{prop:constantsInSupport}, we can now state and prove the main theorem of this section.

\begin{theorem}\label{thm:main}
For any $\eps>0$ let $\hat Z^\eps$ denote the BPHZ renormalised lift of the regularised noise $\xi^\eps$ to a random admissible model and let $k\in\CG_-$ be any character. Then one has the identity
\begin{align}\label{eq:metatheoremmodel1}
\supp \renorm{k} \hat Z=\bigcap_{\eps>0}\overline{\bigcup_{\delta<\eps} \supp \renorm{k} \hat Z^{\delta}}.
\end{align}
Moreover, if we denote by $f^\eps:=f^{\xi^\eps}\in\CG_-$ the sequence of characters defined in Assumption~\ref{ass:CHBPHZcharacters}  (so that $f^\eps \to \fxi$ as $\eps \to 0$), then one has the stronger statement
\begin{align}\label{eq:metatheoremmodel2}
\supp \renorm{k} \hat Z=\overline{\bigcup_{\eps\in(0,1)}\supp \renorm{k\circ \fxi\circ(f^\eps)^{-1}}\hat Z^{\eps}}.
\end{align}

\end{theorem}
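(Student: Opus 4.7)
I plan to prove the stronger identity \eqref{eq:metatheoremmodel2} first, assuming Proposition~\ref{prop:constantsInSupport}, and then deduce \eqref{eq:metatheoremmodel1} from \eqref{eq:metatheoremmodel2} via a continuity argument. For the upper bounds (the $\subseteq$ inclusions in both identities), I would invoke the convergence in probability $\hat Z^\delta \to \hat Z$, which is a standard consequence of the analytic BPHZ theorem of \cite{ChandraHairer2016}. Combined with the continuity of $\eta\mapsto f^\eta$ granted by Assumption~\ref{ass:CHBPHZcharacters} and the joint continuity of the renormalisation action $(g,Z)\mapsto\renorm{g}Z$, this gives both $\renorm{k}\hat Z^\delta \to \renorm{k}\hat Z$ and $\renorm{k\circ\fxi\circ(f^\delta)^{-1}}\hat Z^\delta \to \renorm{k}\hat Z$ in probability. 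A standard Portmanteau argument then yields both inclusions: for $Z \in \supp\renorm{k}\hat Z$ and any open $U\ni Z$, weak convergence forces the approximate supports to intersect $U$ for arbitrarily small $\delta$, so $Z$ lies in $\overline{\bigcup_{\delta<\eps}\supp\renorm{k}\hat Z^\delta}$ for every $\eps>0$, and similarly for the RHS of \eqref{eq:metatheoremmodel2}.

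For the lower bound of \eqref{eq:metatheoremmodel2}, fix $\eps>0$ and use Lemma~\ref{lem:translationoperatorZZ} together with the commutation of translation and renormalisation (Theorem~\ref{thm:translationoperator}) to rewrite
\[
\renorm{k\circ\fxi\circ(f^\eps)^{-1}}\hat Z^\eps \;=\; T_{\xi^\eps}\,\mfc\bigl(k\circ h_\eps\bigr)\;,\qquad h_\eps:=\fxi\circ(f^\eps)^{-1}\circ g^\eps\;.
\]
By Assumption~\ref{ass:CHBPHZcharacters} one has $(f^\eps)^{-1}\circ g^\eps\in\CH$, hence $h_\eps\in\fxi\circ\CH$. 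Proposition~\ref{prop:constantsInSupport} yields $\mfc(h_\eps)\in\supp\hat Z$; continuity of $\renorm{k}$ combined with Lemma~\ref{lem:supportcontinuousmap} upgrades this to $\mfc(k\circ h_\eps)\in\supp\renorm{k}\hat Z$; and Lemma~\ref{lem:supporttranslation} (applied realization-by-realization to the smooth random shift $\xi^\eps$) finally gives that the random model $T_{\xi^\eps}\mfc(k\circ h_\eps)$ takes values in $\supp\renorm{k}\hat Z$ almost surely, delivering the required inclusion of supports.

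To derive \eqref{eq:metatheoremmodel1} from \eqref{eq:metatheoremmodel2}, let $Z$ be in the RHS of \eqref{eq:metatheoremmodel1}, extract $\delta_n\downarrow 0$ and $Z_n\in\supp\renorm{k}\hat Z^{\delta_n}$ with $Z_n\to Z$, and introduce the continuous operator $\Psi^\delta := \renorm{k}\,\renorm{\fxi\circ(f^\delta)^{-1}}\,\renorm{k^{-1}}$ on $\CM_0$. Using $\renorm{g\circ h}=\renorm{g}\renorm{h}$, a direct computation shows $\Psi^\delta\,\renorm{k}\hat Z^\delta = \renorm{k\circ\fxi\circ(f^\delta)^{-1}}\hat Z^\delta$, so that $\Psi^{\delta_n}Z_n\in\supp\renorm{k\circ\fxi\circ(f^{\delta_n})^{-1}}\hat Z^{\delta_n}\subseteq\supp\renorm{k}\hat Z$ by \eqref{eq:metatheoremmodel2}. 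Since $f^\delta\to\fxi$ and the renormalisation action is jointly continuous, $\Psi^\delta\to\id$, so $\Psi^{\delta_n}Z_n\to Z$; closedness of the support then forces $Z\in\supp\renorm{k}\hat Z$.

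The real difficulty in this theorem is not in any of the steps outlined above, but in the input from Proposition~\ref{prop:constantsInSupport}, which occupies the bulk of the paper (identification of $\CH$, construction of the enlarged regularity structure, and the fixed-point argument producing the high-frequency perturbations $\zeta_\delta$). Conditional on that proposition, the only genuinely subtle point here is the introduction of the tweaked operator $\Psi^\delta$ to bridge between the cleaner identity \eqref{eq:metatheoremmodel2}---which emerges directly from the construction---and the non-tweaked version \eqref{eq:metatheoremmodel1}, where the renormalisation characters of the approximations are just the BPHZ characters $g^\eps$ themselves.
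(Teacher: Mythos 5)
Your proposal is correct and follows essentially the same route as the paper: the upper bounds come from lower semicontinuity of the support under convergence in probability, the lower bound of \eqref{eq:metatheoremmodel2} combines Proposition~\ref{prop:constantsInSupport} with the decomposition $\renorm{k\circ\fxi\circ(f^\eps)^{-1}}\hat Z^\eps = T_{\xi^\eps}\mfc(k\circ\fxi\circ(f^\eps)^{-1}\circ g^\eps)$ and the invariance of the support under smooth shifts, and your conjugated operator $\Psi^\delta$ is exactly the paper's $\renorm{l^\delta}$ with $l^\delta$ defined by $k\circ\fxi\circ(f^\delta)^{-1}=l^\delta\circ k$. The only cosmetic difference is that you cite Lemma~\ref{lem:supporttranslation} for shifting a deterministic element of the support rather than $\hat Z$ itself, but the mechanism inside its proof (each deterministic $T_h$ maps $\supp\renorm{k}\hat Z$ into itself) is precisely what is needed and is also what the paper invokes.
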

\begin{proof}\emph{(Assuming Proposition~\ref{prop:constantsInSupport})}
We first argue that (\ref{eq:metatheoremmodel1}) follows from (\ref{eq:metatheoremmodel2}). To see this, we introduce the sequence of characters $l^\eps \in \CG_-$ via the identity $k\circ \fxi\circ (f^\eps)^{-1}=l^\eps\circ k$, and we note that since $k\circ \fxi\circ (f^\eps)^{-1}\to k$ in $\CG_-$ it follows that $l^\eps\to \one^*$. By Lemma~\ref{lem:supportcontinuousmap} and the continuity of the action of the renormalisation group it follows that $\supp \renorm{k} \hat Z$ can be written as
\begin{equ}
\lim_{\eps\to 0}\overline {\bigcup_{\delta<\eps}\supp \renorm{l^\eps \circ k} \hat Z^\delta} 
= \lim_{\eps\to 0}\overline { \renorm{l^\eps} \bigcup_{\delta<\eps} \supp \renorm{k} \hat Z^\delta}
= \bigcap_{\eps>0}\overline{\bigcup_{\delta<\eps} \supp \renorm k { \hat Z^{\delta}} }.
\end{equ}

It remains to show  (\ref{eq:metatheoremmodel2}). The fact that $\supp \renorm{k} \hat Z$ is contained in the right-hand side follows trivially from the fact that 
\[
\renorm{k\circ \fxi\circ(f^\eps)^{-1}}\hat Z^{\eps}\to \renorm{k} \hat Z
\]
in probability in the space of models, so it remains to show the inverse inclusion. By Lemma~\ref{lem:translationoperatorZZ} we have the identity
\[
\supp \renorm{k\circ \fxi\circ(f^\eps)^{-1}} \hat Z^\eps = \overline{
	\{T_h \mfc(k\circ \fxi\circ \hat g^\eps): h\in\Omega_\infty\},
}
\] 
where we introduced the character $\hat g^\eps \in \CG_-$ via the identity $f^\eps \circ \hat g^\eps= g^\eps$. By Assumption~\ref{ass:CHBPHZcharacters} one has $\hat g^\eps \in \CH$, so that Proposition~\ref{prop:constantsInSupport} implies that
\[
\mfc(k\circ \fxi\circ \hat g^\eps)\in\supp \renorm k \hat Z\;.
\]
It remains to show that the translation operator $T_h$ leaves the support of $\renorm {k} \hat Z$ invariant, in the sense that for any smooth function $h\in\Omega_\infty$ one has 
\[
\supp T_h \renorm{k}\hat Z = \supp \renorm{k}\hat Z\;.
\]
This in turn is a corollary of  Lemma~\ref{lem:supporttranslation}, the fact that renormalisation and translation commute (see Theorem \ref{thm:translationoperator}) and Cameron--Martin's theorem.
\end{proof}

One consequence of Theorem~\ref{thm:main} is that the support of the limit model does in general depend on the choice of renormalisation $k \in \CG_-$. 
In the next result we show that for any fixed $k \in \CG_-$ there exists a Lie subgroup $\CH^k$ of $\CG_-$, such that changing renormalisation from $k$ to $l \circ k$ for some $l \in \CH^k$ does not change the support. More precisely, we have the following result.

\begin{corollary}
For any $k\in\CG_-$ let $\bar k := k\circ \fxi$, and denote by $\CH^k$ the Lie subgroup of $\CG_-$ obtained from $\CH$ by conjugation with $\bar k$, i.e.\ the subgroup given by
\[
\CH^k:= \bar k \circ \CH \circ \bar k^{-1}.
\]
Then, for any  $ l\in \CH^k$, one has
\[
\supp \renorm{l\circ k} \hat Z=\supp \renorm k \hat Z.
\]

Moreover, the groups $\CH^k$ are invariant under composing $k$ with any element of $\CH^k$, i.e. one has $\CH^k = \CH^{l \circ k}$ for any $l \in \CH^k$.
\end{corollary}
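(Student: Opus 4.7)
The plan is to reduce the statement to the algebraic identity $(l\circ k)\circ\fxi\circ\CH = k\circ\fxi\circ\CH$ for any $l\in\CH^k$, and then to combine this with a suitable characterisation of $\supp\CR^k\hat Z$ that depends only on the coset $\bar k\circ\CH$. For the algebraic part, writing $l=\bar k\circ h\circ\bar k^{-1}$ for some $h\in\CH$ and using the identity $\bar k^{-1}\circ k=\fxi^{-1}$ one finds $(l\circ k)\circ\fxi=\bar k\circ h$, and then Assumption~\ref{ass:CJHopfIdeal} (which ensures that $\CH$ is a subgroup) yields $\bar k\circ h\circ\CH=\bar k\circ\CH$.

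The core step is to establish the coset-dependent characterisation
\[
\supp\CR^k\hat Z=\overline{\{T_\phi\,\mfc(h'')\,:\,\phi\in\Omega_\infty,\ h''\in\bar k\circ\CH\}}.
\]
The inclusion ``$\supseteq$'' comes from Proposition~\ref{prop:constantsInSupport} combined with two elementary observations: first, $\CR^k$ is a homeomorphism of $\CM_0$ (its inverse $\CR^{k^{-1}}$ is continuous by the continuity of the renormalisation group action), so $\supp\CR^k\hat Z=\CR^k(\supp\hat Z)$; second, $\CR^k\mfc(h')=\mfc(k\circ h')$ because $\mfc(g)=\CR^g\Zcan(0)$ and $\CR^{a\circ b}=\CR^a\CR^b$. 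Applying Proposition~\ref{prop:constantsInSupport} and letting $h'$ range over $\fxi\circ\CH$ gives $\mfc(h'')\in\supp\CR^k\hat Z$ for every $h''\in\bar k\circ\CH$, and Lemma~\ref{lem:supporttranslation} then absorbs the translations $T_\phi$. The inclusion ``$\subseteq$'' follows from Theorem~\ref{thm:main} and Lemma~\ref{lem:translationoperatorZZ}, which express $\supp\CR^k\hat Z$ as the closure of $\bigcup_{\eps}\{T_\phi\,\mfc(\bar k\circ\hat g^\eps):\phi\in\Omega_\infty\}$ with $\hat g^\eps:=(f^\eps)^{-1}\circ g^\eps$; by Assumption~\ref{ass:CHBPHZcharacters} one has $\hat g^\eps\in\CH$, so every approximant $\bar k\circ\hat g^\eps$ already lies in the coset $\bar k\circ\CH$.

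Once this characterisation is in hand, applying it with $l\circ k$ in place of $k$ gives
\[
\supp\CR^{l\circ k}\hat Z=\overline{\{T_\phi\,\mfc(h'')\,:\,\phi\in\Omega_\infty,\ h''\in(l\circ k)\circ\fxi\circ\CH\}},
\]
and the algebraic identity from the first paragraph shows that the two cosets coincide, so the two supports are equal.

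The main subtlety I expect is matching the upper and lower bounds in the characterisation of the support. The lower bound produces the full coset $\bar k\circ\CH$, while the upper bound only delivers the specific sequence $\bar k\circ\hat g^\eps$; the match works precisely because Assumption~\ref{ass:CHBPHZcharacters} places the entire "defect" of the BPHZ characters inside $\CH$, so after taking closures the upper bound sits inside the set produced by the lower bound. This is really the only place where both structural assumptions enter non-trivially, and everything else reduces to formal manipulations in the renormalisation group together with continuity of $\CR^k$ and of the shift operator.
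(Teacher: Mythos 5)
Your proof is correct and follows essentially the same route as the paper: both reduce the statement to the coset identity $l\circ\bar k\circ\CH=\bar k\circ\CH$ for $l\in\CH^k$, combined with the characterisation of $\supp\CR^k\hat Z$ as the closure of all smooth translations of the models $\mfc(h'')$ with $h''\in\bar k\circ\CH$ (which the paper simply cites, having established it in the proof of Theorem~\ref{thm:main}, and which you re-derive correctly from Proposition~\ref{prop:constantsInSupport}, Theorem~\ref{thm:main}, and Lemmas~\ref{lem:translationoperatorZZ} and~\ref{lem:supporttranslation}). The only content of the paper's proof not covered by yours is the inclusion $\CH^{l\circ k}\subseteq\CH^k$, but that serves the subsequent remark rather than the corollary as stated.
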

\begin{proof}
The supports of $\renorm k\hat Z$ and $\renorm{l\circ k} \hat Z$ are respectively characterised as the closure of all smooth translations of all models of the form $\mfc(h)$ and $\mfc(\tilde h)$ for some $h\in \bar k \circ \CH$ and some $\tilde h\in l\circ \bar k \circ \CH$. We are thus left to show that
\[
l \circ \bar k \circ \CH = \bar k \circ \CH,
\]
which is true if and only if $l\in\CH^k$.

The fact that $\CH^k$ is invariant under a change of renormalisation by $l\in\CH^k$ follows from 
the fact that
\begin{align*}
\CH^{l\circ k} 
&= l\circ ( \bar k\circ \CH\circ \bar k^{-1} )  \circ l^{-1} \\
&\subseteq \bar k \circ \CH\circ \bar k^{-1}  \circ 
( \bar k \circ \CH \circ \bar k^{-1} )
\circ \bar k \circ \CH \circ \bar k^{-1} =\CH^k\;,
\end{align*}
whence the claim follows.
\end{proof}
\begin{remark}
A consequence of the previous corollary is that, writing $e$ for the unit in $\CG_-$, the collection of  
cosets $\{ k \circ \CH^e  :  k\in\CG_-\}$
yields a foliation of $\CG_-$ into a family of manifolds of fixed dimension with the property that for any $k \in \CG_-$, the support of $\CR^l \hat Z$ is independent of $l \in k \circ \CH^e$.
\end{remark}

\subsection{Renormalisation group argument}\label{sec:renormalisationgroupargument}

In light of the last section it remains to show Proposition~\ref{prop:constantsInSupport}. For this we fix from now on a character $h\in \fxi\circ\CH$ and we will construct a sequence \label{idx:shift}$\rsn_\delta\in\SM_\infty$, $\delta>0$, of random smooth noises such that
\[
T_{\rsn_\delta}\hat Z \to \ZZ(h) \qquad\text{ in probability in }\CM_0
\]
as $\delta\to 0$. Together with the continuity of the translation operator and Lemma~\ref{lem:supporttranslation}, this immediately implies Proposition~\ref{prop:constantsInSupport}. This convergence essentially relies on two conditions. The first condition (\ref{eq:shiftednoisehomo}) guarantees that the noise cancels out in the limit $\delta\to 0$ and the second condition (\ref{eq:shiftedtreeexpectationA}) guarantees the correct behaviour of the expected values. Before stating the main proposition of this section, we introduce the following notation. 
\begin{definition}\label{def:sim}
Let $\sim$ denote the equivalence relation on $\TT_-$ given by setting $\tau \sim \tilde\tau$ if and only if $\fancynorm{\tau}_\fs = \fancynorm{\tilde\tau}_\fs$ and one has that the identity $[L(\tau),\ft]=[L(\tilde\tau),\ft]$ between multisets. We write $\TT_-/_\sim$ for the set of equivalence classes of $\TT_-$ with respect to $\sim$.
\end{definition}
We also fix an arbitrary total order \label{idx:prec}$\preceq$ on $\TT_-$ with the property that $\tau \preceq	 \tilde\tau$ whenever $\tau,\tilde\tau \in \TT_-$ are two trees such that either $\# E(\tau) < \# E(\tilde\tau)$, or $\# E(\tau) = \# E(\tilde\tau)$ and $\sum_{u \in N(\tau)}\fn(u) \le \sum_{u \in N(\tilde\tau)} \fn(u)$. 
We write $\TT_-^{\preceq\tau}\ssq \TT_-$ and $\TT_-^{\prec\tau}\ssq \TT_-$ for the set of trees $\tilde\tau \in\TT_-$ such that $\tilde\tau\preceq \tau$ and $\tilde\tau\prec \tau$, respectively. We denote the unital subalgebras of $\CT_-$ generated by $\TT_-^{\prec\tau}$ and $\TT_-^{\preceq\tau}$ by $\CT_-^{\prec\tau}$ and $\CT_-^{\preceq\tau}$ respectively and we point out that it follows from the properties of the coproduct $\cpmh$ that both of these algebras form Hopf algebras.

We use the total order $\preceq$ to select a subset of trees $\FT_- \ssq \TT_-$ in the following way.

\begin{definition}\label{def:FT}
For any equivalence class $\Theta \in \TT_-/_\sim$ we write $\TT_-(\Theta) \ssq \Theta$ for the set of trees $\tau \in \Theta$ with the property that there exists a linear combination of trees $\sigma \in \Vec {(\Theta \cap \TT_-^{\prec\tau})}$ such that
\[
\tau + \sigma \in \CJ.
\]
(Here, $\CJ$ is the ideal in $\CTm$ defined in Definition~\ref{def:CH}.) We also write $\FT_-(\Theta):=\Theta\backslash \TT_-(\Theta)$ and we define $\FT_- := \bigsqcup_{\Theta \in \TT_-/_\sim}\FT_-(\Theta)$.
\end{definition}

Later on in (\ref{eq:convergenceh}) we will be given a linear subspace $X$ of $\linspace{\TT_-}$ for which we can show relatively easily that $\CJ\cap \linspace{\TT_-} \ssq X$, and our goal will be to show that $X = \linspace{\TT_-}$. Definition~\ref{def:FT} is set up so that it suffices to show that $\FT_- \ssq X$. The total order $\preceq$ is chosen in such a way that we can show this inductively in the number of edges and the polynomial decoration of $\tau \in \FT_-$.

\begin{example}
Consider the case of 2D PAM equation, where $$\TT_- = \{\leafs, \treeExamplePAM,\treeExamplePAMa,\treeExamplePAMb \}$$ and where 
$\CJ$ is the ideal generated by $\leafs,\, \treeExamplePAM-\treeExamplePAMa-\treeExamplePAMb$. Here a bold edge with label $k=1,2$ denotes the derivative of the Poisson kernel with respect to $x_k$, and a circle denotes an instance of spatial white noise.
In this case we can choose the total order by setting $\leafs\preceq \treeExamplePAM\preceq\treeExamplePAMa\preceq\treeExamplePAMb$.
We then have $\TT_-/_\sim = \{ \{\leafs\}, \{\treeExamplePAM,\treeExamplePAMa,\treeExamplePAMb\} \}$, and further $\FT_-(\{\leafs\}) = \emptyset$ and $\FT_-(\{\treeExamplePAM,\treeExamplePAMa,\treeExamplePAMb\})
= \{\treeExamplePAM,\treeExamplePAMa\}$.

In particular, we have $\FT_-= \{\treeExamplePAM,\treeExamplePAMa\}$.
\end{example}

In Section~\ref{sec:shift} we will show the following proposition (see Proposition~\ref{prop:choice_of_a} below), for which we recall the notation $\Upsilon^\eta$ from Section~\ref{sec:BPHZ:theorem} and the spaces $\SMshifted$ and $\SMz$ of smooth shifted noises and (rough) noises from Definition~\ref{def:smooth:noise}.

\begin{proposition}\label{prop:shiftednoise}
There exists a sequence $\srn_\delta\in \SMshifted$, $\delta>0$, of smooth random noises such that 
\begin{equ}\label{eq:shiftednoisehomo}
\xi+\srn_\delta\to 0 \textqquad{ in }\SMz,
\end{equ}
and such that for any $\tau\in\FT_-$ one has
\begin{equ}\label{eq:shiftedtreeexpectationA}
\lim_{\delta\to 0}\lim_{\eps\to 0} \Upsilon^{\xi^\eps+\srn_\delta}M^{g^{\eps}}\tau=h(\tau).
\end{equ}
\end{proposition}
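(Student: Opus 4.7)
The plan is to construct $\srn_\delta$ as a sum $\srn_\delta = -\xi^\delta + k^\delta$, where $\xi^\delta = \xi \star \rho^{(\delta)}$ is a smooth approximation of the driving noise at scale $\delta$ and $k^\delta$ is a centered, stationary, ``high-frequency'' random perturbation living in a sum of fixed Wiener chaoses with respect to $\xi$. The summand $-\xi^\delta$ automatically kills the Gaussian piece of $\xi$ on scales larger than $\delta$, so the convergence \eqref{eq:shiftednoisehomo} will follow once we verify that $k^\delta$ is constructed via kernels rescaled to some scale $\lambda^\delta_\tau \ll \delta$, giving weak convergence to zero in the sense of the norm $\|\cdot\|_\sfs$ on $\SMshifted$ (this reduces to checking the scaling of $\|K^\delta\|_{\CY^N}$, since the kernel is built out of rescaled simple tensors as in Definition~\ref{def:simple:kernel} and the norm is subadditive under such rescalings).

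The bulk of the proof concerns~\eqref{eq:shiftedtreeexpectationA}. First I would introduce the enlarged regularity structure $\sCT$ obtained by duplicating every noise type and adjoining a rule $\shift R$ that allows the original noise symbols to be replaced by ``red'' copies, and define the shift operator $\SS \colon \CT \to \sCT$ which replaces blue leaves by red leaves in all possible ways. Lemma~\ref{lem:shiftoperator} (to be invoked from Section~\ref{sec:shift}) gives $\Upsilon^{\xi^\eps+\srn_\delta} M^{g^\eps}\tau = \hat\Upsilon^\delta\tau = \enUpsilon^\delta \SS \tau$, so it suffices to control $\enUpsilon^\delta$ on the shifted trees in $\oSS[\tau]$. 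For each pair $(\Xi,\tau)$ with $\tau \in \FT_-$ and $\Xi \in \ft(L(\tau))$ I would define a ``dominant'' test kernel $\Phi_{(\Xi,\tau)}$ (a tensor of truncated Green's functions, modelled on Example~\ref{ex:shift:dominating:intro}), rescale it to scale $\lambda^\delta_\tau$ at the homogeneity $\alpha_{(\Xi,\tau)}$ determined by $|\tau|_\fs$ and $m(\tau)=\#L(\tau)-1$, and take $k^\delta_{(\Xi,\tau)} = a^\delta_\tau \,\sintf_{[\Xi,\tau]}(\Phi^{\lambda^\delta_\tau}_{(\Xi,\tau)})$ in the $m(\tau)$-th Wiener chaos. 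Summing over $(\Xi,\tau)$ gives $k^\delta$.

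The crucial structural step is to recast the identities $F_\tau(a,\lambda) := \hat\Upsilon^\delta\tau = h(\tau)$, $\tau\in\FT_-$, as a fixed point problem for $a = (a^\delta_\tau)_{\tau \in \FT_-}$ with $\lambda$ fixed. Lemma~\ref{lem:dominatingpart} will show that the leading contribution to $F_\tau$ comes from elements of $\oSS[\tau]$, giving a \emph{linear} and invertible principal symbol whose solvability is exactly the statement that the linear system has full rank modulo $\CJ$ restricted to the span of $\FT_-$ --- here the choice of $h \in \fxi \circ \CH$ (so that $h$ annihilates $\CJ$ up to the ``tweak'' controlled by Assumption~\ref{ass:CHBPHZcharacters}) is what makes the target values consistent with the range of the principal symbol. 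Lemma~\ref{lem:Fthenondominatingpart} will then show that the remainder is a small perturbation, provided the scales $\lambda^\delta_\tau$ are chosen compatibly with the order $\preceq$ on $\FT_-$: if $\tau\prec\tilde\tau$ then $\lambda^\delta_\tau\ll\lambda^\delta_{\tilde\tau}$, so that any contamination of $F_\tau$ by trees built from larger-index noises is subdominant. Once these scales are chosen as an ``attainable'' sequence in the sense of Definition~\ref{def:attainable}, the Banach fixed point theorem (Proposition~\ref{prop:choice_of_a}) produces $a^\delta$ solving the system, and moreover gives $a^\delta_\tau \to 0$, which in turn implies $\|k^\delta\|_\sfs \to 0$ and hence \eqref{eq:shiftednoisehomo}.

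The main obstacle, and where almost all the technical work lies, is Lemma~\ref{lem:Fthenondominatingpart}: one must check that for every shifted tree $\sigma \in \SS\tau$ which is \emph{not} dominant (i.e.\ not in $\oSS[\tau]$), its contribution to $\enUpsilon^\delta\sigma$ is $o(1)$ as $\delta\to 0$ after appropriate cancellations against the BPHZ counterterms inherited from $M^{g^\eps}$. This requires a careful power-counting argument (in the spirit of \cite{Hairer2017} and \cite{ChandraHairer2016}), using the cumulant bounds from Lemma~\ref{lem:bound:eta} together with the hierarchy of scales. A secondary obstacle is ensuring that logarithmically divergent trees in $\CV$ do not produce an uncontrolled contribution; this is precisely where Assumption~\ref{ass:technical} enters, via Lemma~\ref{lem:blowuprenormconstantzerohomo}, to force the BPHZ character of any shifted noise to vanish on such trees so that no extra log counterterm is required.
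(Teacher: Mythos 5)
Your proposal follows essentially the same route as the paper: the ansatz $\srn_\delta=-\xi^\delta+k^\delta$ with $k^\delta$ in the $m(\tau)$-th Wiener chaos, the reduction via the shift operator $\SS$ to controlling $\enUpsilon^\delta$ on $\oSS[\tau]$ versus $\uSS[\tau]$, the diagonal linear principal part from Lemma~\ref{lem:dominatingpart}, the perturbative bound of Lemma~\ref{lem:Fthenondominatingpart}, and the attainable-statement bookkeeping for the scales. Three details are off, though. First, the scale hierarchy is \emph{not} keyed to the order $\preceq$ (edge count): the paper orders scales by $\ci(\tau)=(\fancynorm{\tau}_\fs,\#L(\tau))$ with the inequality on the number of leaves \emph{reversed}; this is essential in the proof of Lemma~\ref{lem:Fthenondominatingpart}, where for two trees of equal homogeneity the subdominance of $\uSS$-terms is deduced from a mismatch of Wiener-chaos orders, which requires $\#L(\taub)>\#L(\tau)$ precisely when $\lambda_{\taub}\le\lambda_\tau$ --- with your $\preceq$-based hierarchy that case analysis breaks. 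Second, Proposition~\ref{prop:choice_of_a} only establishes that the fixed-point map sends a small ball into itself and therefore invokes Schauder, not Banach; claiming Banach presupposes a contraction estimate that is neither proved nor needed. Third, for trees with $\fancynorm{\tau}_\fs=0$ the simple rescaling of $\Phi_{(\Xi,\tau)}$ is insufficient (trees in $\uSS[\tau]$ with two shifted noises would contribute at order one), and the paper additionally spreads the kernel over a dyadic range of scales as in \eqref{eq:K0}; this device is separate from Assumption~\ref{ass:technical} and Lemma~\ref{lem:blowuprenormconstantzerohomo}, which handle a different (logarithmic sub-divergence) issue.
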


Given Proposition~\ref{prop:shiftednoise}, we can show the following result.

\begin{proposition}\label{prop:sequencetoconstant}
	Let $\srn_\delta$ be the sequence given by Proposition~\ref{prop:shiftednoise}. Then one has
	\[
	\lim_{\delta\to 0}T_{\srn_\delta} \hat Z= \mfc(h)
	\]
	in probability in the space of admissible models.
\end{proposition}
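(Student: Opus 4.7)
The plan is to control variance and mean of $(T_{\srn_\delta}\hat\Pi)\tau$ separately for each basis tree $\tau\in\TT$, which suffices for convergence in probability to the deterministic model $\mfc(h)$.

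For the variance: by Theorem~\ref{thm:translationoperator} applied pointwise in $\omega$, $T_{\srn_\delta}\hat Z = \lim_{\eps\to 0}\CR^{g^\eps}\Zcan(\xi^\eps+\srn_\delta)$. Writing $g^\eps = g^{\xi^\eps+\srn_\delta}\circ c_\delta^\eps$ with $c_\delta^\eps := (g^{\xi^\eps+\srn_\delta})^{-1}\circ g^\eps$, this becomes $\CR^{c_\delta^\eps}\hat Z^{\xi^\eps+\srn_\delta}$: a (possibly diverging) deterministic renormalisation acting on the BPHZ model of the shifted noise. Since $\xi+\srn_\delta\to 0$ in $\SMz$ by (\ref{eq:shiftednoisehomo}) and Lemma~\ref{lem:bound:eta} translates this into convergence in the Chandra--Hairer cumulant norms, $\hat Z^{\xi+\srn_\delta}\to\mfc(\one^*)$ in $L^p$; in particular its variance vanishes, and this vanishing is preserved by the deterministic prefactor.

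For the mean, define $F_\delta(\tau) := \lim_{\eps\to 0}\Upsilon^{\xi^\eps+\srn_\delta}M^{g^\eps}\tau$, which assembles into a character on $\CT_-$ satisfying $F_\delta(\tau) = \E(T_{\srn_\delta}\hat\Pi)\tau(0)$. Hypothesis (\ref{eq:shiftedtreeexpectationA}) gives $F_\delta(\tau)\to h(\tau)$ for $\tau\in\FT_-$; the task is to extend this to all of $\TT_-$. Using the convolution identity $\Upsilon^\eta M^g = g\star\Upsilon^\eta$ with respect to $\cpmh$, one writes $F_\delta = g^\eps\star\Upsilon^{\xi^\eps+\srn_\delta}$; combining Assumption~\ref{ass:CHBPHZcharacters} applied to both $\xi^\eps$ and $\xi^\eps+\srn_\delta$ (giving $g^\eps\in f^\eps\circ\CH$ with $f^\eps\to\fxi$, and similarly for $g^{\xi^\eps+\srn_\delta}$) with the fact that $\Upsilon^\eta M^{g^\eta}\to\one^*$ as $\eta\to 0$ in $\SMz$ yields that $\bar g := \lim_\delta F_\delta$ exists and lies in the coset $\fxi\circ\CH$. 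It then remains to show that characters in $\fxi\circ\CH$ are determined by their restriction to $\FT_-$. The key algebraic input is the direct-sum decomposition
\begin{equation*}
\linspace{\TT_-} = \linspace{\FT_-}\oplus(\CJ\cap\linspace{\TT_-})\;,
\end{equation*}
obtained from the recursive definition of $\FT_-$ by a leading-term induction in the order $\preceq$: the sum is exhausting since any $\tau\in\TT_-(\Theta)$ reduces modulo $\CJ$ to lower-order $\FT_-$-elements within the same equivalence class, and the intersection is trivial because a nonzero element of $\linspace{\FT_-(\Theta)}\cap\CJ$ with maximal $\tau$ would exhibit a witness $\sigma\in\linspace{\Theta\cap\TT_-^{\prec\tau}}$ with $\tau+\sigma\in\CJ$, contradicting $\tau\in\FT_-(\Theta)$. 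By Assumption~\ref{ass:CJHopfIdeal}, $\CT_-/\CJ$ is a Hopf algebra with character group $\CH$, and the above decomposition implies that the image of $\FT_-$ generates $\CT_-/\CJ$ as an algebra; hence the evaluation map $\CH\to\R^{\FT_-}$ is injective. A short induction on the number of edges of $\tau$, exploiting that the reduced coproduct $\cpmh\tau - \tau\otimes\mathbf 1 - \mathbf 1\otimes\tau$ lies in $\CT_-^+\otimes\CT_-^+$ and hence involves only strictly smaller trees in its second factor, transfers this injectivity from $\CH$ to the coset $\fxi\circ\CH$. Thus $\bar g = h$, and combined with the variance estimate this yields the claimed convergence in probability.

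The main obstacle is rigorously establishing $\bar g\in\fxi\circ\CH$: one must verify that the limit character produced by the shift construction respects the Hopf ideal structure of $\CJ$, which requires careful application of Assumption~\ref{ass:CHBPHZcharacters} to the non-Gaussian shifted noises $\xi^\eps+\srn_\delta$ (which indeed lie in $\SMsz$ provided $\srn_\delta\in\SMshifted$), as well as close attention to the opposite-group convention $M^{h\circ g} = M^g M^h$ in the renormalisation group product. Once this is in place, the remaining identification $\bar g = h$ is a clean algebraic consequence of the structure of $\CJ$ and the definition of $\FT_-$.
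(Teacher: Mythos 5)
Your architecture matches the paper's: factor $T_{\srn_\delta}\hat Z^\eps$ as a deterministic character acting on the BPHZ model of the shifted noise, obtain convergence of the random part from \cite{ChandraHairer2016} since $\xi+\srn_\delta\to 0$ in $\SMz$, and identify the character limit via $\CJ$, $\CH$ and $\FT_-$. The substantive gap is the existence of $\lim_{\delta\to 0}F_\delta(\tau)$ for $\tau\in\TT_-\setminus\FT_-$: this does \emph{not} follow from ``combining Assumption~\ref{ass:CHBPHZcharacters} with $\Upsilon^\eta M^{g^\eta}\to\one^*$''. Your injectivity statement (characters in $\fxi\circ\CH$ are determined by their values on $\FT_-$) identifies the limit only \emph{once it is known to exist and to lie in the coset}, while coset membership via closedness of $\CH$ in turn presupposes convergence — so the logic is circular as written. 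The paper breaks the circle with a single induction in $\preceq$: defining $h^{\eps,\delta}$ by $h^{\eps,\delta}\circ g^{\eps,\delta}=g^\eps$, one has $(f^\eps)^{-1}\circ h^{\eps,\delta}\circ f^{\eps,\delta}=\hat g^\eps\circ(\hat g^{\eps,\delta})^{-1}\in\CH$ (this is precisely where Assumption~\ref{ass:CJHopfIdeal}, i.e.\ that $\CH$ is a \emph{group}, enters essentially), hence this character annihilates the witness $\sigma=\tau+\tilde\tau\in\CJ$ with $\tilde\tau\in\Vec{(\Theta\cap\TT_-^{\prec\tau})}$; expanding $(\cpmh\otimes\Id)\cpmh\sigma$ then isolates $h^{\eps,\delta}(\tau)$ up to terms involving $h^{\eps,\delta}$ only on $\CT_-^{\prec\tau}$ (convergent by induction) and $f^\eps$, $f^{\eps,\delta}$ (convergent by assumption). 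Your two ingredients — the decomposition $\linspace{\TT_-}=\linspace{\FT_-}\oplus(\CJ\cap\linspace{\TT_-})$ and the $\preceq$-induction — are exactly the right ones, but they must be deployed to prove \emph{convergence}, not merely uniqueness; as written you defer exactly this step to ``the main obstacle''.

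Two smaller points. With the convention $\CR^{g\circ h}=\CR^g\CR^h$ the factorisation must read $g^\eps=h^{\eps,\delta}\circ g^{\eps,\delta}$, i.e.\ $h^{\eps,\delta}=g^\eps\circ(g^{\eps,\delta})^{-1}$, so that $\CR^{g^\eps}\Zcan(\xi^\eps+\srn_\delta)=\CR^{h^{\eps,\delta}}\hat Z^{\xi^\eps+\srn_\delta}$; your $c_\delta^\eps=(g^{\eps,\delta})^{-1}\circ g^\eps$ puts the correction on the wrong side of the non-abelian product and does not produce ``$\CR^{c_\delta^\eps}$ acting on the BPHZ model''. Finally, ``controlling variance and mean separately'' is not how the conclusion is actually reached: pointwise moment bounds on $\hat\Pi\tau(0)$ do not give convergence in the model topology. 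What does the job is joint continuity of $(g,Z)\mapsto\CR^gZ$ together with $h^{\eps,\delta}\to h$ in the finite-dimensional group $\CG_-$ and $\CR^{g^{\eps,\delta}}\Zcan(\xi^\eps+\srn_\delta)\to\Zcan(0)$ in probability in $\CM_0$ — which is exactly what your decomposition provides once the composition order is corrected and the character convergence is established.
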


\begin{proof}
 We denote as before the BPHZ character for $\xi^\eps$ by $g^\eps:=g^{\xi^\eps}$, and we denote similarly by $g^{\eps,\delta}:=g^{\xi^\eps+\srn_\delta}$ the BPHZ character for the smooth noise $\xi^\eps+\srn_\delta$. We define a character $h^{\eps,\delta}\in\CG_-$ via the relation
\begin{align}\label{eq:character:h:g:g}
h^{\eps,\delta}\circ g^{\eps,\delta} =g^{\eps},
\end{align}
where $\circ$ denotes the group product in $\CGm$.
We show inductively with respect to $\prec$ that one has
\begin{align}\label{eq:convergenceh}
\lim_{\delta \to 0} \lim_{\eps \to 0} 
	h^{\eps,\delta}(\tau) = h(\tau)
\end{align}
Let first $\tau\in\FT_-$. Since for any tree $\tau \in \TT$ which contains at least one noise type edge one has
\[
\Upsilon^{\xi^\eps+\srn_\delta}M^{g^{\eps,\delta}} \tau\to 0
\]
in the limit $\eps\to 0$ and $\delta\to 0$ by Lemma~\ref{lem:convergenceboldPi} below, it follows that
\begin{align*}
h^{\eps,\delta}(\tau)
&=\Upsilon^{\xi^\eps+\srn_\delta}M^{g^{\eps,\delta}}M^{h^{\eps,\delta}}\tau
-
(h^{\eps,\delta}\otimes \Upsilon^{\xi^\eps+\srn_\delta}M^{g^{\eps,\delta}})(\cpmi- \Id\otimes\one)\tau
 \\
&= \Upsilon^{\xi^\eps+h_\delta}M^{g^{\eps}}\tau + o(1)\;,
\end{align*}
where $o(1)\to 0$ as $\eps \to 0 $ and $\delta \to 0$, so that (\ref{eq:convergenceh}) follows from (\ref{eq:shiftedtreeexpectationA}). 

Let now $\tau \in \TT_-\backslash \FT_-$. Let $\Theta \in \TT_-/_\sim$ be the equivalence class of $\tau$, and let $\tilde \tau\in \Vec {(\Theta \cap \TT_-^{\prec \tau})} $ such that $\sigma := \tau + \tilde \tau \in \CJ$.
We claim that $(f^\eps)^{-1}\circ h^{\eps,\delta}(\sigma)\to 0$ in the limit $\eps\to 0$ and $\delta\to 0$. Indeed, one has
\[
(f^\eps)^{-1} \circ h^{\eps,\delta}\circ f^{\eps,\delta} \circ \hat g^{\eps,\delta}=
\hat g^{\eps}\;,
\]
where $f^\eps,\hat g^\eps$ and $f^{\eps,\delta},\hat g^{\eps,\delta}$ are defined as in Assumption~\ref{ass:CHBPHZcharacters} for the noises $\xi^\eps$ and $\xi^\eps+\srn_\delta$, respectively, so that $f^\eps \circ \hat g^\eps = g^\eps$ and $f^{\eps,\delta} \circ \hat g^{\eps,\delta} = g^{\eps,\delta}$. By definition one has that $\hat g^{\eps,\delta}\in\CH$ and $\hat g^\eps\in\CH$, so that
\begin{align}\label{eq:fh}
(f^\eps)^{-1} \circ h^{\eps,\delta} \circ f^{\eps,\delta} = \hat g^\eps\circ(\hat g^{\eps,\delta})^{-1} \in \CH\;.
\end{align} 
By Assumption~\ref{ass:CHBPHZcharacters} the characters $f^\eps$ and $f^{\eps,\delta}$ converge to $\fxi$ and $\one^*$ in $\CG_-$, respectively. 
At this stage we would be done, if we knew a priori that $\lim_{\delta\to 0} \lim_{\eps \to 0}h^{\eps,\delta}$ exists in $\CG_-$ on $\CT_-^{\preceq\tau}$. By induction hypothesis, this is true on $\CT_-^{\prec\tau}$, so that it remains to show that $h^{\eps,\delta}(\tau)$ converges to \emph{something} in the limit $\eps\to 0$ and $\delta\to 0$. For this, note that (\ref{eq:fh}) vanishes when applied to $\sigma$, since $\sigma \in \CJ$. On the other hand, one has
\[
(\cpmh \otimes \Id)\cpmh\sigma \in 
	(\one \otimes \tau\otimes\one)  + 
	(\CT_-^{\preceq\tau}\otimes \CT_-^{\prec\tau}\otimes \CT_-^{\preceq\tau})\;,
\]
and we conclude using the induction hypothesis, which implies in particular that
\[
(f^\eps)^{-1} \otimes h^{\eps,\delta} \otimes f^{\eps,\delta} 
\]
converges on $\CT_-^{\preceq\tau}\otimes \CT_-^{\prec\tau}\otimes \CT_-^{\preceq\tau}$.

It follows that $\lim_{\delta\to 0}\lim_{\eps\to 0}(f^\eps)^{-1}\circ h^{\eps,\delta} = (\fxi)^{-1} \circ h$ on $\CT^{\preceq\tau}_-$. Since $\CT_-^{\preceq\tau}$ is a Hopf subalgebra of $\CT_-$, we conclude that one also has $\lim_{\delta\to 0}\lim_{\eps\to 0} h^{\eps,\delta}=h$ on $\CT^{\preceq\tau}_-$, and this concludes the proof of (\ref{eq:convergenceh}).

The remaining proof is now straightforward. We first compute
\begin{align*}
T_{\srn_\delta} \hat Z^\eps 
= \renorm{g^\eps} \Zcan(\xi^\eps+\srn_\delta)
=\renorm{h^{\eps,\delta}} \renorm{g^{\eps,\delta}}\Zcan(\xi^\eps + \srn_\delta).
\end{align*}
It follows from \cite[Thm.~2.33]{ChandraHairer2016} that $\lim_{\delta\to 0}\lim_{\eps\to 0}\renorm{g^{\eps,\delta}}\Zcan(\xi^\eps + \srn_\delta)=\Zcan(0)$ in probability in the space of models.
Using the fact that the renormalisation group $\CG_-$ acts continuously onto the space of admissible models, together with the fact that $\lim_{\delta\to 0}\lim_{\eps\to 0} h^{\eps,\delta}=h$, we obtain
\[
\lim_{\delta\to 0}\lim_{\eps\to 0} T_{\srn_\delta} \hat Z^\eps=\renorm h \Zcan(0)= \mfc (h),
\]
and this concludes the proof.
\end{proof}

\begin{lemma}\label{lem:convergenceboldPi}
For any  $\tau \in \CT$ the map $\eta \mapsto \E\PPi^\eta M^{g^\eta}\tau (0)$ is continuous as a map from $\SM_0$ into $\R$.
\end{lemma}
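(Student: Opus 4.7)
The plan is to derive the continuity from the $L^p$-continuity of BPHZ-renormalised admissible models provided by the analytic BPHZ theorem. First I would combine Lemma~\ref{lem:bound:eta} with Lemma~\ref{lem:cumu_homo_consistent} to argue that any convergent sequence $\eta^n\to\eta$ in $\SMz$, together with a uniform bound on $\|\eta^n\|_\sfs$, converges in the cumulant-homogeneity topology required by \cite[Thm.~2.33]{ChandraHairer2016}, with a cumulant homogeneity $\fc = {}^{\sfss}\fc$ for any admissible $\sfss < \sfs - \cumN\kappa$. Lemma~\ref{lem:cumu_homo_consistent} ensures that this cumulant homogeneity is consistent and that the relevant trees are super-regular in the sense needed to apply that theorem.

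Once the hypotheses are verified, \cite[Thm.~2.33]{ChandraHairer2016} yields convergence in $L^p$ (for every $p<\infty$) of the BPHZ-renormalised admissible models
\[
\hat Z^{\eta^n} = \CZ\bigl(\PPi^{\eta^n}M^{g^{\eta^n}}\bigr) \to \hat Z^{\eta}\;,
\]
together with uniform $L^p$-moment bounds on all tree evaluations paired against smooth test functions.

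Next I would exploit stationarity to turn $L^p$-convergence of models into convergence of the real-valued functional $\eta\mapsto \E\,\PPi^\eta M^{g^\eta}\tau(0)$. Since the Gaussian noise $\xi$ is stationary and the shifted noises in $\SMz$ are constructed in a translation invariant way (Definition~\ref{def:smooth:noise}), for any $\tau\in\CT$ the map $z\mapsto \E\,\PPi^\eta M^{g^\eta}\tau(z)$ is constant in $z$. Consequently, for any fixed smooth compactly supported $\phi$ with $\int\phi=1$ one has
\[
\E\,\PPi^\eta M^{g^\eta}\tau(0) = \E\bigl\langle \PPi^\eta M^{g^\eta}\tau , \phi \bigr\rangle\;.
\]
The right-hand side is a continuous linear functional of the admissible model, and combining $L^p$-convergence of $\hat Z^{\eta^n}$ with the uniform moment bounds (which provide uniform integrability) gives the desired continuity.

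The main subtlety will be that noises $\eta\in\SMz$ are no longer smooth in the limit, so the literal pointwise value $\PPi^\eta M^{g^\eta}\tau(0)$ need not make sense as a classical function; the stationarity trick above is precisely what lets me reinterpret the expected value through the test-function pairing, which is independent of the choice of $\phi$ and coincides with the pointwise expression whenever $\eta\in\SM_\infty$. Beyond this book-keeping, every analytic ingredient is already in place: Lemma~\ref{lem:bound:eta} controls the noise cumulants, Lemma~\ref{lem:cumu_homo_consistent} supplies super-regularity, and \cite[Thm.~2.33]{ChandraHairer2016} does the heavy lifting at the level of models.
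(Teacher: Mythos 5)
Your overall route is the same as the paper's: reduce the pointwise expectation at the origin to a pairing with a fixed test function, and then invoke the continuity in the noise of $\eta \mapsto \E(\PPi^\eta M^{g^\eta}\tau)(\vphi)$ supplied by \cite{ChandraHairer2016} together with Lemmas~\ref{lem:cumu_homo_consistent} and~\ref{lem:bound:eta}. That part is fine.

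There is, however, a genuine gap in the reduction step. You claim that stationarity of the noise makes $z \mapsto \E\,\PPi^\eta M^{g^\eta}\tau(z)$ \emph{constant}, so that pairing with any $\phi$ with $\int\phi = 1$ recovers the value at $0$. This is false as soon as $\tau$ carries a nontrivial polynomial decoration, which the lemma does not exclude (it is stated for all $\tau \in \CT$, and it is applied in the proof of Proposition~\ref{prop:sequencetoconstant} to trees of $\TT$ with arbitrary $\fn$): for instance $\PPi^\eta X_i(z) = z_i$, whose expectation is certainly not constant. What stationarity (in the sense of \cite[Def.~6.16]{BrunedHairerZambotti2016}) actually gives is that $\E\,\PPi^\eta M^{g^\eta}\tau(z) = \bigl((\E\,\hat\PPi^\eta)\otimes g_z\bigr)\Delta^+\tau(0)$ is a \emph{polynomial} $P^\eta(z)$ of degree at most $N := \sum_{u\in N(\tau)}|\fn(u)|_\fs$. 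Consequently the identity $\E\,\PPi^\eta M^{g^\eta}\tau(0) = \E(\PPi^\eta M^{g^\eta}\tau)(\phi)$ fails for a generic $\phi$ of unit mass; you must in addition choose $\phi$ so that $\int x^k\phi(x)\,dx = 0$ for all $0 < |k| \le N$, which forces $\int P^\eta(z)\phi(z)\,dz = P^\eta(0)$. With that (standard and easy) correction your argument closes and coincides with the paper's proof; as written, the step would fail.
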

\begin{proof}
The continuity of the map $\eta \mapsto \E (\PPi^\eta M^{g^\eta} \tau)(\vphi)$ for any fixed test function $\vphi \in \CC_c^\infty(\domain)$ is a consequence of \cite{ChandraHairer2016}. To show the lemma, 
it thus suffices to find, for any fixed $\tau \in \CT$, a test function $\vphi$ such that $\E \PPi^\eta M^{g^\eta}\tau(0) = \E (\PPi^\eta M^{g^\eta}\tau)(\vphi)$ for any smooth noise $\eta \in \SM_\infty$. For this we recall that one has
\begin{align}\label{eq:stationary:model}
(\hat {\PPi}^\eta \tau)(z) \sim (\hat{\PPi}^\eta \otimes g_z) \Delta^+ \tau(0), \qquad z \in \domain,
\end{align}
where $\Delta^+$ denotes the coproduct on the structure group $\CG_+$ and $g_z \in \CG_+$ is defined by setting $g_z(X_i) := z_i$, and $g_z(\tau) = 0$ for any non-polynomial $\tau \in \CT_+$ (in the language of \cite[Def.~6.16]{BrunedHairerZambotti2016} this follows from the fact that $\hat{\PPi}^\eta$ is \emph{stationary}). It follows that
\[
\E\PPi^\eta M^{g^\eta}\tau (z)= ((\E \hat {\PPi}^\eta)\otimes g_z) \Delta^+ \tau (0)
= :
P^\eta(z),
\]
where $P^\eta$ is a polynomial depending on $\eta$ with $\deg P^\eta \le \sum_{ u \in N(\tau) }|\fn(u)|_\fs$. Since, for any fixed degree $N$, one can find a test function $\vphi$ integrating to $1$ and 
such that $\int x^k \vphi(x)\,dx = 0$ for all $0 < |k| \le N$, 
we conclude that $P^\eta(0) = \int P^\eta(z) \vphi(z)$, and thus
\[
\E\PPi^\eta M^{g^\eta}\tau (0)= \E (\hat {\PPi}^\eta \tau)(\vphi)\;,
\]
which finishes the proof.
\end{proof}

\maybeNotNeeded{
\begin{lemma}
Let $\eta^\eps\in\SM_\infty$ be a sequence of smooth noises such that $\eta^\eps\to 0$ in $\SM_0(\fc)$. Let also $\TT^\dagger \ssq \TT$ denote the set of trees $\tau$ containing at least one noise type edge. Then for any tree $\tau\in\TT^\dagger$ one has
\[
\E\PPi^{\eta^\eps}M^{g^{\eta^\eps}}\tau(0)\to 0
\]
as $\eps\to 0$.
\end{lemma}
\begin{remark}
The point here is that we allow $\tau$ to have positive homogeneity.
\end{remark}
}

\subsection{Corollaries}

We get the following characterisation of $\CH$.

\begin{corollary}\label{cor:CHsmallestgroup}
The group $\CH$ is the smallest Lie subgroup of $\CG_-$ with the property that the statement of Assumption~\ref{ass:CHBPHZcharacters} holds.
\end{corollary}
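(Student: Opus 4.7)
Let $\CH' \subseteq \CG_-$ be any Lie subgroup equipped with a continuous map $\tilde f \colon \SMsz \to \CG_-$ such that $\tilde f^{\mathbf 0} = \one^*$ and $g^\eta \in \tilde f^\eta \circ \CH'$ for every $\eta \in \SMshifted$. The aim is to prove $\CH \subseteq \CH'$. The first step is to observe that the direction $\subseteq$ of the identity \eqref{eq:metatheoremmodel2} in Theorem~\ref{thm:main} relies only on the continuity of $\tilde f$, on the convergence $\hat Z^{\xi^\eps} \to \hat Z$ in probability, and on Lemma~\ref{lem:supportcontinuousmap}; none of these ingredients depend on Proposition~\ref{prop:constantsInSupport}. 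The same argument therefore applies verbatim with $(\CH',\tilde f)$ in place of $(\CH,f)$. Setting $\tilde k^\eps := (\tilde f^{\xi^\eps})^{-1} \circ g^{\xi^\eps} \in \CH'$, the identity $\renorm{\tilde f^\xi \circ (\tilde f^{\xi^\eps})^{-1}} \hat Z^\eps = T_{\xi^\eps} \mfc(\tilde f^\xi \circ \tilde k^\eps)$ (derived exactly as in the proof of Theorem~\ref{thm:main}) gives
\[
\supp \hat Z \;\subseteq\; \overline{\bigcup_{\eps \in (0,1)} \bigl\{\, T_\phi \mfc(\tilde f^\xi \circ \tilde k^\eps) \,:\, \phi \in \supp \xi^\eps \,\bigr\}}.
\]

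Combining this upper bound with the lower bound $\mfc(\fxi \circ h) \in \supp \hat Z$ furnished by Proposition~\ref{prop:constantsInSupport} for every $h \in \CH$, we obtain, for each such $h$, sequences $(\phi_n) \subset \Omega_\infty$ and $(\tilde k_n) \subset \CH'$ with $T_{\phi_n} \mfc(\tilde f^\xi \circ \tilde k_n) \to \mfc(\fxi \circ h)$ in $\CM_0$. The crux of the proof is to upgrade this convergence of models to the convergence of characters $\tilde f^\xi \circ \tilde k_n \to \fxi \circ h$ in $\CG_-$. Once this is established, closedness of the Lie subgroup $\CH'$ gives $(\tilde f^\xi)^{-1} \circ \fxi \circ h = \lim_n \tilde k_n \in \CH'$ for every $h \in \CH$. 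Specialising to $h = \one^*$ produces the element $a := (\tilde f^\xi)^{-1} \circ \fxi \in \CH'$, and for an arbitrary $h \in \CH$ we then have $h = a^{-1} \circ (a \circ h) \in \CH'$, since both $a^{-1}$ and $a \circ h = (\tilde f^\xi)^{-1} \circ \fxi \circ h$ lie in $\CH'$. This proves $\CH \subseteq \CH'$.

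The main obstacle is therefore the extraction of character convergence from model convergence. The plan is to proceed by induction on $\tau \in \TT_-$ along the total order $\preceq$ from Section~\ref{sec:renormalisationgroupargument}. For any smooth $\phi$ and any $g \in \CG_-$, the identity $M^g \tau = (g \otimes \Id)\cpm \tau$ together with the fact that the trivial coproduct terms are $\one \otimes \tau$ and $\tau \otimes \one$ yields
\[
\PPi^\phi M^g \tau \;=\; g(\tau) + \PPi^\phi \tau + \sum_{\tau_{(1)} \notin \{\one,\tau\}} g(\tau_{(1)}) \, \PPi^\phi \tau_{(2)},
\]
which is precisely the value at $z=0$ of the $\Pi$-part of the model $T_\phi \mfc(g) = \renorm g \Zcan(\phi)$ evaluated at $\tau$. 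The trees $\tau_{(1)}$ in the cross-sum are products of subtrees each strictly $\preceq$-smaller than $\tau$, so the induction hypothesis controls $g_n(\tau_{(1)})$. The remaining work is to separate, in the limit $n \to \infty$, the constant $g_n(\tau)$ from the genuinely $z$-dependent contributions $\PPi^{\phi_n} \tau$ and $\PPi^{\phi_n} \tau_{(2)}$; this is done by exploiting the scaling structure of the seminorms on $\CM_0$, which detect the singular (hence constant) part of the model at small scales uniformly in the possibly unbounded shifts $\phi_n$, thereby forcing $g_n(\tau) \to (\fxi \circ h)(\tau)$.
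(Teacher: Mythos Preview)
Your approach has a genuine gap at precisely the step you flag as the crux: the implication ``$T_{\phi_n}\mfc(g_n) \to \mfc(g)$ in $\CM_0$ forces $g_n \to g$ in $\CG_-$'' is false in general, and Remark~\ref{rem:skew} already contains a counterexample. In the rough path setting described there one has $\CH = \{\one^*\}$, and the oscillatory shifts $\phi_\eps^i(t) = \eps^{1/2}\cos(\eps^{-1}t)$, $\phi_\eps^j(t) = \eps^{1/2}\sin(\eps^{-1}t)$ satisfy $T_{\phi_\eps}\mfc(\one^*) = \Zcan(\phi_\eps) \to \mfc(g_M)$ for a nontrivial character $g_M$ (the one adding the skew-symmetric area $M$), while the characters $g_n \equiv \one^*$ plainly do not converge to $g_M$. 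The contribution $\PPi^{\phi_\eps}\tau$ from the smooth shift converges to a nonzero \emph{constant} as $\eps \to 0$, so no amount of ``scaling structure'' in the model seminorms can separate it from the character value $g_n(\tau)$: the two pieces look identical at every scale $\lambda$. Your inductive scheme in the final paragraph therefore cannot be completed; indeed, the whole mechanism of Section~\ref{sec:shift} is exactly that oscillatory shifts can manufacture arbitrary constants on negative-degree trees.

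The paper's argument avoids this entirely by never passing through supports or models. It exploits the fact that Assumption~\ref{ass:CHBPHZcharacters} for the competing subgroup $\CH'$ applies not only to the regularised noise $\xi^\eps$ but to \emph{every} shifted smooth noise, in particular to $\xi^\eps + \zeta_\delta$ with $\zeta_\delta$ the shift built in Proposition~\ref{prop:shiftednoise}. Writing $\tilde f^\eps$ and $\tilde f^{\eps,\delta}$ for the corresponding characters, one has $(\tilde f^\eps)^{-1}\circ g^\eps \in \CH'$ and $(g^{\eps,\delta})^{-1}\circ \tilde f^{\eps,\delta} \in \CH'$, hence $(\tilde f^\eps)^{-1}\circ h^{\eps,\delta}\circ \tilde f^{\eps,\delta} \in \CH'$ for the character $h^{\eps,\delta}$ of \eqref{eq:character:h:g:g}. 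The proof of Proposition~\ref{prop:sequencetoconstant} already shows $h^{\eps,\delta} \to h$; since $\tilde f^\eps \to \tilde f^\xi$ and $\tilde f^{\eps,\delta} \to \one^*$, closedness of $\CH'$ gives $h \in \tilde f^\xi \circ \CH'$ for every $h \in f^\xi \circ \CH$, whence $\CH \subseteq \CH'$. The key information your route through the support discards is that Assumption~\ref{ass:CHBPHZcharacters} constrains the BPHZ characters of the shifted noises $\xi^\eps + \zeta_\delta$, not just of $\xi^\eps$.
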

\begin{proof}
Let $\CK\subseteq\CG_-$ be any Lie subgroup of $\CG_-$ such that the statement of Assumption~\ref{ass:CHBPHZcharacters} holds and denote the corresponding characters for the noises $\xi^\eps$ and $\xi^\eps+\srn_\delta$ by $\tilde f^\eps$ and $\tilde f^{\eps,\delta}$. It then follows that $\tilde f^\xi:=\lim_{\eps\to 0}\tilde f^\eps$ exists and $\lim_{\delta\to 0}\lim_{\eps\to 0}\tilde f^{\eps,\delta}=\one^*$.

Let $h\in f^\xi\circ\CH$ be any character and let $\srn_\delta$ be the sequence of smooth shifts defined in Proposition~\ref{prop:shiftednoise}. Denoting as in the proof of Proposition~\ref{prop:sequencetoconstant} by $g^\eps$ and $g^{\eps,\delta}$ the BPHZ characters for $\xi^\eps$ and $\xi^\eps+\srn_\delta$ and $h^{\eps,\delta}\in\CG_-$ the character defined via the relation~(\ref{eq:character:h:g:g}), then it follows from (\ref{eq:convergenceh}) that $h^{\eps,\delta}\to h$. On the other hand $\CK$ is a subgroup, and by definition one has $(\tilde f^\eps)^{-1} \circ g^\eps \in \CK$ and $(g^{\eps,\delta})^{-1} \circ \tilde f^{\eps,\delta} \in \CK$, so that
\[
(\tilde f^\eps)^{-1} \circ h^{\eps,\delta} \circ \tilde f^{\eps,\delta} \in \CK.
\]
Note now that since $\CK$ is a Lie subgroup of a nilpotent (and therefore simply connected)
Lie group, it is closed, see for example the introduction of \cite{MR1201409}.
Since $\tilde f^\eps \to \tilde f^\xi$ and $\tilde f^{\eps,\delta}\to \one^*$, one has
$
h\in \tilde f^\xi \circ \CK
$, whence it follows that
\[
\fxi\circ\CH\subseteq \tilde f^\xi\circ \CK\;.
\]
Since the identity belongs to $\CH$ and $\CK$ is
a group, we conclude that $(\fxi)^{-1}\circ\tilde f^\xi \in \CK$ and therefore that  $\CH\subseteq\CK$.
\end{proof}

An interesting, although somewhat unrelated, corollary is the following statement.

\begin{corollary}\label{cor:singular:law}
Let $k,l\in\CG_-$ be two characters with $k\ne l$ and $k(\Xi) = l(\Xi) = 0$ for any noise type $\Xi \in \FL_-$. Then the laws of $\renorm k \hat Z$ and $\renorm l \hat Z$ are singular with respect to each other.
\end{corollary}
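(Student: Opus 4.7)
The plan is to exhibit, for any open $U \supset \{0\}\times \T^{e}$, a measurable functional $\Psi_U$ on the space of admissible models restricted to $U$ that takes two distinct deterministic values almost surely under $\CR^k \hat Z$ and $\CR^l \hat Z$, giving mutual singularity immediately. Since $k\ne l$, I first choose $\tau_0 \in \TT_-$ minimal in the total order $\preceq$ (introduced in Section~\ref{sec:renormalisationgroupargument}) such that $k(\tau_0)\ne l(\tau_0)$. Multiplicativity of characters, together with the fact that every subtree appearing in $\cpm \tau_0$ other than $\tau_0$ itself has strictly fewer edges and therefore belongs to $\TT_-^{\prec\tau_0}$, forces $k(\sigma) = l(\sigma)$ for every subforest $\sigma \subsetneq \tau_0$ occurring in $\cpm \tau_0$. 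Note that the hypothesis $k(\Xi)=l(\Xi)=0$ for every noise type excludes the possibility that $\tau_0$ is itself a noise type.

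The key algebraic input is the expansion
\[
\cpm \tau_0 = \tau_0 \otimes \one + \one \otimes \tau_0 + \sum_{\sigma \ne \tau_0,\,\one} \sigma \otimes (\tau_0/\sigma).
\]
Applying $((k-l)\otimes \Id)$, the second term contributes $0$ since $k(\one)=l(\one)=1$, and every term in the sum vanishes by the minimality of $\tau_0$, leaving the identity $(M^k-M^l)\tau_0 = (k(\tau_0)-l(\tau_0))\,\one$ in $\hat\CT^{\ex}$. For any test function $\vphi \in \CC_c^\infty(U)$ with $\int \vphi = 1$, this yields the almost-sure relation
\[
(\CR^k \hat Z)\tau_0(\vphi) - (\CR^l \hat Z)\tau_0(\vphi) = k(\tau_0)-l(\tau_0).
\]
At this stage the identity is merely a coupling statement between the two random models on the common probability space $(\Omega,\P)$ and does not yet separate their laws. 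To upgrade it, I use the hypothesis $k(\Xi) = l(\Xi) = 0$, which forces $M^k \Xi = M^l \Xi = \Xi$ and thus $Z \Xi = \xi_\Xi$ whenever $Z \in \{\CR^k \hat Z, \CR^l \hat Z\}$. The noise $\xi$ is therefore measurable with respect to $Z$, and by the continuity of the BPHZ lift from $\SMz$ into $\CM_0$ established in \cite{ChandraHairer2016} (cf.\ Lemma~\ref{lem:convergenceboldPi}), the model $\hat Z$ itself becomes a measurable functional of $Z$. Setting
\[
\Psi(Z) := Z\tau_0(\vphi) - \hat Z[\xi(Z)]\tau_0(\vphi) - \sum_{\sigma \ne \tau_0,\,\one} k(\sigma)\,\hat Z[\xi(Z)](\tau_0/\sigma)(\vphi),
\]
a direct computation gives $\Psi(\CR^k \hat Z) = k(\tau_0)$ and, because $k$ and $l$ coincide on proper subforests of $\tau_0$, also $\Psi(\CR^l \hat Z) = l(\tau_0)$ almost surely.

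The main technical obstacle is to realise $\Psi$ as a measurable functional of $Z$ restricted to $U$ alone, because reconstructing $\hat Z\tau_0(\vphi)$ and $\hat Z(\tau_0/\sigma)(\vphi)$ from $\xi$ involves kernel convolutions that could a priori reach outside $U$. Compactness of $\{0\}\times \T^{e}$ guarantees that $U$ contains a slab $(-\eps,\eps)\times \T^{e}$ for some $\eps > 0$. After possibly replacing the truncated kernels $K_\ft$ by versions supported in a sufficiently small neighbourhood of the origin --- a change which amounts to composing both $\hat Z$ and its $\CR^k$--, $\CR^l$--images by a common fixed element $g\in \CG_-$, preserves mutual singularity (since $\CR^g$ is a homeomorphism of $\CM_0$), and preserves the hypothesis $k(\Xi)=l(\Xi)=0$ because $g(\Xi)=0$ for any noise-type tree --- I can localise $\vphi$ deep inside the slab so that every kernel convolution stays within $U$. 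The resulting functional then depends only on $Z|_U$, takes the values $k(\tau_0)\ne l(\tau_0)$ almost surely under the two laws, and therefore separates them into mutually singular measures on $\CM_0|_U$.
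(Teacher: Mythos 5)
Your core argument is correct and takes a genuinely different route from the paper. The paper deduces the corollary from Proposition~\ref{prop:sequencetoconstant}: since $k$ and $l$ act trivially on noise types, the noise can be read off the model, so $Z \mapsto \tilde T_{\srn_\delta} Z := T_{\srn_\delta((\PPi\Xi)_{\Xi\in\FL_-})} Z$ is a continuous functional of the model, and along a subsequence $\tilde T_{\srn_\delta}\renorm{k}\hat Z$ converges almost surely to the deterministic model $\mfc(k\circ h)$, with distinct limits for $k$ and $l$. You instead take a $\preceq$-minimal tree $\tau_0$ on which $k$ and $l$ disagree, use multiplicativity of characters to cancel all proper-subforest contributions in the coproduct, and build a separating functional by recovering $\xi$ from $Z$ and re-running the BPHZ lift. (One small imprecision: there is no \emph{continuous} BPHZ lift from $\SMz$ into $\CM_0$; what you need, and what is true, is that $\hat Z$ is a measurable functional of $\xi$ as an almost sure limit along a subsequence of the continuous functionals $\xi\mapsto\hat Z^\eps[\xi]$.) This is more elementary than the paper's argument in that it bypasses the support-theorem machinery entirely, and it does prove mutual singularity of the two laws on all of $\Lambda$.

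The genuine gap is in the localisation to a thin open set $U \supset \{0\}\times\T^{e}$. Your functional $\Psi$ requires $\hat Z[\xi]\tau_0(\vphi)$ and $\hat Z[\xi](\tau_0/\sigma)(\vphi)$, which depend on $\xi$ in a neighbourhood of $\supp\vphi$ whose radius is comparable to the kernel support times the depth of $\tau_0$; for a thin slab this reaches outside $U$, and white noise on $U$ carries no information about white noise outside $U$. The proposed fix --- shrinking the supports of the truncated kernels and asserting that this ``amounts to composing by a common fixed element $g\in\CG_-$'' --- is false. If $K'_\ft = K_\ft\chi$ is a second truncation, the two BPHZ models differ by genuinely random, not deterministic, quantities: already on a planted tree the difference of the lifts is $(K_\ft - K'_\ft)\star\xi_\Xi$, a nontrivial Gaussian field, while characters of $\CT_-$ vanish on planted trees; on a tree with two noise edges meeting at a node the difference involves a term of the form $\bigl((K-K')\star\xi\bigr)\cdot\xi$, which is not a constant. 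Hence the two models are not related by $\CR^g$ for any $g$, and even if they were, a singularity statement for the model built from $K'$ would not transfer to the model $\hat Z$ fixed in the corollary. As written, your argument therefore proves the statement only for open sets containing a full kernel-sized neighbourhood of the support of an admissible test function; to reach arbitrarily thin $U$ one needs an additional mechanism, such as the paper's device of composing with a noise-dependent shift that drives the model to a deterministic polynomial limit, which is then distinguishable on any open set.
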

\begin{remark}
Even though the laws of $\renorm{k} \hat Z$ and $\renorm l \hat Z$ are mutually singular, their topological supports may still be the same.
\end{remark}
\begin{proof}
This is a Corollary of Proposition~\ref{prop:sequencetoconstant}. Indeed, for any random smooth noise $\srn$ such that $\srn:\Omega\to\Omega_\infty$ is continuous we denote by $\tilde T_\srn:\CM\to\CM$ the continuous map
\[
\tilde T_\srn \CZ(\PPi):= T_{\srn((\PPi\Xi)_{\Xi\in\FL_-})}\CZ(\PPi).
\]
This is well-defined since the map $\CZ(\PPi)\mapsto (\PPi\Xi)_{\Xi\in\FL_-}$ is continuous from $\CM$ into $\Omega$. 
In particular, for any $k\in\CG_-$ which acts trivially on $\FL_-$ one has the identity
\[
T_{\zeta(\xi)} \renorm k\hat Z(\xi) = \tilde T_\zeta \renorm k \hat Z(\xi)\;.
\]
If we now denote by $\srn_\delta$ the sequence defined in Proposition~\ref{prop:sequencetoconstant}, then it follows that, for a suitable subsequence $\delta \to 0$ sufficiently fast,
we have the $\P$-almost sure limit 
\begin{equ}
\lim _{\delta \to 0}\tilde T_{\srn_\delta} \renorm k \hat Z = \mfc(k)\;.
\end{equ}
Since $\mfc(k) \ne \mfc(l)$ (and both are deterministic), the claim follows. 
\end{proof}
\begin{remark}
In case of space-time white noise, we believe that the same statement holds for the laws of $\renorm k \hat Z$ and $\renorm l \hat Z$ restricted to any open subset $U$ of $\spacetime$ which contains the initial time-slice $\{t=0\}$. 
\end{remark}

\section{Constraints between renormalisation constants}
\label{sec:constraints}

The goal of this section is to show that Assumption~\ref{ass:CVz}, which we assume to hold in this section, implies Assumptions~\ref{ass:CJHopfIdeal} and~\ref{ass:CHBPHZcharacters} made at the beginning of Section~\ref{sec:support:thm:models}.

\begin{proposition}\label{prop:ass_implies_ass}
Assumption~\ref{ass:CVz} implies Assumptions~\ref{ass:CJHopfIdeal} and~\ref{ass:CHBPHZcharacters}.
\end{proposition}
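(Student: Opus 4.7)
The plan is to decouple the BPHZ character into a "small-scale" part that automatically annihilates $\CJ$ and a "large-scale" part that constitutes the tweaking character $f^\eta$, while showing separately that $\CJ$ is stable under the coproduct. Concretely, recall that the truncated kernel satisfies $K_\ft = \hat K_\ft \phi$ for a fixed symmetric cutoff $\phi$, so that the remainder $R_\ft := \hat K_\ft - K_\ft$ is smooth on all of $\bar\domain$. For a shifted smooth noise $\eta \in \SMsinf$, I would first express $g^\eta(\tau) = \Upsilon^\eta(\antipode\tau)$ for $\tau \in \TTm$ as a finite combination of contracted integrals $\ctrL{\sigma}{K}{\psi^\sigma_\eta}$ where $\sigma$ ranges over trees appearing in $\antipode\tau$ and $\psi^\sigma_\eta$ is assembled from the simple-kernel representation of the cumulants of $\eta$. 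The key analytic input is that, thanks to the simple-kernel structure, the assembled test functions land in $\cutoffspacez$, modulo an explicit boundary correction which is absorbed into a separate character.

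I would then expand $K = \hat K - R$ across all kernel edges to split each term into a "pure $\hat K$" contribution plus remainder terms containing at least one $R$-edge. The remainder terms involve strictly smooth kernels wherever $R$ appears, so their totality is well defined for every $\tau$ and assembles, through the twisted antipode, into a character $f^\eta \in \CGm$. Its continuity in $\eta$ follows from continuity of the cumulant kernels in the norm $\CYNnorm{\cdot}$, while $f^{\mathbf 0} = \one^*$ because all cumulants vanish when $\eta \equiv 0$. The pure-$\hat K$ residue reassembles as $(f^\eta)^{-1} \circ g^\eta$, which by construction is a linear combination of evaluations $\ctrLhat{\sigma}{\psi}$ with $\psi \in \cutoffspacez$ and therefore annihilates $\CJ$, giving $g^\eta \in f^\eta \circ \CH$ once $\CH$ is shown to form a group.

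The Hopf-ideal property $\cpmh \CJ \subseteq \CJ \otimes \CTm + \CTm \otimes \CJ$, together with stability under the antipode, reduces to a contraction identity for $\CKhat$ compatible with the subforest structure of the coproduct. The coproduct $\cpmh \tau$ sums over negative-degree subforests of $\tau$, producing tensor products of subforest and contracted quotient. The kernel $\CKhat\tau(x_{L(\tau)})$ admits a factorisation: integrating out the leaves contained in a subforest, with the contraction vertex held fixed, produces a function of the remaining leaves and the contraction vertex that is precisely $\CKhat$ of the quotient multiplied by $\CKhat$ of the subforest. Grouping terms in $\cpmh$ applied to a generator of $\CJ$ by the shape of the subforest (respectively the quotient) and applying the defining vanishing property $\ctrLhat{\cdot}{\psi} = 0$ in the appropriate tensor slot produces the required inclusion, with the polynomial decorations on the contraction vertex handled by the translation invariance built into $\cutoffspacez$.

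The main obstacle will be the control of zero-homogeneity trees. Theorem~\ref{thm:evaluation:trees:largescale} guarantees that $\CKhat\tau$ is large-scale integrable only when $|\tau|_\fs < 0$, so both the splitting $g^\eta = f^\eta \circ \hat g^\eta$ and the contraction identity for $\cpmh$ threaten to break down precisely on trees in $\CV_0$. Assumption~\ref{ass:CVz} is what resolves this: since $g^\eta(\tau) = 0$ on $\CV_0$ for every shifted smooth noise $\eta$, one can set $f^\eta$ to vanish on $\CV_0$ without loss of information, decoupling these problematic trees from the rest of the construction. The same assumption is invoked whenever a zero-homogeneity subtree or quotient appears in the coproduct analysis, discarding the otherwise ambiguous contributions and thereby closing the argument.
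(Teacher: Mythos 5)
Your high-level plan --- split $g^\eta$ into a large-scale tweaking character and a small-scale part annihilating $\CJ$, and show $\cpmh\CJ\subseteq\CJ\otimes\CTm+\CTm\otimes\CJ$ via a compatibility of $\CK_{\hat K}$ with the subforest expansion --- is indeed the shape of the paper's argument, and you correctly sense that Assumption~\ref{ass:CVz} is needed for the zero-homogeneity trees. But both of your central technical claims fail as stated, and they are precisely where the paper has to work hardest. The ``contraction identity'' is false: the root of a divergent subtree $S\subset\tau$ is an internal vertex that is itself integrated out in $\CK_{\hat K}\tau$, so the contribution of $S$ is a function of its leaves \emph{and} of that floating root, entangled with the quotient; moreover the test function in the definition of $\CJ$ is a single function of \emph{all} leaves of $\tau$ and does not factor across the two tensor slots, so you cannot apply the defining vanishing property ``in the appropriate slot''. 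The paper circumvents this by attaching a leg (a two-point test function) to every pair of noise vertices, so that a subdivergence can be \emph{triggered} by rescaling only the legs internal to $S$; the desired identity then follows not from a pointwise factorisation but from the dichotomy in Proposition~\ref{prop:interaction:wlCJ:Delta}: homogeneity forces the quantity $f(\eps)$ to be a sum of terms $\eps^{\gamma_j}f_j(1)$ with $\gamma_j\le 0$, while Proposition~\ref{prop:eps:beta:bound} (a genuine BPHZ-type estimate imported from Feynman-diagram machinery) gives $|f(\eps)|\lesssim\eps^\beta$, and the two are compatible only if $f(1)=0$. The derivative decorations produced by the coproduct are likewise not ``handled by translation invariance''; they are the reason the legs must carry nontrivial $\fe$-decorations and the reason for the projections $\plZ$ and $\plQ$.

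The splitting $g^\eta=f^\eta\circ\hat g^\eta$ has analogous problems. Collecting all terms containing at least one factor of $R=\hat K-K$ does not yield a well-defined, continuous character merely because $R$ is smooth: the remaining truncated kernels and the noise cumulants still carry every small-scale subdivergence of $\tau$, and large-scale integrability of the mixed terms is not automatic. Nor is the pure-$\hat K$ part an evaluation against test functions in $\cutoffspacez$: the cumulant of the regularised white-noise component integrates to $1$, not $0$, so one must first upgrade the generators of $\CJ$ from ``vanish against mean-zero test functions'' to ``vanish against all test functions and all cumulants''. That upgrade is the scaling lemma following Definition~\ref{def:bar:Upsilon:large:scale:2}, and it is there that Assumption~\ref{ass:CVz} does its real work, excluding the exceptional case $\fancynorm{\tau}_\fs=0$ with two leaves where the scaling argument is inconclusive. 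Finally, continuity of $\eta\mapsto f^\eta$ is genuinely delicate: the paper explicitly abandons the inductive matching of divergences you sketch (via $f^\eta\tau=g^\eta\tau-(f^\eta\otimes\hat g^\eta)(\cpmh-\Id\otimes\one)\tau$) in favour of a Lie-algebra/ODE argument interpolating in the large-scale cutoff, Lemmas~\ref{lem:fnkn} and~\ref{lem:fnknbound}, precisely because the direct approach does not close.
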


\noindent Assumption~\ref{ass:CJHopfIdeal} is proven in Corollary~\ref{cor:CJ:hopf:ideal}, Assumption~\ref{ass:CHBPHZcharacters} follows from Lemma~\ref{lem:fnkn} and Lemma~\ref{lem:fnknbound}.
 To get a feeling for the ideal $\CJ$ first we consider a couple of examples of generators.

\begin{example}\label{ex:CJ:PAM}\label{ex:const:first}
In the case of the three dimensional PAM equation, one has 
\[
\treeExamplePAM
-
\treeExamplePAMa
-
\treeExamplePAMb
-
\treeExamplePAMc
\in \CJ.
\]
As above, a bold edge with label $k=1,2,3$ denotes the derivative of the Poisson kernel with respect to $x_k$, and a circle denotes an instance of spatial white noise.The reason for this is the relation
\[
K - \partial_1 K * \partial_1 K
-
\partial_2 K * \partial_2 K
-
\partial_3 K * \partial_3 K
=
0
\]
for the Poisson kernel $K$ in three dimensions. Note that the corresponding linear combination between the renormalisation constants does not vanish (since we work with a spatial truncation of the Poisson kernel), but it is easy to see that it is bounded uniformly in the limit.
\end{example}
\begin{example}
Another possible source of constraints is given by ``total derivatives''. For instance in case of the generalised KPZ equation one has
\[
\treeExampleKPZa
+
\treeExampleKPZb
+
\treeExampleKPZc
\in \CJ,
\]
where the white circles denote instances of white noise (the circles are allowed to denote different instances of white noise, but with the convention that circles that appear at the same position in the three trees correspond to the same white noise). These two classes of constraints were recently used quite systematically in \cite{MateSymmetries}.
\end{example}

\begin{example}
A third possible constraint comes from symmetries, for instance in case of 
\[
\partial_t u = -\Delta^2 u + g(u,\partial_x u) (-\Delta)^{1-\kappa} \eta
\]
with $\eta$ spatial white noise in 2 dimensions and $\kappa>0$, one has 
\[
\treeExampleAsym\in \CJ.
\]
Here an edge denotes the (truncation of the) Green's function for $\partial_t-\Delta^2$, the bold edge denotes its spatial derivative (say with respect to $x_1$) and a node $\leafs$ an instance of $(-\Delta)^{1-\kappa}\eta$.
\end{example}
\begin{example}\label{ex:const:last}
Finally, a possible source of constraints comes from moving the root. For instance, if one considers a 
couple of interacting forward-backward generalised KPZ equations, one has
\[
\treeExampleRootA \,- \,	\treeExampleRootB \in \CJ
\]
where a red edge denotes the backward heat kernel.
\begin{remark}
Forward-backward equations appear naturally in the context of the dual to the tangent equation when studying existence of densities for solutions to stochastic equations, see \cite{GassiatLabbe2017,Schoenbauer2018} for this construction in the context of SPDEs. Consider e.g. the KPZ equation $\partial_t h = \partial_x^2 h + |\partial_x h|^2 + \xi$ with tangent equation $\partial_t v = \partial_x^2 v + 2 \partial_x h \partial_x v + f$, where $f$ denotes a Cameron-Martin function. The dual of the tangent equation is given by
$-\partial_t w = \partial_x^2 w - 2\partial_x h \partial_x w + g$.
Some of the trees needed to solve the coupled equation for $(h,v,w)$ contain the backward heat kernel.
\end{remark}
\end{example}
\begin{remark}
It is unclear at this point whether all constrains that show up in reasonable examples are of the form described above. 
One could of course always construct more contrived examples by simply choosing the integration kernels themselves
to satisfy certain constraints. The approach chosen in this article aims for the largest possible generality 
while avoiding having to explicitly characterise these constraints. Instead, we show directly that the ideal 
generated by these constraints always has ``nice'' algebraic properties (Assumption~\ref{ass:CJHopfIdeal}) and 
that the BPHZ characters are ``well-behaved'' in the sense that they respect these constraints up to discrepancies of order~1 (Assumption~\ref{ass:CHBPHZcharacters}).
\end{remark}

We first generalise the notation \eqref{eq:CK} by including noises. We define the space
$\SM_\infty^\star := \SMsinf \sqcup \{\one\}$ and its closure $\SMsz$ under the norm (\ref{eq:noise:norm}).
Here, we let $\one$ act on any noise type $\Xi \in \FL_-$ by setting $\one(\Xi):=1$.\footnote{Note that $\one \notin \SM_\infty$, since $\E \one\ne 0$.}
With this notation, we now make the following key definition.
\label{idx:tildeUpsilon}
\begin{definition}\label{def:bar:Upsilon:large:scale:2}
Given a tree $\tau \in \CT$,  we define for any $\eta\in\SM_\infty^\star$, any $\psi \in \cutoffspace$ and any large-scale kernel assignment  $R=(R_\ft)_{\ft\in\FL_+}$ with $R_\ft \in \CC_c^\infty(\bar\domain)$ the constant
\begin{align}\label{eq:bar:Upsilon}
\evalnA{\eta}{\psi}{R} \tau:=
\ctrL{\tau}{K+R}{
\zeta
}
\end{align}
where $\zeta(x) := 	\Big( \E \prod_{e \in L(T)} \eta_{\ft(e)}(x_{e}) \Big) \, \psi (x)$.
We will write $\evalA{\psi}{R} := \evalnA{\one}{\psi}{R}$.
\end{definition}

As was already remarked below Definition~\ref{def:CJcon}, we will show in Theorem~\ref{thm:evaluation:trees:largescale} that for any $\tau \in \TT_-$ the limit $\evalnA{\eta}{\psi}{R} \tau$ does indeed exist as the smooth kernels $R_\ft$ approach $\hat K_\ft-K_\ft$, and we denote this limit by \label{idx:tildeUpsilonR}$\evalnA{\eta}{\psi}{}$. 
We write also $\evaA{\psi}:= \evalnA{\one}{\psi}{}$. All operators introduced here are multiplicatively extended to characters on the Hopf algebra $\CT_-$.

\begin{remark}
Note that convergence when $R_\ft$ approaches $\hat K_\ft-K_\ft$ relies on the smooth cut-off function $\psi$. This is the reason for introducing this cut-off in the definition \eqref{eq:bar:Upsilon}.
\end{remark}
The following lemma gives a useful alternative description of $\CJ$.
\begin{lemma}
Under Assumption~\ref{ass:CVz} the ideal $\CJ$ is generated by all $\tau \in \Vec{\TT_-}$ such that
$\evalnA{\eta}{\psi}{} \tau = 0$ for any $\eta \in \SM_\infty^\star$ and any $\psi \in \cutoffspace$.
\end{lemma}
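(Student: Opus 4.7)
The plan is to verify the two inclusions between ideals separately, the first being immediate and the second making crucial use of Assumption~\ref{ass:CVz}.

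For the easy direction, suppose $\tau \in \Vec \TT_-$ satisfies $\evalnA{\eta}{\psi}{}\tau=0$ for every $\eta \in \SM_\infty^\star$ and every $\psi \in \cutoffspace$. Specialising to $\eta = \one$ (so that the noise expectation factor in \eqref{eq:bar:Upsilon} becomes identically $1$) and restricting to $\psi \in \cutoffspacez \subseteq \cutoffspace$ gives $\ctrLhat{\tau}{\psi} = \evalnA{\one}{\psi}{}\tau = 0$, which means $\tau$ is a generator of $\CJ$ in the sense of Definition~\ref{def:CJcon}. Hence the ideal generated by the new set is contained in $\CJ$.

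For the reverse inclusion, it suffices to show that every generator $\sigma \in \Vec \TT_-$ of $\CJ$ also satisfies $\evalnA{\eta}{\psi'}{}\sigma = 0$ for every admissible $\eta$ and every $\psi' \in \cutoffspace$. Since $\eta$ is stationary and smooth, the function $\phi_\eta(x) := \E\bigl[\prod_{e \in L(\sigma)} \eta_{\ft(e)}(x_e)\bigr]$ is smooth and translation invariant, so $\tilde\psi := \phi_\eta \cdot \psi' \in \cutoffspace$. Thus $\evalnA{\eta}{\psi'}{}\sigma = \ctrLhat{\sigma}{\tilde\psi}$. Fixing some reference $\chi_0 \in \cutoffspace$ with $\int\chi_0 = 1$ and decomposing $\tilde\psi = (\tilde\psi - (\int \tilde\psi)\chi_0) + (\int\tilde\psi)\chi_0$, the first summand lies in $\cutoffspacez$, and so by the assumption on $\sigma$ its contribution vanishes. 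The problem therefore reduces to showing that the constant
\begin{equation*}
c_\sigma := \ctrLhat{\sigma}{\chi_0}
\end{equation*}
vanishes; note this is independent of the choice of $\chi_0$ since any two such choices differ by an element of $\cutoffspacez$.

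The main obstacle is precisely to show $c_\sigma = 0$. Decompose $\sigma = \sum_{\Theta} \sigma_\Theta$ into its components across equivalence classes $\Theta \in \TT_-/_\sim$; since different equivalence classes correspond to different leaf-type multisets and different fancy homogeneities, the vanishing $\ctrLhat{\sigma}{\psi} = 0$ on $\cutoffspacez$ holds class-by-class. For classes with $\fancynorm{\Theta}_\fs < 0$, the large-scale stability of Theorem~\ref{thm:evaluation:trees:largescale} shows that $\CKhat\sigma_\Theta$ decays at infinity; a translation-invariant function that is a constant modulo test functions integrating to zero and also decays must vanish identically, so $c_{\sigma_\Theta} = 0$. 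The delicate case is $\fancynorm{\Theta}_\fs = 0$: here the kernel $\CKhat\sigma_\Theta$ is scale-invariant and only well-defined as a distribution via Theorem~\ref{thm:evaluation:trees:largescale}, and $c_{\sigma_\Theta}$ can a priori be a genuine nonzero number. For two-leaf trees, however, $c_{\sigma_\Theta}$ matches (up to the discrepancy between $K$ and $\hat K$, which vanishes in the appropriate limit) the BPHZ character $g^\eta(\sigma_\Theta)$ applied to a shifted smooth noise, and Assumption~\ref{ass:CVz} gives $g^\eta(\sigma_\Theta)= 0$. For $\# L(\sigma_\Theta) \ge 3$, the richer space of test functions in $\cutoffspacez$ (integrating to zero against variables that can depend on more than one difference) together with the scale invariance of $\CKhat\sigma_\Theta$ forces $\CKhat\sigma_\Theta \equiv 0$ identically, again yielding $c_{\sigma_\Theta}= 0$. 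Summing over $\Theta$ gives $c_\sigma = 0$, which completes the argument.
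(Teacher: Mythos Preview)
Your overall structure is right and matches the paper's: reduce to showing the constant $c_\sigma = \ctrLhat{\sigma}{\chi_0}$ vanishes, decompose into equivalence classes $\Theta$, and invoke Assumption~\ref{ass:CVz} for the $\CV_0$ case. The gap is in your justifications for the two non-$\CV_0$ cases.

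For $\fancynorm{\Theta}_\fs < 0$, you cite Theorem~\ref{thm:evaluation:trees:largescale} as giving decay of $\CKhat\sigma_\Theta$, but that theorem only establishes well-definedness of the evaluation as $R \to \hat K - K$; it says nothing about pointwise decay. The function $\CKhat\sigma_\Theta$ is not translation-invariant (the root is pinned), and your ``constant plus decay implies zero'' reasoning is unclear in this multivariate setting.

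For $\fancynorm{\Theta}_\fs = 0$ with $\#L \ge 3$, you invoke ``scale invariance'' of $\CKhat\sigma_\Theta$, but this is the wrong direction: in this case the pairing $\ctrLhat{\sigma_\Theta}{\psi^\eps}$ is \emph{not} invariant under rescaling $\psi$, and it is precisely this mismatch that forces $c_{\sigma_\Theta} = 0$. Your ``richer test function space'' argument does not make this explicit and is not a proof.

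The paper handles both cases at once with a single clean scaling argument: since $\ctrLhat{\sigma_\Theta}{\psi} = c_{\sigma_\Theta}\int\psi$ for all $\psi \in \cutoffspace$, and the homogeneity of $\hat K$ gives $\ctrLhat{\sigma_\Theta}{\psi^\eps} = \eps^{\fancynorm{\Theta}_\fs + (\half\#L - 1)|\fs|}\ctrLhat{\sigma_\Theta}{\psi}$ while $\int\psi^\eps$ scales with a different exponent, one gets a contradiction unless the exponents match. That matching condition forces $\#L = 2$ and $\fancynorm{\Theta}_\fs = 0$ (i.e.\ $\Theta \subset \CV_0$), using Assumption~\ref{ass:main:reg} to rule out $\#L \ge 3$. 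Replacing your case-by-case heuristics with this scaling computation closes the gap.
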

\begin{proof}
Comparing \eqref{eq:bar:Upsilon} and Definition~\ref{def:CJcon}, we only have to show that \[
\ctrLhat{\tau}{\psi}
=0
\]
for any $\tau \in \CJ$ and any $\psi \in \cutoffspace$. Rescaling $\psi \to \psi^\eps$ and exploiting the homogeneous behaviour of the integration kernels $\hat K_\ft$ it suffices to consider linear combinations of trees $\tau=\sum_{i\le r} c_i \tau_i$ such that $[L(\tau_i),\ft]$ and $\alpha:=\homofancy{\tau_i}$ do not depend on $i\le r$. 
In particular, it suffices to consider the cases $\tau_i \in \CV_0$ for all $i\le r$ or $\tau_i \notin \CV_0$ for all $i\le r$. In the former case Assumption~\ref{ass:CVz} guarantees that $\ctrLhat{\tau}{\psi}=0$ for any $\psi \in \cutoffspace$.
 In the latter case, note that $\tau\in \CJ$ implies that 
\begin{align}\label{eq:small:1}
\ctrLhat{\tau}{\psi} = a(\tau) \int \psi
\end{align}
for some $a(\tau) \in \R$ and all $\psi \in \cutoffspace$. However, the transformation $\psi^\eps(x):= \eps^{-|\fs|} \psi(\eps^{-\fs} x)$ leaves the right hand side (\ref{eq:small:1}) invariant, while the left hand side is transformed as $\ctrLhat{\tau}{\psi^\eps}  = \eps^{\alpha + (\half \#L(\tau_i) - 1)|\fs| } \ctrLhat{\tau}{\psi}$, which is a contradiction unless $\alpha = -\#L(\tau_i) \shalf + |\fs|$. Unless $\#L(\tau_i)=2$ one has $\alpha\le -\shalf$, contradicting Assumption~\ref{ass:main:reg}. If $\#L(\tau_i)=2$, then one has $\alpha=0$ and thus $\tau \in \CV_0$, in contradiction with $\tau \notin \CV_0$.
\end{proof}

An important remark is that if the cutoff functions $\psi$ are chosen such that $\psi \equiv 1$ in a large enough neighbourhood of the origin, then one has the identity
\[
\evalnA{\eta}{\psi}{0}\tau
=
\E \PPi^\eta \tau (0)
=: 
\ev{\eta}\tau
\]
for any smooth noise $\eta \in \SM_\infty$. 

One may wonder what the function $\psi$ in this notation is trying to accomplish. We want to study the limit of $\evalnA{\eta}{\psi}{R}\tau$ in which $R_\ft$ converges to $\hat K_\ft - K_\ft$, where $\hat K_\ft: \bar\domain\to \R$ is the homogeneous extension of the integration kernel to the whole space, see Section~\ref{sec:kernels}. Without the cutoff function $\psi$, this quantity has no chance of converging in general. 
However, we will see that the presence of the cutoff $\psi$ is sufficient for this limit to exist. The fact that we cannot get rid of the large-scale cutoff completely is no surprise. Indeed, even for $\eta=\one$ this is not true:
\begin{example}
Consider the cherry tree $\cherry$ in $\Phi^4_3$. We obtain 
\[
\evalA{\psi}{R} \cherry = \int_{\R^+ \times \R^3} dx \int_{\R^+ \times \R^3} dy \, (K+R)(x) (K+R)(y) \psi(x-y)
\]
where we identify $\psi \in \bar\CC_c^\infty(\bar\domain^{L(\tau)}) \simeq \CC_c^\infty(\bar\domain)$.
We can only guarantee that this is finite as $R \to \hat K - K$ if $\psi$ is compactly supported.
\end{example}
On a more technical level, this issue is related to the bound on the degrees of tight partitions introduced in \cite[Sec.~4]{Hairer2017}.

There is however one big advantage of the large-scale cutoff introduced by $\psi$ over the one given by simply choosing a compactly
supported kernel $K$: 
the latter ``sees'' the interior structure of the tree $\tau$, while the former only ``sees'' the noise type edges.  When we work out properties of the ideal $\CJ$ later on, this becomes crucial, as it can happen that two trees with the property that the evaluation (\ref{eq:bar:Upsilon}) only 
differs due to the large scale cutoff have distinct interior structure (compare e.g. Example~\ref{ex:CJ:PAM}). However, such trees will always carry the same multiset 
of noise types, so that the cutoff introduced by $\psi$ as in (\ref{eq:bar:Upsilon}) will affect each of them in precisely the same way, 
thus not destroying exact identities between their renormalisation constants.

\subsubsection*{A motivating example}

The main difficulty in proving Assumptions~\ref{ass:CJHopfIdeal} and~\ref{ass:CHBPHZcharacters} is to determine the algebraic structure of a tree $\tau$ drawing only on the analytic information given by $\CK_{\hat K} \tau$. The strategy to show the first of these results, namely that $\CJ$ is a Hopf ideal, and the second one, namely that the BPHZ characters $g^\eta$ ``almost'' annihilate $\CJ$, are quite similar. The main step is to show how $\CJ$ interacts with the coproducts $\cpm$ and $\cpmi$, compare \eqref{eq:inclusion:CJ} and \eqref{eq:inclusion:CJ:hat} in Proposition~\ref{prop:interaction:wlCJ:Delta} below. The  interaction property of $\CJ$ with $\cpm$ gives immediately the Hopf ideal property, while the statement about the BPHZ characters needs a further argument carried out in Section~\ref{sec:constraints:rigidities}. 

Consider as an example two trees coming from the generalised PAM equation in 3D. Recall that in this equation we consider purely spatial white noise and the integration kernel is given by the 3D Poisson kernel $P$. One then has
\begin{equ}[e:inputCJ]
\tau_1 - \tau_2 := \treeExampleKPZm - \treeExampleKPZn \in \CJ\;,
\end{equ}
This can be seen by noting that $P$ is invariant under the transformation $x \mapsto -x$. Here we use different colours to indicate different (hence independent) white noises. As part of the proof of Assumptions~\ref{ass:CJHopfIdeal} and~\ref{ass:CHBPHZcharacters} we have to show respectively that 
\begin{align}\label{eq:outline:of:algebra}
\cpm \Big(
	\treeExampleKPZm - \treeExampleKPZn 
\Big)\in \CJ \otimes \CTm + \CTm \otimes \CJ
\qquad \text{ and }\qquad
\Big|g^\eta(\treeExampleKPZm) - g^\eta(\treeExampleKPZn)\Big| \lesssim 1,
\end{align}
where the second statement is uniform over $\eta$ with $\|\eta\|_\fs \le C$.

\begin{remark}
The reason why we have to bound the linear difference (as opposed to the ``difference'' with respect 
to the group operation) is that these two turn out to be the same in the present example. In general 
the second bound in (\ref{eq:outline:of:algebra}) does not hold and should be replaced with 
$ (\tilde f^\eta \circ g^\eta) (\tau_1 - \tau_2) = 0$ where $|\tilde f^\eta| \lesssim 1$. 
(Here $\tilde f^\eta = (f^\eta)^{-1}$ is the group inverse of the character defined in 
Assumption~\ref{ass:CHBPHZcharacters}. Since $f \mapsto f^{-1}$ is a uniformly bounded operation on $\CGm$, bounding 
$f^\eta$ and $\tilde f^\eta$ are equivalent.) Of course, boundedness is not quite sufficient and we will 
later show the stronger statement of continuity with respect to $\|\cdot\|_\fs$. (This is not equivalent 
since $\eta \mapsto f^\eta$ is not a linear map.)
\end{remark}

Let us first convince ourselves ``by hand'' that (\ref{eq:outline:of:algebra}) holds. To see the first statement, it suffices to note that
\[
\cpm \Big(
	\treeExampleKPZm - \treeExampleKPZn 
\Big)
=
\Big(
	\treeExampleKPZm - \treeExampleKPZn 
\Big) \otimes \one
+
\one \otimes \Big(
	\treeExampleKPZm - \treeExampleKPZn 
\Big)
+
\treeExampleKPZmcpmL
\otimes
\Big(\treeExampleKPZmcpmR
 -
\treeExampleKPZncpmR
\Big)
\]
and $\treeExampleKPZmcpmR
 -
\treeExampleKPZncpmR 
\in \CJ
$
holds with the same argument as above. (In fact both of these trees are individually elements of $\CJ$ for symmetry reasons. Actually, in this case one also has $\treeExampleKPZmcpmL \in \CJ$ for symmetry reasons. Both of these statements are however not generic. They would for instance not hold if the Poisson kernel was replaced by a non-symmetric kernel.) 
Here we draw a cross into the cricle to denote a polynomial decoration, and a bold edge denotes an edge carrying a derivative decoration.
For the second statement in (\ref{eq:outline:of:algebra}) one can calculate
\begin{align}\label{eq:intro:1}
g^\eta \Big(
	\treeExampleKPZm - \treeExampleKPZn 
\Big)
=
- \E \PPi^\eta \Big(
	\treeExampleKPZm - \treeExampleKPZn 
\Big)(0)
+
\E \PPi^\eta \treeExampleKPZmcpmiL(0)\,
\E \PPi^\eta 
\Big(\treeExampleKPZmcpmiR
 -
\treeExampleKPZncpmiR
\Big)(0)
=
0.
\end{align}
Note that the fact that this expression vanishes identically is not really intrinsic. For instance, the various Poison kernels could be associated to different components of the equation, and in principle we could choose different large-scale cutoff's, which would make the expression above non-zero (but it would remain order $1$). This may not seem like a natural thing to do, but it is sometimes unavoidable, compare Example~\ref{ex:CJ:PAM}.

The goal of this section is to automatise these arguments, drawing only on the information that $\evaA{\psi}(\tau_1 - \tau_2) = 0$ for any smooth function $\psi \in \bar\CC_c^\infty( \bar\domain^{[\leafslow,\leafslow,\leafsalow,\leafsalow]} )$. We write elements in the domain as $x = (\introx)$.
Let $\phi \in \CC_c^\infty(\bar\domain)$ be any smooth, symmetric (under $x \mapsto -x$) test function, define $\phi^{(\eps)}:= \eps^{-3}\phi(\eps^{-1} \cdot)$, and let $\psi^\eps(\introx) := \psi(\introx) \phi^{(\eps)}(\introxs)$. We also write $\check\psi(\introxa, \introxd) := \psi(\introxd,\introxd, \introxa)$, and we denote by $\evaA{\check\psi} \treeExampleKPZmcpmiR$ and $\evaA{\check\psi} \treeExampleKPZncpmiR$ the quantity defined analogously to (\ref{eq:bar:Upsilon}), but where the additional variable $\introxd$ corresponds to the node which was generated by contracting the subtree $\treeExampleKPZmcpmiL$ (in the current examples, the only node without a noise). This rather ad hoc notation is resolved later on by the introduction of legs, see below.
We then arrive at the following diagram	
\begin{center}\label{eq:diagram}
\begin{tabular}{ L L L L L }
\evaA{\psi^\eps} \treeExampleKPZm & - & \evaA{\psi^\eps} \treeExampleKPZn  & = & 0	
\\
- &   \phantom{\int^{\int^{\int^\int}}_{\int^{\int^\int}}}  &  - &	 &
\\
\evaA{\phi^{(\eps)}} \treeExampleKPZmcpmiL 
	\evaA{\check\psi} \treeExampleKPZmcpmiR 
	&  - & 
	\evaA{\phi^{(\eps)}} \treeExampleKPZmcpmiL 
		\evaA{\check\psi} \treeExampleKPZncpmiR 
	& = & 0
\\
 \lesssim 1 & \phantom{\int^{\int^{\int^\int}}} & \lesssim 1 & &
\end{tabular}
\end{center}

The equality in the first line is the analytic input we are given from \eqref{e:inputCJ}. The uniform bounds on the differences vertically are a consequence of the analytic BPHZ theorem \cite{Hairer2017}, see also  Proposition~\ref{prop:eps:beta:bound} below.
The equality on the second line is what we infer. Note that in a first step we only deduce a uniform bound, 
however we can make use of the fact that the integration kernels are homogeneous functions, so that we 
 know a priori that the expressions in the second line are proportional to $\eps^{\alpha}$ for some 
homogeneity $\alpha<0$. Both statements can hold simultaneously only if the quantity vanishes identically. It then follows in particular that
\begin{align}\label{eq:intro:2}
\evaA{\phi} \treeExampleKPZmcpmiL \evaA{\psi} \treeExampleKPZmcpmiR 
-
\evaA{\phi} \treeExampleKPZmcpmiL 
\evaA{\psi} \treeExampleKPZncpmiR 
=0
\end{align}
for any symmetric test function $\phi,\psi \in \CC_c^\infty(\bar\domain)$ (here we naturally identify $\bar\CC_c^\infty(\bar\domain^{ [ \leafslow,\leafslow ] })$ and $\bar\CC_c^\infty(\bar\domain^{ [ \leafsalow,\leafsalow ] })$ with the space of symmetric functions in  $\CC_c^\infty(\bar\domain)$). 
Note that in general there may be more than one divergent subtree. We then perform the strategy above with all possible divergent subtrees, by splitting the multiset $[\leafslow,\leafslow,\leafsalow,\leafsalow]$ in two parts in all possible ways (the derivation above would then correspond to $[\leafslow,\leafslow],[\leafsalow,\leafsalow]$). 

Comparing \eqref{eq:intro:2} and \eqref{eq:intro:1}, and using the fact that  the function $\phi(x-y):=\E[\eta(x)\eta(y)]$ is an element of $\CC_c^\infty(\bar\domain)$, we deduce that
\[
\hat g^\eta\Big(
	\treeExampleKPZm - \treeExampleKPZn 
\Big) = 0
\]
for any smooth, centred, stationary noise $\eta$. Here, $\hat g^\eta$ is a character which is defined similarly to the BPHZ character, but where the large-scale cutoff of the integration kernels is removed and instead a large-scale cutoff is introduced between any pair of nodes, see Definition~\ref{def:bar:Upsilon:large:scale} above and (\ref{eq:g:wl:eta:psi:R}) below. 

Let us review the outline so far from a more algebraic perspective. We have essentially proven that,
assuming that 
\begin{align}\label{eq:intro:CJhatCJ}
\treeExampleKPZm - \treeExampleKPZn \in \CJ \qquad\Rightarrow\qquad \cpmi \Big(
	\treeExampleKPZm - \treeExampleKPZn 
\Big)
\in \CJ \otimes \CTmhat + \CTm \otimes \hat\CJ,
\end{align}
where $\hat\CJ \ssq \CTmhat$ is an ideal defined analogously to $\CJ$. (We refrain from given a precise definition here, since there are some subtleties; most notably the fact that $\evaA{\eta}$ is in general not well-defined on trees of positive homogeneity. We refer to Definition~\ref{def:wlCJ} for the definition of an ideal that mirrors this idea.)
We then use that $\cpm = (\Id \otimes \p_-) \cpmi$ and $\CJ = \p_- \hat\CJ$ to conclude that $\CJ$ is a Hopf ideal, which concludes the outline of the proof of Assumption~\ref{ass:CJHopfIdeal}.
The remaining argument to conclude the outline of the proof of Assumption~\ref{ass:CHBPHZcharacters} is to bound the difference of $g^\eta$ and $\hat g^\eta$ with respect to the group product in $\CGm$, which we will do in Section~\ref{sec:constraints:rigidities}.

\subsubsection*{The problems ahead}
There are several points that complicate this line of argument in general:
\begin{itemize}
\item One can have more complicated sub-divergencies, in particular one can have divergent sub-forests instead of just single trees. To deal with this issue, we will introduce a test function for each pair of noises (Definition~\ref{def:FN}), which will give us the flexibility to trigger any sub-divergence by rescaling these test functions in all possible ways.
\item A bigger issue is the presence of derivatives hitting the test function. Implementing the above strategy without a proper algebraic framework leads to significant notational difficulties. Instead, we opt for a systematic extension of the algebraic framework by introducing the notion of ``legs'', against which our test functions are integrated. Formally, we do this via an extension of the regularity structure, see Section~\ref{sec:ext:reg:strct} for the details. The point here is that legs can have non-vanishing derivative decorations.
\item Every leg has a unique partner leg, and we call a tree \emph{properly legged} (Definition~\ref{def:propery:legged}), if for any pair of verices $u$ and $v$ with $u\ne v$, and both u and v carrying noises, there exists a unique leg incident to $u$ such that its partner is incident to $v$. We ultimately need to understand how this ``properly legged'' property interacts with the coproduct, which leads to the construction of algebras $\plCT$ and $\plCThat$, which are related to the algebras $\wCT$ and $\wCThat$ (we colour them to indicate that they are spaces generated by trees containing legs, c.f.\ Sec.~\ref{sec:ext:reg:strct}). We refer to Section~\ref{sec:algebra} for details.
\item Noises are in general indistinguishable. We need to distinguish them at the algebraic level to carry out the argument above, and only afterwards factor out the necessary ideals given by ``identifying'' noises that we made distinguishable (Definition~\ref{def:symCT}). (Actually, it suffices for us to break the symmetry at the level of legs.)
\item We need to make precise what exactly we need to subtract in general in order to see the cancellations inferred above. For this we need a general strategy of rescaling the test functions (c.f.\ \eqref{eq:phi:rescale},\eqref{eq:definition:deg:FJ}) and a general bound in the spirit of the BPHZ theorem (c.f.\ Proposition~\ref{prop:eps:beta:bound}). We draw here on the results of \cite{Hairer2017} rather than \cite{ChandraHairer2016}, since we deal with kernels of unbounded support.
\item Finally, we have to show that the evaluation $\evalnA{\eta}{\psi}{}\tau$ is well-defined, at least on a large enough set of trees $\tau$. We refer the reader to Theorem~\ref{thm:evaluation:trees:largescale} and Lemma~\ref{lem:super:regularity:implies:large:scale:bound}.
\end{itemize}

\subsubsection*{Outline of the section}
The plan is now as follows. We enlarge in Section~\ref{sec:ext:reg:strct} the regularity structures $\CT$ to a regularity structure $\wT$ by adding a sufficient number of new types (which we call ``leg types'', but are treated as noise types with just slightly negative homogeneity) and we allow any number of them (up to a large enough constant) to be incident to any node $u$ of any tree $\tau \in \CT$. We then construct spaces $\wCT$ and $\wCThat$ analogously to $\CTm$ and $\CTmhatex$. We show that one can remove the large-scale cutoff in the sense that $\evaA{\eta}\tau$ exists (at least for a large class of trees $\tau$) in Section~\ref{sec:evaluation:largescale}. The most cumbersome subsection is Section~\ref{sec:algebra}, in which we systematically factor out ideals in $\wCT$ and $\wCThat$, arriving eventually at the following sequences of spaces
\begin{center}
\begin{tabular}{ L L L L L L L L L }
\wCT & \stackrel{\wlP}{\to}& 	
	\wlCT 	 &\supseteq &
	\plCT 	&\stackrel{\symP}{\to}	&
	\symCT & \stackrel{\adzesymP{\adze}}{\to} &
	\pT
\\
\wCThat & \stackrel{\wlPhat}{\to}			& 
	\wlCThat & \supseteq 					&
	\plCThat &  \stackrel{\symPhat}{\to}	&
	\symCThat.
\end{tabular}
\end{center}

The spaces $\wlCT$ and $\wCThat$, see Definition~\ref{def:wCT}, are merely auxiliary spaces, and we 
will mostly be working with the subspaces $\plCT$ and $\plCThat$, see Definition~\ref{def:properly:legged:tree:algebra}, formed by  properly legged trees. So far symmetries of a tree, related to the fact that the same noise type appears multiple times, are not reflected in the legs, and we remedy this in $\symCT$ and $\symCThat$, see Definition~\ref{def:symCT}. Finally, dropping ``non-essential'' legs and identifying trees with non-vanishing derivative decoration on legs, we arrive at the space $\pT$, see Lemma~\ref{lem:CTadze:hopf:ideal}, which turns out to be  isomorphic as a Hopf algebra to $\CTm$, see Lemma~\ref{lem:hopfiso}.
An analytic result generalising the ``vertical'' cancellations in the diagram on page~\pageref{eq:diagram} will be derived in Proposition~\ref{prop:eps:beta:bound} in Section~\ref{sec:constraints:analytic}. A key result is Proposition~\ref{prop:interaction:wlCJ:Delta} in Section~\ref{sec:constraints:ideal}, making precise the idea of (\ref{eq:intro:CJhatCJ}) and in particular concluding the proof of Assumption~\ref{ass:CJHopfIdeal} that $\CJ$ is a Hopf ideal.
Finally, in Section~\ref{sec:constraints:rigidities} we compare the characters $\hat g^\eta$ and $g^\eta$, and show that their difference is continuous in the limit as $\eta$ approaches a rough limit noise, see Lemma~\ref{lem:fnkn} and Lemma~\ref{lem:fnknbound}

\subsection{Extension of the regularity structure}\label{sec:ext:reg:strct}

We assume that we are given a finite set \label{idx:legtypes}$\Legtype$, disjoint from $\FL$, elements of which we call \emph{leg types}. From an algebraic point of view,
we treat $\Legtype$ as a set of additional noise types, and we extend the homogeneity assignments $\homofancys{\cdot}$ and $\homos{\cdot}$ to $\wFLm:=\FL_-\sqcup \Legtype$ by setting $\homofancys{\legtype}:=0$ and $\homos{\legtype}:=-\kappa$ for some $\kappa>0$ small enough (to be specified shortly) whenever  $\legtype \in \Legtype$. From the extended set of types $\wFL := \wFLm \sqcup \FL_+$ we want to build a regularity structure $\wTex$ as in \cite[Sec.~5.5]{BrunedHairerZambotti2016}, for which we specify a rule $\wR$.

Let $M\in\N$ denote the maximum number of edges $\# E(\tau)$ for any $\tau \in \TT_-$. We first define the rule $\wRtilde$ by setting
\begin{align}\label{eq:wl:rule:tilde}
\wRtilde(\ft):=\{A\sqcup B: A\in R(\ft)\text{ and }B\subseteq \Legtype \times \{0\} \text{ is a \emph{set} with } \# B\le M \},
\end{align}
for any $\ft\in\FL_+$. Here $R$ denotes the rule used to construct the regularity structure $\CT$, see Section~\ref{sec:reg:structures}. Note that in (\ref{eq:wl:rule:tilde}) we only allow $B$ to be a proper set (or equivalently a multiset satisfying $B\le 1$), so that any tree conforming to $\wRtilde$ can be built from 
a tree conforming to $R$ by adding to every node up to $M$ edges of distinct types in $\Legtype$. 
Provided that $\kappa>0$ is small enough we obtain a normal and subcritical  \cite[Def.~5.14]{BrunedHairerZambotti2016} rule $\wRtilde$ in this way, and we  denote by $\wR$ its completion \cite[Prop. 5.21]{BrunedHairerZambotti2016}.
\begin{definition}
We denote by \label{idx:wT}\label{idx:wTex}$\wlTex$ (resp. $\wT$) the extended (resp. reduced) regularity structure constructed as in \cite[Sec.~5.5]{BrunedHairerZambotti2016} from the rule $\wR$. 
Furthermore, we denote by \label{idx:wCTex}$\wCTex$ and \label{idx:wCThat}$\wCThat$ the algebras constructed as in \cite[Def.~5.26, Def.~5.29]{BrunedHairerZambotti2016} starting from the regularity structure $\wlTex$. 
\end{definition}
As in \cite[Prop.~5.35]{BrunedHairerZambotti2016}, the space $\wCTex$ forms a Hopf algebra. We will mostly work with the factor Hopf algebra \label{idx:wCT}$\wCT$ of $\wCTex$ given by neglecting the extended decoration. We write \label{idx:wTT}$\wTT$ for the set of unplanted trees $\tau \in \wT$ of negative homogeneity, so that $\wCT$ is generated freely as a unital, commutative algebra from $\wTT$. 

For a tree $\tau\in \wTex$ we denote by \label{idx:LLegtype}$L_\Legtype(\tau)\subseteq E(\tau)$ the set of leg type edges, i.e.\ the set of $e \in E(\tau)$ such that $\ft(e)\in \Legtype$, and by $L(\tau)\subseteq E(\tau)$ the set of noise type edges of $\tau$, i.e.\ the set of $e \in E(\tau)$ such $\ft(e) \in \FL_-$. We will often call an edge of leg type simply a \emph{leg}. 
We write \label{idx:CLtau}$\CL(\tau) \ssq N(\tau)$ (resp. \label{idx:NLegtype}$\LTN(\tau)\subseteq N(\tau)$) for the set of nodes $u\in N(\tau)$ that are adjacent to at least one noise type (resp.\ leg type) edge, and we write \label{idx:hatCL}$\hat\CL(\tau) \ssq N(\tau)$ for the set of nodes $u \in N(\tau)$ with the property that $\fo(u) < 0$.

\begin{example}
In the following example, taken from the KPZ equation, we coloured kernel-type edges $e \in K(\tau)$ grey (they are bold because they carry a derivative decoration), noise type edges $e \in L(\tau)$ blue, we draw legs  $e \in L_\Legtype(\tau)$ as wavy lines, we colour nodes blue if they are elements of $\CL(\tau)$, and we draw nodes as squares (rather than circles) if they are elements of $\LTN(\tau)$
\[
\treeExampleKPZNL.
\]
This is the only example in this paper in which we make noise type edges explicit, since their position can always be inferred by $\CL(\tau)$. We will always make legs explicit (note that their position cannot be inferred from $\NLeg(\tau)$, as there may be more than one leg incident to the same node). Conversely, since $\NLeg(\tau)$ can be inferred from $L_\Legtype(\tau)$, we will not draw them explicitly as boxes in the forthcoming examples.
\end{example}
 
The space $\CT^\ex$ can be identified with the linear subspace of $\wlTex$ spanned by all trees $\tau \in \wlTex$ without legs. Similarly, the spaces $\hat\CT_-^\ex$, $\CT_-^\ex$,  and $\CT_-$ have natural interpretations as subalgebras of $\wCTex$, $\wCThat$ and $\wCT$, respectively (in this interpretation the latter two are Hopf subalgebras). Given any tree $\tau = (T^{\fn,\fo}_\fe, \ft) \in \wlTex$ we define a tree $\pi\tau \in \CT^\ex$\label{idx:pi} by simply removing all of its legs.
 The map $\pi$ extends to a linear map $\pi: \wlTex \to \CT^\ex$, and to an algebra morphism from the algebras $\wCTex$, $\wCThat$ and $\wCT$ onto $\CT_-^\ex$, $\hat\CT_-^\ex$ and $\CT_-$, respectively. 

Finally, we denote by \label{idx:wCG}$\wCG$ the character group of the Hopf algebra $\wCT$, which is canonically isomorphic to the reduced renormalisation group constructed in \cite[Thm.~6.28]{BrunedHairerZambotti2016}. 
There exists a subgroup of $\wCG$ isomorphic to $\CG_-$, given by the set of those characters that vanish on any 
tree $\tau$ with $L_\Legtype(\tau) \ne \emptyset$. (The isomorphism $\varphi:\CG_-\hookrightarrow\wCG$ is given explicitly by mapping $g\in\CG_-$ to a character $\varphi(g) \in \wCG$ given by setting $\varphi(g)(\tau) = g(\tau)$ for $\tau \in \TT_-$ and $\varphi(g)(\tau) = 0$ for $\tau \in \wTT \backslash \TT_-$.)

\begin{remark}\label{rmk:emebdding:character:group}
Since we view $\CTm$ as a subspace of $\wCT$, the embedding $\CGm \to \wCG$ is actually not ``canonical'', although the projection $\wCG \to \CGm$, given by restricting a character $g \in \wCG$ to $\CTm$, is canonical. The construction in the previous paragraph uses indirectly the fact that $\CTm$ is also naturally isomorphic to a factor Hopf algebra $\wCT/\ker \q$, where $\q : \wCT \to \CTm$ is defined by killing trees $\tau$ such that $\LTE(\tau) \ne \emptyset$. However, in the sequel we will continue to view $\CTm$ as a subalgebra of $\wCT$.
\end{remark}

\subsection{Large scale behaviour of renormalised trees}
\label{sec:evaluation:largescale}

We now fix a degree assignment $\deg_\infty(\ft) \in \R_-\sqcup\{-\infty \}$ for kernel types $\ft \in \FL_+$. In order to avoid case distinctions later on, we also set $\deg_\infty(\Xi):=0$ for any noise type $\Xi \in \FL_-$ and $\deg_\infty(\legtype):=-\infty$ for any leg type $\legtype \in \Legtype$. We write \label{idx:CK+infty}$\CK^+_\infty$ for the set of kernel assignments $R =(R_\ft)_{\ft\in\FL_+}$ such that $R_\ft:\bar\domain\to\R$ is smooth and compactly supported for any $\ft\in\FL_+$. We endow this space with the topology generated by the system of seminorms $\|\cdot\|_{\CK^+,\ft}$ for $\ft\in\FL_+$, where the latter is defined as the smallest constant such that
\begin{align}\label{eq:CKp:norm}
|D^k R_\ft(x)|\le \|R\|_{\CK^+,\ft} (1+|x|)^{\deg_\infty\ft}
\end{align}
for any $x\in\bar\domain$ and $k\in\N^d$ with $|k|_\fs<r$. We write \label{idx:CK+z}$\CK_0^+$ for the completion of $\CK_\infty^+$ with respect to the corresponding metric.
We extend the notation of (\ref{eq:CK}) and (\ref{eq:CK:ctr}) to the extended regularity structure, with $L(\tau)$ replaced by $L(\tau) \sqcup L_\Legtype(\tau)$, so that in particular one has $\CK_G \tau : \bar\domain ^{ L(\tau) \sqcup L_\Legtype(\tau) } \to \R$ and the integral in (\ref{eq:CK:ctr}) ranges over $\bar \domain ^{ L(\tau) \sqcup L_\Legtype(\tau) }$. Furthermore, we introduce the following space in analogy to Definition~\ref{def:tildePsi}.

\begin{definition}\label{def:Psi}
We write $\Legfn$ for the set of all families of test functions $(\psi_\cm)_\cm$, indexed by multisets $\cm$ with values in $\Legtype$, such that  $\psi_\cm \in \barCCcmg$.
\end{definition}

With this notation, we now define the following evaluations.

\begin{definition}\label{def:bar:Upsilon:large:scale}
We define for any tree $\tau=T^{\fn,\fo}_\fe\in \wTex$, any smooth noise $\eta\in\SM_\infty^\star$, any $\legfn \in \Legfn$, and any large-scale kernel assignment  $R=(R_\ft)_{\ft\in\FL_+}\in\CK^+_\infty$ the constant
\begin{align}\label{eq:bar:Upsilon:large:scale}
\bar\Upsilon^{\eta,\legfn}_R\tau:=
\ctrL{\tau}{K+R} { \zeta^ {\eta,\legfn,\tau} } \;,
\end{align}
where $\zeta^ {\eta,\legfn,\tau} \in \bar\domain^{L(\tau) \sqcup L_\Legtype(\tau)}$ is defined by
\[
\zeta ^ {\eta,\legfn,\tau} (x)
:=
\Big(\E
\prod_{u\in L(T)} \eta_{\ft(u)}(x_{u})\Big) \,
D^{\fe |_{L_\Legtype(\tau)}}
\psi_{[L_\Legtype(\tau),\ft]}
	(x_{L_\Legtype(\tau)})
\]
for any $x \in \bar\domain ^{L(\tau) \sqcup L_\Legtype(\tau)}$.
Moreover, we define the ``renormalised'' constant by 
\begin{align}\label{eq:evaluation:largescale:renorm}
\hat{\Upsilon}^{\eta,\psi}_R\tau:= \bar\Upsilon_R^{\eta,\psi} M^{g^\eta_\BPHZ}\tau
\end{align}
\end{definition}
Here, we use the notation $g^\eta_\BPHZ \in \CG_-$ for the BPHZ-character of the noise $\eta$ in the renormalisation group $\CG_-$, which we view naturally as a character in $\wCG$ as above. We also set $\eval{\psi}{R} := \evaln{\one}{\psi}{R}$ and $\eva{\psi} := \evaln{\one}{\psi}{\hat K - K}$.
\begin{remark}
One has $g^\eta_\BPHZ(\sigma) = 0$ for any $\sigma \in \wCT$ such that $L_\Legtype(\sigma) \ne \emptyset$, so that $M^{g^\eta_\BPHZ}$ maps $\tau$ onto the span of trees $\tilde\tau$ with the property that $[L(\tau),\ft] = [L(\tilde\tau),\ft]$ It follows that $\hat \Upsilon^{\eta,\psi}_R \tau$ really only depends on $\psi_{[L_\Legtype(\tau),\ft]}$.
We finally note that these notations do not depend on the extended decoration $\fo$.
\end{remark}

Our goal is to show that under some natural assumptions on the degree assignment $\deg_\infty$ the map $\evaln{\eta}{\psi}{R}$ extends 
continuously to any large-scale kernel assignment $R\in\CK_0^+$, and 
$\hat\Upsilon^{\eta,\psi}_R$ extends continuously to the set of pairs $(\eta,R)\in \SMsz \times \CK_0^+$. 
Such a statement can only be true if we make an assumption 
on the degree assignment $\deg_\infty$ and the positions of the legs, which is in complete analogy to \cite[Sec.~4]{Hairer2017}. We then consider partitions $\CP$ of the node set $N(\tau)$ such that $\#\CP\ge 2$ and such that there exists $P\in\CP$ with $\LTN(\tau)\subseteq P$. We call partitions of $N(\tau)$ that satisfy these properties \emph{tight} from now on. For any tight partition $\CP$ we  denote by $K(\CP)$ the set of kernel-type edges $e\in K(\tau)$ with the property that there does not exist $P\in\CP$ such that $e\subseteq P$, and we set 
\begin{align}\label{eq:deg:infty:parti}
\deg_\infty\CP:=
	\sum_{e\in K(\CP)} \deg_\infty \ft(e) +
	\sum_{u\in N(\tau)}|\fn(u)|_\fs +
	|\fs|(\#\CP-1).
\end{align}

Let $\SMstarz$ denote the closure of $\SMstar$ under the norm $\|\cdot\|_{\fs}$.
The key result of this section is the following theorem.

\begin{theorem}\label{thm:evaluation:trees:largescale}
Let $\tau\in \wTTex$ be such that $\deg_\infty\CP<0$ for any tight partition $\CP$ of $N(\tau)$. Then for any fixed $\legfn \in \Legfn$ and $\eta \in \SMstarz$ the evaluation
\begin{align}\label{eq:evaluation:largescale}
R \mapsto \bar\Upsilon^{\eta,\legfn}_R \tau
\end{align}
extends continuously to the space $\CK_0^+$. Moreover, the evaluation
\begin{align}\label{eq:evaluation:largescale:renorm:2}
(\eta,R) \mapsto \hat{\Upsilon}^{\eta,\legfn}_R \tau 
\end{align}
extends continuously to the space $\SMstarz \times \CK_0^+$. Finally, one has the bound
\begin{align}\label{eq:bound:evluation:largescales}
|\hat{\Upsilon}^{\eta,\legfn}_R\tau|
\lesssim 
\|\eta\|_{\fs}
\Big(
\prod_{e\in K(\tau)}
	\|R_{\ft(e)}\|_{\CK^+,\ft(e),r} + 1
\Big)
\end{align}
for $r\in\N$ any integer larger than $-\min \{ |\tau|_\fs : \tau \in \CT\}$, uniformly over all $(\eta,R) \in \SMstarz \times\CK_0^+$.\footnote{We set $\|\one\|_{N,\fc}:=1$.}
\end{theorem}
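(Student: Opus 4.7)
The strategy is to combine two distinct estimates: a small-scale bound provided by the BPHZ machinery of \cite{ChandraHairer2016} (whose applicability was already set up via Lemma~\ref{lem:cumu_homo_consistent}) with a large-scale bound in the spirit of \cite[Sec.~4]{Hairer2017} which is governed precisely by the tight-partition hypothesis. I would first unfold the definition \eqref{eq:evaluation:largescale:renorm} and write
\[
\hat{\Upsilon}^{\eta,\legfn}_R\tau
= \sum_{\sigma}(M^{g^\eta_\BPHZ})_\tau^\sigma \, \bar\Upsilon^{\eta,\legfn}_R\sigma\;,
\]
so that the integrand for each $\sigma$ is a Feynman integral whose vertices carry decorations $\fn$, whose kernel edges are weighted by $D^{\fe(e)}(K+R)_{\ft(e)}$, whose noise type edges are contracted via the joint cumulants of $\eta$, and whose leg type edges are paired against $D^{\fe(\cdot)}\legfn$. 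The simple kernel structure of $\eta\in\SMstarz$ (Definition~\ref{def:CYsimp}) together with Lemma~\ref{lem:bound:eta} identifies these cumulants as sums of convolutions of simple kernels satisfying the norm bounds required by \cite{ChandraHairer2016,Hairer2017}.

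Next I would introduce a partition of unity of $\bar\domain^{N(\tau)}$ indexed by binary trees $\T$ together with scale assignments $\s:\topcirc\T\to\Z$, exactly as in \cite[App.~A]{ChandraHairer2016}. On each such sector the kernels, cumulants and test functions admit the standard scale-by-scale bounds. For scales corresponding to clustering of nodes within a bounded region ($\s\ge 0$), the analytic BPHZ theorem \cite[Thm.~2.15]{ChandraHairer2016}—applicable thanks to Lemma~\ref{lem:cumu_homo_consistent} and the construction of $M^{g^\eta_\BPHZ}$—gives summable bounds, continuously in $\eta\in\SMstarz$, and takes care of any sub-divergence that the legs have forced open. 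The factor $\|\eta\|_\fs$ in \eqref{eq:bound:evluation:largescales} arises from tracking this dependence through \eqref{eq:CYNnorm}. Convergence in $\eta$ will then follow since $\bar\Upsilon^{\eta,\legfn}_R\sigma$ and $g^\eta_\BPHZ(\sigma)$ are both continuous in $\eta\in\SMstarz$ (the latter by the analytic BPHZ theorem applied to the twisted antipode, the former being multilinear in the kernels of $\eta$).

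For the remaining (large) scales $\s<0$, a node $u\in N(\tau)$ sits in a cluster determined by the first interior vertex of $\T$ above $u$ whose scale is non-negative; this defines the partition $\CP_\s$ of $N(\tau)$ into clusters. Kernel-type edges $e$ connecting different clusters contribute $2^{-\s(e^{\uparrow}\wedge_\T e^{\downarrow})\deg_\infty(\ft(e))}$ via \eqref{eq:CKp:norm}, the noise-type edges are grouped inside the clusters (which they must be, since legs are concentrated in the single cluster isolated by $\CP_\s$), and the volume factors produce $\prod 2^{-\s(\nu)|\fs|}$. Summing the geometric series over $\s$ along any branch of $\T$ for which the cluster refines to a genuine tight partition, one sees that the resulting exponent is $\deg_\infty(\CP_\s)$, which is strictly negative by assumption; this yields summability at large scales. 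Since this bound only involves $\|R_{\ft(e)}\|_{\CK^+,\ft(e),r}$ and since $\CK_\infty^+$ is dense in $\CK_0^+$, this provides both the continuous extension to $\CK_0^+$ and the quantitative bound \eqref{eq:bound:evluation:largescales}.

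The main obstacle is the bookkeeping needed to align the two regimes coherently. The Chandra--Hairer bounds are naturally phrased on small scales with a compactly supported truncated kernel $K$, while our large-scale argument needs access to the non-truncated $K+R$; one must therefore arrange the multi-scale decomposition so that the renormalisation acts only within the negative-scale clusters (where the BPHZ estimates apply) while $R$ contributes exclusively at non-negative scales. Because $\hat K-K$ is supported away from the origin this separation is actually automatic, but to make the continuity statement in $(\eta,R)$ joint one needs to verify that the counterterms $g^\eta_\BPHZ(\sigma)$ only involve short-distance behaviour of the kernels and are therefore insensitive to the large-scale piece $R$, an observation that follows from the twisted antipode formula and the support of $K$. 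With these points in place, Lemma~\ref{lem:super:regularity:implies:large:scale:bound}, which carries out precisely the counting argument sketched above, delivers all three conclusions of the theorem.
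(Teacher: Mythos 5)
Your two-regime strategy (small scales via the analytic BPHZ machinery, large scales via the tight-partition power counting) is the right shape, and it is in fact the shape of the paper's argument too — but the paper does not run it directly on the tree, and the points you gloss over are exactly the ones that force the detour. The first gap is that the objects you want to feed into \cite{ChandraHairer2016} and \cite[Thm.~4.3]{Hairer2017} are not Feynman graphs but Feynman \emph{hyper}-graphs: the noises in $\SMstarz$ are non-Gaussian, so the integrand involves joint cumulants of arbitrary order, and neither cited theorem is stated for that setting (the paper flags this explicitly in the remark following the theorem). Moreover, even after writing the cumulants in terms of the simple kernels $K_0\otimes\cdots\otimes K_n$, the individual tensor factors carry no fixed homogeneity — only the product does, via the supremum over decompositions $\bar\alpha$ in \eqref{eq:CYsimp:norm:2} — so there is no single degree assignment for which your scale-by-scale bounds close on every Hepp sector. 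The paper resolves both issues by expanding the cumulants over Wick pairings into genuine Feynman diagrams (Lemma~\ref{lem:evaluation:fd:trees}) and by introducing ``weak types'' whose degrees are chosen \emph{after} fixing the Hepp sector and safe forest (proof of Theorem~\ref{thm:BPHZ:FD}); your proposal has no substitute for either device.

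The second gap concerns the renormalisation. You subtract with $M^{g^\eta_\BPHZ}$, where $g^\eta_\BPHZ$ is defined from the truncated kernels $K$ with no legs attached, but the counterterms that actually cancel the subdivergences of the integral with the test functions $\legfn$ glued to the legs and with $K+R$ on the edges are the diagram-level BPHZ counterterms. These two renormalisations do \emph{not} coincide; showing that they differ by a character that is bounded uniformly in $\eta$ and $R$ is the content of Lemma~\ref{lem:bound:character} and Proposition~\ref{prop:trees:FD} (via the characters $g_\trees$ and $h$), and your observation that $g^\eta_\BPHZ$ is ``insensitive to $R$'' does not address it. Finally, you cite Lemma~\ref{lem:super:regularity:implies:large:scale:bound} as ``delivering all three conclusions,'' but that lemma only \emph{verifies the hypothesis} $\deg_\infty\CP<0$ for super-regular trees; the engine that converts that hypothesis into convergence and the bound \eqref{eq:bound:evluation:largescales} is the generalised graph-level statement, Theorem~\ref{thm:BPHZ:FD}, which your plan would still need to prove.
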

\begin{remark}
This theorem should be viewed as a generalisation of \cite[Thm.~4.3]{Hairer2017}. The main reason why it does not follow directly from 
\cite[Thm.~4.3]{Hairer2017} is the presence of higher-order cumulants. In principle, one could formulate a statement analogous to 
\cite[Thm.~4.3]{Hairer2017} for Feynman hyper-graphs which would then imply the statement of the above theorem. However, such a formulation 
is rather cumbersome, so that we refrain from carrying out this construction.
\end{remark}
\begin{proof}
This follows very similar to \cite[Thm.~4.3]{Hairer2017}. See Section~\ref{sec:Feynman-diagrmas} for a proof.
\end{proof}

The large-scale kernel assignment that we are interested in is given by $R_\ft=\hat K_\ft - K_\ft$ for any $\ft \in \FL_+$, so that we have to choose $\deg_\infty \ft:=\fancynorm\ft_\fs-|\fs|$ for any kernel-type $\ft \in \FL_+$. 
The next lemma shows that the assumption of super-regularity implies that the condition of Theorem~\ref{thm:evaluation:trees:largescale} holds automatically for a large class of trees $\tau \in \wTTex$. 

\begin{lemma}\label{lem:super:regularity:implies:large:scale:bound}
Let $\tau\in \wTTex$ be a tree with $\homofancyex{\tau} \le 0$ and such that $\CL(\tau)\cup \hat \CL(\tau) \subseteq \LTN(\tau)$.  Assume moreover that $\tau \notin \CV_0$ (see Section~\ref{sec:technical:assumpation}).
Then one has $\deg_\infty \CP < 0$ for any tight partition $\CP$ of $N(\tau)$.
\end{lemma}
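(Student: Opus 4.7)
The plan is to proceed by direct power counting, comparing $\deg_\infty \CP$ with $\homofancyex{\tau}$ block by block. First, I would exploit the tree structure: since the subgraph induced on $N(\tau)$ by the kernel edges is a spanning tree (all noise and leg edges attach only at leaves), contracting the blocks of a partition $\CP$ yields a connected graph on $\#\CP$ vertices with $\#K(\CP) = \sum_{P \in \CP} c_P - 1$ edges, where $c_P$ denotes the number of connected components of the subgraph induced by~$P$. Substituting this identity into the definition of $\deg_\infty \CP$ gives
\[
\deg_\infty \CP = \sum_{e \in K(\CP)} |\ft(e)|_\fs + \sum_{u \in N(\tau)} |\fn(u)|_\fs - |\fs| \sum_{P \in \CP}(c_P - 1).
\]

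Next, I would compare this expression with $\homofancyex{\tau}$. The assumption $\CL(\tau) \cup \hat\CL(\tau) \subseteq \LTN(\tau) \subseteq P_0$ ensures that every noise edge, every leg and every node with negative extended decoration lives inside~$P_0$, so that each block $P \ne P_0$ is purely ``internal'' to the kernel structure. For every connected component $\sigma_{P,i}$ of such a block, I would introduce its intrinsic homogeneity
\[
\beta_{P,i} := \sum_{u \in V(\sigma_{P,i})} |\fn(u)|_\fs + \sum_{e \in E(\sigma_{P,i})} (|\ft(e)|_\fs - |\fe(e)|_\fs),
\]
which by Assumption~\ref{ass:main:reg} satisfies $\beta_{P,i} > 0$ whenever $E(\sigma_{P,i}) \ne \emptyset$. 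A careful algebraic rearrangement, using $\fancynorm{\Xi}_\fs = -\shalf$ for noise types and $\fancynorm{\legtype}_\fs = 0$ for leg types, then rewrites the expression above as
\[
\deg_\infty \CP = \homofancyex{\tau} - \sum_{\substack{P \ne P_0 \\ i \le c_P}} \beta_{P,i} - |\fs| \sum_{P \in \CP}(c_P - 1) + E,
\]
where the residual $E$, gathering derivative-decoration contributions on cut edges together with the noise and leg contributions localised in~$P_0$, is absorbed into the negativity of $\homofancyex{\tau} \le 0$ thanks to the inclusion $\hat\CL(\tau) \subseteq P_0$ (which pushes all the extended-decoration corrections into the block that already carries a non-positive total).

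Combining both steps, strict negativity $\deg_\infty \CP < 0$ then follows from $\homofancyex{\tau} \le 0$ together with the strict positivity of at least one of the component homogeneities $\beta_{P,i}$ or of the disconnection correction $-|\fs|(c_P - 1)$. The only way this strictness can fail is when every block $P \ne P_0$ consists of a single kernel-isolated node carrying no polynomial decoration, and simultaneously $\homofancyex{\tau} = 0$; a short case analysis shows that the combinatorics then force $\tau$ to have exactly two noise leaves attached via a single pair of kernel edges with no other structure, which is precisely the defining condition of $\CV_0$, excluded by hypothesis.

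The hard part will be the precise algebraic bookkeeping hidden in the second step: the positive contributions (derivative decorations on cut edges, noise homogeneities at leaves, possible single-node blocks) almost cancel the negative intrinsic homogeneities, so to close the argument one has to invoke the rule's super-regularity, in the form already extracted in the proof of Lemma~\ref{lem:cumu_homo_consistent}, in order to rule out otherwise admissible configurations that would make $\deg_\infty \CP \ge 0$. Carefully following the decorations through the rearrangement and identifying the equality case with $\CV_0$ is what turns the formal inequality into a strict one.
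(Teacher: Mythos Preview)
Your approach shares the right general spirit with the paper's proof (power counting, comparing with $\homofancyex{\tau}$, and using positivity of internal subtree homogeneities), but there is a genuine gap in the treatment of the block $P_0$ and the residual term~$E$.

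The paper does not try to control a residual ``$E$'' absorbed into $\homofancyex{\tau}\le 0$; instead it first \emph{reduces to an extremal partition}. Starting from any tight $\CP$, it shows that one may assume (i) every block $P\ne P^\star$ is a single vertex, and (ii) $P^\star$ is a disjoint union of subtrees $S_1,\ldots,S_m$ with $L(T)=\bigsqcup_i L(S_i)$. On this extremal partition one obtains the \emph{exact} identity
\[
\deg_\infty\CP = \fancynorm{T^\fn_\fe}_\fs - \sum_{i\le m}\fancynorm{(S_i)^0_\fe}_\fs - (m-1)|\fs|\;,
\]
and then invokes super-regularity in the form $\fancynorm{(S_i)^0_\fe}_\fs \ge -\shalf$ (with equality only when $S_i$ is a bare noise) to get $\deg_\infty\CP\le (1-\tfrac m2)|\fs|$. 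This is the step your argument is missing: your $\beta_{P,i}>0$ controls the blocks $P\ne P_0$, but the block $P_0$ itself can carry arbitrarily negative noise contributions, and you give no mechanism to bound them. The phrase ``absorbed into the negativity of $\homofancyex{\tau}\le 0$'' hides exactly the computation that makes the proof work, and as stated it is not correct: the noise homogeneities in $P_0$ can overwhelm $\homofancyex{\tau}$ unless one uses the subtree decomposition and the $-\shalf$ lower bound.

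Two further points: your equality case analysis is also too quick, since the clean identification with $\CV_0$ relies on knowing that $m=2$ and each $S_i$ is a single noise edge, which comes out of the exact identity above rather than from a generic ``combinatorics'' argument. And you do not address the case $\hat\CL(\tau)\ne\emptyset$ at all; the paper handles this separately by lifting $\tau$ to a tree $\bar\tau$ without extended decoration via the forest-contraction description of \cite[Lem.~5.25]{BrunedHairerZambotti2016}, pulling back the tight partition along the contraction map, and reducing to the first case.
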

\begin{proof}
Let $\tau=T^\fn_\fe\in \wTTex$, assume first that $\hat\CL(\tau) = \emptyset$, so that $\homofancyex{\tau}=\homofancy{\tau} \le 0$, and let $\CP$ be a tight partition of $N(T)$.
We denote by $P^\star\in\CP$ the set such that $\NLeg(\tau)\subseteq P^\star$, 
and therefore, by assumption, $\CL(\tau) \ssq P^\star$, and we write $\CP^\star:=\CP\backslash\{P^\star \}$. 
We need to show that
\[
\deg_\infty(\CP):=\sum_{e\in K(\CP)}\deg_\infty{\ft(e)}+\sum_{u\in N(T)} |\fn(u)|_\fs+ (\#\CP-1)|\scale|<0.
\]
Any $P \in \CP^\star$ is a subset of $N(\tau)$ and induces a subgraph $G_P= (V_P, E_P)$ of $\tau$, given by setting $V_P := P$ and $E_P$ is the set of edges $e \in K(\tau)$ such that $e \ssq P$.
It is sufficient to consider partitions $\CP$ that have the property that this induced subgraph is connected for any $P\in\CP^\star$ ; otherwise there exists a non-trivial way to write $P=P_1\sqcup P_2$ such that there does not exist an edge $e$ with the property that $e^\uparrow\in P_1$ and $e^\downarrow\in P_2$ or the other way around. One could then replace $P$ with $\{P_1,P_2\}$ in $\CP$ to create a tight partition $\CQ$ with $\deg_\infty(\CQ) = \deg_\infty(\CP) +|\fs|>\deg_\infty(\CP)$. We now claim that it is even sufficient to consider partitions $\CP$ with the property that any set $P\in\CP^\star$ contains only a single vertex. Indeed, assume that $P\in\CP^\star$ contains more than one vertex. Then $P$ induces a subtree $S\subseteq T$ that does not contain any $u\in\CL(T)$, and thus one has 
\[
\sum_{e\in K(T), e\subseteq P}\deg_\infty{\ft(e)}+(\#P-1)|\scale|=|S_\fe^0|_\fs>0
\]
by assumption.
With a virtually identical argument one can assume that the partition $\CP$ has the property that there exists a finite number of node-disjoint subtrees $S_1,\ldots,S_m$ of $T$ for some $m\ge 1$ such that for any $1\le i\le m$ one has that $L(S_i) \ne \emptyset$, such that $L(T) = \bigsqcup_{i \le m} L(S_i)$, and with the property that $P^\star=\bigsqcup_{i\le m} N(S_i)$. 
We also assume that the number of trees are minimal, so that for any $i \ne j$ the subgraph induced by the node set $N(S_i) \sqcup N(S_j)$ is not connected. It follows that $K(\CP) = K(T) \setminus \bigsqcup_{i \le m} K(S_i)$.

A straightforward calculation shows that
\[
\fancynorm{T^\fn_\fe}_\scale
	=\sum_{e\in K(T)}
		\deg_\infty{\ft(e)} +
		\sum_{u\in L(T)}\fancynorm{\ft(u)}_\fs +
		\sum_{u\in N(T)}\fn(u) +
		\# K(T)|\scale|\;,
\]
and a similar identity holds for any the subtree $S_i$ for any $i\le m$. Using the fact that $\# K(\CP) = \#K(T) - \sum_{i=1}^m \# K(S_i)$ we get the identity
\[
\deg_\infty(\CP)=\fancynorm{T^\fn_\fe}_\scale-\sum_{i\le m}\fancynorm{(S_i)^0_\fe}_\scale
-\#K(\CP)|\fs|+(\#\CP-1)|\fs|.
\]

Let $Q:= \{ e^\uparrow : e \in K(\CP) \}$, then our definitions show that
\[
Q = \big( \{ \rho(S_i) : i \le m \} \sqcup \CP^\star \big) \setminus \{ \rho_T \},
\]
so that $\# K(\CP) = \# Q = m + \#\CP - 2$, and thus
\[
\deg_\infty(\CP)=\fancynorm{T^\fn_\fe}_\scale-\sum_{i\le m}\fancynorm{(S_i)^0_\fe}_\scale
-(m-1)|\fs|.
\]
We now use the assumptions of the lemma which imply on the one hand that $\fancynorm{T^\fn_\fe}_\scale\le 0$ and on the other hand that $\fancynorm{(S_i)^0_\fe}_\scale\ge-\shalf$, from which it follows that
\begin{align}\label{eq:psi_estimate}
\deg_\infty(\CP)\le (1-\frac{m}{2})|\scale|\le 0\;,
\end{align}
with equality if and only if $m=2$, $\fancynorm\tau_\fs=0$, and $\fancynorm {S_i}_\fs=-\shalf$ for any $i=1,2$. By assumption, any tree $S \in\CT$ such that $\homofancys S=-\shalf$ is equal to some $\Xi \in \FL_-$, so that $\# L(\tau)=2$, and therefore $\tau \in \CV_0$.

Assume now that $\tau = (T^{\fn,\fo}_\fe, \ft) \in \wTTex$ is such that $\hat\CL(\tau) \ne \emptyset$. 
Since $\homofancyex{\tau} \le 0$, by \cite[Lem.~5.25]{BrunedHairerZambotti2016} there exists a tree $\bar\tau = (\bar T^{\bar \fn}_{\bar \fe}, \bar \ft) \in \wTT$ (that is, a tree with vanishing extended decoration such that $\homofancy{\bar\tau} \le 0$), a subforest $\CF \in \div(\tau)$ (here $\div(\tau)$\label{idx:divtau} denotes the set of subforests $\CF$ of $\tau$ with the property that any connected component $S$ of $\CF$ is of negative homogeneity; see Section~\ref{sec:i-forests}), and decorations $\fn_\CF$ and $\fe_\CF$ as in (\ref{eq:iforest:decoration:convention}) with the property that one has
\[
\tau = ((\bar T / \CF)^ {\bar \fn - \fn_\CF,  [\fo]_\CF } _{ \bar \fe + \fe_\CF } , \bar \ft ).
\]
(Note that necessarily $\tau$ has at least one divergent proper subtree, so that $\#L(\tau)>2$. In particular, $\tau$ is not the exceptional case from the first part of the proof.)
We let  $\varphi_{\bar T}^\CF : V(\bar T) \to V(T)$ be the map defined in (\ref{eq:iforests:vertex:map}), and we write $\varphi:= \varphi_{\bar T}^\CF |_{N(\bar \tau)}: N(\bar \tau) \to N(\tau)$ for the restriction of this map to the set of nodes $N(\bar \tau) \ssq V(\bar \tau)$. 

Let now $\CP$ be a tight partition of $N(T)$, 
and write again $P^\star \in \CP$ for the element such that $\CL(\tau) \cup \hat\CL ( \tau ) \ssq P^\star$.
 We define a partition $\CQ$ of $N( \bar \tau )$ by setting 
\[
\CQ := \{ \varphi^{-1}(P) : P \in \CP\}.
\]
Since $\CL( \bar \tau ) \ssq \varphi^{-1}(P^*)$, the partition $\CQ$ is tight, and by the first part of the proof one has $\deg_\infty \CQ <0$. It thus remains to note that $\deg_\infty\CP \le \deg_\infty \CQ$, which follows from the definition of $\deg_\infty \CP$ in (\ref{eq:deg:infty:parti}), the fact that one has $K(\CP) = K(\CQ)$, $\#\CP = \# \CQ$ and the fact that by definition $\sum_{ u \in N(\tau) } |\fn(u)|_\fs = \sum_{ u \in N(\bar \tau) } |\bar \fn(u) - \fn_\CF(u)|_\fs$.
\end{proof}

\begin{remark}\label{rmk:large:scale:V0}
The statement fails for trees $\tau \in \CV_0$. For such trees however one has $\evaln{\eta}{\psi}{R} \tau= \Upsilon^{\eta}\tau = - g^\eta(\tau) = 0$ for any $\eta \in \SMsinf$, where the first equality holds if $R=0$, and $\psi=1$ in a large enough neighbourhood of the origin (compare Lemma~\ref{lem:identity:giota:g}), and the last equality holds by Assumption~\ref{ass:technical}. Using the homogeneity of the integration kernels, it is possible to find a sequence $R_n \to \hat K-K$ so that $\evaln{\eta}{\psi}{R_n}$ vanished for any $n$, so that at least for this particular choice of $R_n$ and $\psi$ a statement analogue to \eqref{eq:evaluation:largescale:renorm:2} holds. We will make use of this fact in the proof of Lemma~\ref{lem:fnknbound} below.
\end{remark}

The statement of Lemma~\ref{lem:super:regularity:implies:large:scale:bound} does clearly not hold in general for trees $\tau \in \wT$ with positive homogeneity, if we only assume $\CL(\tau) \ssq \LTN(\tau)$. Keeping track of the ``location'' of contracted subtrees (and thus a sufficient criterion for the positions at which we have to attach legs) is the only reason why we keep track of the extended decoration $\fo$ instead of working directly with $\wT$. As mentioned in Lemma~\ref{lem:super:regularity:implies:large:scale:bound}, this is irrelevant for trees $\tau$ such that $\fancynorm{\tau}_- \le 0$, so that there is no need to keep the extended decoration when working with the Hopf algebra $\wCT$. It will therefore be convenient for us to work with the two spaces $\wCThat$ (keeping the extended decoration) and $\wCT$ (dropping the extended decoration), and we will view the operator $\cpm$ as acting between these space
\[
\cpm : \wCThat \to \wCT \otimes \wCThat
\]
by dropping the extended decoration on the left component.

\subsection{An algebraic construction} \label{sec:algebra}

We want to work with a  Hopf subalgebra (resp. subalgebra) of $\wCT$ (resp. $\wCThat$) generated by trees $\tau$ such that Theorem~\ref{thm:evaluation:trees:largescale} can be applied. In other words, we want to work with trees that contain enough legs so that the large-scale evaluation is well-defined.
Also, we would like to work with trees that are \emph{properly legged}, see Definition~\ref{def:propery:legged} below. Roughly speaking, we want every leg to have a unique ``partner''. 
For this we assume that we are given a type map \label{idx:imap}$\imap: \Legtype \to \FL_- \times \FL_-$ and an involution \label{idx:invo}$\Legtype \ni \legtype \mapsto \bar\legtype \in \Legtype$ that switches the components of $\imap$ in the sense that if $\imap(\legtype)=(\ft,\ft')$, then $\imap(\bar\legtype)=(\ft',\ft)$. To avoid case distinctions, we also assume that $\bar\legtype \ne \legtype$. 

\maybeNotNeeded{
It will be useful to fix arbitrary total orders $\le$ on $\FL_-$ and on $\Legtype$ which are compatible in the sense that $\legtype_1 \le \legtype_2$ whenever $\ft_1 < \ft_2$ or $\ft_1 = \ft_2$ and $\ft'_1 \le \ft'_2$, where $\imap(\legtype_i)=(\ft_i, \ft'_i)$ for $i=1,2$.
}
With this notation, we make the following key definition. Recall the notation $[\cdot,\cdot]$ for multisets from Section~\ref{sec:multisets}.

\begin{definition}\label{def:propery:legged}
We call a tree $\tau \in \wTTex$ \emph{properly legged} if $\homofancyex{\tau} \le 0$ and the following properties hold.
\begin{enumerate}
\item \label{item:properly:legged:legtypeunique}
Any leg type appears at most once, i.e.\ one has $[L_\Legtype( \tau ),\ft]\le 1$.
\item \label{item:properly:legged:typei}
For any noise type edge $u \in L(\tau)$ and any leg $e \in L_\Legtype(\tau)$ with $e^\downarrow=u^\downarrow$ one has $\imap_1(\ft(e))=\ft(u)$.
\item \label{item:properly:legged:leg_coupling}
For any leg $e \in L_\Legtype(\tau)$ there exists a leg $\bar e \in L_\Legtype(\tau)$, which we call the \emph{partner} of $e$, with $\ft(\bar e) = \overline{\ft(e)}$ and one has $e^\downarrow \ne \bar e^\downarrow$.
\item \label{item:properly:legged:leaf_coupling}
For any distinct $u,\bar u\in\CL(\tau)$ there exists a \emph{unique} leg $e$ with $e^\downarrow = u$ and $\bar e^\downarrow = \bar u$. 
\item \label{item:properly:legged:leaf_hat_coupling}
For any $u \in \hat \CL(\tau)$ and any $\bar u \in \CL(\tau)$\footnote{Recall that our assumptions imply $\CL(\tau)\cap \hat\CL(\tau) = \emptyset$, so that $u \ne \bar u$.} there exists\footnote{Note that we do not impose uniqueness here.} a leg $e\in L_\Legtype(\tau)$ such that $e^\downarrow = u$ and $\bar e^\downarrow = \bar u$.
\end{enumerate}
\end{definition}

\begin{remark}
The leg $e$ refered to in~\ref{item:properly:legged:leaf_hat_coupling}. in the previous definition is not assumed to be unique  (as opposed to~\ref{item:properly:legged:leaf_coupling}.):
Given a tree $\tau$ and a subtree $\sigma \ssq \tau$, then after contracting $\sigma$, we obtain a tree $\tilde\tau = \tau/\sigma$. Let $w \in \hat\CL(\tilde\tau)$ be the vertex generated by contracting $\sigma$. For any $u \in \CL(\tau) \backslash \CL(\sigma)$ and any $v \in \CL(\sigma)$ there will be a pair of legs $e,\bar e$ in $\tilde\tau$ with $e^\downarrow = u$ and ${\bar e}^\downarrow = w$.
\end{remark}

\begin{example}
Consider the following example of a properly legged tree:
\[
\treeExampleProperlyLegged
\]
where straight lines denote kernel-type edges, circles denote noises (elements of $\CL(\tau)$) and coloured coiling edges denote legs. Here, we coloured legs which are partners with the same colour, but with different wavy patterns to make them distinguishable. Note that we could remove only the gray edges without loosing the property of being properly legged.
\end{example}

We will mainly work with algebras $\plCT$ and $\plCThat$ formed by properly legged trees. But if we would simply define these spaces as the algebras generated by properly legged trees, then they would not be closed under the action of the coproduct $\cpm$. The main problem here is that the previous definition enforces the existence of a partner for any leg $e \in L_\Legtype(\tau)$, and this property is not preserved under the coproduct. To circumvent this problem, we will define $\plCT$ and $\plCThat$ as subalgebras of factor algebras $\wlCT$ and $\wlCThat$, which are defined in the following way.

\begin{definition}\label{def:wCT}
Let $\wl{\CI}\subseteq \wCT$ and $\wl{\hat\CI}\subseteq \wCThat$ denote the ideals generated by the set of trees~$\tau\in \wCT$ or~$\tau\in \wCThat$ respectively such that there exists a leg $e \in L_\Legtype(\tau)$ without a partner. (Recall from \ref{item:properly:legged:leg_coupling} from Definition~\ref{def:propery:legged} that $\bar e \in L_\Legtype(\tau)$ is a partner of $e$ if $\ft(\bar e) = \overline{\ft(e)}$ and $e^\downarrow \ne \bar e^\downarrow$.)
Then we define
\begin{equ}
\wlCT:= \wCT /\wl{\CI}
\quad\text{ and }\quad
\wlCThat:= \wCThat /\wl{\hat\CI},
\end{equ}
and the canonical projections $\wlP:\wCT\to \wlCT$ and $\wlPhat: \wCThat \to\wlCThat$.
\end{definition}

Concerning $\wlCT$ and $\wlCThat$, we can now show the following Lemma.
\begin{lemma}\label{lem:interaction:WLCT:coproduct}
The ideal $\wl\CI$ forms a Hopf ideal in $\wCT$, so that in particular $\wlCT$ is a Hopf algebra, and the factor algebra $\wlCThat$ forms a co-module over the factor Hopf algebra $\wlCT$.
\end{lemma}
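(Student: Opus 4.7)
The plan is to verify that $\wl\CI$ is both an ideal and a coideal in $\wCT$ (and analogously for $\wl{\hat\CI}$ with the comodule structure), whereupon the Hopf ideal / comodule statements follow by standard arguments for connected graded Hopf algebras. The ideal property and counit vanishing are essentially by definition: $\wl\CI$ is an ideal by construction, and its generators are non-trivial trees (they carry at least one leg), so the counit annihilates them. The crux of the argument is the coideal property
\begin{equ}
\cpmh(\wl\CI) \ssq \wl\CI \otimes \wCT + \wCT \otimes \wl\CI\;,
\end{equ}
and likewise $\cpm(\wl{\hat\CI}) \ssq \wl\CI \otimes \wCThat + \wCT \otimes \wl{\hat\CI}$.

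To establish this, it suffices to check the inclusion on a generator, so fix $\tau \in \wTT$ containing a leg $e \in L_\Legtype(\tau)$ without a partner. Recall that $\cpmh \tau$ (resp.\ $\cpm \tau$) is a sum over divergent subforests $\CF$ of $\tau$ of terms of the form $\CF \otimes (\tau/\CF)$ with appropriate decorations. Since edges of $\tau$ are partitioned as $E(\tau) = E(\CF) \sqcup E(\tau/\CF)$, the leg $e$ lies in exactly one of these two pieces for each summand. I will show that in the first case the left factor lies in $\wl\CI$, and in the second case the right factor lies in $\wl\CI$ (resp.\ $\wl{\hat\CI}$); since decoration modifications leave the underlying tree structure untouched, membership in $\wl\CI$ is insensitive to them.

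Suppose first that $e \in E(\CF)$, so $e$ belongs to some connected component $S$ of $\CF$. Any candidate partner of $e$ inside $S$ would be a leg $\bar e \in L_\Legtype(S) \ssq L_\Legtype(\tau)$ of the required type with $e^\downarrow \ne \bar e^\downarrow$; since no such leg exists in $\tau$, none exists in $S$, so $S \in \wl\CI$, and therefore $\CF \in \wl\CI$ because $\wl\CI$ is an ideal. Suppose instead that $e \in E(\tau/\CF)$. Any leg of $\tau/\CF$ is obtained from a leg $\bar e \in L_\Legtype(\tau)$ with $\bar e \notin E(\CF)$, and its base vertex is either unchanged or is the vertex obtained by collapsing the component of $\CF$ containing it. Contraction is a surjective map on vertices: it identifies, but never separates. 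Hence if every candidate partner $\bar e$ in $\tau$ satisfies $e^\downarrow = \bar e^\downarrow$, then the same equality persists in $\tau/\CF$, and likewise if there is no candidate of the correct type in $\tau$, none appears in $\tau/\CF$. Thus $e$ remains unpartnered in $\tau/\CF$, i.e.\ $\tau/\CF \in \wl\CI$ (resp.\ $\wl{\hat\CI}$). The straightforward part I expect to be no obstacle; the one subtle point is confirming that contraction cannot create new partners, which amounts precisely to the fact that a quotient of vertex sets cannot split a coincidence of base vertices.

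Combining these two cases gives the desired coideal inclusion. Since $\wCT$ is a connected graded bialgebra, any ideal which is also a coideal is automatically a Hopf ideal (the antipode is determined inductively by the coproduct and the counit and preserves the ideal), so $\wlCT = \wCT / \wl\CI$ inherits the Hopf algebra structure. The analogous argument, applied to $\cpm\colon \wCThat \to \wCT \otimes \wCThat$, shows that $\cpm$ descends to a coaction $\wlCThat \to \wlCT \otimes \wlCThat$, and the comodule axioms $(\cpmh \otimes \Id)\cpm = (\Id \otimes \cpm)\cpm$ and $(\varepsilon \otimes \Id)\cpm = \Id$ pass to the quotient from the corresponding identities on $\wCThat$, thus making $\wlCThat$ a comodule over $\wlCT$.
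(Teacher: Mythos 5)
Your proposal is correct and follows essentially the same route as the paper: reduce to a generator $\tau$ with an unpartnered leg $e$, expand the coproduct over divergent forests $\CF$, and argue case by case that either the component of $\CF$ containing $e$ still lacks a partner for $e$, or that contraction (which can only identify base vertices, never separate them) cannot create a partner for $e$ in $\tau/\CF$. The only cosmetic difference is that you make explicit the standard fact that an ideal-coideal in a connected graded bialgebra is automatically a Hopf ideal, which the paper leaves implicit.
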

\begin{proof}
The lemma follows once we show the identities
\begin{align}
\label{eq:interaction:WLCT:coproduct:1}
\cpmh \wl\CI 		&\subseteq \wl\CI \otimes \wCT + \wCT \otimes \wl\CI \\
\label{eq:interaction:WLCT:coproduct:2}
\cpm \wl{\hat\CI} 	&\subseteq \wl\CI \otimes \wCThat + \wCT \otimes \wl{\hat\CI}.
\end{align}
We only show (\ref{eq:interaction:WLCT:coproduct:1}) since (\ref{eq:interaction:WLCT:coproduct:2}) follows with almost the same proof. 
Let $\tau =T^\fn_\fe\in \wl\CI$ be a tree and fix a leg $e \in L_\Legtype(\tau)$ 
such that all legs $\tilde e \in L_\Legtype(\tau)$ 
with the property that $\ft(\tilde e) = \overline{\ft(e)}$ satisfy $e^\downarrow = \tilde e^\downarrow$. 
By (\ref{eq:coproduct:ex}) we are left to show that for any forest $\CF \in \div(\tau)$ one has 
$\prod_{S \in \CF} S^{\fn_\CF+\pi\ce_\CF}_\fe 
\otimes (T/\CF)^{\fn-\fn_\CF,[\fo]_\CF}_{\fe+\fe_\CF} 
\in \wl\CI \otimes \wCT + \wCT \otimes \wl\CI$ 
for any choice of decorations $\fn_\CF,\fe_\CF$. For this we distinguish two cases. In the first case, writing $\bar \CF$ for the set of connected components of $\CF$,
there exist $S \in \bar\CF$ such that $e \in E(S)$. 
From this it follows that whenever $\ft(\tilde e) =\overline{\ft(e)}$ for some edge $\tilde e \in E(S)$, then one has $e^\downarrow = \tilde e^\downarrow$ and thus $S^{\fn_\CF+\pi\ce_\CF}_\fe \in \wl\CI$. In the second case, one has $e \notin E(\CF)$. 
In this case it suffices to note that whenever $\tilde e \notin E(\CF)$ is a leg with the property that $e^\downarrow = \tilde e^\downarrow$ in $T$, then this identity remains true in $T/\CF$ as well, so that in this case one has $(T/\CF)^{\fn-\fn_\CF}_{\fe+\fe_\CF, [\fo]_\CF}  \in \CI$.
\end{proof}

The canonical embedding $\wli:\wCT\to\wCThat$ induces an embedding \label{idx:wli}$\wli:\wlCT\to\wlCThat$. 
We denote by $\wlj:\wlCT\to\wCT$ and $\wljhat:\wlCThat\to\wCThat$ the obvious embeddings, so that the range of $\wlj$ is the algebra generated by trees with the property that any leg has at least one partner.
We now have the following analogue of \cite[Prop.~6.5]{BrunedHairerZambotti2016} in this setting.

\begin{proposition}\label{prop:twisted:antipode:pl}
There exists a unique algebra homomorphism $\wlCA:\wlCT\to\wlCThat$ with the property that the identity
\begin{align}\label{eq:definition:twisted:antipode:wl}
\CM(\wlCA\otimes\Id) \cpmwi =\one^\star
\end{align}
holds on $\wlCT$. Moreover, in terms of the usual twisted antipode $\wCA$, this operator is uniquely determined by the relation
\begin{align}\label{eq:identity:twisted:antipode:wl}
\wlCA\wlP=\wlPhat \wCA
\end{align}
on $\wCT$, or equivalently by the relation
\begin{align}\label{eq:identity:twisted:antipode:wl:2}
\wlCA=\wlPhat \wCA \wlj
\end{align}
on $\wlCT$.
\end{proposition}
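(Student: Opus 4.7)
The plan is to reduce the existence statement to the classical twisted antipode $\wCA:\wCT\to\wCThat$ of \cite[Prop.~6.5]{BrunedHairerZambotti2016} by showing that $\wCA$ descends through the quotient maps $\wlP$ and $\wlPhat$. Concretely, the core of the argument is the inclusion
\begin{equ}\label{eq:plan:A:preserves:ideal}
\wCA(\wl\CI) \subseteq \wl{\hat\CI}\;,
\end{equ}
which I would establish by induction on $\#E(\tau)$ over trees $\tau \in \wTT$ generating $\wl\CI$. Given such a $\tau$, decompose the coproduct as $\cpmh\tau = \tau\otimes\one + \one\otimes\tau + \sum \tau'\otimes\tau''$, where each $\tau'$ and $\tau''$ in the residual sum has strictly fewer edges than $\tau$, and apply the defining recursion $\CM(\wCA\otimes\Id)\cpmh = \one^\star$ to obtain $\wCA\tau = -\tau - \sum \wCA(\tau')\tau''$. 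Since $\tau \in \wl\CI\subseteq \wl{\hat\CI}$ already accounts for the first term, it remains to control the sum. By Lemma~\ref{lem:interaction:WLCT:coproduct}, each $\tau'\otimes\tau''$ lies in $\wl\CI\otimes\wCT + \wCT\otimes\wl\CI$, so either $\wCA(\tau')\in\wl{\hat\CI}$ by the induction hypothesis, or $\tau''\in\wl\CI\subseteq\wl{\hat\CI}$; in both cases the product $\wCA(\tau')\tau''$ belongs to the ideal $\wl{\hat\CI}$ of $\wCThat$, establishing \eqref{eq:plan:A:preserves:ideal}.

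With \eqref{eq:plan:A:preserves:ideal} in hand, the composition $\wlPhat\wCA:\wCT\to\wlCThat$ vanishes on $\wl\CI$ and therefore factors through $\wlP$ to give a unique algebra homomorphism $\wlCA:\wlCT\to\wlCThat$ satisfying \eqref{eq:identity:twisted:antipode:wl}. The equivalent form \eqref{eq:identity:twisted:antipode:wl:2} follows immediately from $\wlP\wlj = \Id_{\wlCT}$. To verify that this $\wlCA$ satisfies the defining relation \eqref{eq:definition:twisted:antipode:wl}, I would apply $\wlPhat$ to the identity $\CM(\wCA\otimes\Id)\cpmh = \one^\star$ on $\wCT$; using multiplicativity of $\wlPhat$, the intertwining $\wlCA\wlP = \wlPhat\wCA$, and the fact that the induced coproduct on $\wlCT$ satisfies $\cpmwi\wlP = (\wlP\otimes\wlP)\cpmh$ (a direct consequence of Lemma~\ref{lem:interaction:WLCT:coproduct}), one obtains $\CM(\wlCA\otimes\Id)\cpmwi\wlP = \one^\star$ on $\wCT$, which descends to \eqref{eq:definition:twisted:antipode:wl} on $\wlCT$ by surjectivity of $\wlP$.

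Uniqueness of $\wlCA$ is essentially formal: any algebra homomorphism satisfying \eqref{eq:definition:twisted:antipode:wl} is forced on basis trees $\tau\in\wlTT$ by the induction
\begin{equ}
\wlCA\tau = -\tau - \sum \wlCA(\tau')\tau''\;,
\end{equ}
where the sum runs over the strictly smaller contributions to $\cpmwi\tau$, and the value on $\one$ is pinned down by $\wlCA\one = \one$. The main (and really only) obstacle is the inductive verification of \eqref{eq:plan:A:preserves:ideal}; everything else is routine bookkeeping with quotients of Hopf algebras and their comodules, and relies only on Lemma~\ref{lem:interaction:WLCT:coproduct} together with the classical properties of $\wCA$.
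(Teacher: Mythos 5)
Your proof is correct and follows the same overall route as the paper: reduce to the classical twisted antipode $\wCA$ via the projections $\wlP$, $\wlPhat$, and verify \eqref{eq:definition:twisted:antipode:wl} by pushing the defining identity of $\wCA$ through the quotients using the intertwining relations of Lemma~\ref{lem:interaction:WLCT:coproduct}. The one genuine difference is that you make explicit the inclusion $\wCA(\wl\CI)\subseteq\wl{\hat\CI}$, proved by induction on the number of edges, as the justification that $\wlPhat\wCA$ descends through $\wlP$; the paper instead obtains $\wlCA$ directly from the recursion \eqref{eq:definition:twisted:antipode:wl} and treats \eqref{eq:identity:twisted:antipode:wl} as determining it by surjectivity of $\wlP$, leaving the well-definedness of the descended map implicit. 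Your version is therefore slightly more complete on this point. Two cosmetic remarks: in the inductive step the relevant statement is \eqref{eq:interaction:WLCT:coproduct:2}, i.e.\ $\cpmwi\tau\in\wl\CI\otimes\wCThat+\wCT\otimes\wl{\hat\CI}$ (the right tensor factor lives in $\wCThat$), and the conclusion that each elementary tensor $\tau'\otimes\tau''$ has one factor in the corresponding ideal uses that both ideals are spanned by monomials — this is exactly what the term-by-term argument in the proof of Lemma~\ref{lem:interaction:WLCT:coproduct} supplies. Similarly, the intertwining relation you need is $\cpmwi\wlP=(\wlP\otimes\wlPhat)\cpmwi$ rather than $(\wlP\otimes\wlP)\cpmh$. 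Neither point affects the validity of the argument.
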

\begin{proof}
The fact that (\ref{eq:definition:twisted:antipode:wl}) determines a unique algebra homomorphism follows easily via induction in the number of edges (see also the proof of \cite[Prop.~6.5]{BrunedHairerZambotti2016}). Since $\wlP$ is surjective, 
(\ref{eq:identity:twisted:antipode:wl}) defines a unique operator $\wlCA$, so that we are left to show that this operator 
also satisfies  (\ref{eq:definition:twisted:antipode:wl}). For this we use the identities $\cpm \wlPhat=(\wlP\otimes\wlPhat) \cpm$ on $\wlCThat$ and $\wli\wlP=\wlPhat\wli$, from which it follows that
\[
\CM(\wlCA\otimes\Id) \cpmwi \wlP
=
\CM(\wlCA\wlP\otimes\wlPhat) \cpmwi
\]
holds on $\wlCT$. Using (\ref{eq:identity:twisted:antipode:wl}), we can rewrite the right-hand side of this identity  as
\[
\wlPhat\Big(
\CM(\wlCA	\otimes\Id) \cpmwi
\Big)=\one^\star,
\]
and since $\wlPhat$ is a  surjective homomorphism the statement follows. 
The equivalence with (\ref{eq:identity:twisted:antipode:wl:2}) follows at once from the fact that 
$\wlP\wljhat=\Id$ on $\wlCT$.
\end{proof}

Later on we will mostly work with subalgebras $\plCT$ and $\plCThat$ of $\wlCT$ and $\wlCThat$ which are generated by properly legged trees.

\begin{definition}\label{def:properly:legged:tree:algebra}
We denote by $\plCT \subseteq \wlCT$ and $\plCThat \ssq \wlCThat$ the subalgebras generated by properly legged trees.
\end{definition}

\begin{remark}
Note that by definition any tree $\tau \in \plCThat$ satisfies $\homofancyex \tau \le 0$. By definition of $\homofancyex\tau$ and the coproduct $\cpm$ one has $\cpm: \plCThat \to \plCT \otimes \plCThat$, so that $\plCThat$ is a comodule over $\plCT$.
\end{remark}

One of the facts that motivate the definition of properly legged trees is that for any tree $\tau \in \plCThat$ there exists a one to one correspondence between forests $\CF \in \div(\pi\tau)$ and forests $\CG \in \div(\tau)$ with the property that $\CF$ and $\CG$ give non-vanishing contributions to the coproduct, see (\ref{eq:coproduct:pl}) below.
To state this correspondence we introduce the following notation. Given a tree $\tau \in \wTex$ and a forest $\CF \in \div(\pi \tau)$, we write $\forestlegs\CF$ for the forest of $\tau$ induced by the edge set 
\begin{equ}\label{eq:forests:legs}
E(\forestlegs\CF):= E(\CF) \sqcup \bigsqcup_{S \in \bar\CF}L_\Legtype(S)
\end{equ}
with $L_\Legtype(S) \subseteq L_\Legtype(T)$ defined as the set of legs $e \in L_\Legtype(T)$ 
with the property that $e^\downarrow,\bar e^\downarrow \in N(S)$, 
where $\bar e \in L_\Legtype(T)$ denotes the partner of $e$ in $T$ as before. Here and below we write $\bar\CF$ for the set of connected components of $\CF$. We sometimes write $\forestlegs{\CF}[\tau]$ if we want to emphasise the tree $\tau$.

With this notation, we have the following lemma, the proof of which is postponed to Section~\ref{sec:technical:proofs} below.

\begin{lemma}\label{lem:CT:pl}
The space $\plCT$ forms a Hopf subalgebra of $\wlCT$ and $\plCThat$ forms a co-module over $\plCT$. In particular, one has $\wlCA : \plCT \to \plCThat$. Moreover, one has that $\wli : \plCT \to \plCThat$. Finally, the coproduct $\cpm : \plCThat \to \plCT \otimes \plCThat$ is explicitly given by
\begin{align}\label{eq:coproduct:pl}
\cpm \tau = 
\sum_{\CF \in \div(\pi \tau)}
\sum_{\fn_{\forestlegs \CF}, \ce_{\forestlegs\CF} }
\frac{1}{\ce_{\forestlegs\CF} !}
\binom{\fn}{\fn_{\forestlegs\CF}}
\prod_{S \in \forestlegs\CF} S^{\fn_{\forestlegs\CF} + \pi\ce_{\forestlegs\CF}}_\fe
\otimes
(T/\forestlegs\CF)^{\fn-\fn_{\forestlegs\CF},  [\fo]_{\forestlegs\CF}}
_{\fe + \fe_{\forestlegs\CF}},
\end{align}
for any tree $\tau = T^{\fn,\fo}_\fe\in \plCThat$. 
\end{lemma}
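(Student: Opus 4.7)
The proof proceeds in three steps: establishing the coproduct formula~(\ref{eq:coproduct:pl}), verifying that all trees appearing on the right-hand side are properly legged, and finally deducing the Hopf subalgebra/comodule structure together with the statement about $\wlCA$.

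First, I would apply the usual formula for $\cpm$ on $\wCThat$, which expresses $\cpm\tau$ as a sum over forests $\CG \in \div(\tau)$, and identify which terms survive modulo $\wl\CI \otimes \wCThat + \wCT \otimes \wl{\hat\CI}$. A forest $\CG$ yields a non-zero contribution precisely when every component $S \in \bar\CG$ contains a partner for each of its legs, and when every leg $e \notin E(\CG)$ has a partner $\bar e \notin E(\CG)$ with $e^\downarrow$ and $\bar e^\downarrow$ in distinct components of $\CG$. Using that $\tau$ is properly legged---specifically the uniqueness of leg types (condition~\ref{item:properly:legged:legtypeunique}) together with the pairing property (condition~\ref{item:properly:legged:leg_coupling})---a short case distinction shows that, for any leg $e$, either both $e$ and its partner $\bar e$ lie in $E(\CG)$ with common containing component, or both lie outside $E(\CG)$ with distinct containing components. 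This is exactly the dichotomy characterising $\CG = \forestlegs\CF$, where $\CF$ consists of the kernel-type edges of $\CG$. One also notes that $\forestlegs\CF \in \div(\tau)$ for any $\CF \in \div(\pi\tau)$, since adjoining legs of small negative homogeneity $-\kappa$ only decreases the homogeneity of a divergent component, which remains negative provided $\kappa$ was fixed small enough at the outset.

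Next, I would verify that for each $\CF \in \div(\pi\tau)$, every component $S' \in \overline{\forestlegs\CF}$ and the quotient $T/\forestlegs\CF$ (equipped with the decorations $\fn_{\forestlegs\CF} + \pi\ce_{\forestlegs\CF}$ and $[\fo]_{\forestlegs\CF}$ prescribed by the coproduct) are themselves properly legged. Conditions~\ref{item:properly:legged:legtypeunique} and~\ref{item:properly:legged:typei} of Definition~\ref{def:propery:legged} are inherited verbatim from $\tau$. Condition~\ref{item:properly:legged:leg_coupling} for $S'$ is built directly into the definition of $\forestlegs\CF$, and for $T/\forestlegs\CF$ it follows from the dichotomy just established. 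Condition~\ref{item:properly:legged:leaf_coupling} for any two distinct noise vertices $u,\bar u$ in $S'$ (respectively in $T/\forestlegs\CF$) reduces to the corresponding statement in $\tau$, since the unique leg connecting $u$ and $\bar u$ in $\tau$ automatically lands in $E(S')$ when both endpoints are in $N(S)$, and outside $E(\forestlegs\CF)$ otherwise. The most delicate condition is~\ref{item:properly:legged:leaf_hat_coupling} for $T/\forestlegs\CF$: a vertex $u \in \hat\CL(T/\forestlegs\CF)$ either comes from $\hat\CL(\tau)$ or arises as $\rho_{S'}$ from contracting some $S' \in \overline{\forestlegs\CF}$ whose modified extended decoration is negative. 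In the latter case, since $S'$ has negative homogeneity it must contain a noise vertex $v \in N(S') \cap \CL(\tau)$; the original proper leggedness of $\tau$ then furnishes a leg in $\tau$ from $v$ to any given $\bar u \in \CL(T/\forestlegs\CF) \subseteq \CL(\tau)$, and this leg lies outside $E(\forestlegs\CF)$ (since $\bar u$ was not contracted) and hence survives in the quotient with footpoints $\rho_{S'} = u$ and $\bar u$.

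With these two steps in hand, the coproduct formula follows, and the Hopf subalgebra and comodule properties are immediate: $\cpm: \plCThat \to \plCT \otimes \plCThat$ directly, and an entirely analogous argument shows that $\cpmh$ restricts to $\plCT \to \plCT \otimes \plCT$. The inclusion $\wli : \plCT \to \plCThat$ is trivial, since conditions~\ref{item:properly:legged:legtypeunique}--\ref{item:properly:legged:leaf_coupling} are preserved by the embedding while condition~\ref{item:properly:legged:leaf_hat_coupling} is vacuous on $\plCT$ (where the extended decoration vanishes identically and hence $\hat\CL = \emptyset$). Finally, for $\wlCA : \plCT \to \plCThat$ I would induct on the number of edges of $\tau \in \plCT$: all non-trivial summands of the reduced coproduct $\cpmwi\tau$ lie in $\plCT \otimes \plCT$ by the Hopf subalgebra property just established, and applying the inductive hypothesis together with the fact that $\plCThat$ is a subalgebra of $\wlCThat$ containing $\wli(\plCT)$ forces $\wlCA\tau \in \plCThat$ via the defining recursion of Proposition~\ref{prop:twisted:antipode:pl}.

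The main obstacle is the combinatorial bookkeeping in the second step, particularly tracking how the extended decoration $\fo$ and its modification $[\fo]_{\forestlegs\CF}$ interact with the sets $\CL$ and $\hat\CL$ after contraction in order to verify condition~\ref{item:properly:legged:leaf_hat_coupling} in the quotient; a secondary technical point is confirming that the smallness of $\kappa$ suffices to keep the correspondence $\CF \leftrightarrow \forestlegs\CF$ between $\div(\pi\tau)$ and the surviving forests in $\div(\tau)$ clean.
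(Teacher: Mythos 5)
Your proposal is correct and follows essentially the same route as the paper's own proof: identify the surviving forests as exactly those of the form $\forestlegs\CF$ by showing that any other forest produces a component without an internal partner for some leg (landing in $\wl\CI$) or a quotient in which some leg and its partner share a footpoint (landing in $\wl{\hat\CI}$), and then check the five conditions of Definition~\ref{def:propery:legged} for the components and the quotient. You are in fact somewhat more explicit than the paper on condition~\ref{item:properly:legged:leaf_hat_coupling} for the quotient and on the inductive argument for $\wlCA$; the only detail worth making explicit (as the paper does) is that the identification $\CL(T/\forestlegs\CF)=\CL(\tau)\setminus N(\CF)$, underlying your ``inherited verbatim'' claims for conditions~\ref{item:properly:legged:legtypeunique} and~\ref{item:properly:legged:typei}, uses the structural fact that divergent subtrees contain every noise-type edge attached to their nodes.
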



We will work with embeddings $\iota:\CT_-\to\plCT$ with the property that any tree $\tau \in \CT_-$ is mapped onto a tree $\iota\tau \in \plCT$ with the property that $\pi\iota\tau = \tau$ and $\iota\tau$ is in some sense as simple as possible with this property. There is some freedom how to construct such embeddings, and many of the statements below do not depend on the choice of embedding, as long as certain conditions are met, which we summarise in the following definition. We choose this way, rather than simply fixing such an embedding, since it will be convenient in the proofs below to have some flexibility in this choice. 

\begin{definition}\label{def:admissible:embedding}
We call an algebra monomorphism $\iota:\CT_-\to\plCT$ an \emph{admissible embedding} if 
all of the following  properties hold for any $\tau\in\TT_-$.
\begin{itemize}
\item The tree $\iota\tau$ is constructed by attaching legs to $\tau$, i.e.\ one has $\pi \iota=\Id$ on $\CT_-$. 
\item There are only legs attached to nodes in $\CL(\tau)$, i.e.\ one has $\LTN( \iota \tau)=\CL( \iota \tau)$.
\item The derivative decoration vanishes on legs, i.e.\ all legs $e \in L_\Legtype(\tau)$ satisfy $\fe(e)=0$.
\end{itemize}
\end{definition}

We denote by \label{idx:adCT}$\adCT$ the subalgebra of $\plCT$ generated by all elements of the form $\iota\tau$ for some admissible embedding $\iota$ and some $\tau \in \CT_-$.
Note that $\adCT$ is not closed under the coproduct, so that in particular $\adCT$ does not form a Hopf algebra. We will write $\adCThat \subseteq \plCThat$ for the smallest subalgebra of $\plCThat$ with the property that $\cpmwi \adCT \subseteq \plCT \otimes \adCThat$.
It follows from the definition of $\adCT$ and (\ref{eq:coproduct:pl}) that one actually has 
$$\cpmwi :\adCT \to \adCT \otimes \adCThat.$$ 

%

\begin{example}
The following is an example of an admissible embedding: 
\[
\treeExampleProperlyNOLEGS \mapsto \treeExampleProperlyAd.
\]
\end{example}

The construction so far does not mirror the fact that noise types might appear multiple times on a given tree. In such a situation the cumulants built between noises satisfy certain symmetry constraints, and we want to mirror these symmetries 
at the level of the legs.
To this end, we perform the following construction.
\begin{definition}\label{def:Glegs}
We denote by $\Glegs$ the group of all permutations $\sigma$ of $\Legtype$ such that $\imap$ is invariant under $\sigma$ and with the property that $\overline{\sigma(\lt)}=\sigma(\bar\lt)$ for any $\lt \in \LT$.
\end{definition} 
We will often abuse notation and view elements $\sigma$ of $\Glegs$ as maps $\sigma : \FL \sqcup \LT \to \FL \sqcup \LT$ by extending $\sigma$ as the identity on $\FL$.
There exists an action $\CS$ of $\Glegs$ onto $\plCT$ and $\plCThat$ given by linearly and multiplicatively extending the map $(g,(\tau,\ft))\mapsto\CS^g(\tau,\ft):=(\tau,\CS^g\ft)$, and this action has the property that $\CS^g$ is an algebra automorphism on $\plCThat$, and a Hopf algebra automorphism on $\plCT$.
It will be useful to denote for $\lt_1,\lt_2 \in \LT$ with $\imap(\lt_1) = \imap(\lt_2)$ by $\lt_1 \lra \lt_2 \in \Glegs$ the group element given by sending $\legtype_1$ to $\legtype_2$, $\bar \legtype_1$ to $\bar\legtype_2$ (and vice versa), and letting ${(\legtype_1 \lra \legtype_2)}(\ft) := \ft$ for any $\ft \in \LT \backslash\{ \legtype_1, \legtype_2, \bar\legtype_1, \bar\legtype_2 \}$. Note that elements of the form $\lt_1 \lra \lt_2$ generate $\Glegs$.

\begin{example}
As an example, with $g = (\legRed \!\!\lra\!\! \legBlue)$ one has
\[
\CS^{g}
\;
\treeExampleProperlyLegged
=
\treeExampleProperlyLeggedSym.
\]
\end{example}

With this notation we introduce the following definition.

\begin{definition}\label{def:symCT}
We denote by  $\wlS \subseteq \plCT$ and $\wlShat\subseteq \plCThat$ the ideals generated by all elements of the form $\CS^{g} \tau -\tau$ for some $\tau \in \plCT$ and $\tau \in \plCThat$ respectively, for some $g \in \Glegs$.

We then denote by 
\[
\symCThat:= \plCThat /\wlShat
\quad \text{ and } \quad
\symCT := \plCT / \wlS
\] 
the factor algebras with the canonical projections $\symPhat : \plCThat \to \symCThat$ and $\symP : \plCT \to \symCT$.
\end{definition}
As before, we define the natural embedding \label{idx:symfi}$\symfi : \symCT \to \symCThat$ induced by $\wli$.
We have the following lemma.
\begin{lemma}
The ideal $\wlS \subseteq \plCT$ forms  a Hopf ideal in $\plCT$, so that in particular the factor algebra $\symCT$ is a factor Hopf algebra. The algebra $\symCThat$ is a co-module over $\symCT$.
\end{lemma}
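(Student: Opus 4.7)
The plan is to reduce everything to the fact that the action $\CS^g$ commutes in the appropriate sense with the coproducts $\cpmh$ and $\cpm$, together with the twisted antipode $\wlCA$. Since $\CS^g$ is already asserted to be a Hopf algebra automorphism of $\plCT$ and an algebra automorphism of $\plCThat$, the bulk of the work is to verify one additional compatibility: that
\begin{equation*}
(\CS^g\otimes \CS^g)\circ \cpm = \cpm\circ \CS^g \quad \text{on } \plCThat\;,
\qquad (\CS^g\otimes \CS^g)\circ \cpmh = \cpmh\circ \CS^g \quad \text{on } \plCT\;,
\end{equation*}
and analogously $\wlCA\circ\CS^g = \CS^g\circ\wlCA$. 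These are immediate from the explicit formula \eqref{eq:coproduct:pl}: the sum over divergent subforests $\CF\in\div(\pi\tau)$ and the rule $\CF\mapsto \forestlegs\CF$ depend only on the \emph{kernel-type} tree structure and on the \emph{pairing} of legs into partners, both of which are left invariant by any $g\in\Glegs$ (by definition of $\Glegs$, $g$ preserves $\imap$ and commutes with the involution $\legtype\mapsto\bar\legtype$). For the twisted antipode one uses the inductive characterisation \eqref{eq:definition:twisted:antipode:wl} together with uniqueness.

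To show that $\wlS$ is a coideal in $\plCT$, it suffices (since $\wlS\otimes\plCT+\plCT\otimes\wlS$ is itself a two-sided ideal of $\plCT\otimes\plCT$ and $\cpmh$ is an algebra morphism) to check the containment on generators $\CS^g\tau-\tau$. Writing $\cpmh\tau = \sum \tau_{(1)}\otimes\tau_{(2)}$, the compatibility above yields
\begin{equation*}
\cpmh(\CS^g\tau-\tau) = \sum \bigl(\CS^g\tau_{(1)}-\tau_{(1)}\bigr)\otimes \CS^g\tau_{(2)} + \sum \tau_{(1)}\otimes \bigl(\CS^g\tau_{(2)}-\tau_{(2)}\bigr)\;,
\end{equation*}
which lies in $\wlS\otimes\plCT + \plCT\otimes\wlS$. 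The counit kills $\wlS$ trivially since $\CS^g\tau$ and $\tau$ have the same ``empty-tree component''. Combined with $\wlCA\circ\CS^g=\CS^g\circ\wlCA$ (which immediately gives $\wlCA(\wlS)\subseteq\wlShat$, and composed with $\symPhat$ lands in $\ker\symPhat$), this establishes that $\wlS$ is a Hopf ideal, so $\symCT$ inherits a Hopf algebra structure.

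The statement that $\symCThat$ is a co-module over $\symCT$ is proven in exactly the same way: the compatibility $(\CS^g\otimes\CS^g)\cpm = \cpm \CS^g$ on $\plCThat$ yields, for any generator $\CS^g\sigma-\sigma\in\wlShat$ of $\wlShat$,
\begin{equation*}
\cpm(\CS^g\sigma-\sigma) \in \wlS\otimes\plCThat + \plCT\otimes\wlShat\;,
\end{equation*}
and extension to arbitrary elements of $\wlShat$ uses the fact that the right-hand side is a sub-bimodule of $\plCT\otimes\plCThat$ under left multiplication by $\plCT\otimes\plCThat$ and that $\cpm$ is an algebra morphism. The co-associativity and co-unit identities on $\symCThat$ then descend from those on $\plCThat$ by surjectivity of $\symPhat$.

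The main (and really only) obstacle is the verification of the commutation identity between $\CS^g$ and $\cpm$. This is essentially bookkeeping: one has to check that under the explicit formula \eqref{eq:coproduct:pl}, relabelling leg types via $g$ commutes with the assignments $\CF\mapsto\forestlegs\CF$ and $(\fn_{\forestlegs\CF},\fe_{\forestlegs\CF})\mapsto$ decorations on quotient and subforest. This is clear since $g$ acts only on leg types, leaves $\ft|_{K(\tau)\sqcup L(\tau)}$ untouched (hence $\div(\pi\tau)$ is preserved), and preserves the partner-pairing that defines $\forestlegs\CF$ in \eqref{eq:forests:legs}.
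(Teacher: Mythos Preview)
Your proof is correct and follows the same approach as the paper, which dispatches the lemma in one sentence by invoking that $\CS^g$ is a Hopf algebra automorphism on $\plCT$ and a co-module automorphism on $\plCThat$. You have essentially spelled out the details behind that sentence: the commutation of $\CS^g$ with the coproducts via the explicit formula \eqref{eq:coproduct:pl}, and the standard splitting argument on generators $\CS^g\tau-\tau$.

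One small remark: your discussion of the twisted antipode $\wlCA$ is extraneous for this lemma. The map $\wlCA:\plCT\to\plCThat$ is not the Hopf algebra antipode of $\plCT$ (it takes values in a different space), so it plays no role in checking that $\wlS$ is a Hopf ideal. In a connected graded Hopf algebra, stability under the actual antipode follows automatically once the coideal and counit conditions are verified, so nothing further is needed. The compatibility $\wlCA\circ\CS^g=\CS^g\circ\wlCA$ you mention is relevant instead for the subsequent Proposition~\ref{prop:twisted:antipode:sym}.
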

\begin{proof}
This follows from the fact that $\CS^g$ is a co-module and Hopf algebra automorphism on $\plCThat$ and $\plCT$, respectively.
\end{proof}

It will sometimes be convenient to view basis vectors $\tau \in \symCT$ (resp. $\tau \in \symCThat$) as basis vectors $\tau \in \plCT$ (resp. $\tau \in \plCT$). For this we simply fix, once and for all, a right inverse $\syminv : \symCT \to \plCT$ (resp. $\syminv:\symCThat \to \plCThat$) of the canonical projection, with the property that $\syminv$ maps trees onto trees.
Concerning the twisted antipode, we have the following analogue of Proposition~\ref{prop:twisted:antipode:pl}.

\begin{proposition}\label{prop:twisted:antipode:sym}
There exists a unique algebra homomorphism $\symCA: \symCT \to \symCThat$ such that the identity
\begin{align}\label{eq:definition:twisted:antipode:sym}
\CM (\symCA \otimes \Id) \cpmwi=\one^\star
\end{align}
holds on $\symCT$. Moreover, in terms of the operator $\wlCA$, this operator is uniquely determined by the relation
\begin{align}\label{eq:identity:twisted:antipode:sym}
\symCA \symP = \symPhat \wlCA
\end{align}
on $\plCT$.
\end{proposition}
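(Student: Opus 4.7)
The plan is to mirror the proof of Proposition~\ref{prop:twisted:antipode:pl}, replacing the pair $(\wCT,\wlCT)$ with $(\plCT,\symCT)$ and $(\wCA,\wlCA)$ with $(\wlCA,\symCA)$. The uniqueness of any algebra homomorphism $\symCA: \symCT \to \symCThat$ satisfying (\ref{eq:definition:twisted:antipode:sym}) follows by induction on the number of edges: since $\cpmwi \tau = \tau \otimes \one + \one \otimes \tau + (\text{terms with strictly fewer edges on the left component})$, the defining relation recursively determines $\symCA(\tau)$ from its values on smaller trees, and multiplicativity extends this to products.

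To construct $\symCA$, I would set $\symCA \symP := \symPhat \wlCA$, following (\ref{eq:identity:twisted:antipode:sym}). For this formula to descend to the quotient $\symCT = \plCT/\wlS$, the composition $\symPhat \wlCA$ must vanish on the ideal $\wlS$. Since $\wlCA$ and $\symPhat$ are algebra homomorphisms and $\wlS$ is generated by elements of the form $\CS^g \tau - \tau$ with $g \in \Glegs$, this reduces to checking that $\symPhat \wlCA(\CS^g \tau) = \symPhat \wlCA(\tau)$. Using that $\symPhat \CS^g = \symPhat$ on $\plCThat$ (a direct consequence of $\CS^g \sigma - \sigma \in \wlShat$ for every $\sigma \in \plCThat$), this in turn follows from the equivariance
\begin{equation}\label{eq:equivariance}
\wlCA \CS^g = \CS^g \wlCA \qquad \text{on } \plCT, \quad g \in \Glegs.
\end{equation}

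Establishing (\ref{eq:equivariance}) is the main technical step. I would deduce it from the uniqueness part of Proposition~\ref{prop:twisted:antipode:pl} applied to the conjugated algebra homomorphism $\wlCA^g := (\CS^g)^{-1} \wlCA \CS^g$. Since $\CS^g$ is a Hopf algebra automorphism of $\plCT$ (so that $\cpmwi \CS^g = (\CS^g \otimes \CS^g)\cpmwi$ once we extend $\CS^g$ naturally to $\plCThat$), an algebra automorphism of $\plCThat$ (so that $\CS^g \CM = \CM(\CS^g \otimes \CS^g)$), and fixes the counit $\one^\star$, conjugating the defining relation $\CM(\wlCA \otimes \Id)\cpmwi = \one^\star$ by $\CS^g$ yields $\CM(\wlCA^g \otimes \Id)\cpmwi = \one^\star$; the uniqueness part of Proposition~\ref{prop:twisted:antipode:pl} then forces $\wlCA^g = \wlCA$.

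It remains to verify that the $\symCA$ so constructed satisfies (\ref{eq:definition:twisted:antipode:sym}). As in the proof of Proposition~\ref{prop:twisted:antipode:pl}, I would use surjectivity of $\symP$ and check the identity after precomposition. The compatibility $\cpmwi \symP = (\symP \otimes \symPhat)\cpmwi$ (which holds because $\wlS$ is a Hopf ideal in $\plCT$, as asserted in the lemma preceding the proposition) together with (\ref{eq:identity:twisted:antipode:sym}), multiplicativity of $\symPhat$, and the defining relation for $\wlCA$ give
\[
\CM(\symCA \otimes \Id)\cpmwi \symP = \CM(\symPhat \wlCA \otimes \symPhat)\cpmwi = \symPhat \CM(\wlCA \otimes \Id) \cpmwi = \symPhat \one^\star = \one^\star,
\]
which, together with surjectivity of $\symP$, concludes the proof.
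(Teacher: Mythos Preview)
Your proposal is correct and follows essentially the same approach as the paper, which simply states that the proof is identical to that of Proposition~\ref{prop:twisted:antipode:pl}. You have carried out that mirroring explicitly and, in addition, spelled out the well-definedness step (that $\symPhat\wlCA$ factors through $\symP$) via the $\Glegs$-equivariance $\wlCA\CS^g=\CS^g\wlCA$, which you deduce cleanly from the uniqueness part of Proposition~\ref{prop:twisted:antipode:pl}; this detail is implicit in the paper's one-line proof.
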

\begin{proof}
The proof is identical to the proof of Proposition~\ref{prop:twisted:antipode:pl}.
\end{proof}

Finally, we have the following result.
\begin{lemma}\label{lem:iotasym}
The map $\symiota := \symP \iota : \CT_- \to \symCT$ is an algebra monomorphism and independent of the choice of admissible embedding $\iota : \CT_- \to \plCT$. 
\end{lemma}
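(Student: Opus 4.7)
The fact that $\symiota = \symP \iota$ is an algebra morphism is immediate, as it is the composition of two algebra morphisms. The content of the statement is therefore in the injectivity of $\symiota$ and in the independence of the admissible embedding $\iota$, which I would treat separately.

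For the injectivity, the plan is to produce a one-sided inverse $\bar\pi : \symCT \to \CT_-$ of $\symiota$ by descending the ``forget the legs'' projection. Concretely, the natural projection $\pi : \wCT \to \CT_-$ that removes every leg from a tree (and was already introduced in Section~\ref{sec:ext:reg:strct}) kills the ideal $\wl\CI$ of Definition~\ref{def:wCT}, hence factors through an algebra morphism $\pi : \plCT \to \CT_-$. Since each group element $g \in \Glegs$ acts only by relabelling leg types, one has $\pi \circ \CS^g = \pi$, so $\pi$ vanishes on every generator $\CS^g \tau - \tau$ of $\wlS$ and descends further to an algebra morphism $\bar\pi : \symCT \to \CT_-$. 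The third bullet in Definition~\ref{def:admissible:embedding} (or in fact already the first) ensures $\pi \circ \iota = \Id_{\CT_-}$, which gives $\bar\pi \circ \symiota = \Id_{\CT_-}$ and proves injectivity.

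For the independence, let $\iota_1, \iota_2$ be two admissible embeddings. As both maps and $\symP$ are multiplicative and $\CT_-$ is generated as a unital commutative algebra by $\TT_-$, it suffices to check $\symP \iota_1 \tau = \symP \iota_2 \tau$ for each $\tau \in \TT_-$. By Definition~\ref{def:admissible:embedding} the trees $\iota_1 \tau$ and $\iota_2 \tau$ share the same underlying typed, decorated tree once legs are forgotten, and both carry legs only at nodes of $\CL(\tau)$ with vanishing derivative decoration. The properly-legged condition, in particular Definition~\ref{def:propery:legged}(4), then forces, for each ordered pair $(u,\bar u)$ of distinct nodes in $\CL(\tau)$, the existence of a unique leg $e_i$ in $\iota_i\tau$ with $e_i^\downarrow = u$ and $\bar e_i^\downarrow = \bar u$; by conditions~(1)--(3) its type lies in the fibre $\imap^{-1}(\ft(u),\ft(\bar u))$ and is compatible with the partner involution. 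Consequently $\iota_2\tau$ is obtained from $\iota_1\tau$ by a permutation of leg types that respects both $\imap$ and $\legtype \mapsto \bar\legtype$, i.e.\ by some $g \in \Glegs$, so $\iota_1\tau - \iota_2\tau = \iota_1\tau - \CS^g \iota_1\tau \in \wlS$ and the two images in $\symCT$ agree.

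The only mildly delicate point is the construction of the global element $g \in \Glegs$ realising the change of embedding: on the (finitely many) leg types actually appearing in $\iota_1\tau$ the assignment $g$ is forced and by item~(1) of Definition~\ref{def:propery:legged} it is an injective partial map whose source and target lie in the correct $\imap$-fibres and respect the bar involution, and on the remaining leg types one simply extends $g$ (e.g.\ as the identity on each fibre) while preserving $\imap$ and the involution. This extension is exactly what the definition of $\Glegs$ allows, so no obstruction arises and the argument is complete.
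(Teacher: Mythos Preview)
Your approach matches the paper's: injectivity via $\wlS \subseteq \ker\pi$ (giving the left inverse $\bar\pi$), and independence via the $\Glegs$-action. Two small inaccuracies are worth noting, though neither breaks the argument.

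First, $\pi$ does \emph{not} kill $\wl\CI$: a tree with an unpartnered leg maps to a generally nonzero element of $\CT_-$ once its legs are removed, so $\pi$ does not factor through $\wlCT$. This is harmless because you only need $\pi$ on $\plCT$, and properly legged trees have canonical representatives in $\wCT$ (they lie outside $\wl\CI$ by item~\ref{item:properly:legged:leg_coupling} of Definition~\ref{def:propery:legged}); one simply defines $\pi$ on $\plCT$ tree-by-tree via these representatives, or equivalently as $\pi\circ\wlj$ restricted to $\plCT$. The key inclusion $\wlS\subseteq\ker\pi$ and the identity $\bar\pi\circ\symiota=\Id$ then go through unchanged.

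Second, the parenthetical ``e.g.\ as the identity on each fibre'' is not correct when the sets of leg types appearing in $\iota_1\tau$ and $\iota_2\tau$ differ. The extension to some $g\in\Glegs$ is still always possible: on each $\imap$-fibre the partial map $g_0$ is a bijection between subsets of equal size, hence extends to a permutation of the fibre, and one handles the fibres $\imap^{-1}(\ft,\ft')$ and $\imap^{-1}(\ft',\ft)$ in tandem to preserve the involution $\bar\cdot$.
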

\begin{proof}
The fact that $\symiota$ is independent of the admissible embedding follows directly from the definition. By definition $\symiota$ is a homomorphism of algebras, and the fact that $\symiota$ is one to one follows from the fact that $\wlS \subseteq \ker \pi$. 
\end{proof}

\subsubsection{Hopf algebra isomorphism}

We will now factor out a final ideal from $\symCT$ to obtain a factor Hopf algebra $\pT$ which is isomorphic 
as a Hopf algebra to $\CT_-$. There are two reasons why $\symCT$ is not already isomorphic 
to $\CT_-$. The first is that trees may contain more legs than needed to be properly legged, thus making $\symCT$ larger than $\CT_-$. The second reason is that legs may have non-vanishing derivative decoration, thus the decoration of trees in $\symCT$ is richer than in $\CT_-$.

To tackle the first issue, we denote for any tree $\tau \in \plCT$ by $\plQ \tau$ the tree obtained from $\tau$ by removing all legs which are not needed in order for $\tau$ to be properly legged. More precisely, suppose that $\tau= (T^\fn_\fe,\ft)$ is a properly legged tree. Then there exists (by definition of properly legged trees) for any pair of distinct vertices $u,v \in \CL(\tau)$ a leg $e \in L_\Legtype(\tau)$ such that $e^\downarrow = u$ and $\bar e^\downarrow = v$. We then call $e$ and $\bar e$ \emph{essential} legs, since we can not remove them from $\tau$ if we want the resulting tree to be properly legged. On the other hand, any leg $e \in L_\Legtype(\tau)$ such that either $e^\downarrow \notin \CL(\tau)$ or $\bar e^\downarrow \notin \CL(\tau)$ is called \emph{superfluous}, and we can remove it, together with its partner, while remaining properly legged.
We then set \label{idx:plQ}$\plQ \tau := (\tilde T^\fn_\fe,\ft)$, where $\tilde T \subseteq T$ denotes the subtree of $T$ obtained by removing all superfluous legs, and we extend $\plQ$ to a linear and multiplicative map, so that $\plQ : \plCT \to \adCT$ becomes an algebra homomorphism. (Note that $\plQ$ acts as the identity on the image of any admissible embedding $\iota:\CT_- \to \plCT$.) Composed to the left with the natural projection $\plCT \to \symCT$ and to the right with $\syminv : \symCT \hookrightarrow \plCT$, we obtain an algebra homomorphism $\plQ : \symCT \to \symCT$.

Similarly, we write \label{idx:plZ}$\plZ : \plCT \to \plCT$ for the multiplicative projection that kills trees with non vanishing derivative decoration on legs, formally given by 
\begin{align*}
\plZ(\tau) :=
\bigg\{ 
\begin{split}
	\tau		&\qquad \text{ if } \fe(e) = 0 \text{ for all } e \in L_\Legtype(\tau)\\
	0			&\qquad \text{ otherwise}
\end{split}
\end{align*}
for any tree $\tau \in \plCT$. As before, we use the same symbol for the map $\plZ:\symCT \to \symCT$ given by composing $\plZ$ with the natural projection and the embedding $\syminv$.

Finally, we denote by $\plQz: \plCT \to \plCT$ the multiplicative projection given by \label{idx:plQz}$\plQz = \plQ \plZ$. (Note that the order of the operators matters here.)
With this notation we now have the following
straightforward result, the proof of which is postponed 
to Section~\ref{sec:technical:proofs}. 
\begin{lemma}\label{lem:CTadze:hopf:ideal}
The ideal $\ker \plQz \subset \symCT$ is a Hopf ideal, so that in particular $\pT = \symCT / \ker \plQz$ 
is a Hopf algebra.
\end{lemma}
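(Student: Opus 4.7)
The plan is to identify $\ker \plQz$ as the kernel of an explicit bialgebra morphism $p:\symCT \to \CT_-$, from which the Hopf ideal property follows at once. Since $\plQz$ is an algebra homomorphism, $\ker\plQz$ is automatically an ideal, and $\plQz(\one) = \one$ gives $\epsilon(\ker\plQz) = 0$, so only coproduct compatibility requires work. Define $p$ on each properly legged tree $\tau$ by setting $p(\tau) := \pi(\tau)$ if every leg of $\tau$ carries trivial derivative decoration and $p(\tau) := 0$ otherwise, then extend linearly and multiplicatively. This descends to $\symCT$ because both $\pi(\tau)$ and the "all legs undecorated" condition are invariant under the $\Glegs$-action $\CS^g$. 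On a linear combination $x = \sum_i a_i \tau_i$ of properly legged trees, both $p(x)$ and $\plQz(x)$ depend only on the sub-sum $A$ indexed by those $\tau_i$ whose legs are all undecorated, and $\plQz(x) = \sum_{i\in A} a_i\,\plQ(\tau_i)$ in $\symCT$, $p(x) = \sum_{i\in A} a_i\,\pi(\tau_i)$ in $\CT_-$. These vanish simultaneously: after applying $\pi$, which is well-defined on $\symCT \to \CT_-$ (since $\wlS$ consists of leg-permutation relations killed by $\pi$), any cancellation among the $\plQ(\tau_i)$ descends to a cancellation among the $\pi(\tau_i)$, while conversely any cancellation in $\CT_-$ lifts to $\symCT$ because a properly legged tree with only essential legs is determined, up to $\Glegs$-action, by its $\pi$-image.

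To verify that $p$ is a coalgebra morphism, expand $\cpmh \tau$ via (the $\cpmh$-version of) formula \eqref{eq:coproduct:pl}, so that each term is
\[
\prod_{S \in \forestlegs\CF} S^{\fn_{\forestlegs\CF} + \pi\fe_{\forestlegs\CF}}_\fe \;\otimes\; (T/\forestlegs\CF)^{\fn - \fn_{\forestlegs\CF},\,[\fo]_{\forestlegs\CF}}_{\fe + \fe_{\forestlegs\CF}},
\]
and consider two cases. If $\tau$ has a leg $e$ with $\fe(e) \ne 0$, then either $e \in \forestlegs\CF$ (and both $e,\bar e$ lie in the same component $S$), in which case the left factor contains a tree with a decorated leg; or $e \notin \forestlegs\CF$, in which case $e$ survives in $T/\forestlegs\CF$ still carrying $\fe(e)$. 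Here the key combinatorial observation is that the additional decoration $\fe_{\forestlegs\CF}$ lives on kernel-type edges only and never alters the decoration of a leg. Hence $(p \otimes p)$ annihilates every term, matching $\cpmh p(\tau) = 0$. If on the other hand all legs of $\tau$ carry trivial decoration, then the same holds in every $S$ and in $T/\forestlegs\CF$, and applying $p \otimes p$ to each term replaces each factor by its $\pi$-image. Using that $\pi(\forestlegs\CF) = \CF$, that $\pi$ commutes with contraction of kernel-edge subforests, and that node decorations are unaffected by removing legs, the resulting sum matches, term by term, the standard BHZ formula for $\cpmh \pi(\tau)$; thus $(p \otimes p)\cpmh \tau = \cpmh \pi(\tau) = \cpmh p(\tau)$.

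With $p$ confirmed as a bialgebra morphism and $\ker p = \ker \plQz$, the kernel is a biideal; since $\symCT$ and $\CT_-$ are both Hopf algebras, $p$ automatically intertwines the antipodes, so $\ker \plQz$ is preserved by the twisted antipode and is a genuine Hopf ideal. Consequently $\pT = \symCT/\ker\plQz$ inherits the structure of a Hopf algebra, as claimed. The main obstacle is the bookkeeping in the coproduct expansion: the cleanest way to run the argument is to isolate the fact that $\fe_{\forestlegs\CF}$ never decorates a leg, which simultaneously explains why decorated legs "survive" the coproduct and why the "undecorated" case reduces exactly to the BHZ coproduct on $\pi(\tau)$.
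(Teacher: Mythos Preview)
Your overall strategy—identifying $\ker\plQz$ with the kernel of a bialgebra morphism $p\colon\symCT\to\CTm$—is valid and in fact anticipates Lemma~\ref{lem:hopfiso}, which shows that $\pi\colon\pT\to\CTm$ is a Hopf isomorphism. The paper instead argues intrinsically, using Lemma~\ref{lem:tensor:kernel} to reduce the coideal property to $(\plQz\otimes\plQz)\cpmh=(\plQz\otimes\plQz)\cpmh\plQz$, which it verifies through $(\plZ\otimes\plZ)\cpmh\tau=(\plZ\otimes\plZ)\cpmh\plQz\tau$ by comparing forest expansions. Your route bundles the two lemmas together, at the price of having to check that $p$ is a coalgebra morphism by hand.

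There is, however, an error in your ``key combinatorial observation'': $\fe_{\forestlegs\CF}$ does \emph{not} live on kernel-type edges only. Its support $\partial_{\forestlegs\CF}E(\tau)$ contains any leg $e$ with $e^\downarrow\in N(\CF)$ whose partner $\bar e$ satisfies $\bar e^\downarrow\notin N(S)$ for the component $S\in\bar\CF$ containing $e^\downarrow$; such $e$ becomes a superfluous leg in $T/\forestlegs\CF$, and the coproduct sum does allow $\fe_{\forestlegs\CF}(e)\ne 0$. Hence in your Case~2 it is false that all legs of $T/\forestlegs\CF$ remain undecorated. The repair is immediate (and is precisely the mechanism behind the paper's proof): any term of the forest expansion with $\fe_{\forestlegs\CF}$ nonzero on a leg has a decorated leg in the right factor and is therefore annihilated by $p$ on the right. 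The surviving terms are those with $\fe_{\forestlegs\CF}$ vanishing on all legs, and for these your term-by-term matching with $\cpmh\pi(\tau)$ goes through. Your Case~1 is unaffected, since a leg already carrying $\fe(e)\ne 0$ retains a nonzero decoration under $\fe\mapsto\fe+\fe_{\forestlegs\CF}$.
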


We now recall the projection $\pi:\wCT \to \CT_-$ given on a tree $\tau$ by simply removing all legs from $\tau$, which we naturally view as a projection $\pi:\symT{\adze} \to \CT_-$. 
Conversely, composing the embedding $\symiota:\CT_- \to \symCT$ of Lemma~\ref{lem:iotasym} with the canonical projection $\spadePsym : \symCT \to \pT$ yields an embedding $\CT_- \to \symT{\adze}$.
The next lemma shows that these two maps are actually Hopf algebra isomorphisms.

\begin{lemma}\label{lem:hopfiso}
The maps
\begin{align*}
\pi  : \pT \to \CT_-
\quad\text{ and }\quad
\hopfiso:=\spadePsym \symiota : \CT_- \to \pT 
\end{align*}
are Hopf algebra isomorphisms and one has $\hopfiso = \pi^{-1}$.
\end{lemma}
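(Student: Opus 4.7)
The plan is to verify that $\pi$ descends to a well-defined algebra map $\pT \to \CT_-$ inverse to $\hopfiso$, and then that it intertwines the two coproducts; Hopf-isomorphism of both maps follows automatically.

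First I would check that $\pi$ passes to the quotient $\pT$: since $\pi$ simply strips all legs, it kills any tree with an unpaired leg (so it descends through $\wlCT$), it is invariant under the $\Glegs$-action (so it descends through $\symCT$), and it is unaffected by removing superfluous legs or by derivative decorations on legs (so it descends through $\ker \plQz$). The identity $\pi \circ \hopfiso = \Id_{\CT_-}$ is then immediate from Definition~\ref{def:admissible:embedding}, which stipulates $\pi \iota = \Id$ for any admissible embedding $\iota$.

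The converse $\hopfiso \circ \pi = \Id_{\pT}$ is the main combinatorial point. Given $\bar\tau \in \pT$ with a properly legged representative $\tau \in \plCT$, I want to show $\iota \pi \tau$ and $\tau$ agree in $\pT$ for some (hence any, by Lemma~\ref{lem:iotasym}) admissible embedding $\iota$. Modulo $\ker \plQz = \ker(\plQ \plZ)$ I may assume that both trees carry only essential legs and vanishing derivative decoration on legs. Crucially, trees in $\plCT$ have no extended decoration, so $\hat\CL(\tau) = \emptyset$ and Definition~\ref{def:propery:legged} reduces to the clean statement that essential legs form a perfect pairing of the multiset of ordered pairs in $\CL(\tau) \times \CL(\tau)$ with distinct components. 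Two such matchings on the \emph{same} underlying tree $\pi\tau$ differ by an element of $\Glegs$, so $\tau$ and $\iota\pi\tau$ coincide in $\symCT$, hence in $\pT$. This establishes the set-theoretic bijection, and since both $\pi$ and $\hopfiso$ are algebra morphisms by construction, they are algebra isomorphisms.

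It remains to verify that $\pi$ is a coalgebra morphism, for which I apply $\pi \otimes \pi$ to the explicit coproduct~\eqref{eq:coproduct:pl} on $\plCT$. The summation set $\div(\pi\tau)$ is literally the index set of the $\CT_-$-coproduct of $\pi\tau$; applying $\pi$ to each factor collapses $\forestlegs\CF$ to $\CF$, turns $\prod_{S \in \forestlegs\CF} S^{\dots}_\fe$ into $\prod_{S \in \CF} S^{\dots}_\fe$, and sends $(T/\forestlegs\CF)^{\dots}$ to $(T/\CF)^{\dots}$. Any contribution in which the decoration $\ce_{\forestlegs\CF}$ assigns a non-zero derivative to a leg is killed after projection to $\pT$ by $\plZ$, while contributions in which $\fn_{\forestlegs\CF}$ decorates a leg-endpoint become superfluous-leg configurations killed by $\plQ$; what survives matches term by term the standard coproduct on $\CT_-$ applied to $\pi\tau$. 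Hence $(\pi \otimes \pi)\cpmh = \cpmh \circ \pi$, so $\pi$ is a bijective bialgebra morphism between Hopf algebras, and therefore a Hopf-algebra isomorphism, with inverse $\hopfiso$ also a Hopf-algebra isomorphism. The main technical obstacle is disentangling the various decorations appearing in~\eqref{eq:coproduct:pl} to confirm that exactly the correct terms survive in the quotient $\pT$.
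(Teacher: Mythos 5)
Your proposal is correct and follows essentially the same route as the paper's proof: $\pi\hopfiso=\Id$ by construction, bijectivity via the uniqueness of the properly legged tree over a given $\tau\in\CT_-$ (your $\Glegs$-matching argument is exactly the content of Lemma~\ref{lem:iotasym}, which the paper invokes directly), and the Hopf property read off from the explicit coproduct formulas \eqref{eq:coproduct:pl} and \eqref{eq:coproduct:ex}. You merely spell out in more detail the steps the paper compresses into ``it is not hard to see''.
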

\begin{proof}
By construction the map $\pi\hopfiso$ is the identity on $\CT_-$. 
It is not hard to see that for any tree $\tau \in \CT_-$ the tree $\hopfiso \tau \in \pT$ is the unique tree in $\pT$ with the property that $\pi \sigma = \tau$. 
Note for this that any tree $\sigma\in \pT$ with the property that $\pi\sigma = \tau$ is the image of $\tau$ under an admissible embedding. The claim then follows from Lemma~\ref{lem:iotasym}. 
It follows that $\pi : \symT{\adze} \to \CT_-$ is an algebra isomorphism with inverse given  by $\hopfiso$. The fact that these maps are Hopf algebra isomorphisms follows from the explicit formula for the coproducts in (\ref{eq:coproduct:pl}) and (\ref{eq:coproduct:ex}), respectively.
\end{proof}

It will be useful to introduce the notation $\spadeP := \spadePsym \symP : \plCT \to \pT$ for the canonical projection. Also, for later use we point out that the projection $\plZ$ is also well-defined on $\plCThat$ and $\symCThat$. (Note however that $\plQ$ is not!) Note also that one actually has $\plQz:\plCT \to \adCT$, and $\plQz$ is the identity on $\adCT$.

\subsubsection{Evaluations and characters}
\label{sec:evaluation:characters}
We start by rephrasing the statement of Theorem~\ref{thm:evaluation:trees:largescale} into a form that is more suited to our analysis below. First we introduce the following terminology. We say that a typed set $(A,\ft)$ with $\ft: A \to \Legtype$ is \emph{properly typed} if $\ft$ is injective and such that 
every $a \in A$ has a \emph{partner} $\bar a \in A$ such that $\ft(\bar a)= \overline{\ft(a)}$.
We also call a subset $A \ssq \Legtype$ proper typed if for any $\lt \in A$ one has $\bar\lt \in A$. A multiset $\cm$ with values in $\Legtype$ is properly typed if it is a properly typed set.
Note that $(L_\Legtype(\tau),\ft)$ is properly typed for any properly legged tree $\tau$.
For the next definition we fix $\bar R> \max_{\tau \in \TT_-} \# K(\tau)$, and fix an element $\bar\psi \in \Psi$ such that $\bar\psi_\cm( x_\cm ) = 1$ for any $x_\cm \in \bar\domain^{ \td(\cm) }$ such that $|x_p-x_q|_\fs \le \bar R$ for any $p,q \in \td(\cm)$.

\begin{definition}\label{def:FN}
We denote by $\SN$ the set of all families $(\phi_{\legtype})_{\legtype \in \Legtype}$ of smooth functions
with the property that $\phi_{\legtype}\in \CCinfg$ 
and $\phi_{\legtype} = \phi_{\bar\legtype}(- \cdot)$ for any $\legtype \in \Legtype$. We also impose that $\phi_\legtype$ is supported in a centred scaled ball or radius $\bar R$ around the origin.
Any such family $\phi\in\SN$ determines an element $\hat\phi \in \Psi$ given by
\begin{align}\label{eq:Upsilon:bar:phi:psi}
\hat\phi_{A} ( x_A ):=
\bar\psi_A (x_A)
\prod_{ 
	\{\lt,\bar \lt\} \ssq A 
}
\phi_{ \lt }(x_{\lt} - x_{\bar \lt}),
\end{align}
for properly typed set $A \ssq \Legtype$.
We simply set $\hat\phi_{\cm}:=0$ if $\cm$ is not not properly typed.
\end{definition}
In (\ref{eq:Upsilon:bar:phi:psi}) there is one factor for leg-type $\lt$ and its partner $\bar \lt$. Note that the right-hand side is well-defined, since $\phi_{\lt}=\phi_{\bar\lt}(-\cdot)$. 
Without the function $\psi$ one would not have $\hat\phi \in \Psi$, since $\hat\phi$ would not be compactly supported in the differences of its arguments. 
Note however that $\bar \psi$ plays no role in the definition of the evaluation $\evaln{\eta}{\hat \phi}{R}$, since by (\ref{eq:bar:Upsilon:large:scale}) and (\ref{eq:CK}) only the function
\begin{align*}
(x_u) _{u \in \LTN} \mapsto \int dx_{\LTE(\tau)} \hat\phi(x_{\LTE(\tau)}) \prod_{e \in \LTE(\tau)} \delta_0( x_e - x_{e^\downarrow})
\end{align*}
enters the definition of $\evaln{\eta}{\hat\phi}{R}$, and by the support properties of $\phi_\lt$ this expression does not depend on the choice of $\bar\psi$ for any properly legged tree $\tau$. 
Finally, $\bar R$ is chosen such that one can find a tuple $\phi \in \SN$ with the property that (\ref{eq:identity:remove:cutoff}) holds (recall that the truncated integration kernels $K_\ft$ are supported in the centred ball of radius $1$).
We first have the following consequence of Theorem~\ref{thm:evaluation:trees:largescale}.
\begin{corollary}\label{cor:evaluation:trees:largescale}
Let $\tau\in \wTex$ be a properly legged tree, and let $\phi \in \SN$. Then for any $\eta \in \SM_\infty^\star$ the evaluation $R \mapsto \bar\Upsilon^{\eta,\hat\phi}_R \tau$ extends continuously to $\CK_0^+$, and the evaluation $(\eta,R) \mapsto \hat \Upsilon^{\eta,\hat\phi}_R \tau$ defined in (\ref{eq:evaluation:largescale:renorm:2}) extends continuously to the space $\SM_0^\star \times \CK^+_0$.  
\end{corollary}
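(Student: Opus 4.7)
The plan is to reduce this corollary to a direct application of Theorem~\ref{thm:evaluation:trees:largescale}. It suffices to check its combinatorial hypothesis $\deg_\infty\CP<0$ for every tight partition $\CP$ of $N(\tau)$, and this will follow from Lemma~\ref{lem:super:regularity:implies:large:scale:bound} once I verify that any properly legged tree satisfies the hypotheses of that lemma. Indeed, Definition~\ref{def:propery:legged} already imposes $\homofancyex{\tau}\le 0$, while items~\ref{item:properly:legged:leaf_coupling} and~\ref{item:properly:legged:leaf_hat_coupling} of that same definition guarantee that every vertex in $\CL(\tau)\cup\hat\CL(\tau)$ carries at least one leg, so that $\CL(\tau)\cup\hat\CL(\tau)\subseteq\LTN(\tau)$ holds automatically.

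Thus, for any properly legged $\tau$ with $\pi\tau\notin\CV_0$, Lemma~\ref{lem:super:regularity:implies:large:scale:bound} yields $\deg_\infty\CP<0$ uniformly over tight partitions, whence Theorem~\ref{thm:evaluation:trees:largescale}, together with the uniform bound \eqref{eq:bound:evluation:largescales}, immediately gives the claimed continuity of both $R\mapsto\bar\Upsilon^{\eta,\hat\phi}_R\tau$ and $(\eta,R)\mapsto\hat\Upsilon^{\eta,\hat\phi}_R\tau$ on $\CK^+_0$ and $\SMstarz\times\CK^+_0$ respectively.

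The delicate case, which I expect to be the main obstacle, is $\pi\tau\in\CV_0$; this is precisely the borderline configuration identified at the end of the proof of Lemma~\ref{lem:super:regularity:implies:large:scale:bound} (two noise-type leaves, $\fancynorm{\pi\tau}_\fs=0$), for which one tight partition saturates at $\deg_\infty\CP=0$. Here the properly legged structure produces a unique leg-pair joining the two noise-type leaves, so that $\hat\phi$ contains a factor $\phi_\legtype(x_\legtype-x_{\bar\legtype})$ which is compactly supported in the position-difference of these two nodes. Changing variables to this difference reduces $\bar\Upsilon^{\eta,\hat\phi}_R\tau$ to the integral of $\phi_\legtype$ against a convolution of the kernels $K_\ft+R_\ft$ whose large-scale behaviour is controlled by the self-similarity asserted in Assumption~\ref{ass:kernelhomo}; a standard convolution estimate then gives the desired continuity as $R\to\hat K-K$. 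For the renormalised evaluation, Assumption~\ref{ass:CVz} contributes the identity $g^\eta(\pi\tau)=0$ for every shifted smooth $\eta$, so that in $M^{g^\eta}\tau$ the potentially divergent top-level contribution vanishes and the continuity of $\hat\Upsilon^{\eta,\hat\phi}_R\tau$ reduces to continuity of $\bar\Upsilon^{\eta,\hat\phi}_R$ applied to $\tau$ itself and to the strictly smaller sub-symbols produced by $\cpm\tau$, to which the generic case of the corollary already applies.
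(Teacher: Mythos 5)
Your reduction of the generic case is exactly the paper's intended argument: a properly legged tree has $\homofancyex{\tau}\le 0$ and, by items~\ref{item:properly:legged:leaf_coupling} and~\ref{item:properly:legged:leaf_hat_coupling} of Definition~\ref{def:propery:legged}, satisfies $\CL(\tau)\cup\hat\CL(\tau)\subseteq\LTN(\tau)$, so Lemma~\ref{lem:super:regularity:implies:large:scale:bound} gives $\deg_\infty\CP<0$ for every tight partition and Theorem~\ref{thm:evaluation:trees:largescale} applies verbatim. You also correctly identify that the only obstruction is the case $\pi\tau\in\CV_0$.

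The gap is in your treatment of that exceptional case. The ``standard convolution estimate'' you invoke is precisely what fails there: for $\pi\tau\in\CV_0$ the tight partition with $P^\star=\CL(\tau)$ has $\deg_\infty\CP=0$, which means that even with $x_u-x_v$ pinned inside the support of $\phi_\legtype$, the integral over the \emph{internal} vertices diverges logarithmically as the large-scale cutoff is removed (think of $\int dw\,\hat K(x_u-w)\hat K(x_v-w)$ with two kernels of homogeneity $-\shalf$: the integrand decays only like $|w|^{-|\fs|}$). The compact support of $\hat\phi$ in the leg variables does not control the internal nodes, so continuity of $R\mapsto\bar\Upsilon^{\eta,\hat\phi}_R\tau$ on all of $\CK_0^+$ cannot be obtained this way. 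This is not a fixable detail of your write-up: the paper itself states in Remark~\ref{rmk:large:scale:V0} that the conclusion of Lemma~\ref{lem:super:regularity:implies:large:scale:bound} fails for such trees, and it does \emph{not} claim full continuity for them. Instead it uses Assumption~\ref{ass:CVz} to conclude that the evaluation vanishes identically for $R=0$, and then exploits the exact self-similarity of $\hat K$ to exhibit a \emph{particular} sequence $R_n\to\hat K-K$ along which the (vanishing) evaluation trivially converges; this weaker statement is what is actually used later (e.g.\ in the proof of Lemma~\ref{lem:fnknbound}). Your appeal to $g^\eta(\pi\tau)=0$ goes in the right direction, but it only kills the counterterm in $M^{g^\eta}\tau$, not the term $\bar\Upsilon^{\eta,\hat\phi}_R\tau$ itself, and the vanishing of the latter at $R=0$ does not by itself yield continuity in $R$ on $\CK_0^+$. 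So either the corollary must be read as excluding trees with $\pi\tau\in\CV_0$ (which is how the paper uses it), or the exceptional case must be handled by the vanishing-plus-homogeneity argument of Remark~\ref{rmk:large:scale:V0} rather than by a convolution bound.

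One further small point: item~\ref{item:properly:legged:leaf_coupling} only forces legs at the noise-carrying nodes when $\#\CL(\tau)\ge 2$, and item~\ref{item:properly:legged:leaf_hat_coupling} only when $\CL(\tau)\ne\emptyset$; the degenerate cases $\#\CL(\tau)\le 1$ should at least be mentioned (they are harmless, since then no tight partition can have non-negative degree for the reasons given in the proof of Lemma~\ref{lem:super:regularity:implies:large:scale:bound}).
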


We will abuse notation a bit and simply write $\hat \Upsilon^{\eta,\phi}_R \tau := \hat \Upsilon^{\eta, \hat\phi}_R \tau$ and similarly for $\bar \Upsilon^{\eta,\phi}_R$, $\eval{\phi}{R}$ and $\eva{\phi}$.
Given a smooth noise $\eta\in\SM_\infty^\star$, an element $\phi \in \SN$ and a large-scale kernel assignment  $R\in\CK^+_0$, we want to define a character $\wlchar{g^{\eta,\phi}_R}$
 on $\plCT$ which is defined analogously to the BPHZ character $g^\eta_\BPHZ$, but where the kernel assignment in the evaluations is replaced by $K+R$, and where we introduce a cutoff according to $\phi$.

To this 	end we first define a character on $\plCThat$ by linearly and  multiplicatively extending the evaluation $\bar \Upsilon^{\eta,\phi}_R$.
We then define a character $\wlchar{g^{\eta,\phi}_R}$ on $\plCT$ via the identity
\begin{align}\label{eq:g:wl:eta:psi:R}
\wlchar{g^{\eta,\phi}_R}(\tau) := \bar \Upsilon ^{\eta,\phi}_R \wlCA\tau.
\end{align}

Let \label{idx:SNsym}$\SN_{\sym}$ be defined as the set of families $\phi \in \SN$ which are invariant under $\Glegs$ in the sense that $\phi_{g(\legtype)} = \phi_\legtype$ for any $g \in \Glegs$.
This definition ensures that one has the identity
\[
\bar \Upsilon^{\eta,\phi}_R \CS^{g} = \bar \Upsilon^{\eta,\phi}_R
\]
for every $g \in \Glegs$, and hence the character $\bar \Upsilon^{\eta,\phi}_{R}$ vanishes on the ideal $\wlShat$ which we used in Definition~\ref{def:symCT} to define the factor algebra $\symCThat$. It follows that $\bar \Upsilon^{\eta,\phi}_{R}$ is well-defined on $\symCThat$ for any $\phi \in \SN_\sym$, and thus we can define a character $\wlchar{g^{\eta,\phi}_R}$ on $\symCT$ via the identity
\begin{align}\label{eq:g:sym:eta:psi}
\wlchar{g^{\eta,\phi}_R}(\tau) := \bar\Upsilon^{\eta,\phi}_R \symCA \tau.
\end{align}
(Comparing this with (\ref{eq:g:wl:eta:psi:R}) and Proposition~\ref{prop:twisted:antipode:sym} one has $\wlchar{g^{\eta,\phi}_R} \symP = \wlchar{g^{\eta,\phi}_R}$ on $\plCT$.)

The following lemma shows the relation between the characters $\wlchar{g^{\eta,\phi}_0}$ on $\plCT$ on the one hand, and the usual BPHZ character $g^\eta_\BPHZ$ on $\CT_-$ on the other hand.
\begin{lemma}\label{lem:identity:giota:g}
Let $\phi\in \SN$ and assume that for any $\legtype \in \Legtype$ one has that $\phi _\legtype=1$ in a large enough neighbourhood of the origin. Then for any $\eta\in\SM_\infty$ one has the identity
\begin{align}\label{eq:identity:giota:g}
\wlchar{g^{\eta,\phi}_0} = g^\eta \pi  \zeroP
\qquad \text{ and }\qquad
\wlchar{g^{\eta,\phi}_0} \iota = g^\eta_\BPHZ
\end{align}
on $\plCT$ and $\CT_-$, respectively, for any admissible embedding $\iota:\CT_- \to\plCT$.

Moreover, one has
\begin{align}\label{eq:identity:eval}
\evaln{\eta}{\phi}{R} M^{g^\eta} \wlj
=
\evaln{\eta}{\phi}{R} M^{\wlchar{g^{\eta,\phi}_0}}
\end{align}
on $\plCThat$. Here on the left hand side we view $\CGm \ssq \wCG$ as in Section~\ref{sec:ext:reg:strct}.
\end{lemma}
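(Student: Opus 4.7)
The plan is to exploit the key analytic fact that, for $\phi$ as assumed with $\phi_\legtype \equiv 1$ on a neighbourhood of the origin of diameter exceeding $\bar R$, the cutoff function $\hat\phi$ is identically $1$ on the support of $\prod_{e \in K(T)} K_{\ft(e)}(x_{e^\downarrow}-x_{e^\uparrow})$ for every tree $T$ appearing in our computations. Indeed, each truncated kernel $K_\ft$ is supported in the unit ball, so the kernel product forces all node positions to lie within scaled distance $\# K(T) < \bar R$ of each other, where $\hat\phi$ is constant; in particular $D^{k} \phi_\legtype \equiv 0$ on this support for any multi-index with $|k| > 0$. This will reduce the new evaluations $\bar\Upsilon^{\eta,\phi}_0$ to the standard $\Upsilon^\eta$ composed with the leg-removal projection $\pi$, up to vanishing when legs carry nonzero derivative decoration.

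First I would prove the second identity $\wlchar{g^{\eta,\phi}_0} \iota = g^\eta_\BPHZ$ on $\CT_-$. Fix $\sigma \in \CT_-$ and let $\tau := \iota\sigma \in \plCT$. By \eqref{eq:identity:twisted:antipode:wl:2} we have $\wlchar{g^{\eta,\phi}_0}(\tau) = \bar\Upsilon^{\eta,\phi}_0 \wlPhat\wCA \wlj \tau$. Since $\iota$ attaches only legs with vanishing derivative decoration and the coproduct on $\wCThat$ never introduces new derivative decorations on existing legs, every tree $T$ appearing in $\wCA \wlj \iota\sigma$ carries only zero-derivative-decoration legs. By the analytic observation, $\bar\Upsilon^{\eta,\phi}_0 T = \Upsilon^\eta \pi T$, so extending multiplicatively yields $\wlchar{g^{\eta,\phi}_0}(\iota\sigma) = \Upsilon^\eta \pi \wCA \wlj \iota\sigma$. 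It remains to check that $\pi \wCA \wlj \iota = \antipode$ as algebra morphisms $\CT_- \to \hat\CT_-$, which follows by induction using the uniqueness of the twisted antipode: both sides are algebra morphisms satisfying the defining recursion $\CM(\,\cdot\, \otimes \Id) \cpmi = \one^*$ on $\CT_-$, since the compatibility $\cpmi \pi = (\pi \otimes \pi)\cpmwi$ holds on $\wlj\iota(\CT_-)$ and $\pi \wlj \iota = \Id$. Hence $\wlchar{g^{\eta,\phi}_0}\iota = \Upsilon^\eta \antipode = g^\eta_\BPHZ$.

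For the first identity, both sides are algebra homomorphisms on $\plCT$, so it suffices to check on trees $\tau \in \plCT$. If some leg $e$ of $\tau$ satisfies $\fe(e) \ne 0$, every term in $\wCA \wlj \tau$ retains the same decoration on $e$, so the analytic observation forces $\bar\Upsilon^{\eta,\phi}_0$ to vanish on each term; on the other side, $\plZ\tau = 0$, so the projection to $\pT$ kills $\tau$ and the right-hand side vanishes also. If instead $\tau$ has vanishing derivative decoration on every leg, then under our choice of $\phi$ the cutoffs of any superfluous legs contribute only factors of $\phi_\legtype = 1$ on the support of the integrand, so $\bar\Upsilon^{\eta,\phi}_0 T$ depends on $T$ only through $\plQz T$, and we may identify $\tau$ (up to superfluous legs) with $\iota'(\pi\tau)$ for some admissible embedding $\iota'$; the previous paragraph then gives $\wlchar{g^{\eta,\phi}_0}(\tau) = g^\eta_\BPHZ(\pi\tau)$, matching the right-hand side.

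Finally, identity \eqref{eq:identity:eval} follows by expanding both sides as $(g^\eta \otimes \Id) \cpm \wlj \tau$ in $\wCThat$ and $(\wlchar{g^{\eta,\phi}_0} \otimes \Id) \cpm \tau$ in $\plCThat$, with the latter coproduct given by \eqref{eq:coproduct:pl}. The extended character $g^\eta \in \wCG$ vanishes on any forest in $\wCT$ containing a leg, while by the first identity $\wlchar{g^{\eta,\phi}_0}$ applied to a forest in $\plCT$ either vanishes (when some factor has a nonzero derivative on a leg) or equals $g^\eta$ applied to the leg-stripped forest. Since the coproduct \eqref{eq:coproduct:pl} uses $\forestlegs\CF$, which keeps each leg grouped with its partner, stripping legs from the left factor of each term in the $\plCThat$-expansion produces precisely the forests of $\pi\tau$ that contribute nontrivially on the $\wCThat$ side. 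The main obstacle, and the crux of the matching, is the careful bookkeeping of the residual data on the right factor: one must verify that the additional legs appearing in $\forestlegs\CF$ on the right factor of the $\plCThat$-coproduct are evaluated under $\evaln{\eta}{\phi}{R}$ to factors that coincide with those appearing on the right factor of the $\wCThat$-coproduct after the corresponding legs have been absorbed, which follows from the naturality of $\hat\phi$ and of the cumulant structure under the splitting of noises across a contraction.
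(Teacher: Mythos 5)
Your overall strategy is the same as the paper's: use the transparency of the cutoff on the support of the truncated kernels to reduce $\bar\Upsilon^{\eta,\phi}_0$ to $\Upsilon^\eta\pi$, push this through the twisted antipode to obtain the character identities, and match forest expansions for \eqref{eq:identity:eval}. There are, however, two genuine gaps.

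First, the claim that ``the coproduct on $\wCThat$ never introduces new derivative decorations on existing legs'' is false, and the intermediate identity $\pi\,\wCA\,\wlj\,\iota=\antipode$ that you deduce from it does not hold as stated. In the forest expansion \eqref{eq:coproduct:pl} the decoration $\fe_{\forestlegs\CF}$ ranges over $\partial_{\forestlegs\CF}E(\tau)$, which contains every leg attached to a node of the contracted forest whose partner lies \emph{outside} that forest; such legs acquire nonzero derivative decorations in the right factor, and hence trees with derivative-decorated legs do appear in $\wlCA\,\wlj\,\iota\sigma$. Since $\pi$ merely deletes legs rather than killing such trees, $\pi\,\wCA\,\wlj\,\iota$ collects one extra term for each choice of decoration on those legs and fails the defining recursion of $\antipode$, so your induction does not close. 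The repair is to insert the multiplicative projection $\zeroPhat$ killing trees with nonzero derivative decoration on legs: the correct analytic input is $\bar\Upsilon^{\eta,\phi}_0=\Upsilon^\eta\,\pi\,\zeroPhat$, and the identity to prove inductively is $\pi\,\zeroPhat\,\wlCA=\antipode\,\pi\,\zeroP$, which does go through because $(\pi\zeroP\otimes\pi\zeroPhat)(\cpmwi-\Id\otimes\one)=(\cpmi-\Id\otimes\one)\pi\zeroP$. The same imprecision propagates into your treatment of general $\tau\in\plCT$ in the third paragraph.

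Second, your argument for \eqref{eq:identity:eval} stops exactly where the work begins. The two sides involve \emph{different} coproducts: on the left, $M^{g^\eta}\wlj$ uses the coproduct of $\wCThat$, whose contracted forests contain no legs and whose right factors therefore contain legs separated from their partners (possibly with nonzero decoration); on the right, the coproduct of $\plCThat$ contracts $\forestlegs\CF$, dragging each leg along with its partner. The substance of the statement is the combinatorial identity $(\qNL\otimes\Qlegz)\cpm\wlj=(\pi\plZ\otimes\Id)\cpm$ on $\plCThat$, together with the observation that unpartnered legs with zero decoration in the right factor are invisible to $\evaln{\eta}{\phi}{R}$ (because $\phi\equiv 1$ near the origin) while those with nonzero decoration annihilate the evaluation. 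Asserting that the residual bookkeeping ``follows from the naturality of $\hat\phi$ and of the cumulant structure'' defers precisely this term-by-term comparison of the two forest expansions, which is the crux and must be carried out.
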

\begin{proof}
Let $\phi\in \SN$ be such that for any $\legtype \in \Legtype$ the test function $\phi_\legtype$ is $1$ in a neighbourhood 
of the origin which is large enough so that one has for any smooth noise $\eta\in\SM_\infty$ the identity
\begin{align}\label{eq:identity:remove:cutoff}
\bar\Upsilon^{\eta,\phi}_0  \tau = \Upsilon^{\eta} \pi \tau
\end{align}
for any tree $\tau \in \plCThat$ with the property that the derivative decoration $\fe$ vanishes on the set of legs $L_\Legtype(\tau)$. 

We first show (\ref{eq:identity:giota:g}). Note that with the same arguments that shows (\ref{eq:identity:remove:cutoff}) it also follows that one has $\bar\Upsilon^{\eta,\phi}_0 \tau=0$ for any tree $\tau \in \plCThat$ with the property that the derivative decoration $\fe$ does not vanish identically on the set of legs. Writing $\zeroP : \plCT \to \plCT$ and $\zeroPhat : \plCThat \to \plCThat$ for the multiplicative projections onto the respective subalgebra generated by trees $\tau \in \plCT$ and $\tau \in \plCThat$ respectively, with the property that the decoration $\fe$ vanishes identically on the set of legs of $\tau$, the two previous observations are equivalent to the identity
\begin{align}\label{eq:identity:remove:cutoff:2}
\bar \Upsilon^{\eta,\phi}_0 = \bar \Upsilon^{\eta,\phi}_0  \zeroPhat = \bar \Upsilon^\eta \pi  \zeroPhat
\end{align}
on $\plCThat$.

Noting that one has $\pi \zeroP \iota = \Id$ on $\CT_-$, we are left to show that $\wlchar{g^{\eta,\phi}_0} = g^\eta \pi  \zeroP$ on $\plCT$, which, with the aid of~(\ref{eq:identity:remove:cutoff:2}) and the definition of the respective character, follows once we show the identity
\[
\pi  \zeroPhat \wlCA = \tilde\CA_-^\ex \pi  \zeroP
\]
on $\plCT$. In order to see this, apply the operator on either side of this identity to some tree $\tau$ and proceed inductively in the number of edges of $\tau$. We then have the identities
\begin{align*}
\tilde\CA_-^\ex \pi  \zeroP \tau
&=
-\CM( \tilde\CA_-^\ex \otimes \Id) (\cpmi - \Id \otimes \one)
	\pi  \zeroP \tau \\
&=
-\CM(  \tilde\CA_-^\ex \otimes \Id ) (\pi  \zeroP  \otimes \pi  \zeroPhat )
	( \cpmwi - \Id \otimes \one)  \tau  \\
&=- \pi  \zeroPhat  \CM ( \wlCA \otimes \Id ) ( \cpmwi - \Id \otimes \one) \tau =  \pi  \zeroPhat  \wlCA \tau\;,
\end{align*}
and the claim follows.

We now show (\ref{eq:identity:eval}). Denote by $\qNL: \wCT \to \CTm$ the algebra homomorphism such that, for any tree $\tau \in \wCT$ one has $\qNL \tau = \pi \tau$ if $L_\Legtype(\tau) = \emptyset$ and $\qNL\tau = 0$ otherwise. Let furthermore denote by $\Qleg : \wCT \to \wCT$ the multiplicative projection which on a tree $\tau \in\wCT$ acts by removing all legs $e \in L_\Legtype(\tau)$ without a partner and such that $\fe(e) = 0$, and set $\Qlegz := \wlPhat \Qleg$. Then one has
\begin{align}\label{eq:noname:1}
\evaln{\eta}{\phi}{R} M^{g^\eta} \wlj
=
(g^\eta \qNL \otimes \evaln{\eta}{\phi}{R}) \cpmh \wlj 
=
(g^\eta \qNL \otimes \evaln{\eta}{\phi}{R} \Qlegz) \cpmh \wlj 
\end{align}
on $\plCThat$. The first equality is a consequence of the embedding $\CGm \ssq \wCG$, compare in Section~\ref{sec:ext:reg:strct}. The second equality is a consequence of the fact that the only legs $e$ appearing in the right component of this tensor product which do not have a partner are such that there exists a leg $\bar e$ with $\ft(\bar e) = \overline{\ft(e)}$ and $e^\downarrow = \bar e^\downarrow$. Since $\phi=1$ in a neighbourhood of the origin, if the derivative decoration of these legs is zero they do not contribute to the evaluation $\evaln{\eta}{\phi}{R}$, while in case that the derivative decoration does not vanish they kill the evaluation $\evaln{\eta}{\phi}{R}$. Either way, inserting the projection $\Qlegz$ does not change (\ref{eq:noname:1}). 

Next we note that one has
\begin{align}\label{eq:noname:2}
\evaln{\eta}{\phi}{R} M^{\wlchar{g^{\eta,\phi}_0}}
=
(g^\eta \pi \plZ \otimes \evaln{\eta}{\phi}{R}) \cpmh 
\end{align}
on $\plCThat$, where we used (\ref{eq:identity:giota:g}), and combining (\ref{eq:noname:1}) and (\ref{eq:noname:2}) we are left to show that
\begin{align}\label{eq:identity:eval:1}
(\qNL \otimes \Qlegz) \cpm \wlj 
=
(\pi \plZ \otimes \Id) \cpm
\end{align}
on $\plCThat$.

For this we use the forest expansion of $\cpm$  on $\wCThat$ given by (\ref{eq:coproduct:ex}) and on $\plCThat$ given by (\ref{eq:coproduct:pl}). First, due to the projection $\qNL$ on the right hand side of (\ref{eq:identity:eval:1}) the first sum in (\ref{eq:coproduct:ex}) can be restricted to $\CF \in \div(\pi\tau)$. The sum over all polynomial decorations is already identical, but (\ref{eq:coproduct:ex}) include a sum over edge decoration put on legs $e \in L_\Legtype(\underline\CF)$, where $\ul\CF$ is as in (\ref{eq:forests:legs}). Any term where $\fe_\CF$ does not vanish on such legs gets killed by $\Qlegz$, so that we can restrict the sum  over $\fe_{\CF}$ in (\ref{eq:coproduct:ex}) to $\fe_{\ul\CF}$ as in (\ref{eq:coproduct:pl}). 

Now fix $\CF \in \div(\pi\tau)$ and decorations $\fn_{\ul\CF}$ and $\fe_{\ul\CF}$.
We show that
\begin{align}\label{eq:noname:3}
\prod_{S \in \CF}  S^{\fn_{\ul\CF} + \pi\ce_{\ul\CF}}_\fe
\otimes
\Qlegz (T/\CF)^{\fn-\fn_{\ul\CF},  [\fo]_{\forestlegs\CF}}
_{\fe + \fe_{\ul\CF}}
=
\prod_{S \in \forestlegs\CF} \pi \plZ S^{\fn_{\forestlegs\CF} + \pi\ce_{\forestlegs\CF}}_\fe
\otimes
(T/\forestlegs\CF)^{\fn-\fn_{\forestlegs\CF},  [\fo]_{\forestlegs\CF}}
_{\fe + \fe_{\forestlegs\CF}},
\end{align}
which concludes the proof.

It there exists a leg $e \in L_\Legtype(\ul\CF)$ with $\fe(e)\ne 0$ then both sides of (\ref{eq:noname:3}) vanish. On the other hand, if $\fe(e)=0$ for all legs $e \in L_\Legtype(\ul\CF)$, then one can remove the projection $\plZ$ from the right hand side of (\ref{eq:noname:3}), which then becomes
\[
\prod_{S \in \CF} S^{\fn + \pi\ce_{\forestlegs\CF}}_\fe
\otimes
(T/\forestlegs\CF)^{\fn_{\forestlegs\CF}-\fn_{\forestlegs\CF},  [\fo]_{\forestlegs\CF}}
_{\fe + \fe_{\forestlegs\CF}},
\]
so that we are left to show that
\[
\Qlegz (T/\CF)^{\fn-\fn_{\ul\CF},  [\fo]_{\forestlegs\CF}} _{\fe + \fe_{\ul\CF}}
=
(T/\forestlegs\CF)^{\fn_{\forestlegs\CF}-\fn_{\forestlegs\CF},  [\fo]_{\forestlegs\CF}}
_{\fe + \fe_{\forestlegs\CF}},
\]
which is a consequence of the definition of $\Qlegz$.
\end{proof}


\subsection{An analytic result}
\label{sec:constraints:analytic}

In this section we are going to show an analytic result, Proposition~\ref{prop:eps:beta:bound}, which 
we will then use as a black box in the next section. 
Our goal is to study how the evaluations $\barg \Upsilon^{\eta,\phi}_R \tau$ for $\phi \in \SN$ behave when the smooth functions $\phi_\legtype$ for $\legtype \in \Legtype$ are rescaled to small scales. More concretely, assume that we are given a degree assignment $\deg : \Legtype \to \R_-\cup \{\redef\}$ that is invariant under conjugation.  For any family $\phi \in \SN$ we define a rescaled family $\phi^\eps \in \SN$ by setting
\begin{align}\label{eq:phi:rescale}
\phi_\legtype^\eps := 
\begin{cases}
	 \eps^{2\deg(\legtype)} \SS^{[\eps]} \phi_\legtype 	
			\quad &\text{ if }\deg(\legtype) \in \R_- \\
	 \phi_\legtype									\quad &\text{ if }\deg(\legtype)=\redef
\end{cases}
\end{align}
for any $\legtype \in \Legtype$ and $\eps>0$. Here, we define the rescaling operator $\SS^{[\eps]}$ by setting
\[
(\SS^{[\eps]}\varphi) (x):=
\varphi
		 ( \eps^{-\fs} x)
\]
for any $\varphi \in \CC_c^\infty(\bar\domain)$.

We will now describe a particular way to choose degree assignments $\deg : \Legtype \to \R_-\cup\{ \redef \}$. 
We fix an arbitrary homogeneity assignment $\boxnorm\cdot _\fs$ on $\FL_-$ with the property that one has $|\Xi|_\fs < \boxnorm\Xi _\fs < \fancynorm\Xi _\fs$ for any noise type $\Xi \in \FL_-$. For any set $\LTa \ssq \Legtype$ of leg types which is closed under conjugation we define a degree assignment $\LTdeg : \Legtype \to \R_-\cup\{ \redef \}$ by setting
\begin{align} \label{eq:definition:deg:I}
\LTdeg ( \legtype ) := 
\frac{
	\frac{1}{2}(
		\boxnorm{\imap_1(\legtype)}_\fs + \boxnorm{\imap_2(\legtype)}_\fs
	)
}
{-\frac{1}{2}+\sqrt{\frac{1}{4} + \# \LTa}}
\qquad\text{ if } \legtype \in \LTa,
\end{align}
and $\LTdeg( \legtype ):=\redef$ if $\legtype \notin \LTa$. The factor in (\ref{eq:definition:deg:I}) is chosen in such a way that one has for any tree $\tau \in \adCT$ the identity
\begin{align} \label{eq:definition:deg:I:explain}
\sum_{\legtype \in \LTa} \LTdeg(\legtype) = \sum_{e \in L(\tau)} \boxnorm{\ft(e)},
\end{align}
with $\LTa = \{ \ft(e) : e \in L_\Legtype(\tau) \}$ the set of leg types appearing in $\tau$.
Let us sketch the argument why (\ref{eq:definition:deg:I:explain}) is true. Since $\tau \in \adCT$ there are 
no superfluous legs in $\tau$, so that $\# \LTa = \# L(\tau) (\# L(\tau)-1)$. It follows that the denominator 
in (\ref{eq:definition:deg:I}) is simply given by $\# L(\tau) - 1$, which is equal to  $\{ e \in L_\Legtype(\tau) : e^\downarrow =u \}$
 for any $u \in \CL(\tau)$, and one has
\[
\sum_{\legtype \in \LTa} \LTdeg (\legtype) = 
\sum_{u \in L(\tau)}
	\sum_{e \in L_\Legtype(\tau) : e^\downarrow = u^\downarrow}
		\frac{\boxnorm{\ft(u)}_\fs}
			{\# L(\tau) - 1} 
=\sum_{e \in L(\tau)} \boxnorm{\ft(e)}.
\]

More generally, assume that we are given a system $\LTsys$ of non-empty, disjoint subsets of $\Legtype$ such that each $\LTa \in \LTsys$ is invariant under conjugation (we allow $\LTsys = \emptyset$, but we impose $\emptyset \notin \LTsys$). Then we define a degree assignment $\LTsysdeg$ by setting
\begin{align}\label{eq:definition:deg:FJ}
\LTsysdeg := \sum_{\LTa \in \LTsys} \LTdeg,
\end{align}
with the convention that $\alpha + \redef := \alpha$ for any $\alpha \in \R_- \cup \{ \redef \}$. We write $\LTsyss$ for the set of all systems $\LTsys$ as above.

We define for any $\LTsys \in \LTsyss$, any smooth tuple $\phi \in \SN$, and any large-scale kernel assignment $R \in \CK_0^+$ a character $\Lchar{\phi}{\LTsys}{R}$ on $\plCT$ by setting
\begin{align}\label{eq:h:CI:phi:R}
\Lchar{\phi}{\LTsys}{R}\tau := 
	- \sum_{\LTa \in \LTsys} \eval{\phi}{R} \LTp \tau
\end{align}
for any tree $\tau \in \plCT$, and extending this linearly and multiplicatively. Here, we introduce the linear (but not multiplicative!) projections \label{idx:LTp}$\LTp : \plCT \to \plCT$ onto the subspace of $\plCT$ spanned by all trees $\tau \in \adCT$ with the property that $\ft(L_\Legtype(\tau)) = \LTa$. In analogy to above we write $\Lch := \Lchar{ \phi }{ \LTsys }{ \hat K - K }$. 

\begin{remark}
We could make $P_\LTa$ multiplicative without changing (\ref{eq:h:CI:phi:R}). However, we will later on introduce the notation $P_\LTsys$, where $\LTsys$ is a system of subsets of $\LT$, in a similar way, projecting onto a subspace spanned by products of trees, see (\ref{eq:delta:PFI}). At this point we really want to consider the linear and not multiplicative projection, so we choose the definition introduced above to be consistent.
\end{remark}

The goal of the present section is to obtain bounds on the quantity
\[
(\Lche
	\otimes
    \eva{ \phi^\eps }
)
	\cpm
	\tau
\]
as $\eps \to 0$ for any $\tau \in \adCT$, where $\phi^\eps$ is defined as in (\ref{eq:phi:rescale}) for the degree assignment $\LTsysdeg$. 
\begin{example}
Let $\LTsys:= \{ \{\treehlega,\treehlegb\} , \{\treehlegc, \treehlegd\} \}$ and consider the following example from the generalised KPZ equation
\begin{multline*}
(\Lche \otimes \eva{\phi^\eps}) \cpm \treeExampleKPZh
=
\eva{ \phi^\eps } \treeExampleKPZh
-
\eva{\phi^\eps }\treeExampleKPZhL
\,
 \eva{\phi^\eps} \treeExampleKPZhR
\\
-
\sum_{\te \in \{ \treehlegd,\treehlege,\treehlegf,\treehlegg \}}
\eva{ \phi^\eps }
\treeExampleKPZhLd
\, 
\eva{ \phi^\eps }
D_\te \treeExampleKPZhR.
\end{multline*}
Here $D_{\treehlegd}$, etc., is a shortcut for putting a derivative decoration $(0,1)$ on the respective edge.
We also colour a node $u$ blue if it contains an $X$-decoration, i.e. $\fn(u) = (0,1)$.
One has $\# \{\treehlega,\treehlegb\} = 2$ so that (\ref{eq:definition:deg:I}) gives $\LTsysdeg(\treehlega) = -\shalf-\kappa$.
It follows that 
\[
\eva{\phi^\eps }\treeExampleKPZhL \simeq \eps^{-1-2\kappa}
\,, \qquad
\eva{\phi^\eps }\treeExampleKPZhLd \simeq \eps^{-2\kappa}
\]
as $\eps \to 0$. Note that the counter-terms cancel out precisely the subdivergence in the big tree. The fact that one has $\{\treehlegc, \treehlegd\} \in \LTsys$ changes nothing, since there is no subtree of negative homogeneity in the image of $P_{\{\treehlegc, \treehlegd\}}$.
\end{example}

There are some technical subtleties in the proof that require us to put certain assumptions on the test tuple $\phi$ in order for good bounds to hold, and we summarise these assumptions in the following definition. 
\begin{definition}\label{def:SN:FP}
Given a system $\LTsys \in \LTsyss$, and $\delta>0$, we define the set $\SN(\LTsys,\delta) \ssq \SN$ as the set of $\phi \in \SN$ such that both of the following properties hold for any $\legtype \in \Legtype$.
\begin{itemize}
\item If $\legtype \notin \bigsqcup \LTsys$, then one has that $\phi_\legtype = 0$ in the $\delta$-ball of the origin.
\item If $\LTsysdeg \legtype \le -\shalf$, then one has that $\int \phi_\legtype (x) dx = 0$.
\end{itemize}
We write $\SN(\LTsys)$ for the union of $\SN(\LTsys,\delta)$ over $\delta>0$.
\end{definition}

\def\CW{\Legtype}
\def\cw{\legtype}

Let us briefly comment why these assumptions will play a role later on. The first assumption ensures that under rescaling $\phi$ as in (\ref{eq:phi:rescale}) all subtrees $\sigma$ of a tree $\tau \in \adCT$ that trigger a divergence have the property that $\ft(L_\Legtype(\sigma)) \in \LTsys$. Without this assumption, one would have to consider additionally any subtree $\sigma$ with the property that $\ft(L_\Legtype(\sigma)) $ can be written as $\ft(L_\Legtype(\sigma))  = \bigsqcup_{i \le m} \LTa_i$ for some sets $\LTa_i \in \LTsys$. In particular, this assumption means that we never have to deal with nested divergencies. The second assumption above simply ensures that the test functions $\phi^\eps$ converge to $0$ in the distributional sense under the rescaling (\ref{eq:phi:rescale}). One always has $2\deg^\FJ \cw > -|\fs|-1$ for any $\cw \in \CW$, which follows from the assumption that $\homofancys{\Xi} \ge -\shalf$ for any $\Xi \in \FL_-$.

In order to state the next result, we need a final piece of notation. Let $\tau \in \adCT$ be a tree. Given $M \ssq \CL(\tau)$ we denote by $\LT(\tau, M)$ the set of leg types $\ft(e) \in \LT$ with $e\in L_\LT(\tau)$ such that both $e$ and $\bar e$ are incident to $M$. (Note in particular that $\LT(\tau,M)$ is closed under conjugation.) 
We also write \label{idx:Lambda(tau)}$\Lambda(\tau)$ for the set of all systems $\CM$ of disjoint, non-empty subsets of $\CL(\tau)$. (Note that one has $\emptyset \in \Lambda(\tau)$, but for any $\CM \in \Lambda(\tau)$ one has $\emptyset \notin \CM$.)
\begin{definition}\label{def:II:tau}
We write $\LTsyss(\tau)$ for the set of all $\LTsys \in \LTsyss$ of the form
\begin{align}\label{eq:II:tau}
\LTsys = \{ \LT(\tau,M) : M \in \CM \}
\end{align}
for some $\CM \in \Lambda(\tau)$.
\end{definition}

With these notations, we will show the following statement.

\begin{proposition}\label{prop:eps:beta:bound} 
Let $\tau \in \adCT$ be an admissible tree, let $\LTsys \in \LTsyss(\tau)$, and let $\phi \in \SN(\LTsys)$ be a tuple of smooth functions. Define the rescaled family $\phi^\eps$ as in (\ref{eq:phi:rescale}) for the degree assignment $\LTsysdeg$ defined in (\ref{eq:definition:deg:FJ}). Let finally $R,R' \in \{0, \hat K-K \}$ be large scale kernel assignments. Then, there exists $\beta>0$ such that one has the bound
\begin{align}\label{eq:bound:eps:beta}
\big|
\big(
\Lchar{\phi}{\LTsys}{R}
	\otimes
	\eval{\phi^\eps}{R'}
\big )
\cpmwi
	\tau
\big| 
\lesssim \eps^\beta
\end{align}
uniformly over $\eps>0$.
\end{proposition}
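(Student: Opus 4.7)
The strategy is to combine a coproduct expansion with a scaling analysis driven by the degree assignment $\LTsysdeg$, and to verify that the counterterms packaged in $\Lchar{\phi}{\LTsys}{R}$ precisely cancel the would-be divergences generated by the rescaling (\ref{eq:phi:rescale}). First I would expand $\cpmwi\tau$ using the forest formula (\ref{eq:coproduct:pl}) as a sum over $\CF\in\div(\pi\tau)$, where each connected component $S\in\ul\CF$ appears in the left-hand tensor slot (with a decoration shift) and is contracted to a single node in the right-hand slot. For each such term, the evaluation $\eval{\phi^\eps}{R'}$ in the right-hand slot distributes the $\eps$-scaling over the legs of the contracted tree, while $\Lchar{\phi}{\LTsys}{R}$ acts on the left-hand factor by the recursive formula derived from (\ref{eq:h:CI:phi:R}) and the twisted antipode (\ref{eq:definition:twisted:antipode:wl}).

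The second step is a localisation argument: since $\phi\in \SN(\LTsys,\delta)$ vanishes near the origin for any leg type $\legtype\notin\bigsqcup\LTsys$ and such $\phi_\legtype$ is left unscaled by (\ref{eq:phi:rescale}), only forests whose components $S$ satisfy $\ft(L_\Legtype(S))\in\LTsys$ can produce a small-scale singularity as $\eps\to 0$. All other forest contributions are uniformly bounded in $\eps$ by Theorem~\ref{thm:evaluation:trees:largescale}. Together with the assumption $\LTsys\in\LTsyss(\tau)$, this matches the forests that produce divergences exactly with the index set of the sum in (\ref{eq:h:CI:phi:R}), so that the counterterm structure of $\Lchar{\phi}{\LTsys}{R}$ is tailored to these forests and only these.

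Third, I would establish the cancellation of the leading-order divergence. For a forest component $S$ with $\ft(L_\Legtype(S))=\LTa\in\LTsys$, the key identity to exploit is the definition of $\wlCA$ through (\ref{eq:definition:twisted:antipode:wl}), which when combined with (\ref{eq:h:CI:phi:R}) shows that $\Lchar{\phi}{\LTsys}{R}$ evaluated on $S$ with the decoration shift it inherits from the coproduct equals (up to sign) the singular part that $\eval{\phi^\eps}{R'}$ extracts at scale $\eps$; the identity (\ref{eq:definition:deg:I:explain}) is what makes the scaling balance check out. The residual after this cancellation is then estimated by a Taylor-expansion-with-remainder on the contracted node, using that $\boxnorm{\Xi}_\fs>|\Xi|_\fs$ to gain a strictly positive exponent. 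The second clause in Definition~\ref{def:SN:FP}, namely $\int\phi_\legtype=0$ whenever $\LTsysdeg(\legtype)\le-\shalf$, is what guarantees that marginal (log-divergent) contributions also produce a positive power of $\eps$ rather than a logarithm: expanding the contracted kernel to one more order kills a constant against $\int\phi_\legtype$ and leaves a remainder with an additional factor $\eps^\kappa$.

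Fourth, to turn these pointwise cancellations into the uniform bound (\ref{eq:bound:eps:beta}) I would use the Feynman-diagram/sector-decomposition machinery of \cite[Thm.~4.3]{Hairer2017}, proceeding exactly as in the proof of Theorem~\ref{thm:evaluation:trees:largescale} sketched in Section~\ref{sec:Feynman-diagrmas}, but with the additional scale $\eps$ carried by the rescaled legs. The large-scale behaviour is controlled through $R,R'\in\CK_0^+$ just as in Theorem~\ref{thm:evaluation:trees:largescale}, while the small-scale bounds become summable thanks to the strictly positive gain established in the previous step. The main obstacle will be organising the bookkeeping when several components $S_1,\ldots,S_m$ of the forest simultaneously carry distinct label sets $\LTa_1,\ldots,\LTa_m\in\LTsys$: because the sets in $\LTsys$ are disjoint, these divergences do not nest, so the counterterms factorise and it suffices to verify the cancellation for each component separately — but care is needed to ensure that the Taylor remainders on different components combine to yield a single common positive exponent $\beta>0$ independent of $\eps$.
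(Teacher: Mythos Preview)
Your proposal captures the essential mechanism --- the counterterms in $h$ cancel exactly the divergences that the rescaling (\ref{eq:phi:rescale}) produces, and the non-nesting of divergences (disjointness of the sets in $\LTsys$, combined with the vanishing of $\phi_\legtype$ near $0$ for $\legtype\notin\bigsqcup\LTsys$) is what makes the simple formula (\ref{eq:h:CI:phi:R}) suffice. However, your step~3 contains a confusion: the character $h$ is \emph{not} built from the twisted antipode $\wlCA$. Its definition (\ref{eq:h:CI:phi:R}) is simply $h(\sigma) = -\sum_{\LTa\in\LTsys}\eval{\phi}{R}P_\LTa\sigma$, so on a single tree $S$ with $\ft(L_\LT(S))=\LTa$ one just has $h(S) = -\eval{\phi}{R}S$. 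The real content of the cancellation is that this naive subtraction \emph{happens to coincide} with the full BPHZ subtraction on the class of diagrams that arise, and justifying this coincidence --- rather than deriving it from some antipode identity --- is the heart of the matter.

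The paper's route differs from yours in two substantive ways. First, rather than chasing the $\eps^\beta$ gain through Taylor remainders, the paper reduces Proposition~\ref{prop:eps:beta:bound} to a \emph{uniform} bound (Proposition~\ref{prop:eps:beta:bound:2}): one shifts the auxiliary homogeneities $\boxnorm{\cdot}_\fs$ down by $\beta$, observes that $\eps^{-\beta/2}\phi^\eps$ then stays uniformly bounded in the norm $\|\cdot\|_{\LTsys,\beta}$, and concludes. This cleanly decouples the (easy) gain-of-$\beta$ step from the (hard) cancellation step. Second, for the uniform bound the paper embeds $\plCThat$ into the Feynman-diagram algebra $\hat\CH_-$ of \cite{Hairer2017} via an explicit map $\CV$, transports $h$ to a character on $\CH_-$, and proves (Lemma~\ref{lem:g:full:h}) that this character \emph{equals} the BPHZ character $g^\full$ on the relevant diagrams --- using exactly the support and moment constraints in Definition~\ref{def:SN:FP} that you correctly singled out. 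The bound then follows directly from \cite[Thm.~4.3]{Hairer2017} as a black box. Your direct approach is not wrong in principle, but it would force you to re-prove by hand, tree by tree, the BPHZ-type estimate that \cite{Hairer2017} already provides; the Feynman embedding is precisely the device that lets one avoid this.
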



We will show Proposition~\ref{prop:eps:beta:bound} by applying the results of \cite{Hairer2017}, which ultimately comes down to comparing the character $h$ with the BPHZ character for a suitable space of Feynman diagrams. Since this proof is largely technical, we postpone it to Appendix~\ref{sec:proof:of:eps:beta:bound}.

\subsection{The ideal \texorpdfstring{$\CJ$}{J} is a Hopf ideal}
\label{sec:constraints:ideal}

We will construct an ideal $\wlCJ$ in $\plCT$ that is related to the ideal $\CJ$ given in Definition~\ref{def:CH} via the projection $\pi$ (Lemma~\ref{lem:CJ:wlCJ}). 
We will work below with the space $\linspace{\SN}$ of formal linear combinations of elements of $\SN$. The notation (\ref{eq:Upsilon:bar:phi:psi}) can be linearly extended
to an operator $\hat\cdot : \linspace{\SN} \to \Psi$, where $\Psi$ is as in Definition~\ref{def:Psi}. 

\begin{remark}
For a fixed properly typed set $A \ssq \Legtype$ the set $\{ \hat\phi_{A} : \phi \in \linspace\SN \}$ is dense in $\barCCAg$ (say with respect to the topology of $\CC^k$, for any $k>0$). Note however that the definition of $\hat\phi$ puts non trivial constraints between  $\hat\phi_A$ and $\hat\phi_B$ whenever $A \ssq B$.
\end{remark}

\begin{definition}
Given a linear combination $\phi \in \linspace{\SN}$, say $\phi = \sum_{i\le r}c_i \phi_i$ with $c_i \in \R$ and $\phi_i \in \SN$, we define the character $\eva{\phi}$ on $\plCThat$ by setting
\[
\eva{\phi} \tau 
:=
\sum_{i \le r}
c_i \eva{\phi_i} \tau,
\]
for any tree $\tau \in \plCThat$, and extending this linearly and multiplicatively.
\end{definition}

We now fix a partition $\Lpart$ of the set of leg types $\LT$ with the property that for any fixed $P \in \Lpart$ the noise type $\ft(P):=\imap_1(\lt) \in \FL_-$ does not depend on the representative $\legtype \in P$.
We then introduce the following terminology. 
\begin{definition}\label{def:good:trees}
We call a tree $\tau \in \plCT$ or $\tau \in \plCThat$ \emph{good} if there exists an injection $\zeta: \CL(\tau) \to \Lpart$ with the property that for any leg $e \in L_\LT(\tau)$ with $e^\downarrow \in \CL(\tau)$ one has $\ft(e) \in \zeta(e^\downarrow)$.
We write $\plCT[\Lpart]$ and $\plCThat[\Lpart]$ for the subalgebras generated by good trees.
\end{definition}

We can (and will) assume without loss of generality that $\LT$ is large enough and $\Lpart$ is such that there exists an admissible embedding $\iota: \CT_- \to \plCT$ mapping any tree $\tau\in \CT_-$ onto a good tree $\iota\tau \in \plCT$. We also note that these subalgebras are stable under the coproduct, namely one has
\[
\cpmh : \plCT[\Lpart] \to \plCT[\Lpart] \otimes \plCT[\Lpart]
\]
and 
\[
\cpm : \plCThat[\Lpart] \to \plCT[\Lpart] \otimes \plCThat[\Lpart].
\]
Moreover, the following simple lemma will be helpful, which contains the motivation for the preceding definition.
\begin{lemma}
Assume that $\tau ,\bar\tau \in\adCT$ are good trees such that $\ft(L_\LT(\tau)) = \ft(L_\LT(\bar\tau))$. Then $\LTsyss(\tau) = \LTsyss(\bar \tau)$, where $\LTsyss(\tau)$ is as in Definition~\ref{def:II:tau}.
\end{lemma}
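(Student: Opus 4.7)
The plan is to exploit the goodness hypothesis to build a canonical bijection $\vartheta: \CL(\tau) \to \CL(\bar\tau)$ that intertwines the assignments $M \mapsto \LT(\tau,M)$ and $\bar M \mapsto \LT(\bar\tau,\bar M)$, and then to observe that this forces the images of $\Lambda(\tau)$ and $\Lambda(\bar\tau)$ under the assignment of Definition~\ref{def:II:tau} to coincide.

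First I would unpack the structure. Let $\zeta_\tau : \CL(\tau) \to \Lpart$ denote the injection provided by Definition~\ref{def:good:trees}, and let $P : \LT \to \Lpart$ send a leg type to the unique block of $\Lpart$ containing it. Since $\tau$ lies in the image of an admissible embedding we have $\LTN(\tau) = \CL(\tau)$ and $\tau$ carries no superfluous legs; in particular, for every $\lt \in \ft(L_\LT(\tau))$ there is, by property (i) of Definition~\ref{def:propery:legged}, a \emph{unique} leg $e$ of type $\lt$, and its lower endpoint $\psi_\tau(\lt) := e^\downarrow$ lies in $\CL(\tau)$ with $\zeta_\tau(\psi_\tau(\lt)) = P(\lt)$ by goodness. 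This yields the identity $\psi_\tau(\lt) = \zeta_\tau^{-1}(P(\lt))$, so that $\psi_\tau(\lt)=\psi_\tau(\lt')$ iff $P(\lt)=P(\lt')$; moreover, by property (iv) of Definition~\ref{def:propery:legged}, whenever $\#\CL(\tau)\ge 2$ every leaf is incident to at least one leg, so $\psi_\tau$ is surjective onto $\CL(\tau)$ with image $\zeta_\tau^{-1}(P(\ft(L_\LT(\tau))))$.

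Applying the same construction to $\bar\tau$ and invoking the hypothesis $\ft(L_\LT(\tau)) = \ft(L_\LT(\bar\tau))$, I would get two surjections $\psi_\tau,\psi_{\bar\tau}$ from the common set $\ft(L_\LT(\tau))$ onto $\CL(\tau)$ and $\CL(\bar\tau)$ respectively, both inducing the same equivalence relation (``lying in the same block of $\Lpart$''). They therefore descend to the same quotient, giving a unique bijection $\vartheta : \CL(\tau) \to \CL(\bar\tau)$ with $\vartheta\circ\psi_\tau = \psi_{\bar\tau}$; concretely, $\vartheta = \zeta_{\bar\tau}^{-1}\circ\zeta_\tau$, which is well defined because the two injections share the image $P(\ft(L_\LT(\tau)))$ in $\Lpart$.

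Finally I would check that $\vartheta$ intertwines the combinatorial data. For $M\subseteq \CL(\tau)$ and $\lt\in\ft(L_\LT(\tau))$, one has $\lt\in\LT(\tau,M)$ iff $\psi_\tau(\lt),\psi_\tau(\bar\lt)\in M$, iff $\psi_{\bar\tau}(\lt),\psi_{\bar\tau}(\bar\lt)\in\vartheta(M)$, iff $\lt\in\LT(\bar\tau,\vartheta(M))$; hence $\LT(\tau,M) = \LT(\bar\tau,\vartheta(M))$. Since $\vartheta$ is a bijection, $\CM \mapsto \{\vartheta(M) : M\in\CM\}$ is a bijection $\Lambda(\tau)\to\Lambda(\bar\tau)$ (disjointness and non-emptiness are preserved), and under it the systems \eqref{eq:II:tau} for $\tau$ and $\bar\tau$ correspond, which gives $\LTsyss(\tau) = \LTsyss(\bar\tau)$. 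The argument is essentially bookkeeping and I do not anticipate any real obstacle; the only mild points to handle are the degenerate cases $\#\CL(\tau)\le 1$, where both trees carry no legs and both sides reduce to $\{\emptyset\}$, and the implicit uniqueness of $\zeta_\tau$, which follows from goodness together with the fact that each leaf carries at least one leg.
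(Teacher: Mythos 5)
Your proof is correct and follows essentially the same route as the paper: the paper likewise observes that the hypothesis forces the injections $\zeta_\tau$ and $\zeta_{\bar\tau}$ from Definition~\ref{def:good:trees} to have the same range, takes $\vartheta = \zeta_{\bar\tau}^{-1}\circ\zeta_\tau$ as the induced bijection $\CL(\tau)\to\CL(\bar\tau)$, and concludes via the induced bijection $\Lambda(\tau)\to\Lambda(\bar\tau)$. Your version simply spells out the intermediate bookkeeping (the maps $\psi_\tau$, the intertwining of $\LT(\tau,M)$ with $\LT(\bar\tau,\vartheta(M))$) that the paper leaves implicit.
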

\begin{proof}
Let $\zeta:\CL(\tau) \to \Lpart$ and $\bar\zeta:\CL(\bar \tau) \to \Lpart$ be the injections used in the definition of good trees. Observe that the condition of the lemma implies that $\zeta$ and $\bar\zeta$ have the same range, so that $\eta:= \bar\zeta^{-1} \circ \zeta$ defines a bijection from $\CL(\tau)$ to $\CL(\bar\tau)$. This induces a bijection from $\Lambda(\tau)$ to $\Lambda(\bar\tau)$, and the result follows immediately from Definition~\ref{def:II:tau}.
\end{proof}

With this notation, we define the following ideals.

\begin{definition}\label{def:wlCJ}
We define $\wlCJ \subseteq \plCT$ as the ideal generated by all $\sigma\in\plCT[\Lpart]$ 
with the property that 
\begin{equ}\label{eq:wlCJ}
\eva{\phi} \plQz\sigma=0\;,
\qquad \text{ for any } \phi\in \linspace{\SN},
\end{equ}
with $\plQz$ as in Lemma~\ref{lem:CTadze:hopf:ideal}. We also let $\wlCJad \ssq \plCT$ denote the ideal generated by all $\sigma \in \plCT[\Lpart] \cap \adCT$ such that (\ref{eq:wlCJ}) is satisfied.

Finally, we define $\wlCJhat \ssq \plCThat$ as the ideal generated by all $\sigma \in \plCThat[\Lpart]$  with the property that 
\begin{equ}\label{eq:wlCJhat}
\eva{\phi} \sigma=0
\qquad \text{ for any } \phi\in \linspace{\SN}.
\end{equ}
\end{definition}

Note that these ideals depend on $\Lpart$, but we think of $\Lpart$ as fixed from now on and hide this dependence in the notation. 
One has $\plQz \wlCJ = \plQz \wlCJad$, and since $\spadeP = \spadeP \plQz$ on $\plCT$, one has the identity
\begin{align}\label{eq:identity:adzeCJ}
\adzeCJ := \spadeP \wlCJ = \spadeP \wlCJad
\end{align}
as ideals on $\pT$.

\begin{remark}
We use $\plQz \sigma$ instead of just $\sigma$ in (\ref{eq:wlCJ}) to ensure that (\ref{eq:identity:adzeCJ}) holds. If $\eva{\phi}(\tau+\tilde \tau)=0$ for two trees $\tau,\tilde\tau$ and all $\phi \in \linspace{\SN}$, we easily infer that $\tau$ and $\tilde\tau$ contain the same leg types (unless the evaluation vanishes on both trees individually), but there is no reason for $\tau$ and $\tilde\tau$ to contain the same ``essential leg types'' (i.e.\ the set of types of essential legs), so that there is no obvious relation between $\plQz\tau$ and $\plQz\tilde\tau$. We cannot use $\plQz$ in (\ref{eq:wlCJhat}), since this projection is not well-defined on $\plCThat$. In particular is does \emph{not} hold that $\wlCJ = \wlcol{\p_-} \wlCJhat$. However, if $\tau= \sum_{i}c_i \tau_i \in \wlCJhat$ is a linear combination of trees with $|\tau_i|_- <0$, and we know a priori that all the $\tau_i$ contain the same essential leg types, then we can conclude that $\wlcol{\p_-} \tau \in \wlCJ$.
\end{remark}

We use $\phi \in \linspace{\SN}$ in the preceding definition rather than $\phi \in \SN$ so that Lemma~\ref{lem:ideal:generators} can be applied,
which ensures that the ideals $\wlCJ$ and $\wlCJhat$ are generated by linear combinations of trees, see Lemma~\ref{lem:wlCJ:generators}.
 Note that if $\sigma \in \plCT$ or $\sigma \in \plCThat$ is a linear combination of trees, then (\ref{eq:wlCJ}) for all $\phi \in \SN$ is equivalent to (\ref{eq:wlCJ}) for all $\phi \in \Vec \SN $. More generally, one has the following.

\begin{lemma}\label{lem:consistently:legged}
Let $\sigma \in \plCThat$ (respectively $\sigma \in \plCT$) be of the form
\begin{align}\label{eq:consistently:legged}
\sigma = \sum_{i \le r} c_i \prod_{j \le m} \tau_{i,j}
\end{align}
for some collection of trees $\tau_{i,j} \in \plCThat$ (respectively $\tau_{i,j} \in \plCT$) and some $m,r\ge 1$. Assume that the multisets $\fm_{i,j}:=[L_\Legtype(\tau_{i,j}), \ft]$ of leg types have the following two properties.
\begin{enumerate}
\item \label{item:consisitently:legged:1}
For fixed $j \le m$ the trees $\tau_{i,j}$ contain the same leg types for any $i \le r$, i.e.\ one has that $\fm_j := \fm_{i,j}$ is independent of $i \le r$.
\item \label{item:consistently:legged:2}
Any leg type appears at most once, i.e.\ one has that $\sum_{j \le m} \fm_j \le 1$ (in other words, the multisets $\fm_j$ are really sets and one has $\fm_j \cap \fm_k = \emptyset$ for any $j \ne k$).
\end{enumerate}
If (\ref{eq:wlCJhat}) (resp. (\ref{eq:wlCJ})) holds for $\sigma$ for any $\phi \in \SN$, then one has $\sigma \in \wlCJhat$ (respectively $\sigma \in \wlCJ$). 
\end{lemma}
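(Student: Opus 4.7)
The plan is to reduce the conclusion $\sigma \in \wlCJhat$ (resp.\ $\sigma \in \wlCJ$) to showing that \eqref{eq:wlCJhat} (resp.\ \eqref{eq:wlCJ}) holds in the stronger sense of vanishing against \emph{every} $\phi \in \linspace{\SN}$, and to derive this from the hypothesis (vanishing for each individual $\phi \in \SN$) by a polarisation argument. The $\plCT$ case follows from the $\plCThat$ case once one notes that $\plQz$ is multiplicative, that each $\plQz\tau_{i,j}$ is again properly legged, and that its leg-type multiset is a subset of $\fm_j$, so that conditions~\ref{item:consisitently:legged:1} and~\ref{item:consistently:legged:2} are preserved upon passing to $\plQz\sigma$. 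I therefore focus on $\sigma \in \plCThat$.

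Writing an arbitrary $\phi = \sum_{k=1}^{r'} a_k \phi^{(k)} \in \linspace{\SN}$ and using that $\eva{\phi}$ is multiplicative on basis products of $\plCThat$ and linear on trees, I would compute
\[
\eva{\phi}\sigma = \sum_i c_i \prod_{j=1}^m \Bigl(\sum_k a_k \eva{\phi^{(k)}}\tau_{i,j}\Bigr) = \sum_{\mathbf{k}\colon [m] \to [r']} \Bigl(\prod_j a_{k_j}\Bigr) \sum_i c_i \prod_j \eva{\phi^{(k_j)}}\tau_{i,j}.
\]
It then suffices to exhibit, for each fixed multi-index $\mathbf{k}$, a single element $\psi^{\mathbf{k}} \in \SN$ with the property that $\eva{\phi^{(k_j)}}\tau_{i,j} = \eva{\psi^{\mathbf{k}}}\tau_{i,j}$ for all pairs $(i,j)$; the innermost sum then collapses to $\eva{\psi^{\mathbf{k}}}\sigma$, which vanishes by hypothesis, and the proof concludes.

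The construction of $\psi^{\mathbf{k}}$ is where conditions~\ref{item:consisitently:legged:1} and~\ref{item:consistently:legged:2} are essential. The evaluation $\eva{\phi}\tau_{i,j}$ depends on $\phi$ only through its values on leg types lying in $\fm_j$ (condition~\ref{item:consisitently:legged:1} ensures that this leg-type multiset is independent of $i$), while the $\fm_j$'s are pairwise disjoint by condition~\ref{item:consistently:legged:2}. Hence the prescription $\psi^{\mathbf{k}}_\lt := \phi^{(k_j)}_\lt$ for $\lt \in \fm_j$ (with any admissible choice for $\lt \notin \bigsqcup_j \fm_j$) is unambiguous, and it indeed defines an element of $\SN$: the partner constraint $\psi_\lt = \psi_{\bar\lt}(-\cdot)$ is automatic since each $\fm_j$ is closed under conjugation---as the leg-type multiset of a properly legged tree, it is properly typed---and each $\phi^{(k_j)}$ already obeys this relation.

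The main point requiring care is exactly this construction of $\psi^{\mathbf{k}}$: one must verify both the consistency of the prescription across different $j$'s and the partner-compatibility inherent in $\SN$, and conditions~\ref{item:consisitently:legged:1} and~\ref{item:consistently:legged:2} are precisely what makes this go through---without condition~\ref{item:consistently:legged:2} the constraints imposed by different $j$'s could overlap inconsistently, and without condition~\ref{item:consisitently:legged:1} the independence of $\eva{\phi}\tau_{i,j}$ on $i$ could fail for a given $j$. The only auxiliary point is to identify $\sigma$ (or $\plQz\sigma$) as a direct generator of the relevant ideal, which requires it to lie in $\plCThat[\Lpart]$ (resp.\ $\plCT[\Lpart]$); this is automatic in the intended applications since the $\tau_{i,j}$ therein are good trees by construction.
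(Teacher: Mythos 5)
Your proposal is correct and follows essentially the same route as the paper's proof: the multilinear expansion of $\eva{\phi}\sigma$ over a multi-index $\mathbf{k}\colon[m]\to[r']$ and the reassembly of each cross-term into $\eva{\psi^{\mathbf{k}}}\sigma$ for a single $\psi^{\mathbf{k}}\in\SN$ built by gluing the $\phi^{(k_j)}$ along the disjoint sets $\fm_j$ is exactly the paper's construction of $\phi^\alpha$. Your additional remarks on the partner-compatibility of $\psi^{\mathbf{k}}$ and on reducing the $\plCT$ case via $\plQz$ are fine details the paper treats more tersely.
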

\begin{proof}
We only show the statement for $\wlCJ$, the one for $\wlCJhat$ follows in the same way.
Assume without loss of generality that $\plQz \sigma = \sigma$. Let $l_0 \ge 1$ and let $\phi=\sum_{l \le l_0} \gamma_l \phi^l$ for some $\gamma_l \in \R$ and $\phi^l \in \SN$ for any $l \le l_0$. Given a finite sequence $\alpha : [m] \to [l_0]$, we define the tuple $\phi^\alpha \in \SN$ by setting
\begin{align*}
\phi^\alpha_\cw
:=
\begin{cases}
	\phi^{\alpha_j}_\cw 	&\qquad \text{ if } \cw \in \fm_j ,\, j \le m \\
	0						&\qquad \text{otherwise}
\end{cases}
\end{align*}
for any $\cw \in \CW$.
Note that this is well-defined, since it follows from point~\ref{item:consistently:legged:2}. of our assumptions that the relation $\cw \in \fm_j$ holds for at most one $j \le m$. It follows from a simple application of the binomial expansion and the representation (\ref{eq:consistently:legged}) that one has the identity
\begin{multline*}
\bar\Upsilon^{\phi} \sigma
=
\sum_{i \le r} c_i
\prod_{j \le m}
\sum_{l \le l_0}
\gamma_l
\eva{\phi^l} \tau_{i,j}
=
\\
\sum_{i \le r} c_i
\sum_{\alpha : [m] \to [l_0]}
\prod_{j \le m}
\gamma_{\alpha_j}
\eva{\phi^{\alpha_j}} \tau_{i,j}
=
\sum_{\alpha : [m] \to [l_0]}
\eva{\phi^\alpha}
\sigma.
\end{multline*}
In the last equality we used that one has $\eva{\phi^{\alpha_j}} \tau_{i,j} = \eva{\phi^{\alpha}} \tau_{i,j}$ for any $i \le r$ and $j \le m$.
\end{proof}

The next lemma is crucial since it shows that the ideals $\wlCJ$, $\wlCJhat$ and $\wlCJad$ are generated by linear combinations of trees.
\begin{lemma}\label{lem:wlCJ:generators}
The ideals $\wlCJ$ ($\wlCJhat$, $\wlCJad$) are generated by all $\sigma \in \plCT[\Lpart]$ ($\sigma \in \plCThat[\Lpart]$, $\sigma \in \plCT[\Lpart] \cap \adCT$), such that $\sigma$ can be written as a linear combination of good trees and such that (\ref{eq:wlCJ}) holds for any $\phi \in \SN$.
\end{lemma}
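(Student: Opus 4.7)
My plan is to prove the statement for $\wlCJ$ in detail; the cases of $\wlCJhat$ and $\wlCJad$ proceed along the same lines. Denote by $\wlCJ^\star \subseteq \wlCJ$ the ideal generated by linear combinations of good trees that satisfy \eqref{eq:wlCJ} for every $\phi \in \SN$. The inclusion $\wlCJ^\star \subseteq \wlCJ$ is immediate, and since \eqref{eq:wlCJ} is linear in $\phi$ (so the condition on $\phi \in \SN$ coincides with that on $\phi \in \linspace{\SN}$), it remains to show that every generator $\sigma \in \plCT[\Lpart]$ of $\wlCJ$ lies in $\wlCJ^\star$.

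The first step is to decompose $\plQz\sigma$ by leg-type content. Expanding $\plQz\sigma = \sum_k c_k \prod_{j} \tau_{k,j}$ in a basis of products of good trees, I group summands by the unordered multiset $\beta_k = (\ft(L_\LT(\tau_{k,j})))_{j}$ of leg-type sets, giving $\plQz\sigma = \sum_\beta \rho_\beta$. For $\phi \in \SN$ the evaluation $\eva{\phi}(\tau)$ of a good tree $\tau$ depends only on the components $\phi_\lt$ with $\lt \in \ft(L_\LT(\tau))$, and $\eva{\phi}$ is multiplicative across products. By choosing tuples $\phi \in \SN$ whose individual components $\phi_\lt$ are supported on prescribed subsets of leg types (cf.\ Definition~\ref{def:SN:FP}) and using the resulting linear independence of the multilinear monomials $\prod_\lt \phi_\lt$ that arise across different $\beta$'s, I will show that $\eva{\phi}\rho_\beta = 0$ individually for each $\beta$. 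This reduces the task to the case where $\plQz\sigma$ has a single fixed leg-type tuple $\beta = (\fm_j)_j$.

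Within this leg-type-homogeneous regime, I will apply the $\Glegs$-action (with elements compatible with $\Lpart$) to relabel the leg types so that the sets $\fm_j$ become pairwise disjoint. This is permissible because $\Glegs$ acts on $\plCT$ by Hopf-algebra automorphisms preserving both \eqref{eq:wlCJ} and the notion of ``good tree'', so the problem of expressing $\sigma$ as an element of $\wlCJ^\star$ is unaffected by this relabeling. After relabeling, $\plQz\sigma$ satisfies the hypotheses of Lemma~\ref{lem:consistently:legged}, and moreover lives in the tensor product $\bigotimes_j R_j$ of the subalgebras $R_j \subseteq \plCT[\Lpart]$ generated by good trees with leg types in $\fm_j$. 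Since on each factor $R_j$ the evaluation $v \mapsto \eva{\phi|_{\fm_j}}(v)$ is linear in $v$, the condition \eqref{eq:wlCJ} for $\plQz\sigma$ becomes a tensor-vanishing condition on a product of affine ``evaluation images'', to which the algebraic Lemma~\ref{lem:ideal:generators} applies. Its conclusion exhibits $\plQz\sigma$ as lying in the ideal generated (inside $\bigotimes_j R_j$) by single-factor elements $\sigma_j \in R_j$ satisfying $\eva{\phi}(\sigma_j) = 0$ for every $\phi \in \SN$. Each such $\sigma_j$ is a linear combination of good trees, hence a generator of $\wlCJ^\star$, so $\sigma \in \wlCJ^\star$ as required.

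The principal obstacle is the leg-type separation of the second paragraph: the multiplicativity of $\eva{\phi}$ notwithstanding, the dependence of $\eva{\phi}(\tau)$ on $\phi$ is multilinear in the components $\phi_\lt$, so separating the $\rho_\beta$'s is not a straightforward consequence of linearity. Rather, it relies on the linear independence of the multilinear monomials $\prod_\lt \phi_\lt$ as $\phi$ varies, which in turn requires the freedom (provided by Definition~\ref{def:SN:FP}) to prescribe independent supports for the individual $\phi_\lt$'s on disjoint subsets of leg types. This is precisely the reason why $\phi$ is allowed to range over the larger space $\linspace{\SN}$ in Definition~\ref{def:wlCJ}, even though a posteriori the set of true generators is the same.
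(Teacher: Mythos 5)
Your route diverges from the paper's, and two of its steps have genuine gaps. The paper's proof is a one\--step application of Lemma~\ref{lem:ideal:generators}: take $\CB$ to be the linear span of good trees, so that $\plCT[\Lpart]$ is the symmetric algebra over $\CB$, and take $\Phi := \{\eva{\phi}\plQz|_{\CB} : \phi \in \linspace{\SN}\}$. Since $\eva{\phi}\tau$ is \emph{linear} in $\phi$ when $\tau$ is a single tree, $\Phi$ is a linear subspace of $\CB^*$ --- this, and not the sector separation you describe, is the reason Definition~\ref{def:wlCJ} ranges over $\linspace{\SN}$ --- and Lemma~\ref{lem:ideal:generators} then says directly that the ideal cut out by the multiplicative extensions is generated by its tree-level part. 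No decomposition by leg-type content and no relabelling are needed; the complement-plus-character argument inside Lemma~\ref{lem:ideal:generators} is precisely what replaces them.

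The two problematic steps in your version are the following. First, the separation of $\plQz\sigma$ into the pieces $\rho_\beta$ cannot be achieved by ``linear independence of the multilinear monomials $\prod_\lt \phi_\lt$'': if $\tau_1\tau_2$ and a single tree $\tau_3$ satisfy $\ft(L_\LT(\tau_3)) = \ft(L_\LT(\tau_1)) \sqcup \ft(L_\LT(\tau_2))$, then $\eva{\phi}(\tau_1\tau_2)$ and $\eva{\phi}\tau_3$ are linear functionals of the \emph{same} tensor $\bigotimes_\lt \phi_\lt$, so varying $\phi\in\SN$ (or prescribing supports) distinguishes only the total leg-type set, not the finer multiset $\beta$ recording the factorisation. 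Second, the $\Glegs$-relabelling cannot make the sets $\fm_j$ pairwise disjoint in general: $\CS^g$ applies the same permutation of $\Legtype$ to every factor of a product, so for an element such as $\tau^2$ (or any product in which a leg type recurs across factors) the factors keep identical leg types after relabelling. There is also a smaller unaddressed point: your argument produces membership statements for $\plQz\sigma$, whereas the generator of $\wlCJ$ is $\sigma$ itself, and $\sigma - \plQz\sigma$ is not obviously in your ideal $\wlCJ^\star$. All of these issues evaporate if you apply Lemma~\ref{lem:ideal:generators} directly to the composite functionals $\eva{\phi}\plQz$ as above.
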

\begin{proof}
This follows from Lemma~\ref{lem:ideal:generators} applied to the algebras $\plCT[\Lpart]$ ($\plCThat[\Lpart]$, $\plCT[\Lpart]\cap \adCT$). Note that e.g. the set $\{\eva{\phi} \plQz : \phi \in \linspace\SN \}$ is indeed a \emph{linear} space of linear functionals when restricted to $\linspace{\big(\wTT \cap \plCT[\Lpart]\big)}$. This was the motivation for using $\linspace{\SN}$ in the definition of these ideals.
\end{proof}

We now have the following lemma.

\begin{lemma}\label{lem:CJ:wlCJ}
Let $\CJcon$ be the ideal defined in Definition~\ref{def:CH}. Then one has the identity
\begin{align}\label{eq:CJ:wlCJ}
\CJcon = \pi\adzeCJ = \pi\wlCJad
\end{align}
\end{lemma}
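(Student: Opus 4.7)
The plan is to reduce quickly to the substantive equality $\CJcon = \pi\wlCJad$, after which the strategy is to match the two vanishing conditions on generators via the explicit unfolding $\eva{\phi}\sigma = \ctrLhat{\pi\sigma}{\hat\phi_A}$ that follows from admissibility.

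The first identity $\pi\adzeCJ = \pi\wlCJad$ is essentially bookkeeping: equation (\ref{eq:identity:adzeCJ}) gives $\adzeCJ = \spadeP\wlCJad$, and since $\wlCJad$ is generated by elements of $\adCT$ on which $\plQz$ acts trivially and which carry no non-trivial derivative decoration on legs, the composition $\pi\spadePsym\symP$ agrees with the direct projection $\pi:\plCT\to\CTm$. Hence the two ideals in $\CTm$ coincide.

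For $\pi\wlCJad \subseteq \CJcon$, I would take a generator $\sigma$ of $\wlCJad$ as provided by Lemma~\ref{lem:wlCJ:generators}, i.e.\ a linear combination of good admissible trees $\sigma = \sum_i c_i \sigma_i$ with common leg-type multiset $A$, satisfying $\eva{\phi}\sigma = 0$ for every $\phi \in \linspace{\SN}$. Because $\sigma$ is admissible, the delta constraints in (\ref{eq:CK}) pin each leg variable to its node and every leg has trivial edge decoration, so the unfolding of (\ref{eq:bar:Upsilon:large:scale}) together with (\ref{eq:Upsilon:bar:phi:psi}) yields the identity $\eva{\phi}\sigma = \ctrLhat{\pi\sigma}{\hat\phi_A}$. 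It remains to deduce that vanishing on this family of test functions forces $\ctrLhat{\pi\sigma}{\psi} = 0$ for every $\psi \in \cutoffspacez$. Since $\pi\sigma$ lies in a single $\sim$-equivalence class, the distribution $\CK_{\hat K}\pi\sigma$ is translation invariant and symmetric under permutations of its variables that preserve noise types, and the family $\{\hat\phi_A : \phi \in \linspace{\SN}\}$ spans the subalgebra of $\bar\CC_c^\infty(\bar\domain^A)$ generated by pair potentials. A Stone--Weierstrass style density argument on the appropriate translation and symmetry quotient space shows that this subalgebra is dense in the class of test functions detected by $\ctrLhat{\pi\sigma}{\cdot}$, yielding $\pi\sigma \in \CJcon$.

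For the reverse inclusion $\CJcon \subseteq \pi\wlCJad$, I would fix a generator $\tau = \sum_i c_i \tau_i$ of $\CJcon$, necessarily supported on a single $\sim$-equivalence class with common noise-type multiset $\cm$, and choose an admissible embedding $\iota$ such that each $\iota\tau_i$ is a good tree carrying the same leg-type multiset $A$ with $\imap_1(A) = \cm$ (possible by enlarging $\LT$ and $\Lpart$ as already allowed in Section~\ref{sec:constraints:ideal}). The identity $\eva{\phi}\iota\tau = \ctrLhat{\tau}{\hat\phi_A}$ reduces the membership $\iota\tau \in \wlCJad$ to showing that this quantity vanishes for every $\phi \in \SN$. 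When $\int\hat\phi_A = 0$ one has $\hat\phi_A \in \cutoffspacez$ and the vanishing is immediate. When $\int\hat\phi_A \ne 0$ the crucial point is the self-similarity of $\hat K$: writing $\hat\phi_A = \psi_0 + c\psi_\infty$ with $\psi_0 \in \cutoffspacez$ and $\psi_\infty$ a fixed test function with $\int\psi_\infty = 1$, the rescaling identity $\ctrLhat{\tau}{\psi_\infty^\eps} = \eps^\beta \ctrLhat{\tau}{\psi_\infty}$ with nonzero exponent $\beta$ (which holds whenever $\homofancys{\tau} < 0$; the exceptional cases $\tau \in \CV$ and $\tau \in \CV_0$ are handled directly by Assumptions~\ref{ass:technical} and~\ref{ass:CVz}) combined with the $\eps$-invariance of $\int\psi_\infty^\eps$ forces $\ctrLhat{\tau}{\psi_\infty} = 0$. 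Hence $\ctrLhat{\tau}{\hat\phi_A} = 0$ as well. Lemma~\ref{lem:consistently:legged}, applied to the lifted sum $\iota\tau$ whose summands share the leg-type multiset $A$, then identifies $\iota\tau$ as an actual element of $\wlCJad$ rather than merely a linear combination of generators, and thus $\tau = \pi\iota\tau \in \pi\wlCJad$.

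The principal obstacle lies in the density step of the first inclusion: products of pair potentials do not literally span $\cutoffspacez$ once $\#\cm \ge 3$, so the argument must be phrased at the level of the translation-invariant distribution $\CK_{\hat K}\pi\sigma$ and invoke the translation invariance, the permutation symmetry enforced by the multiset structure, and the self-similarity of $\hat K$ in tandem. After these reductions the question collapses to a density question on a smaller quotient space, where products of pair potentials do span a dense subalgebra; making this passage precise is the technical heart of the argument.
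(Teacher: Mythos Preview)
Your overall strategy and the identification of the density step as the key obstacle are both correct, and match the paper's approach. However, there is a genuine gap in the ``unfolding'' identity you rely on. You write $\eva{\phi}\sigma = \ctrLhat{\pi\sigma}{\hat\phi_A}$, but this is not well-posed: the right-hand side $\ctrLhat{\cdot}{\psi}$ is defined for $\psi$ in $\barCCcmg$, i.e.\ for functions that are \emph{symmetric} under permutations of the arguments preserving noise types, whereas $\hat\phi_A$ is a product of pair potentials indexed by \emph{leg types} and carries no such symmetry. Concretely, when $\sigma = \sum_i c_i\sigma_i$ is a sum of good admissible trees sharing the leg-type set $A$, the pullback of $\hat\phi_A$ to $\bar\domain^{L(\pi\sigma_i)}$ (via the incidence of legs to noise-carrying nodes) depends on $i$, so there is no single symmetric test function against which you are integrating.

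The paper resolves this by introducing the subgroup $H \ssq \Glegs$ of leg-type permutations that fix types outside $L$, and replacing your direct identity by the symmetrised one
\[
\evaA{\psi^\phi}\pi\sigma \;=\; \eva{\phi}\sum_{g\in H}\CS_g\sigma\;,\qquad
\psi^\phi(x_{\Lparta}) \;=\; \sum_{g\in H}\prod_{\{\lt,\bar\lt\}\subset L}\phi_{g(\lt)}(x_{P(\lt)}-x_{P(\bar\lt)})\;.
\]
The sum over $H$ is exactly what forces $\psi^\phi$ to land in the symmetric space $\barCCgg{\cm}$. For the forward inclusion one then uses that $\CS_g$ preserves the defining vanishing condition of $\wlCJad$, so the right-hand side still vanishes; density of the span of $\{\psi^\phi\}$ in $\barCCgg{\cm}$ (a genuine Stone--Weierstrass argument, since after symmetrisation this span \emph{is} the algebra of symmetric products of pair potentials) finishes the job. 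For the reverse inclusion one uses $\pi\CS_g = \pi$, so that $\sum_{g\in H}\CS_g\iota\tau \in \wlCJad$ already gives $\tau \in \pi\wlCJad$. Your rescaling argument to pass from $\cutoffspacez$ to all of $\cutoffspace$ is correct (it is the content of the short lemma immediately preceding this one in the paper), but it does not by itself close the gap: even granting vanishing against all symmetric $\psi$, you still need the $H$-symmetrisation to compare $\eva{\phi}\iota\tau$ to such an evaluation.
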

\begin{proof}
Since $\pi$ is an algebra homomorphism and both ideals are generated by linear combinations of trees (for $\adzeCJ$ this follows from Lemma~\ref{lem:CJ:wlCJ} and (\ref{eq:identity:adzeCJ}), for $\CJcon$ this follows from Definition~\ref{def:CJcon}), it suffices to show that for any linear combination of trees $\sigma \in  \symT{\adze}$ one has $\sigma \in \adzeCJ$ if and only if $\pi\sigma \in \CJcon$.

 	Let first $\sigma \in \plCT[\Lpart] \cap \adCT$ be as in Lemma~\ref{lem:wlCJ:generators} a linear combination of good trees $\sigma = \sum_{i \in I}c_i \sigma_i \in\wlCJad$ such that (\ref{eq:wlCJ}) holds. We assume without loss of generality that $\plQz \sigma = \sigma$.
	
	We first claim that is suffices to consider $\sigma$ such that the set of leg-types $L:=[L_\LT(\tau_i),\ft]$ does not depend on $i \in I$.
	Indeed, assume that the claim holds for all $\sigma$ with this property and let 
	$\sigma = \sum_{i \in I}c_i \sigma_i \in\wlCJad$ be as above a linear combination of trees, 
	but assume that $[\LTE(\tau_i),\ft]$ is not independent of $i\in I$. 
	We claim that there exists a proper non-empty subset $J\ssq I$ such that $\sum_{j \in J}c_j \sigma_j \in \wlCJad$, from which the result follows by induction. 
	For this let $\lta \in \LT$ let $J:=\{i \in I: \lta \in \LTE(\sigma_i)  \}$, and assume that $I \ne J \ne \emptyset$.
	Consider for any family $\phi \in \SN$ the family $\phi^\lambda$, $\lambda>0$, 
	defined by setting $\phi^\lambda_{\lta} := \lambda \phi_{\lta}$, $\phi^\lambda_{\barlta} := \lambda \phi_{\barlta}$ and
	 $\phi^\lambda_{\lt}:=\phi_{\lt}$ for any $\lt \in \LT \setminus\{ \lta, \barlta \}$. It follows that
\[
\eva{\phi^\lambda} \sigma_i = \eva{\phi} \sigma_i
\,,\qquad
\eva{\phi^\lambda} \sigma_j \to 0 
\] 
as $\lambda \to 0$, for any $i \in I\setminus J$ and $j \in J$, which implies that $\sum_{j \in J} c_j \sigma_j \in \wlCJad$. 
	
Hence, we also have that $[L(\tau_i),\ft]$ is independent if $i\in I$. Let now $\zeta_i : \CL(\tau_i) \to \Lpart$ be the injection as Definition~\ref{def:good:trees}, denote by $\Lparta$ its range (which is independent of $i$), and write as above $\ft(P) \in \FL_-$ for the ``type'' of $P \in \Lpart$.   Let $H \ssq \Glegs$ be the subgroup of those $g \in \Glegs$ with the property that $g(\lt) = \lt$ for any $\lt \notin L$. 
	For $\lt \in \LT$ let $P(\lt) \in \Lpart$ be such that $ \lt \in P(\lt)$. One has $\cm = [\Lparta,\ft]$, so that  for any $\phi \in \SN$ we can define a function 
	$\psi^\phi \in \barCCgg{\cm}$ by
	\begin{align} \label{eq:psi:phi}
		\psi^\phi( x_{\Lparta} ):=
		\sum_{g \in H}
		\prod_{ \lt, \bar\lt \in L }
			\phi_{ g(\lt) }(x_{P(\lt)} - x_{P(\bar\lt)}).
	\end{align}
	Recall that in the definition of multisets of the form $[\Lparta,\ft]$ we ``forget'' the domain $\Lparta$, so that one has indeed $[\Lparta,\ft] = [L(\tau_i),\ft]$ for any $i \in I$. Furthermore, $\psi^\phi$ is invariant under those perturbations of $\Lparta$ which leave the noise type $\ft(P)$ invariant for any $P \in \Lparta$. Hence, $\psi^\phi$ can indeed be viewed as having the domain $\bar\domain^\cm$.
	Finally, in the product on the right-hand side of (\ref{eq:psi:phi}) we have one (and only one) factor for each pair leg $\lt$ and its partner $\bar \lt$. Since by definition $g$ commutes with conjugation and $\phi_{\lt} = \phi_{\bar \lt}(- \cdot)$, there is no ambiguity in this notation.

We claim that the linear space $Y$ generated by functions of the form $\psi^\phi$ for some $\phi \in \SN$ is dense in the space $X$ of functions $\psi \in \barCCgg{\cm}$ 
which are supported in the set of $x_{\td(\cm)}$ with $|x_p-x_q| \le \bar R$ for any $p,q \in \td(\cm)$ 
with respect to uniform convergence. 
Then $Y$ is the linear space generated by functions $\psi \in X$ such that there exist functions 
$\psi_{\Xi,\Xia} \in \CCg$ with 
\[
\psi(x_{\cm}) = \prod_{(\Xi,k),(\Xia,l) \in \td(\cm), (\Xi,k) \ne (\Xia,l)} \psi_{\Xi,\Xia} (x_{(\Xi,k)} - x_{(\Xia,l)}).
\]
The claim now follows from Arzel\'a and Ascoli's theorem.

For any fixed compact $K \ssq \bar\domain^\cm$ and any $\tau \in \CT_-$ the evaluation $\psi \mapsto \evaA{\psi} \tau$ is continuous on the subspace of those $ \psi \in \bar\CC_c^\infty( \bar\domain^\cm )$ with $\supp \psi \ssq K$ with respect to uniform convergence. This follows from the second part Assumption~\ref{ass:main:reg}, which implies a bound on the small scales, and Proposition~\ref{lem:super:regularity:implies:large:scale:bound} and \cite[Sec.~4]{Hairer2017}, which implies a bound on the large scales. It now suffices to show $\tilde\Upsilon^{\psi^\phi} \pi\sigma = 0$ for any $\phi \in \SN$. This however follows from
	\[
	\evaA{\psi^\phi} \pi\sigma
	=
	\eva{\phi} \sum_{g \in H} \CS_g \sigma
	=	
	0,
	\]
	so that $\pi \sigma \in \CJcon$.

	The converse direction follows in almost the same way. Let $\sigma= \sum_{i\in I}c_i \sigma_i \in \CJcon$ be a linear combination of trees and let $\iota : \CT_- \to \plCT$ be an admissible embedding taking values in the set of good trees. Assume without loss of generality that the set of leg types $L:=L_\LT(\iota \sigma_i)$ does not depend on $i$, and let $H \ssq \Glegs$ be as above. It then suffices to show that
	\[
		\eva{\phi} \sum_{g \in H} \CS_g \iota \sigma = 0
	\]
	for any $\phi \in \SN$. Reversing the above arguments, we see that 
	\[
		\eva{\phi} \sum_{g \in H} \CS_g \iota \sigma
		=
		\eva{\psi^\phi} \sigma
		=
		0,
	\]
	which concludes the proof.
\end{proof}


We want to use Proposition~\ref{prop:eps:beta:bound} from the previous section. For this we need the following technical lemma, which shows that the ideals $\wlCJ$ and $\wlCJhat$ can alternatively be defined by considering only $\phi \in \SN(\LTsys)$ for some $\LTsys \in \LTsyss$, where $\SN(\LTsys)$ is as in Definition~\ref{def:SN:FP}.
\begin{lemma}\label{lem:CJ:limit:argument}
Let $\tau \in \plCT[\Lpart]$ ($\tau \in \plCT[\Lpart] \cap \adCT$, $\tau \in \plCThat[\Lpart]$), assume that $\tau$ can be written as a linear combination of trees 
$
\tau = \sum_{i=1}^r c_i \tau_i
$
with $r\ge 1$, $c_i\in\R$ and trees $\tau_i$ such that $\LTa:=[L_\LT(\tau_i),\ft]$ is independent of $i \le r$. Assume that there exists some system $\LTsys \in \LTsyss$ such that $\eva{\phi} \plQz\tau=0$ (resp. $\eva{\phi} \tau =0$ if $\tau \in \plCThat[\Lpart]$)  for any $\phi \in \SN(\LTsys)$. Then one has $\tau \in \wlCJ$ ($\tau \in \wlCJad$, $\tau \in \wlCJhat$).
\end{lemma}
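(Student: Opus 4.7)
The plan is an approximation argument that transports vanishing of $\eva\phi\,\plQz\tau$ from $\phi\in\SN(\LTsys)$ to all $\phi\in\linspace{\SN}$ by leveraging the analytic estimate of Proposition~\ref{prop:eps:beta:bound}. I would begin by isolating the $\wlCJ$ case: since $\plQz\tau\in\adCT$ and $\plQz$ is the identity on $\adCT$, I may assume $\tau\in\adCT$ with $\plQz\tau=\tau$, reducing this case to $\wlCJad$. The $\wlCJhat$ case is handled by the same argument, replacing $\plQz$ by $\plZ$ and noting that $\plQz$ is not defined on $\plCThat$; the fact that all $\tau_i$ share the common leg-type multiset $\LTa$ (so in particular share the essential leg types) ensures that the reduction is consistent. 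Using Lemma~\ref{lem:ideal:generators} and the common-multiset hypothesis, I would further restrict attention to linear combinations of good trees in $\plCT[\Lpart]\cap\adCT$.

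For an arbitrary $\phi\in\linspace{\SN}$, the idea is to decompose $\phi=\phi^{\flat}+\phi^{\sharp}$, with $\phi^{\flat}\in\SN(\LTsys)$ and $\phi^{\sharp}$ collecting precisely the obstructions to Definition~\ref{def:SN:FP}: a bump at the origin for each $\legtype\notin\bigsqcup\LTsys$, and a nonzero mean $\int\phi_\legtype$ for those $\legtype$ with $\LTsysdeg(\legtype)\le-\shalf$. By hypothesis the $\phi^{\flat}$ contribution vanishes, so it remains to show $\eva{\phi^{\sharp}}\tau=0$. I would then pass from the abstract $\LTsys\in\LTsyss$ to a refinement $\LTsysp\in\LTsyss(\tau)$ adapted to $\tau$—obtained by intersecting the elements of $\LTsys$ with the sets of the form $\LT(\tau,M)$ for $M\in\Lambda(\tau)$, and discarding empty intersections—and apply Proposition~\ref{prop:eps:beta:bound} to the rescaled family $\phi^{\sharp,\eps}$ defined by (\ref{eq:phi:rescale}) with degrees $\LTsyspdeg$. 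Expanding $\cpmwi\tau = \tau\otimes\one + (\text{mixed terms})$ and using $\Lchar{\phi^{\sharp}}{\LTsysp}{R}(\one)=1$, the estimate becomes
\begin{equ}
\eva{\phi^{\sharp,\eps}}\tau \;+\; (\text{mixed counterterms}) \;=\; O(\eps^\beta)\;,
\end{equ}
where the counterterms are products $\Lchar{\phi^{\sharp}}{\LTsysp}{R}(\sigma_1)\cdot\eva{\phi^{\sharp,\eps}}(\sigma_2)$ for subforests $\sigma_1$ with leg-type content in $\bigsqcup\LTsysp$ and corresponding quotients $\sigma_2$.

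The counterterms are handled either by induction on $\#E(\tau)$ (the subtrees $\sigma_1$ with leg-type set in $\LTa\in\LTsysp$ already lie in $\wlCJad$ once the hypothesis is propagated down, hence contribute nothing after the limit) or by the explicit scaling behaviour dictated by (\ref{eq:definition:deg:I:explain}): under (\ref{eq:phi:rescale}) each relevant subdivergence rescales by a known power of $\eps$. Since $\eva{\phi^{\sharp,\eps}}\tau$ is a finite linear combination of monomials in $\eps$ (because the kernels $K_\ft$ are fixed and only the leg-test-functions are rescaled), the bound $O(\eps^\beta)$ forces every non-positive power coefficient to vanish; in particular the $\eps^0$-coefficient, which is precisely $\eva{\phi^{\sharp}}\tau$, must be zero. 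The main obstacle will be the bookkeeping needed to perform the refinement $\LTsys\rightsquigarrow\LTsysp\in\LTsyss(\tau)$ cleanly while keeping the coproduct structure compatible with $\Lchar{\cdot}{\LTsysp}{R}$, and the careful separation of homogeneous $\eps$-components so that the induction on $\#E(\tau)$ can be closed within the class of good trees sharing a common leg-type multiset.
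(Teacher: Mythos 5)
Your approach has genuine gaps that I do not see how to close. The most basic one is that $\phi \mapsto \eva{\phi}\tau$ is not linear in $\phi$: by \eqref{eq:Upsilon:bar:phi:psi} the function $\hat\phi_{\LTa}$ is a \emph{product} of the $\phi_\lt$ over the pairs $\{\lt,\bar\lt\}\ssq\LTa$, so the decomposition $\phi=\phi^{\flat}+\phi^{\sharp}$ produces $2^{\#\LTa/2}$ cross terms in which some pairs carry $\phi^{\flat}$ and others $\phi^{\sharp}$. The hypothesis only kills the all-$\flat$ term; what remains is not $\eva{\phi^{\sharp}}\tau=0$ but the vanishing of every mixed term, and none of those mixed tuples lies in $\SN(\LTsys)$, so you are back where you started. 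Second, Proposition~\ref{prop:eps:beta:bound} is stated only for $\phi\in\SN(\LTsys)$ with $\LTsys\in\LTsyss(\tau)$, and $\phi^{\sharp}$ violates the constraints of Definition~\ref{def:SN:FP} by construction, so the $O(\eps^{\beta})$ bound is not available for $\phi^{\sharp,\eps}$. Third, the claim that $\eva{\phi^{\sharp,\eps}}\tau$ is a finite linear combination of monomials in $\eps$ is false for the full tree: in \eqref{eq:phi:rescale} only the legs with $\deg(\legtype)\in\R_-$ are rescaled while those with $\deg(\legtype)=\redef$ are not, so no global change of variables extracts a power of $\eps$ from the homogeneous kernel. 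In the paper this monomial structure is only ever exploited for the \emph{left} tensor factor of $(P_\LTsys\otimes\Id)\cpmwi\tau$, where all legs of the contracted subtrees are rescaled simultaneously.

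The paper's actual proof does not use Proposition~\ref{prop:eps:beta:bound} at all; it is a direct density argument. After reducing to essential legs, $\eva{\phi}\tau$ is the pairing of the locally integrable, homogeneous function $\CK_{\hat K}\pi\tau$ (this uses Theorem~\ref{thm:evaluation:trees:largescale} and Lemma~\ref{lem:super:regularity:implies:large:scale:bound}) against a test function built from the $\phi_\lt$. The constraint that $\phi_\lt$ vanish near the origin is removed one leg type at a time by an $L^{\infty}$ limit, which local integrability permits, and the derivative decoration is absorbed because any test function supported in a small ball agrees there with one of the form $D^{\fe}\phi_\lt+\tilde\phi$ with $\tilde\phi$ supported away from the origin. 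The mean-zero constraint (the case $\LTa=\{\lt_0,\bar\lt_0\}$ with $\LTsysdeg\lt_0\le-\shalf$) requires a separate observation that your scheme does not produce: a kernel orthogonal to all mean-zero test functions must be constant, and a constant homogeneous function of strictly negative degree vanishes; the degenerate case $\fancynorm{\tau_i}_\fs=0$ is disposed of by Assumption~\ref{ass:zero-homo}. I would encourage you to rework the argument along these elementary lines rather than through the rescaling machinery.
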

\begin{proof}
We only show the statement about $\wlCJ$.

By Lemma~\ref{lem:consistently:legged}, we need to show that (\ref{eq:wlCJ}) holds for any $\phi \in \SN$. For this we recall the definition of $\eva{\phi}$ \eqref{eq:Upsilon:bar:phi:psi}, \eqref{eq:bar:Upsilon:large:scale}. It suffices to consider the case that the $\tau_i$'s contain only essential legs, and we naturally identify the sets of legs $L_\LT(\tau_i)$ with $\LTa$ for any $i \in I$. This identification induces a natural identification of the sets of noise type edges $L(\tau_i)$ with a subset $\Lparta \ssq \Lpart$ as in the proof of Lemma~\ref{lem:CJ:wlCJ}. Then, for any $P,Q \in \Lparta$ with $P \ne Q$ there exists a unique leg type $\lt(P,Q) \in \LTa$ such that $\lt(P,Q) \in P$ and $\bar\lt(P,Q) \in Q$. Conversely, for any leg type $\lt \in \LTa$ there exists a unique $P(\lt) \in \Lparta$ such that $\lt \in P(\lt)$.

For $\vphi \in \barCCgg{\LTa}$ let $\Pi \vphi \in \barCCgg{\Lparta}$ be defined by setting
\[
\Pi \vphi( x_{\Lparta})
:=
\int dx_{\LTa} \vphi(x_\Lparta) \prod_{\lt \in \LTa} \delta( x_\lt - x_{P(\lt)}).
\]
It follows that 
\[
	0
	=
	\eva{\phi}\tau
	=
	\langle \CK_{\hat K} \tau, D ^{\fe|_{\LTa}} \hat\phi_{\LTa} \rangle 
	=
	\langle \CK_{\hat K} \pi \tau, \Pi D ^{\fe|_{\LTa}} \hat\phi_{\LTa} \rangle.
\]
for any $\phi \in \SN( \LTsys )$. We need to show that this identity holds for any $\phi \in \SN$. 

Assume first that $\# \LTa>2$. 
We first claim that one has $\langle \CK_{\hat K} \pi \tau, \Pi  \hat\phi_{\LTa} \rangle=0$ for any $\phi \in \SN( \LTsys )$, that is, one can get rid of the derivative decoration. 
Indeed, let $R>0$ be such that $\supp \phi_\lt$ is included in the centred ball of radius $R$ for any $\lt \in \LTa$. Note that since $\CK_{\hat K} \pi\tau$ is homogeneous we may assume that $R$ is as small as we want, so that in particular, we may assume that $2 R \le \bar R$.
Fixing $\lt^* \in \LTa$, 
we see that for any $\tilde\phi \in \CCg$ 
such that $\tilde\phi$ vanishes inside the ball of radius $2R$ one has $\Pi D^{\fe|_\LTa}\hat\phi_\LTa = \Pi (D^{\fe|_\LTa}\hat\phi_\LTa + \hat \phi^*_\LTa)$, where $\phi^* _{\lt^*} := \tilde\phi$ and $\phi^*_\lt := \phi_\lt$ for $\lt \ne \lt^*$. Since any smooth function $\psi$ which is compactly supported in the centred ball of radius $R$ agrees with a function of the form $D^{\fe|_{\LTa}} \phi_\lt  + \tilde\phi$ (for $\tilde\phi$ as above) inside the ball of radius $2R$, the claim follows.
With precisely the same argument we can remove the second constrained coming from Definition~\ref{def:SN:FP}, so that
the equality $\langle \CK_{\hat K} \pi \tau, \Pi  \hat\phi_\LTa \rangle = 0$ holds for any $\phi \in \SN$ such that $\phi_\lt$ vanishes in a neighbourhood of the origin.
At this point is remains to note that $\CK_{\hat K} \pi \tau$ is a locally integrable function, so the condition that the $\phi_\fl$'s vanish around the origin can be removed by a limit argument in $L^\infty(\bar\domain)$.

The remaining case $\LTa = \{ \lt_0, \bar\lt_0 \} \in \LTsys$, so that $\LTsysdeg \lt_0 \le -\shalf$, needs a slightly different argument. Using a simple rescaling argument it is clear that it suffices to consider the case that $\alpha := \fancynorm{\tau_i}_\fs <0$ is independent of $i\le r$. (Note that in case $\alpha=0$ one has $\bar\Upsilon^\phi \tau = 0$ by Assumption~\ref{ass:zero-homo}, so that there is nothing to show.) Our integration kernels are homogeneous, so that we can write $\langle \CK_{\hat K}\tau, D^{\fe|_\LTa} \hat\phi_\LTa\rangle = \int \tilde K(x) \phi_{\fl_0}(x) dx$ for some function $\tilde K \in \CC^ \infty ( \bar \domain \setminus \{0\} )$ satisfying $\tilde K( \lambda x ) = \lambda^{\alpha} K(x)$ for all $\lambda>0$, $x \in \bar\domain \setminus \{0\}$. (Here we removed the derivative decoration by an integration by parts.) Since $\alpha>-|\fs|$ the function $\tilde K$ is locally integrable and we remove the constraint that $\phi_{\lt_0}$ vanishes around the origin by a limit argument in $L^\infty(\bar\domain)$. We still have the constraint $\int \phi_{\lt_0} = 0$ coming from Definition~\ref{def:SN:FP}, which implies that $\tilde K$ is a constant, and since $\alpha<0$, this actually implies that $\tilde K=0$ as required.
\end{proof}

With these preliminaries we can now show the following proposition, which is the main result of this section.

\begin{proposition}\label{prop:interaction:wlCJ:Delta}
One has the identities
\begin{align}
\label{eq:inclusion:CJ:hat}
\cpmwi
\wlCJad
&\subseteq
(\wlCJad \otimes \plCThat) + (\plCT \otimes \wlCJhat)
\\
\label{eq:inclusion:CJ}
\cpm
\wlCJad
 &\subseteq
(\wlCJad \otimes \plCT) + (\plCT \otimes \wlCJ) .
\end{align}
\end{proposition}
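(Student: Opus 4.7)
The strategy is to deduce both inclusions from the analytic bound of Proposition~\ref{prop:eps:beta:bound} combined with the characterisations of the ideals furnished by Lemmas~\ref{lem:wlCJ:generators}, \ref{lem:consistently:legged} and \ref{lem:CJ:limit:argument}. The argument proceeds by induction on the number of edges of trees appearing in a generator of $\wlCJad$. By Lemma~\ref{lem:wlCJ:generators} we may assume that $\sigma = \sum_{i \in I} c_i \sigma_i \in \plCT[\Lpart] \cap \adCT$ is a linear combination of good trees satisfying $\eva{\phi}\plQz\sigma = 0$ for all $\phi \in \linspace{\SN}$. As in the proof of Lemma~\ref{lem:CJ:wlCJ}, a rescaling of individual leg-types reduces matters to the case where $[L_\Legtype(\sigma_i),\ft]$ (hence also $\LTsyss(\sigma_i)$) is independent of $i$.

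For the first inclusion, I would expand $\cpmwi \sigma$ via the explicit forest-formula \eqref{eq:coproduct:pl}. The terms naturally decompose according to the system $\LTsys \in \LTsyss(\sigma_i)$ recording which noise-type vertices are gathered into each connected component of the contracting forest. Fix such a non-trivial $\LTsys$ and a tuple $\phi \in \SN(\LTsys)$, and apply Proposition~\ref{prop:eps:beta:bound} with $R = R' = \hat K - K$ and the rescaling \eqref{eq:phi:rescale} determined by $\LTsysdeg$. Summing over $i \in I$, the hypothesis that $\sigma \in \wlCJad$ kills the ``empty forest'' contribution (which is just $\eva{\phi^\eps}\plQz\sigma \cdot \Lche(\one) = 0$), and what remains is a combination of terms indexed by the non-trivial subforests encoded by $\LTsys$, bounded by $O(\eps^\beta)$ uniformly in $\eps > 0$.

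The key observation is then scale-invariance: under \eqref{eq:phi:rescale} each such remaining term evaluates to a fixed homogeneous power of $\eps$, so an $O(\eps^\beta)$ bound with $\beta > 0$ forces each scaling class to vanish identically. Collecting the terms of a given class gives an identity of the form
\[
\sum_{j} \eva{\phi}(\alpha_{j}^{\LTsys}) \cdot \eva{\phi^\eps}(\beta_{j}^{\LTsys}) \; = \; o(1),
\]
valid for all $\phi \in \SN(\LTsys)$, where $\alpha_j^{\LTsys} \otimes \beta_j^{\LTsys}$ enumerates the tensor components of $\cpmwi\sigma$ whose left leg-type content equals $\bigcup \LTsys$. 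Applying Lemma~\ref{lem:CJ:limit:argument} to the left factor (for which the hypotheses of Lemma~\ref{lem:consistently:legged} are satisfied precisely because all $\sigma_i$ were chosen good with the same leg-type content) places the corresponding left component in $\wlCJad$, modulo right components that by construction lie in $\wlCJhat$ (since their evaluation against the corresponding family vanishes for all $\phi$).

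The second inclusion follows from the first by restricting to trees of negative homogeneity and dropping the extended decoration: the natural projection $\plCThat \to \plCT$ (on basis elements of negative degree) sends $\wlCJhat$ into $\wlCJ$, because the forest expansion on good trees ensures that the ``essential leg'' structure which enters the definition of $\wlCJ$ is preserved. I expect the main obstacle to be the bookkeeping involved in step three, namely ensuring that the grouping of coproduct terms by induced leg-system $\LTsys$ is compatible with the ``same multiset of leg-types'' hypothesis of Lemma~\ref{lem:consistently:legged} for each piece separately; this requires a simultaneous refinement by the isomorphism class of the contracted quotient tree, after which the induction hypothesis can be applied to the strictly smaller right components to clear the $\wlCJhat$ contribution.
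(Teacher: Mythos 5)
Your overall strategy --- deriving the vanishing of the bilinear evaluations from Proposition~\ref{prop:eps:beta:bound} together with a homogeneity argument, and then converting this into the tensor-sum inclusions --- is the same as the paper's, but there are two genuine gaps. First, your induction is on the number of edges, whereas the step you actually need is an induction on the size of the system $\LTsys$. Proposition~\ref{prop:eps:beta:bound} bounds the \emph{alternating sum} $(\Lche\otimes\eva{\phi^\eps})\cpmwi\sigma = \sum_{\LTsysa\ssq\LTsys}(-1)^{\#\LTsysa}(\eva{\phi^\eps}\otimes\eva{\phi^\eps})(P_\LTsysa\otimes\Id)\cpmwi\sigma$, not the single term attached to $\LTsys$. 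Your proposal to isolate that term by ``collecting the terms of a given scaling class'' does not work: the scaling exponent of a left factor is determined by its homogeneity, and distinct subsystems $\LTsysa\subsetneq\LTsys$ can produce left factors with the same exponent, so the scaling classes do not separate the contributions of the different subsystems. The paper's fix is to induct on $\#\LTsys$: once all proper subsystems are known to give zero, the alternating sum \emph{equals} the single term $f(\eps)$ for the full system, which by homogeneity alone satisfies $|f(\eps)|\ge|f(1)|$ for small $\eps$; combined with the $O(\eps^\beta)$ bound this forces $f(1)=0$. Relatedly, passing from ``all products of evaluations vanish'' to the inclusion into $(\wlCJad\otimes\plCThat)+(\plCT\otimes\wlCJhat)$ requires the elementary but non-trivial fact that $\bigcap_{i,j}\ker(f_i\otimes g_j) = \big(\bigcap_i\ker f_i\big)\otimes Y + X\otimes\big(\bigcap_j\ker g_j\big)$ (Lemma~\ref{lem:tensor:kernel}), which you gesture at but do not supply.

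Second, and more seriously, your derivation of \eqref{eq:inclusion:CJ} from \eqref{eq:inclusion:CJ:hat} rests on the claim that the projection $\wlcol{\p_-}$ maps $\wlCJhat$ into $\wlCJ$. This is false in general, and the paper explicitly warns against it in the remark following Definition~\ref{def:wlCJ}: a generator of $\wlCJhat$ is a linear combination of trees which need not share the same \emph{essential} leg types, while $\wlCJ$ is defined through the projection $\plQz$, which removes superfluous legs and kills nonzero derivative decorations on legs --- neither operation is controlled by membership in $\wlCJhat$. The paper's proof of \eqref{eq:inclusion:CJ} therefore requires a separate argument: after writing $\cpm=(\Id\otimes\wlcol{\p_-})\cpmwi$, one identifies the essential leg types $\LTess$ of the right factor, performs an additional rescaling with the degree assignment $\deg^{\LTess}$, and then removes the remaining non-essential legs by a further limit $\phi_\legtype\to 1$, using that the coproduct never produces derivative decorations on noise type edges or essential legs. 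None of this is present in your sketch.
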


\begin{proof}
Let $\tau = \sum_{i=1}^r c_i \tau_i \in \wlCJad$ be a linear combination of trees with $\tau_i \in \plCT[\Lpart]  \cap \adCT$ and $c_i \in \R$ for $i\le r$. As before we can assume without loss of generality that the trees $\tau_i$ are such that the set of leg types $L := \ft(L_\LT(\tau_i))$ is independent of $i \le r$, and thus so is $\LTsyssa := \LTsyss(\tau_i)$ (recall \eqref{eq:II:tau} for the definition of this set).
By definition of the coproduct $\cpm$, it follows that one has the identity
\begin{align}\label{eq:delta:PFI}
\cpmwi
\tau
= 
(\sum_{\LTsys \in \LTsyssa} P_\LTsys \otimes \Id) \cpmwi \tau.
\end{align}
Here $P_\LTsys$ is the linear (but not multiplicative) projection of $\plCT$ onto the linear subspace $\plCT[\LTsys]$ spanned by all products of trees of the form $\tau = \prod_{\LTa \in \LTsys} \tau_{\LTa}$ with $\tau_\LTa \in \range P_{\LTa}$ (that is $\ft(L_\LT(\tau_\LTa)) = \LTa$). The projection $P_\LTsys$ is uniquely defined if we specify additionally that it diagonalises on the basis (in the sense of linear spaces) $\CB \ssq \plCT$ containing $\one$ and all possible products of trees.


The crucial step is to show that for any fixed $\LTsys \in \LTsyssa$ and any fixed  $\phi^L\in \SN(\LTsys)$ and $\phi^R \in \SN(\LTsys)$ one has
\begin{align}\label{eq:inclusion:CJ:hat:PFI}
\UOP{\phi^L,\phi^R} \tau
:=
(\eva{\phi^L} 
\otimes
\eva{\phi^R})
(P_\LTsys \otimes \Id) \cpmwi \tau 
=
0.
\end{align}
Actually, since no leg type appears in both the left and the right factor of this tensor product simultaneously, it is enough to show this claim for $\phi^L = \phi^R \in \SN(\LTsys)$.

More precisely: assume we have shown this special case. Then we construct a tuple $\phi \in \SN(\LTsys)$ by setting $\phi_{\lt} := \phi^L_\lt$ if there exists $P \in \LTsys$ such that one has $\lt \in P$, and $\phi_\lt:=\phi^R_\lt$ otherwise. It follows that $\phi \in \SN( \LTsys )$ and one has the identity
\[
\UOP{\phi} \tau
= 
\UOP{\phi^L,\phi^R} \tau,
\] 
where $\UOP{\phi}:=\UOP{\phi,\phi}$,
so that (\ref{eq:inclusion:CJ:hat:PFI}) follows indeed from the special case $\phi^L = \phi^R$.
In order to continue we fix a family $\phi \in \SN( \LTsys )$. For $\eps>0$ we define a rescaled family $\phi^\eps \in \SN( \LTsys )$ as in (\ref{eq:phi:rescale}) for the degree assignment $\LTsysdeg$ defined as in (\ref{eq:definition:deg:FJ}). With this notation, we define a function $f:(0,1] \to \R$ by setting
\[
f(\eps) :=
	\UOP{\phi^\eps} \tau
\]
for any $\eps \in (0,1]$. The proof of (\ref{eq:inclusion:CJ:hat:PFI}) is finished once we show that $f(1)=0$. 
\begin{lemma}
One has that 
\begin{align}\label{eq:f:eps:f:1}
|f(\eps)|\ge |f(1)|
\end{align}
for $\eps>0$ small enough.
\end{lemma}
\begin{proof}
We first note that one can write
\[
f(\eps)=
\UOP{\phi^\eps,\phi}  \tau.
\]
This follows from the fact that there is a projection $P_\LTsys$ hitting the left component of (\ref{eq:inclusion:CJ:hat}), which ensures that no leg type $\lt \in \bigcup \LTsys$ appears in the right component, together with the definition of $\LTsysdeg$ in (\ref{eq:definition:deg:FJ}).
On the other hand, by a simple change of variables one can exploit the homogeneity of the kernels $\hat K$, which implies that
\[
\eva{\phi^\eps}   
\sigma
=
\eps^{\homofancy\sigma} \,
\eva{\phi}   
\sigma
\]
for any fixed tree $\sigma \in \rng P_\LTa$ for any $\LTa \in \LTsys$. This is a consequence of the definition of $P_\LTa$ below (\ref{eq:h:CI:phi:R}) and $\LTsysdeg$.
As a consequence $f(\eps)$ can be written as a finite sum of terms $\sum_{j\le J}f_j(\eps)$ such that $f_j(\eps)=\eps^{\gamma_j}f_j(1)$ for some $\gamma_j\le 0$ and $\eps>0$, from which the statement of the lemma easily follows. 
\end{proof}

We now proceed to show (\ref{eq:delta:PFI}) by induction over $\# \LTsys$. For $\# \LTsys = 0$ one has the identity $(P_\LTsys \otimes \Id) \cpmwi \tau = \one \otimes \wli \tau$ so that (\ref{eq:inclusion:CJ:hat:PFI}) follows from the fact that $\tau \in \wlCJad$. Let now $\#\LTsys \ge 1$ and assume that (\ref{eq:inclusion:CJ:hat:PFI}) holds for any $\LTsysa$ with $\LTsysa \subsetneq \LTsys$. Then, using the induction hypothesis, we can rewrite $f(\eps)$ as
\begin{align}
f(\eps)=
\sum_{\LTsysa \subseteq \LTsys}
	(-1)^{\#\LTsysa}
(\eva{\phi^\eps}
\otimes
\eva{\phi^\eps})
(P_\LTsysa \otimes \Id) \cpmwi 
	\tau
= 
(\Lche
\otimes
\eva{\phi^\eps})
\cpmwi \tau,
\end{align}
where $\Lche$ denotes the character on $\plCT$ defined in (\ref{eq:h:CI:phi:R}), compare also (\ref{eq:h:trees:rewrite}).
Since $\phi \in \SN(\LTsys)$ by assumption, we conclude from Proposition~\ref{prop:eps:beta:bound} that there exists $\beta>0$ such that one has the estimate
\[
|f(\eps)|\lesssim \eps^\beta
\]
uniformly over $\eps \in (0,1)$.
Comparing this with (\ref{eq:f:eps:f:1}) it follows at once that one has $f(1)=0$, and this concludes the proof of (\ref{eq:inclusion:CJ:hat:PFI}). 

Since the left factor of (\ref{eq:inclusion:CJ:hat:PFI}) is an element of $\adCT$ one also has
\begin{align}\label{eq:inclusion:CJ:hat:PFI:Q}
(\eva{\phi^L} \plQz 
\otimes
\eva{\phi^R})
(P_\LTsys \otimes \Id) \cpmwi \tau = 0.
\end{align}
In order to see (\ref{eq:inclusion:CJ:hat}), we draw on the following simple lemma.
\begin{lemma}\label{lem:tensor:kernel}
Let $X$ and $Y$ be linear spaces and let $(f_i)_{i \in I}$ and $(g_j)_{j \in J}$ be families of linear functionals on $X$ and $Y$ respectively, for some index sets $I$ and $J$. Then one has
\[
\bigcap_{i,j}\ker (f_i \otimes g_j) = \Big(\bigcap_i \ker f_i \Big) \otimes Y + X \otimes \Big(\bigcap_j \ker g_j \Big)
\]
as subspaces of the algebraic tensor product $X \otimes Y$.
\end{lemma}
\begin{proof}
Denote the right and left-hand sides by $R$ and $L$, respectively. Let first $z \in R$. Then by definition we can write $z = z_1 + z_2$ with $(f_i \otimes \Id)(z_1) = (\Id \otimes g_j)(z_2) = 0$ for all $i \in I$ and $j \in J$. It follows that $(f_i \otimes g_j)(z_k) = 0$ for all $i\in I$, $j \in J$ and $k=1,2$, and thus $z \in L$.

Let now $z = \sum_{k=1}^K x_k \otimes y_k \in L$. We proceed inductively in $K$. For $K=1$ one has $f_i(x_1) g_j(y_1) = 0$ for all $i \in I$ and $j \in J$. Thus either $f_i(x_1)=0$ for all $i \in I$ or $g_j(y_1) = 0$ for all $j \in J$, and hence $x_1 \otimes y_1 \in R$. For $K>0$ we can assume that $x_K \otimes y_K \notin L$. In particular, there exists
\def\inode{{i_\node}}$\inode \in I$ such that $f_{\inode}(x_K) \ne 0$. Define $b_k := \frac{f_\inode(x_k)}{f_\inode{(x_K)}}$, so that by assumption one has
\begin{align}\label{eq:small}
\sum_{k=1}^K b_k g_j(y_k) = 0 \qquad \text{ for all }j \in J.
\end{align}
We can write
\[
z = \sum_{k=1}^{K-1} (x_k - b_k x_K) \otimes y_k + x_K \otimes \Big( \sum_{k=1}^K b_k y_k \Big)
\]
From (\ref{eq:small}) we deduce $x_K \otimes \Big( \sum_{k=1}^K b_k y_k \Big) \in R \ssq L$, so that $\sum_{k=1}^{K-1} (x_k - b_k x_K) \otimes y_k  \in L$. We conclude using the induction hypothesis.
\end{proof}
Applying this lemma to the families of linear functionals on $\plCT$ and $\plCThat$, given by $\eva{\phi}\plQz$ and $\eva{\phi}$, respectively, where $\phi$ ranges over $\SN(\LTsys)$ (and recalling Lemma~\ref{lem:CJ:limit:argument}), we conclude that (\ref{eq:inclusion:CJ:hat}) is a consequence of (\ref{eq:inclusion:CJ:hat:PFI:Q}).

In order to see (\ref{eq:inclusion:CJ}) we now use the identity
\[
\cpm = (\Id \otimes \wlcol{\p_-}) \cpmwi ,
\]
on $\plCT$. We still fix  $\LTsys \in \LTsyssa$. For $\lt \in \LT$ denote by $P(\lt) \in \PP$ the set such that $\lt \in P(\lt)$ (recall from above Definition~\ref{def:good:trees} that $\PP$ is a partition of the set of leg types). Let $\LTess$ denote the set of leg types $\lt \in L$ with the property that $\lt, \bar\lt \notin \bigcup_{\lt' \in\bigcup \LTsys} P(\lt')$. It follows that the right factor of $(P_\LTsys \otimes \wlcol{\p_-}) \cpmwi \tau$ takes values in the algebra generated by trees $\sigma$ such that the set of essential leg types of $\sigma$ is given by $\LTess$. Letting $\phi \to \phi^\eps$, where we rescale $\phi$ as in (\ref{eq:phi:rescale}) for the degree assignment $\deg^{\LTess}$, shows that 
\begin{align}\label{eq:bum}
\Big(
	\eva{\phi}
	\otimes
	\eva{\phi}
\Big)
 (\plQz P_\LTsys \otimes \wlcol{\p_-}) \cpmwi \tau = 0
\end{align}
for any $\phi \in \SN$.

Finally, letting $\phi_\legtype \to 1$  for any $\legtype \in \LTa := \Legtype\setminus (\LTess \cup \bigcup \LTsys)$ we can show that
\begin{align}\label{eq:bla}
	\Big( \eva{\phi} \otimes \eva{\phi} \Big)(\plQz P_\LTsys \otimes \plQz\wlcol{\p_-}) \cpmwi \tau = 0.
\end{align}
Indeed, recall that $\plQz = \plQ \plZ$. From Assumption~\ref{ass:main:reg} it follows that divergent subtrees $\sigma$ never touch noise type edges $e$, that is one has either $e \in L(\sigma)$ or $e^\downarrow \notin N(\sigma)$. It follows from this that the coproduct never produces a derivative decoration on noise type edges. In precisely the same way we see that the coproduct does not produce a derivative decoration on \emph{essential} legs on the right-hand side. Hence every tree on the right-hand side of $(P_\LTsys \otimes (\Id-\plZ)\wlcol{\p_-}) \cpmwi \tau$ contains at least one non-essential leg $e$ such that $\fe(e)>0$. Assume now that $\phi_\lt \equiv 1$ in a neighbourhood of the origin for $\lt \in \LTa$, and define $\phi_\lt^{\eps,N}:= \phi_\lt^\eps( N^{-1} \cdot)$ for $\lt \in \LTa$ and $\phi_\lt^{\eps,N}:=\phi_\lt^\eps$ for $\lt \in \LT\backslash\LTa$. we see that 
\begin{align*}
\Big(
	\eva{\phi^{\eps,N}} \otimes \eva{\phi^{\eps,N}} 
\Big)
(\plQz P_\LTsys \otimes (\Id-\plZ)\wlcol{\p_-}) \cpmwi \tau \to 0 \quad \text{ as } N \to \infty.
\end{align*}
In exactly the same way we see that
\begin{align*}
\Big(
	\eva{\phi^{\eps,N}} \otimes \eva{\phi^{\eps,N}} 
\Big)
(\plQz P_\LTsys \otimes (\Id-\plQ) \plZ\wlcol{\p_-}) \cpmwi \tau \to 0.
\end{align*}
On the other hand, the quantity
\begin{align*}
\Big(
	\eva{\phi^{\eps,N}} \otimes \eva{\phi^{\eps,N}} 
\Big)
(\plQz P_\LTsys \otimes \wlcol{A} \wlcol{\p_-}) \cpmwi \tau
\end{align*}
for $\wlcol{A} \in \{\Id, \plQz \}$ is independent of $N\in\N$ (for $\wlcol{A} = \Id$ this quantity vanishes by (\ref{eq:bum}), for $\wlcol A = \plQz$ independence of $N$ follows since the projection $\plQz$ removes non-essential legs on the right factor, but these are the only one that come with a type which we rescale). This concludes the proof.
\end{proof}


Finally, the key result of this section is the following corollary, which finishes the proof of Assumption~\ref{ass:CJHopfIdeal}.

\begin{corollary}\label{cor:CJ:hopf:ideal}
The ideal $\CJcon$ is a Hopf ideal in $\CT_-$.
\end{corollary}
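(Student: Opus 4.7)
\medskip

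\noindent\textbf{Plan.} The strategy is to transport Proposition~\ref{prop:interaction:wlCJ:Delta} down to the Hopf algebra $\pT$ via the canonical projection $\spadeP=\spadePsym\symP:\plCT\to\pT$, and then use the Hopf algebra isomorphism $\pi:\pT\to\CT_-$ from Lemma~\ref{lem:hopfiso} to transfer the conclusion to $\CT_-$.

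First, I would record the naturality of the coproducts: since $\symP:\plCT\to\symCT$ is a Hopf algebra morphism (from the definition of $\wlS$ as a Hopf ideal) and $\spadePsym:\symCT\to\pT$ is a Hopf algebra morphism (Lemma~\ref{lem:CTadze:hopf:ideal}), the composition $\spadeP$ satisfies $\cpmh^{\pT}\circ\spadeP=(\spadeP\otimes\spadeP)\circ\cpm^{\plCT}$. Next, starting from the second inclusion of Proposition~\ref{prop:interaction:wlCJ:Delta},
\[
\cpm\,\wlCJad\ \subseteq\ (\wlCJad\otimes\plCT)+(\plCT\otimes\wlCJ),
\]
I would apply $\spadeP\otimes\spadeP$ to both sides. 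Using $\spadeP(\wlCJad)=\spadeP(\wlCJ)=\adzeCJ$ (this is the identity \eqref{eq:identity:adzeCJ}), the right-hand side maps into $\adzeCJ\otimes\pT+\pT\otimes\adzeCJ$, while by naturality the left-hand side equals $\cpmh^{\pT}(\adzeCJ)$. This shows that $\adzeCJ$ is a Hopf ideal in $\pT$ (modulo the routine verification that $\adzeCJ$ is stable under the antipode, which for a graded connected Hopf algebra follows automatically from the coproduct inclusion and the fact that $\adzeCJ$ is a two-sided ideal by construction).

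Finally, I would invoke Lemma~\ref{lem:hopfiso}, which tells us that $\pi:\pT\to\CT_-$ is a Hopf algebra isomorphism. Combined with Lemma~\ref{lem:CJ:wlCJ}, which gives $\CJ=\pi(\adzeCJ)$, the Hopf ideal property transports directly: for any $\sigma\in\CJ$, writing $\sigma=\pi(\tilde\sigma)$ with $\tilde\sigma\in\adzeCJ$, we obtain
\[
\cpmh\,\sigma\ =\ (\pi\otimes\pi)\cpmh^{\pT}\tilde\sigma\ \in\ (\pi\otimes\pi)\bigl(\adzeCJ\otimes\pT+\pT\otimes\adzeCJ\bigr)\ =\ \CJ\otimes\CT_-+\CT_-\otimes\CJ,
\]
as required.

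There is essentially no obstacle to this argument: all the hard analytic and algebraic work has already been carried out in Proposition~\ref{prop:interaction:wlCJ:Delta} and Lemmas~\ref{lem:CJ:wlCJ} and~\ref{lem:hopfiso}. The corollary is a short bookkeeping step that strings these three results together through the commutative diagram relating $\plCT$, $\pT$ and $\CT_-$ and their respective coproducts.
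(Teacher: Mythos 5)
Your proof is correct and follows essentially the same route as the paper: reduce to showing $\adzeCJ$ is a Hopf ideal in $\pT$ via Lemma~\ref{lem:CJ:wlCJ} and the isomorphism of Lemma~\ref{lem:hopfiso}, then deduce this from the inclusion \eqref{eq:inclusion:CJ} of Proposition~\ref{prop:interaction:wlCJ:Delta} together with the identity \eqref{eq:identity:adzeCJ}. The paper's own proof is just a terser version of the same bookkeeping.
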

\begin{proof}
By Lemma~\ref{lem:CJ:wlCJ} is suffices to show that $\adzeCJ$ is a Hopf ideal in $\symT{\adze}$. 
This in turn follows from (\ref{eq:inclusion:CJ}) and the identity (\ref{eq:identity:adzeCJ}).
\end{proof}


\subsection{Rigidities between renormalisation constants}\label{sec:rigidities}
\label{sec:constraints:rigidities}

In this section we are going to prove Assumption~\ref{ass:CHBPHZcharacters}.
We first build for any smooth shifted noise $\eta\in\SM_{\infty}$  characters $\hat g^\eta\in\CH$ and $f^\eta \in \CG_-$ such that $g^{\eta}= f^\eta\circ \hat g^\eta$. Recall that $\CH$ is the annihilator of the ideal $\CJ$ defined in Definition~\ref{def:CH}. We also recall for this the notation introduced in Section~\ref{sec:evaluation:largescale}, which we will use heavily in this section. As above, we always set $\deg_\infty\ft:=\fancynorm\ft_\fs-|\fs|$ for any kernel type $\ft \in \FL_+$, and we fix from now on the homogeneous large-scale kernel assignment $R_\ft:=\hat K_\ft-K_\ft$ for any $\ft\in\FL_+$. Recall that with this definition one has $(R_\ft)_{\ft\in\FL_+}\in\CK^+_0$.

Furthermore, we fix a smooth, symmetric under $\groupD$, compactly supported function $\varphi\in \CCg$ such that $\varphi\equiv 1$ in a neighbourhood of the origin. Given this function $\varphi$ we build an element $\phi\in\SN_\sym$ by setting $\phi_\cw=\varphi$ for any $\cw\in\CW$.

With this notation we introduce for any smooth noise $\eta \in \SM_\infty$ a character $\hat g^\eta\in\CG_-$ by setting
\[
\hat g^\eta := \wlchar{g ^{\eta,\phi} _R} \symiota,
\]
where $\wlchar{g ^{\eta,\phi} _R}$ is the character on $\symCT$ defined in (\ref{eq:g:sym:eta:psi}), and we define $f^\eta\in\CG_-$ by
\begin{align}\label{eq:definition:f}
g^\eta=f^\eta\circ\hat g^\eta.
\end{align}
One has the identity $\hat g^\eta = \wlchar{g ^{\eta,\phi} _R} \iota$ where $\wlchar{g ^{\eta,\phi} _R}$ is as in (\ref{eq:g:wl:eta:psi:R}) for \emph{any} admissible embedding $\iota : \CT_- \to \plCT$.
We assume that $\varphi\equiv 1$ holds in a large enough neighbourhood of the origin so that Lemma~\ref{lem:identity:giota:g} applies.
\begin{lemma}\label{lem:convergence:g:hat}
For any smooth shifted noise $\eta\in\SMsinf$ one has that $\hat g^\eta\in\CH$, where $\CH$ denotes the annihilator of $\CJ$, see Definition~\ref{def:CH}.
\end{lemma}
\begin{proof}
Fix an admissible embedding $\iota:\CT_- \to \plCT$. We have to show that $\hat g^\eta = \wlchar{g ^{\eta,\phi} _R} \iota$ vanishes on $\CJ$, so that is suffices to show that $\wlchar{g ^{\eta,\phi} _R}$ vanishes on $\iota \CJ \ssq \plCT$. 
Recalling that $\CJcon = \pi\spadeP \wlCJad$ we see that $\symP \iota\CJ = \symP \wlCJad$, and since the character $\wlchar{g ^{\eta,\phi} _R}$ is invariant under the symmetry group $\Glegs$, it suffices to show that $\wlchar{g ^{\eta,\phi} _R}$ vanishes on $\wlCJad$. For this we use the fact that $\wlCA \wlCJad \subseteq \wlCJhat$ (c.f.\ Proposition~\ref{prop:interaction:wlCJ:Delta}), and the fact that by definition the character $\bar\Upsilon^{\eta,\phi}_R$ vanishes on $\wlCJhat$.
\end{proof}

We are left to show that the map $\eta\mapsto f^\eta$ extends continuously to $\eta\in\SMsz$. A possible approach to show such a statement would be to use an inductive argument  in the number of edges of a tree $\tau \in \CT_-$, and to use the fact that we can re-write the definition of $f^\eta$ in (\ref{eq:definition:f}) as
\begin{align}\label{eq:identity:g:f:g:hat}
f^\eta\tau = g^\eta\tau - 
	(f^\eta\otimes\hat g^\eta)( \cpmh -\Id\otimes\one)\tau.
\end{align}
One could then exploit the properties of the coproduct from which it follows that the character $f^\eta$ on the right-hand side of (\ref{eq:identity:g:f:g:hat}) gets only hit by trees that have strictly fewer edges then $\tau$, so that one could try to match the diverging terms coming from $g^\eta$ and $\hat g^\eta$ on the right-hand side. At this point however, this approach leads to relatively complicated expressions, and our arguments are greatly simplified by bounding the linearised expression and using an integration argument. 

We first recast the problem into a problem of characters acting on $\symCT$.
\begin{lemma}
For $\eta \in \SMsinf$ let $\wlchar{\tilde f^\eta}$ be the character of $\symCT$ defined by
\begin{align}\label{eq:tildef:wl}
\wlchar{  {\tilde f}^\eta } \circ \wlchar{ g^{\eta,\phi}_0} = \wlchar{  g^{\eta,\phi}_R}.
\end{align}
If the map $\eta \mapsto \wlchar{  {\tilde f}^\eta }$ extends continuously to $\SMsz$, then so does the map $\eta \mapsto  f^\eta$.
\end{lemma}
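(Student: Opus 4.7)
The plan is to pass the assumed continuity of $\eta\mapsto\wlchar{\tilde f^\eta}$ through the defining identity \eqref{eq:tildef:wl} to obtain continuity of $\eta\mapsto\hat g^\eta$, and then use the identity $g^\eta = f^\eta\circ\hat g^\eta$ together with the standard continuity of $\eta\mapsto g^\eta$ and of the group operations in $\CGm$. As a preliminary step, I would show that $\eta\mapsto\wlchar{g^{\eta,\phi}_0}$, viewed as a character of $\symCT$, extends continuously from $\SMsinf$ to $\SMsz$. Combining Proposition~\ref{prop:twisted:antipode:sym} with Lemma~\ref{lem:identity:giota:g}, and using that $\bar\Upsilon^{\eta,\phi}_0$ is invariant under $\Glegs$ so that the identity established there on $\plCT$ descends to $\symCT$, one obtains an explicit formula expressing $\wlchar{g^{\eta,\phi}_0}$ as $g^\eta$ pre-composed with a fixed algebra morphism $\symCT\to\CTm$. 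The continuity of $\eta\mapsto g^\eta\in\CGm$ on $\SMsz$ is a consequence of the analytic BPHZ theorem (cf.\ Lemma~\ref{lem:bound:eta} and Lemma~\ref{lem:convergenceboldPi}), so $\wlchar{g^{\eta,\phi}_0}$ inherits this continuity.

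Truncating $\symCT$ by homogeneity, its character group becomes a finite-dimensional nilpotent Lie group in which both the group product and inversion are polynomial in the evaluations on a basis of trees, hence jointly continuous in the topology of pointwise convergence. Combining this with the assumed continuity of $\wlchar{\tilde f^\eta}$ and the continuity of $\wlchar{g^{\eta,\phi}_0}$ just obtained yields the continuous extension of $\eta\mapsto\wlchar{g^{\eta,\phi}_R} = \wlchar{\tilde f^\eta}\circ\wlchar{g^{\eta,\phi}_0}$ to $\SMsz$. Pulling back along any fixed admissible embedding $\iota\colon\CTm\to\plCT$, the identity $\hat g^\eta=\wlchar{g^{\eta,\phi}_R}\iota$ exhibits $\hat g^\eta$ as an algebra morphism $\CTm\to\R$, hence an element of $\CGm$, depending continuously on $\wlchar{g^{\eta,\phi}_R}$ by precomposition with a fixed linear map. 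Thus $\eta\mapsto\hat g^\eta$ extends continuously to $\SMsz$, and the identity $f^\eta=g^\eta\circ(\hat g^\eta)^{-1}$, together with the continuity of multiplication and inversion in the nilpotent Lie group $\CGm$, finishes the proof.

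The subtlety already neutralised by formulating the hypothesis at the level of $\symCT$ rather than directly on $\CTm$ is that the embedding $\symiota$ is only an algebra morphism and not a coalgebra morphism; only the composition $\spadePsym\symiota = \hopfiso$ upgrades to a Hopf algebra isomorphism, by Lemma~\ref{lem:hopfiso}. Consequently one cannot directly pull the identity \eqref{eq:tildef:wl} back through $\symiota$ to obtain a relation in $\CGm$. The argument above avoids this by carrying out the group product inside the character group of $\symCT$ first, and only afterwards applying the pullback to produce a single character on $\CTm$.
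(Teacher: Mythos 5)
There is a genuine gap, and it sits at the very first step of your argument: the claim that $\eta\mapsto g^\eta$ extends continuously from $\SMsinf$ to $\SMsz$ is false. The BPHZ character itself diverges as the noise becomes singular — for instance $g^{\xi^\eps}(\cherry)\sim\eps^{-1}$ in $\Phi^4_3$ — and this divergence is the entire reason renormalisation is needed. What the analytic BPHZ theorem (and Lemma~\ref{lem:convergenceboldPi}) gives is continuity of the \emph{renormalised} evaluation $\Upsilon^\eta M^{g^\eta}$, not of the character $g^\eta$. Consequently $\wlchar{g^{\eta,\phi}_0}=g^\eta\pi\zeroP$ does not extend continuously to $\SMsz$, and neither does $\wlchar{g^{\eta,\phi}_R}$, i.e.\ $\hat g^\eta$ (removing the large-scale cutoff only changes the character by an order-one amount, so it diverges just as badly). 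Your factorisations $\wlchar{g^{\eta,\phi}_R}=\wlchar{\tilde f^\eta}\circ\wlchar{g^{\eta,\phi}_0}$ and $f^\eta=g^\eta\circ(\hat g^\eta)^{-1}$ therefore express a convergent quantity as a group product of divergent ones, and continuity cannot be propagated factor by factor. The same objection applies to your final step: continuity of multiplication and inversion in $\CGm$ is of no use when neither factor converges.

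The way out is to relate $\wlchar{\tilde f^\eta}$ \emph{directly} to $(f^\eta)^{-1}$, bypassing $g^\eta$ and $\hat g^\eta$ entirely: one shows the exact identity $\wlchar{\tilde f^\eta}\symiota=(f^\eta)^{-1}$, after which the hypothesis, continuity of precomposition with $\symiota$, and the fact that inversion is a homeomorphism of $\CGm$ finish the proof. Establishing this identity is where the coproduct subtlety you flagged actually has to be confronted, not merely postponed: one must verify that
\[
(\wlchar{\tilde f^\eta}\otimes\wlchar{g^{\eta,\phi}_0})(\symiota\otimes\symiota)\cpmh
=
(\wlchar{\tilde f^\eta}\otimes\wlchar{g^{\eta,\phi}_0})\cpmh\symiota ,
\]
which does not follow formally since $\symiota$ is not a coalgebra morphism. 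The resolution uses Lemmas~\ref{lem:CTadze:hopf:ideal} and~\ref{lem:hopfiso} to show that $\cpmh\symiota-(\symiota\otimes\symiota)\cpmh$ takes values in $\ker\plQz\otimes\symCT+\symCT\otimes\ker\plQz$, then that by admissibility of the embedding the discrepancy actually lies in $\symCT\otimes\ker\plQz$, and finally that $\ker\plQz\ssq\ker\wlchar{g^{\eta,\phi}_0}$ because $\phi\equiv1$ in a large neighbourhood of the origin (Lemma~\ref{lem:identity:giota:g}). None of this is present in your proposal, and without it the statement is not proved.
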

\begin{proof}
For $\eta \in \SMshifted$ let $\tilde f^\eta := (f^\eta)^{-1}$, where the inverse is taken in the character group $\CGm$ of $\CTm$. The operation of taking inverses is a homeomorphism of $\CG_-$, so that is suffices to show that $\tilde f^\eta$ extends continuously to $\SMsz$. 
We claim that one has $\wlchar{  {\tilde f}^\eta } \symiota = \tilde f^\eta$, which concludes the proof, since the map $\symG \to \CGm$, $\wlg \mapsto \wlg \symiota$ is continuous. To see this claim, we are left to show that $\wlchar{  {\tilde f}^\eta } \symiota \circ g^\eta = \hat g^\eta$. Recall that one has
\begin{align}\label{eq:tildef:1}
\tilde f^\eta \circ g^\eta = \hat g^\eta
\qquad\text{ and }\qquad
\hat g^\eta = \wlchar{ g^{\eta,\phi}_R } \symiota \,, \quad
g^\eta = \wlchar{ g^{\eta,\phi}_0 }\symiota
\end{align}
so that we are left to show that
\begin{align}\label{eq:tildef:2}
(\wlchar{  {\tilde f}^\eta } 
\otimes 
\wlchar{ g^{\eta,\phi}_0} 
) (\symiota \otimes \symiota)
\cpmh
= 
(\wlchar{  {\tilde f}^\eta } 
\otimes 
\wlchar{ g^{\eta,\phi}_0} 
) \cpmh \symiota.
\end{align}
Note that the previous identity does not follow immediately, since $\symiota$ is not a Hopf algebra homomorphism. However, using Lemma~\ref{lem:CTadze:hopf:ideal} and Lemma~\ref{lem:hopfiso}, we can show that
\begin{align}\label{eq:tildef:3}
\cpmh \symiota
\in
(\symiota \otimes \symiota) \cpmh 
+
\ker \plQz \otimes \symCT + \symCT \otimes \ker\plQz.
\end{align}
Indeed, note first that $\ker\plQz = \ker\spadePsym$, so that with Lemma~\ref{lem:tensor:kernel} we are left to show that $(\spadePsym \otimes \spadePsym ) (\cpmh \symiota - (\symiota \otimes \symiota)\cpmh) = 0$ on $\symCT$. By Lemma~\ref{lem:hopfiso} the map $\spadePsym \symiota$ is a Hopf isomorphism and by Lemma~\ref{lem:CTadze:hopf:ideal} and the definition of a Hopf factor algebra one has $(\spadePsym \otimes \spadePsym)\cpmh = \cpmh \spadePsym$ on $\symCT$, hence (\ref{eq:tildef:3}) follows.

We now show (\ref{eq:tildef:2}), which concludes the proof. By the definition of admissible embeddings, the definition of $\cpmh$ and $\plQz$ one has $(\plQz \otimes \Id) \cpmh\symiota = \cpmh \symiota $ on $\CTm$, so that we deduce from (\ref{eq:tildef:3}) the stronger inclusion 
$\cpmh \symiota
\in
(\symiota \otimes \symiota) \cpmh 
+
\symCT \otimes \ker\plQz.
$
It remains to note that $\ker\plQz \ssq \ker \wlchar{ g^{\eta,\phi}_0}$, which follows since we chose $\phi=1$ in a large neighbourhood of the origin, compare Lemma~\ref{lem:identity:giota:g}.
\end{proof}

In order to continue, we define for $r>0$ the family of large-scale integration kernels $R^{(r)}=(R^{(r)}_{\ft})_{\ft\in\FL_+}$ by setting
\[
R^{(r)}_\ft(x) :=
 \hat K_\ft \phi ((r+1)^{-\fs} \cdot ) - K_\ft
\]
for any $\ft\in\FL_+$, where $\phi$ is as in Section~\ref{sec:kernels}. This particular way of removing the cutoff has the advantage that $R^{(r)}_\ft + K_\ft  = \hat K_\ft \phi((r+1)^{-\fs} \cdot)$ for any $r>0$, which will be helpful in the proof of Lemma~\ref{lem:fnknbound} below. We also denote by $\wlchar {g^\eta_r}$ the character of $\symCT$ defined by
\[
\wlchar {g^\eta_r} := \wlchar{g^{\eta,\phi}_{R^{(r)}}}.
\]
Note that one has $\lim_{r\to 0} R^{(r)}=0$ and $\lim_{r \to \infty}R^{(r)}=R$, so that it follows from Corollary~\ref{cor:evaluation:trees:largescale} that one has $ \wlchar {g^\eta_r}\to \wlchar {g^\eta_R}$ as $r\to\infty$ for any fixed $\eta\in\SM_\infty$. We define the character $\wlchar{\tilde f^\eta_r}$ analogue to above via the identity $\wlchar {\tilde f^\eta_r} \circ \wlchar {g^\eta_0} = \wlchar {g^\eta_r}$. It follows from the continuity of the group operation that one has $\wlchar{ \tilde f^\eta_0 } = \one^\star$ and $\wlchar{\tilde f^\eta_r} \to \wlchar{\tilde f^\eta}$ as $r\to \infty$. 
Moreover, it follows easily from the fact both $R_\ft$ and $\phi$ are smooth that the maps $r\mapsto \wlchar {g^\eta_r}$ and $r\mapsto \wlchar {\tilde f^\eta_r}$ are smooth functions in $r>0$ for any fixed smooth noises $\eta \in \SMshifted$. We are going to study a differential equation that $\wlchar{\tilde f^\eta_r}$ satisfies for $r>0$. To this end we introduce the following notation.

\begin{definition}
We call a linear map $k: \symCT\to\R$ an \emph{infinitesimal character} if for any $\tau_1,\tau_2\in \symCT$ one has $k(\tau_1\tau_2)=\one^*(\tau_1) k(\tau_2)+k(\tau_1)\one^*(\tau_2)$.
\end{definition}
Note that an infinitesimal character $k$ vanishes on elements which are not linear combination of trees. In particular, one has $k(\one)=0$, where $\one$ is the unity for multiplication. We extend the operation $\circ$ to act on any pair of linear maps $g,h: \symCT \to\R$ by setting $g\circ h:=(g\otimes h)\Delta$, where $\Delta$ denotes the coproduct of the Hopf algebra $\symCT$. With this definition $g\circ h$ is in particular well-defined whenever $g$ and $h$ are characters or infinitesimal characters, and in case both are infinitesimal characters, then $g\circ h-h\circ g$ is again an infinitesimal character. The following is well-known. 
\begin{lemma}
Let $\fg$ denote the space of infinitesimal characters of $\symCT$ and define the bi-linear map $[\cdot,\cdot]:\fg \times \fg \to \fg$ by
$
[k,l]:=k\circ l-l\circ k.
$
Then $\fg$ is the Lie algebra of the character group $\symG$ of $\symCT$.
\end{lemma}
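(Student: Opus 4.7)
The plan is to verify that $\symCT$ carries the structure of a connected graded commutative Hopf algebra, and then apply the standard correspondence between such Hopf algebras and pro-unipotent Lie groups with their Lie algebras of infinitesimal characters.

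First I would fix a grading on $\symCT$. Since $\CTm$ is connected and graded (e.g.\ by the number of edges, or by $-|\cdot|_\fs$ suitably discretised), and since the ideal $\wlS$ is homogeneous with respect to this grading, $\symCT = \bigoplus_{n\ge 0} (\symCT)_n$ inherits a grading with $(\symCT)_0 = \R\cdot\one$ (connectedness). The coproduct $\cpmh$ and antipode respect this grading and $\cpmh \tau - \tau\otimes\one - \one\otimes\tau$ lies in $\bigoplus_{i+j=n, i,j>0} (\symCT)_i \otimes (\symCT)_j$ for $\tau \in (\symCT)_n$. This makes $\symG$ pro-unipotent: on any graded component, the convolution of finitely many linear maps vanishing on $\one$ eventually becomes zero.

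Next I would check that $[\cdot,\cdot]$ indeed maps $\fg\times\fg$ into $\fg$ and satisfies the Jacobi identity. For $k,l\in\fg$ and $\tau_1,\tau_2\in\symCT$, a direct computation using coassociativity, commutativity of the product on $\symCT$ and the fact that $\cpmh$ is an algebra morphism shows that $k\circ l(\tau_1\tau_2)=\one^*(\tau_1)(k\circ l)(\tau_2)+(k\circ l)(\tau_1)\one^*(\tau_2) + k(\tau_1)l(\tau_2)+k(\tau_2)l(\tau_1)$. The symmetric cross-terms cancel in $[k,l]=k\circ l-l\circ k$, so $[k,l]\in\fg$. Jacobi then follows from coassociativity of $\cpmh$ in the standard way.

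The main step is to construct the exponential map $\exp:\fg\to\symG$ by the convolution series $\exp(k):=\sum_{n\ge 0}\tfrac{1}{n!} k^{\circ n}$, with $k^{\circ 0}:=\one^*$. The pro-unipotency established above ensures that on each $\tau\in\symCT$ only finitely many terms contribute, so the series is well-defined, and a routine check using commutativity of $\symCT$ shows $\exp(k)$ is multiplicative, hence in $\symG$. Conversely, for $g\in\symG$, the series $\log(g):=\sum_{n\ge 1}\tfrac{(-1)^{n+1}}{n}(g-\one^*)^{\circ n}$ converges for the same reason and lies in $\fg$. Standard manipulations (mirroring the classical Baker-Campbell-Hausdorff formalism in the pro-nilpotent setting) show $\exp$ and $\log$ are mutually inverse bijections, and the Lie bracket on $\fg$ induced by pulling back the group structure of $\symG$ through $\exp$ agrees with $[k,l]=k\circ l-l\circ k$. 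Since $\symG$ has a canonical smooth (even analytic) manifold structure coming from its identification with $\fg$ via $\exp$, this identifies $\fg$ as its Lie algebra.

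I expect no serious obstacle here: this is a standard fact about connected graded commutative Hopf algebras, cf.\ e.g.\ \cite[Sec.~6]{BrunedHairerZambotti2016}, where the analogous statement for $\CGm$ is used implicitly. The only mildly delicate point is ensuring that the grading on $\CTm$ descends to a grading on $\symCT$ compatible with all the quotienting performed in Section~\ref{sec:algebra}, which follows because all the ideals $\wl\CI$, $\wlS$ are generated by elements that are homogeneous for the relevant grading.
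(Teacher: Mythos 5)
Your argument is correct and is exactly the standard machinery the paper is invoking: the paper's ``proof'' of this lemma consists of a single citation to \cite[Thm.~3.9]{Bogfjellmo2018}, which establishes precisely this statement for character groups of graded connected Hopf algebras via the pro-unipotent/convolution-exponential formalism you describe. Your only genuinely paper-specific point --- that the grading by edge count on $\wCT$ descends through the quotients by $\wl\CI$ and $\wlS$ because those ideals are generated by homogeneous elements --- is right and is the reason the cited theorem applies here.
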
 
\begin{proof}
See for instance \cite[Thm.~3.9]{Bogfjellmo2018}.
\end{proof}
It is well known that the Lie algebra $\fg$ is naturally isomorphic to the tangent space $T_{\one^\star} \symG$ of $\symG$ at the co-unit $\one^\star \in \symG$ and for fixed $h \in \symG$ both right and left translations $k \mapsto k\circ h$ and $k \mapsto h \circ k$ induce isomorphisms between $\fg$ and the tangent space of $\symG$ at $h$.
We are going to study the differential equation
\begin{align}\label{eq:diffeq:f}
\partial_r \wlchar{\tilde f^\eta_r} =  \wlchar{k^\eta _r} \circ \wlchar{\tilde f^\eta _r} , \qquad\text{for }r>0,
\end{align}
with initial condition $\wlchar{\tilde f_0^\eta}:=\one^\star$. Note that the identity (\ref{eq:diffeq:f}) \emph{defines} an infinitesimal character $\wlchar{k_r^\eta} \in\symg$. 
The reason for studying equation (\ref{eq:diffeq:f}) is the following Lemma.
\begin{lemma}\label{lem:fnkn}
Assume that for any fixed $\eta\in\SMshifted$ the map $r\mapsto \wlchar{k^\eta_r}$ is an element of $L^1(0,\infty)$, and assume that this map extends to a continuous map $\eta\mapsto \wlchar{k^\eta_\cdot}$ from $\SMsz$ into $L^1(0,\infty)$. Then the map $\eta\mapsto \wlchar{\tilde f^\eta}$ extends continuously to $\SMsz$.
\end{lemma}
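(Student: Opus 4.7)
The plan is to solve the initial value problem (\ref{eq:diffeq:f}) explicitly via its Chen-series/time-ordered exponential expansion, exploiting the fact that although $\symG$ is infinite-dimensional, the iterated reduced coproduct is nilpotent on any fixed tree. Formally the solution should be
\[
\wlchar{\tilde f^\eta_r} = \one^\star + \sum_{n\ge 1}\int_{0<r_1<\cdots<r_n<r}\wlchar{k^\eta_{r_1}}\circ\cdots\circ \wlchar{k^\eta_{r_n}}\, dr_1\cdots dr_n,
\]
and I will check that, once evaluated on a fixed basis tree $\tau\in\symCT$, this series terminates after finitely many terms: since each $\wlchar{k^\eta_r}$ vanishes on $\one$ and on products of two non-trivial elements, and since the reduced coproduct $\tilde\Delta := \Delta - \Id\otimes\one - \one\otimes\Id$ strictly decreases the grading by number of edges, the $n$-th term in the expansion vanishes as soon as $n$ exceeds $\#E(\tau)$.

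The natural implementation is then an induction on the grading. Evaluating (\ref{eq:diffeq:f}) on $\tau$ yields
\[
\partial_r \wlchar{\tilde f^\eta_r}(\tau) = \wlchar{k^\eta_r}(\tau) + \bigl(\wlchar{k^\eta_r}\otimes \wlchar{\tilde f^\eta_r}\bigr)\tilde\Delta\tau,
\]
where the second summand only couples to $\wlchar{\tilde f^\eta_r}(\sigma)$ for finitely many $\sigma$ of strictly smaller grading. In the base case $\tilde\Delta\tau=0$ and a single quadrature produces $\wlchar{\tilde f^\eta_r}(\tau) = \int_0^r \wlchar{k^\eta_s}(\tau)\, ds$, the $r\to\infty$ limit of which exists and depends continuously on $\eta\in\SMsz$ exactly by the hypothesis that $\eta\mapsto \wlchar{k^\eta_\cdot}$ is continuous into $L^1(0,\infty)$. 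For the inductive step, the inductive hypothesis yields uniform-in-$r$ bounds and continuity in $\eta$ for all lower-grading components of $\wlchar{\tilde f^\eta_r}$, so that the right-hand side above is an $L^1(0,\infty)$ function in $r$ whose $L^1$-norm depends continuously on $\eta$; a further quadrature and the limit $r\to\infty$ then produce the claim for $\tau$.

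It remains to argue that the $r\to\infty$ limit so obtained actually coincides with $\wlchar{\tilde f^\eta}$ as defined in (\ref{eq:tildef:wl}). For smooth $\eta\in\SMsinf$ this follows from the remark made just after (\ref{eq:diffeq:f}): $R^{(r)}\to R$ in $\CK_0^+$ and Corollary~\ref{cor:evaluation:trees:largescale} give $\wlchar{g^\eta_r}\to\wlchar{g^\eta_R}$ in $\symG$, whence $\wlchar{\tilde f^\eta_r}\to\wlchar{\tilde f^\eta}$. Both sides are now continuous in $\eta$, and they agree on the dense subset $\SMsinf$, so they agree throughout $\SMsz$. The main subtlety I anticipate is that the inductive procedure produces a linear functional on $\symCT$ tree by tree, and one has to verify that the resulting functional is actually multiplicative, hence an element of $\symG$ rather than just of the linear dual; this is however automatic as a pointwise limit of characters, since $\symCT$ is freely generated as a unital commutative algebra by trees and multiplicativity on the polynomial generators is preserved under pointwise convergence.
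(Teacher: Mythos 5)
Your proof is correct, and it takes a genuinely different route from the paper's. The paper disposes of the lemma in one line: it fixes a norm $\|\cdot\|$ on the Lie algebra $\fg$ of infinitesimal characters, takes the induced metric $d$ on the group $\symG$, and invokes the standard Gronwall-type stability estimate for flows of the ODE (\ref{eq:diffeq:f}) with respect to the driving path, namely $d(\wlchar{\tilde f^\eta},\wlchar{\tilde f^{\tilde\eta}})\le \exp\big(\int_0^\infty\|\wlchar{k_r^\eta}-\wlchar{k_r^{\tilde\eta}}\|\,dr\big)$, from which continuity in $\eta$ is immediate. You instead unroll the time-ordered exponential and observe that, evaluated on a fixed tree $\tau$, the series terminates because infinitesimal characters kill $\one$ and non-trivial products while the extraction--contraction coproduct preserves the total edge count, so everything reduces to finitely many quadratures organised as an induction on the grading. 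What the paper's argument buys is brevity, at the cost of being somewhat cavalier about the choice of norm on the infinite-dimensional $\fg$ (and the displayed estimate, read literally, does not vanish at $\eta=\tilde\eta$, so a corrected multiplicative form is clearly intended); what yours buys is that it is entirely componentwise and norm-free, it uses the $L^1$ hypothesis in exactly the tree-by-tree form in which it is later verified in (\ref{eq:k:convergence}), and it makes explicit both the identification of the $r\to\infty$ limit with $\wlchar{\tilde f^\eta}$ on the dense set $\SMsinf$ and the preservation of multiplicativity under pointwise limits. The only points worth tightening are (i) in the inductive step you should state that the lower-order components $\wlchar{\tilde f^{\eta}_r}(\sigma')$ are bounded uniformly in $r$ \emph{and} locally uniformly in $\eta$ (which follows from the induction together with boundedness of $\|\wlchar{k^{\eta}_\cdot}\|_{L^1}$ along convergent sequences), since that is what justifies passing to the limit in $\int_0^\infty \wlchar{k^\eta_s}(\sigma)\wlchar{\tilde f^\eta_s}(\sigma')\,ds$ by dominated convergence; and (ii) the closing sentence should be phrased as ``the candidate functional is continuous on $\SMsz$ and agrees with $\wlchar{\tilde f^\eta}$ on $\SMsinf$, hence is the desired extension'' rather than as an agreement of two maps both already defined on $\SMsz$. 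Neither point is a gap in substance.
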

\begin{proof}
Let $\|\cdot\|$ denote a norm on $\fg$ and let $d(\cdot,\cdot)$ be the induced metric on $\symCT$. Then one has for any $\eta,\tilde\eta\in\SM_\infty$ the estimate
\[
d(\wlchar{\tilde f^\eta}, \wlchar{\tilde f^{\tilde\eta}}) 
	\le \exp \Big( \int_0^\infty \|\wlchar{k_r^\eta}- \wlchar {k_r^{\tilde\eta}}\| dr \Big),
\]
from which the statement follows immediately from the assumption of the lemma.
\end{proof}

Fix from now on a rough noise $\eta\in\SMsz$ and let $\eta^\eps\in\SMshifted$ be any sequence such that $\eta^\eps\to\eta$ in $\SMsz$ as $\eps\to 0$. We will use the simplified notation $\wlchar{g^\eps_r} := \wlchar{g^{\eta^\eps,\phi}_r}$, $\wlchar{ k^\eps_r } := \wlchar{ k^{\eta^\eps}_r }$, and similar for the other characters.
By Lebesgue's theorem it is sufficient to show that the sequence $\wlchar{ k^\eps_r }$ converges as $\eps\to 0$ for any fixed $r>0$, as well as the estimate $\int_0^\infty \sup_{\eps>0}\| \wlchar{ k^\eps_r } \|_{n\times n} dr<\infty$. This is equivalent to showing that there exist infinitesimal characters $\wlchar{ k_r }\in \symg$ such that one has
\begin{align}\label{eq:k:convergence}
\forall r>0:\, \wlchar{ k_r^\eps }(\tau) \to \wlchar{ k_r }(\tau) \text{ as } \eps \to 0 \,,
\qquad\text{ and }\qquad
\int _0 ^\infty \sup_{\eps>0} | \wlchar{ k_r^\eps }(\tau)| dr <\infty
\end{align}
for all $\tau \in \symCT$. In the remainder of this section we show (\ref{eq:k:convergence}), which completes the proof. 
For simplicity, we are going to write
\[
\doublebar\Upsilon^\eps_r:=\bar\Upsilon ^{\eta^\eps,\phi} _{ R^{(r)} }  
\]
for the character on $\symCThat$ from now on. With this notation, we have the following representation of the infinitesimal character $\wlchar{ k_r^\eps }$.

\begin{lemma}\label{lem:k:formula}
One has the identity
\[
\wlchar{ k^\eps_r }(\tau)
=
-(\wlchar{ k^\eps_r } \otimes \doublebar\Upsilon^\eps_r M^{\wlchar{ g^{\eps}_r} }) (\cpmwi - \Id \otimes \one) \tau
	- (\wlchar{ g^{\eps}_r } \otimes \partial_r \doublebar\Upsilon^\eps_r) \cpmwi \tau
\]
for any tree $\tau \in \symCT$.
\end{lemma}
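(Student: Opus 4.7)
The plan is to unfold both sides of the claimed identity by using (i) the defining ODE for $\wlchar{k^\eps_r}$, (ii) coassociativity of $\cpmwi$, and (iii) the twisted antipode relation for $\symCA$, together with its $r$-derivative. First I would record the consequence of the ODE together with the relation $\wlchar{\tilde f^\eps_r} \circ \wlchar{g^\eps_0} = \wlchar{g^\eps_r}$: differentiating in $r$ and right-composing with $\wlchar{g^\eps_0}^{-1}$ yields
\begin{equ}\label{e:keyODE}
\wlchar{k^\eps_r} \circ \wlchar{g^\eps_r} = \partial_r \wlchar{g^\eps_r} \qquad \text{on } \symCT,
\end{equ}
i.e.\ $(\wlchar{k^\eps_r} \otimes \wlchar{g^\eps_r}) \cpmh = \partial_r \wlchar{g^\eps_r}$.

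Next, from Proposition~\ref{prop:twisted:antipode:sym} one has $\CM(\symCA \otimes \Id)\cpmwi = \one^*$ on $\symCT$. Applying the character $\doublebar\Upsilon^\eps_r$ and using its multiplicativity, and then recognising $\wlchar{g^\eps_r} = \doublebar\Upsilon^\eps_r \symCA$, this becomes
\begin{equ}\label{e:Upsinv}
(\wlchar{g^\eps_r} \otimes \doublebar\Upsilon^\eps_r) \cpmwi \tau = \one^*(\tau) \qquad \text{for all } \tau \in \symCT.
\end{equ}
In particular the right-hand side is independent of $r$, so differentiating in $r$ gives
\begin{equ}\label{e:drTwisted}
(\partial_r \wlchar{g^\eps_r} \otimes \doublebar\Upsilon^\eps_r)\cpmwi \tau = -(\wlchar{g^\eps_r} \otimes \partial_r \doublebar\Upsilon^\eps_r)\cpmwi \tau.
\end{equ}

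The main algebraic manoeuvre is then to rewrite the first term on the right-hand side of the claim using the definition $\doublebar\Upsilon^\eps_r M^{\wlchar{g^\eps_r}} = (\wlchar{g^\eps_r} \otimes \doublebar\Upsilon^\eps_r)\cpmwi$ on $\symCThat$ and coassociativity $(\Id \otimes \cpmwi)\cpmwi = (\cpmh \otimes \Id)\cpmwi$, giving
\begin{equ}
(\wlchar{k^\eps_r} \otimes \doublebar\Upsilon^\eps_r M^{\wlchar{g^\eps_r}})\cpmwi \tau
= ((\wlchar{k^\eps_r} \circ \wlchar{g^\eps_r}) \otimes \doublebar\Upsilon^\eps_r) \cpmwi \tau
= (\partial_r\wlchar{g^\eps_r} \otimes \doublebar\Upsilon^\eps_r)\cpmwi\tau,
\end{equ}
where the second equality uses \eqref{e:keyODE}. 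Combining this with \eqref{e:drTwisted} yields
\begin{equ}\label{e:firstTerm}
(\wlchar{k^\eps_r} \otimes \doublebar\Upsilon^\eps_r M^{\wlchar{g^\eps_r}})\cpmwi \tau
= -(\wlchar{g^\eps_r} \otimes \partial_r \doublebar\Upsilon^\eps_r)\cpmwi \tau.
\end{equ}

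Plugging \eqref{e:firstTerm} into the right-hand side of the claim, the two $(\wlchar{g^\eps_r} \otimes \partial_r\doublebar\Upsilon^\eps_r)\cpmwi \tau$ contributions cancel, and one is left with
\begin{equ}
(\wlchar{k^\eps_r} \otimes \doublebar\Upsilon^\eps_r M^{\wlchar{g^\eps_r}})(\Id \otimes \one) \tau
= \wlchar{k^\eps_r}(\tau) \cdot \doublebar\Upsilon^\eps_r M^{\wlchar{g^\eps_r}}(\one) = \wlchar{k^\eps_r}(\tau),
\end{equ}
since $M^g\one = \one$ and any character sends $\one$ to $1$. No step here presents a real obstacle: the only mild subtlety is to be careful that $\cpmwi$ genuinely takes values in the right spaces for $(\Id \otimes \cpmwi)\cpmwi = (\cpmh \otimes \Id)\cpmwi$ to make sense — i.e.\ one uses the compatibility between the Hopf coproduct $\cpmh$ on $\symCT$ and the comodule coproduct $\cpmwi$ on $\symCThat$, which is exactly the coassociativity of the comodule structure set up in Section~\ref{sec:algebra}.
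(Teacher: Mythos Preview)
Your proof is correct and follows essentially the same approach as the paper: both differentiate the twisted-antipode identity $(\wlchar{g^\eps_r}\otimes\doublebar\Upsilon^\eps_r)\cpmwi = \one^*$ in $r$, use $\partial_r\wlchar{g^\eps_r}=\wlchar{k^\eps_r}\circ\wlchar{g^\eps_r}$, and invoke comodule coassociativity to rewrite the resulting first term as $(\wlchar{k^\eps_r}\otimes\doublebar\Upsilon^\eps_r M^{\wlchar{g^\eps_r}})\cpmwi$, then split off $\Id\otimes\one$. The paper's version is more compressed but the content is identical.
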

\begin{remark}
The significance of this formula is that the right-hand side only depends on the character $\wlchar{ k_\tau^\eps }$ on proper subtrees of $\tau$. This identity is thus well adapted to an inductive argument, see the proofs of (\ref{eq:k:convergence:1}) and (\ref{eq:k:convergence:2}) below. 
\end{remark}
\begin{proof}

The key point is that by definition of the character $\wlchar{ g^{\eps} _{r} }$ in $\symCT$ and the definition of the twisted antipode $\symCA$ one has that
\begin{align}\label{eq:renom:vanishes:sym}
\doublebar\Upsilon ^{\eps} _{ r } 
	M^{\wlchar{ g^{\eps}_{r} }} 
	\symfi = 0
\end{align}
on $\symCT$ for any $r>0$, where we use the usual notation
$
M^g := (g \otimes \Id) \Delta_-^\ex
$
on $\symCThat$ for any character $g$ of $\symCT$. 
Differentiating (\ref{eq:renom:vanishes:sym}) with respect to $r$, one obtains
\begin{align*}
0
&= \partial_r  (\doublebar\Upsilon^\eps_r M^{\wlchar{ g^{\eps}_r }} \symfi) \\
&= (\partial_r \wlchar{ g^{\eps}_r } \otimes \doublebar\Upsilon_r^\eps ) \cpmwi  
	+ (\wlchar{ g^{\eps}_r } \otimes \partial_r \doublebar\Upsilon_r^\eps) \cpmwi \\
&= (\wlchar{k_r^\eps} \otimes \doublebar\Upsilon^\eps_r M^{\wlchar{ g^{\eps}_r} }) \cpmwi 
	+ (\wlchar{ g^{\eps}_r } \otimes \partial_r \doublebar\Upsilon^\eps_r) \cpmwi 
\end{align*}
on $\symCT$. In the last equality we used that
\begin{align*}
\partial_r \wlchar{ g^{\eps}_r } 
= \partial_r (\wlchar {\tilde f^\eps_r} \circ \wlchar {g^\eps_0}) 
= \wlchar{k_r^\eps} \circ \wlchar {\tilde f^\eps_r} \circ \wlchar {g^\eps_0} 
=  \wlchar{k_r^\eps} \circ  \wlchar{ g^{\eps}_r }.
\end{align*}
\end{proof}

As a consequence, we have the following sufficient condition for (\ref{eq:k:convergence}) to hold.

\begin{lemma}\label{lem:fnknbound}
Let $\tau \in \symCT$ be a tree and assume that (\ref{eq:k:convergence}) holds on the Hopf subalgebra $\symCT[\tau]$ generated by all trees $\sigma \in \symCT$ with strictly less edges than $\tau$. Then one has 
that 
\begin{align}\label{eq:k:convergence:1}
(\wlchar{ g^{\eps}_r }\otimes \partial_r \doublebar \Upsilon^\eps_r) \cpmwi \tau
\end{align}
converges to a finite limit $\eps\to 0$ for any $r>0$, and its supremum over $\eps \in (0,1)$ is moreover bounded in $L^1(0,\infty)$ as a function in $r$. Furthermore, for any properly legged tree $\sigma \in \symCThat$ with $|\sigma|_+<0$ and with strictly less edges than $\tau$ one has that 
\begin{align}\label{eq:k:convergence:2}
\doublebar\Upsilon^\eps_r M^{\wlchar{ g^{\eps}_r }} \sigma
\end{align}
converges to a finite limit as $\eps\to 0$, and is moreover bounded uniformly in $r>0$ and $\eps\in (0,1)$.
In particular, (\ref{eq:k:convergence}) holds for $\tau$.
\end{lemma}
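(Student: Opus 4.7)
The plan is to first derive the ``in particular'' statement (\ref{eq:k:convergence}) for $\tau$ as an immediate consequence of the two assertions about (\ref{eq:k:convergence:1}) and (\ref{eq:k:convergence:2}), and then establish these two bounds in turn. Using Lemma~\ref{lem:k:formula}, I would write
\[
\wlchar{k_r^\eps}(\tau)=-\bigl(\wlchar{k_r^\eps}\otimes\doublebar\Upsilon_r^\eps M^{\wlchar{g_r^\eps}}\bigr)(\cpmwi-\Id\otimes\one)\tau-\bigl(\wlchar{g_r^\eps}\otimes\partial_r\doublebar\Upsilon_r^\eps\bigr)\cpmwi\tau.
\]
Since $\wlchar{k_r^\eps}$ is an infinitesimal character, it annihilates genuine products, so only terms $\sigma_1\otimes\sigma_2$ in $(\cpmwi-\Id\otimes\one)\tau$ where $\sigma_1$ is a single tree (or $\one$) contribute. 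In either case $\sigma_2$ is non-trivial and $\sigma_1$ has strictly fewer edges than $\tau$, so that $\wlchar{k_r^\eps}(\sigma_1)$ is controlled by the induction hypothesis. Multiplying the pointwise-convergent, $\sup_\eps$-$L^1$-in-$r$ estimate on $\wlchar{k_r^\eps}(\sigma_1)$ with the pointwise-convergent, $\sup_{\eps,r}$-bounded estimate on $\doublebar\Upsilon_r^\eps M^{\wlchar{g_r^\eps}}(\sigma_2)$ provided by (\ref{eq:k:convergence:2}), and combining with (\ref{eq:k:convergence:1}), yields that $r\mapsto\wlchar{k_r^\eps}(\tau)$ converges pointwise with $\sup_\eps$-$L^1$ control.

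For (\ref{eq:k:convergence:2}), I would use the factorisation $\wlchar{g_r^\eps}=\wlchar{\tilde f_r^\eps}\circ\wlchar{g_0^\eps}$ to write
\[
\doublebar\Upsilon_r^\eps M^{\wlchar{g_r^\eps}}\sigma=\doublebar\Upsilon_r^\eps M^{\wlchar{g_0^\eps}}M^{\wlchar{\tilde f_r^\eps}}\sigma.
\]
By Lemma~\ref{lem:identity:giota:g}, $\doublebar\Upsilon_r^\eps M^{\wlchar{g_0^\eps}}$ coincides (modulo the trivial projection $\plZ$ on legs) with the BPHZ-renormalised evaluation $\hat\Upsilon^{\eta^\eps,\phi}_{R^{(r)}}$, which by Corollary~\ref{cor:evaluation:trees:largescale} and the bound (\ref{eq:bound:evluation:largescales}) extends continuously to $(\eta,R^{(r)})\in\SMstarz\times\CK_0^+$ with norm controlled by $\|\eta^\eps\|_\fs$ times the uniformly (in $r$) bounded $\CK^+$-norm of $R^{(r)}$. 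Since $\sigma$ has fewer edges than $\tau$, the induction hypothesis (combined with the ODE $\partial_r\wlchar{\tilde f_r^\eps}=\wlchar{k_r^\eps}\circ\wlchar{\tilde f_r^\eps}$, a Gronwall-type argument, and the fact that $\wlchar{k_r^\eps}$ is an infinitesimal character so only touches smaller trees in the composition) shows that $\wlchar{\tilde f_r^\eps}$ converges uniformly in $r$ and is $\sup$-bounded on trees of edge-count less than $\#E(\tau)$; hence $M^{\wlchar{\tilde f_r^\eps}}\sigma$ lies in a bounded, convergent family in $\symCThat$, and the desired conclusion follows.

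For (\ref{eq:k:convergence:1}), I would differentiate explicitly: by the chain rule
\[
\partial_r\doublebar\Upsilon_r^\eps=\sum_{e\in K(\sigma)}\doublebar\Upsilon_r^{\eps,(e)},
\]
where $\doublebar\Upsilon_r^{\eps,(e)}$ denotes the evaluation in which the kernel on edge $e$ is replaced by $\partial_r R^{(r)}_{\ft(e)}=\hat K_{\ft(e)}\,\partial_r\phi((r+1)^{-\fs}\cdot)$. This derivative kernel is supported on an annulus $|x|_\fs\sim r$ and satisfies $\|\partial_r R^{(r)}_\ft\|_{\CK^+,\ft,k}\lesssim r^{-1}$ in the effective scale, so that an application of the bound of Theorem~\ref{thm:evaluation:trees:largescale} and an inspection of the tight-partition count as in Lemma~\ref{lem:super:regularity:implies:large:scale:bound} (noting that ``cutting'' the tree at edge $e$ yields a tight partition of strictly negative degree, by the strict inequality $\deg_\infty\CP<0$) shows that each term is bounded by $Cr^{-1-\beta}$ (for large $r$) and $Cr^{-1+\beta'}$ (for small $r$), with $\beta,\beta'>0$ coming from the strict negativity of $\deg_\infty\CP$. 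Pairing with the uniformly bounded character $\wlchar{g_r^\eps}$ (bounded on proper subtrees by the argument of the previous paragraph) and integrating yields both pointwise convergence in $\eps$ (from Corollary~\ref{cor:evaluation:trees:largescale} applied to the modified kernel assignment) and the $\sup_\eps$-$L^1$ bound in $r$.

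The main obstacle is the integrability assertion in (\ref{eq:k:convergence:1}): one must extract from the super-regularity of the trees involved a \emph{strict} rate of decay of $\partial_r\doublebar\Upsilon_r^\eps$ both as $r\to 0$ and as $r\to\infty$. The $r\to\infty$ part follows from the negativity of $\deg_\infty\CP$ on tight partitions (exactly as in Theorem~\ref{thm:evaluation:trees:largescale}), while the small-$r$ part requires an additional observation that differentiating in $r$ forces an edge to live at scale $r$ and hence generates an extra positive power of $r$ after rescaling, compensated by the $r^{-1}$ from the derivative; ensuring that the net exponent is strictly away from $-1$ is what requires the strict super-regularity inequality and constitutes the technically delicate step.
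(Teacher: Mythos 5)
Your proposal is correct and follows essentially the same route as the paper: the reduction of (\ref{eq:k:convergence}) to the two displayed bounds via Lemma~\ref{lem:k:formula} together with the fact that the infinitesimal character only sees the tree factor, the treatment of (\ref{eq:k:convergence:2}) via $\wlchar{\tilde f^\eps_r}\circ\wlchar{g^\eps_0}=\wlchar{g^\eps_r}$, Lemma~\ref{lem:identity:giota:g} and the inductive control of $\wlchar{\tilde f^\eps_r}$, and the differentiation in $r$ for (\ref{eq:k:convergence:1}) all match. The only cosmetic difference is that where you differentiate ``by hand'' via the Leibniz rule, the paper packages the same computation into an operator $\CD$ on a regularity structure enlarged by derivative kernel types $\partial\FL_+$ with $R^{(r)}_{\partial\ft}:=\partial_r R^{(r)}_\ft$, so that Theorem~\ref{thm:evaluation:trees:largescale} and the bound (\ref{eq:bound:evluation:largescales}) apply verbatim. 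One small correction to your final paragraph: since $R^{(r)}_\ft=\hat K_\ft\,\phi((r+1)^{-\fs}\cdot)-K_\ft$, the derivative kernel is supported at scale $r+1$ rather than $r$, so there is no small-$r$ singularity at all — the delicate point you anticipate there does not arise, and integrability on $(0,1)$ is trivial; the only content is the decay $\|R^{(r)}_{\partial\ft}\|_{\CK^+,\partial\ft}\lesssim r^{-1-\kappa}\wedge 1$ at large $r$, obtained by shifting the degree assignment by $\kappa$. You should also note the exceptional case $\sigma\in\CV_0$ in (\ref{eq:k:convergence:2}), where Lemma~\ref{lem:super:regularity:implies:large:scale:bound} does not apply but the evaluation vanishes identically by Assumption~\ref{ass:technical} (cf.\ Remark~\ref{rmk:large:scale:V0}).
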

\begin{remark}
The relative simplicity of (\ref{eq:k:convergence:1}) and (\ref{eq:k:convergence:2}) over the corresponding expressions one would get in the strategy outline in (\ref{eq:identity:g:f:g:hat}) is the main motivation for choosing this approach.
\end{remark}
\begin{proof}
Using Lemma~\ref{lem:k:formula}, it is clear that (\ref{eq:k:convergence:1}) and (\ref{eq:k:convergence:2}) imply (\ref{eq:k:convergence}). Note that in (\ref{eq:k:convergence:2}) it is sufficient to consider $\sigma$ with strictly less edges than $\tau$, since $\wlchar{k_r^\eps}$ is an infinitesimal character and vanishes on the unit element $\one$.

In order to see the converse, we first show (\ref{eq:k:convergence:2}). Recall that the large scale integration kernels $K^{(r)}$ converge to $\hat K - K$ in $\CK_0^+$ as $r \to \infty$ and the smooth noises $\eta^\eps$ converge to  $\eta$ in $\SMsz$ as $\eps \to 0$. Since $\sigma$ is properly legged by assumption, the convergence of the expression
\[
\lim_{\eps\to 0}\doublebar\Upsilon^\eps_r M^{\wlchar {g^{\eps}_0}} \sigma
\] 
and the uniform boundedness of $\doublebar\Upsilon^\eps_r M^{\wlchar {g^{\eps}_0}} \sigma$ in $\eps>0$ and $r>0$ are a consequence of Theorem~\ref{thm:evaluation:trees:largescale}, Lemma~\ref{lem:super:regularity:implies:large:scale:bound} and (\ref{eq:identity:eval}). (If $\sigma \in \CV_0$, then this expression vanishes for any $\eps,r>0$.)
The remaining obstacle is therefore the presence of the character $\wlchar{g ^\eps _r}$ instead of $\wlchar {g^{\eps}_0}$ in (\ref{eq:k:convergence:2}). However, by definition one has $\wlchar {\tilde f^{\eps}_r} \circ \wlchar {g^{\eps}_0} = \wlchar {g^{\eps}_r}$, so that it suffices to show that the character $\wlchar {\tilde f^{\eps}_r}$ restricted to $\symCT[\tau]$ is uniformly bounded in $\eps,r>0$ and converges as $\eps\to 0$ to a finite limit. This is a consequence of (\ref{eq:k:convergence}), which holds on $\symCT[\tau]$ by assumption.

In order to derive the bound (\ref{eq:k:convergence:1}) we make the following construction. Consider the extended  set of kernel types $\bar\FL_+:=\FL_+\sqcup\partial\FL_+$ where $\partial\FL_+:=\{\partial\ft:\ft\in\FL_+\}$ is a disjoint copy of $\FL_+$. We let $|\partial \ft|_\fs:=|\ft|_\fs$ for any $\ft\in\FL_+$, and we extend the rule $R$ to a rule $\bar R$ by allowing any kernel-type $\ft$ to be replaced by $\partial\ft$. 
We  denote by $\wlTexA$, $\symCTA$, and $\symCThatA$ the respective spaces constructed in Section~\ref{sec:evaluation:largescale} starting from the rule $\bar R$. 
Finally, we introduce a linear operator $\CD:\wlTex\to \wlTexA$ by setting for any tree $\tau=(T^{\fn,\fo}_\fe,\ft)$ 
\[
\CD\tau:=\sum_{e \in K(T)} (T^{\fn,\fo}_\fe,\partial^e \ft)
\]
where $\partial^e \ft:K(T)\sqcup L(T)\to\bar\FL_+\sqcup \FL_-$ is defined by setting $(\partial^e \ft)_{f}:=\ft_f$ for any $f\in K(T)\backslash\{e\} \sqcup L(T)$, and $(\partial^e\ft)_e:=\partial\ft_e$.
We extend this to a linear operator 
$\CD:\symCThat \to \symCThatA$ by imposing that the Leibnitz rule $\CD(\tau\sigma)= \CD(\tau) \sigma + \tau\CD(\sigma)$ holds.

Finally, we define the kernel assignments $K_{\partial\ft}=0$ and $R^{(r)}_{\partial\ft}:=\partial_r R^{(r)}_\ft$ for any $\ft\in\FL_+$, and we write again
$\doublebar\Upsilon^\eps_r:= \bar\Upsilon^{\eta^\eps,\phi}_{R^{(r)}}$ for the character on $\symCThatA$. 
It follows that (\ref{eq:k:convergence:1}) is equal to
\[
\doublebar\Upsilon^\eps_r M^{\wlchar {g^\eps_r}}\CD\tau,
\]
where we view $\gr$ as a character on $\symCTA$ by setting $\gr(\tau):=0$ for any tree $\tau \in \symCTA$ which contains an edge $e \in K(\tau)$ such that $\ft(e) \in \partial\FL_+$.

Using the induction hypothesis and an argument identical to before (using the identity $\wlchar {\tilde f^{\eps}_r} \circ \wlchar {g^{\eps}_0} = \wlchar {g^{\eps}_r}$ and the fact that by (\ref{eq:k:convergence}) the sequence $\wlchar{ \tilde f^{\eps}_r }$ is bounded), it is now sufficient to bound
\[
\doublebar\Upsilon^\eps_r M^{\wlchar{g^\eps_0}}\CD\tau,
\]
which is again bounded uniformly in $\eps>0$ and $r>0$ as a consequence of Theorem~\ref{thm:evaluation:trees:largescale}. It remains to show that this expression is absolutely 
integrable over $r\in (0,\infty)$ and that this integral is uniformly bounded in $\eps>0$. For this note 
that $\CD \tau$ satisfies the conditions of Theorem~\ref{thm:evaluation:trees:largescale} with the degree 
assignment $\deg_\infty \partial \ft:=\deg_\infty \ft := |\ft|_\fs - |\fs| = \fancynorm\ft_{\fs} - |\fs| - \kappa$ 
for $\ft \in \FL_+$. With this degree assignment however it follows that one has 
$\|R_{\partial\ft}^{(r)}\|_{\CK^+,\partial\ft}\lesssim r^{-\kappa-1}\land 1$ uniformly in $r>0$, and 
we conclude  with (\ref{eq:bound:evluation:largescales}).
\end{proof}

\section{The construction of the shift}
\label{sec:shift}

We fix a character $h \in \fxi \circ \CH$, and we finally construct a sequence $\srn_\delta\in\SMsinf$ for $\delta>0$ such that (\ref{eq:shiftednoisehomo}) and (\ref{eq:shiftedtreeexpectationA}) hold. Let us first motivate the construction below. The convergence in (\ref{eq:shiftednoisehomo}) requires us to choose $\srn_\delta$ in such a way that for any $\ft\in\FL_-$ one has 
\begin{align}\label{eq:shifttozero}
\xi_\ft+(\srn_\delta)_\ft\to 0
\end{align} 
in $\SMsz$. This could simply be accomplished by setting $(\srn_\delta)_\ft=-\xi_\ft^\delta$, where $\xi^\delta$ is a $\delta$-regularisation of $\xi$. However, with this choice there is no hope of satisfying (\ref{eq:shiftedtreeexpectationA}) as well. At this point we make the observation that introducing a perturbation of $-\xi^\delta_\ft$, which lives on scales much smaller than $\delta$, may not destroy the convergence $\eqref{eq:shifttozero}$. On the other hand, such small-scale perturbations of $-\xi^\delta_\ft$ generate resonances in expressions of the type (\ref{eq:shiftedtreeexpectationA}), and the fact that a tree $\tau$ has negative homogeneity implies that perturbations weak enough not to destroy (\ref{eq:shifttozero}) might at the same time give non-vanishing contributions to (\ref{eq:shiftedtreeexpectationA}).

Let us briefly compare this idea to the strategy used in \cite{Friz2016} to show a support theorem for the 2D multiplicative heat equation with purely spatial white noise, known as the 2D-PAM equation. Although the set-up in their paper differs slightly from ours (they use the theory of paracontrolled distributions rather than regularity structures and hard cutoffs of the noise in Fourier space rather than regularisations via convolution) the spirit of the two approaches are similar.
At this stage the authors of \cite{Friz2016} use \emph{deterministic} perturbations of $-\xi^\delta$ at a fixed frequency in order to generate the required resonances. Deterministic perturbations do not fall in our setting, since we assume our noises to be stationary and centred (one could of course use randomly shifted oscillations at a fixed frequency, but this does not seem to generalise well). There are two major reasons why we prefer to use perturbations which are random instead of deterministic.

The first reason concerns the type of expression one gets when calculating expected values of the form (\ref{eq:shiftedtreeexpectationA}). By considering random stationary shifts, we ensure that these expressions are constant (as opposed to space-time dependent). Moreover, by choosing the shift to be non-Gaussian we have a freedom to control the cumulants built between the original noise and the shift. This will be crucial in order to control the expected value of all $\tau \in \FT_-$ (see Definition~\ref{def:FT}) simultaneously.

The second reason concerns the bound of variances of the shifted model, once the expectation can be controlled. This argument was carried out in Proposition~\ref{prop:shiftednoise}. The proof of this proposition uses crucially the results from \cite{ChandraHairer2016}, which in turn requires the shift to be stationary and centred. 

Both of these points were carried out in \cite{Friz2016} by hand, and the success of this strategy seems to rely heavily on the fact that the corresponding regularity structure is relatively simple (in particular the set $\FT_-$ contains only a single tree). 

\subsection{Enlarging the regularity structure}\label{sec:ext:reg:str}

Following the discussion above, we will choose the shift $\srn_\delta = -\xi^\delta + k^\delta$ for some random perturbation $k^\delta$ living on scales much smaller than $\delta$. The random smooth function $k^\delta$ in turn will be written as a sum over functions $k^\delta_{(\Xi,\tau)}$, where $(\Xi,\tau)$ runs over all pairs of noise types $\Xi \in \FL_-$ and trees $\tau \in \FT_-$ with the property that $\Xi \in \ft(L(\tau))$. 
In order to keep the notation clean, we will introduce an extended set of noise types as follows.
For any type $\Xi\in\FL_-$ we let $\tilde\Xi$ be a new symbol such that $\tilde\Xi\notin\FL_-$.
We then define for any type $\Xi\in\FL_-$ the set (recall that $\FL_- \ssq \CJ$ if one identifies elements of $\FL_-$ with elements of $\CT_-$, see Definition~\ref{def:CH}, so that $\tau \in \FT_-$ implies $|L(\tau)| \ge 2$)
\begin{equ}[idx:sFLm]
\sFLm[\Xi]:=\{\Xi,\tilde\Xi\}\cup\{(\Xi,\tau):
	\tau\in\FT_-
	\text{ such that } \exists u \in L(\tau) \text{ with } \ft(u)=\Xi\}\;.
\end{equ}

Consider then the construction given in \cite[Sec.~5]{2DYangMills}, in particular the
class of natural transformations considered in Remark~5.18, the direct sum 
decompositions of Section~5.3, and the construction of regularity structures 
and associated spaces in Sections~5.5--5.8. Recall that the purpose of this construction
is the following. Take a set of types $\FL$ and a rule $R$ as above, as well as a
``space assignment'' $V$, namely a collection of finite-dimensional vector
spaces $V_\Xi$, one for every type $\Xi \in \FL$. Then, \cite[Sec.~5.6]{2DYangMills}
describes a way of using this data to build regularity structures
$\CT$, $\CT^\ex$, spaces $\CT_+$, $\CT_-$, etc which is analogous to the construction
of \cite{BrunedHairerZambotti2016}, but with a copy of $V_\Xi$ ``attached'' to every edge of type $\Xi$,
so that a tree $\tau$ now isn't a basis vector of $\CT$, but defines
a subspace $\CT[\tau]$ that is isomorphic to a suitable symmetrisation of 
$\bigotimes_{e \in E} V_{\ft(e)}$, where $E$ denotes the edge set of $\tau$
and $\ft(e)$ is the type of an edge $e$.
(Symmetrisation is needed for example for $\tau = \Xi^2$ which is isomorphic
to the symmetric tensor product $V_\Xi \otimes_s V_\Xi$.)
The ``classical'' construction of these spaces is then obtained as the special case
of the space assignment $\R$ which simply assigns $\R$ to every type.

The construction is functorial in the sense that one constructs ``abstract'' counterparts
$\mathbf{\CT}$, $\mathbf{\CT}_+$, etc of the spaces $\CT$, $\CT_+$, etc as well as 
of the various linear maps $\Delta$, $\Delta_+$,
etc between them as objects and morphisms of a monoidal category of ``symmetric structures''. 
Every space assignment $V$ then yields a functor $\mathbf{F}_V$ mapping
the abstract objects to their ``concrete'' counterparts. Furthermore, given two
vector space assignments $V$ and $W$, as well as a collection of 
linear maps $A_\Xi \colon V_\Xi \to W_\Xi$, \cite[Rem.~5.18]{2DYangMills} 
yields a natural transformation from $\mathbf{F}_V$ to $\mathbf{F}_W$.
In other words, for any ``abstract'' space $\boldsymbol{\CA}$, $A$ determines 
a linear map $A\colon \mathbf{F}_V(\boldsymbol{\CA}) \to \mathbf{F}_W(\boldsymbol{\CA})$
(note the symbol overload here)
intertwining $\mathbf{F}_V(f)$ and $\mathbf{F}_W(f)$ for any morphism 
$f \colon \boldsymbol{\CA}_1 \to \boldsymbol{\CA}_2$.

Note now that \eqref{idx:sFLm} yields a vector space assignment $V$ by setting
$V_\Xi = \R^{\sFLm[\Xi]}$ for $\Xi \in \FL_-$ and $V_\Xi = \R$ for $\Xi \in \FL_+$.
We henceforth use the convention that $\sCT = \mathbf{F}_V(\boldsymbol{\CT})$
and similarly for $\sCTm$, $\sCTex$, etc. \label{idx:extended:reg:str}
For every $\Xi \in \FL_-$, we have a natural embedding
$\iota_\Xi \colon \R \to V_\Xi$ mapping $1$ to $\Xi$, so that the natural transformation 
of \cite[Rem.~5.18]{2DYangMills} mentioned above yields natural embeddings
$\iota \colon \CT \to \sCT$, etc, which we henceforth simply write as $\CT \subset \sCT$, etc.
We also write $\iota_\Xi^* \colon V_\Xi \to \R$ for the ``adjoint'' obtained by mapping $\Xi$ to $1$ and 
all elements of $\sFLm[\Xi] \setminus \{\Xi\}$ to $0$. Similarly, this yields 
projections $\iota^*\colon \sCT \to \CT$, etc.
In particular, $\iota^*$ yields an embedding $\CG_- \hookrightarrow \sCGm$
obtained by mapping any $\CG_- \ni \ell \colon \CTm \to \R$ to $\ell \circ \iota^* \in \sCGm$.

One important remark is given by the ``direct sum decomposition'' verified in \cite[Sec.~5.3]{2DYangMills}. 
When combined with the construction of \cite[Sec.~5.5--5.6]{2DYangMills}, it yields a canonical 
identification of $\sCT$ (and $\sCTm$, $\sCTex$, etc) with the regularity structure
built from the set of symbols $\sFLm = \bigsqcup_{\Xi \in \FL} \sFLm[\Xi]$ with the rule $\shift R$
obtained from $R$ by allowing to replace any given instance of $\Xi$ by an arbitrary element of $\sFLm[\Xi]$.

Our construction comes with a natural ``summation map''
$\SS_\Xi\colon \R \to V_\Xi$ given by
\begin{equ}[e:defShiftOperator]
\SS_\Xi 1 = \sum  \sFLm[\Xi]\;,
\end{equ}
which yields linear maps $\SS \colon \CT \to \sCT$, etc that also commute with all the operations built in
\cite[Sec.~5.5--5.6]{2DYangMills}, so for example
\begin{equ}[lem:shift:operator:co:product]
(\SS \otimes \SS) \Delta = \Delta \SS\;,\qquad
(\SS \otimes \SS) \cpm = \cpm \SS\;,
\end{equ}
etc. Similarly, we have its ``adjoint'' $\SS^* \colon \sCT \to \CT$ 
defined from the linear maps $\SS_\Xi^* \colon V_\Xi \to \R$ 
mapping every element of $\sFLm[\Xi]$ to $1$. It will be convenient to write 
$\SS[\tau]$ for the collection of those canonical basis vectors $\bar \tau \in \sCT$ such that 
$\SS^* \bar\tau = \tau$.

We denote by \label{idx:sSMinfty}$\sSMinfty := \SMsinf(\sFLm)$ the set of smooth noises as in
Definitions~\ref{def:smooth:noise} and~\ref{def:shifted:noise}, and $\sSMsz := \SMsz(\sFLm)$ for its closure under the 
norm (\ref{eq:noise:norm}).
Since $\Vec(\sFLm) \simeq \bigoplus_{\Xi \in \FL} V_\Xi$, we can define
$\SS^*:\sSMinfty\to\SM_\infty$ by $\SS^* \eta = \eta \circ \SS$, and similarly for $\iota^*$.

The following lemma connects the construction of this section to the discussion of the last section.

\begin{lemma}\label{lem:shiftoperator}
Let $\xi \in \SM_\infty$ be a smooth noise, let $\eta\in\sSMinfty$ be a smooth noise 
extending $\xi$ in the sense that $\iota^* \eta = \xi$, and let 
$\srn \in \SM_\infty$ be defined by $\srn:=\SS^*\eta-\xi$.
Then one has for any $\tau\in\CT$
\begin{align} \label{eq:eta:zeta}
(T_{\srn}\hat{\PPi}^{\xi})\tau
=\PPi^{\xi + \srn} M^{g^\xi}\tau 
=\PPi^{\eta} \SS M^{{g}^\xi}\tau.
\end{align}
\end{lemma}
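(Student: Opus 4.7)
The proof splits naturally into verifying the two equalities independently, and neither requires any new machinery beyond Theorem~\ref{thm:translationoperator} and the definitions of $\SS$ and $\SS^*$.

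For the first equality $(T_{\srn}\hat{\PPi}^{\xi})\tau = \PPi^{\xi+\srn}M^{g^\xi}\tau$, my plan is to invoke Theorem~\ref{thm:translationoperator} directly. By \eqref{eq:lift:BPHZ} and \eqref{eq:CR:M} one has $\hat Z^\xi = \CZ(\PPi^\xi M^{g^\xi}) = \CR^{g^\xi}\Zcan(\xi)$, so applying the theorem with $f = \xi$, $h = \srn$ and $g = g^\xi$ yields
\[
T_{\srn}\hat Z^\xi \;=\; T_\srn \CR^{g^\xi}\Zcan(\xi) \;=\; \CR^{g^\xi}\Zcan(\xi+\srn)\;.
\]
Reading off the $\PPi$-component using \eqref{eq:CR:M} gives exactly $\PPi^{\xi+\srn}M^{g^\xi}$, which is the desired identity when evaluated on $\tau$.

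For the second equality, since $M^{g^\xi}\tau \in \CT$, it suffices to establish the pointwise identity $\PPi^{\SS^*\eta}\sigma = \PPi^{\eta}\SS\sigma$ for every $\sigma \in \CT$, because $\xi+\srn = \SS^*\eta$ by definition of $\srn$. I would prove this by induction on the number of edges of $\sigma$. The base case of a single noise-type edge $\Xi \in \FL_-$ is immediate from Definition~\ref{def:shift:operator}:
\[
\PPi^{\eta}\SS \Xi \;=\; \sum_{\snt\in\sFLm[\Xi]}\eta_\snt \;=\; (\SS^*\eta)_\Xi \;=\; \PPi^{\SS^*\eta}\Xi\;.
\]
For the inductive step, the key observation is that $\SS$ is multiplicative and acts only on noise-type labels, leaving the tree shape, the decorations $\fn,\fe,\fo$ and the kernel-edge types untouched (since $\cq$ is the identity on $\FL_+$). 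Hence $\SS$ commutes with each integration operator $\CI_\ft^k$. Since both canonical lifts $\PPi^{\SS^*\eta}$ (on $\CT$) and $\PPi^\eta$ (on $\sCT$) are admissible and multiplicative, satisfying $\PPi^\zeta(\tau_1\tau_2) = \PPi^\zeta\tau_1 \cdot \PPi^\zeta\tau_2$ and $\PPi^\zeta\CI_\ft^k\tau = D^k K_\ft * \PPi^\zeta\tau$, the identity propagates through the recursive definition of the canonical lift.

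I do not expect any real obstacle: the lemma is essentially a formal check that the abstract shift operator $\SS$ on the algebra side reproduces the concrete decomposition $\SS^*\eta = \xi + \srn$ of the noise on the analytic side. The only care required is to confirm that $\SS$ interacts trivially with all structural data of a decorated tree other than the noise-type assignment, which is transparent from Definition~\ref{def:shift:operator}. Note in particular that Lemma~\ref{lem:shift:operator:co:product} is not needed here, although it was essential in the proof of Theorem~\ref{thm:translationoperator} itself to show that shift and renormalisation commute.
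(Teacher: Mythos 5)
Your proposal is correct and follows essentially the same route as the paper, which simply cites Theorem~\ref{thm:translationoperator}; the identity $(T_h\PPi^f)M^g\tau = \PPi^{f,h}\SS M^g\tau$ that you verify by induction for the second equality is exactly the computation already carried out inside the proof of that theorem, here specialised to the enlargement $\sFLm$. Your additional detail on why $\PPi^{\SS^*\eta} = \PPi^\eta\SS$ (multiplicativity, admissibility, and the fact that $\cq$ is the identity on $\FL_+$) is the right justification and introduces nothing beyond what the paper intends.
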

\begin{proof}
This follows from Theorem~\ref{thm:translationoperator}.
\end{proof}

We will show that there exists a double sequence $\eta^{\eps,\delta} \in \sSMinfty$, $\eps, \delta>0$, 
of smooth, random noises with the property that $\eta^{\eps,\delta}$ extends $\xi^\eps$,
one has $\SS^* \eta^{\eps,\delta} \to 0$  in $\sSMsz$ in the limit $\eps \to 0$ and $\delta \to 0$, 
and one has $\lim_{\delta \to 0} \lim_{\eps\to 0} \Upsilon^{\eta^{\eps,\delta}}M^{g^\eps} \SS \tau = h(\tau)$ for any $\tau \in \FT_-$. Setting $\srn:= \SS^*\eta-\xi$ then concludes the proof of Proposition~\ref{prop:shiftednoise}.

We now identify those trees $\sigma \in \SS[\tau]$ that have the property that their expected value depends linearly on the shift. They will give the dominating contribution to $\Upsilon^{\eta^{\eps,\delta}}M^{g^\eps} \SS \tau$.

\begin{definition}\label{def:oSS}
For any $\tau\in\FT_-$ we define $\oSS[\tau]$ as the set of $\sigma\in\SS[\tau]$ such that there exists a noise type edge $u\in L(\sigma)$ such that 
\begin{itemize}
\item one has $\ft(u) = (\Xi,\tilde\tau)$ for some $\Xi \in \FL_-$ and $\tilde\tau \in \FT_-$ with $\tilde\tau \sim\tau$, and
\item for any noise type edge $v\in L(\sigma)\backslash\{ u \}$ one has $\ft(v)\in\FL_-$.
\end{itemize}
Recall Definition~\ref{def:sim} for the definition of the equivalence relation $\sim$ used here.
We also set $\uSS[\tau]:=\SS[\tau]\backslash (\oSS[\tau] \sqcup \{\tau\})$. With this notation, we define
\[
\oSS\tau := \sum_{ \sigma \in \oSS[\tau] } \sigma
\quad{\text{ and }}\quad
\uSS\tau := \sum_{ \sigma \in \uSS[\tau] } \sigma.
\]
\end{definition}
Note that one has the identity
$
\SS=\oSS+\uSS+\Id.
$

\begin{example}
We visualise this construction on the example of the tree $\treeExampleKPZaa$ from the generalised KPZ equation.
Here the set  $\SS[\treeExampleKPZaa]$ is given by
\begin{multline*}
\treeExampleKPZaa, \treeExampleKPZaaEa, \treeExampleKPZaaEaa, \treeExampleKPZaaEb, \treeExampleKPZaaEc, \treeExampleKPZaaEd, \treeExampleKPZaaEe, \treeExampleKPZaaEf, \treeExampleKPZaaEg, \treeExampleKPZaaEh, \\
\treeExampleKPZaaEi, \treeExampleKPZaaEj, \treeExampleKPZaaEk, \treeExampleKPZaaEk, \treeExampleKPZaaEl, \treeExampleKPZaaEm
\treeExampleKPZaaEn
\end{multline*}
Here a leave drawn as square $\leafsE$ is a placeholder for an element in $\sFLm[\leafs] \backslash \{\leafs\}$, and similar for $\leafsa$ and $\leafsaE$. Hence every tree drawn above (except the first one) is really a placeholder for a finite family of trees.
The set $\oSS[\treeExampleKPZaa]$ is then given by
\[
 \treeExampleKPZaaEEa, \treeExampleKPZaaEEaa, \treeExampleKPZaaEEb, \treeExampleKPZaaEEc,
\]
where now $\leafsEE$ only runs over extended noise types $\snt \in \sFLm[\leafs]$ of the form $\snt = (\leafs,\tilde \tau)$, where $\tilde\tau \in \FT_-$ is a tree with exactly 2 noises of type $\leafs$ and 2 noises of type $\leafsa$ (and no other noises); and similar for $\leafsaEE$ and $\leafsa$
\end{example}

\subsection{Construction of the shift as Wiener chaos}
\label{sec:shift:WC}

We will choose the perturbation $\Extn_{(\Xi,\tau)}$ in a homogeneous Wiener chaos of fixed order, so that $\Extn_{(\Xi,\tau)}$ is determined by specifying a kernel $K_{(\Xi,\tau)}^{\eps,\delta}$. 
(To clarify the idea behind the construction below, consider Example~\ref{ex:shift:dominating:intro} in the introduction).

The kernels will be constructed by fixing a smooth, compactly supported function and rescaling it to scales much smaller than $\delta$ at some homogeneity $\shift\fs(\Xi)$, see (\ref{eq:K}), (\ref{eq:K0}). It will be crucial that we choose this homogeneity $\shift\fs(\Xi)$ carefully in such a way that shifted trees $\sigma$ as in Example~\ref{ex:shift:dominating:intro} (i.e.\ where exactly one noise $\Xi$ of some tree $\tau \in \FT_-$ is replaced by $(\Xi,\tau)$) have just slightly negative homogeneity. For this, we fix $\bar\kappa>0$ small enough and we 
define a homogeneity assignment $\shift\fs: \sFL \to \R$ in the following way:

\begin{definition}\label{def:en:homo}
Set $\shift\fs(\Xi):=\shift\fs(\tilde\Xi):=\fs(\Xi)$ for any noise type $\Xi \in \FL_-$, and $\shift\fs(\ft):=\fs(\ft)$ for any kernel-type $\ft \in \FL_+$. For any noise type of the form $(\Xi,\tau)\in\sFLm$, set
\[
\shift\fs(\Xi,\tau):=\fs(\Xi)-\fancynorm\tau_\fs-\bar\kappa.
\]
\end{definition}

We now have two homogeneity assignments $\fs$ and $\shift\fs$ with $\shift\fs\ge \fs - \bar\kappa$ on $\sFLm$.
For any $\eps>0$ and $\delta>0$ we are going to define a random smooth noise $\Extn \in \sSMinfty$ satisfying the following.
\begin{itemize}
\item For any noise type $\Xi\in\FL_-$ one has that $\Extn_\Xi=\xi^\eps_\Xi$ and $\Extn_{\tilde\Xi}=-\xi^\delta_\Xi$.
\item For any noise type $\snt \in\sFLm\backslash\FL_-$ the noise $\Extn_\snt$ is independent of $\eps$.
\item For any noise type of the form $(\Xi,\tau)$ the noise $\Extn_{(\Xi,\tau)}$ is a random centred stationary smooth function that takes values in the $\cm(\Xi,\tau)$-th homogeneous Wiener chaos with respect to $\xi$, where \label{idx:cmXi}$\cm(\Xi,\tau):=[L(\tau),\ft] \backslash\{ \Xi \}$. (Note that $[L(\tau),\ft] \backslash\{ \Xi \}$ denotes the multiset where \emph{exactly one} instance of $\Xi$ is removed from $[L(\tau),\ft]$.)
\end{itemize}

We also write $\cm(\Xi)=\cm(\tilde\Xi):=\{\Xi\}$ for any $\Xi\in\FL_-$.
We now define for any $(\Xi,\tau)\in\sFLm$ 
a smooth kernel $K^{\delta}_{(\Xi,\tau)} \in \CYsimpp{m(\tau) }$, where $m(\tau):=\#L(\tau)-1$, depending only on
$\delta>0$ (compare (\ref{eq:CYsimp}) for the notation used here).
We define $K^{\delta}_{(\Xi,\tau)}$ by rescaling a fixed kernel 
$\Phi_{(\Xi,\tau)} \in \CYsimpp{m(\tau)}$,
independent of $\delta>0$, which will be determined in Lemma~\ref{lem:dominatingpart} below.
In order to avoid case distinctions, we also define for any noise type $\Xi\in\FL_-$ the kernels
\[
\Phi_\Xi := \rho, \qquad \Phi_{\tilde\Xi} := -\rho,
\]
so that $\Phi_\Xi, \Phi_{\tilde\Xi} \in \CYsimpp{1}$.
Recall that $\rho$ is a compactly supported smooth cut-off function integrating to one.
Before we choose the kernels  $\Phi_{(\Xi,\tau)}$, we describe how we rescale them in order to obtain the kernels $K_{(\Xi,\tau)} ^{\delta}$.
Let us first define for any $n \ge 1$,
any scale $\lambda>0$ and any homogeneity $\alpha\in\R$ the rescaling operator $\rescale(\lambda, \alpha) : \CYsimp \to \CYsimp$ by
\begin{align}\label{eq:rescalingoperator}
\rescale(\lambda,\alpha) (K_0 \otimes \ldots \otimes K_n) :=
\lambda^{\alpha-|\fs|} (K_0(\lambda^{-\fs} \cdot) \otimes \ldots \otimes K_n( \lambda^{-\fs} \cdot)).
\end{align}
Note that $\CU K$ transforms as
\[
(\CU \rescale(\lambda,\alpha) K) (x_1, \ldots, x_n)
= 
\lambda^{\alpha} (\CU K) (\lambda^{-\fs} x_1, \ldots ,\lambda^{-\fs} x_n)
\]
for any $K \in \CYsimp$. 

%

The correct homogeneity to rescale a kernel $\Phi_{\sxt}$ so that the random variable $\Extn_\sxt$ is of order $1$ for the homogeneity $\sfs\sxt$ is given by
\begin{align}\label{eq:alpha}
\alpha_{(\Xi,\tau)}:=\shift\fs(\Xi,\tau)- m(\tau) \shalf
\end{align}
for any $(\Xi,\tau)\in\sFLm$. (Recall that $m(\tau) =\#L(\tau)-1$.) This follows from the fact that the covariance of $\Extn_\sxt$ is given by $|\E[\Extn_\sxt(z) \Extn_\sxt(\bar z)]| = |\int dx K_\sxt^\delta(z,x) K_\sxt^\delta(\bar z,x)| \lesssim |z-\bar z|^{2\alpha_\sxt + m(\tau)|\fs|}$.
We will later on choose for any $\delta>0$ and any tree $\tau\in\FT_-$ a scale $\lambda^\delta_{\tau}\in (0,1)$ and a real constant $a^\delta_{\tau}\in\R$. 
Let $\sparaA := \R^{\FT_-}$ and denote by $\sparaL$ the set of scales $\lambda \in (0,1)^{\FT_- \sqcup\{ \star, \dagger \}}$ such that $\lambda_{\tau}$ depends only on the equivalence class $[\tau]_\sim$ of $\tau$. 
(This property will be useful in the proof of Lemma~\ref{lem:dominatingpart} below.)  
For fixed scales $\lambda_\tau \in \sparaLb$ and constants $a_\tau \in \sparaL$ we now make the following definition.

\begin{definition}
For any $(a,\lambda) \in \sparaA \times \sparaL$ and any $\sxt \in \sFLm$ with $\fancynorm\tau_\fs<0$, we define the kernel
\begin{align}\label{eq:K}
K_{(\Xi,\tau)}^{\para}:=
a_{\tau}\rescale
(
	{\lambda_{\tau}}, \alpha_{(\Xi,\tau)}
)
\Phi_{(\Xi,\tau)}.
\end{align}
For $\fancynorm\tau_\fs=0$ we use a slightly different definition
\begin{align}\label{eq:K0}
K_{(\Xi,\tau)}^{\para}:=
a_{\tau}
\frac{1}{N^\lambda _\tau}
\sum_{k=0}^{N^\lambda_\tau-1} 2^{-\bar\kappa k}
\rescale
(
	{2^{-k}\lambda_{\tau}},\alpha_{(\Xi,\tau)}
) \Phi_{(\Xi,\tau)},
\end{align}
where $N^\lambda _\tau$ is the smallest integer larger then $(\lambda_\tau)^{-1}$.
\end{definition}
We also set
\begin{align}\label{eq:ham}
K^{\para}_{\Xi}:=\rescale(\eps,-|\fs|)\rho
\quad\text{ and }\quad
K^{\para}_{\tilde\Xi}:= \rescale(\delta,-|\fs|)\rho,
\end{align}
where we write $\delta := \lambda_\star$ and $\eps:=\lambda_\dagger$. 
\begin{remark}
We include $\eps$ and $\delta$ into the data $\lambda$ in order to avoid case distinctions in some expressions below. Sometimes it will be useful to make $\eps$ explicit. In these cases we write $\eta^{\eps,\para}$ and $K^{\eps,\para}$ with $a \in \sparaA$ and $\lambda \in \sparaLa:= (0,1)^{ \FT_- \sqcup\{ \star \}}$.
\end{remark}
\begin{example}\label{ex:log}
To understand (\ref{eq:K0}), consider first a tree $\tau \in \FT_-$ with $\fancynorm{\tau}_\fs<0$ and assume for simplicity that $\tau$ does not contain any divergent proper subtree. Consider two trees $\taua,\taub \in \SS[\tau]$, where in $\taua$ (resp. $\taub$) exactly one noise type $\Xi$ (resp.\ two noise types $\Xi$, $\tilde\Xi$) are replaced by $(\Xi,\tau)$ (resp.\ $(\Xi,\tau)$, $(\tilde\Xi,\tau)$), so that in the notation of Definition~\ref{def:oSS} one has $\taua \in \oSS[\tau]$ and $\taub \in \uSS[\tau]$. It then follows from a simple scaling argument and (\ref{eq:K}) that one has 
\begin{align}\label{eq:K:K0:example}
|\Upsilon^{ \eta^{\eps,a,\lambda} } \taua| \simeq a_\tau \lambda_\tau^{-\bar \kappa}
\qquad
\text{and}
\qquad
|\Upsilon^{ \eta^{\eps,a,\lambda} } \taub| \lesssim a_\tau^2.
\end{align}
The second bound follows from the fact that $|\taub|_{\sfs}>0$. We will choose $a_\tau$ such that $\Upsilon^{ \eta^{\eps,a,\lambda} } \taua$ is of order $1$ as $\lambda_\tau \to 0$, hence $a_\tau \simeq \lambda_\tau^{\bar\kappa}$. Thus, one has $\Upsilon^{ \eta^{\eps,a,\lambda} } \taub \to 0$.

This argument used crucially that $|\taub|_{\sfs}>0$, which fails in case $\fancynorm{\tau}_\fs=0$ where one has $\fancynorm{\taua}_{\sfs}= -\bar\kappa$ and $\fancynorm{\taub}_{\sfs}= -2\bar\kappa$. It follows that if we simply defined $K_{(\Xi,\tau)}^{a,\lambda}$ via \eqref{eq:K} in this case, we would get that $|\Upsilon^{ \eta^{\eps,a,\lambda} } \taub|$ is order $1$. In order to continue, we ``spread out'' the kernel $K_{(\Xi,\tau)}^{a,\lambda}$ in frequency space via \eqref{eq:K0}. One can readily check that the first relation in (\ref{eq:K:K0:example}) still holds, while for the second relation essentially only the resonant terms contribute (compare the proof of Lemma~\ref{lem:blowuprenormconstantzerohomo}, where this is made precise), and we obtain the bound
\begin{align*}
|\Upsilon^{ \eta^{\eps,a,\lambda} } \taub| 
&\lesssim 
\frac{a_\tau^2 }{(N_\tau^\lambda)^2} \sum_{k=0}^{N_\tau^\lambda-1} 
2^{-2\bar\kappa k} (2^{2\bar\kappa k}\lambda_\tau^2)\lesssim  (N_\tau^\lambda)^{-1},
\end{align*}
which converges to $0$ as $N_\tau^\lambda \to \infty$.
\end{example}

We are now given a family of kernels $K_\snt^{\para} \in \CYsimpp{m(\snt)}$ and a multiset $\cm(\snt)$ for any noise type $\snt\in\sFLm$, where $m(\snt) := \#\cm(\snt)$.
(Think of $\cm(\snt) = \cm(\Xi,\tau)$ as being defined as discussed after Definition~\ref{def:en:homo}.)
With this notation we now make the following definition, for which 
we recall Definition~\ref{def:smooth:noise}.

\begin{definition}
Let $\FKernel^{\para} \in \CYN$ be defined by setting $(\FKernel^{\para})_\cm^\snt := K_\snt^\para \I_{\cm = \cm(\snt)}$ for any multiset $\snt$ with values in $\FLm$ and any $\snt \in \sFLm$.
We then define the smooth noises $\eta^{\para} \in \sSMinfty$ by setting,
for any noise type $\snt \in \sFLm$, 
\begin{align}\label{eq:eta}
\eta^{\para}_{\snt} := \FKtN ( \FKernel^{\para}).
\end{align}
\end{definition}
From (\ref{eq:smooth:noise}) it follows that $\eta^{\para}_\snt = \sintf_{\cm(\snt)}( \CU K^{\para}_\snt)$.
Recall \eqref{eq:simple:kernel} that the operator $\CU$ is given by
\begin{align}
\CU K (x_1, \ldots , x_n) = \int_{\bar\domain} dy K_0(y) K_1 (x_1 - y) \ldots K_n (x_n - y).
\end{align}



The following is then a simple consequence of this definition.
\begin{lemma}
For any $\parad \in \sparaL$ 
and any $\Xi\in\FL_-$ one has $\eta_\Xi^{ \para }=\xi^\eps_\Xi$ and $\eta_{\tilde\Xi}^{ \para }=-\xi^\delta_\Xi$.
\end{lemma}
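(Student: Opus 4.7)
The proof is a direct verification from the definitions, with no real obstacle beyond careful bookkeeping; the content of the lemma is that with the conventions fixed in \eqref{eq:K}, \eqref{eq:K0}, and \eqref{eq:ham}, the two distinguished noise components $\eta^\para_\Xi$ and $\eta^\para_{\tilde\Xi}$ really do reduce to $\xi^\eps_\Xi$ and $-\xi^\delta_\Xi$ uniformly in the remaining parameters $(a_\tau,\lambda_\tau)_{\tau\in\FT_-}$ which control the high-frequency perturbations. My plan is as follows.

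First I would specialise \eqref{eq:eta} to the original noise types. For $\snt\in\{\Xi,\tilde\Xi\}$ with $\Xi\in\FL_-$, the multiset $\cm(\snt)=\{\Xi\}$ is of cardinality one, so by definition of $\FKernel^\para$ and of $\FKtN$ the identity \eqref{eq:eta} reduces to $\eta^\para_\snt=\sintf_{\{\Xi\}}(\CU K^\para_\snt)$. By \eqref{eq:ham}, together with the conventions $\Phi_\Xi=\rho$ and $\Phi_{\tilde\Xi}=-\rho$ fixed just above \eqref{eq:ham}, one has $K^\para_\Xi=\rescale(\eps,-|\fs|)\rho$ and $K^\para_{\tilde\Xi}=-\rescale(\delta,-|\fs|)\rho$. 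A direct computation from \eqref{eq:simple:kernel} and \eqref{eq:rescalingoperator} yields the scaling identity $\CU(\rescale(\lambda,\alpha)K)=\lambda^{\alpha}(\CU K)(\lambda^{-\fs}\cdot)$; applied with $\alpha=-|\fs|$ and with $\CU\rho=\rho$ (identifying the mollifier with the corresponding simple kernel in $\CYsimpp{1}$), this gives the clean identifications $\CU K^\para_\Xi=\rho^{(\eps)}$ and $\CU K^\para_{\tilde\Xi}=-\rho^{(\delta)}$.

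The final step is then to identify the stochastic integral $\sintf_{\{\Xi\}}(h)$ with the convolution $\xi_\Xi\ast h$ for a sufficiently regular deterministic $h$. Unwinding \eqref{eq:stochasticintegral_cm}, one has $\sintf_{\{\Xi\}}(h)(z)=\sintsimple_1\bigl(\pi_\Xi h(z-\cdot)\bigr)$; since $\sintsimple_1$ is the canonical Itô isometry $H\to\CH_1$ satisfying $\sintsimple_1(\pi_\Xi g)=\xi_\Xi(g)=\int g(y)\,\xi_\Xi(dy)$ for every $g\in L^2(\domain)$, this equals $\int h(z-y)\,\xi_\Xi(dy)=(\xi_\Xi\ast h)(z)$. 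Substituting $h=\rho^{(\eps)}$ and $h=-\rho^{(\delta)}$ into the previous paragraph then produces $\eta^\para_\Xi=\xi_\Xi\ast\rho^{(\eps)}=\xi^\eps_\Xi$ and $\eta^\para_{\tilde\Xi}=-\xi_\Xi\ast\rho^{(\delta)}=-\xi^\delta_\Xi$, which are the two claimed identities. The only point demanding attention is tracking the sign from $\Phi_{\tilde\Xi}=-\rho$, since without it the second identity fails; everything else is a matter of matching definitions.
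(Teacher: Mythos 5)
Your proof is correct and is exactly the definition-unwinding the paper intends (the paper states this lemma without proof, as ``a simple consequence of this definition''). You were also right to flag the sign: as literally printed, \eqref{eq:ham} gives $K^{\para}_{\tilde\Xi}=\rescale(\delta,-|\fs|)\rho$ rather than $\rescale(\delta,-|\fs|)\Phi_{\tilde\Xi}$, so the minus sign needed for $\eta^{\para}_{\tilde\Xi}=-\xi^\delta_\Xi$ has to be read off from the convention $\Phi_{\tilde\Xi}=-\rho$, exactly as you did.
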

%

In order to determine our shift, we are left to choose for any $\sxt \in \sFLm$ a compactly supported kernel $\Phi_{(\Xi,\tau)}$,  and for any $\delta>\eps> 0$ a choice of parameters $(\para) \in \sparaA \times \sparaL$ with $\lambda_\dagger = \eps$ and $\lambda_\star = \delta$.

The following lemma determines a choice of smooth kernels $\Phi_{(\Xi,\ft)}$.

\begin{lemma}\label{lem:dominatingpart}
Let $\oSS$ be the operator from Definition~\ref{def:oSS} and let $g^\eps$ be the BPHZ character for the noise $\xi^\eps$ as in Section~\ref{sec:BPHZ:theorem}, which we view as an element of $\sCGm$ as in Section~\ref{sec:ext:reg:str}.
Then, there exists a choice of kernels $\Phi_{(\Xi,\tau)}\in \CYzsimpp{m(\tau)}$ for any $(\Xi,\tau) \in \sFLm$ such that the following holds.
For any tree $\tau \in \FT_-$ and any $C>0$ one has the identity 
\begin{align*}
\lim_{\eps \to 0} \Upsilon^{\eta^{\eps,\para}} M^{g^\eps}\oSS\tau
= a_\tau (\lambda_\tau)^{-\bar\kappa}+o((\lambda_\tau)^{-\bar\kappa}).
\end{align*}
where the $o((\lambda_\tau^\delta)^{-\bar\kappa})$ constant is such that 
\[
(\lambda_\tau^\delta)^{\bar\kappa}o((\lambda_\tau^\delta)^{-\bar\kappa})
\to 0
\]
as $\lambda_\tau \to 0$ uniformly over $\lambda \in \sparaLb$ and $a \in \sparaA$ with $|a|_\infty<C$.
\end{lemma}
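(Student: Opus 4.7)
I would prove Lemma~\ref{lem:dominatingpart} by a direct computation that first extracts the leading $a_\tau \lambda_\tau^{-\bar\kappa}$ behaviour from each summand, then normalises the kernels $\Phi_{(\Xi,\tau)}$ so that the resulting prefactor equals $1$.

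The plan is as follows. First, fix $\tau\in\FT_-$ and unpack $\oSS[\tau]$: by Definition~\ref{def:oSS}, every $\sigma\in\oSS[\tau]$ carries exactly one leaf $u_\sigma\in L(\sigma)$ of type $(\Xi_\sigma,\tilde\tau_\sigma)$ with $\tilde\tau_\sigma\sim\tau$, the other noises being of type in $\FL_-$. Since $\eta^{\para}_{(\Xi_\sigma,\tilde\tau_\sigma)}$ is a simple stochastic integral in the $m(\tilde\tau_\sigma)$-th Wiener chaos with kernel $\CU K^{\para}_{(\Xi_\sigma,\tilde\tau_\sigma)}$, the joint cumulant of $\eta^{\para}_{(\Xi_\sigma,\tilde\tau_\sigma)}$ with the remaining noises of $\sigma$ is given by Wick's formula; because $\lambda$ and $a$ only depend on the equivalence class of the tree and $\tilde\tau_\sigma\sim\tau$, every prefactor equals $a_\tau$, so that the whole expression is linear in $a_\tau$.

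Next I would take $\eps\to 0$. The renormalisation $M^{g^\eps}$ is, by construction, exactly what is needed for the analytic BPHZ theorem of \cite{ChandraHairer2016} to apply to each $\sigma\in\oSS[\tau]$ as a tree in the enlarged regularity structure $\sCT$: the noises $\eta^{\eps,\para}$ lie in $\sSMinfty$ uniformly as $\eps\to 0$, and $M^{g^\eps}$ cancels precisely the same subdivergences that would be cancelled by $g^{\eta^{\eps,\para}}$ restricted to $\CT_-\subset\sCTm$. Thus $\Upsilon^{\eta^{\eps,\para}}M^{g^\eps}\sigma$ converges, and the limit admits an explicit integral representation in terms of the Green's kernels $\hat K_\ft$ contracted with $\CU K^{\para}_{(\Xi_\sigma,\tilde\tau_\sigma)}$ at the distinguished leaf $u_\sigma$. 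A change of variables using the rescaling \eqref{eq:rescalingoperator} with homogeneity $\alpha_{(\Xi_\sigma,\tilde\tau_\sigma)}$ of \eqref{eq:alpha} together with the scale invariance of $\hat K_\ft$ (Assumption~\ref{ass:kernelhomo}) extracts a factor $\lambda_\tau^{-\bar\kappa}$ when $\fancynorm\tau_\fs<0$. For the exceptional case $\fancynorm\tau_\fs=0$, the averaged definition \eqref{eq:K0} must be used: the dominant contribution comes from the sum $\sum_{k=0}^{N^\lambda_\tau-1} 2^{-\bar\kappa k}\cdot 2^{\bar\kappa k}=N^\lambda_\tau$ after the dyadic scaling, which again gives $\lambda_\tau^{-\bar\kappa}$ at leading order, with remainder $o(\lambda_\tau^{-\bar\kappa})$ controlled because the non-resonant dyadic scales give geometrically smaller contributions. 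The outcome is
\begin{equ}
\lim_{\eps\to 0}\Upsilon^{\eta^{\eps,\para}}M^{g^\eps}\oSS\tau
= a_\tau\,\lambda_\tau^{-\bar\kappa}\,C(\tau;\Phi)
+ o(\lambda_\tau^{-\bar\kappa}),
\end{equ}
where $C(\tau;\Phi)$ is a finite linear functional of the family $\{\Phi_{(\Xi,\tilde\tau)}:\tilde\tau\sim\tau\}$ expressible through the symmetrised evaluation $\hat\CK_\sym\tilde\tau$ paired against a test function naturally built from $\Phi_{(\Xi,\tilde\tau)}$ via \eqref{eq:simple:kernel}.

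The main step, and the main obstacle, is then to choose the $\Phi_{(\Xi,\tau)}\in\CYzsimpp{m(\tau)}$ consistently across each equivalence class $\Theta\in\TT_-/_\sim$ so that $C(\tau;\Phi)=1$ for all $\tau\in\FT_-(\Theta)$ simultaneously. The point is that the list of functionals $(C(\tau;\cdot))_{\tau\in\FT_-(\Theta)}$ is linearly independent on the space of admissible $\Phi$: if a linear combination $\sum_{\tau\in\FT_-(\Theta)}c_\tau\,C(\tau;\cdot)$ vanished identically, then by the integral representation above the corresponding element $\sum c_\tau\,\tau$ of $\Vec\FT_-(\Theta)$ would pair to zero against every $\psi\in\cutoffspacez$ of the type produced by $\Phi\in\CYzsimpp{\cdot}$; since such $\psi$ are dense (this uses that elements of $\CYzsimpp{\cdot}$ integrate to zero in their central variable, matching the $\int\psi_\cm=0$ constraint in Definition~\ref{def:tildePsi}), the combination would lie in $\CJ$ modulo $\Vec{(\Theta\cap\TT_-^{\prec\tau})}$, contradicting the very definition of $\FT_-(\Theta)$ in Definition~\ref{def:FT}. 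The $\groupD$-invariance required by Definition~\ref{def:CYshift} is compatible with this because both $\hat\CK$ and $\CJ$ are $\groupD$-invariant. Inverting the resulting finite-dimensional linear system within each $\Theta$ produces the desired $\Phi_{(\Xi,\tau)}$, and a standard diagonal argument combines these choices across the finite collection of equivalence classes to complete the proof.
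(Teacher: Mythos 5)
Your overall route---extract the factor $a\,\lambda^{-\bar\kappa}$ by a change of variables using the homogeneity $\alpha_{(\Xi,\tau)}$ and the self-similarity of $\hat K$, treat $\fancynorm{\tau}_\fs=0$ via the dyadic average in \eqref{eq:K0}, and then fix the $\Phi_{(\Xi,\tau)}$ by solving a finite linear system whose solvability rests on $\CJ\cap\Vec{\FT_-}=\{0\}$ and the density of the kernels $\CU\Phi$---is the paper's route. But there is one genuine error that makes your normalisation condition insufficient. You assert that ``$\lambda$ and $a$ only depend on the equivalence class of the tree'', so that every summand of $\oSS\tau$ carries the prefactor $a_\tau$. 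Only $\lambda$ has this property (that is the definition of $\sparaLb$); the constants $a$ range over all of $\R^{\FT_-}$, and the downstream fixed-point argument of Proposition~\ref{prop:choice_of_a} must tune $a_{\tilde\tau}$ \emph{independently} for distinct trees $\tilde\tau\sim\tau$ in the same class. Consequently the summand of $\oSS\tau$ whose distinguished leaf has type $(\Xi,\tilde\tau)$ contributes
\begin{equ}
a_{\tilde\tau}\,\zeta_{\tau,\Xi}\,\lambda_\tau^{-\bar\kappa}\int \bigl(\CK_{\hat K}\tau\bigr)(x)\,\CU\Phi_{(\Xi,\tilde\tau)}(x_w;x\circ\phi^{-1})\,dx + o\bigl(\lambda_\tau^{-\bar\kappa}\bigr)\;,
\end{equ}
and if the integral does not vanish for $\tilde\tau\ne\tau$, this cross term is of exact order $\lambda_\tau^{-\bar\kappa}$ uniformly over $|a|_\infty<C$; it cannot be absorbed into the $o(\lambda_\tau^{-\bar\kappa})$. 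So imposing only $C(\tau;\Phi)=1$ does not prove the lemma: you need the biorthogonality condition that the integral equal $\zeta_{\tau,\Xi}^{-1}\delta_{\tau,\tilde\tau}$ for all $\tau\sim\tilde\tau$ in $\FT_-$. The good news is that your own linear-independence argument delivers exactly this: since no nonzero element of $\Vec{\FT_-}$ lies in $\CJ$ and the admissible $\CU\Phi$ are dense among the relevant test functions, the functionals $\Phi\mapsto\langle\CK_{\hat K}\tau,\CU\Phi\rangle$ for $\tau$ in a fixed equivalence class are linearly independent, and a biorthogonal family exists.

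A second, smaller point: your claim that $M^{g^\eps}$ ``cancels precisely the same subdivergences'' as the BPHZ renormalisation of $\eta^{\eps,a,\lambda}$ is true but not ``by construction''---these are a priori different characters. The actual reason is that both act trivially on $\oSS[\tau]$: the single extended noise lies in the homogeneous Wiener chaos of order $\#L(\tau)-1$, so every cumulant pairing it with a proper subset of the remaining Gaussian noises vanishes, hence every counterterm evaluates to zero and $\Upsilon^{\eta^{\eps,a,\lambda}}M^{g^\eps}\sigma=\Upsilon^{\eta^{\eps,a,\lambda}}\sigma$. This observation also removes the need to invoke the analytic BPHZ theorem at this step.
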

\begin{proof}
Fix for the entire proof an equivalence class $\Theta\in\FT_-/_\sim$. Let $\tau\in\Theta$ and
$\sigma = (S^\fn_\fe,\ft) \in\oSS[\tau]$, and let $w\in L(\sigma)$ be the unique noise type edge such that $\ft(w)=(\Xi,\tilde\tau)$ for some $\Xi\in\FL_-$ and some $\tilde\tau\in\Theta$. 
It follows that
\begin{align*}
\Upsilon^{\eta^{\eps,\para}}M^{g^\eps}\sigma  
&= \Upsilon^{\eta^{\eps, \para}}\sigma \\
&= \int_{\bar\domain^{L(\tau)}} dx \, 
\big( \CK_K \tau \big) ((x_u)_{u\in L(\tau)}) 
\E^c[
	((\xi^{\eps}_{\ft(u)} (x_u))
		_{u\in L(\tau)\backslash\{w\}},
	\eta
		_{(\Xi,\tilde\tau)}^{\eps,\para}(x_w)) ].
\end{align*}
(Both equalities are consequences of the fact that $\eta^{\eps,\para}_{\ft(w)}$ is in a homogeneous Wiener chaos of order $m(\tau) = \#L(\tau)-1$.)
In the limit $\eps\to 0$ we obtain
\begin{align*}
\int_{\bar\domain^{L(\tau)}} dx \, 
\big( \CK_K \tau \big)((x_u)_{u\in L(\tau)}) 
\E^c[(\xi_{\ft(u)}(x_u))_{u\in L(\tau)\backslash\{w\}},
I_{\cm(\Xi,\tau)}(\CU K_(\Xi,\tilde\tau)^{a,\lambda}(x_w,\cdot)) ].
\end{align*}
For $\fancynorm\tau<0$ this expression is equal to
\begin{align}\label{eq:yup}
a_{\tilde\tau} \zeta_{\tau,\Xi}
\int_{\bar\domain^{L(\tau)}} dx \, 
\big(
	\CK_K \tau
\big)(x) 
\rescale(\lambda_{\tau} , \alpha_{(\Xi,\tau)}) \CU\Phi_{(\Xi,\tilde\tau)}(x_w ; x \circ \phi^{-1}),
\end{align}
where we used that $\lambda_\tau = \lambda_{\tilde\tau}$ and $\alpha_{(\Xi,\tau)} = \alpha_{(\Xi,\tilde\tau)}$, 
where $\zeta_{\tau,\Xi}\in\N$ denotes a symmetry factor, and where  $\phi:L(\tau)\backslash\{w\}\to \td(\cm(\Xi,\tau))$ denotes an arbitrary bijection with the property that $\ft(u)=\phi_1(u)$ for any $u \in L(\tau)\backslash\{w\}$. 
It follows from the definition of $\cm(\Xi,\tau)$ that such a bijection exists and from the symmetry properties of $\Phi_{(\Xi,\tilde\tau)}$ that the integral is independent of this choice. 
(Here we assume without loss of generality that $\Phi_{(\Xi,\tau)}$ is symmetric under all permutations 
of $[m(\tau)]$ which leave the noise-type $\ft$ invariant, where $\ft:[m(\tau)] \to \FLm$ is the unique order preserving map such that $[[m(\tau)], \ft] = \cm(\Xi,\tau)$.)
Recall now the definition of $\alpha_{(\Xi,\tau)}$ from (\ref{eq:alpha}), and note that after a change of integration $x \to \lambda_\tau^\fs x$ we obtain the expression
\begin{equ}
a_{\tilde\tau} \zeta_{\tau,\Xi}(\lambda_\tau)^{-\bar\kappa}
\int_{\bar\domain^{L(\tau)}} dx \, 
\big( \CK_{K^{(R)}} \tau \big)(x) 
\CU\Phi_{(\Xi,\tilde\tau)}(x_w;x \circ \phi^{-1})\;,
\end{equ}
where $R=(\lambda_\tau)^{-1}$ and the assignment $K^{(R)}$ is given by $K^{(R)}_\ft (x) = R^{|\fs|-|\ft|_\fs} K_\ft(R^{-\fs} x)$ for any $\ft \in \FL_+$.

As $\lambda_\tau\to 0$ the integral in the last expression converges to
\begin{align}\label{eq:dominatingintegral}
\int_{\bar \domain^{L(\tau)}} dx\,
\big( \CK_{\hat K}\tau \big)(x)   
\CU\Phi_{(\Xi,\tilde\tau)}(x_w;x \circ \phi^{-1}).
\end{align}
(The integrand is absolutely integrable. The fact that this integral is finite on small scales is easy to see, the bound on large scales follows from the assumption that $\CU\Phi_{(\Xi,\tilde\tau)}$ is compactly supported and Lemma~\ref{lem:super:regularity:implies:large:scale:bound}. 
One could also see this directly from a simple power counting argument, or equivalently from \cite[Thm.~4.3]{Hairer2017}.) 

For $\fancynorm\tau=0$ equation \eqref{eq:dominatingintegral} follows almost identically. Indeed, in this case (\ref{eq:yup}) should be replaced by 
\begin{align}
a_{\tilde\tau} \zeta_{\tau,\Xi}
\frac{1}{N_\tau^\lambda}
\sum_{k=0}^{N_\tau^\lambda-1}
2^{-\bar \kappa k}
\int_{\bar\domain^{L(\tau)}} dx \, 
\big(
	\CK_K \tau
\big)(x) 
\rescale(2^{-k}\lambda_{\tau} , \alpha_{(\Xi,\tau)}) \CU\Phi_{(\Xi,\tilde\tau)}(x_w ; x \circ \phi^{-1}),
\end{align}
which as above gives $a_{\tilde\tau} \zeta_{\tau,\Xi} (\lambda_\tau)^{-\bar\kappa}$ times an integral expression which converges to 
\begin{align}
\frac{1}{N_\tau^\lambda}
\sum_{k=0}^{N_\tau^\lambda-1}
2^{-\bar \kappa k}
2^{\bar\kappa k}
\int_{\bar \domain^{L(\tau)}} dx\,
\big( \CK_{\hat K}\tau \big)(x)   
\CU\Phi_{(\Xi,\tilde\tau)}(x_w;x \circ \phi^{-1}),
\end{align}
so that we recover (\ref{eq:dominatingintegral}).

It remains to argue that there exists a choice of $\Phi_{(\Xi,\tau)} \in \CYzsimpp{m(\tau)}$ for any $(\Xi,\tau) \in \sFLm$
such that for any $\tau, \tilde\tau \in \FT_-$ with $\tau \sim \tilde\tau$ the expression in (\ref{eq:dominatingintegral}) is equal to $\zeta_{\tau,\Xi}^{-1}\delta_{\tau,\tilde\tau}$. 
For this we recall that $\CJ\cap \linspace{\FT_-}  = \{ 0 \}$ by Definition~\ref{def:FT}.
Moreover, the space of functions of the form $\CU\Phi \in \bar\CC_c^\infty(\bar\domain^\cm)$ for $\Phi \in \CYzsimpp{m(\tau)}$ which is symmetric under permutations of $[m]$ which preserve the noise-type in the same sense as above are dense in $\bar\CC_c^\infty( \bar\domain^\cm/\groupD)$.
The claim now follows from the definition of the ideal $\CJ$ in Definition~\ref{def:CH}. 
\end{proof}

Finally, we want to bound the norm $\|\eta^{\para}\|_{\sfss}$ for homogeneity assignments $\sfss:\sFLm \to \R_-$, compare \eqref{eq:noise:norm}.
As long as $\sfss\le\shift\fs$, with $\sfs$ as in Definition~\ref{def:en:homo}, we obtain a bound uniformly in $\lambda \in \sparaL$. If this condition is violated a uniform bound of this form is in general not true. However, it is still possible to derive a bound on this quantity in terms of the scales $\lambda_\tau$ in the following way.
\begin{lemma}\label{lem:cumulantexpansionB}
Let $\sfss : \sFLm \to \R_-$ be a homogeneity assignment.
For any scale $\lambda \in \sparaL$ we define the quantity
\begin{align*}
\ul\lambda (\sfss):=
&\min\{ \lambda_\snt : \snt \in \sFLm \, ,  \,
\sfss(\snt)
>
\shift\fs(\snt) \}\land 1.
\end{align*}
For any natural number $N\in\N$ and any $\ul\lambda>0$ there exists a constant $C_{N}(\ul\lambda)>0$ such that the following holds.
For any $A>0$ one has the bound
\begin{align}\label{eq:eta:eps:delta:bound}
\|\eta^{\para}\|_{\sfss}
\le
C_{N}(\ul\lambda(\sfss))
\end{align}
uniformly $\lambda \in \sparaL$ and $a_\tau \in \R$ with the property that $|a|_\infty < A$.

In particular, the convergence (\ref{eq:shiftednoisehomo}) holds provided that $a_{\tau}^{(\delta)}\to 0$ as $\delta\to 0$ for any $\tau\in\FT_-$.
\end{lemma}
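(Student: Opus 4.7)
The plan is to reduce the bound on $\|\eta^\para\|_\sfss$ to an explicit computation on the canonical kernel family. By construction $\eta^\para = \FKtN(\FKernel^\para)$ for the explicit family $\FKernel^\para \in \CYN$ (with $N$ larger than $\max_\snt m(\snt)$), so by the infimum definition of the noise norm,
\[
\|\eta^\para\|_\sfss \le \|\FKernel^\para\|_{\CYN} = \sum_{\snt \in \sFLm} \|K^\para_\snt\|_{\beta^\snt_{\cm(\snt)}},
\]
with $\beta^\snt_\cm = \sfss(\snt) - \#\cm\,\shalf$. Since $\sFLm$ is finite and $|a_\tau| \le A$, it suffices to bound each summand with the correct dependence on $\lambda$ and $\ul\lambda(\sfss)$.

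The core ingredient is a scaling estimate for the operator $\rescale(\lambda,\alpha)$. Combining the elementary bound $\|K(\lambda^{-\fs}\cdot)\|_{\bar\alpha} \lesssim \lambda^{-\bar\alpha}\|K\|_{\bar\alpha}$ on individual factors with the constraint $\sum_i \bar\alpha_i = \alpha' - |\fs|$ appearing in Definition~\ref{def:CYsimp}, one obtains
\[
\|\rescale(\lambda,\alpha)\Phi\|_{\alpha'} \lesssim \lambda^{\alpha - \alpha'}\|\Phi\|_{\alpha'}
\]
uniformly in $\lambda \in (0,1]$. Applied to \eqref{eq:K}, together with the arithmetic identity $\alpha_{(\Xi,\tau)} - \beta^{(\Xi,\tau)}_{\cm(\Xi,\tau)} = \shift\fs(\Xi,\tau) - \sfss(\Xi,\tau)$ (immediate from \eqref{eq:alpha} and the definition of $\beta$), this yields for $\fancynorm\tau_\fs < 0$
\[
\|K^\para_{(\Xi,\tau)}\|_{\beta^{(\Xi,\tau)}_{\cm(\Xi,\tau)}} \lesssim |a_\tau|\,\lambda_\tau^{\shift\fs(\Xi,\tau) - \sfss(\Xi,\tau)},
\]
and an analogous estimate (with $\eps$ and $\delta$ in place of $\lambda_\tau$) for the Gaussian components $K^\para_\Xi$, $K^\para_{\tilde\Xi}$ defined in \eqref{eq:ham}. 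I expect the main technical obstacle to be the logarithmic case $\fancynorm\tau_\fs = 0$: applying the same scaling estimate term by term to \eqref{eq:K0} and using subadditivity gives
\[
\|K^\para_{(\Xi,\tau)}\|_{\beta^{(\Xi,\tau)}_{\cm(\Xi,\tau)}} \lesssim \frac{|a_\tau|}{N^\lambda_\tau}\sum_{k=0}^{N^\lambda_\tau-1} 2^{-\bar\kappa k}(2^{-k}\lambda_\tau)^{\shift\fs - \sfss},
\]
and the weighting $2^{-\bar\kappa k}$ in \eqref{eq:K0} is precisely what is needed to cancel the $2^{k(\sfss - \shift\fs)}$ growth coming from the scaling factor, thereby absorbing the logarithmic divergence that would otherwise plague the exactly-critical trees.

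To assemble the bound I would partition $\sFLm$ into the set where $\sfss(\snt) \le \shift\fs(\snt)$, on which $\lambda_\snt^{\shift\fs - \sfss} \le 1$ trivially, and its complement, on which $\lambda_\snt \ge \ul\lambda(\sfss)$ gives $\lambda_\snt^{\shift\fs - \sfss} \le \ul\lambda(\sfss)^{\shift\fs - \sfss}$. Absorbing these finitely many factors and the bound $|a|_\infty < A$ into a constant $C_N(\ul\lambda)$ proves \eqref{eq:eta:eps:delta:bound}. For the final assertion about \eqref{eq:shiftednoisehomo}, I would apply the bound to $\SS^*\eta^\para = \xi + \srn$ with a homogeneity $\sfss$ chosen strictly below $\fs$ so that $\sfss \le \shift\fs$ pointwise and $\ul\lambda(\sfss) = 1$; tracking the explicit $|a_\tau|$ prefactors in the estimates above then gives $\|\SS^*\eta^\para\|_\sfss \lesssim \max_\tau |a_\tau^{(\delta)}| + \|\rho^{(\eps)} - \rho^{(\delta)}\|_{\sfss(\Xi)-\shalf}$, and both terms vanish in the joint limit by the hypothesis $a_\tau^{(\delta)} \to 0$ together with standard mollifier convergence of the Gaussian difference.
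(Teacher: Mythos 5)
Your proof follows the same route as the paper's: bound $\|\eta^{a,\lambda}\|_{\sfss}$ by the $\CYN$-norm of the kernel family, estimate each $\|K^{a,\lambda}_\snt\|_{\beta^\snt_{\cm(\snt)}}$ by a scaling computation on the individual tensor factors, and split $\sFLm$ according to the sign of $\sfss(\snt)-\shift\fs(\snt)$, using $\lambda_\snt\ge\ul\lambda(\sfss)$ on the bad set. There is, however, one step that fails as written. The ``elementary bound'' $\|K(\lambda^{-\fs}\cdot)\|_{\bar\alpha}\lesssim\lambda^{-\bar\alpha}\|K\|_{\bar\alpha}$ is \emph{false} for the zeroth tensor factor in the range $\bar\alpha_0\in(-|\fs|-1,-|\fs|)$ admitted by the supremum in \eqref{eq:CYsimp:norm:2}: for such $\bar\alpha_0$ the norm includes the moment condition \eqref{eq:norm:blowup:2} for $k=0$, and under rescaling $\int K_0(\lambda^{-\fs}x)\,dx=\lambda^{|\fs|}\int K_0$, while $\lambda^{|\fs|}\gg\lambda^{-\bar\alpha_0}$ as $\lambda\to0$ whenever $\bar\alpha_0<-|\fs|$. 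The desired estimate $\|\Phi_0(\lambda^{-\fs}\cdot)\|_{\bar\alpha_0}\lesssim\lambda^{-\bar\alpha_0}$ therefore holds only because $\int\Phi_0=0$, i.e.\ because Lemma~\ref{lem:dominatingpart} produces kernels $\Phi_{(\Xi,\tau)}\in\CYzsimpp{m(\tau)}$. This is precisely the point the paper's proof singles out (``since $\int(\Phi_\snt)_0=0$ by definition\dots''), and your argument must invoke it explicitly; without it the bound genuinely breaks down. The same vanishing moment, together with $2\sfs>-|\fs|-1$, is what makes the variance contribution \eqref{eq:varnorm} harmless, a point you do not address at all.

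A second, less serious, issue concerns your reading of the logarithmic case. You attribute the bound to a cancellation between the weight $2^{-\bar\kappa k}$ in \eqref{eq:K0} and the growth $2^{k(\sfss-\shift\fs)}$. In the regime where \eqref{eq:eta:eps:delta:bound} must be uniform in $\lambda$, namely $\sfss(\Xi,\tau)\le\shift\fs(\Xi,\tau)$, each summand of \eqref{eq:K0} is already bounded by $1$ and no cancellation is needed; in the opposite regime one has $\lambda_\tau\ge\ul\lambda(\sfss)$, hence $N^\lambda_\tau\lesssim\ul\lambda(\sfss)^{-1}$, so the finitely many summands are simply absorbed into $C_N(\ul\lambda)$. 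The weight $2^{-\bar\kappa k}$ is not what rescues this lemma; its purpose is to suppress the resonant contributions of trees in $\uSS[\tau]$ later on (compare Example~\ref{ex:log} and Lemma~\ref{lem:blowuprenormconstantzerohomo}). This mischaracterisation does not invalidate your estimate, but the stated mechanism is not the one at work.
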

\begin{proof}
We have to bound $\| K_\snt^{\para} \|_{\sfss(\snt) - m(\snt) \shalf}$ for any $\snt \in \sFLm$, and $\varnorm{\FKernel^{\para}}$, compare (\ref{eq:CYNnorm}). 
Fix $\snt = (\Xi,\tau) \in \sFLm$ and assume first that $\homofancy{\tau} < 0$. We only treat the slightly more difficult case $m(\tau) \ge 2$ in detail. 
We show bounds uniform in $\bar\alpha$ as in (\ref{eq:CYsimp:norm}), so that $\bar\alpha_i \in \R_-$, $i=0, \ldots, m(\snt)$ is such that $\bar \alpha_0 > -|\fs|-1$, $\bar\alpha_i > -|\fs|$ and $\sum_{i\ge 0} \bar \alpha_i = \sfss(\snt) - m(\snt) \shalf - |\fs|$. 
For the purpose of this proof we assume for notational simplicity that $\Phi_{(\Xi,\tau)}$ is a simple tensor product (in general, it is a linear combination of such terms, but since the number of summands does not change under rescaling one can repeat the argument given here for each summand individually).
Write $\Phi_{(\Xi,\tau)} := \Phi_0 \otimes \ldots \otimes \Phi_n$, so that 
\[
K_\snt^{\para} = a_\tau \lambda_\tau^{\alpha_{(\Xi,\tau)} - |\fs| } \Phi_0(\lambda ^{-\fs} \cdot) \otimes \ldots \otimes  \Phi_n(\lambda ^{-\fs} \cdot).
\]
Since $\int (\Phi_\snt^{\para})_0 = 0$ by definition one has 
$\| (K_\snt^{\para})_0 \|_{\bar\alpha_0} \lesssim \lambda^{-\bar\alpha_0}$ and for $i \ge 1$ one has $\| (K_\snt^{\para})_0 \|_{\bar\alpha_i} \lesssim \lambda^{-\bar\alpha_i}$, both uniformly over all $\bar\alpha$ as above. It follows that
\[
\| K_\snt^{\para} \|_{\sfss(\snt) - m(\tau) \shalf}
\lesssim
\lambda_\tau^{-\sfss(\snt) + m(\tau) \shalf + |\fs| + \alpha_{(\Xi,\tau)} - |\fs|}
=
\lambda_\tau^{\sfs(\Xi,\tau) - \sfss(\Xi,\tau)}.
\]
Since $\sfs(\Xi,\tau) - \sfss(\Xi,\tau) <0$ implies $\lambda_\tau  > \ul\lambda(\sfss)$, the required bound follows. 

In case that $\homofancy\tau = 0$ one proceeds in the same way, using the fact that $\sum_{k \ge 0} 2^{-\kappa k}$ is finite.

Finally, bounding the ``variances'' (\ref{eq:varnorm}) is a simple exercise using the fact $(\Phi_\snt^{\para})_0$ integrates to zero and $2\sfs \ge -|\fs| - 2\kappa - 2\bar\kappa > -|\fs|-1$.
\end{proof}

\subsection{A recursive strategy for choosing \texorpdfstring{$\lambda_\ft^{\eps,\delta}$}{lambda t e}}\label{sec:monotonestatements}

Our shift $\srn_\delta$ is defined up to specifying a sequence of constants $a^{\delta} \in \sparaA$ and a sequence of scales $\lambda^{ \delta } \in \sparaLa$ with $\lambda^{ \delta }_\star = \delta$ for any $\delta>0$. 

The constants $a_\tau^\delta$ will be chosen in the subsequent section as solutions to a fixed-point problem which we will show has a solution provided that the scales $\lambda^\delta_\tau$ are chosen in a good way. We will choose the scales $\lambda_{\tau}^\delta$ only depending on the homogeneity $\fancynorm{\tau}_\fs$ and the number of leaves of $\tau$. (In particular the scale only depends on the equivalence class $[\tau]_\sim$ of $\tau$, so that $\lambda^\delta \in \sparaLb$). To this end we write $\ci(\tau):= \big( \fancynorm{\tau}_\fs,\#L(\tau) \big)$, and we define the set\label{idx:CI}
\[
\CI:= \big \{
		\ci(\tau) : \tau\in\FT_-
\big \},
\]
On $\CI$ we define the total order
\[
(\gamma,l)\le (\beta,r),
\] 
if and only if either $\gamma<\beta$, or $\gamma= \beta$ and $l\ge r$. (Note the reversed direction of the second inequality!)
\begin{example}
Consider as an example the KPZ equation, where one has 
$$\TT_-=\{ \<0>,\, \<2>, \,\<11>, \,\<21>,\,\<2x>,\, \<40>, \,\<211> \}.$$ 
By definition of $\CJ$ one has $\FT_- = \{  \<2>, \<21>, \<40>, \<211> \}$, and $\CI=\{(-1,2), (-\half,3), (0,4)\}$.
\end{example}
It will also be convenient to introduce the set $\CI^\star:=\CI\cup\{\star\}$, where we extend the total order $\le$ to $\CI^\star$ by setting $\ci\le\star$ for any $\ci\in\CI$. As above we always set $\lambda_\star^\delta:=\delta$ for any $\delta>0$.


The arguments showing the existence of a solution to the fixed point problem we will be looking at (c.f.\ (\ref{eq:fpp})) will in general only hold if one chooses the scales $\lambda^\delta_{\ci(\tau)}$ such that $\lambda_{(\gamma, l)}^\delta$ is ``small enough'' compared to $\lambda_{(\beta,r)}^\delta$ 
whenever $(\gamma,l)\le (\beta,r)$.
In order to make the arguments below more systematic, we introduce the set
\begin{align}\label{eq:Lambda:indexset}
\sparaLc :=\{\lambda\in (0,1)^{\CI^\star} : \ci< \cj \text{ implies }\lambda_\ci<\lambda_\cj \}.
\end{align}
We will view $\sparaLc$ as a subset of $\sparaLb$ by setting $\lambda_\tau := \lambda_{\ci(\tau)}$ for any $\tau \in \FT_-$ and any $\lambda \in \sparaLc$.
\begin{remark}
It follows that in our notation we have $\lambda_\tau^\delta=\lambda_{(\Xi,\tau)}^\delta=\lambda_{\ci(\tau)}^\delta$ for any $\tau\in\FT_-$ and any $\Xi\in\FL_-$ such that $(\Xi,\tau)\in\sFLm$.
\end{remark}
We also introduce a bit of notation for general finite totally ordered sets $(I,\le)$. For notational convenience we formulate out statements for $(I,\le) = ([M],\le)$, where $M \in \N$ is the given by $M=\#I$.
\begin{definition}\label{def:attainable}
Let $([M],\le)$ be a finite totally ordered set and let $S=S(\lambda)$ be any statement depending on $\lambda\in(0,1)^M$. We define recursively in the number of elements $M$ the notion of an attainable statement. If $M=1$, then we call $S$ an \emph{attainable} statement if there exists $\bar\lambda_1>0$ such that $S(\lambda_1)$ holds for any $\lambda_1 \in(0,\bar\lambda_1)$. 
For $M \ge 2$ and any fixed $\lambda_{M}>0$ we denote by $(S|\lambda_M)(\lambda_{i_1},\ldots,\lambda_{{M-1}})$ the statement depending on $\lambda_{1},\ldots,\lambda_{{M-1}}$ defined by
\[
(S|\lambda_{M})(\lambda_{1},\ldots,\lambda_{{M-1}})\Longleftrightarrow S(\lambda_{1},\ldots,\lambda_{M}).
\]
We then call the statement $S$ \emph{attainable} if there exists $\bar \lambda_M>0$ such that for any $\lambda_{M}<\bar\lambda_M$ the statement $(S|\lambda_{M})$ is attainable.
\end{definition}
We will often use the following lemma, which is a direct consequence of the definition of attainable statements.
\begin{lemma}\label{lem:attainablestatements}
	Let $I$ be a finite, totally ordered set and let $R,S$ be attainable statements on $I$. Then the conjunction $R\land S$ is attainable.
\end{lemma}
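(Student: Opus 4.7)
The plan is to proceed by induction on the cardinality $M$ of the totally ordered set $I$, which we identify with $([M], \le)$ as in the statement. The key observation is that the definition of attainability is set up recursively on $M$, and the operation of conjunction commutes with the ``fix $\lambda_M$ and look at the restricted statement'' operation in the sense that
\begin{equ}
\bigl((R \land S) \bigm| \lambda_M\bigr) = (R|\lambda_M) \land (S|\lambda_M),
\end{equ}
so the induction essentially writes itself.

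For the base case $M=1$, attainability of $R$ and $S$ respectively furnishes constants $\bar\lambda_1^R > 0$ and $\bar\lambda_1^S > 0$ such that $R(\lambda_1)$ holds for $\lambda_1 \in (0, \bar\lambda_1^R)$ and $S(\lambda_1)$ holds for $\lambda_1 \in (0, \bar\lambda_1^S)$. Setting $\bar\lambda_1 := \bar\lambda_1^R \land \bar\lambda_1^S > 0$ then yields $(R \land S)(\lambda_1)$ for all $\lambda_1 \in (0, \bar\lambda_1)$, which is precisely the attainability of $R \land S$.

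For the inductive step, assume the lemma holds for all totally ordered sets of cardinality strictly less than $M$. Attainability of $R$ provides some $\bar\lambda_M^R > 0$ such that $(R|\lambda_M)$ is attainable on $[M-1]$ whenever $\lambda_M < \bar\lambda_M^R$, and similarly $S$ provides $\bar\lambda_M^S > 0$. Set $\bar\lambda_M := \bar\lambda_M^R \land \bar\lambda_M^S$. For any $\lambda_M < \bar\lambda_M$, both $(R|\lambda_M)$ and $(S|\lambda_M)$ are attainable statements on a totally ordered set of cardinality $M-1$, so the induction hypothesis applies and yields the attainability of their conjunction $(R|\lambda_M) \land (S|\lambda_M) = ((R\land S)|\lambda_M)$. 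Since this holds for all $\lambda_M < \bar\lambda_M$, this is exactly the definition of $R \land S$ being attainable, completing the induction.

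There is no genuine obstacle here; the only subtlety is verifying that the operation $(\,\cdot\,|\lambda_M)$ distributes over $\land$, which is immediate from its definition.
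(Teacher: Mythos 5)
Your proof is correct and is exactly the induction on $M$ that the paper has in mind — the paper simply states the lemma as "a direct consequence of the definition of attainable statements" without writing it out. The key point you isolate, that $(\,\cdot\,|\lambda_M)$ distributes over $\land$, together with taking the minimum of the two thresholds at each level, is the whole argument.
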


The strategy of the following sections will be as follows. We will show various lemmas whose statements are attainable statements for the family $\lambda\in (0,1)^{\CI^\star}$. These (finitely many) statements in conjunction imply that (\ref{eq:fpp}) can be solved, so that the existence of a solution is an attainable statement. This in particular implies that there exists a choice of scales $\lambda^\delta \in \sparaLc$ for any $\delta>0$ small enough such that the statement holds true, and this concludes the proof.


\subsection{A fixed point argument}\label{sec:fpa}

Our goal is to find a family $(a^\delta,\lambda^\delta) \in \sparaA \times \sparaLc$ for $\delta>0$, converging to $0$ as $\delta \to 0 $, such that 
\[
\lim_{\delta\to 0}\lim_{\eps\to 0}
\Upsilon ^{\eta^{\eps}(a^\delta,\lambda^\delta)} M^{ g^\eps}\SS\tau  = h(\tau)
\]
for any $\tau\in\FT_-$. Here and below we write $\eta^\eps(a,\lambda):= \eta^{\eps,a,\lambda}$ in order to make some expressions more readable.
For $\tau \in \TT_-$ we introduce the function $F_\tau : \sA \times \sparaLc \to \R$ by setting
\begin{align}\label{eq:Ftau}
F_\tau
(a, \lambda):=
\lim_{\eps\to 0} \Upsilon^{\eta^{\eps}(a,\lambda)} M^{g^\eps}\SS\tau.
\end{align}
Note that restricted to $\tau \in \FT_-$ we obtain a map
\[
F : \sA \times \sparaLc \to \sA.
\]
We also define the functions $\oF$ and $\uF$ in the same way with $\SS$ replaced by $\oSS$ and $\uSS$ respectively, see Definition~\ref{def:oSS}. Since $\SS=\oSS+\uSS+\Id$ and $\Upsilon^{\eta^{\eps}(a,\lambda)} M^{ g^\eps }\tau=0$ for any $\tau\in\TT_-$ one has $F=\oF+\uF$. 

\begin{remark}
For fixed $(a,\lambda) \in \spara$ the noise $\eta_{\snt}^\eps(a,\lambda)$ is independent of $\eps$ unless $\snt \in \FLm$. On the other hand, the expression $M^{g^\eps}\SS\tau$ coincides with the BPHZ renormalisation, if the homogeneity of the noise types $\snt \in \sFLm \setminus \FLm$ are viewed as zero (or more precisely as $-\kappa$ for some $\kappa$ small enough, compare  Lemma~\ref{lem:shift:upsilon:bound} below for a precise statement). In this sense the right-hand side of (\ref{eq:Ftau}) is just the expectation of $\PPi\tau(0)$,  where $\PPi$ denotes the BPHZ renormalised (in the sense of the previous sentence) canonical lift of $\eta^\eps(a,\lambda)$. It follows in particular that the right-hand side of (\ref{eq:Ftau}) is indeed convergent.

Note however that this expression does not vanish. This is not a contradiction to the characterisation \cite[Eq.~6.25]{BrunedHairerZambotti2016} of the BPHZ character, since with respect to the homogeneity constructed in the previous paragraph, $\SS$ does not leave homogeneity invariant. In fact, any tree $\sigma \in \SS[\tau] \backslash\{\tau\}$ is of positive homogeneity in this sense. The identity \cite[Eq.~6.25]{BrunedHairerZambotti2016} on the other hand is only guaranteed to hold for trees of negative homogeneity.
\end{remark}

Our intuition behind this definition is that $\uF_\tau$ should be small compared to $\oF_\tau$, in the sense that a statement of the form $\uF_\tau(a,\lambda) \ll \oF_\tau(a,\lambda)$ is an attainable statement. It turns out that this is not quite true, since in general there will be sub-divergencies of $\tau$ that cause $\uF_\tau$ to become dominant. However, assuming that we have good bounds on these sub-divergencies, this statement becomes attainable. More precisely, we have the following result. 
\begin{lemma}\label{lem:Fthenondominatingpart}
For any $\tau \in \FT_-$ there exists a smooth function $G_\tau:\R^{\FT_-}\times \R^{\TT_-^{\prec\tau}} \times \sparaLc \to \R$ such that
\begin{align}\label{eq:GTFT}
G_\tau(a, (F_{\tilde\tau}(a,\lambda))_{\tilde\tau\in\TT_-^{\prec\tau}}, \lambda)=\uF_\tau(a,\lambda),
\end{align}
for any $(a,\lambda) \in \sA \times \sparaLc$, and such that for any fixed $\rho > 0$ and $\beta>0$ the following bound is attainable: One has
\begin{align}\label{eq:GTbound}
|G_\tau(a,b, \lambda)|\le \beta \lambda_{\ci(\tau)}^{-\bar\kappa} 
\end{align}
uniformly over all $(a,b) \in \sA \times \sATT{\tau}$ such that $\max_{\tilde\tau\in\FT_-}|a_{\tilde\tau}| \lor  \max_{\tilde\tau\in\TT_-^{\prec\tau}}|b_{\tilde\tau}|\le \rho$.
\end{lemma}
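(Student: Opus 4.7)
The plan is to exhibit $G_\tau$ by carefully unpacking the renormalisation $M^{g^\eps}$ on each shifted tree $\sigma \in \uSS[\tau]$, extracting from the co-product expansion precisely those factors that blow up with $\lambda$ and showing that they always have the form $F_{\tilde\tau}(a,\lambda)$ for some $\tilde\tau \in \TT_-^{\prec\tau}$. The residue will then be bounded by a power-counting argument.

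First I would write $\uF_\tau(a,\lambda) = \sum_{\sigma \in \uSS[\tau]} \lim_{\eps \to 0}\Upsilon^{\eta^\eps(a,\lambda)} M^{g^\eps}\sigma$ and, for each $\sigma$, introduce the BPHZ character $g^{\eta^\eps} \in \sCGm$ of the full shifted noise $\eta^\eps(a,\lambda)$ in the enlarged regularity structure. Defining $k^\eps \in \sCGm$ by $g^\eps = k^\eps \circ g^{\eta^\eps}$, Remark~\ref{rem:renormgroup} gives $M^{g^\eps} = M^{k^\eps} M^{g^{\eta^\eps}}$ and the BPHZ identity (\ref{eq:BPHZ:character}) applied to $\eta^\eps(a,\lambda)$ implies that $\Upsilon^{\eta^\eps} M^{g^{\eta^\eps}}$ kills every non-trivial factor on $\sigma$'s of negative homogeneity in the image of $\cpm$. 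Hence the Hopf-algebraic expansion
\begin{equ}
\Upsilon^{\eta^\eps} M^{g^\eps} \sigma
\;=\; \bigl(k^\eps \otimes \Upsilon^{\eta^\eps} M^{g^{\eta^\eps}}\bigr) \cpm \sigma
\end{equ}
collapses onto a sum indexed by forests of divergent subtrees $\tilde\sigma \subseteq \sigma$. Each factor $k^\eps(\tilde\sigma)$ — whenever the connected component $\tilde\sigma$ contains at least one shifted noise-type edge — will be identified, up to a term that vanishes as $\eps \to 0$, with $F_{\tilde\tau}(a,\lambda)$ for $\tilde\tau := \pi \cq \tilde\sigma$.

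The essential observation is that any such $\tilde\tau$ lies strictly below $\tau$ in the order $\preceq$: either $\tilde\sigma$ is a proper sub-forest (so $\#E(\tilde\tau) < \#E(\tau)$) or $\tilde\sigma$ coincides with the root component but then $\sigma$ itself, being in $\uSS[\tau]$, forces $\tilde\tau \ne \tau$ with $\#L(\tilde\tau) > \#L(\tau)$ or strictly smaller homogeneity. This gives the recursive structure: define
\begin{equ}
G_\tau(a, b, \lambda)
\;:=\; \sum_{\sigma \in \uSS[\tau]} \;\sum_{\CF}\;
\Bigl(\prod_{\tilde\sigma \in \CF} b_{\pi\cq\tilde\sigma}\Bigr)\, H_\sigma^{\CF}(a, \lambda)\;,
\end{equ}
where the inner sum runs over the forest decompositions generated by $\cpm$ and $H_\sigma^{\CF}$ is the remaining integral expression — smooth in $(a,\lambda)$ because it reduces to products of constants $\eva{\phi}$ of the type controlled by Theorem~\ref{thm:evaluation:trees:largescale} (it depends polynomially on $a$ and the kernels $\Phi_{(\Xi,\tau)}$). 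Substituting $b_{\tilde\tau} = F_{\tilde\tau}(a,\lambda)$ and matching the two Hopf-algebraic expansions yields (\ref{eq:GTFT}).

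The bound (\ref{eq:GTbound}) is then an attainable statement by power counting. For a forest contributing $\prod b_{\tilde\tau_j}$ with $|b_{\tilde\tau_j}| \le \rho$, the scaling of $H_\sigma^{\CF}$ is dictated by the rescaled kernels $K_{(\Xi,\tilde\tau)}^{\para}$ of (\ref{eq:K})--(\ref{eq:K0}) and the analytic estimates of Lemma~\ref{lem:cumulantexpansionB}. A straightforward generalisation of the scaling argument in Lemma~\ref{lem:dominatingpart} — using that after the extraction each remaining shifted noise type $(\Xi,\tilde\tau)$ appearing in $H_\sigma^\CF$ either occurs with multiplicity strictly greater than one on the dominant scale, or has $\tilde\tau \not\sim \tau$ — shows that $|H_\sigma^\CF(a,\lambda)| \lesssim \lambda_{\ci(\tau)}^{-\bar\kappa}\,\prod_j (\lambda_{\ci(\tilde\tau_j)}/\lambda_{\ci(\tau)})^{c_j}$ for some strictly positive exponents $c_j$. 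By the definition of $\sparaLc$, choosing $\lambda_{\ci(\tau)}$ first and then each $\lambda_\cj$ with $\cj < \ci(\tau)$ sufficiently small, these ratios become arbitrarily small, producing the required factor $\beta$. The main obstacle will be the careful bookkeeping of the forest expansion — in particular verifying that nested shifted sub-divergences are correctly separated into disjoint factors of the form $F_{\tilde\tau_j}$ — and the uniform control of the large-scale cut-offs when $\tilde\sigma$ touches the root component, which is handled by Lemma~\ref{lem:super:regularity:implies:large:scale:bound}.
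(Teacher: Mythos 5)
Your overall architecture --- expand $M^{g^\eps}$ via the coproduct against a BPHZ character of the shifted noise, identify the extracted counterterms recursively with the $F_{\tilde\tau}$, and power-count the residue --- is the right shape, but there is a genuine gap at the single most delicate point. In the expansion $(k^\eps\otimes\Upsilon^{\eta^\eps}M^{g^{\eta^\eps}})\cpmi\sigma$ the coproduct contains the term in which the extracted divergent subtree is \emph{all} of $\sigma$; for $\sigma\in\uSS[\tau]$ this produces the standalone counterterm $k^\eps(\sigma)$, and since $\cq\sigma=\tau$ one has $\pi\cq\sigma=\tau$ itself, \emph{not} an element of $\TT_-^{\prec\tau}$. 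Your claim that this case ``forces $\tilde\tau\ne\tau$'' is false, so this term can be absorbed neither into the variables $b_{\tilde\tau}$ nor into a residue expressed through $F_{\tilde\tau}$ with $\tilde\tau\prec\tau$: it must be shown to vanish, or at least to be bounded by a constant independent of $\lambda_{\ci(\tau)}$, and that is where essentially all of the work lies. The paper achieves this by replacing your full BPHZ character $g^{\eta^\eps}$ with the scale-truncated character $\sgj$ built from the modified homogeneity $\sfsj$, which declares all noises living on scales larger than $\lambda_{\ci(\tau)}$ to be almost smooth; with that choice the companion character $\skj$ annihilates every tree of positive $\sfsj$-homogeneity, and combined with Wiener-chaos orthogonality this yields $\skj\uSS\tau=0$ whenever $\fancynorm{\tau}_\fs<0$, see (\ref{eq:cor2}). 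With the unmodified homogeneity the trees in $\uSS[\tau]$ containing a single $\tilde\Xi$ or a single $(\Xi,\taub)$ still have negative homogeneity, so the analogous vanishing statement is simply unavailable, and the corresponding counterterm is in general of order $\lambda_{\ci(\tau)}^{-\bar\kappa}$ rather than $o(\lambda_{\ci(\tau)}^{-\bar\kappa})$.

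Two further points. First, the case $\fancynorm{\tau}_\fs=0$ is not covered by your power counting at all: there $\skj\uSS\tau$ does \emph{not} vanish, and bounding it requires Assumption~\ref{ass:zero-homo} together with the dedicated log-divergence estimate of Lemma~\ref{lem:blowuprenormconstantzerohomo} --- this is the entire reason the kernels $K^{\para}_{(\Xi,\tau)}$ are spread over dyadic scales in (\ref{eq:K0}) rather than defined by a single rescaling. Second, your final estimate extracts smallness from ratios $\lambda_{\ci(\tilde\tau_j)}/\lambda_{\ci(\tau)}$ with $\ci(\tilde\tau_j)<\ci(\tau)$, i.e.\ from scales chosen \emph{after} $\lambda_{\ci(\tau)}$ in the attainability recursion; but the bound (\ref{eq:GTbound}) must hold uniformly over those smaller scales. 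The mechanism that actually works is the opposite one: one proves $|G_\tau(a,b,\lambda)|\le C(\lambda_{\ci(\tau)^\uparrow})$ with a constant depending only on the already-fixed larger scales, uniformly in all smaller ones, and then chooses $\lambda_{\ci(\tau)}$ small enough that $C(\lambda_{\ci(\tau)^\uparrow})\le\beta\,\lambda_{\ci(\tau)}^{-\bar\kappa}$.
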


Before we prove Lemma~\ref{lem:Fthenondominatingpart}, we show how to use this in order to finish the proof of Proposition~\ref{prop:shiftednoise}. We first argue that one can strengthen the statement of this lemma.
\begin{lemma}\label{lem:Fthenondominatingpart:2}
For any $\tau \in \FT_-$ and $\rho>0$ there exists a continuous function $\tilde G _\tau:\sA \times \sAFT{\tau} \times \sparaLc \to \R$ such that 
\begin{align}\label{eq:GTFT:2}
	\tilde G_\tau(a, (F_{\tilde\tau}(a,\lambda)) _{\tilde\tau\in\FT_-^{\prec\tau}} , \lambda)
	= \uF_\tau(a,\lambda),
\end{align}
holds for any $(a,\lambda) \in \sA \times \sparaLc$ with $\sup_{\taua \in \FT_-}|a_\taua| \lor \sup_{\taua \in \FT_-^{\prec \tau}}|F_\taua (a,\lambda) | \le \rho$, and for any $\beta>0$ the bound
\begin{align}\label{eq:GTbound:2}
|\tilde G_\tau( a , b , \lambda )|
\le 
\beta  \lambda_{ \ci( \tau ) }  ^{ -\bar\kappa} 
\end{align}
uniformly over $(a,b) \in \sA \times \sAFT{\tau}$ such that $\max_{\tilde\tau\in\FT_-}|a_{\tilde\tau}| \lor  \max_{\tilde\tau\in\FT_-^{\prec\tau}}|b_{\tilde\tau}|\le \rho$ is attainable.
\end{lemma}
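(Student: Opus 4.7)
The proof proceeds by strong $\prec$-induction, reducing the dependence of $G_\tau$ on $F_{\tilde\tau}$ for $\tilde\tau \in \TT_-^{\prec\tau}\setminus\FT_-^{\prec\tau}$ to dependence only on $F_\taua$ for $\taua \in \FT_-^{\prec\tau}$, modulo controlled error terms drawn from the ideal~$\CJ$. The algebraic reduction uses the defining property of $\FT_-$ (Definition~\ref{def:FT}): for every $\tilde\tau \in \TT_-(\Theta) \subseteq \TT_-^{\prec\tau}\setminus\FT_-^{\prec\tau}$ with $\Theta := [\tilde\tau]_\sim$, there exists $\sigma \in \Vec(\Theta \cap \TT_-^{\prec\tilde\tau})$ such that $\eta := \tilde\tau + \sigma \in \CJ$, and therefore
\[
F_{\tilde\tau}(a,\lambda) = -F_\sigma(a,\lambda) + F_\eta(a,\lambda).
\]
Applying this relation recursively along $\prec$ and separating the contributions of trees inside and outside $\FT_-$, one produces for each $\tilde\tau \in \TT_-^{\prec\tau}$ a linear map $\phi_{\tilde\tau} : \R^{\FT_-^{\prec\tau}} \to \R$ together with a finite list of defect trees $\eta^{(i)}_{\tilde\tau} \in \CJ \cap \Vec\TT_-^{\preceq\tilde\tau}$ and signs $s^{(i)}_{\tilde\tau} \in \{\pm 1\}$ satisfying
\[
F_{\tilde\tau}(a,\lambda) = \phi_{\tilde\tau}\big((F_\taua(a,\lambda))_{\taua\in\FT_-^{\prec\tau}}\big) + R_{\tilde\tau}(a,\lambda),
\qquad
R_{\tilde\tau}(a,\lambda) := \sum_i s^{(i)}_{\tilde\tau}\, F_{\eta^{(i)}_{\tilde\tau}}(a,\lambda).
\]
Setting
\[
\tilde G_\tau(a,b,\lambda) := G_\tau\Big(a,\,\big(\phi_{\tilde\tau}(b) + R_{\tilde\tau}(a,\lambda)\big)_{\tilde\tau\in\TT_-^{\prec\tau}},\,\lambda\Big)
\]
then defines a continuous function for which \eqref{eq:GTFT:2} follows immediately from \eqref{eq:GTFT}.

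The quantitative ingredient driving the bound \eqref{eq:GTbound:2} is the following \emph{key claim}: for any $\eta \in \CJ \cap \Vec\TT_-^{\preceq \tilde\tau}$ with $\tilde\tau \prec \tau$ and any $\rho>0$, the bound $|F_\eta(a,\lambda)| \le 1$ holds uniformly over $|a|_\infty \le \rho$ and is attainable in the sense of Definition~\ref{def:attainable}. To prove it, decompose $\eta = \sum_\Theta \eta_\Theta$ across the $\sim$-equivalence classes; each $\eta_\Theta$ again lies in $\CJ$ because the pairing characterising $\CJ$ in Definition~\ref{def:CJcon} separates multisets of noise types. Split $F_{\eta_\Theta} = \oF_{\eta_\Theta} + \uF_{\eta_\Theta}$. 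A direct extension of the computation in Lemma~\ref{lem:dominatingpart} yields
\[
\oF_{\eta_\Theta}(a,\lambda) = \lambda_\Theta^{-\bar\kappa}\!\!\sum_{\taub \in \FT_-(\Theta)}\sum_\Xi a_\taub\, \zeta_{\taub,\Xi}\, \langle \CK_{\hat K}\eta_\Theta,\, \CU\Phi_{(\Xi,\taub)}\rangle + o(\lambda_\Theta^{-\bar\kappa}),
\]
where the remainder decays polynomially coming from $\CK_{K^{(R)}} - \CK_{\hat K}$ at scale $R = \lambda_\Theta^{-1}$. The leading sum vanishes identically: by the choice of $\Phi_{(\Xi,\taub)} \in \CYzsimpp{m(\taub)}$ in Lemma~\ref{lem:dominatingpart}, each $\CU\Phi_{(\Xi,\taub)}$ integrates to zero in one variable and hence represents an element of $\cutoffspacez$, on which $\eta_\Theta \in \CJ$ vanishes by Definition~\ref{def:CJcon}. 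Choosing $\bar\kappa>0$ small enough relative to the polynomial rate, $\oF_{\eta_\Theta} \to 0$ attainably as $\lambda_\Theta \to 0$. The subdominant piece $\uF_{\eta_\Theta}$ is handled by the straightforward linear extension of Lemma~\ref{lem:Fthenondominatingpart} to linear combinations, together with the $\prec$-inductive control on its $F_{\tilde\tau'}$-arguments.

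Granting the key claim, the bound \eqref{eq:GTbound:2} is routine: for $(a,b)$ with $|a|_\infty, |b|_\infty \le \rho$, the arguments $\phi_{\tilde\tau}(b) + R_{\tilde\tau}(a,\lambda)$ of $G_\tau$ are bounded by $C\rho$ via the linear part, while $|R_{\tilde\tau}(a,\lambda)| \le 1$ attainably, so all arguments lie in a fixed ball of radius $C\rho+1$ where Lemma~\ref{lem:Fthenondominatingpart} delivers $|\tilde G_\tau(a,b,\lambda)| \le \beta\lambda_{\ci(\tau)}^{-\bar\kappa}$ attainably.

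The main obstacle is to carry out the above induction uniformly in the attainable sense: the key claim for $F_\eta$ must be established \emph{in tandem with} Lemma~\ref{lem:Fthenondominatingpart:2} itself, since bounding $\uF_{\eta_\Theta}$ already invokes the lemma for trees $\tilde\tau' \prec \eta_\Theta$, and the finitely many attainable statements produced along the way must all remain compatible across the totally ordered scale family $\sparaLc$. This is resolved by Lemma~\ref{lem:attainablestatements} combined with the hierarchical scale structure of Section~\ref{sec:monotonestatements}: since smaller $\ci$ corresponds to strictly smaller~$\lambda$, one may arrange for the inductive bounds at finer scales to be absorbed into arbitrarily small constants at the target scale $\lambda_{\ci(\tau)}$.
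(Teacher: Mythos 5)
Your construction of $\tilde G_\tau$ via the linear maps $\phi_{\tilde\tau}$ and the remainders $R_{\tilde\tau} = \sum_i s^{(i)}_{\tilde\tau} F_{\eta^{(i)}_{\tilde\tau}}$ is a legitimate alternative to the paper's (which simply sets $\tilde G_\tau(a,b,\lambda) := G_\tau(a, (b_\taub)_{\taub \in \FT_-^{\prec\tau}} \sqcup (F_\taub(a,\lambda)\land R)_{\taub \in \TT_-^{\prec\tau}\setminus\FT_-^{\prec\tau}},\lambda)$), and your observation that the \emph{leading} term of $\oF_{\eta_\Theta}$ vanishes for $\eta_\Theta \in \CJ$ because the kernels $\CU\Phi_{(\Xi,\taub)}$ pair trivially against $\CJ$ is correct and is indeed the mechanism underlying the whole construction. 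The genuine gap is in your key claim that $|F_\eta(a,\lambda)| \le 1$ attainably for $\eta \in \CJ$. Your argument controls only the dominating part: for the subdominant part you invoke ``the straightforward linear extension of Lemma~\ref{lem:Fthenondominatingpart}'', but that lemma only yields $|\uF_{\eta_\Theta}| \le \beta\,\lambda_{\Theta}^{-\bar\kappa}$ (equivalently, from its proof, a bound by constants $C(\lambda_{\ci^\uparrow})$ depending on the \emph{larger} scales), which is not an $O(1)$ bound — it blows up as $\lambda_\Theta \to 0$, and no cancellation across the linear combination $\eta_\Theta$ is identified for $\uF$. Consequently $|F_\eta| \lesssim \beta\lambda_\Theta^{-\bar\kappa}$ is all you get, the arguments $\phi_{\tilde\tau}(b)+R_{\tilde\tau}(a,\lambda)$ fed into $G_\tau$ need not lie in a fixed ball, and the hypothesis of Lemma~\ref{lem:Fthenondominatingpart} (uniformity over bounded $(a,b)$) is not met. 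The circularity you flag at the end is not merely a bookkeeping issue: the radius of the ball on which you need Lemma~\ref{lem:Fthenondominatingpart} would itself depend on the scales, and the attainability quantifiers then do not close.

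The paper avoids this by proving a genuinely \emph{uniform} (over all of $\sparaLc$, not merely attainable) a priori bound $|F_\taua(a,\lambda)| \le R$ for every $\taua \in \TT_-^{\prec\tau}$ and $(a,\lambda)$ satisfying the hypotheses, and this is where the real work lies. The route is through the renormalisation group rather than the $\oF+\uF$ decomposition: one writes $F_\taua = \lim_\eps \Upsilon^{\eta^\eps} M^{\sg} M^{\sh}\SS\taua$ with $\sh \circ \sg = g^\eps$, uses the uniform boundedness of the BPHZ-renormalised model from \cite{ChandraHairer2016} together with Lemma~\ref{lem:cumulantexpansionB} for the homogeneity $\fs - \bar\kappa$ (which is uniform in \emph{all} scales, unlike the bounds at homogeneity $\sfsj$), identifies $\sh\SS$ with the character $h^{\eps,\delta}$ of Proposition~\ref{prop:sequencetoconstant}, and then runs the same $\prec$-induction as there: for $\taua \notin \FT_-$ the cancellation coming from $\CJ$ is exploited at the level of the subgroup $\CH$ (which annihilates $\CJ$ and, crucially, is a group), i.e.\ it applies to the full renormalised object and not just its dominating part. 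If you want to rescue your approach you would need to exhibit the analogous cancellation in $\uF_{\eta_\Theta}$ itself, which in effect reproduces the paper's character argument.
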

\begin{proof}
Let $A_\rho$ denote the set of $a \in \sA$ such that $\sup_{\taua \in \FT_-}|a_\taua| \lor \sup_{\taua \in \FT_-^{\prec \tau}}|F_\taua (a,\lambda) | \le \rho$. 
We first argue that there exists $R>0$ such that one has $|F_{\taua}(a,\lambda)| \le R$ for any $\taua \in \TT_-^{\prec\tau}$ and any $(a, \lambda) \in A_\rho \times \sparaLc$. Once this is shown, it is not hard to see that the function
\[
\tilde G_\tau( a, b, \lambda)
:=
G_\tau( a, (b_\taub)_{\taub \in \FT_-^{\prec\tau}} \sqcup (F_\taub(a, \lambda) \land R )_{\taub \in \TT_-^{\prec\tau} \backslash \FT_-^{\prec\tau}}, \lambda)
\]
has all the properties we were looking for.

We denote by $\sg=\sg(a,\lambda) \in \sCGm$ the BPHZ-character for the noise $\eta^{\eps}(a,\lambda)$ and we define $\sh = \sh(a,\lambda) \in \sCGm$ via the identity $\sh\circ \sg = g^\eps$, so that one has
\[
F_\taua(a,\lambda) = \lim_{\eps \to 0}
\Upsilon^{\eta^{\eps}} M^{\sg} M^{\sh} \SS\taua
\]
for any $\taua \in \TT_-$, where we suppress the dependence on $(a,\lambda)$ in the notation on the right-hand side.
By Lemma~\ref{lem:cumulantexpansionB} the noise $\eta^{\eps}(a,\lambda)$ is uniformly bounded with respect to $\| \cdot \|_{\fs - \bar\kappa}$ over $(a,\lambda) \in A_\rho \times \sparaLc$ and $\eps>0$, 
and it follows from \cite{ChandraHairer2016} that 
\[
|\Upsilon^{\eta^{\eps}} M^{\sg} \sigma|
\lesssim
1
\]
for any $\sigma \in \sCT$ uniformly over $(a,\lambda) \in A_\rho \times \sparaLc$ and $\eps>0$.
We recall at this point (Lemma~\ref{lem:shift:operator:co:product}) that $\SS$ commutes with the coproduct, so that it remains to show that $\sh\SS\taua$ is bounded for any $\taua \in \CT_-$ uniformly over $(a,\lambda) \in A_\rho \times \sparaLc$ and $\eps>0$. 
We denote by $h^{\eps,\delta} \in \CG_-$ the character from Proposition~\ref{prop:sequencetoconstant} for the shift $\zeta_\delta := \SS^*\eta^\eps(a,\lambda)-\xi^\eps$, 
and we claim that one has $\sh\SS = h^{\eps,\delta}$ on $\CT_-$.
Indeed, one has
\begin{align*}
\CM( \sg \SS \otimes \Upsilon^{\xi^\eps+\zeta_\delta}) \cpmi
=
\CM( \sg \otimes \Upsilon^{\eta^{\eps}}) \cpmi \SS
=
\one^\star \SS = \one^\star
\end{align*}
on $\CTm$, and since this relation characterizes the BPHZ character, one has $\sg\SS = g^{\eps,\delta}$. It remains to argue that
\[
\sh\SS \circ g^{\eps,\delta} = \sh\SS \circ \sg\SS = (\sh \circ \sg) \SS = g^\eps \SS = g^\eps
\]
on $\CTm$.
We can now argue inductively with respect to $\prec$ in the same way as in the proof of (\ref{eq:convergenceh}). 
The only difference is that in (\ref{eq:convergenceh}) we showed convergence based on the assumption that $F_{\tilde\tau} (a,\lambda)$ converges for $\tilde\tau \in \FT_-^{\prec\tau}$, now we show boundedness based on the assumption that these quantities are bounded.
\end{proof}

With this we can finish the proof of Proposition~\ref{prop:shiftednoise}. We recall at this point that we fixed $h \in \fxi \circ \CH$ at the beginning of Section~\ref{sec:renormalisationgroupargument}, see also (\ref{eq:shiftedtreeexpectationA}) in Proposition~\ref{prop:shiftednoise}.

\begin{proposition}\label{prop:choice_of_a}
For any $\delta>0$ the following is an attainable statement: there exists a family of constants $a\in\R^{\FT_-}$ such that $\sup_{\tau \in \FT_-} |a_\tau|\le \delta$ and such that
\begin{align}\label{eq:fpp}
		F_\tau(a,\lambda)=h(\tau)
\end{align}
for any $\tau\in\FT_-$. 

In particular, one can choose a sequence of scales $\lambda^\delta \in \sparaLc$ and a sequence of constants $a^\delta \in \sA$, both converging to $0$ as $\delta \to 0$, such that the statement of Proposition~\ref{prop:shiftednoise} holds for the corresponding shift $\srn_\delta = \SS^* \eta^{\eps}(a,\lambda)-\xi^\eps$.
\end{proposition}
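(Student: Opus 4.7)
The plan is to solve (\ref{eq:fpp}) via a fixed-point argument in the finite-dimensional space $\sA$. For fixed $\lambda \in \sparaLc$ I define the continuous map $\Phi(\cdot,\lambda): \sA \to \sA$ by
\begin{equ}
\Phi_\tau(a,\lambda) := a_\tau - \lambda_\tau^{\bar\kappa}\bigl(F_\tau(a,\lambda) - h(\tau)\bigr)\;,\qquad \tau\in\FT_-\;,
\end{equ}
so that fixed points of $\Phi(\cdot,\lambda)$ in $\sA$ are exactly solutions of (\ref{eq:fpp}). Since the kernels $K_\snt^{a,\lambda}$ are linear in $a$, each $F_\tau(a,\lambda)$ is polynomial in $a$ and hence so is $\Phi$. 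Substituting the expansion $\oF_\tau(a,\lambda) = a_\tau\lambda_\tau^{-\bar\kappa} + o(\lambda_\tau^{-\bar\kappa})$ from Lemma~\ref{lem:dominatingpart} cancels the leading linear-in-$a_\tau$ contribution and yields the ``linear plus small perturbation'' form
\begin{equ}
\Phi_\tau(a,\lambda) = \lambda_\tau^{\bar\kappa} h(\tau) - \lambda_\tau^{\bar\kappa}\,\uF_\tau(a,\lambda) - \lambda_\tau^{\bar\kappa}\,o\bigl(\lambda_\tau^{-\bar\kappa}\bigr)\;,
\end{equ}
whose first and third summands are each attainably bounded by $\delta/3$ uniformly over $|a|_\infty \le \delta$ (the first by shrinking $\lambda_\tau$, the third by the uniform bound of Lemma~\ref{lem:dominatingpart}).

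The hard part will be bounding the middle summand. Lemma~\ref{lem:Fthenondominatingpart:2} gives $|\uF_\tau(a,\lambda)| \le \beta\lambda_\tau^{-\bar\kappa}$ attainably, but only under the a priori control $|F_\taua(a,\lambda)| \le \rho$ for $\taua \in \FT_-^{\prec\tau}$. This bound is \emph{not} available uniformly on the full ball $|a|_\infty\le\delta$ (generically $F_\taua = O(\lambda_\taua^{-\bar\kappa})$ there, which blows up) and is in fact exactly the kind of control that a solution to (\ref{eq:fpp}) would itself supply, so applying Brouwer directly to $\Phi$ on $\{|a|_\infty\le\delta\}$ runs into a circularity.

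To break this circularity I will run the fixed-point argument inductively with respect to $\prec$. Enumerate $\FT_- = \{\tau_1 \prec \ldots \prec \tau_N\}$ and fix $\rho := \|h\|_\infty + 1$. The inductive hypothesis $(H_k)$ asserts attainability, in $(\lambda_\ci)_{\ci \le \ci(\tau_k)}$, of a continuous implicit function $A^{(k)}:[-\delta,\delta]^{N-k}\times\sparaLc \to [-\delta,\delta]^k$ such that the tuple $a := (A^{(k)}(a_{>k},\lambda), a_{>k})$ satisfies $F_{\tau_i}(a,\lambda) = h(\tau_i)$ for every $i \le k$. At step $k$, substituting the previously constructed $A^{(k-1)}$ into the coordinates $a_{<k}$ and treating $a_{>k}$ as an external parameter reduces the problem to a one-dimensional fixed-point equation for $a_{\tau_k}\in [-\delta,\delta]$. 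The crucial point is that after this substitution the identities $F_{\tau_i}(a,\lambda) = h(\tau_i)$ for $i<k$ hold \emph{automatically}, so the bounds $|F_{\tau_i}| \le \|h\|_\infty \le \rho$ come for free and Lemma~\ref{lem:Fthenondominatingpart:2} applies with $\beta=\delta/3$ to give $\lambda_{\tau_k}^{\bar\kappa}|\uF_{\tau_k}(a,\lambda)| \le \delta/3$ attainably. Thus $a_{\tau_k}\mapsto\Phi_{\tau_k}(a,\lambda)$ sends $[-\delta,\delta]$ into itself attainably, and one-dimensional Brouwer (equivalently, the intermediate value theorem) yields the required selection $a_{\tau_k} = A^{(k)}_{\tau_k}(a_{>k},\lambda)$, with continuity in $a_{>k}$ inherited from that of $F_{\tau_k}$ via a standard compactness argument. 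Combining the attainability statements at all $N$ steps via Lemma~\ref{lem:attainablestatements} establishes $(H_N)$, which is the first assertion.

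For the second assertion, by attainability one extracts scales $\lambda^\delta \in \sparaLc$ with $\lambda^\delta_\ci \to 0$ as $\delta\to 0$ for every $\ci \in \CI^\star$ (in particular $\lambda^\delta_\star = \delta$), together with $a^\delta := A^{(N)}(\lambda^\delta) \in [-\delta,\delta]^{\FT_-}$, so that $|a^\delta|_\infty \le \delta \to 0$. The shift $\srn_\delta := \SS^*\eta^\eps(a^\delta,\lambda^\delta) - \xi^\eps$ then satisfies (\ref{eq:shiftedtreeexpectationA}) by the very construction of $a^\delta$, while (\ref{eq:shiftednoisehomo}) is the explicit content of the ``in particular'' clause of Lemma~\ref{lem:cumulantexpansionB}: since $a^\delta_\tau \to 0$ for every $\tau$, one obtains $\xi+\srn_\delta \to 0$ in $\SMsz$.
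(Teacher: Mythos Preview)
Your proposal correctly identifies the circularity in applying Lemma~\ref{lem:Fthenondominatingpart:2}, but the inductive resolution via continuous implicit functions $A^{(k)}$ has a real gap. The claim that ``continuity in $a_{>k}$ [is] inherited from that of $F_{\tau_k}$ via a standard compactness argument'' is not justified: one-dimensional Brouwer gives, for each value of the parameter $a_{>k}\in[-\delta,\delta]^{N-k}$, \emph{existence} of a fixed point in $[-\delta,\delta]$, but in the absence of uniqueness there is in general no continuous selection as the (multidimensional) parameter varies. The fixed-point correspondence is only upper hemicontinuous, and compactness alone does not upgrade this to a continuous section. Without $A^{(k-1)}$ continuous in $a_k$, the map you feed into the intermediate value theorem at step~$k$ is not known to be continuous, and the induction does not close.

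The paper sidesteps this entirely by a ``freezing'' trick. Rather than working with $\uF_\tau(a,\lambda)=\tilde G_\tau\big(a,(F_\taua(a,\lambda))_{\taua\prec\tau},\lambda\big)$, one replaces the unknowns $F_\taua(a,\lambda)$ by their \emph{target} values $h(\taua)$ and solves the modified system
\[
\oF_\tau(a,\lambda)+\tilde G_\tau\big(a,(h(\taua))_{\taua\in\FT_-^{\prec\tau}},\lambda\big)=h(\tau)\,,\qquad\tau\in\FT_-\,.
\]
The associated fixed-point map is globally defined and continuous on the full cube $[-\delta,\delta]^{\FT_-}$ (the hypothesis $|b_\taua|\le\rho$ of Lemma~\ref{lem:Fthenondominatingpart:2} is trivially met with $b_\taua=h(\taua)$), so a \emph{single} application of Brouwer yields a fixed point $a$; no parametrised selection is needed. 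One then checks a posteriori, by induction on $\prec$, that this $a$ also solves~(\ref{eq:fpp}): once $F_\taua(a,\lambda)=h(\taua)$ for all $\taua\prec\tau$, the identity~(\ref{eq:GTFT:2}) gives $\tilde G_\tau\big(a,(h(\taua)),\lambda\big)=\uF_\tau(a,\lambda)$, whence $F_\tau(a,\lambda)=\oF_\tau+\uF_\tau=h(\tau)$.
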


\begin{proof}
The key step is to find a solution $a \in \sA$ with $\max_{\tau \in \FT_- } |a_\tau|<\delta$ to the system of equations
\begin{align}\label{eq:barFTplusulG}
\oF_\tau(a,\lambda)+
\tilde G_\tau  
\big(
	a, (h(\taua))_{\taua \in \FT_-^{\prec\tau}} , \lambda
\big)
=
h(\tau) \,, 
\qquad\quad{\tau \in \FT_-},
\end{align}
where $\tilde G_\tau$ is as in Lemma~\ref{lem:Fthenondominatingpart:2} for $\rho:= 2 \max_{\tau \in \FT_-} h(\tau) \land 2\delta$. We then argue inductively: Fixing $\tau \in \FT_-$ and assuming that $F_\taua(a, \lambda) = h(\taua)$ for any $\taua\in \FT_-^{\prec\tau}$, then the assumptions of (\ref{eq:GTFT:2}) are met for $\tau$, so that the left-hand side of (\ref{eq:barFTplusulG}) is equal to $F_\tau(a, \lambda)$.

We rephrase (\ref{eq:barFTplusulG}) slightly into a fixed-point problem. Define the function $g : \sA \times \sparaLc \to \sA$ by
\begin{align}\label{eq:fpp:g}
g_\tau( a , \lambda ) :=
- \lambda_{\ci(\tau)} ^{\bar\kappa} 
\Big (
	\oF_\tau(a,\lambda)+
	\tilde G_\tau \big(a, (h(\tilde\tau)) _{\tilde\tau \in \FT_-^{\prec\tau}} , \lambda
	\big)
	-h(\tau) 
\Big)
+a_\tau
\end{align}
for any $\tau\in\FT_-$.
Then $a \in \sA$  with $\max_{\tau \in \FT_- } |a_\tau|<\delta$ is a fixed point of $g(\cdot, \lambda)$ if and only if $(a,\lambda)$ is a solution to (\ref{eq:fpp}). 
It follows from Lemma~\ref{lem:dominatingpart} that $-\lambda_{\ci(\tau)}^{\bar\kappa}  \oF_\tau (a,\lambda) + a_\tau \to 0$ as $\lambda\to 0$ uniformly over $a \in \sA$ as above, and from Lemma~\ref{lem:Fthenondominatingpart:2} that for any $\beta>0$ the bound 
\[
\left|
\lambda_{\ci(\tau)}^{\bar\kappa} \tilde G_\tau 
	\big( a, (h(\tilde\tau)) _{\tilde\tau\in\FT_-^{\prec\tau}} , \lambda \big) 
\right|
\le \beta 
\]
for any $a \in \sA$ as above is an attainable statement.

It follows that the statement
\[
\max_{\tau\in\FT_-} |a_\tau|\le \delta
\,\,\text{ implies }\,\,
\max_{\tau\in\FT_-} |g_\tau(a,\lambda)| \le \delta
\]
is attainable, and by Schauder's fixed point theorem there exists a solution $a$ to (\ref{eq:fpp:g}) in the $\delta$-neighborhood of the origin.
\end{proof}

It remains to show Lemma~\ref{lem:Fthenondominatingpart}.
We fix $\skappa > 0$ with the property that $\bar\kappa> \skappa$ and such that $|\tau|_{\sfs}>-\shalf+\shift\kappa$ for any tree $\tau\in\shift\CT$ with $\#K(\tau)\ge 1$. 
For fixed $\cif\in\CI$ we introduce a homogeneity assignment $\sfsi$ on $\sFLm$ which treats noises regularised on scales larger than $\lambda_\cif$ as smooth. More precisely, we set $\sfsi(\Xi) := \sfs(\Xi)$ and $\sfsi(\tilde\Xi) := -\skappa$ for any $\Xi \in \FL_-$, and we define
\begin{align*}
\sfsi(\Xi,\tau):=
\bigg\{
\begin{split}
	\sfs(\Xi,\tau)	\qquad &\text{ if } \ci(\tau) \le \cif \\
	-\skappa 		\qquad &\text{ if } \ci(\tau)>\cif
\end{split}
\end{align*}
for any noise type $(\Xi,\tau) \in \sFLm$. Here we use the total order $\le$ on $\CI$ introduced in Section~\ref{sec:monotonestatements}.
\begin{lemma}\label{lem:cumulantbound}
Let $\ci\in\CI$ and let $\ci^\uparrow:=\min\{\cj\in\CI^\star : \cj>\ci \}$.
For any $A>0$ there exists a constant $C_{N}(\lambda_{\ci^\uparrow})>0$, such that for any $N\in\N$ the bound
\[
\|\eta^{\eps}(a,\lambda)\|_{\sfsi}
\le C_{N}( \lambda_{\ci^\uparrow})
\]
holds uniformly over all families $\lambda \in \sparaLc$ and $a \in \sA$ with $\max_{\tau \in \FT_-} |a_\tau|< A$.
\end{lemma}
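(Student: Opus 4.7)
The plan is to deduce this directly from Lemma~\ref{lem:cumulantexpansionB} by identifying $\ul\lambda(\sfsi)$ for the specific homogeneity assignment $\sfsi$. Recall that
\[
\ul\lambda(\sfsi) = \min\{\lambda_\snt : \snt \in \sFLm,\ \sfsi(\snt) > \shift\fs(\snt)\} \land 1,
\]
so the main task is to check which noise types actually contribute to this minimum and to verify that they are all associated to scales that are bounded below by $\lambda_{\ci^\uparrow}$.

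First I would enumerate the noise types:
\begin{itemize}
\item For $\Xi \in \FL_-$ one has $\sfsi(\Xi) = \sfs(\Xi) = \shift\fs(\Xi)$, so there is no contribution.
\item For $\tilde\Xi$, one has $\sfsi(\tilde\Xi) = -\skappa > -\shalf = \shift\fs(\tilde\Xi)$ (since $\skappa < \shalf$), so this type contributes with associated scale $\lambda_{\tilde\Xi} = \lambda_\star$.
\item For $(\Xi,\tau) \in \sFLm \setminus \FL_-$ with $\ci(\tau) \le \ci$, one has $\sfsi(\Xi,\tau) = \shift\fs(\Xi,\tau)$, so there is no contribution.
\item For $(\Xi,\tau) \in \sFLm \setminus \FL_-$ with $\ci(\tau) > \ci$, one has
\[
\sfsi(\Xi,\tau) - \shift\fs(\Xi,\tau) = -\skappa + \shalf + \homofancys{\tau} + \bar\kappa,
\]
which is strictly positive since $\bar\kappa > \skappa$ and $\homofancys{\tau} > -\shalf$ (the latter follows from Assumption~\ref{ass:main:reg} together with the fact that $\tau \in \FT_-$ has at least two leaves). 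Thus this type contributes, with associated scale $\lambda_\tau = \lambda_{\ci(\tau)}$.
\end{itemize}

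Using the definition of $\sparaLc$, whenever $\cj > \ci$ in $\CI^\star$ one has $\lambda_\cj > \lambda_\ci$. Consequently, among the contributing noise types the minimum of $\lambda_\snt$ is attained either at $\lambda_\star$ (if $\ci^\uparrow = \star$) or at $\lambda_{\ci^\uparrow} \in (0, \lambda_\star)$ (if $\ci^\uparrow \in \CI$). In both cases $\ul\lambda(\sfsi) = \lambda_{\ci^\uparrow}$, and an application of Lemma~\ref{lem:cumulantexpansionB} with $\sfss := \sfsi$ and the uniform bound $|a|_\infty < A$ yields the desired estimate with $C_N( \lambda_{\ci^\uparrow} ) := C_N(\ul\lambda(\sfsi))$.

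The only potentially subtle point is the strict inequality $\homofancys{\tau} > -\shalf$ used in the fourth bullet. For $\tau \in \FT_-$ containing only a single noise type edge, $\tau$ would be proportional to a symbol $\Xi \in \FL_-$; but such trees lie in $\CJ$ (they are linear combinations of generators in the sense of Definition~\ref{def:CJcon} in a trivial way) and hence are excluded from $\FT_-$ by Definition~\ref{def:FT}. Therefore every $\tau \in \FT_-$ satisfies $\#L(\tau) \ge 2$, and then the first part of Assumption~\ref{ass:main:reg} gives $\homofancys{\tau} > -\shalf$, completing the proof.
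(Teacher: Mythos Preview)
Your proof is correct and follows exactly the approach of the paper, which simply reads ``This follows directly from Lemma~\ref{lem:cumulantexpansionB}.'' You have correctly unpacked what this entails by computing $\ul\lambda(\sfsi)$ explicitly and identifying it with $\lambda_{\ci^\uparrow}$.

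One minor remark: the final paragraph about the ``subtle point'' is more involved than necessary. The inequality $\fancynorm{\tau}_\fs > -\shalf$ for $\tau \in \FT_- \subset \TT_-$ follows immediately from Assumption~\ref{ass:main:reg}, which gives $|\tau|_\fs > -\shalf$, together with $\fancynorm{\tau}_\fs = |\tau|_\fs + \#L(\tau)\kappa \ge |\tau|_\fs$; you do not need to invoke the exclusion of single-noise trees from $\FT_-$ for this particular estimate.
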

\begin{proof}
This follows directly from Lemma~\ref{lem:cumulantexpansionB}.
\end{proof}
We denote by $\sCTi \ssq \sCTm$ the unital subalgebra generated by trees of negative $|\cdot|_{\sfsin}$-homogeneity. Note that $\skappa$ was chosen small enough so that for any tree $\tau \in \sCTm$ one has $\tau \in \sCTi$ if and only if for any noise type edge $ e \in L(\tau)$ on has either $\ft(e) \in \FL_-$ or $\ft(e) = (\Xi,\tau)$ with $\ci(\tau)\le \ci_0$. We denote by $\spi$ the multiplicative projection of $\sCTm$ onto $\sCTi$, and we define $\sgi := \sg \spi$ (we usually suppress the dependence of $(a,\lambda)$ in this notation). 

It follows that $\sgi$ restricted to $\sCTi$ is just the BPHZ-character for the homogeneity assignment $\sfsin$ and the evaluation $\eta^{\eps}$.
Applying the results of \cite{ChandraHairer2016} to the homogeneity assignment $\sfsin$ 
we obtain the following estimate.

\begin{lemma}\label{lem:shift:upsilon:bound}
For any $\ci\in\CI$ there exists a constant $C_{N}(\lambda_{\ci^\uparrow})>0$ such that for any $\tau\in \sCTmhat$ the bound
\[
|\Upsilon ^{\eta^{\eps}} M^{ \sgi }\tau |
\le 
C_{N} (\lambda_{\ci^\uparrow})
\]
holds uniformly over all families $\lambda \in \sparaLc$ and $a \in \sA$ with $\max_{\tau \in \FT_-} |a_\tau|< A$.

Moreover, one has
\[
\cg_i(a, \lambda)\tau := \lim_{\eps \to 0}\Upsilon ^{\eta^{\eps}} M^{ \sgi }\tau 
\]
exists and is a continuous function in $(a,\lambda)$.
\end{lemma}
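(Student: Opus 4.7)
The plan is to recognise $\sgi$ as the BPHZ character associated to the noise $\eta^\eps$ and the (modified) homogeneity assignment $\sfsin$, and then simply invoke the analytic BPHZ theorem of \cite{ChandraHairer2016} fed by the input provided by Lemma~\ref{lem:cumulantbound}.

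First, I would verify that on $\sCTi$ the character $\sgi$ coincides with the BPHZ character $g^{\eta^\eps,\sfsin}_{\BPHZ}$ of $\eta^\eps$ computed with respect to $\sfsin$. The point is that for $\tau\in\sCTi$, every subforest $\CF$ of $\tau$ whose components are $\sfsin$-negative has components that are automatically $\sfs$-negative, so the forest expansions of the coproducts with respect to $\sfs$ and $\sfsin$ agree on $\sCTi$. Since $\spi$ kills any tree with a component outside of $\sCTi$, the character $\sgi=\sg\spi$ vanishes on the extra terms in the $\sfs$-coproduct and therefore the identity $\CM(\sgi\otimes \Upsilon^{\eta^\eps})\cpmi=\one^\star$ of the twisted antipode holds on $\sCTi$ with respect to $\cpmi^{\sfsin}$; uniqueness of the BPHZ character yields $\sgi|_{\sCTi}=g^{\eta^\eps,\sfsin}_{\BPHZ}$. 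For a general $\tau\in\sCTmhat$ the operator $M^{\sgi}$ then subtracts precisely the counterterms coming from subforests with $\sfsin$-negative components, which is the renormalisation map of the auxiliary regularity structure built from the rule $\shift R$ with homogeneity $\sfsin$ in place of $\sfs$.

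Next I would check that the auxiliary structure satisfies the assumptions of the analytic BPHZ theorem. Sub-criticality is preserved because $\sfsin\ge\sfs$ pointwise on $\sFL$, so any regularity function $\reg$ witnessing sub-criticality of $\shift R$ with respect to $\sfs$ also works for $\sfsin$. The assumptions on the noise in \cite[Thm.~2.33]{ChandraHairer2016} are exactly Lemma~\ref{lem:cumu_homo_consistent} applied to $\sfsin$, together with the bound on $\|\eta^\eps\|_{\sfsin}$ provided by Lemma~\ref{lem:cumulantbound}, which is uniform over $\lambda\in\sparaLc$ and $|a|_\infty\le A$ in terms of the single scale $\lambda_{\ci^\uparrow}$. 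Applying \cite[Thm.~2.15, Thm.~2.33]{ChandraHairer2016} then yields a uniform bound and convergence in $L^p$ for every $p\ge 1$ of the renormalised model $\hat Z^{\eta^\eps}:=\CZ(\Upsilon^{\eta^\eps}M^{\sgi})$ associated to $\sfsin$, with a constant depending only on $\|\eta^\eps\|_{\sfsin}$, hence only on $\lambda_{\ci^\uparrow}$.

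The remaining step is to pass from the model bound to the pointwise bound on $\Upsilon^{\eta^\eps}M^{\sgi}\tau$. This is exactly the stationarity argument of Lemma~\ref{lem:convergenceboldPi}: using the fact that the renormalised model is stationary, the evaluation at~$0$ is a polynomial in the space-time variable of degree at most $\sum_{u\in N(\tau)}|\fn(u)|_\fs$, and one can represent it as an integral against a compactly supported test function~$\vphi$ whose polynomial moments vanish up to that degree. The bound on $\E(\hat\PPi^{\eta^\eps}\tau)(\vphi)$ follows from the bound on the model, and the joint continuity in $(a,\lambda)$ of the limit $\cg_i(a,\lambda)\tau$ follows from the joint continuity statement of \cite[Thm.~2.33]{ChandraHairer2016} combined with the continuous dependence of $\eta^\eps$ on $(a,\lambda)$ in $\SMsz$, which is immediate from the explicit formulas \eqref{eq:K}--\eqref{eq:ham}. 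The only mildly delicate point is the bookkeeping in Step~1, since $\sCTi$ is a strict subalgebra of $\sCTm$ whose coproduct structure needs to be tracked carefully; everything else is a direct application of results proved earlier in the paper and in \cite{ChandraHairer2016}.
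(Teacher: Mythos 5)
Your proposal is correct and follows essentially the same route as the paper, whose proof is the one-liner that both statements follow from Lemma~\ref{lem:cumulantbound} and the analytic BPHZ theorem of \cite{ChandraHairer2016}; the identification of $\sgi$ with the BPHZ character for the homogeneity assignment $\sfsin$ that you carefully verify is stated by the paper in the paragraph immediately preceding the lemma. Your additional bookkeeping (the comparison of coproducts on $\sCTi$ and the stationarity argument to pass from the model bound to the pointwise evaluation) is a faithful expansion of what the paper leaves implicit.
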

\begin{proof}
Both statements follow from Lemma~\ref{lem:cumulantbound} and \cite[Thm.~2.31]{ChandraHairer2016}. 
\end{proof}
We are finally in the position to prove Lemma~\ref{lem:Fthenondominatingpart}.
\begin{proof}[of Lemma~\ref{lem:Fthenondominatingpart}]
We fix from now on a tree $\tau\in\FT_-$ and assume that the statement of Lemma~\ref{lem:Fthenondominatingpart} holds for any $\taua \in \FT_-^{\prec\tau}$. We set $\cj:=\ci(\tau)$ and we define a character $\skj \in \sCGm$ by
\begin{align}\label{eq:kj}
\skj \circ \sgj = g^{\eps}.
\end{align}
It then follows that for any $\taua \in \TT_-^{\prec\tau}$ one has
\begin{align*}
\Upsilon^{\eta^{\eps}} M^{ g^\eps } \SS \taua
&= (\skj \otimes \Upsilon^{\eta^{\eps}} M^{\sgj}) \cpmi \SS \taua \\
&= (\skj \SS\otimes 
\Upsilon^{\eta^{\eps}} M^{\sgj} \SS )
	( 
		\cpmi - \Id \otimes \one
	)
\taua+ \skj \SS \taua.
\end{align*}
It follows from this identity and the definition of the coproduct that there exists a fixed polynomial $P_{\tilde\tau}$ in $\TT_-^{\preceq\taua} \times \TT$ variables such that
\begin{align}\label{eq:skj}
\skj \SS\taua
=
\Upsilon^{\eta^{\eps}} M^{ g^\eps} \SS \taua
+
P_{\tilde\tau}\big( \skj \SS \taub \, , \, \Upsilon^{\eta^\eps} M^{\sgj} \SS \tauc
\,:\, \taub \in \TT_-^{\preceq\taua} \, , \, \tauc \in \TT \big).
\end{align}
We prove inductively in $\prec$ that for any $\taua \in \TT_-^{\prec\tau}$ there exists a continuous function 
\begin{align}\label{eq:tildek}
\cf(\cdot,\cdot,\cdot) \taua : \sA \times \R^{\TT_-^{\prec\tau}} \times \sparaLc \to \R 
\end{align}
such that
$
\cf(a, (F_{\hat\tau}(a, \lambda))_{\hat \tau \in \TT_-^{\prec\tau}} ,\lambda) \taua
$
is equal to (\ref{eq:skj}) in the limit $\eps \to 0$ for any $(a,\lambda) \in \sA \times \sparaLc$, and such that
for any $C>0$ the estimate
\[
|\cf{}(a,b,\lambda) \taua| \lesssim C_{N}(\lambda_{\cj^\uparrow})
\]
holds uniformly over all $(a,b) \in \sA \times \sATT{\tau}$ such that $\sup_{\taub \in \FT_-}|a_{\taub}| \lor \sup_{\taub \in \TT_-^{\prec\tau}}|b_{\taub}|<C$ and $\eps>0$ small enough (where ``small enough'' may depend on $(a,b)$). 
We can write
\begin{align*}
\lim_{\eps \to 0}
\skj \SS\taua
&=
F_{\taua}(a,\lambda)
+
P_{\tilde\tau} \big( \cf(a, (F_{\hat\tau}(a, \lambda))_{\hat \tau \in \TT_-^{\prec\tau}} ,\lambda) \taub 
\, , \, 
\cg_j (a,\lambda) \tauc
\,:\, \taub \in \TT_-^{\preceq\taua} \, , \, \tauc \in \TT \big)
\\
&=:
\cf(a, (F_{\hat\tau}(a, \lambda))_{\hat \tau \in \TT_-^{\prec\tau}} ,\lambda) \taua,
\end{align*}
which is bounded in the required way by Lemma~\ref{lem:shift:upsilon:bound} and the induction hypothesis.
%
In order to continue, we now make the claim that for any tree $\tilde\tau \in \sCTm$ one has $\skj \taua = 0$ if at least one of the following three properties is satisfied.
\begin{enumerate}
\item One has $\taua\in\CT_-$.
\item For any partition $\pi\in\parti{L(\taua)}$ one has
\begin{align}\label{eq:cumulant:vanish}
\prod_{P\in\pi}\E^c[(\eta^{\eps}_{t(e)}(x_e))_{e\in P}]= 0
\end{align}
for any $(x_e)_{e\in L(\taua)}\in(\bar\domain)^{L(\taua)}$, where $\ft:L(\taua)\to\sFLm$ denotes the type map of $\taua$.
\item One has $|\taua|_{\sfsj}>0$. 
\end{enumerate}
The first claim follows from the fact that $\CT_-$ is a Hopf subalgebra of $\sCTm$ and on $\CT_-$ the characters $g^\eps$ and $\sgj$ agree. For the second claim 
we denote by $\CI \ssq \sCTm$ the ideal in $\sCTm$ generated by all trees $\taua \in \sCTm$ with the property that (\ref{eq:cumulant:vanish}) holds. Then $\CI$ forms a Hopf ideal, and by definition of the BPHZ character it follows that both $g^\eps$ and $\sgj$ vanish on $\CI$. The claim now follows from the fact that the annihilator of any Hopf ideal forms a subgroup of $\sCGm$. 
The third claim follows similarly, noting that $\cpm$ preserves the $|\cdot|_{\sfsj}$-homogeneity, in the sense that $|\cpm\taua|_{\sfsj} = |\taua|_{\sfsj}$, 
where on the left-hand side we add up the homogeneities of products of trees. It follows that the ideal $\CI_\cj^+$ in $\sCTm$ generated by trees $\taua \in \sCTm$ such that $|\taua|_{\sfsj}>0$ is a Hopf ideal. Since moreover one has that $\sgj$ and $g^\eps$ vanish on $\CI_\ci^+$, the claim follows again from the fact that annihilators of Hopf ideals are subgroups.

As a corollary, we obtain the identity
\begin{align}\label{eq:cor1}
(\skj \otimes \Upsilon^{\eta^{\eps}} M^{\sgj}) ( \cpmi - \Id \otimes \one )
(\oSS+\Id)\tau=0.
\end{align}
Indeed, let $\taua\in \oSS [\tau]$. Then there exists a unique noise type edge $e\in L(\taua)$ such that $\ft(e)\notin\FL_-$, and for this edge $\eta^\eps_{\ft(e)}$ is an element of the $m(\tau)$-th homogeneous Wiener chaos. It follows that whenever $\taub\ne\one$ is a proper subtree of $\taua$, so that in particular $L(\taub) \subsetneq L(\taua)$, then either the first or the second point above are satisfied. 
The claim then follows, since on the one hand $\Upsilon^{\eta^{\eps}} M^{\sgj} X^k=0$ for any $k \in \N^{d}\backslash\{0\}$, and on the other hand $|\taua|_{\sfsj}<0$ implies that $\Upsilon^{\eta^{\eps}} M^{\sgj}\taua=0$. 
Here we use that $\sfsj$ was chosen in such a way that $|\taua|_{\sfsj} \le \bar\kappa < 0$ for any $\taua\in \oSS [\tau]$.

As a second corollary, we get that if $\tau$ is such that $\fancynorm{\tau}_\scale<0$, then
\begin{align}\label{eq:cor2}
\skj \uSS\tau=0.
\end{align}
To see this, let $\taua\in\uSS[\tau]$. It is clear that whenever there exists $e \in L(\taua)$ such that either $\ft(e)=\tilde\Xi$ for some $\Xi \in \FL_-$, or $\ft(e) = (\Xi,\taub)$ with $\ci(\taub) > \cj$, then one has $|\taua|_{\sfsj} >0$, and by the third point above it follows that $\skj\taua=0$. Thus, we can assume that $\ft(e) \in \FL_-$ or $\ft(e) = (\Xi,\taub)$ with $\ci(\taub) \le \cj$ for all $e \in L(\tau)$. 

Assume now that in addition there are two distinct $e,f\in L(\taua)$ such that $\ft(e),\ft(f)\notin\FL_-$, say $\ft(e)=(\Xib,\taub)$ and $\ft(f)=(\Xic,\tauc)$.
Then the assumption $\ci(\taub) \lor \ci(\tauc) \le \cj$ implies that $\fancynorm{\taub}_\fs\lor\fancynorm{\tauc}_\fs \le \fancynorm\tau_\fs$.  Upon choosing $\bar\kappa$ small enough, one has 
\begin{align}\label{eq:cor2sigma}
\begin{split}
|\taua|_{\sfsj}
&\ge |\tau|_{\fs}-(\fancynorm{\taub}_{\fs}+\bar\kappa)
-(\fancynorm{\tauc}_{\fs}+\bar\kappa) \\
&\ge
|\tau|_{\fs}-2(\fancynorm{\tau}_{\fs}+\bar\kappa) >0.
\end{split}
\end{align}

In the remaining case there exists a unique $e\in L(\taua)$ such that $\ft(e)\notin\FL_-$, say $\ft(e)=(\Xib,\taub)$. We distinguish the case $\fancynorm{\taub}_\fs =\fancynorm{\tau}_\fs$ and $\fancynorm{\taub}_\fs <\fancynorm{\tau}_\fs$ (by the discussion above we have $\ci(\taub) \le \cj$, so that the case $\fancynorm{\taub}_\fs>\fancynorm{\tau}_\fs$ is ruled out). In the first case, we have by definition of $\le$ and $\uSS$ that $\#L(\taub)>\#L(\tau)$, so that $\eta^{\eps}_{\ft(e)}$ takes values in a homogeneous Wiener chaos of order strictly greater than $m(\tau)$, so that the second point above applies. In the second case it follows similarly to before that 
\[
|\taua|_{\sfsj}\ge |\tau|_{\fs}-\fancynorm{\taub}_{\fs}-\bar\kappa>0,
\] 
so that the third point above applies, and this finishes the proof of the claim.

We now conclude that in case $\fancynorm\tau_\scale<0$, it follows from (\ref{eq:cor1}) and (\ref{eq:cor2}) that
\begin{align*}
\uF_\tau(a,\lambda) 
&= 
\lim_{\eps \to 0}
(\skj \otimes 
\Upsilon ^{\eta^{\eps}} M^{\sgj} )
(\cpmi - \Id \otimes \one) \uSS\tau
+ \skj \uSS\tau \\
&= 
\lim_{\eps \to 0}
(\skj \SS\otimes
\Upsilon^{\eta^{\eps}} M^{\sgj}\SS )
(\cpmi - \Id \otimes \one) \tau,
\end{align*}
and using the definition of $\cf$ in (\ref{eq:tildek}) together with Lemma~\ref{lem:shift:upsilon:bound} this can be re-written as
\[
\uF_\tau(a,\lambda) 
=
(\cf(a,(F_\taua)_{\tau \in \TT_-^{\prec\tau}}(a),\lambda) \otimes \cg(a,\lambda))
(\cpmi - \Id \otimes \one) \tau
\] 
so that
\[
G_\tau(a, b, \lambda)
:=
(\cf(a,b,\lambda) \otimes \cg(a,\lambda))
(\cpmi - \Id \otimes \one) \tau
\]
has the desired form (\ref{eq:GTFT}).

It remains to treat the case $\fancynorm\tau_\fs=0$, where the estimate (\ref{eq:cor2sigma}) fails in general. However, we will show that a slightly weaker statement than (\ref{eq:cor2}) still holds, namely there exists a constant $C_{N}(\lambda_{\cj^\uparrow})$ such that 
\begin{align}\label{eq:kboundzerohomo}
|\skj \uSS \tau|\le C_{N}(\lambda_{\cj^\uparrow}).
\end{align}
Proceeding identically to above, this suffices to finish the proof of Lemma~\ref{lem:Fthenondominatingpart}.

To show (\ref{eq:kboundzerohomo}) we recall that by Assumption~\ref{ass:zero-homo} 
one has for any $\taua \in \CV$ and any $\taub \in \SS[\taua]$ that $g^\eps\taua = \sgj\taub = 0$. 
It follows from this, (\ref{eq:kj}) and the fact that the unital algebra generated by $\bigcup_{\tau \in \CV} \SS[\tau]$ is a Hopf subalgebra of $\sCTm$ that one also has $\skj\taub=0$ for any $\taub \in \SS[\taua]$ and any $\taua \in \CV$. In particular 
\[
0
=
g^\eps \uSS\tau
=
(\skj \otimes \sgj)
	\cpmh \uSS\tau = 
\skj\uSS\tau
+\sgj\uSS\tau.
\]
The estimate (\ref{eq:kboundzerohomo}) now follows from (\ref{eq:BPHZboundzerohomo}) below, using the fact that one has the identity $\sgj \taub = \sg \taub$ for any tree $\taub \in \sCTm$ such that $\sgj \taub \ne 0$.
\end{proof}
\begin{remark}
The last step of the previous proof, relating $\skj \uSS \tau$ and $\sgj \uSS \tau$, does not need Assumption~\ref{ass:zero-homo}, although the argument is greatly simplified. The assumption is however needed in the proof of (\ref{eq:BPHZboundzerohomo}) below. 
\end{remark}

\begin{lemma}\label{lem:blowuprenormconstantzerohomo}
Let $\tau\in\TT_-$ satisfy $\fancynorm\tau_\fs=0$, and let $\taua\in \uSS[\tau]$ be such that $\fancynorm\taua_{\sfs}\le 0$. Then for any noise type edge $e \in L(\taua)$ one has $\ft(e)=\Xi$ or $\ft(e)=\tilde\Xi$ for some $\Xi \in \FL_-$, or $\ft(e) = (\Xi,\taub)$ with $\taub\in\FT_-$ such that $\fancynorm{\taub}_\fs=0$.

Moreover, setting $\cj:= \ci(\tau)$, for any $\rho>0$ the bound 
\begin{align}\label{eq:BPHZboundzerohomo}
|\sg\taua|\le C(\lambda_{\cj^\uparrow}),
\end{align}
uniformly over $a \in \sA$ with $\max_{\tau \in \FT_-}|a_\tau|<\rho$ is attainable.
\end{lemma}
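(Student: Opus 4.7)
For Point 1, I compute the fancy $\sfs$-homogeneity of $\taua$ in terms of that of $\tau$. Since $\cq\taua=\tau$, the two trees share the same underlying decorated tree and differ only in their noise-type labels, so that
\[
\fancynorm{\taua}_\sfs-\fancynorm{\tau}_\fs
\;=\;\sum_{e\in L(\taua)}\bigl(\fancynorm{\ft(e)}_\sfs-\fancynorm{\cq\ft(e)}_\fs\bigr).
\]
By Definition~\ref{def:en:homo} and the extension of $\fancynorm{\cdot}_\fs$ to $\sFL$ fixed in Section~\ref{sec:ext:reg:str}, every edge of type $\Xi$ or $\tilde\Xi$ contributes $0$, while an edge of type $(\Xi_e,\tau'_e)$ contributes $-\fancynorm{\tau'_e}_\fs-\bar\kappa\ge-\bar\kappa$ (since $\fancynorm{\tau'_e}_\fs\le 0$). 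Combined with $\fancynorm{\tau}_\fs=0$ and $\fancynorm{\taua}_\sfs\le 0$, this yields
\[
\sum_{e:\ft(e)\notin\FLm}(-\fancynorm{\tau'_e}_\fs)\;\le\;\bar\kappa\, N,
\]
where $N$ is the (bounded) number of extended noise-type edges in $\taua$. Fixing $\bar\kappa$ at the outset strictly smaller than $\min\{|\fancynorm{\tau'}_\fs|:\tau'\in\FT_-,\,\fancynorm{\tau'}_\fs<0\}/\max_{\sigma\in\TT_-}\#L(\sigma)$ then forces $\fancynorm{\tau'_e}_\fs=0$ for every contributing edge, which is Point~1.

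For the bound in Point~2, note first that $\fancynorm{\taua}_\sfs\le 0$ together with the $\kappa$-correction relating $\fancynorm{\cdot}_\sfs$ to $|\cdot|_\sfs$ gives $|\taua|_\sfs<0$, so $\sg\taua=\Upsilon^{\eta^{\eps,a,\lambda}}\antipode\taua$ is well-defined. My plan is to apply the analytic BPHZ theorem \cite[Thm.~2.31]{ChandraHairer2016} with a slightly perturbed homogeneity $\sfss:=\sfs-\tilde\kappa$ for a fixed $\tilde\kappa>0$ chosen independently of $\lambda$ and $a$, small enough that $\sfss\ge-\shalf-\kappa'$ with some $\kappa'>0$ still within the tolerance of Lemma~\ref{lem:cumu_homo_consistent}. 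Since $\sfss<\sfs$ pointwise, the set in the definition of $\ul\lambda(\sfss)$ used in Lemma~\ref{lem:cumulantexpansionB} is empty, so $\|\eta^{\eps,a,\lambda}\|_\sfss\lesssim 1$ uniformly in $\lambda\in\sparaLc$ and in $a\in\sA$ with $|a|_\infty\le\rho$; Lemma~\ref{lem:bound:eta} upgrades this to the same uniform bound on $\|\eta^{\eps,a,\lambda}\|_{\cumN,\fc^\sfss}$ for any fixed $\cumN\in\N$. Since moreover $\fancynorm{\taua}_\sfss\le -\tilde\kappa\,\#L(\taua)<0$ strictly, Chandra-Hairer's theorem produces
\[
|\sg\taua|\;\lesssim\;\|\eta^{\eps,a,\lambda}\|_{\cumN,\fc^\sfss}^{\cumN}\;\lesssim\;1
\]
uniformly in $\lambda$, $a$ and $\eps$, which is considerably stronger than the attainable bound $|\sg\taua|\le C(\lambda_{\cj^\uparrow})$ claimed in the statement.

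The main technical obstacle is to verify that the downward perturbation to $\sfss$ preserves the consistency of $\fc^\sfss$ and the super-regularity of every sub-forest of $\taua$, which are the prerequisites for invoking Chandra-Hairer's theorem. This is where Point~1 plays its decisive role: by excluding extended noise types $(\Xi,\tau'_e)$ with $\fancynorm{\tau'_e}_\fs<0$, it keeps all noise-type homogeneities appearing in $\taua$ within an $O(\bar\kappa)$-tube around the homogeneities of the corresponding types in the original problem, and thus prevents any sub-forest from acquiring a dangerously low homogeneity after the perturbation. With this structural constraint in place, super-regularity of $\taua$ and of all of its sub-forests follows from Lemma~\ref{lem:cumu_homo_consistent}, provided $\bar\kappa$, $\tilde\kappa$ and $\kappa$ are chosen small enough at the outset and uniformly over the finitely many trees in $\FT_-$; the resulting uniform bound on $|\sg\taua|$ a fortiori implies attainability of the estimate claimed in the lemma.
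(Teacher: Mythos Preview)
Your argument for Point~1 is fine and matches the paper's terse ``follows directly from the definition''.

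Point~2, however, has a genuine gap. The step ``Chandra--Hairer's theorem produces $|\sg\taua|\lesssim 1$'' is incorrect: \cite[Thm.~2.31]{ChandraHairer2016} bounds the BPHZ-\emph{renormalised} evaluation $\Upsilon^\eta M^{g^\eta}$, not the BPHZ character $g^\eta$ itself. A uniform bound on $\|\eta\|_{\sfss}$ places no control on $g^\eta(\taua)$; the archetypal counterexample is $\eta=\xi^\eps$, for which $\|\xi^\eps\|_\fs\lesssim 1$ uniformly while $g^{\xi^\eps}(\cherry)\sim\eps^{-1}$. Nothing in your perturbation to $\sfss$ changes this: for $\tilde\kappa$ small the divergent-subtree structure (and hence the twisted antipode) is unchanged, so $\sg$ computed with $\sfss$ equals $\sg$ computed with $\sfs$, and you are back to bounding the same object. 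Your final paragraph about super-regularity is addressing prerequisites for a theorem that does not deliver the conclusion you need.

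The paper's proof is genuinely more delicate and splits into three cases according to the types carried by $\taua$. In the first case it uses Assumption~\ref{ass:technical} to rewrite $\sg(\taua)$ as a renormalised evaluation $\Upsilon^{\eta^\eps}M^{\sga}\taua$ for a perturbed character $\sga$ with $|\taua|_{\sfsa}>0$, and only \emph{then} invokes Chandra--Hairer. In the second case the Wiener-chaos order of the single extended noise type forces all cumulants to vanish. The third case (two or more extended noise types at scales $\le\lambda_\cj$) is where the special ``spread-out'' definition~\eqref{eq:K0} of the kernel for zero-homogeneity trees is essential: it converts what would otherwise be an order-one contribution into a sum over $N^\lambda_\tau$ scales whose off-diagonal terms decay by Corollary~\ref{cor:log-void}, yielding the factor $(N^\lambda_\tau)^{-1}$ that makes the bound attainable. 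The paper states explicitly (see the remarks after Assumption~\ref{ass:technical} and after the proof of Lemma~\ref{lem:Fthenondominatingpart}) that Assumption~\ref{ass:technical} is used precisely here; any argument that bypasses it and the averaging~\eqref{eq:K0} should be viewed with suspicion.
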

\begin{proof}
The first statement follows directly from the definition.
For (\ref{eq:BPHZboundzerohomo}) we distinguish three cases. 

\smallskip\noindent\textbf{First case.}~There exists $e \in L(\taua)$ with $\lambda(\ft(e)) > \lambda_\cj$; that is, one has either $\ft(e) = \tilde \Xi \in \tilde\FL_-$ or $\ft(e)= (\Xi,\taub)$ with $\fancynorm{\taub}_\fs=0$ and $\# L(\taub)< \# L(\taua)$. 
In this case consider the homogeneity assignment $\sfsa$ given by 
\[
\sfsa(\ft) := \sfs(\ft) + \theta \I_{\ft = \ft(e)},
\]
for any $\ft \in \sFLm$ where $\theta:= - |\taua|_{\sfs} + \kappa$. Then, one has $| \taua |_{\sfsa}>0$. 
Let $\sga$ be the BPHZ character for this homogeneity assignment and the noise $\eta^{\eps}$. 
From Assumption~\ref{ass:zero-homo} it follows that
\begin{multline}\label{eq:zero:homo}
\sg(\taua) 
= 
(\sg \otimes \Upsilon^{\eta^\eps}) (\cpmi - \Id \otimes \one) \taua
=
(\sga \otimes  \Upsilon^{\eta^\eps}) \cpmi  \taua
=
\\
 \Upsilon^{\eta^\eps} M^{\sga}\taua  
\lesssim 
\lambda(\ft(e))^{\fancynorm{\taua}_{\sfsa}} 
\le 
C(\lambda_{\cj^\uparrow}).
\end{multline}
Here we used that by construction $\sga\taua =0$. We also used that whenever $\taub^\fn_\fe \ssq \taua$ is a proper subtree such that for some polynomial decoration $\tilde\fn$ one has $\fancynorm{\taub^{\tilde\fn}_\fe}_\sfs = 0$ and $\ft(e) \in \ft(L(\taub))$, then $|\taub^{\tilde\fn}_\fe|_{\sfss}>0$, so that $\sga\taub=0$. On the other hand, by Assumption~\ref{ass:technical} one also has $\sg \taub^{\tilde\fn}_\fe=0$.

\smallskip\noindent\textbf{Second case.}~There exists a unique noise type edge $e \in L(\taua)$ with $\fe(e) \notin \FL_-$. We only need to consider the case that $\ft(e)$ is of the form $(\Xi,\taub)$ for some $\taub \in \TT_-$ with $\fancynorm{\taub}_\fs =0$ and $\# L(\taub) > \# L(\taua)$; otherwise either the first case above applies, or $\taua \in \oSS[\tau]$. In this case however we recall that $\eta^\eps_{(\Xi,\taub)}$ takes values in the $(\#L(\taub)-1)$-th Wiener chaos, and since $\# L(\taub)-1 \ge \# L(\taua)$ and all other noise type edges (there are only $\#L(\taua)-1$ such edges) carry Gaussian noises, there is no non-vanishing cumulant, and one has $\sg(\taua)=0$.

\smallskip\noindent\textbf{Third case.}~In the final case there exist $r\ge 2$ distinct noise type edges $e_1, \ldots , e_r \in L(\taua)$ with $\ft(e_i)=(\bar\Xi_i,\taub_i)$, and one has $\fancynorm{\taub_i}_\fs =0$ and $\#L (\taub_i) \ge \# L(\taua)$.
At this point we recall the definition (\ref{eq:K0}) of $K_{(\bar\Xi_i,\taub_i)}$ from which it follows that these two kernels are sums over $N_i:=N_{\taub_i}$ kernels respectively, and we write
\[
K_{(\bar\Xi_i,\taub_i)} = \frac{a_{\taub_i}}{N_i}
\lambda_{\tau_i}^{- \bar\kappa } 
\sum_{m=0}^{N_i - 1}K^m_{i}
\]
with (recall that $\alpha_{(\bar\Xi_i,\taub_i)} = -\#L(\tau_i) \shalf -\bar\kappa$)
\[
K_i^m :=
\lambda^{-\bar\kappa m}
\rescale
\big(
	{2^{-m}\lambda_{\tau_i}}, \#L(\tau_i) \shalf
\big) \Phi_{ (\bar\Xi_i,\taub_i) }.
\]

In order to simplify the argument below, we assume that the noise types $(\bar\Xi_i,\taub_i)$ are all different. (If this is not the case, extend the regularity structure at this point by introducing sufficiently many distinct copies of the noise types $(\bar\Xi_i, \taub_i)$, and extend $\eta^\eps$ such that it acts identically on each copy of any given noise type. The argument below can then be applied to the extended regularity structure and the extended set of noise types.) 

Given $n = (n_1, \ldots n_r)$ with $n_i \in \{ 0, \ldots, N_i-1 \}$, we write $\eta^\eps_{n}$ for the noise defined by (\ref{eq:eta}) but with $ K_{(\bar\Xi_i,\taub_i)}$ replaced by $ K^{n_i}_i$ and we write $\sgmn$ for the BPHZ character for the noise $\eta^\eps_{n}$. It follows that with $a = a_{\taub_1} \cdots a_{\taub_n}$ one has
\begin{align}\label{eq:g:n:eps}
\sg(\taua) =
a\frac{\lambda_{\tau_1}^{-\bar \kappa} \ldots \lambda_{\tau_r}^{-\bar \kappa} }{N_1 \ldots N_r}
 \sum_{n_1=0}^{N_1 - 1} \cdots \sum_{n_r = 0}^{N_r - 1} 
\sgmn(\taua).
\end{align}
By Corollary~\ref{cor:log-void} there exists $\theta>0$ such that
\begin{align}\label{eq:g:n:eps:bound}
|\sgmn(\taua)| \lesssim 
\Big(
	\frac{ \min_i 2^{-n_i} \lambda_{\tau_i} }{ \max_i 2^{-n_i} \lambda_{\tau_i} }
\Big)^{\theta}
\end{align}
We can assume that $\lambda_1 \le \lambda_2 \le \cdots \le \lambda_r$ and hence also $N_1 \ge N_2 \ge \cdots \ge N_r$.
At this point we recall that the scales are ``well separated'', and in particular there is no loss of generality to assume that whenever $\lambda_{\tau_i} > \lambda_{\tau_j}$ one also has $2^{-N_i} \lambda_{\tau_i} > \lambda_{\tau_j}$. Then one has the bound
\begin{align}\label{eq:g:eps:taua:bound}
|\sg(\taua)| \lesssim
\frac{\lambda_1 ^{-r \bar \kappa}  }{N_1 \ldots N_r}
\sum_{n_1=0}^{N_1 - 1} \cdots \sum_{n_r = 0}^{N_r - 1} 
	2^{-\theta(\max_i n_i - \min_i n_i) }.
\end{align}
Up to a combinatorial factor we can restrict the sum to the regime $n_1 \ge \ldots \ge n_r$. Changing variables in the sum so that $k=(\max_i n_i) - (\min_i n_i) = n_1 - n_r$, we obtain the bound
\[
\sum_{n_1=0}^{N_1 - 1} \cdots \sum_{n_r = 0}^{N_r - 1} 
	2^{-\theta(\max_i n_i - \min_i n_i) }
\lesssim
\sum_k
\sum_{n_2 =0}^{N_2 - 1} \cdots \sum_{n_r = 0}^{N_r - 1} 
	2^{-\theta k}
\lesssim
N_2 \ldots N_r.
\]
With (\ref{eq:g:eps:taua:bound}) we obtain
\[
|\sg(\taua)| \lesssim N_1^{-1} \lambda_1^{-r \bar \kappa},
\]
and recalling that $N_1 \sim \lambda_1^{-1}$ it remains to choose $\bar\kappa$ small enough so that $\bar r \bar\kappa<1$, where $\bar r$ denotes the maximal number of noise type edges appearing in any tree $\tau \in \TT_-$.
\end{proof}

\appendix

\section{Technical proofs and notations}

\subsection{The coproduct via forests}
\label{sec:i-forests}

In some of the arguments we are going to perform a construction for which it will be important to first derive certain identities involving the co-product. The proofs of some of these statements are relatively straightforward but turn out to be a bit fiddly, and these arguments are going to get more clear using some notation that we introduce in this section.

\begin{definition}
For a rooted, typed tree $(T,\ft)$ way say that $\CF$ is a sub-forest of $T$ if $\CF$ is a subgraph of $\tau$ without isolated vertices. We call $\CF$ a subtree if $\CF$ is non-empty and connected. We write $T/\CF$ for the rooted, typed tree obtained by contracting any connected component of $T$ to a single vertex. 
\end{definition}

We write $\bar\CF$ for the set of the connected components of $\CF$. In a natural way one can view any connected component of $S \in \bar\CF$ again as a rooted, typed tree $(S,\ft)$ where the type $\ft$ is simply taken over from $T$. The set of vertices $V(T/\CF)$ can now be naturally identified with the set $(V(T)\backslash V(\CF))\sqcup \{ u_A : A \in \bar \CF \}$, where $u_A \in V(T / \CF)$ denotes the vertex obtained by contracting the subtree $A$ of $T$. It follows that there exists a map $\varphi_T^\CF : V(T) \to V(\CF/T)$ defined by 
\begin{align}\label{eq:iforests:vertex:map}
\varphi_T^\CF (u)
:=
\begin{cases}
	u_A		&\qquad \text{ if } u \in V(A) \text{ with } A \in \bar\CF \\
	u		&\qquad \text{ if } u \in V(T) \backslash V(\CF).
\end{cases}
\end{align}
Given a tree $\tau \in {\CT^\ex}$ then $\tau$ is of the form $\tau=T^{\fn,\fo}_\fe$ for some rooted, typed tree $T$, and we say that $\CF$ is a sub-forest of $\tau$ if $\CF$ is a sub-forest of $T$. 

Given a tree $\tau=T^{\fn,\fo}_\fe \in {\CT^\ex}$ we write $\div(\tau)$ for the set of sub-forests $\CF$ of $\tau$ with the property that for any $S \in \bar\CF$ one has $|S^0_\fe|_\fs<0$. We write $\partial_\CF E(\tau) \subseteq E(\tau)$ for the set of $e \in E(\tau)$ with the property that $e \notin E(\CF)$ but $e^\downarrow \in N(\CF)$. For a map $\bar \ce : E(\tau) \to \Z^d \oplus \Z(\FL)$ we write $\pi \bar\ce : N(\tau) \to \Z^d \oplus \Z(\FL)$ for the map defined by 
\[
\pi \bar \ce (u) := \sum_{e \in E(\tau), e^\downarrow = u} \bar\ce(e).
\]
Finally, if $\fm : N(\tau) \to \Z^d \oplus \Z(\FL)$ the we define $\fm/\CF: N(\tau/\CF) \to \Z^d \oplus \Z(\FL)$ by setting $(\fm/\CF)(u):= \fm(u)$ of $u \in N(\tau)\backslash N(\CF)$ and
\[
(\fm/\CF)(u) := \sum_{v \in N(S)} \fm(v)
\]
if $u \in N(\tau/\CF)$ was generated by contracting the subtree $S \in \bar\CF$.
With this notation we have the following formula for the coproduct $\cpm : \CT^\ex \to \CT_- \otimes \CT^\ex$ from \cite[Def.~3.3,Def.~3.18]{BrunedHairerZambotti2016}:
\begin{align}\label{eq:coproduct:ex}
\cpm \tau = 
\sum_{\CF \in \div(\tau)}
\sum_{\fn_\CF, \ce_\CF}
\frac{1}{\ce_\CF !}
\binom{\fn}{\fn_\CF}
\prod_{S \in \CF} S^{\fn_\CF + \pi\ce_\CF}_\fe
\otimes
(T/\CF)^{\fn-\fn_\CF,  [\fo]_\CF}
_{\fe + \fe_\CF},
\end{align}
where $[\fo]_\CF:=(\fo +\fn_\CF + \pi(\ce_\CF - \fe|_\CF + \ft|_{E(\CF)})/\CF$. Here we use the same convention as in \cite[Def.~3.3]{BrunedHairerZambotti2016}: Given a typed tree $\tau$ and a subforest $\CF$ of $\tau$, then the notations $\fn_\CF$ and $\ce_\CF$ always denote decorations $\fn_\CF : N(\tau) \to \N^d$ and $\ce_\CF : E(\tau) \to \N^d$ with the property that 
\begin{align}\label{eq:iforest:decoration:convention}
\supp \fn_\CF \subseteq N(\CF)
\qquad\text{ and }\qquad
\supp \fe_\CF \subseteq \partial_\CF E(\tau).
\end{align}
We furthermore use the convention that a sum over $\fn_\CF$ and $\fe_\CF$ ranges over all decorations satisfying (\ref{eq:iforest:decoration:convention}).

\subsection{Some technical proofs}
\label{sec:technical:proofs}

\begin{proof}[of Lemma~\ref{lem:CT:pl}]
Note first that for an undecorated tree $S$ one has that $S^\fn_\fe \in \wl\CI$ holds either for all choices of decorations $\fn,\fe$ or for no choice of decoration, and for the purpose of this proof we write $S \in\wl{\CI}^\star$ in the first case.

In order to see that (\ref{eq:coproduct:pl}) holds, we only need to show that for any forest $\CF \in \div(\tau)$ which is not of the form $\forestlegs\CG$ for some $\CG \in \div(\pi\tau)$ one has
\begin{align}\label{eq:forest:vanishes}
\prod_{S \in \CF} S^{\fn_\CF + \pi\ce_{\CF}}_\fe
\otimes
(T/\CF)^{\fn-\fn_{\CF},  [\fo]_{\CF}}
_{\fe + \fe_{\CF}}
\in
\wl\CI \otimes \wCThat + \wCT \otimes \wl{\hat\CI}.
\end{align}
If there exists $S \in \bar\CF$ such that $E(S) = L_\Legtype(S)$ (i.e.\ such that $S$ consists only of the root with a finite number of legs attached to it),  then (\ref{eq:forest:vanishes}) follows at once. Otherwise, we write $\pi \CF$ for the sub forest of $\CF$ given by removing all legs, i.e.\ $\pi\CF$ is the subgraph of $\pi\tau$ induced by the edge set
\[
E(\pi\CF) := E(\CF) \backslash L_\Legtype(\tau).
\]
Then one has $\pi\CF \in \div(\pi\tau)$, and since $\CF \ne \forestlegs{\pi\CF}$ by assumption, there exists $S \in \CF$ with the property such that $L_\Legtype(S) \ne L_\Legtype(\forestlegs{\pi S})$. Assume first that $L_\Legtype(S)$ contains a leg $e$ with the property that $e \notin L_\Legtype(\forestlegs{\pi S})$. Then, since $\tau$ is properly legged, the leg $e$ has a unique partner $\tilde e \in L_\Legtype(T)$ in $T$ and one has $\tilde e^\downarrow \notin N(S)$ (since otherwise one would have $e^\downarrow, \tilde e^\downarrow\in N(S)$ and hence $e,\tilde e\in L_\Legtype(\forestlegs{\pi S})$ by definition of $\forestlegs{ \pi S }$). Thus, $e$ does not have a partner in $S$ and hence $S \in \wl{\CI}^\star$. 
Otherwise, $L_\Legtype(S) \subsetneq L_\Legtype(\forestlegs{\pi S})$. Then, there exists legs $e, \tilde e\in L_\Legtype( \forestlegs{ \pi S})$ such that $\{\ft(e),\ft(\tilde e)\}\in\CW$, and such that at least one of these two legs is not an element of $L_\Legtype(S)$. If $e \in L_\Legtype(S)$ but $\tilde e \notin L_\Legtype(S)$ (or the other way round), then one has $S \in \wl{\CI}^\star$. If $e,\tilde e \notin L_\Legtype(S)$ then one has $e^\downarrow = \tilde e^\downarrow$ in $T/\CF$ (since the vertices $e^\downarrow$ and $\tilde e^\downarrow$ in $T$ belong to the same connected component of $\CF$) and thus $T/\CF \in \wl{\hat\CI}^\star$.

We now show that $\plCT$ is a Hopf subalgebra, the claim that $\plCThat$ is a co-module follows very similarly. We need to show that $\cpmh \plCT \ssq \plCT \otimes \plCT$. For this it is sufficient to show that $\cpmh \wlP \tau = (\wlP \otimes \wlP )\cpmh \tau\in \plCT \otimes \plCT$ for any properly legged tree $\tau = T^\fn_\fe \in \wCT$, which in turn follows once we show that any sub-forest $\CF=\forestlegs{\CG}$ for some $\CG\in \div(\pi\tau)$ has the property that the trees $S \in \bar \CF$ and the tree $T/\CF$ are all properly legged.

It follows from the definition of the coproduct that if $\tau$ is properly legged, then point~\ref{item:properly:legged:legtypeunique}.\ in Definition~\ref{def:propery:legged} carries over to any subtree $S\in\CF$ and also to $T/\CF$. Moreover, point~\ref{item:properly:legged:typei}.\ also immediately carries over to subtrees $S \in \CF$. To see that point~\ref{item:properly:legged:typei}.\ is inherited also by $T/\CF$, we note that
by assumption on the regularity structure it follows that whenever $ S$ is a subtree of $\pi \tau$ with $\fancynorm{ S^0_\fe}_\fs<0$, then for any $u \in L(\tau)$ one has $u \in L(S)$ if and only of $u^\downarrow \in N(S)$. Applying this to fact to the trees $S \in \CG$, it follows that one has $\CL(\tau/\CF) = \CL(\tau) \backslash N(\CF)$ and moreover, for any $u \in \CL(\tau/\CF)$ one has that the sets $E(u,\tau/\CF)$ and $E(u,\tau)$, given respectively as the sets containing all edges $e \in E(\tau/\CF)$ and $e \in E(\tau)$ with  $e^\downarrow = u$, coincide, which together imply that~\ref{item:properly:legged:typei}.\ holds with $\itype = \itype|_{\CL(\tau/\CF)}$. 

The fact that points~\ref{item:properly:legged:leaf_coupling}.\ and~\ref{item:properly:legged:leg_coupling}.\ of Definition~\ref{def:propery:legged} hold for any $S \in \bar\CF$ follow from the definition of $\CF = \forestlegs{\CG}$. The fact that~\ref{item:properly:legged:leaf_coupling}.\ holds for $T/\CF$ follows from the fact that $E(u,\tau) = E(u, \tau/\CF)$ for any $u \in \CL(\tau)$. A very similar argument shows that~\ref{item:properly:legged:leg_coupling} holds also for $T/\CF$.
Finally, note that~\ref{item:properly:legged:leaf_hat_coupling}.\ holds trivially for the left component of $\Delta_-^\ex \tau$ since this component does not contain coloured vertices by definition.
The fact that~\ref{item:properly:legged:leaf_hat_coupling}.\ holds for $T/\CF$ can be argued very similarly to~\ref{item:properly:legged:leaf_coupling}.
\end{proof}

\begin{proof}[of Lemma~\ref{lem:CTadze:hopf:ideal}]
Using Lemma~\ref{lem:tensor:kernel}, it suffices to show that $\cpmh \ker \plQz \ssq \ker (\plQz \otimes \plQz)$, which follows once we show that $(\plQz \otimes \plQz)\cpmh = (\plQz \otimes \plQz) \cpmh \plQz$. This in turn is a consequence of
\begin{align}\label{eq:plz:plz:Qz}
(\plZ \otimes \plZ) \cpmh \tau= (\plZ \otimes \plZ) \cpmh \plQz \tau
\end{align}
for any $\tau \in \plCT$. Since both sides are linear and multiplicative, it suffices to show this for trees $\tau \in \plCT$.

In the case that the derivative decoration $\fe$ of $\tau$ does not vanish identically on legs, identity (\ref{eq:plz:plz:Qz}) follows directly from the fact that the coproduct $\cpmh$ never decreases the decoration $\fe$, so that either the right or the left component of $\cpmh\tau$ contains at least one leg with non-vanishing derivative decoration. Hence, both sides in (\ref{eq:plz:plz:Qz}) vanish.
 
In the remaining case one has $\fe|_{L_\LT(\tau)} = 0$ and thus $\plQz \tau = \plQ\tau$. 
Recall that one has $\cpm = (\Id \otimes \pmi) \cpmwi$ on $\plCT$. We then note that one has $\pi \tau = \pi \plQ \tau$, so that from (\ref{eq:coproduct:pl}) we infer
\[
\cpmwi \plQ \tau = 
\sum_{\CF \in \div(\pi \tau)}
\sum_{\fn_{\forestlegs \CF}, \ce_{\forestlegs\CF} }
\frac{1}{\ce_{\forestlegs\CF} !}
\binom{\fn}{\fn_{\forestlegs\CF}}
\prod_{S \in \forestlegs\CF} S^{\fn _{\forestlegs\CF}+ \pi\ce_{\forestlegs\CF}}_\fe
\otimes
(T/\forestlegs\CF)^{\fn-\fn_{\forestlegs\CF},  [\fo]_{\forestlegs\CF}}
_{\fe + \fe_{\forestlegs\CF}},
\]
where $\forestlegs{\CF} = \forestlegs{\CF}[\plQ\tau]$. If we compare the second sum in this identity to the corresponding sum in (\ref{eq:coproduct:pl}), we see that they only differ by the range of the decoration $\fe_{\forestlegs{\CF}}$, since the sum above puts derivatives also on superfluous legs. If we write $L_\Legtype^{*}(\tau) \ssq L_\Legtype(\tau)$ for the set of superfluous legs of $\tau$, it follows that one has the idenity
\begin{align*}
\cpmwi \plQ \tau = 
\sum_{\CF \in \div(\pi \tau)}
\sum_{  \substack  {
	\fn_{\forestlegs \CF}, \ce_{\forestlegs\CF}  \\
	\fe_{\forestlegs \CF}|_{L^*_\Legtype(\tau)}=0
} }
\frac{1}{\ce_{\forestlegs\CF} !}
\binom{\fn}{\fn_{\forestlegs\CF}}
\prod_{S \in \forestlegs\CF} 
\plQ S^{\fn_{\forestlegs\CF} + \pi\ce_{\forestlegs\CF}}_\fe
\otimes
\plQ (T/\forestlegs\CF )^{\fn-\fn_{\forestlegs\CF},  [\fo]_{\forestlegs\CF}}
_{\fe + \fe_{\forestlegs\CF}},
\end{align*}
where this time $\forestlegs{\CF} = \forestlegs{\CF}[\tau]$. Since any term on the right-hand side of (\ref{eq:coproduct:pl}) with $\fe_{\forestlegs \CF}$ non vanishing on $L_\Legtype^*(\tau)$ yields an element of $\plCT \otimes \ker \plZ$, the claim follows.

\end{proof}

\begin{lemma}\label{lem:ideal:generators}
Let $\CA$ be the symmetric algebra of a finite-dimensional vector space $\CB$, 
and let $\Phi \ssq \CB^*$ be a linear subspace of the dual space $\CB^*$ of $\CB$. 
For any $f \in \CB^*$, denote by $f_* \in {\CA^*}$ the unique character of $\CA$ extending $f$. 
Finally, for any $C \ssq \CA$, write $\CJ(C)$ for the ideal in $\CA$ generated by $C$.

Then, one has
$
\CI := \CJ\big(\bigcap_{\varphi \in \Phi} \ker \varphi\big) = \bigcap_{\varphi \in \Phi} \ker\varphi_* =: \bar \CI
$.
\end{lemma}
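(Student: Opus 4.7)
The plan is to reduce the statement to the elementary fact that a polynomial on a finite-dimensional real vector space vanishing everywhere is zero.

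First, I would dispose of the easy inclusion $\CI \ssq \bar\CI$. For every $\varphi \in \Phi$, the character $\varphi_*$ extends $\varphi$, so $\ker \varphi \ssq \ker \varphi_*$; since $\ker\varphi_*$ is an ideal of $\CA$ (being the kernel of an algebra homomorphism), it contains the ideal generated by $\bigcap_{\varphi \in \Phi} \ker\varphi$. Intersecting over $\varphi \in \Phi$ gives the claim.

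For the reverse inclusion, set $W := \bigcap_{\varphi \in \Phi}\ker\varphi \ssq \CB$. Because $\CB$ is finite-dimensional, the double annihilator theorem implies that $\Phi$ coincides with the annihilator $W^\perp := \{\varphi \in \CB^* : \varphi|_W = 0\}$ of $W$ in $\CB^*$ (the inclusion $\Phi \ssq W^\perp$ being obvious, and equality then forced by dimension). Next I would pick a linear complement $V$ of $W$ in $\CB$, so that $\CB = W \oplus V$, and use the universal property of the symmetric algebra to get a canonical isomorphism of algebras
\begin{equation*}
\CA \;=\; S(\CB) \;\cong\; S(W) \otimes S(V).
\end{equation*}
Under this identification the ideal $\CI = \CJ(W)$ generated by the subspace $W \ssq \CB = S^1(\CB)$ equals $S_+(W) \otimes S(V)$, where $S_+(W) = \bigoplus_{n\ge 1}S^n(W)$ is the augmentation ideal. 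Consequently $\CA/\CI \cong S(V)$, and we write $\rho\colon\CA \to S(V)$ for the quotient map.

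The final step is to observe that for $\varphi \in \Phi = W^\perp$, the character $\varphi_*$ kills $W$ and therefore factors through $\rho$, say $\varphi_* = \tilde\varphi_* \circ \rho$, where $\tilde\varphi_*$ is the character of $S(V)$ extending $\varphi|_V \in V^*$. Identifying $S(V)$ with the algebra of polynomial functions on $V^*$, the character $\tilde\varphi_*$ is evaluation at the point $\varphi|_V \in V^*$. Since the restriction map $W^\perp \to V^*$, $\varphi \mapsto \varphi|_V$, is a linear isomorphism (again because $\CB = W \oplus V$), as $\varphi$ ranges over $\Phi$ the point $\varphi|_V$ ranges over all of $V^*$. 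Hence if $a \in \bar\CI$, i.e.\ $\varphi_*(a) = 0$ for all $\varphi \in \Phi$, then the polynomial $\rho(a) \in S(V)$ vanishes identically on $V^*$. Over $\R$ (or any infinite field) this forces $\rho(a) = 0$, i.e.\ $a \in \ker\rho = \CI$, completing the proof.

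The only delicate point is the identification $\Phi = W^\perp$, which is where finite-dimensionality of $\CB$ enters; everything else is structural. No genuine obstacles arise.
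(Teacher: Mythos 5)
Your proof is correct and follows essentially the same route as the paper's: both identify $\Phi$ with the annihilator of $W=\bigcap_{\varphi\in\Phi}\ker\varphi$ via the double-annihilator theorem, split $\CB$ as $W$ plus a complement $V$ (the paper does this with bases $X\cup Y$, you with $S(W)\otimes S(V)$), and reduce the reverse inclusion to the fact that a nonzero element of $S(V)$ admits a character not annihilating it, i.e.\ a nonzero real polynomial cannot vanish identically. The presentation via the quotient $\CA/\CI\cong S(V)$ is just a more structural phrasing of the paper's decomposition $a=a_0+a_1$ with $a_0\in\CA_Y$.
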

\begin{proof}
The inclusion $\CI \ssq \bar\CI$ is trivial so we only need to show that $\bar\CI \ssq \CI$.

Define  $\Phi^\perp \ssq \CB$ by 
$
\Phi^\perp := \bigcap_{\vphi \in \Phi} \ker\vphi
$,
Let $X$ be a basis of $\Phi^\perp$, let $Y$ be a basis of a complement of $\Phi^\perp$ in $\CB$, 
and observe that $X \cup Y$ generates $\CA$ freely as a commutative, unital algebra. So, for any $a \in \CA$ there exist $r \ge 0$, $c_i \in \R$ and $b_i \colon X\cup Y \to \N$ such that
\begin{equ}[eq:ideal:generators:a]
a = \sum_{i \le r} c_i \prod_{x \in X} x^{b_i(x)} \prod_{y \in Y} y^{b_i(y)}\;.
\end{equ}
We will always assume that this sum is minimal in the sense that $c_i \ne 0$ and $i \ne j \Rightarrow b_i \ne b_j$, which makes the representation unique, modulo a permutation of the index set $\{1, \ldots, r\}$. 

The ideal $\CI$ consists precisely of those elements $a \in \CA$ such that in the representation (\ref{eq:ideal:generators:a}) one has $b_i \restr X \ne 0$ for all $i$. 
For $a \in \bar\CI$, we can therefore write $a = a_0 + a_1$ with $a_1 \in \CI$ and
$a_0$ belonging to the subalgebra $\CA_Y \subset \CA$ generated by $Y$.
Assuming by contradiction that $a_0 \neq 0$, one can find a character $\vphi$ of $\CA_Y$
such that $\phi(a_0) \neq 0$. 
If we extend $\vphi$ to all of $\CA$ by setting $\vphi(x)=0$ for $x \in X$, then 
$\vphi \in (\Phi^\perp)^\perp = \Phi$, so that $\vphi(a_0) = 0$ and therefore $\vphi(a) \neq 0$
 in contradiction with the assumption that $a \in \bar\CI$.
\end{proof}

\subsection{Feynman diagrams}\label{sec:Feynman-diagrmas}

We state and sketch the proof of (a slight generalisation) of \cite[Thm.~3.1, Thm.~4.3]{Hairer2017}. 
Let for this $\CL_\star$ be a non-empty set of types and let $\CL:=\CL_\star \sqcup\{ \delta \}$. In analogy to \cite[Def.~2.1]{Hairer2017} we make the following definition.
\begin{definition}
A \emph{Feynman diagram} is a finite directed graph $\Gamma=(V,E)$ endowed with the following additional data
\begin{itemize}
\item An ordered set of distinct vertices $\bar V=\{ [1], \ldots , [k]\} \ssq V$ such that each $[i]$ has exactly one outgoing edge called ``leg'' and no incoming edge and such the each connected component of $\Gamma$ contains at least one leg. 
We write $V_\star := V \setminus \bar V$ and $E_\star \ssq E$ for the set of internal edges, 
i.e.\ edges which are not legs. 
For each $[i] \in \bar V$ we denote by $i_\star$ the vertex such that $([i],i_\star) \in E$.
\item For every connected component $\tilde \Gamma$ of $\Gamma$ we choose a distinguished vertex $v_\star(\tilde\Gamma)$. For any $u \in V$ we write $u_\star$ for the distinguished vertex $v_\star(\tilde\Gamma)$ of the connected component $\tilde\Gamma$ which contains $u$. 
\item Decorations $\ft: E \to \CL$ such that $\ft(e) = \delta$ if and only if $e$ is a leg,
 $\fe : E \to \N^d$ and $\fn : V_\star \to \N^d$.
\end{itemize}
We write $\Gamma^\fn_\fe$ whenever we want to make the decorations explicit.
\end{definition}

This definition differs slightly from \cite[Def.~2.1]{Hairer2017} since we include a polynomial decoration $\fn$. 

As in \cite[Def.~2.7]{Hairer2017} we define a \emph{vacuum diagram} as a Feynman diagram $\Gamma$ such that each connected component contains exactly one leg. We write $\CD$ for the linear space generated by all Feynman diagrams, and we write $\hat\CD_-$ for the algebra of all vacuum diagrams such that each connected component contains at least one internal edge. As in \cite{Hairer2017} we factor out a subspace (resp. an ideal) on which the valuation (which we will define below) vanishes. We define $\partial\CD$ (resp. $\partial \hat\CD_-$) as the smallest subspace of $\CD$ (resp. the smallest ideal in $\hat\CD_-$) which contains the expressions \cite[Eq.~2.16, 2.17, 2.18]{Hairer2017} for any connected Feynman diagram, and we set
\[
\CH := \CD / \partial\CD
\qquad\text{ and }\qquad
\hat\CH_- := \hat\CD_- / \partial\hat\CD_-.
\]


\subsubsection*{Degree assignments}

In \cite{Hairer2017} it was assumed that we are given a degree assignment $\deg : \CL \to \R_-$ and for any $C>0$ bounds were derived uniformly in kernel assignments $K$ such that $\|K_\ft\|_{\deg\ft}<C$ for any $\ft \in \CL$. We will generalise this setting slightly to allow some of the kernels to ''exchange'' homogeneity. This is possible due to the fact that the bounds we are interested in only depend on the product $\prod_{e \in E(\Gamma)}\|K_{\ft(e)}\|_{\deg\ft(e)}$.
\begin{example}
As a typical example, consider two edges $e,f \in E(\Gamma)$ and two smooth, compactly supported functions $\vphi,\psi$, and assume that 
\[
K_{\ft(e)} (x) := \lambda^{\alpha} \vphi(\lambda^{-\fs} x)
\qquad\text{ and }\qquad
K_{\ft(f)}(x) := \lambda^{\beta}  \psi(\lambda^{-\fs} x)
\]
for some $\alpha,\beta<0$ and $\lambda \in (0,1)$. While $\deg\ft(e)=\alpha$ and $\deg\ft(f)=\beta$ seems the most natural choice, if we are interested in bounds uniformly in $\lambda>0$ we could make any choice of the form $\deg\ft(e)=\alpha-\theta$ and $\deg\ft(f)=\beta+\theta$ for some $\theta\in\R$ (as long as $\alpha-\theta<0$ and $\beta+\theta<0$). 
\end{example}

Tweaking the degrees in this way may alter the renormalisation structure (i.e.\ the degree of a sub diagram may cross a non-positive integer), so that one could try and find a degree assignment which minimises the number of sub-diagrams that need to be renormalised. Unfortunately, in the situations we are interested in, this turns out to be impossible: Given any ``tweaked'' choice of degrees, there are always sub graphs which appear divergent, but are actually completely fine. 

To overcome this issue we consider the following construction.
Assume that we are given a partition $\CL = \bigsqcup_{\fdl\in\FDL} \CL_\fdl$ of the set of types. 
We then call a type $\ft \in \CL$ \emph{strong} if $\{\ft\}=\FL_\fdl$ for some $\fdl \in \FDL$, and \emph{weak} otherwise, and we write $\CLs$ and $\CLw$ for the subsets of strong and weak types, respectively. We always assume that $\delta$ is a strong type. 

\begin{definition}
A Feynman diagram $\Gamma$ is called \emph{admissible} if any weak type $\ft$ appears at most once in $\Gamma$, and for any $\fdl \in \FDL$ with $\ft(E(\Gamma)) \cap \CL_\fdl \ne \emptyset$ one has $\CL_\fdl \ssq \ft(E(\Gamma))$ and there exists a vertex $u$ with the property that all $e \in E(\Gamma)$ with type $\ft(e) \in \CL_\fdl$ are connected to $u$. 
\end{definition}
The last part of the previous definition rules out the possibility that for some $\fdl \in \FDL$ there are two non-overlapping subgraphs which contain the edges $e \in E(\Gamma)$ with $\ft(e) \in \CL_\fdl$  (in which case we may be able to avoid either sub-divergence, but possibly not both at the same time).
\begin{example}
A typical example to which we apply this setting is given by a Feynman diagram where the ``strong'' edges are the kernels-type edges of an underlying tree $\tau \in \CT$ and the ``weak edges'' represent kernels of noises living in fixed homogeneous Wiener chaoses. For instance, one could look at the following Feynman diagram:
\[
\treeExampleWeakEdges
\]
Here, we draw bold lines for strong edges, dotted lines for weak edges, and we colour weak edges according to the partition $\bigsqcup_{\fdl \in \FDL} \CL_\fdl$.
\end{example}
We assume we are given a degree assignment $\fddeg: \FDL \to \R_-$, such that $\fddeg(\delta) = -|\fs|$. Here and below we write $\fddeg(\ft) := \fddeg(\fdl)$ if $\ft \in \CLs$ is a strong type such that $\CL_\fdl=\{ \ft \}$. Finally, we assume we are given a homogeneity assignment $\fddegmin:\CL \to \R_-$ such that $\fddegmin \ft := \fddeg \ft$ for any strong type $\ft \in \CLs$. We then write
\[
\FDdeg := \{ \deg : \CL \to \R_- : 
\deg \ge \fddegmin 
\text{ and for all } \fdl \in \FDL \text{ one has } \sum_{\ft \in \CL_\fdl} \deg\ft = \fddeg \,\fdl 
\},
\]
and for any Feynman diagram $\Gamma^\fn_\fe$ we define the quantity
\[
\fddeg \Gamma
:=
\sup_{\deg \in \Deg}
\sum_{ e \in E(\Gamma) } (\deg \ft(e) - |\fe(e)|_\fs)
+
\sum_{ u \in V(\Gamma) } |\fn(u)|_\fs
+
|\fs|(\# V(\Gamma) - 1).
\]
(Note that the expression inside the $\sup$ does not depend on $\deg\in\Deg$ for admissible Feynman diagrams.) 
\begin{example}
Consider two non-admissible, overlapping (but not nested) sub-diagrams $\tilde\Gamma_1$, $\tilde\Gamma_2$ with $\fddeg\tilde\Gamma_i>0$. Consider furthermore a spanning tree $\T$ such that $\tilde\Gamma_1$ collapses for $\T$ (i.e.\ for some interior node $\mu$ of $\T$ one has that $V(\tilde\Gamma_1)$ is given by the set of $u \in V(\Gamma)$ such that $u \ge \mu$ with respect to the tree order). By definition we can find a degree assignment $\deg\in\Deg$ such that $\tilde\Gamma_1$ is of positive degree, and since $\tilde\Gamma_2$ is overlapping with $\tilde\Gamma_1$ it does not collapse. Consequently, neither of these two sub-diagrams need to be renormalised. An identical argument works in the case that $\tilde\Gamma_2$ is collapsing. However, there might not exist a fixed degree assignment $\deg \in \Deg$ such that both statements are true at the same time.
\end{example}

Finally, we fix another degree assignment $\deg_\infty : \CL \to [-\infty,0]$ with $\deg_\infty(\delta):= - \infty$.

\subsubsection*{Kernel assignments and valuations}

We write $\CC_1$ for the space of smooth functions $\phi \in \CC_c^\infty( \bar \domain )$ supported in the $\fs$-unit ball of radius $1$. We then write $\CK^-_\infty$ for the set kernel assignments $(K_\ft)_{\ft \in \CL}$ such that $K_\ft \in \CC_1$ for any $\ft \in \CL$, and $\CK^+_\infty$ for the set of kernel assignments $(R_\ft)_{\ft \in \CL}$ such that $R_\ft \in \CC_c^\infty(\bar\domain)$ for any $\ft \in \CL$ and $R_\ft:=0$ for any weak type $\ft \in \CLw$. We also let $l_i := \fe([i], i_\star)$ for $i=1,\ldots,k$, and with this notation we define an valuation $\Pi^{K,R}$ on $\CH$ by setting
\begin{multline*}
(\Pi^{K,R} \Gamma)(\vphi)
:=
\int_{\bar \domain^{V_\star}}
	\prod_{e \in E_\star} D^{ \fe( e ) } (K+R)_{ \ft ( e ) } (x_{e_+} - x_{e_-} )
\\
	\prod_{u \in V_\star} ( x_u - x_{u_\star} )^{ \fn ( u ) }
	(D^{l_1}_1 \ldots D^{l_k}_k \vphi) (x _{ v _1 } , \ldots , x _{ v _k } )dx,
\end{multline*}
for any $(K,R) \in \CK^-_\infty \times \CK_\infty^+$.

Recall that we want to allow types $\ft,\tilde\ft \in \CL_\fdl$ to ``exchange homogeneity'', and as consequence there is no natural norm on $\CKminf$ which we can use. Instead we are forced to work with tensor products, which is very similar to Definition~\ref{def:CYsimp}. For any $\fdl \in \FDL$ we define the space 
\[
\fdCKminf := \bigotimes_{\ft \in \CL_\fdl} \CC_1
\]
together with the norm
\begin{align}\label{eq:CKtmz:norm}
\CKfdlnorm{ K } := \sup_{\deg \in \FDdeg} \prod_{\ft \in \CL_\fdl} \| K_\ft \|_{\deg\ft},
\end{align}
where $\|\cdot\|_{\deg\ft}$ is as in (\ref{eq:norm:blowup:1}), (\ref{eq:norm:blowup:2}),
and we define $\fdCKmz$ as the closure of $\fdCKminf$ under this norm. We also write
\[
\CKtminf := \bigoplus_{\fdl \in \FDL} \fdCKminf
\qquad\text{ and}\qquad
\CKtmz := \bigoplus_{\fdl \in \FDL} \fdCKmz.
\]

We next note that, for admissible Feynman diagrams $\Gamma$, one can define $\Pi^{K,R}\Gamma$ for any $(K,R) \in \CKtminf \times \CKpinf$ in a canonical way by imposing this to be linear on each component $\fdCKminf$. To be more precise, for fixed $K \in \CKminf$ define $\tilde K \in \CKtminf$ by setting
\begin{align}\label{eq:CKtminf}
\tilde K_{\fdl} := \bigotimes_{\ft \in \CL_\fdl} K_\ft
\end{align}
for any $\fdl \in \FDL$. Then, if $\Gamma$ is admissible, the quantity
\[
\Pi^{\tilde K,R} \Gamma := 
\Pi^{K,R} \Gamma 
\]
is well defined (recall that $R_\ft=0$ for any weak type $\ft \in \CLw$) and can be linearly extended to $\tilde K \in \CKtminf$. 

We finally define $\CKpz$ analogously to \cite[Sec.~4]{Hairer2017}  as the closure of $\CKpinf$ under the norm $\|R\|_{\infty,\deg_\infty}$ given by the smallest constant such that
\[
|D^k R_\ft(x)| \le \|R\|_{\infty,\deg_\infty	} (1+|x|)^{\deg_\infty \ft}
\]
for all $\ft \in \CL$, $x \in \bar\domain$ and $|k|_\fs < r$.


\subsubsection*{Renormalisation}

We denote by $\CH_-$ the algebras of Feynman vacuum diagrams defined as in \cite[below Rem.~2.9]{Hairer2017}. Recall that 
$\CH_-$ can be identified with the  factor algebra  $\hat\CH_-/\CJ_+$, where $\CJ_+ \ssq \hat\CH_-$ denotes the ideal generated by connected vacuum diagrams of positive $\fddeg$-degree. 
We introduce a co-product $\Delta_- : \CX \to \CH_- \otimes \CX$ for $\CX \in \{\CH, \hat \CH_- , \CH_-\}$  in analogue to \cite[Eq.~2.19, 2.24]{Hairer2017} by setting
\begin{align}\label{eq:coproduct:fd}
\Delta_-  \Gamma^\fn_\fe
=
\sum_{\tilde\Gamma \ssq \Gamma} \sum_{\tilde \fe, \tilde \fn}
	\frac{(-1)^{|\out \tilde \fe|}}{\tilde \fe !}
	\binom{\fn}{\tilde\fn}
	\tilde\Gamma^{\tilde\fn+\pi\tilde\fe}_{\fe} \otimes (\Gamma/\tilde\Gamma)^{\fn-\tilde\fn}_{
	[\tilde \fe]+ \fe}
\end{align}
where we use the convention that the first sum runs over full subgraphs\footnote{Recall \cite[pg.~7]{Hairer2017} that by definition a \emph{subgraph} does not contain legs or isolated vertices} $\tilde\Gamma$ of $\Gamma$ with the property that any connected component of $\tilde\Gamma^0_\fe$ is of negative degree, and the second sum runs over all decorations $\tilde\fe: \partial_{\tilde\Gamma} E(\Gamma) \to \N^d$ and $\tilde\fn : V(\Gamma) \to \N^d$ such that $\supp \tilde \fn \ssq V(\tilde \Gamma)$. Here, we write $\partial_{\tilde\Gamma} E(\Gamma)$ for the set of half-edges $(e,v)$ with $e \in E(\Gamma) \backslash E(\tilde\Gamma)$ and $v \in e \cap V(\tilde\Gamma)$, and we write $[\tilde\fe](e):=\sum_{u \in e}\tilde\fe(e,u)$. We call a subgraph $\bar\Gamma$ of $\Gamma$ \emph{full} if it has the property that $E(\bar \Gamma)$ is given by the set of all $e=(u,v) \in E(\Gamma)$ such that $\{u,v\}\subseteq V(\bar\Gamma)$.
This definition agrees (apart form the fact the we restrict to full subgraphs) with \cite[Eq.~2.24]{Hairer2017} in case $\CX \in \{ \hat\CH_-, \CH_- \}$ and is a slight generalisation of \cite[Eq.~2.19]{Hairer2017} in case $\CX = \CH$ since we include polynomial decorations.

We moreover write $\tilde\Delta_-$ for the coproducts acting between the same spaces, which are defined similarly to (\ref{eq:coproduct:fd}), but where $\bar\Gamma$ ranges also over subgraphs which are not necessarily full.
We define the twisted antipode $\hat\CA : \CH_- \to \hat\CH_-$ as in \cite[Eq.~2.28]{Hairer2017} as the unique multiplicative map satisfying $\CM(\hat\CA \otimes \Id)\Delta_- \Gamma=0$ for any $\Gamma \in \hat\CH_-$ such that $\fddeg\Gamma < 0$, 
and we write $\tilde\CA : \CH_- \to \hat\CH_-$ for the operator that satisfies the same identity with $\Delta_-$ replaced by $\tilde \Delta_-$. 
It follows with arguments identical to those carried out in \cite{Hairer2017} that the spaces $\CH_-$ and $\hat\CH_-$ equipped with the full coproduct $\Delta_-$ form a Hopf algebra and a co-module, respectively.
 When we refer to $\CH_-$ as a Hopf algebra, and in particular when we refer to the group product in the character group of $\CH_-$, it is always the full coproduct $\Delta_-$ that we have in mind.

Finally, given a smooth kernel assignment $K \in \CKminf$, we write $g(K)$ and $g^\full(K)$ for the respective BPHZ characters, defined as characters on the Hopf algebra $\CH_-$ via the identities
\[
g(K) := \Pi^{K} \hat\CA
\quad \text{ and }\quad
\tilde g(K) := \Pi^{K} \tilde \CA.
\]
It then follows from \cite[Prop.~3.11]{Hairer2017} that one has
\begin{align}\label{eq:coproduct:full}
\hat \Pi^K := (g(K) \otimes \Pi^K) \Delta_- = (\tilde g (K) \otimes \Pi^K ) \tilde \Delta_-
\end{align}
on $\CH$. 
We first show a simple lemma that extends (\ref{eq:coproduct:full}) to the situation where one has non-vanishing large-scale kernel assignments.
\begin{lemma}\label{lem:renorm:large:scale:full}
Assume that $R$ is a smooth, compactly supported large-scale kernel assignment as in \cite[Sec.~4]{Hairer2017} 
and assume that $\Gamma \in \hat\CH_-$ is a connected vacuum diagram with the following property. Whenever $\tilde\Gamma \subseteq \Gamma$ is a connected subgraph such that $\fddeg \tilde\Gamma <0$, then for any $e=(u,v) \in E(\Gamma)\backslash E(\tilde \Gamma)$ with $\{u,v\} \ssq V(\tilde\Gamma)$ one has $R_{\ft(e)}=0$. Then one has
\[
\hat \Pi^{K,R} \Gamma:= (g(K) \otimes \Pi^{K,R}) \Delta_- \Gamma= (\tilde g (K) \otimes \Pi^{K,R} ) \tilde \Delta_-\Gamma.
\]
\end{lemma}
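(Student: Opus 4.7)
The plan is to upgrade the $R=0$ identity \eqref{eq:coproduct:full}, already established in \cite[Sec.~3.3]{Hairer2017}, to the setting with non-trivial large-scale kernel assignment $R$, by leveraging the hypothesis that $R_{\ft(e)}$ vanishes on every ``chord edge'' $e$ (i.e.\ every $e\in E(\Gamma)\setminus E(\tilde\Gamma)$ with $\{e^+,e^-\}\subseteq V(\tilde\Gamma)$) of any connected subgraph $\tilde\Gamma$ of negative degree.

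The first step is to linearize the valuation $\Pi^{K,R}$ in the decomposition $(K+R)_{\ft(e)} = K_{\ft(e)} + R_{\ft(e)}$. Distributing this sum inside the definition of $\Pi^{K,R}\Gamma$ over all internal edges gives
\[
\Pi^{K,R}\Gamma = \sum_{M \subseteq E_\star(\Gamma)} \Pi^{K,R,M}\Gamma,
\]
where $\Pi^{K,R,M}$ is the valuation using kernel $R_{\ft(e)}$ on edges $e \in M$ and $K_{\ft(e)}$ on $e \notin M$. Since both $g(K)$ and $\tilde g(K)$ are independent of $R$, both sides of the desired identity split as sums over $M$, reducing the problem to the marking-by-marking equality
\[
(g(K) \otimes \Pi^{K,R,M})\Delta_-\Gamma = (\tilde g(K) \otimes \Pi^{K,R,M})\tilde\Delta_-\Gamma
\]
for each fixed $M \subseteq E_\star(\Gamma)$.

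Next, I analyze each fixed marking $M$. The left-hand side sums only over full subgraphs $\tilde\Gamma$, for which no chord edges exist, so the quotient $\Gamma/\tilde\Gamma$ contains no self-loops and the expression is unambiguous. The right-hand side sums additionally over non-full connected subgraphs of negative degree; each such $\tilde\Gamma$ produces, for every chord edge $e$, a self-loop in the quotient at the contracted vertex. If such a chord edge is $R$-marked ($e \in M$), the resulting self-loop carries a factor that is a derivative of $R_{\ft(e)}$ evaluated at the origin, which vanishes identically by the hypothesis of the lemma. If instead $e \notin M$, the self-loop carries a singular factor in $K_{\ft(e)}$ that is precisely the object the twisted antipode $\tilde\CA$ is designed to subtract; its cancellation against the corresponding contribution of $\tilde g(K)$ is the content of \eqref{eq:coproduct:full}. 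Hence, for each $M$, the surviving contributions on both sides coincide, and summing over $M$ recovers the claimed equality.

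The main obstacle is the combinatorial bookkeeping around self-loops: one must verify that the twisted antipode $\tilde\CA$ provides the correct counter-term for the self-loop $K$-divergences on both sides, and that this cancellation remains intact once the $R$-marked chord edges are forced to contribute zero. Once the conventions of \cite{Hairer2017} for diagrams with self-loops are unwrapped, the hypothesis of the lemma ensures that $R$ enters only through non-chord edges of the quotient, where it is handled identically on both sides, reducing the proof to a direct application of \eqref{eq:coproduct:full} marking-by-marking.
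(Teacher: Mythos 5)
The paper states this lemma without proof (it is introduced as ``a simple lemma''), so there is no written argument to compare against; judged on its own merits, your proposal identifies the correct mechanism. The extra terms in $\tilde\Delta_-\Gamma$ relative to $\Delta_-\Gamma$ come from contracting non-full divergent subgraphs, and the edges distinguishing a non-full connected divergent subgraph from its componentwise full closure are exactly the chord edges $e$ covered by the hypothesis; these become self-loops in the quotient and, since $R_{\ft(e)}=0$ identically, they are evaluated with $K_{\ft(e)}$ alone. The left tensor factors are evaluated with $g(K)$ resp.\ $\tilde g(K)$, which never see $R$ either. Hence, after grouping the $\tilde\Delta_-$ sum by full closures, the only place $R$ enters is the factor $\Pi^{K,R}(\Gamma/\hat\Gamma)$ attached to each full subgraph $\hat\Gamma$, which is common to both sides, and the identity reduces to the same self-loop resummation that underlies \eqref{eq:coproduct:full}. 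This is surely the intended argument.

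Two points in your write-up need tightening. First, the marking decomposition is not well-posed as stated: in $(g(K)\otimes\Pi^{K,R})\Delta_-\Gamma$ the valuation $\Pi^{K,R}$ acts on $\Gamma/\tilde\Gamma$, whose edge set is $E(\Gamma)\setminus E(\tilde\Gamma)$ and hence depends on $\tilde\Gamma$, so a sum over a fixed $M\subseteq E_\star(\Gamma)$ does not commute with the coproduct sum (summing over all $M\subseteq E_\star(\Gamma)$ would overcount each term by a $\tilde\Gamma$-dependent power of two). This is fixable --- declare $\Pi^{K,R,M}(\Gamma/\tilde\Gamma):=0$ whenever $M\cap E(\tilde\Gamma)\neq\emptyset$, after which your case analysis goes through, the terms where $M$ contains a chord of the full closure vanishing on both sides --- but it is cleaner to drop the markings altogether and argue by grouping as above. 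Second, ``the content of \eqref{eq:coproduct:full}'' is, strictly speaking, not quite what you need for the per-marking cancellation: the required input is the identity expressing $g(K)$ on a full divergent subgraph as the sum over its non-full refinements of $\tilde g(K)$ times the $K$-self-loop factors. This is the identity by which \cite[Sec.~3.3]{Hairer2017} \emph{proves} \eqref{eq:coproduct:full}, so it is available, but it should be invoked as such rather than deduced from \eqref{eq:coproduct:full} as a black box (the latter is a single scalar identity per diagram and does not by itself isolate the contribution of each full closure).
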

\begin{proof}
We only sketch how to adapt the proof of \cite[Prop.~3.11]{Hairer2017} to our situation. In the notation of \cite[Prop.~3.11]{Hairer2017}, the difference to our case is that in \cite[Eqn. 3.16]{Hairer2017} the evaluations $\Pi_-$ and $\Pi$ are build on different kernel assignments (since the former ignores the large scale kernel assignment). Without the extra assumption made in the statement of our lemma,  \cite[Eqn. 3.16]{Hairer2017} is not independent of $\CB$ whenever there is an element $\gamma^{\cl} \in \CB$ which is a root of the forest $\CF^p \cup \CF_{\diamond}^{full} \cup \CB$. 
By definition, $\gamma^\cl$ is the closure of some root $\gamma$ if $\CF^p$.
Thanks to the additional assumption made in our lemma, the two evaluations $\Pi^K$ and $\Pi^{K,R}$ act identically when applied to an edge $e \in E(\gamma^\cl) \backslash E(\gamma)$, and the proof can be finished as in  \cite[Prop.~3.11]{Hairer2017}.
\end{proof}
Finally, note that as before, for admissible Feynman diagrams $\Gamma$, the quantity $\hat\Pi^{K,R}\Gamma$ is well-defined for $K \in \CKtminf$ by linear extension.
\begin{remark}
The character $g(K)$ is in general not well defined for $K \in \CKtminf$. This is because divergent subgraphs $\tilde\Gamma$ of an admissible Feynman diagram $\Gamma$ need not be admissible. However, the map $K \mapsto(g(K) \otimes \Pi^{K,R}) \Delta_- \Gamma$, for $K \in \CKminf$, has the multi-linearity property described above, and can therefore be extended uniquely to $\CKtminf$ by linearity.
\end{remark}

\subsubsection*{A slight generalisation of \cite{Hairer2017}}

We now state a generalisation of the results of \cite{Hairer2017}. 
For this we recall that for any Feynman diagram $\Gamma$ with legs $1_\star, \ldots , k_\star$, we call a partition $\CP$ of $V_\star$ \emph{tight} if $\#\CP\ge 2$ and all legs are contained in the same element, i.e. there exists $P_\star \in \CP$ such that for all $1 \le i \le k$ one has $i_\star  \in P_\star$. 
For any such partition we introduce the notation $E_\CP$ for the set of edges $e \in E_\star$ such that $e$ is not subset of any $P \in \CP$, and the notation $V_\CP$ for the set of vertices $u \in V_\star$ such that $\{u,u_\star\}$ is not subset of any $P \in \CP$. We then define
\[
\deg_\infty \CP := \sum_{e \in E_\CP} \deg_\infty \ft(e) - |\fe(e)|_\fs + \sum_{u \in V_\CP} |\fn(u)| + |\fs|(\#\CP -1 ).
\]
This differs slightly from \cite[pg.~43]{Hairer2017} since we include polynomial decorations.

\begin{theorem}\label{thm:BPHZ:FD}
Let $\Gamma$ be an admissible Feynman diagram such that $\deg_\infty \CP<0$ for any tight partition of $V$. Then, for fixed $K \in \CKtmz$ the map $R \mapsto \Pi^{K,R}\Gamma$ extends continuously to the space $\CK_0^+$ and the map $(K,R) \mapsto \hat\Pi^{K,R}\Gamma$ extends continuously to the space $\CKtmz \times \CK_0^+$.
\end{theorem}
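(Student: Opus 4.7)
The plan is to reduce Theorem~\ref{thm:BPHZ:FD} to its ``classical'' counterparts \cite[Thm.~3.1, Thm.~4.3]{Hairer2017} by carefully exploiting the multi-linear structure of $\CKtmz$ together with the admissibility assumption on $\Gamma$. First I would observe that by the definition of the tensor norm $\CKfdlnorm{\cdot}$ in \eqref{eq:CKtmz:norm} and the fact that the valuation $\hat\Pi^{K,R}\Gamma$ is, by construction, multilinear in each factor $K_\fdl \in \fdCKmz$, it suffices to establish bounds on $\hat\Pi^{K,R}\Gamma$ uniformly over \emph{simple} tensors of the form $K_\fdl = \bigotimes_{\ft \in \CL_\fdl} K_\ft$ with $\prod_{\ft \in \CL_\fdl} \|K_\ft\|_{\deg\ft} \le 1$, uniformly over $\deg \in \FDdeg$. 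For such simple tensors, $\hat\Pi^{K,R}\Gamma$ coincides with the renormalised valuation of \cite{Hairer2017} applied to the ordinary kernel assignment $(K_\ft)_{\ft \in \CL}$, so one can hope to invoke the existing analytic BPHZ results provided one checks that the choice of $\deg$ is compatible with the renormalisation structure.

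The subtlety is that the coproduct $\Delta_-$, and hence the character $g(K)$, a priori depends on the choice of $\deg \in \FDdeg$, since the set of full subgraphs $\tilde\Gamma \ssq \Gamma$ with $\sum_{e \in E(\tilde\Gamma)} \deg\ft(e)$ below a given threshold may genuinely change with $\deg$. The admissibility hypothesis is precisely what makes this harmless: for any full subgraph $\tilde\Gamma \ssq \Gamma$, either $\tilde\Gamma$ is itself admissible -- in which case $\sum_{e\in E(\tilde\Gamma)} \deg\ft(e)$ does not depend on the choice of $\deg \in \FDdeg$ and $\fddeg \tilde\Gamma$ is well-defined -- or $\tilde\Gamma$ fails admissibility because some class $\CL_\fdl$ is split between $\tilde\Gamma$ and its complement, in which case one can pick $\deg \in \FDdeg$ placing all of the $\fddeg\fdl$ weight on the edges outside $\tilde\Gamma$, thus pushing $\tilde\Gamma$ out of the divergent sector. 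Consequently, only admissible subgraphs of negative $\fddeg$ contribute to the renormalisation sum encoded in \eqref{eq:coproduct:full}, and $\hat\Pi^{K,R}\Gamma$ is unambiguously defined by multilinear extension.

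With this reduction in hand, the proof proceeds in two stages, mimicking \cite{Hairer2017}. For the small-scale part (continuity in $K$), I would run the inductive Bogoliubov-style recursion underlying \cite[Thm.~3.1]{Hairer2017}, at each step choosing a $\deg \in \FDdeg$ tailored to the currently extracted admissible subdivergence. The compatibility of renormalisation with non-vanishing large-scale kernels $R$ is exactly the content of Lemma~\ref{lem:renorm:large:scale:full}: since $R_\ft = 0$ for weak $\ft$ and only strong types $\ft$ with $\deg_\infty \ft < 0$ carry non-trivial $R_\ft$, no $R$-kernel appears on a chord of a divergent admissible subgraph. The polynomial decorations $\fn$ enter the analysis only through the weight $(x_u-x_{u_\star})^{\fn(u)}$, which is absorbed into the homogeneity count; this is the only point at which \cite{Hairer2017} needs minor modification, and the adaptation is immediate. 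For the large-scale part, the condition $\deg_\infty \CP < 0$ for every tight partition $\CP$ is, after incorporating the polynomial decoration into $\sum_{u \in V_\CP} |\fn(u)|_\fs$, exactly the hypothesis of \cite[Thm.~4.3]{Hairer2017}, and the counting argument of \cite[Sec.~4]{Hairer2017} carries over verbatim to produce the required bound on the extension to $\CK_0^+$.

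The main obstacle I anticipate is simply the bookkeeping of running the BPHZ induction while keeping the supremum over $\deg \in \FDdeg$ under control: at each inductive step one must verify that the bound produced is in fact independent of the particular $\deg$ chosen, and that the subdivergences extracted at different levels are mutually compatible. This is guaranteed by the admissibility assumption, which implies that the collection of admissible divergent subgraphs of $\Gamma$ is a nested family stable under the coproduct, so the Bogoliubov recursion closes as in the classical setting. Once the uniform bound on simple tensors is established, extending from $\CK^-_\infty \times \CK^+_\infty$ to $\CKtmz \times \CK^+_0$ is a routine density argument using multilinearity and the continuity of $\hat\Pi^{K,R}\Gamma$ in each argument separately.
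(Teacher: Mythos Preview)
Your high-level strategy matches the paper's: reduce to \cite[Thm.~3.1, Thm.~4.3]{Hairer2017} by working with simple tensors and choosing $\deg \in \FDdeg$ appropriately, with polynomial decorations handled trivially and the large-scale argument going through because $R_\ft = 0$ for weak types. The large-scale portion of your sketch is essentially correct and agrees with the paper.

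However, the small-scale argument has a real gap. The proof in \cite{Hairer2017} is not an ``inductive Bogoliubov-style recursion'' in which one extracts subdivergences one at a time; it proceeds via the Zimmermann forest formula organised by Hepp sectors and forest intervals. Within a fixed Hepp sector $\T$ and safe forest $\CF_s$, one must simultaneously bound contributions from \emph{all} unsafe subgraphs (those that collapse at some node of $\T$ but are not members of the forest interval), and these unsafe subgraphs need not be admissible. Your proposal to choose $\deg$ ``at each step tailored to the currently extracted subdivergence'' does not address this: different unsafe subgraphs in the same Hepp sector might a priori demand incompatible choices of $\deg$, and the bound \cite[Eq.~3.10]{Hairer2017} requires one fixed $\deg$ per sector.

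The paper resolves this by reversing the order of choices: first fix the Hepp sector $\T$ and the safe forest, and \emph{only then} choose $\deg \in \FDdeg$. The point you miss is why a single compatible $\deg$ always exists. Admissibility says that for each weak class $\fdl$, all edges $e$ with $\ft(e) \in \CL_\fdl$ share a common vertex; consequently the interior nodes $\nu_e \in \topcirc\T$ at which these edges collapse form a \emph{totally ordered} chain in $\T$. This total ordering is what allows one to recursively distribute the weight $\fddeg\,\fdl$ among the edges of $\CL_\fdl$ so that every unsafe subgraph attains its supremal degree $\fddeg\tilde\Gamma$. Your claim that ``the collection of admissible divergent subgraphs of $\Gamma$ is a nested family'' is false (divergent subgraphs can overlap), and in any case would not suffice: it is the total ordering of collapse nodes, not nesting of subgraphs, that makes the argument work.
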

\begin{proof}
We only sketch the difference to \cite{Hairer2017}. Let us first discuss the bound on small scales, i.e.\ the continuous extension of $K \mapsto \hat \Pi^{K,0} \Gamma$ to $\CKtmz$. There are two differences to the case treated in \cite[Sec.~3]{Hairer2017}. One is that we allow $\Gamma$ to have polynomial decoration $\fn$ and the other is the presence of weak types. 

It is straightforward to convince oneself that the proof given in \cite{Hairer2017} works without any changed for non-vanishing the polynomial decoration. 
To see that weak edges cause no problem, we recall a few pieces of notation. We write $\FF_\Gamma^-$ for the set of all forests $\CF$ of $\Gamma$. Recall \cite[Sec.~3.1]{Hairer2017} that a \emph{forest} $\CF$ is a family of divergent subgraphs of $\Gamma$ such that any two elements of $\CF$ are non-overlapping (i.e.\ either node-disjoint or nested). Recall further that a \emph{forest interval} $\M$ is a subset of $\FF_\Gamma^-$ with the property that there exists $\ol \M, \ul\M \in \FF_\Gamma^-$ such that $\M$ contains exactly those forests $\CF\in\FF_\Gamma^-$ such that $\ul\M \ssq \CF \ssq \ol\M$. The bound in \cite{Hairer2017} is then obtained by fixing a Hepp sector $\T$ \cite[Def.~3.5]{Hairer2017}, which allows to partition the set of forests into a family of forests intervals indexed by safe-forests for $\T$. 
The main step of the proof \cite[Eqs~3.9, 3.10, Lem.~3.7, Lem.~3.8]{Hairer2017} is then performed for each of these forest intervals separately. 
The only difference to the present setup is that we have a set $\FDdeg$ of possible degree assignments to choose from. By our definitions, a subgraph $\tilde\Gamma$ is divergent if and only if $\deg \tilde\Gamma<0$ for any $\deg \in \FDdeg$, so that the definition of the set of forests $\FF_\Gamma^-$ does not depend on a choice $\deg \in \FDdeg$. 
Neither do the notions of forest interval and save forest. 
We can then use exactly the same proof as in \cite{Hairer2017}. 
The only difference is that we first fix a Hepp sector $\T$ and a safe forest $\CF_s =\ul\M$. Only afterwards do we choose $\deg \in \FDdeg$ in such a way to make sure that for any subgraph $\tilde\Gamma$ of $\Gamma$ which is unsafe for $\CF_s$ one has $\deg \tilde\Gamma  = \FDdeg\tilde\Gamma$. This is always possible, since by definition, for any weak type $\fdl$, all edges $e \in E(\Gamma)$ of type $\ft(e) \in \CL_\fdl$ are connected to the same vertex. If we denote by $E_\fdl \ssq E(\Gamma)$ the set of these edges and $\nu_e \in \topcirc\T$ the node of the spanning tree at which $e$ collapses, then the fact that all $e \in E_\fdl$ have a vertex in common implies that $\{ \nu_e : e \in E_\fdl \}$ is a totally ordered set with respect to the tree order. It follows that one can recursively choose $\deg \in \FDdeg$ to optimise the degree of subdiagrams containing edges $e \in E_\fdl$.


We finally discuss the bound on large scales. In \cite[Sec.~4]{Hairer2017} the analogue statement was shown again without polynomial decorations and weak types. It is again easy to convince oneself that polynomial decorations pose no problems. Furthermore, the proof of \cite[Thm.~4.3]{Hairer2017} uses the bound on small scales (i.e.\ the continuous extension of $\hat \Pi^{K,0}$ to $K \in \CKtmz$) as a black box, otherwise only the a-priori bounds on the large-scale kernel assignments are used. It remains to point out that since $R_\ft=0$ for weak types $\ft \in \CLw$ it suffices to consider in \cite[pg.~45]{Hairer2017} subsets $\tilde E$ of the set of edges $E$ such that each $e \in \tilde E$ has a strong type, and for such sets $\CU(\Gamma,\tilde E)$ constructed in \cite[pg.~45]{Hairer2017} is again an admissible Feynman diagram.	
\end{proof}

An important application of the previous theorem is the proof of Theorem~\ref{thm:evaluation:trees:largescale}, see the end of Section~\ref{sec:application:to:trees} below. 


\subsubsection*{A scale dependent bound}

We also show that one can infer a scale dependent bound from \cite{Hairer2017} (even though this is not explicitly stated in this paper).
Given a Feynman diagram $\Gamma \in \CH$ we write $\fdforests \ssq \FF_\Gamma^-$ for the set of all forests $\CF \in \FF_\Gamma^-$ of $\Gamma$ such that $\Gamma \notin \CF$.
To any forest interval $\M$ we associate a linear combination of Feyman diagrams $\hat\CR_\M \Gamma$ as in \cite[Eq.~3.7]{Hairer2017}.  
\begin{remark}
A reader who is not familiar with \cite{Hairer2017} should think of $\hat\CR_\M\Gamma$ as a sum over all possible ways of ``pulling out and contracting'' the divergent sub-diagrams in $\ol\M$, with the restriction that any element of $\ul\M$ is always pulled out, and adjusting the sign according to the number of sub-diagrams which are pulled out. One specify property of $\hat\CR_\M$ is that we view the vertex set of every Feynman diagrams that we sum over in $\hat\CR_\M\Gamma$ as equal to $V(\Gamma)$. This can be obtained by ``reattaching'' one vertex of the pulled-out sub-diagram to the vertex which has been created by contracting it. (This is not canonical, but depends on a choice of distinct vertex in the pulled-out diagram. The ambiguity can be removed by fixing an arbitrary total order on $V(\Gamma)$.)
Note that this last property forces us to abstain from viewing $\hat\CR_\M$ as an operator acting on the algebra $\CH$ (the operator viewed in this way is denoted by $\CR_\M$ in \cite{Hairer2017}).
\end{remark}
We then write $\hat \CR:= \sum_{\M \in \CP} \hat\CR_\M$ where $\CP$ is some partition of $\fdforests$ into forest intervals (the definition of $\hat\CR$ is independent of this choice), and we write $\CWW^K$ for the map defined below \cite[Eq.~3.7]{Hairer2017}, so that
\[
	\CWW^K  \Gamma \in \CC_c^\infty( \ws^{V(\Gamma)})
\]
for any $K \in \CK_\infty^-$. The function $\CWW^K  \Gamma$ should be thought of as introducing for every edge $e \in E(\Gamma)$ a factor $D^{\fe(e)} K_{\ft(e)}$ evaluated between its endpoints. Note that by definition of $\hat\CR \Gamma$, one has $\CWW^K  \hat\CR \Gamma \in \CC_c^\infty( \ws^{V(\Gamma)})$.
 It follows as in \cite[Lem.~3.4]{Hairer2017} that one has
\begin{align}\label{eq:CW:hatCR}
g^\full(K)\Gamma = \int_{\ws^{V(\Gamma)}}
dx
\delta(x_{v_\star})
(\CWW^K \hat\CR\Gamma) (x)
\end{align}
if $\deg\Gamma \le 0$, while in case that $\deg\Gamma>0$ the right-hand side of (\ref{eq:CW:hatCR}) is equal to $\hat\Pi^K_\BPHZ \Gamma$.
We define the ``scale'' $\fdm(x)$ of $x \in \ws\backslash\{0\}$ as the largest integer smaller than $-\ld(|x|_\fs)$ (so that $|x|_\fs$ is of order $2^{-\fdm(x)}$), and we set
\begin{align}\label{eq:character:scale}
\esn 
:=
\int_{\ws^{V(\Gamma)}} dx \, \delta(x_{v_\star}) \CWW^K \hat\CR\Gamma(x) 
\,
\I\{\min_{u,v \in V(\Gamma)} \fdm(|x_u - x_v|)= n\}.
\end{align}
The indicator function ensures that we only integrate over point configurations $x$ such that the maximal distance $|x_u - x_v|$ for $u,v \in V(\Gamma)$ is of order $2^{-n}$, and the sum
\begin{align*}
\sum_{n \ge 0} \esn
\end{align*}
is equal to the right-hand side of (\ref{eq:CW:hatCR}).
The following result follows as in \cite{Hairer2017}.
\begin{theorem}\label{thm:BPHZ:scaledependent}
Let $\Gamma$ be an admissible Feynman diagram. Then one has the bound
\begin{align}\label{eq:BPHZ:scaledependent}
|\esn| \lesssim 
\Big(
	\prod_{e \in E(\Gamma)} \|K_{\ft(e)}\|_{\deg\ft(e)}
\Big)
2^{-n \, \fddeg\Gamma}
\end{align}
for any $\deg \in \Deg$. (Note that the right-hand side does not depend on $\deg \in \Deg$.)
Here, the implicit constant only depends on $\Gamma$ and $\fddeg$, but is uniform in $n$ and $K \in \CKtminf$.
\end{theorem}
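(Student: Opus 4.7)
The plan is to follow the proof of \cite[Thm.~3.1]{Hairer2017} essentially verbatim, with two adaptations: the handling of weak types and polynomial decorations as already done in the proof of Theorem~\ref{thm:BPHZ:FD}, and a careful bookkeeping of how the scale restriction $\min_{u,v}\fdm(|x_u-x_v|)=n$ in \eqref{eq:character:scale} translates into an extra factor $2^{-n\fddeg\Gamma}$.

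First, I would partition the integration domain $\ws^{V(\Gamma)}$ into Hepp sectors $D_{(\T,\s)}$ as in \cite[Def.~3.5]{Hairer2017}, where $\T$ is a binary rooted tree with leaf set $V(\Gamma)$ and $\s\colon \topcirc\T\to\N$ is a labelling, so that on $D_{(\T,\s)}$ one has $|x_k-x_l|\sim 2^{-\s(k\wedge_\T l)}$ for all $k,l\in V(\Gamma)$. The key observation is that the restriction in \eqref{eq:character:scale} translates precisely to $\s(\rho_\T)=n$, since the minimum of $\fdm(|x_u-x_v|)$ over all pairs coincides with the value $\s(\rho_\T)$ in the corresponding sector.

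Second, for each Hepp sector I would partition the set of forests $\fdforests$ into forest intervals indexed by safe forests for $\T$ and estimate $\CWW^K\hat\CR_\M\Gamma$ on $D_{(\T,\s)}$ using \cite[Lem.~3.7,~Lem.~3.8]{Hairer2017}. The adaptation needed for weak types is identical to the one used in the proof of Theorem~\ref{thm:BPHZ:FD}: having fixed $\T$ and a safe forest $\CF_s$, I would then choose a degree assignment $\deg\in\FDdeg$ recursively along $\T$ so that the degree $\FDdeg\tilde\Gamma$ of each unsafe subdiagram $\tilde\Gamma$ is realised, which is possible because for each weak class $\fdl$ the collapse nodes $\{\nu_e:e\in E_\fdl\}$ form a totally ordered subset of $\topcirc\T$ (admissibility of $\Gamma$). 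The polynomial decoration only adds the factor $\prod_u(x_u-x_{u_\star})^{\fn(u)}$, which is estimated trivially on each sector by $\prod_u 2^{-\s(u\wedge_\T u_\star)|\fn(u)|_\fs}$ and absorbed in the exponents below.

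Third, combining these estimates yields on each sector a bound of the form
\begin{equ}
|\CWW^K\hat\CR\Gamma(x)|\lesssim \Big(\prod_{e\in E(\Gamma)}\|K_{\ft(e)}\|_{\deg\ft(e)}\Big)\prod_{\nu\in\topcirc\T}2^{-(\s(\nu)-\s(\pi\nu))\delta(\nu)}\;,
\end{equ}
valid on $D_{(\T,\s)}$, where $\pi\nu$ is the parent of $\nu$ (with $\s(\pi\rho_\T):=0$) and the exponents $\delta(\nu)$ are such that $\sum_{\mu\ge\nu}\delta(\mu)$ coincides with the degree $\FDdeg\Gamma_\nu$ of the subdiagram $\Gamma_\nu$ attached to $\nu$, and such that $\delta(\nu)>0$ for $\nu\ne\rho_\T$ by the Weinberg/BPHZ argument. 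In particular $\delta(\rho_\T)=\fddeg\Gamma$. Integrating against the volume of $D_{(\T,\s)}$ contributes a further factor $\prod_{\nu}2^{-|\fs|(\s(\nu)-\s(\pi\nu))(\#L(\T_\nu)-1)}$ (after killing $x_{v_\star}$ via the $\delta$-function), which I absorb into the $\delta(\nu)$'s; the resulting exponents remain strictly positive for $\nu\ne\rho_\T$ by the same super-regularity argument used in \cite{Hairer2017}.

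Finally, summing over the finitely many Hepp trees $\T$ and over all labellings $\s\colon\topcirc\T\to\N$ \emph{with $\s(\rho_\T)=n$ fixed}, the geometric series in $\s(\nu)-\s(\pi\nu)$ for $\nu\ne\rho_\T$ converge and contribute a bounded constant, while the factor $2^{-n\,\delta(\rho_\T)}=2^{-n\,\fddeg\Gamma}$ factorises out. This yields \eqref{eq:BPHZ:scaledependent}. The main obstacle is the last point: verifying that the strict positivity $\delta(\nu)>0$ for $\nu\ne\rho_\T$ continues to hold uniformly over all choices of safe forest and degree assignment $\deg\in\FDdeg$ used in step two; this is precisely the super-regularity verification carried out in \cite[Sec.~3.2]{Hairer2017}, which survives intact once the degree choice is made Hepp-sector dependent, exactly as in the proof of Theorem~\ref{thm:BPHZ:FD}.
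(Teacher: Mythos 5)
Your proposal is correct and follows essentially the same route as the paper: decompose into Hepp sectors with the root label pinned near $n$, choose the degree assignment $\deg\in\FDdeg$ sector- and safe-forest-dependently exactly as in the proof of Theorem~\ref{thm:BPHZ:FD}, invoke the forest-interval estimates of \cite{Hairer2017}, and read off the factor $2^{-n\,\fddeg\Gamma}$ from the root exponent (which equals $\sum_{v\in\topcirc\T}\eta_i(v)=\fddeg\Gamma$), while positivity of the partial sums at non-root nodes makes the remaining geometric series converge. The only points where the paper is slightly more careful are that the restriction $\min_{u,v}\fdm(|x_u-x_v|)=n$ matches $\s(\rho_\T)=n$ only up to a bounded offset $n_0$ (so one sums $m$ over $[n-n_0,n+n_0]$), and that it is the partial sums $\delta(\nu)=\sum_{\mu\ge\nu}\eta(\mu)$ themselves, not their further sums, that equal the subdiagram degrees --- both harmless bookkeeping corrections to your sketch.
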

\begin{proof}
Given a decorated spanning tree $(\T,\n)$ for $V(\Gamma)$ with $\n : \topcirc \T \to \N$, we denote by $D_{(\T,\n)} \ssq \ws^{V(\Gamma)}$ the Hepp sector associated to $(\T,\n)$ defined via \cite[Eq.~2.10]{Hairer2017}, and for $n\in \N$ we write $D_{\T,n}:= \bigcup_{\n: \n(\rho_\T)=n} D_{(\T,\n)}$.
Given furthermore a forest interval $\M$ of $\bar\FF_\Gamma^-$, we write $\esan$ for the constant given by 
\begin{align}\label{eq:character:Hepp}
\esan
:=
\int_{D_{\T,n}} |dx \, \delta(x_{v_\star}) \CWW^K \hat\CR_\M\Gamma(x)|,
\end{align}
so that it follows from the definitions that
\[
|\esn|
\lesssim
\sum_{\T}\sum_{\M \in \CP_\T}\sum_{m=n-n_0}^{n+n_0} \esam,
\]
where $n_0 \in \N$ depends only on the choice of $C$ in \cite[Eq.~2.10]{Hairer2017}. Here $\CP_\T$ is the partition of $\bar\FF_\Gamma^-$ defined as in \cite[p.~29]{Hairer2017}.
It suffices to show the bound (\ref{eq:BPHZ:scaledependent}) for $\esam$ for any spanning tree $\T$, any $m \in \N$ and any $\M \in \bar\FF_\Gamma^-$ separately. 

We now choose a degree assignment $\deg \in \Deg$ with the property for that any sub graph $\tilde\Gamma$ of $\Gamma$ which collapses for $\T$ one has $\roundup{\deg{\tilde\Gamma}}=\roundup{\fddeg{\tilde\Gamma}}$. 
Identically to \cite[Eq.~3.9]{Hairer2017} we obtain the bound
\[
\esam
\lesssim
\sum_{i \in I} \sum_{\fn:\fn(\rho_\T)=m} \prod_{v \in \topcirc \T} 2^{-\eta_i(v) \fn_v}
\]
where $\eta_i(v)$ is defined as in \cite[Eq.~3.17]{Hairer2017} for the degree assignment $\deg$. We can show \cite[Eq.~3.10]{Hairer2017} for any $v \in \topcirc\T \backslash\{ \rho_\T\}$ exactly as in \cite[p.~34]{Hairer2017}, and it follows that
\[
\esam
\lesssim
\sum_{i \in I}
\prod_{v \in \topcirc\T} 
2^{- \eta_i(v) m}
\lesssim
2^{- \fddeg \Gamma m},
\]
where we used that $\sum_{v \in \topcirc\T} \eta_i(v) = \fddeg\Gamma$ for any $i \in I$.
\end{proof}

One application is the following corollary which shows the absence of logarithmic divergencies in certain situations. 
Before we state the next definition, we introduce a piece of notation. Given a Feynman diagram $\Gamma_\fe^\fn$ and a subgraph $\tilde\Gamma \ssq \Gamma$, we want to identify all polynomial decorations $\tilde\fn : V(\tilde\Gamma) \to \N^d$ such that $\tilde\Gamma^{\tilde\fn}_\fe$ appears on the right-hand side of the coproduct $\Delta_-$ applied to $\Gamma^\fn_\fe$.
For this we write $\CN(\tilde\Gamma)$ for the set of decorations $\tilde\fn : V(\tilde\Gamma) \to \N^d$ 
such that for any $u \in V(\tilde\Gamma)$ 
with the property that there does not exist $e \in E(\Gamma) \setminus E(\tilde\Gamma)$ with $u \in e$ 
one has $\tilde\fn(u) \le \fn(u)$.
\begin{definition}\label{def:log-avoiding}
Let $\Gamma = \Gamma^\fn_\fe$ be an admissible Feynman diagram such that $\fddeg\Gamma^\fn_\fe=0$ and let $\fdl \in \FDL$. 
We say that a kernel assignment $K \in \CKtminf$ is \emph{log-avoiding} for $\Gamma$ and $\fdl$, if
$\CL_\fdl \ssq \ft(E(\Gamma))$ and for any proper subgraph $\tilde\Gamma \ssq \Gamma$ with $\CL_\fdl \cap \ft(E(\tilde\Gamma)) \ne \emptyset$ and any polynomial decoration $\tilde\fn \in \CN(\tilde\Gamma)$ such that $\fddeg \tilde\Gamma^{\tilde\fn}_\fe=0$ one has $g(K)\tilde\Gamma^{\tilde\fn}_\fe=0$.
\end{definition}
Given $\fdl \in \FDL$ and $\theta>0$ we also introduce the seminorm
\[
\logavnorm{ K }{\fdl}{\theta} := 
\CKfdlnormm{ K_\fdl }{\fddeg\fdl + \theta}
\CKfdlnormm{ K_\fdl }{\fddeg\fdl - \theta}
\]
for $K \in \CKtmz$.
We then have the following statement.

\begin{corollary}\label{cor:no-log-divergence}
In the setting above, let $\Gamma$ be an admissible Feynman diagram, let $\fdly \in \FDL$ 
and let $K \in \CKtminf$ be a kernel assignment which is log-avoiding for $\Gamma$ and $\fdly$.
Then for all $\theta>0$ small enough one has
\begin{align}\label{eq:log-avoiding:bound}
|g(K)\Gamma| \lesssim \CKtmnorm{K} + \logavnorm{K}{\fdly}{\theta}
\end{align}
uniformly over all $K \in \CKtminf$.

Moreover, if $\fdlz \in \FDL$ is such that $K$ is also log-avoiding for $\Gamma$ and $\fdlz$, then for all $\theta>0$ small enough one has
\begin{align}\label{eq:log-avoiding:bound:2}
|g(K)\Gamma| \lesssim 
{\CKfdlnormm{ K _\fdly }{\fddeg \fdly + \theta}} {\CKfdlnormm{ K_{ \fdlz }} { \fddeg \fdlz - \theta}}
\end{align}
uniformly over $K \in \CKtminf$ such that $\CKtmnorm{K}\lesssim 1$.
\end{corollary}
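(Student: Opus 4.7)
The starting point is the scale decomposition $g(K)\Gamma = \sum_{n\ge 0} \esn$ from \eqref{eq:CW:hatCR} and \eqref{eq:character:scale}. Since $\fddeg\Gamma=0$ by assumption, the scale-dependent bound of Theorem~\ref{thm:BPHZ:scaledependent} gives $|\esn|\lesssim \CKtmnorm{K}$ uniformly in $n$, whence the sum diverges logarithmically. The idea is to exploit the log-avoiding hypothesis in order to \emph{effectively perturb} the degree assignment outside $\Deg$: we shift the degrees of types in $\CL_\fdly$ so that their total becomes $\fddeg\fdly+\theta$ (possible because $\fdly$ is weak, so $\CL_\fdly$ contains several types among which $\theta$ can be redistributed). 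Call this perturbed assignment $\deg_\theta$. Since $\CL_\fdly\subseteq\ft(E(\Gamma))$, we gain $\fddeg_\theta\Gamma = \theta>0$ and, if we can legitimately plug $\deg_\theta$ into the bound of Theorem~\ref{thm:BPHZ:scaledependent}, the scale sum becomes summable.

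\textbf{The algebraic step.} The obstruction is that $\deg_\theta\notin\Deg$, so the proof of Theorem~\ref{thm:BPHZ:scaledependent} does not apply verbatim: the set of subgraphs that are ``divergent'' (and thus need to be subtracted by the twisted antipode) changes. For $\theta>0$ small enough, the only subgraphs $\tilde\Gamma^{\tilde\fn}_\fe$ whose divergence status changes between $\deg$ and $\deg_\theta$ are proper subgraphs with $\fddeg\tilde\Gamma^{\tilde\fn}_\fe=0$ intersecting $\CL_\fdly$. I would therefore introduce a modified twisted antipode $\hat\CA_\theta$ that sums only over forests of subgraphs divergent with respect to $\deg_\theta$, and show the identity
\begin{equation*}
\Pi^{K}\hat\CA\,\Gamma\;=\;\Pi^{K}\hat\CA_\theta\,\Gamma,
\end{equation*}
proceeding inductively on the size of the largest element of the forest. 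At each step the difference between the two antipodes is a contribution of the form $(g(K)\tilde\Gamma^{\tilde\fn}_\fe)\cdot (\cdots)$, where $\tilde\Gamma$ meets $\CL_\fdly$ and $\fddeg\tilde\Gamma^{\tilde\fn}_\fe=0$ with $\tilde\fn\in\CN(\tilde\Gamma)$; by the log-avoiding hypothesis (Definition~\ref{def:log-avoiding}) these factors vanish, so $g(K)\Gamma$ is unchanged. Rerunning the proof of Theorem~\ref{thm:BPHZ:scaledependent} with $\hat\CA_\theta$ in place of $\hat\CA$ and $\deg_\theta$ in place of $\deg$ (both the Hepp-sector analysis and the safe-forest argument carry over essentially unchanged, since no new divergent subgraphs appear), one obtains
\begin{equation*}
|\esn|\;\lesssim\;\Bigl(\prod_{e\in E(\Gamma)}\|K_{\ft(e)}\|_{\deg_\theta\ft(e)}\Bigr)\,2^{-n\theta}.
\end{equation*}

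\textbf{Deriving the two bounds.} Summing over $n$ and using the tensor-product structure of the norms, the prefactor is bounded by $\CKfdlnormm{K_\fdly}{\fddeg\fdly+\theta}\cdot\prod_{\fdl\ne\fdly}\CKfdlnormm{K_\fdl}{\fddeg\fdl}$, and crude estimates (bounding the product over $\fdl\ne\fdly$ by $\CKtmnorm{K}$ and using $ab\le a+\CKtmnorm{K}\cdot b$ plus the obvious $|g(K)\Gamma|\lesssim\CKtmnorm{K}$ coming from the unperturbed bound) yield \eqref{eq:log-avoiding:bound}. For the improved bound \eqref{eq:log-avoiding:bound:2}, one simultaneously perturbs the degrees of $\CL_\fdly$ upward by $\theta$ \emph{and} those of $\CL_\fdlz$ downward by $\theta$; the downward perturbation enlarges the class of divergent $\fdlz$-subgraphs, but the second log-avoiding hypothesis ensures the corresponding modifications to $\hat\CA_\theta$ do not affect $g(K)\Gamma$, exactly as above. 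Under the assumption $\CKtmnorm{K}\lesssim 1$, all norms for $\fdl\ne\fdly,\fdlz$ are absorbed into a constant and one reads off \eqref{eq:log-avoiding:bound:2}.

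\textbf{Expected obstacle.} The only delicate step is the algebraic identity $\Pi^{K}\hat\CA\,\Gamma=\Pi^{K}\hat\CA_\theta\,\Gamma$, which requires one to match forests term by term and to verify that all polynomial decorations $\tilde\fn\in\CN(\tilde\Gamma)$ generated by the coproduct \eqref{eq:coproduct:fd} of a zero-degree $\fdly$-subgraph are covered by the log-avoiding hypothesis. Once this is in place, the remainder is a straightforward rerun of the proof of Theorem~\ref{thm:BPHZ:scaledependent} with the perturbed degree.
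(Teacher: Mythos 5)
Your core idea---perturb the degree assignment so that $\Gamma$ acquires nonzero degree, and use the log-avoiding hypothesis to argue that the associated change of renormalisation does not alter $g(K)\Gamma$---is indeed how the paper proceeds, and your algebraic step (matching forests, with the discrepancies killed by $g(K)\tilde\Gamma^{\tilde\fn}_\fe=0$ for zero-degree subgraphs meeting $\CL_\fdly$) is essentially the induction the paper runs through the coproduct. The gap is in the analytic conclusion. For \eqref{eq:log-avoiding:bound:2} you apply both perturbations in a \emph{single} degree assignment, raising $\fdly$ by $\theta$ and lowering $\fdlz$ by $\theta$; the total degree of $\Gamma$ is then $\theta-\theta=0$, so Theorem~\ref{thm:BPHZ:scaledependent} still only gives $|\esn|\lesssim(\text{prefactor})\cdot 2^{-n\cdot 0}$ and the sum over scales diverges logarithmically---exactly the divergence you set out to remove. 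You cannot ``read off'' \eqref{eq:log-avoiding:bound:2} from the prefactor alone. The missing idea is that the two perturbations must be used on \emph{disjoint ranges of scales}: with $\lambda$ defined by $\CKfdlnormm{ K_\fdly }{\fddeg \fdly + \theta}=\lambda^{-\theta}$ and $2^{-m}\simeq\lambda$, one bounds $\sum_{n\ge m}|\esn|$ with the assignment raising $\fdly$ (total degree $+\theta$, geometric sum $\simeq 2^{-m\theta}\lambda^{-\theta}\simeq1$) and $\sum_{n\le m}|\esn|$ with the assignment lowering $\fdlz$ (total degree $-\theta$, geometric sum $\simeq 2^{m\theta}\CKfdlnormm{ K_\fdlz }{\fddeg \fdlz - \theta}$). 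The product of the two norms in \eqref{eq:log-avoiding:bound:2} arises precisely from this crossover; no single global perturbation can produce it. Relatedly, your route to \eqref{eq:log-avoiding:bound} (upward perturbation only) yields a bound proportional to $\CKfdlnormm{ K_\fdly }{\fddeg \fdly + \theta}$ times the remaining norms, which is not dominated by $\CKtmnorm{K}+\logavnorm{K}{\fdly}{\theta}$ in general; the paper instead deduces \eqref{eq:log-avoiding:bound} from \eqref{eq:log-avoiding:bound:2} with $\fdly=\fdlz$.

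Two smaller corrections. First, for $\theta$ small the \emph{downward} perturbation changes neither the set of divergent subgraphs (a subgraph of degree exactly $0$ remains divergent after subtracting $\theta$) nor any Taylor-subtraction order, so the corresponding BPHZ character literally coincides with $g(K)$ and no log-avoiding input is needed there; it is only the \emph{upward} perturbation that removes zero-degree subgraphs from the divergent set and requires the hypothesis. Second, there is no need to redistribute $\theta$ among the individual types of $\CL_\fdly$: the quantity $\fddeg\fdl$ is itself the datum that enters the definition of $\FDdeg$ and of $\fddeg\Gamma$, so one simply replaces $\fddeg\fdly$ by $\fddeg\fdly+\theta$ (and this also covers the case where $\fdly$ is a strong, singleton class).
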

\begin{proof} 
First note that (\ref{eq:log-avoiding:bound}) is a consequence of (\ref{eq:log-avoiding:bound:2}) with $\fdly = \fdlz$.
We consider two degree assignments $\fddegp$ and $\fddegm$ on $\FDL$ such that $\fddegp \fdl = \fddeg \fdl$ except for $\fdl = \fdly$ and $\fddegm \fdl = \fddeg \fdl$ except for $\fdl = \fdlz$, which are defined by $\fddegp \fdly := \fddeg \fdly+\theta$ and $\fddegm \fdlz := \fddeg \fdlz-\theta$.
If we denote by $g_+(K)$ and $g_-(K)$ the BPHZ characters for $K$ and the degree assignments $\fddegp$ and $\fddegm$, respectively, then it is not hard to see that for $\theta>0$ small enough one has 
$$
g_-(K)\Gamma^\fn_\fe= g(K)\Gamma^\fn_\fe = \Pi^{K}M^{g_+(K)}\Gamma^\fn_\fe.
$$ 
\def\tGtfnfe{\tilde\Gamma ^{\tilde\fn} _\fe}
Indeed, the first identity follows from the fact that $\roundup{\fddeg\tilde\Gamma} =\roundup {\fddegm\tilde\Gamma}$ for any Feynman diagram $\tilde\Gamma$ and for any $\theta>0$ small enough. The second identity is a bit more subtle, since in general the ``divergence structure'' of $\Gamma$ is not the same for the homogeneity assignments $\fddeg$ and $\fddegp$. 
But writing $\Delta_-^{\fddeg}$ and $\Delta_-^{\fddegp}$ for the coproducts obtained from the respective homogeneity assignments, one has $\Delta_-^{\fddegp}= (\pfddegp \otimes \Id)\Delta_-^{\fddeg}$ 
with $\pfddegp$ the projection onto the algebra generated by diagrams of non-positive $\fddegp$-degree, so that it suffices to show that $g_+(K) \pfddegp \tilde\Gamma^{\tilde\fn}_\fe = g(K) \tilde\Gamma ^{\tilde\fn} _\fe$ for any sub diagram $\tilde\Gamma$ of $\Gamma$ such that $\fddeg \tilde\Gamma ^{\tilde\fn} _\fe \le 0$. 
If we assume inductively that this is true for all proper sub diagrams of $\tilde\Gamma$, 
then we observe that if $\theta>0$ is small enough then $\fddegp \tilde\Gamma ^{\tilde\fn} _\fe > 0$ implies $\fddeg \tilde\Gamma^{\tilde\fn}_\fe = 0$, and hence also $g(K)\tilde \Gamma^ {\tilde\fn} _\fe=0$ (by assumption), and otherwise
\begin{align*}
g(K)\tGtfnfe 
&= -(g(K) \otimes \Id ) (\Delta _-^{\fddeg} - \Id \otimes \one) \tGtfnfe
=
-(g_+(K) \otimes \Id ) (\Delta _-^{\fddegp} - \Id \otimes \one) \tGtfnfe
\\
&=
g_+(K) \tGtfnfe,
\end{align*}
which proves the claim.

Let now $\lambda>0$ be such that 
${\CKfdlnormm{ K _\fdly }{\fddeg \fdly + \theta}}  = \lambda^{-\theta}$, so that one also has 
${\CKfdlnormm{ K_{ \fdlz }} { \fddeg \fdlz - \theta}} \le C(\theta) \lambda^{\theta}$, 
where $C(\theta)$ denote the right-hand side of (\ref{eq:log-avoiding:bound:2}).
Let moreover $m \in \N$ such that $2^{-m-1} \le \lambda < 2^m$. We estimate the sum over large scales
using (\ref{eq:BPHZ:scaledependent}) for the degree assignment $\fddegm$ so that 
\[
\sum_{n \le m} |\esn|
\lesssim
2^{m \theta} \lambda^{\theta} \simeq C(\theta)
\]
and the sum over small scales
using (\ref{eq:BPHZ:scaledependent}) for the degree assignment $\fddegp$ so that 
\[
\sum_{n \ge m} |\esn|
\lesssim
2^{-m \theta} \lambda^{-\theta} \simeq 1,
\]
where both estimates hold uniformly over $K \in \CKtminf$ such that $\CKtmnorm{K} \le C$. 
\end{proof}

\subsubsection{Application to trees}\label{sec:application:to:trees}

We want to apply the result of Section~\ref{sec:Feynman-diagrmas} to models obtained from smooth noises $\eta \in \SM_\infty$, see Section~\ref{sec:non-gaussian-noises}. In Sections~\ref{sec:constraints} and \ref{sec:shift} we consider two different enlargements of the regularity structure (by including legs and by enlarging the set of noise types, respectively). We want to use the construction carried out in this section in both cases, so we simply formulate our results on the regularity structure which is enlarged in both ways. We write $\aFLm$ for the enlarged set of noise types (not including leg types) and $\Legtype$ for the set of leg types, and we write $\wFLm := \aFLm \sqcup \Legtype$. We then use the notation $\wT$ etc. as in Section~\ref{sec:ext:reg:strct} with $\FL_-$ replaced by $\aFLm$. (Note that this is not really a generalisation, since all assumptions which Section~\ref{sec:constraints} puts on $\FLm$ and $\CT$ are satisfied for the enlargement $\aFLm$ and $\aCT$ as well.)

Moreover, in order not to overcomplicate the presentation of the current section, we assume that for any $\ant \in \aFLm$ we are given a multi-set $\cm_\ant$ with values in $\FLm$, and we write $\CYN(\aFLm, \cm)$ for the set of kernels $\FK \in \CYN(\aFLm)$ such that for any $\ant \in \aFLm$ one has $\FK^\ant_{\cm}=0$ unless $\cm = \cm_\ant$. For $\FK \in \CYN(\aFLm, \cm)$ we simply write $\FK^\ant := \FK^\ant_{\cm_\ant}$. 
We set $m_\ant := \# \cm_\ant$. 

Define the set of labels 
\[
	\CL_\star := \FL_+ \sqcup \tilde\CL := \FL_+ \sqcup \{ (\ant,k) : \I_{m_\ant = 1} \le k \le  m_\ant ,\, \ant \in \aFLm \},
\]
and we write $\CH$ for the linear space of Feynman diagrams as above.
We also define the partition $\FDL$ of $\CL$ given by 
\begin{align}\label{eq:fdl:trees}
\FDL := \big\{ \{\fl\} : \fl \in \FL_+ \big\}
\sqcup
\big\{ 
	\fdl_\ant : \ant \in \aFLm, 
\big\},
\end{align}
where $\fdl_\ant := 	\{(\ant,k) : 0 \le k \le m_\ant\}$ if $m_\ant > 1$ and $\fdl_\ant := \{ (\snt,1) \}$ if $m_\ant = 1$.

Trees $\tau \in \wT$ contain a finite number of legs $e \in L_\Legtype(\tau)$ and a finite number of noise type edges $f \in L(\tau)$. The formulation of this section will be cleaner by focusing on trees $\tau \in \wT$ with the property that any leg type $\legtype$ and any noise type $\ant$ appears at most once in $\tau$, so that $[L_\Legtype(\tau),\ft]$ and $[L(\tau),\ft]$ are proper sets.  We write $\wTu \ssq \wT$ for the subspace generated by such trees. 
We will also work the Hopf subalgebra $\wCTu \ssq \wCT$ generated by such trees.
We also fix an arbitrary total order $\le$ on $\Legtype$ and  we note that $\le$ induces an order $\le$ on $L_\Legtype(\tau)$ for any tree $\tau \in \wTu$. 

Given $\FK \in \CYN$ (see Definition~\ref{def:CYN}) we always write $\eta := \FKtN \FK$ for ease of notation.
We first construct continuous linear operators
\[
\ttf : \wTu \to \CH
\qquad\text{ and }\qquad
\nka : \CYN(\aFLm, \cm) \to \CKtminf(\tilde\CL)
\]
with the property that, for any tree $\tau \in \wTu$, $\ttf\tau$ takes values in the span of Feynman diagrams $\Gamma$ with exactly $k(\tau) := \# L_\Legtype(\tau)$ legs, and such 
\begin{align}\label{eq:FDT:1}
(\Pi^{\nkaFK, R} \ttf \tau)(\psi_\tau) = \evaln{\eta}{\psi}{R} \tau
\end{align}
for any $\tau \in \wTu$, $\FK \in \CYN(\aFLm, \cm)$ and $\psi \in \Psi$. Here, we write $\psi_\tau = \psi_{( k(\tau), \ft_\tau )} \in \CC_c^\infty( \bar \domain ^{k(\tau)})$, where $\ft_\tau : k(\tau) \to \ft(L_\Legtype(\tau))$ denotes the unique order preserving map.



Fix a tree $\tau \in \wTu$ and denote the legs of $\tau$ by $e_1, \ldots , e_{k(\tau)}$ in increasing order. Let $\Lambda := \{ (u,k) : u \in L(\tau) ,\, 1 \le k \le m(\ft(u)) \}$. 
\begin{definition}\label{def:CP}
We denote by $\pairings$ the set of pairings $\pairing$ of $\Lambda$ with the property that for any $\{ (u,k), (v,l) \} \in \pairing$ one has $u \ne v$ and $\cm_{\ft(u)}[k] = \cm_{\ft(v)}[l]$. For $u \in L(\tau)$ and $\pairing \in \pairings$ we write $\pairing[u] \ssq \pairing$ for all elements of the form $\{(u,k), (v,l) \} \in \pairing$ for some $v \in L(\tau)$ and $k,l \in \N$.
\end{definition}

 For any pairing $\pairing \in \pairings$ we now construct a connected, direct graph $\Gamma_\tau^\pairing = (V^\pairing_\tau, E_\tau^\pairing)$. Let $L_{> 1}(\tau)$ denote the set of noise-type edges $u \in L(\tau)$ such that $m(\ft(u)) > 1$ and $\Leo (\tau) := L(\tau) \setminus \Lgo$.
We first set
\[
V^P_\tau := N(\tau) \sqcup \pairing \sqcup \{ u^* : u \in L_{> 1}(\tau) \} \sqcup \{ [1] , \ldots, [k(\tau)] \}.
\]
We define for any $u \in \Lgo(\tau)$ the set $E_u :=\{ (q, u^*) : q \in \pairing[u] \}\sqcup\{ (u^*, u^\downarrow )\}$, while for $u \in \Leo(\tau)$ we simply set $E_u :=\{ (q, u^\downarrow) : q \in \pairing[u] \}$. To avoid case distinctions, we also set $u^* := u^\downarrow$ for any $u \in \Leo(\tau)$. (As always, $u^\downarrow \in N(\tau)$ denotes the unique node to which the noise type edge $u$ is connected.)
We then set
\[
E^\pairing_\tau := K(\tau) \sqcup \bigsqcup_{u \in L(\tau)} E_u \sqcup \{ ([i] , e_i^\downarrow) : 1 \le i \le k(\tau) \}.
\]
We also fix an edge decoration $\fe : E_\tau^P \to \N^d$ and a node decoration $\fn : V_\tau^P \to \N^d$ by extending the corresponding decorations coming from $\tau$ by setting them to zero everywhere else. We choose as ``special'' vertex the root $u_\star(\Gamma_\tau^P) := \rho_\tau \in N(\tau)$.

\begin{example}
To illustrate this construction, we take the following tree as an example, where nodes $u \in L(\tau)$ are coloured, and legs are drawn as short thick grey edges
\[
\treePairingExample. 
\]
We then have $k(\tau) = 4$, and we assume that $m(\leafs)=m(\leafsa)=1$ and $m(\leafsc) = m(\leafsd) = 3$. In particular we have $L_{>1}(\tau) = \{ \leafsc^*, \leafsd^* \}$, and  for the current example we will write ${\nodec} := \leafsc^*$ and $\noded := \leafsd^*$. Finally, lets fix a pairing 
\[
P := 
\big\{
\{ (\leafs,1),  (\leafsc,1) \},\;
\{ (\leafsa,1), (\leafsd,1) \},\;
\{ (\leafsc,2), (\leafsd,2) \},\;
\{ (\leafsc,3), (\leafsd,3) \}  
\big\}.
\]
The resulting diagram $\Gamma_\tau^P$ can then visualised as
\[
\treePairingExampleA.
\]
\end{example}

It remains to specify a type map $\ft: (E_\tau^P)_\star \to \CL$ to obtain an element of $\CH$. On $K(\tau)$ we define $\ft$ to be equal to the type map of $\tau$. On edges $(q, u^*) \in E_u$ for $u \in L(\tau)$ and $q \in P[u]$ we set $\ft(q, u^*):= (\ft(u),k)$, where $k \le m(\ft(u))$ is the unique integer such that $(u,k) \in q$. Finally, we define $\ft(u^*, u^\downarrow) := (\ft(u),0)$ for any $u \in \Lgo(\tau)$. 

We then set
\begin{align}\label{eq:CW}
\ttf \tau := \sum_{\pairing \in \pairings} \Gamma^\pairing_\tau,
\end{align}
and we extend $\ttf$ to a linear operator $\ttf : \wTu \to \CH$.


Next we define a kernel assignment $\nkaFK \in \CKtminf(\tilde\CL)$ for any $\FK \in \CYN(\aFLm, \cm)$. For this we set for any $\snt \in \sFLm$ with $m(\snt) >1$
\[
(\nkaFK)_{\fdl_\ant} := \FK^\ant,
\]
where we identify $\{ 0, \ldots, m(\ant) \}$ with $\{ (\ant, 0) , \ldots , (\ant,m_\ant) \}$, so that the space (\ref{eq:CKtminf}) with $\fdl_\ant$ is naturally isomorphic to (\ref{eq:CYsimp}) with $n=m_\ant$. 
For $\snt \in \sFLm$ with $m(\snt)=1$ we set $(\nkaFK)_{\fdl_\ant} := \FKernel^\snt_0 \star \FKernel^\snt_1$.
We extend any element $K \in \CKtminf(\tilde\CL)$ to a kernel assignment $K \in \CKtminf(\CL)$ by defining $K_\ft$ to agree with the truncated integration kernel (see Section~\ref{sec:kernels}).
Finally, we fix a degree assignment $\deg_\infty : \CL \to [-\infty,0]$ 
such that $\deg_\infty \ft := -\infty$ for any $\ft \in \tilde\CL$. 
It then follows directly from the definition that one has the following identity.
\begin{lemma}\label{lem:evaluation:fd:trees}
One has
\[
(\Pi^{\nkaFK, R} \ttf \tau)(\psi_\tau) = \evaln{\eta}{\psi}{R} \tau
\]
for any $\tau \in \wTu$, any $\FK \in \CYN(\aFLm, \cm)$, any $R \in \CK_\infty^+$ and $\psi \in \Psi$. Here $\psi_\tau$ is as in (\ref{eq:FDT:1}). Here we write as above $\eta := \FKtN \FK$.
\end{lemma}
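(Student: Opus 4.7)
The plan is to verify the identity by expanding both sides and matching them via the standard product formula for multiple Wiener integrals (Wick's theorem). First I would unravel the right-hand side using Definition~\ref{def:bar:Upsilon:large:scale} together with \eqref{eq:CK} and \eqref{eq:CK:ctr}: after integrating out the delta functions $\delta(x_e - x_{e^\downarrow})$ coming from noise-type and leg-type edges, $\evaln{\eta}{\psi}{R}\tau$ becomes an integral over $\bar\domain^{N(\tau)}$ containing the root factor $\delta(x_{\rho_\tau})$, the kernel product $\prod_{e \in K(\tau)} D^{\fe(e)}(K+R)_{\ft(e)}(x_{e^\downarrow}-x_{e^\uparrow})$, the polynomial factor $\prod_{u \in N(\tau)} x_u^{\fn(u)}$, the Gaussian expectation $\E \prod_{u \in L(\tau)} \eta_{\ft(u)}(x_{u^\downarrow})$, and the test-function factor. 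Since $\tau \in \wTu$ the leg types are all distinct, so the test-function factor reduces to $D^{\fe|_{L_\Legtype(\tau)}}\psi_\tau$ evaluated at the points $(x_{e^\downarrow})_{e \in L_\Legtype(\tau)}$, which corresponds exactly to the legs $[i]$ of $\Gamma_\tau^\pairing$ with their decorations $l_i = \fe(e_i)$.

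Next I would apply the product formula for multiple Wiener integrals to the Gaussian expectation. Each factor $\eta_{\ft(u)}(x_{u^\downarrow}) = \sintf_{\cm_{\ft(u)}}(\CU K^{\ft(u)})(x_{u^\downarrow})$ lives in the $m_{\ft(u)}$-th homogeneous Wiener chaos with respect to $\xi$, so
\[
\E \prod_{u \in L(\tau)} \eta_{\ft(u)}(x_{u^\downarrow}) = \sum_{\pairing \in \pairings} J_\pairing(x_{N(\tau)}),
\]
where the sum ranges over exactly the pairings in Definition~\ref{def:CP}: self-contractions vanish because each $\eta_\ant$ sits in a fixed chaos, and type-violating pairings vanish because the underlying white noises $\xi_\Xi$ are independent. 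Expanding $\CU K^{\ft(u)}$ via \eqref{eq:simple:kernel}, each $J_\pairing$ becomes an integral with an auxiliary central variable $z_u$ for every $u \in L_{>1}(\tau)$, Gaussian-contraction variables $y_q$ for every $q \in \pairing$, the kernel $K^{\ft(u)}_0$ attached to $(z_u, x_{u^\downarrow})$, and a kernel $K^{\ft(u)}_k$ attached to each $(y_q, z_u)$ whenever $\{(u,k),(v,l)\} = q \in \pairing[u]$; when $m_{\ft(u)} = 1$ the vertex $z_u$ collapses and the single kernel $\CU K^{\ft(u)} = K_0^{\ft(u)} \star K_1^{\ft(u)}$ is precisely what is assigned to the edge $(q, u^\downarrow)$ by $\nka$, matching the $m=1$ case in Definition~\ref{def:CYsimp} and the construction of $\nkaFK$.

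Finally I would match this pairing-indexed expansion with $(\Pi^{\nkaFK,R}\ttf\tau)(\psi_\tau) = \sum_{\pairing \in \pairings} (\Pi^{\nkaFK,R}\Gamma_\tau^\pairing)(\psi_\tau)$ obtained from \eqref{eq:CW} and the definition of the Feynman-diagram valuation. The identifications are: vertices $N(\tau) \subseteq V_\tau^\pairing$ correspond to the same integration variables as in $\CK_{K+R}\tau$, with the role of $\delta(x_{\rho_\tau})$ taken over by the choice $v_\star = \rho_\tau$ and the translation invariance of $\psi_\tau \in \bar\CC_c^\infty$; for $u \in L_{>1}(\tau)$, the auxiliary vertices $u^*$ correspond to the variables $z_u$, with the edge $(u^*, u^\downarrow)$ carrying $K_0^{\ft(u)}$; the pair vertices $q \in \pairing$ correspond to the Gaussian contraction variables $y_q$, the two incident edges carrying the kernels $K_k^{\ft(u)}$ and $K_l^{\ft(v)}$ selected by $q = \{(u,k),(v,l)\}$; and the leg vertices $[i]$ together with the edges $([i], e_i^\downarrow)$ implement the evaluation of $\psi_\tau$ and its derivatives at the leg-attachment nodes, reproducing the derivative decoration $l_i = \fe(e_i)$. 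The main (purely bookkeeping) obstacle is keeping track of edge orientations, handling the $m_{\ft(u)}=1$ versus $m_{\ft(u)}>1$ case distinction, and checking that the polynomial decorations and translation factor $x_{v_\star}$ in $\Pi^{K,R}$ reproduce the $\prod_u x_u^{\fn(u)}$ times $\delta(x_{\rho_\tau})$ of $\CK_{K+R}\tau$; the symmetry of the white-noise covariance eliminates any genuine sign ambiguity between the two pictures.
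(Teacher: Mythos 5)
Your proposal is correct and follows essentially the same route as the paper, which simply asserts that the identity "follows almost directly from the definition" and points to the nearly identical Lemma~\ref{lem:identity:evaluations:trees:feynman:diagrams}; the construction of the pairing set in Definition~\ref{def:CP} and of $\ttf\tau$ in \eqref{eq:CW} is designed precisely so that the diagram (Wick) formula for the moment of a product of multiple Wiener integrals — no self-contractions within a fixed chaos, type-matching from independence of the $\xi_\Xi$ — reproduces the sum over $\Gamma^\pairing_\tau$, with the $m_{\ft(u)}=1$ collapse matching the convolution convention in Definition~\ref{def:CYsimp}. Your write-up just supplies the bookkeeping the paper leaves implicit.
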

\begin{proof}
This follows almost directly from the definition. See the proof of Lemma~\ref{lem:identity:evaluations:trees:feynman:diagrams} below for a very similar statement.
\end{proof}

Our next goal is to show that a similar identity holds for the BPHZ renormalised evaluations. (Actually, this will only be true modulo an order one change in renormalisation, see below). As in Section~\ref{sec:non-gaussian-noises} we fix a homogeneity $\sfs: \aFLm \to \R_-$ with $\sfs(\ant) \ge -\shalf - \kappa$ (for some $\kappa>0$ small enough), and we define a degree assignment $\fddeg:\FDL \to \R_-$ by setting
\begin{equation}\label{eq:deg:trees:FD}
\begin{gathered}
\fddeg\ft := -|\fs|+|\ft|_\fs
\,,\qquad
\fddeg\fdl_\ant := \sfs(\ant) - m_\ant \shalf - |\fs| \I_{m_\snt>1}
\end{gathered}
\end{equation}
for any $\ft \in \FL_+$ and $\ant \in \aFLm$. We also set $\fddegmin (\snt,0) := - |\fs|-1+\kappa$ and $\fddegmin(\snt,k) := -|\fs|$ for any $k \ge 1$.
The degree assignments $\fddeg$ and $\fddegmin$ give us a natural norm on $\fdCKminf$ as in \eqref{eq:CKtmz:norm}, and we respect to this norm and (\ref{eq:CYNnorm}) the map $\nka : \SM_\infty(\aFLm, \cm) \to \fdCKminf(\tilde\CL)$ constructed becomes a bounded linear map. 
We extend $\sfs$ to $\wFLm$ by setting $\sfs(\legtype) := 0$ for any leg type $\legtype \in \Legtype$.
Then, a quick computation shows the following.
\begin{lemma}\label{lem:deg:sfs}
For any tree $\tau \in \wTu$ any pairing $\pairing \in \pairings$ one has $\fddeg \Gamma_\tau^\pairing = |\tau|_\sfs$ and $\Gamma_\tau^\pairing$ is an admissible Feynman diagram.
\end{lemma}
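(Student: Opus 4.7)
The plan is a direct bookkeeping argument: both assertions follow once we carefully match the combinatorics of $\Gamma_\tau^\pairing$ (built by pairing, splitting noise vertices into a central vertex $u^*$ and grafting legs) against the combinatorics of the tree $\tau$. I would organize the proof into two independent parts.

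For admissibility, I would go through the list of degree blocks in the partition (\ref{eq:fdl:trees}). The blocks $\{\fl\}$ with $\fl\in\FL_+$ contain a single strong type and impose no constraint. For $\fdl_\ant$ with $m_\ant=1$ the block is again a singleton $\{(\ant,1)\}$, so nothing to check. The only nontrivial blocks are $\fdl_\ant$ with $m_\ant>1$, whose elements $(\ant,0),(\ant,1),\dots,(\ant,m_\ant)$ are weak types. By construction, since $\tau\in\wTu$, the type $\ant$ is carried by at most one noise-type edge $u\in L(\tau)$; if no such $u$ exists then $\fdl_\ant\cap\ft(E(\Gamma_\tau^\pairing))=\emptyset$ and there is nothing to check; otherwise exactly one edge of each weak type $(\ant,k)$ appears in $\Gamma_\tau^\pairing$, and all of these edges share the common endpoint $u^*$. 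This gives both parts of admissibility at once.

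For the identity $\fddeg\Gamma_\tau^\pairing=|\tau|_\sfs$, the key point is that since $\Gamma_\tau^\pairing$ is admissible, the supremum defining $\fddeg\Gamma_\tau^\pairing$ is attained by any $\deg\in\FDdeg$ and reduces to a sum of block-degrees $\fddeg\fdl$. The plan is to group the edges according to which block they belong to, and to use the numerology $\sum_{u\in L(\tau)} m_{\ft(u)} = 2\#\pairing$ (each element of $\pairing$ contains exactly two elements of $\Lambda$). The noise blocks contribute $\sum_{u\in L(\tau)}\sfs(\ft(u)) - \#\pairing\,|\fs| - \#\Lgo(\tau)\,|\fs|$ after collecting the $-m_\ant\shalf$ and $-|\fs|\I_{m_\ant>1}$ terms using (\ref{eq:deg:trees:FD}); the kernel edges contribute $\sum_{e\in K(\tau)}(|\ft(e)|_\fs-|\fe(e)|_\fs)-\#K(\tau)\,|\fs|$; the legs contribute $-k(\tau)\,|\fs|$; and the node decorations contribute $\sum_{u\in N(\tau)}|\fn(u)|_\fs$.

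The remaining $|\fs|$-term is $|\fs|(\#V(\Gamma_\tau^\pairing)-1)$ with
\begin{equ}
\#V(\Gamma_\tau^\pairing)=\#N(\tau)+\#\pairing+\#\Lgo(\tau)+k(\tau)\;.
\end{equ}
All the contributions involving $\#\pairing$, $\#\Lgo(\tau)$ and $k(\tau)$ cancel exactly, leaving $|\fs|(\#N(\tau)-\#K(\tau)-1)$, and the usual tree identity $\#N(\tau)=\#K(\tau)+1$ (a rooted tree with $\#K(\tau)$ internal edges and $\#L(\tau)$ noise-type edges has $\#K(\tau)+\#L(\tau)+1$ vertices total, of which $\#L(\tau)$ are upper endpoints of distinct noise-type edges and therefore not in $N(\tau)$) makes this term vanish. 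What remains is exactly $|\tau|_\sfs$, as desired. The only mildly subtle step is keeping the case distinction $m_\ant=1$ vs.\ $m_\ant>1$ consistent (the special identification $u^*=u^\downarrow$ for $u\in\Leo(\tau)$ removes one vertex \emph{and} one would-be edge, which is why the counts still balance); this is the one place where a small mistake in the definitions propagates to a shift in the final result, and it is the main thing I would double-check.
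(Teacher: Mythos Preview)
Your proposal is correct and is exactly the computation the paper has in mind; the paper itself only says ``a quick computation shows the following'' and gives no details, so your careful bookkeeping is precisely what is required. One small point worth tightening: the legs $([i],e_i^\downarrow)$ in $\Gamma_\tau^\pairing$ should inherit the derivative decoration $\fe(e_i)$ from the corresponding leg $e_i\in L_\Legtype(\tau)$ (this is needed for Lemma~\ref{lem:evaluation:fd:trees} to hold), so they actually contribute $-k(\tau)\,|\fs|-\sum_{e\in L_\Legtype(\tau)}|\fe(e)|_\fs$; but since $\sfs(\legtype)=0$ this extra term is exactly what shows up in $|\tau|_\sfs$ from the leg edges, and your final identity is unaffected.
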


We still fix a tree $\tau \in \wTu$ and a pairing $P \in \pairings$.

\begin{definition}
We call a subtree $\sigma \ssq \tau$ \emph{closed} for $\pairing$ if for any $\{ (u,k), (v,l) \} \in \pairing$ one has either $\{ u,v\} \ssq L(\sigma)$ or $\{ u,v \}\cap L(\sigma) = \emptyset$.
\end{definition}

Let $\sigma \ssq \tau$ be a closed subtree. Then we denote by $\Gamma^P_\tau(\sigma) \ssq \Gamma$ the full connected subgraph of $\Gamma^P_\tau$ which is induced by the vertex set 
$$
V(\Gamma^P_\tau(\sigma)):= N(\sigma) \sqcup \{u^*: u \in \Lgo(\sigma) \} \sqcup\{q: q \in \pairing[u], u \in L(\sigma) \}.
$$ 
We show next that divergent subgraphs $\tilde\Gamma$ of $\Gamma^P_\tau$ correspond (almost) to closed divergent subtrees $\sigma$ of $\tau$. 

\begin{lemma}\label{lem:closed:subtrees}
Let $P \in \pairings$ and let $\sigma \ssq \tau$ be a closed subtree of $\tau$. Then one has $|\sigma_\fe^0|_\sfss = \fddeg (\Gamma^P_\tau(\sigma))^0_\fe$. 
Conversely, if $\tilde\Gamma \ssq \Gamma^P_\tau$ is a connected full subgraph such that $\fddeg \tilde\Gamma^0_\fe \le 0$, then either $\tilde\Gamma = \Gamma^P_\tau(\sigma)$ for a closed subtree $\sigma \ssq \tau$, or there does not exists an edge $e \in E(\tilde\Gamma)$ with $\ft(e) \in \FL_+$.
\end{lemma}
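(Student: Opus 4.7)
The proof splits into forward and converse implications.

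For the forward implication I would perform a direct bookkeeping computation. Because $\sigma$ is closed, $\Gamma^P_\tau(\sigma)$ is admissible (every type class $\fdl_{\ft(u)}$ with $u \in L(\sigma)$ appears in full), so the supremum over $\deg \in \FDdeg$ defining $\fddeg$ is unambiguous: each kernel edge $e \in K(\sigma)$ contributes $|\ft(e)|_\fs - |\fs| - |\fe(e)|_\fs$ and each leaf $u \in L(\sigma)$ contributes $\fddeg\fdl_{\ft(u)} = \sfs(\ft(u)) - m(\ft(u))\shalf - |\fs|\I_{m(\ft(u)) > 1}$ across its cluster edges. Counting vertices using closedness of $\sigma$ gives $\#V(\Gamma^P_\tau(\sigma)) = \#N(\sigma) + \#L_{>1}(\sigma) + \tfrac{1}{2}\sum_{u \in L(\sigma)} m(\ft(u))$, where the factor $\tfrac{1}{2}$ comes from each pairing $q$ touching $L(\sigma)$ being internal to it. Combining these with the Euler factor $|\fs|(\#V - 1)$ and using $\shalf = |\fs|/2$, the $-m(\ft(u))\shalf$ and $|\fs|\I_{m(\ft(u))>1}$ terms cancel pairwise against their Euler counterparts, and what remains is exactly $|\sigma^0_\fe|_\sfs$.

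For the converse, assume $\tilde\Gamma$ contains at least one $\FL_+$-edge and set $N(\sigma) := V(\tilde\Gamma) \cap N(\tau)$, $K(\sigma) := E(\tilde\Gamma) \cap K(\tau)$ (all kernel edges between $N(\sigma)$-vertices, by fullness), and $L(\sigma) := \{u \in L(\tau) :$ the entire cluster of $u$ lies in $\tilde\Gamma\}$; one always has $\Gamma^P_\tau(\sigma) \ssq \tilde\Gamma$. Reversing this inclusion reduces to ruling out ``partial clusters'' --- noise edges $u \notin L(\sigma)$ with some piece of their cluster in $\tilde\Gamma$. Once this is established, closedness of $\sigma$ is automatic: if $q = \{(u,k),(v,l)\}$ is a pairing with $u \in L(\sigma)$, then $q \in V(\tilde\Gamma)$ is a piece of $v$'s cluster, forcing $v \in L(\sigma)$ by the no-partial-clusters property; connectedness of $\sigma$ follows from connectedness of $\tilde\Gamma$ together with the fact that any path between two elements of $N(\tau)$ uses either a kernel edge or traverses a complete cluster.

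The main obstacle is the absence of partial clusters. I would analyse the contribution of a partial cluster of $u$ (with $m' \in \{1,\ldots,m(\ft(u))-1\}$ pairings of $P[u]$ in $V(\tilde\Gamma)$) to $\fddeg \tilde\Gamma^0_\fe$: the supremum over $\FDdeg$ drives the missing types of $\fdl_{\ft(u)}$ to $\fddegmin = -|\fs|$, redistributing the freed mass onto the present types, and a bookkeeping calculation parallel to the forward direction gives a net contribution of $\sfs(\ft(u)) + m(\ft(u))\shalf$, independent of $m'$ --- strictly larger than the full-cluster contribution $\sfs(\ft(u))$ by $m(\ft(u))\shalf \ge \shalf$. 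Let $\tilde\Gamma^c$ be the completion in which every partial cluster is filled in; then $\tilde\Gamma^c = \Gamma^P_\tau(\sigma^c)$ for a closed subtree $\sigma^c \supseteq \sigma$ still containing a kernel edge. Combining the forward-direction identity with the contribution analysis yields $|\sigma^{c,0}_\fe|_\sfs = \fddeg \tilde\Gamma^{c,0}_\fe \le \fddeg \tilde\Gamma^0_\fe - \shalf \le -\shalf$, contradicting the super-regularity of $\tau$ (Lemma~\ref{lem:cumu_homo_consistent}), which forces $|\sigma^{c,0}_\fe|_\sfs > -\shalf$ for every such subtree. Thus partial clusters are excluded, $\tilde\Gamma = \Gamma^P_\tau(\sigma)$, and the converse follows.
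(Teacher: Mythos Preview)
Your forward direction is fine and matches the paper's approach (which simply invokes Lemma~\ref{lem:deg:sfs}). The converse, however, has a genuine gap.

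The issue is connectedness of the kernel subgraph. You claim that once partial clusters are ruled out, ``connectedness of $\sigma$ follows from connectedness of $\tilde\Gamma$ together with the fact that any path between two elements of $N(\tau)$ uses either a kernel edge or traverses a complete cluster.'' But traversing a complete cluster --- going $u^\downarrow \to u^* \to q \to v^* \to v^\downarrow$ --- connects $u^\downarrow$ and $v^\downarrow$ in $\tilde\Gamma$, not in $\sigma$: there is no kernel edge in $\tau$ joining these two nodes. So $\sigma$ (equivalently your $\sigma^c$) may well be a forest with several components linked only by noise bridges in $\tilde\Gamma$, and then the forward-direction identity, which is stated and proved only for closed \emph{subtrees}, cannot be invoked. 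Your contradiction with super-regularity therefore does not go through.

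The paper handles this point directly and it is the heart of the argument: writing $N$ for the number of connected components of the kernel subgraph, one bounds
\[
\fddeg \tilde\Gamma^0_\fe \;\ge\; \sum_{i=1}^N |(\sigma_i)^0_\fe|_{\sfs} \;+\; |\fs|(N-1)\;,
\]
and since each component satisfies $|(\sigma_i)^0_\fe|_{\sfs} > -\shalf - \kappa$, the right-hand side is strictly positive whenever $N\ge 2$, forcing $N=1$. Only after this step does one rule out hanging noise edges and incomplete clusters (your ``partial clusters'') by further degree bounds. Your completion idea is in the right spirit --- the paper also extends $\tilde\Gamma$ (by adding the missing $(\ant,0)$-edges) to reduce to a tractable special case --- but the connectedness step cannot be skipped. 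A secondary concern: your claim that a partial cluster contributes exactly $\sfs(\ft(u)) + m(\ft(u))\shalf$ ``independent of $m'$'' glosses over the constraint $\deg\ft \le 0$ in $\FDdeg$ and the sharing of pairing vertices $q$ between two leaves; this would need a more careful accounting even if the connectedness issue were resolved.
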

\begin{proof}
The first statement follows from Lemma~\ref{lem:deg:sfs}. For the second statement, let $E_\trees$ denote the set of edges $e \in \tilde E:= E(\tilde\Gamma)$ with $\ft(e) \in \FL_+$, and assume that $E_\trees \ne \emptyset$. 
Let moreover $E_\psi$ (resp. $E_\noise$) denote the set of edges $e \in E(\tilde\Gamma)$ with $\ft(e) = (\ant,0)$ (resp. $\ft(e) = (\ant,k)$, $k \ge 1$) for some $\ant \in \aFLm$. 
Let also $\tilde V := V(\tilde\Gamma)$. 
The set $\tilde V$ induces a subforest $\sigma \ssq \tau$ with 
$N(\sigma) := \tilde V \cap N(\tau)$, $K(\sigma) := \{ e \in K(\tau) : e \ssq N(\sigma) \}$, and 
$L(\sigma) := \{ u \in L(\tau) : u^\downarrow \in \tilde V \}$. We also set $L_\Legtype(\sigma) := \emptyset$. 

\smallskip\noindent\textbf{Special case.}~Assume that $E_\psi$ contains all edge $e \in E$ with $\ft(e) = (\ant,0)$ for some $\ant \in \aFLm$ and $e \cap \tilde V \ne \emptyset$.

Let $N \ge 1$ denote the number of connected components of $\sigma$. Let furthermore $H \ssq E_\noise$ denote the set of edges $e \in E_\noise$ which are ``hanging'' in the following sense. Since $e \in E_\noise$ one has $e = (q, u^*)$ for some $u \in L(\tau)$ and $q \in P[u]$, say $q = \{ (u,k) , (v,l) \}$. 
Consequently $(q, v^*) \in E(\Gamma^P_\tau)$, and we let $e \in H$ if $(q, v^*) \notin E_\noise$. 
Let finally $Q \ssq L(\sigma)$ denote the set of noise type edges $u \in L(\sigma)$ such that $u^\downarrow \in \tilde V$ but there exists $q \in P[u]$ such that $(q,u^*) \notin E_\noise$. The proof of the first step is finished if we can show that $N=1$ and $H=Q=\emptyset$. We have the bound
\begin{align*}
\fddeg \tilde\Gamma_\fe^0
&\ge
\sum_{e \in E_\trees} \deg(e)
+
\sum_{u \in L(\sigma)} \sfs( \ft( u ) )
+
\shalf \# H
+ 
|\sfs|( \# N(\sigma) - 1)  \\
&=
|\sigma|_\sfs
+
\shalf \# H
+
|\fs|(N-1),
\end{align*}
where $|\sigma|_\sfs$ denotes the sum of the homogeneities of each connected component $\hat\sigma$ of $\sigma$. Since $|\hat\sigma|_\sfs \ge -\shalf-\kappa$ for any tree $\hat\sigma \in \wT$ one has $|\sigma|_\sfs \ge -\shalf N - \kappa N$, so that, provided $\kappa$ and $\kappadelta$ are small enough, one has has $N=1$, since the left hand side is non-positive by assumption. It follows that $\sigma$ is a tree, and since $\sigma$ contains at least one kernel-type edge by assumption, one has $|\sigma|_\sfs > -\shalf$. Hence one has $\# H = 0$.

Let now denote by $a \ge 0$ the number of edges of the form $(q,u^*)$ for some $u \in Q$ and $q \in P[u]$ such that $(q,u^*) \notin E_\noise$. Then we have the bound
\begin{align*}
\fddeg \tilde\Gamma_\fe^0
&\ge
\sum_{e \in E_\trees} \deg(e)
+
\sum_{u \in L(\sigma) \setminus Q} \sfs( \ft( u ) )
+
\shalf a
+ 
|\sfs|( \# N(\sigma) - 1)  \\
&\ge
|\sigma|_\sfs
+
\shalf a,
\end{align*}
so that with the same argument as above one has $a=0$ and hence $Q = \emptyset$.

\smallskip\noindent\textbf{General case.}~Define $\hat\Gamma \ssq \Gamma^P_\tau$ as the subgraph induced by the edge set
\[
E(\hat\Gamma) 
:= E(\tilde\Gamma) \cup 
\{ e \in E(\Gamma^P_\tau) 
: \ft(e) = (\ant,0) \text{ for some }\ant \in \aFLm 
\text{ and } e \cap \tilde V \ne \emptyset  \}.
\]
Then $\hat\Gamma$ is a connected subgraph of $\Gamma$ and one has $\fddeg \hat\Gamma \le \fddeg \tilde\Gamma$. Hence $\hat\Gamma$ satisfies the conditions of the special case above, so that in particular $\hat\Gamma = \Gamma^P_\tau(\sigma)$ for some closed subtree $\sigma \ssq \tau$. Let now $e \in E(\hat\Gamma) \setminus E(\tilde\Gamma)$. Then necessarily $\ft(e) = (\ant,0)$ for some $\ant \in \aFLm$ and from the definition we infer $\deg \fdl_\ant < - |\fs|$. The first part of the proof shows that $e \ssq V(\tilde\Gamma)$, hence $\fddeg \tilde\Gamma > \fddeg \hat \Gamma + |\fs|>0$, in contradiction to the assumption. Hence we must have $\hat\Gamma = \tilde\Gamma$, and this concludes the proof.
%
%
\end{proof}

We denote by $g(\nkaFK)$ BPHZ characters on $\CH_-$ and by $g^{\FKtN\FK} \in \CGm$ the BPHZ character on $\CTm$. We introduce furthermore a character $g_\trees(L\FK)$ on $\CH_-$ which corresponds to $g^\eta$.
For this we introduce the canonical projection $\p_- : \hat\CH_- \to \CH_-$. 
We write $\fdi: \CH_- \to \hat\CH_-$ for the embedding which is a right inverse of $\p_-$ such that the range of $\fdi$ is given by the subalgebra of $\hat\CH_-$ generated by Feynman diagrams of non-positive homogeneity.
Furthermore, we write $\CH^\trees_-$ for the unital subalgebra of $\CH_-$ generated by connected vacuum Feynman diagrams $\Gamma$ such that there exists an edge $e \in E(\Gamma)$ with $\ft(e) \in \FL_+$,
and we denote by $\CJ_\noise$ and $\CH_-^\noise$ (resp. $\hat\CJ_\noise$ and $\hat\CH_-^\noise$) the ideal and unital subalgebra of $\CH_-$ (resp. $\hat\CH_-$) generated by Feynman vacuum diagrams $\Gamma$ with the property $\ft(e) \in \tilde\CL$ for any edge $e \in E(\Gamma)$. 
Finally, we define $\p_\trees : \CH_- \to \CH_-^\trees$ as the multiplicative projection which is the identity on $\CH_-^\trees$ and annihilates $\CJ_\noise$.

With this notation we define $g_\trees(\nkaFK)$ as the unique character on $\CH_-$ which satisfies the two relations
\begin{align}\label{eq:character:g:trees}
(g_\trees(\nkaFK) \otimes \Pi^{\nkaFK}) \cpmh \fdi & = 0	\qquad \text{ on } \CH_\trees \\
\label{eq:character:g:trees:noise}
g_\trees(\nkaFK) & = 0								\qquad \text{ on } \CJ_\noise.
\end{align}
Let finally $h(\nkaFK)$ be the character defined by
\[
h(\nkaFK) \circ g_{\trees}(\nkaFK) = g(\nkaFK).
\]
We would like to show that $h$ is bounded in the character group uniformly over $\FK \in \CYN(\sFLm,\cm)$ such that $\|\FK \|_\sfs \le C$. This is not quite true, however it is true if $h$ is restricted to the Hopf subalgebra $\CHms \ssq \CH_-$ generated by all connected vacuum diagrams of the form $\Gamma^P_\tau$ for some $\tau \in \wTu$ and some $P \in \pairings$. 
\begin{lemma}\label{lem:gamma:rep:var}
Let $\Gamma \in \CH_-^\noise \cap \CHms$ be a connected vacuum diagram. Then either $\Gamma$ contains exectly one edge $e$, and one has $\ft(e)=(\snt,1)$ for any $\snt \in \sFLm$ with $m_\snt=1$, or $\Gamma$ ``represents a covariance'' in the sense that $\Gamma$ is of the form
\begin{align}\label{eq:gamma:1}
\Gamma = \treeExampleVariance \,.
\end{align}
Here we coloured edges $e$ with weak types $\ft(e)$ belong to the same element of the partition $\CL_\fdl$ in the same colour.
\end{lemma}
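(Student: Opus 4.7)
The plan is to unpack the two defining conditions on $\Gamma$ one by one and pin down both the underlying tree and the pairing. First I would use the hypothesis $\Gamma \in \CHms$ together with connectedness to conclude that $\Gamma$ must itself be a single generator $\Gamma^P_\tau$ for some $\tau \in \wTu$ with $k(\tau) = 1$ and some pairing $P \in \pairings$; this uses that $\CHms$ is the unital subalgebra generated by \emph{connected} vacuum diagrams, so a connected element cannot arise as a non-trivial product (such a product is necessarily disconnected as a graph). The second condition $\Gamma \in \CH_-^\noise$ requires every internal edge of $\Gamma$ to carry a type in $\tilde\CL$, and since the kernel-type edges of $\tau$ contribute exactly the edges of $\Gamma$ whose types lie in $\FL_+ \not\ssq \tilde \CL$, this forces $K(\tau) = \emptyset$. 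Consequently $\tau$ collapses to a single node $\rho_\tau \in N(\tau)$ carrying a collection of noise-type edges $u_1,\ldots,u_n \in L(\tau)$ with distinct types $\ant_i := \ft(u_i) \in \aFLm$, together with the unique leg.

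Next I would classify the admissible pairings on $\Lambda = \bigsqcup_i \{(u_i,k) : 1 \le k \le m_{\ant_i}\}$. The prohibition $u \ne v$ in every pair of $P$ forces $n \ge 2$, while the requirement that $P$ be a partition of $\Lambda$ into pairs satisfying $\cm_{\ant_i}[k] = \cm_{\ant_j}[l]$ imposes strong compatibility between the multisets $\cm_{\ant_i}$. In the case $n = 2$, a complete pairing exists if and only if $m_{\ant_1} = m_{\ant_2} =: m$ and $\cm_{\ant_1}$ and $\cm_{\ant_2}$ agree as multisets, and the explicit description of $\Gamma^P_\tau$ then yields exactly the two possibilities of the lemma: when $m = 1$ both $u_1, u_2 \in \Leo(\tau)$, and the diagram collapses to the single-edge configuration with type $(\snt,1)$, $m_\snt = 1$; when $m \ge 2$ the construction produces precisely the barbell of \eqref{eq:gamma:1}, with two star vertices $u_i^*$ each joined to $\rho_\tau$ by an edge of type $(\ant_i,0)$ and to $m$ common pairing vertices $q_1,\ldots,q_m$ by edges of types $(\ant_i,1),\ldots,(\ant_i,m)$.

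The hardest part is to exclude all configurations with $n \ge 3$. My strategy is to argue that any such $\Gamma^P_\tau$ either decomposes, at the level of the factor space $\CH_-$ modulo the relations in $\partial\CD$, into a product of connected subdiagrams each of which is of the two advertised forms - thereby violating the connectedness of $\Gamma$ in $\CHms$ - or is absent because no partition $P$ of $\Lambda$ respects both the matching condition on cumulant indices and the single-node topology forced by $K(\tau) = \emptyset$. The delicate point is that for $n \ge 3$ the pairing combinatorics genuinely produces graphs that are topologically connected through chains of pairing vertices, so the reduction to a product must be traced carefully through the multi-edge identities of the Feynman diagram algebra; verifying this cancellation is what the proof would need to carry out in detail.
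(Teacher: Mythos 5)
There is a genuine gap, and it occurs at the very first step. You read $\CHms$ as the \emph{unital subalgebra} generated by the diagrams $\Gamma^P_\tau$, and conclude that a connected element of $\CHms$ must be one of the generators, so that $\Gamma \in \CH_-^\noise$ forces $K(\tau)=\emptyset$. But $\CHms$ is defined as the \emph{Hopf} subalgebra generated by these diagrams, i.e.\ the smallest subalgebra containing them that is also stable under $\Delta_-$. Closure under the coproduct adjoins, for every generator $\Gamma^P_\tau$, all of its divergent full subdiagrams (the left legs of $\Delta_-$) and all contracted quotients. The connected elements of $\CH_-^\noise \cap \CHms$ that the lemma must classify are precisely these pure-noise \emph{subdiagrams} of $\Gamma^P_\tau$ for trees $\tau$ that do contain kernel-type edges; they are the objects on which the character $h(\nkaFK)$ is evaluated in Lemma~\ref{lem:bound:character} and Corollary~\ref{cor:log-void}. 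Your reduction to a single-node tree discards exactly this case, so the classification you then perform (of pairings on a star) does not address what the lemma asserts. The paper instead exhibits an explicit larger algebra $\CHmsa$, generated by the contractions $\Gamma^P_\tau|L$ along $P$-closed families $L$ of noise edges together with the diagrams \eqref{eq:gamma:1}, proves that $\CHmsa$ is a Hopf algebra by running through the forest expansion of $\Delta_-$, and thereby bounds $\CHms$ from above.

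The second gap is the exclusion of configurations involving three or more noise families, which you yourself flag as the hardest part. Your proposed mechanism --- that such a diagram ``decomposes, modulo the relations in $\partial\CD$, into a product of connected subdiagrams'' --- cannot work: the relations generating $\partial\CD$ are integration-by-parts identities and do not disconnect a connected graph, and a connected vacuum diagram is never equal in $\CH_-$ to a non-trivial product. The correct mechanism is a homogeneity count combined with the fact that $\CH_-$ is the quotient of $\hat\CH_-$ by the ideal $\CJ_+$ of positive-degree diagrams (equivalently, only subdiagrams of non-positive degree survive in the coproduct). A connected pure-noise subdiagram meeting $n$ distinct weak families $\fdl$ satisfies $\fddeg \tilde\Gamma \ge -n(\shalf+\kappa)+(n-1)|\fs|$, which is strictly positive for $n\ge 3$ and $\kappa$ small; hence only $n=1$ (the single-edge case, forcing $m_\snt=1$) and $n=2$ (the covariance barbell \eqref{eq:gamma:1}) survive. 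Without this degree estimate, or some substitute for it, the $n\ge 3$ case is not excluded and the proof is incomplete.
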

\begin{proof}
Let $\tau \in \wTu$ and fix a pairing $P \in \pairings$. Assume that we are given a family $L=(L_i)_{i\le n}$ of disjoint subsets $L_i \ssq L(\tau)$, $i=1, \ldots , n$ such that each $L_i$ is ``closed'' under $P$ in the sense that whenever $\{ (u,k) , (v,l) \} \in P$ for any $u \in L_i$ and $k,l \in \N$, then one has $v \in L_i$ as well. 
Each set $L_i$ defines a connected subgraph $\Gamma_i \ssq \Gamma^P_\tau$, $i=1, \ldots, n$,  induced by the vertex set $V(\Gamma_i) := \{ u^* , u^\downarrow, q : u \in L_i, q \in P[u] \}$.
Then we define the ``contraction'' $\Gamma^P_\tau|L \in \CH_-^\trees$ by contracting each subdiagrams $L_i$ to a single vertex.

We claim that the unital algebra $\CHmsa \ssq \CH_-$ generated by connected vacuum diagrams of the form $\Gamma^P_\tau|L$ where $L$ is as a (possible empty) family as above and (\ref{eq:gamma:1}) forms a Hopf algebra. This immediately concludes the proof.

Fix $\tau \in \wTu$ and $P \in \pairings$ and let $\CF$ be a forest of $\Gamma^P_\tau$, i.e.\ $\CF$ is a collection of node-disjoint subgraphs $\tilde\Gamma \ssq \Gamma^P_\tau$ such that $\fddeg \tilde\Gamma_\fe^0 \le 0$. We first show that for any $\tilde\Gamma \in \CF$ and any polynomial decoration $\fn : V(\tilde\Gamma) \to \N^d$ one has $\tilde\Gamma_\fe^\fn \in \CHmsa$. 
First it follows with the same arguments as in the proof of Lemma~\ref{lem:closed:subtrees} that $\tilde\Gamma$ is admissible. If $\tilde\Gamma \in \CH^\trees_-$ then $\tilde\Gamma = \Gamma^P_\tau(\sigma)$ for some subtree $\sigma \ssq \tau$ by Lemma~\ref{lem:closed:subtrees}, and the latter is element of $\CHmsa$ by definition. Otherwise there exists $\tilde\FDL \ssq \FDL$ such that $\ft(E(\tilde\Gamma)) = \bigsqcup_{\fdl \in \tilde\FDL} \CL_\fdl$, and it follows that
\[
\fddeg \tilde\Gamma \ge -n(\shalf+\kappa) + (n-1)|\fs|,
\]
where $n := \# \tilde\FDL$. This can only be negative if $n\le 2$. If $n=1$, say $\tilde\FDL = \{ \fdl_\snt \}$, then $\fddeg\tilde\Gamma = \sfs(\snt) + m_\snt \shalf$, so that $m_\snt=1$ and hence $\Gamma$ contains a single leg. Otherwise $n=2$ and hence $\Gamma$ is of the form \eqref{eq:gamma:1}. 

Now let $\hat\Gamma$ be the Feynman diagram generated by contracting each graph $\tilde\Gamma \in \CF$ to a single vertex. Since the operation of contracting node disjoint subgraphs is commutative, we can first contract those elements $\tilde\Gamma$ of $\CF$ for which $\tilde\Gamma \in \CH_-^\trees$. 
By Lemma~\ref{lem:closed:subtrees} for any such $\tilde\Gamma$ there exists a closed subtree $\sigma(\tilde\Gamma) \ssq \tau$ such that $\tilde\Gamma = \Gamma^P_\tau(\sigma(\tilde\Gamma))$. 
The Feynman diagram resulting from this contractions is then again of the form $\Gamma^{\hat P}_{\hat \tau}$, where $\hat\tau \in \wTu$ is the tree obtained by contracting each $\sigma(\tilde\Gamma)$ to a single vertex and $\hat P$ is the pairing induced by $P$. 
To proceed we can hence assume that each connected component $\tilde\Gamma$ of $\CF$ is an element of $\CH_-^\noise$. Then each $\tilde\Gamma$ induces a subset $L(\tilde\Gamma) \ssq L(\tau)$ by setting $L(\tilde\Gamma) := \{ u \in L(\tau) : u^\downarrow \in V(\tilde\Gamma) \}$ and thus $\hat\Gamma = \Gamma^P_\tau|L$ and this concludes the proof.
\end{proof}

The next lemma shows that when restricted to $\CHms$ the character $h$ is uniformly bounded.
\begin{lemma}\label{lem:bound:character}
For any $C>0$ the character $h(\nkaFK)$ restricted to the Hopf subalgebra $\CHms$ is bounded uniformly over all noises $\FK \in \CYN(\sFLm, \cm)$ with $\|\FK\|_{\sfs} \le C$
\end{lemma}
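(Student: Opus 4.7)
The plan is to exploit the inductive structure of $h$ coming from the relation $h \circ g_\trees = g$, which gives the recursion
\[
h(\Gamma) = g(\Gamma) - (h \otimes g_\trees)(\cpmh - \Id \otimes \one) \Gamma,
\]
combined with the classification of the ``pure-noise'' building blocks of $\CHms$ provided by Lemma~\ref{lem:gamma:rep:var}. First I would verify that $\CHms$ is stable under $\cpmh$, which follows from the proof of that lemma (both factors of $\cpmh \Gamma^P_\tau$, after contracting closed subtrees and pure-noise subgraphs, remain of the form $\Gamma^{\hat P}_{\hat \tau}$ times a product of noise-only graphs of the allowed shapes). This lets us set up an induction on the number of edges $\# E(\Gamma)$ within $\CHms$.

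For the base of the induction, and more generally whenever $\Gamma \in \CJ_\noise \cap \CHms$, the defining property \eqref{eq:character:g:trees:noise} gives $g_\trees(\Gamma) = 0$, so $h(\Gamma) = g(\Gamma)$. By Lemma~\ref{lem:gamma:rep:var} such a connected $\Gamma$ is either a single weak edge with $m_\snt = 1$ or of the covariance shape \eqref{eq:gamma:1}; in both cases the twisted antipode $\hat \CA \Gamma$ is a linear combination of the same diagrams (possibly with polynomial decoration), and $\Pi^{\nkaFK} \hat\CA \Gamma$ reduces to a single integral of $\CU \FK^{\snt}$ against another such kernel. Using $|\CU K| \lesssim \| K \|_{\beta}$ together with \eqref{eq:CYNnorm} and the continuity of $\nka$ with respect to $\| \cdot \|_\sfs$, the quantity $g(\Gamma)$ is then bounded by a polynomial in $\| \FK \|_\sfs$, giving the required uniform bound.

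For the inductive step on a diagram $\Gamma \in \CH_-^\trees \cap \CHms$, I would expand
\[
h(\Gamma) = g(\Gamma) - \sum (h \otimes g_\trees)\bigl(\cpmh \Gamma - \Gamma \otimes \one - \one \otimes \Gamma\bigr),
\]
and note that each proper factor appearing in the sum lies in $\CHms$ with strictly fewer edges than $\Gamma$, so the induction hypothesis applies to $h$ on those factors. The factors on which $g_\trees$ acts are bounded by Theorem~\ref{thm:BPHZ:FD} (together with Lemma~\ref{lem:deg:sfs} and Lemma~\ref{lem:closed:subtrees}, which identify $g_\trees(\Gamma^P_\tau(\sigma))$ with the tree BPHZ character applied to the closed subtree $\sigma$). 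It remains to bound the ``main term'' $g(\Gamma) = \Pi^{\nkaFK} \hat\CA \Gamma$, which follows directly from Theorem~\ref{thm:BPHZ:FD} since $\Gamma_\tau^P$ is admissible and of non-positive $\fddeg$-degree, the bound being polynomial in $\CKtmnorm{\nkaFK}$ and hence in $\|\FK\|_\sfs$.

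The main obstacle will be the first step: showing that the coproduct of an element of $\CHms$ remains in $\CHms \otimes \CHms$, so that the induction actually closes. This requires unwinding the proof of Lemma~\ref{lem:gamma:rep:var} to check that every forest $\CF$ of $\Gamma_\tau^P$ appearing in $\cpmh$ can be split into a ``tree part'' (whose contraction produces a new diagram $\Gamma_{\hat\tau}^{\hat P}$ with $\hat\tau \in \wTu$ obtained by contracting closed subtrees) and a ``noise part'' that is a disjoint union of the specific covariance-type diagrams from \eqref{eq:gamma:1}. Once this structural fact is firmly in place, the inductive bound proceeds routinely, with the covariance-type contributions absorbed into the base case argument above.
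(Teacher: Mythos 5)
Your overall skeleton (split into the pure-noise part and the tree part, induct on the number of edges via the recursion coming from $h \circ g_\trees = g$) matches the paper's, and your treatment of the base case on $\CH_-^\noise \cap \CHms$ is essentially the paper's argument. However, the inductive step contains a genuine gap that invalidates the approach as written.

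The problem is your claim that the ``main term'' $g(\nkaFK)\Gamma = \Pi^{\nkaFK}\hat\CA\Gamma$ is uniformly bounded for $\Gamma \in \CH_-^\trees \cap \CHms$ ``directly from Theorem~\ref{thm:BPHZ:FD}'', and likewise that $g_\trees$ applied to the right factors of the cross terms is bounded. Theorem~\ref{thm:BPHZ:FD} controls the \emph{renormalised} evaluation $\hat\Pi^{K,R}\Gamma$ of diagrams with legs tested against test functions; it says nothing about the character $g(K)$ evaluated on a connected vacuum diagram of negative degree. That quantity is precisely the BPHZ renormalisation constant, and it is \emph{not} uniformly bounded over $\|\FK\|_\sfs \le C$: for instance, for the cherry in $\Phi^4_3$ with $\FK^\Xi = \rho^{(\eps)}$ one has $\|\FK\|_\sfs \lesssim 1$ uniformly in $\eps$ while $g(\nkaFK)$ of the associated vacuum diagram blows up like $\eps^{-1}$ (consistently with Theorem~\ref{thm:BPHZ:scaledependent}, whose small-scale contributions $2^{-n\,\fddeg\Gamma}$ are not summable when $\fddeg\Gamma<0$). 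The same objection applies to $g_\trees(\Gamma^P_\tau(\sigma))$ for a divergent closed subtree $\sigma$. So bounding $g(\Gamma)$, $g_\trees(\Gamma)$ and the cross terms \emph{separately} cannot work: the content of the lemma is exactly that the divergences of these individual terms cancel.

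The paper's proof realises this cancellation by proving the much stronger statement that $h(\nkaFK)$ \emph{vanishes identically} on $\CH_-^\trees$. The induction hypothesis lets one insert the projection $\p_\noise$ into the left slot of $\big(h(\nkaFK) \otimes (g_\trees(\nkaFK)\otimes\Pi^{\nkaFK})\fdcp\big)(\fdcp - \Id\otimes\one)\fdi\,\Gamma$; one then checks that $(\p_\noise\otimes\Id)(\fdcp-\Id\otimes\one)\fdi\,\Gamma$ lands in $\CH_-^\noise\otimes\hat\CH_-^\trees$ with the right factor of non-positive degree, on which $(g_\trees(\nkaFK)\otimes\Pi^{\nkaFK})\fdcp\,\fdi$ vanishes by the very definition \eqref{eq:character:g:trees} of $g_\trees$. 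No quantitative bound on any divergent constant is ever needed on the tree part; uniform boundedness is only required on $\CH_-^\noise\cap\CHms$, where Lemma~\ref{lem:gamma:rep:var} reduces everything to covariance-type diagrams controlled directly by $\|\FK\|_\sfs$. You should replace your inductive estimate by this exact-vanishing argument; your observation about stability of $\CHms$ under the coproduct remains relevant but is not the main obstacle.
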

\begin{proof}
By definition of the BPHZ character $g(\nkaFK)$ one has
\begin{align*}
0 = \big(g(\nkaFK) \otimes \Pi^{\nkaFK} \big) \fdcp \fdi
&=
\big(h \otimes g_\trees(\nkaFK) \otimes \Pi^{\nkaFK}\big) (\fdcph \otimes \Id) \fdcp \fdi \\
&=
\big(h \otimes (g_\trees(\nkaFK) \otimes \Pi^{\nkaFK}) \fdcp\big) \fdcp \fdi \;.
\end{align*}
Since $(g_\trees(\nkaFK) \otimes \Pi^{\nkaFK}) \fdcp = \Pi^{\nkaFK}$ on $\hat\CH_-^\noise$, it follows that $h = g$ on $\CH_-^\noise$. 
The fact that $|g(\nkaFK) \Gamma| \lesssim 1$ for any $\Gamma \in \CH_-^\noise \cap \CHms$ is straightforward from the definitions. 

 We now show inductively in the number of edges of connected Feynman diagrams $\Gamma$, and in the quantity $\sum_{u \in N(\Gamma)} |\fn(u)|_\fs$, that one has $h(\nkaFK)\Gamma = 0$ for any $\Gamma \in \CH_-^\trees$. Indeed, one has
\begin{align}
h(\nkaFK)\Gamma &= -\big( h(\nkaFK) \otimes (g_\trees(\nkaFK) \otimes \Pi^{\nkaFK}) \fdcp ) (\fdcp - \Id \otimes \one) \fdi \Gamma
\\
&\label{eq:h:CK}
= -\big( h(\nkaFK) \p_\noise \otimes (g_\trees(\nkaFK) \otimes \Pi^{\nkaFK}) \fdcp ) (\fdcp - \Id \otimes \one) \fdi\Gamma,
\end{align}
where we used the induction hypothesis in order to get the projection $\p_\noise$ onto $\CH_-^\noise$ in the last line. 

One has
\begin{align}\label{eq:fd:1}
(\p_\noise \otimes \Id) (\fdcp - \Id \otimes \one) \fdi \Gamma
\ssq
\CH_-^\noise \otimes \hat\CH_-^\trees.
\end{align}
(Note that the second component contains an edge $e$ of type $\ft(e) \in \FL_+$, otherwise such an edge would be in the left component and thus the term would be killed by the projection.) 
Since $(g_\trees(\nkaFK) \otimes \Pi^{\nkaFK}) \cpmh \fdi$ vanishes on $\CH_-^\trees$ by definition, it remains to show that the right component of \eqref{eq:fd:1} is of non-positive degree. 
But this follows for $\kappa>0$ small enough, since 
from the fact that any connected diagram $\Gamma \in \CH_-^\trees$ satisfies $\fddeg \Gamma \ge - n \kappa$ where $n > 0$ denotes the number of nodes $w \in V(\Gamma)$ of the form $w=u^*$ for some $u \in L(\tau)$. (We omit the details of this argument which are very similar to the one carried out in the proof of Lemma~\ref{lem:closed:subtrees}.) 
\end{proof}

Finally, we have the following relation between the renormalised valuations on $\CH$ and $\wTu$.

\begin{proposition}\label{prop:trees:FD}
One has the identity
\begin{align}\label{eq:characters}
g_\trees(\nkaFK) \ttf \qNL = g^{\eta}
\end{align}
on $\wCTu$. Here $\qNL : \wCTu \to \CTm$ denote the projection which kills trees $\tau \in \wCTu$ such that $L_\Legtype(\tau) \ne \emptyset$, and we write as above $\eta := \FKtN \FK$.  Moreover, one has
\begin{align}\label{eq:renorm:valuations}
(\Pi^{\nkaFK,R} M^{g_\trees(\nkaFK)}\ttf \tau)(\psi_\tau)  =  \evaln{\eta}{\psi}{R} M^{g^{\eta}} \tau 
\end{align}
for any $\tau \in \wTu$, any $\FK \in \CYN(\aFLm, \cm)$, any $R \in \CK_\infty^+$ and $\psi \in \Psi$. Here $\psi_\tau$ is as in (\ref{eq:FDT:1}).
\end{proposition}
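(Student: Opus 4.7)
The plan is to prove the character identity \eqref{eq:characters} first by induction on the number of edges of $\tau \in \CTm$ (recalling that $\qNL$ kills trees with legs), and then to deduce the evaluation identity \eqref{eq:renorm:valuations} from \eqref{eq:characters} together with an appropriate commutation of $\ttf$ with the coproduct. The backbone is the observation that both characters are uniquely characterised by recursive relations: $g^\eta = \Upsilon^\eta \antipode$ satisfies $(g^\eta \otimes \Upsilon^\eta)\cpmh\iota = 0$ on $\TTm$ (plus $g^\eta(\one)=1$), while by definition $g_\trees(\nkaFK)$ satisfies $(g_\trees(\nkaFK) \otimes \Pi^{\nkaFK})\cpmh\fdi = 0$ on $\CH_-^\trees$ and vanishes on the ideal $\CJ_\noise$. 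Since by Lemma~\ref{lem:deg:sfs} each Feynman diagram $\Gamma^P_\tau$ is admissible and has $\fddeg\Gamma^P_\tau = |\tau|_\sfs$, a negative homogeneity tree $\tau$ maps under $\ttf$ into a linear combination of connected diagrams each lying in $\CH_-^\trees$, so both recursive relations apply.

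The core step is a ``matching'' between the forest expansion of $\cpmh\fdi\ttf\tau$ on the Feynman side and the forest expansion of $\cpmh\iota\tau$ on the tree side. For a fixed pairing $P \in \pairings$, Lemma~\ref{lem:closed:subtrees} classifies divergent full connected subgraphs $\tilde\Gamma \subseteq \Gamma^P_\tau$ into exactly two types: those of the form $\Gamma^P_\tau(\sigma)$ for a closed subtree $\sigma \subseteq \tau$, and those contained entirely in $\hat\CH_-^\noise$ (noise-only). Any forest of $\Gamma^P_\tau$ containing at least one noise-only divergent component produces, in the left leg of $\cpmh\fdi\Gamma^P_\tau$, a factor lying in $\CJ_\noise$, and is therefore annihilated after contraction with $g_\trees(\nkaFK)$ by \eqref{eq:character:g:trees:noise}. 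We may thus restrict the forest sum to forests whose components are all of the form $\Gamma^P_\tau(\sigma_i)$ for node-disjoint closed subtrees $\sigma_1,\ldots,\sigma_r \subseteq \tau$.

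The next step is the combinatorial book-keeping of pairings. For a fixed family $\{\sigma_1,\ldots,\sigma_r\}$ of node-disjoint closed subtrees of $\tau$, the closedness of each $\sigma_i$ under $P$ implies that $P$ restricts to a pairing $P|_{\sigma_i} \in \pairings_{\sigma_i}$ and induces a pairing $P/\{\sigma_i\}$ on the contracted tree $\tau/\{\sigma_1,\ldots,\sigma_r\}$; conversely any such family of pairings assembles uniquely into some $P \in \pairings$. Summing over all $P$ one obtains
\begin{equ}
\sum_{P\in\pairings} \sum_{\substack{\CF \subseteq \Gamma^P_\tau \\ \text{closed type}}} \prod_{i} \Gamma^P_\tau(\sigma_i)\otimes (\Gamma^P_\tau/\CF) = (\ttf \otimes \ttf)\cpmh\tau\;,
\end{equ}
up to the usual decoration sums, which match because the decorations on the tree side transfer verbatim to the corresponding Feynman vertices in the construction of $\ttf$. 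Combined with the induction hypothesis $g_\trees(\nkaFK)\ttf\sigma_i = g^\eta\sigma_i$, with Lemma~\ref{lem:evaluation:fd:trees} to identify $\Pi^{\nkaFK}\ttf = \Upsilon^\eta$ on the contracted tree (noting that the distinguished vertex $u_\star$ corresponds to the root), and with the recursion $g^\eta\tau = -(g^\eta\otimes\Upsilon^\eta)(\cpmh-\Id\otimes\one)\iota\tau$, this yields \eqref{eq:characters}.

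For the second identity, write $M^{g_\trees(\nkaFK)}\ttf\tau = (g_\trees(\nkaFK) \otimes \Id)\cpmh\ttf\tau$ and apply the same commutation argument: modulo $\CJ_\noise \otimes \hat\CH_-$ terms (which are killed by $g_\trees$), one has $\cpmh\ttf\tau = (\ttf\otimes\ttf)\cpmh\tau$, so that after pairing with $\Pi^{\nkaFK,R}$, using \eqref{eq:characters} on the left factor and Lemma~\ref{lem:evaluation:fd:trees} on the right factor, one recovers $\evaln{\eta}{\psi}{R}M^{g^\eta}\tau$. The main obstacle I anticipate is precisely the combinatorial matching in the third paragraph: one must verify that the multiplicity factors produced by the derivative and polynomial decorations in the forest expansion \eqref{eq:coproduct:fd} match those in \eqref{eq:coproduct:ex} under the identification of closed subtrees with subdiagrams, and that the ``hanging'' paired noise half-edges between $\sigma$ and its complement are correctly encoded by the type changes produced by the coproduct on the Feynman side. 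This requires a careful case analysis for the different types of nodes $u^*$, $q$, $[i]$ appearing in the construction of $\Gamma^P_\tau$, but should go through by direct inspection given the explicit form of the constructions.
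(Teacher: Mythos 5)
Your proposal is correct and follows essentially the same route as the paper: both arguments rest on the fact that $g^\eta$ and $g_\trees(\nkaFK)$ are each pinned down by their defining recursive relations, and both reduce the matter to the commutation identity $(\ttf\qNL\otimes\ttf)\cpmi = (\p_\trees\otimes\Id)\fdcp\fdi\ttf$, justified via Lemma~\ref{lem:closed:subtrees} together with the annihilation of noise-only divergent components by $g_\trees(\nkaFK)$ through $\CJ_\noise$. Your explicit induction and the unpacking of the pairing bijection are just a more detailed rendering of what the paper asserts in one line, so no further comparison is needed.
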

\begin{proof}
We show (\ref{eq:characters}). The character $g^{\eta}$ when viewed as a character of the Hopf algebra $\wCTu$ is determined by the relations
\begin{align*}
(g^\eta \otimes \Upsilon^\eta) \cpmi &= 0  	&&\quad \text{ on }\aCTm
\\
g^\eta &= 0									&&\quad \text{ on } \wCJlegs
\end{align*}
where $\wCJlegs \ssq \wCTu$ denote the ideal generated by trees $\tau \in \wCTu$ that contain legs. The second identity holds for the character $g_\trees(\nkaFK) \ttf \qNL$ by definition of $\qNL$. To see the first one, note that
\[
(g_\trees(\nkaFK) \ttf \qNL \otimes \Upsilon^\eta ) \cpmi
=
(g_\trees(\nkaFK) \otimes \Pi^{\nkaFK}) (\ttf \qNL \otimes \ttf) \cpmi
\]
on $\aCTm$. 
From Lemma~\ref{lem:closed:subtrees} and the definition of the respective coproducts we infer that
\[
(\ttf \qNL\otimes \ttf )\cpmi = (\p_\trees \otimes \Id)\fdcp \fdi \ttf
\]
on $\aCTm$, which together with (\ref{eq:character:g:trees}) concludes the proof.

The proof for (\ref{eq:renorm:valuations}) is very similar. One has
\[
\evaln{\eta}{\psi}{R} M^{g^\eta} \tau 
=
(g_\trees( \nkaFK) \ttf \qNL \otimes \Pi^{\nkaFK}\ttf) \cpm
\]
on $\wTu$, where we used (\ref{eq:FDT:1}) and (\ref{eq:characters}). Using the identity
\[
(\ttf \qNL\otimes \ttf )\cpm = (\p_\trees \otimes \Id)\fdcp \ttf
\]
on $\wTu$ concludes the proof.
%
%
\end{proof}

As an important application of this construction we proof Theorem~\ref{thm:evaluation:trees:largescale}

\begin{proof}[of Theorem~\ref{thm:evaluation:trees:largescale}]
We fix a tree $\tau \in \wT$ and assume without loss of generality that leg types and noise types are unique in $\tau$, so that $\tau \in \wTu$.

By Lemma~\ref{lem:evaluation:fd:trees} the continuous extension of $R \mapsto \evaln{\eta}{\psi}{R}$ to $\CK_0^+$ is a consequence of the continuous extension of the evaluation $R \mapsto \Pi^{\ttf \eta, R}$ which in turn is the content of the first part of Theorem~\ref{thm:BPHZ:FD}. 

The continuous extension of the map $(\eta,R) \mapsto \hat{\Upsilon}^{\eta,\legfn}_R \tau $ to the space $\SMstarz \times \CK_0^+$ is a consequence of (\ref{eq:renorm:valuations}), Lemma~\ref{lem:bound:character} and the second part of Theorem~\ref{thm:BPHZ:FD}.
\end{proof}

Another consequence is the following corollary for which we assume as in Section~\ref{sec:non-gaussian-noises} that we are given a set of types $\sFLm$ such that $\FLm \ssq \sFLm$ and a homogeneity assignment $\sfs:\sFLm \to \R_-$.

\begin{corollary}\label{cor:log-void}
Assume that Assumption~\ref{ass:technical} holds. Let $N \in \N$ and let $\FKernel=(\FKernel^\snt_\cm) \in \CYN$, and assume that 
\begin{itemize}
\item one has $\FKernel^\snt_\cm \in \CYzsimpp{\#\cm}$ for any $\snt \in \sFLm \setminus \FLm$ and any mulitset $\cm$ (see Definition~\ref{def:CYzsimp} for the definition of this space), and
\item one has $\FKernel^\Xi_\cm =0$ for any $\Xi \in \FLm$ and multiset $\cm$ with $\#\cm>1$.
\end{itemize}
Fix $\tau \in \aCT$  such that $\fancynorm{\tau}_\sfs=0$ and let $\sFLmz:= (\sFLm \setminus \FLm) \cap \ft(L(\tau))$.
Let finally $\eta := \FKtN(\FKernel) \in \SMinf$ and denote  by $g^\eta \in \sCGm$ the BPHZ character for $\eta$. Then for any $\snt, \snta \in \sFLmz$ there exists $\theta>0$ such that one has
\begin{align}\label{eq:log-av}
|g^{\eta} \tau| \lesssim 
\supp_{\cm,\tilde\cm}
	\| \FK^\snt_\cm \|_{ \beta^\snt_\cm + \theta}
	\| \FK^\snta_\cm \|_{ \beta^\snta_{\tilde\cm} - \theta}
\end{align}
uniformly over all $\FKernel$ as above such that $\|\FKernel\|_\sfs \le C$. Here the  supremum runs over all multisets $\cm,\tilde\cm$ with values in $\FLm$ and such that $\#\cm \lor \#\tilde\cm \le N$. (See Definition~\ref{def:CYN} for the definition of $\beta^\snt_\cm$.)
\end{corollary}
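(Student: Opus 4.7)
The plan is to translate the tree-level statement into one about Feynman diagrams and to read \eqref{eq:log-av} off from Corollary~\ref{cor:no-log-divergence} applied with the two weak types $\fdl_\snt$ and $\fdl_\snta$. Since $\tau$ has no legs, Proposition~\ref{prop:trees:FD} gives $g^\eta(\tau) = g_\trees(\nkaFK)(\ttf\tau) = \sum_{P \in \pairings} g_\trees(\nkaFK)(\Gamma^P_\tau)$, so it suffices to bound each summand. The defining relation $h(\nkaFK) \circ g_\trees(\nkaFK) = g(\nkaFK)$, combined with Lemma~\ref{lem:bound:character} which provides a uniform bound on $h(\nkaFK)^{\pm 1}$ on the Hopf subalgebra $\CHms$, expresses each $g_\trees(\nkaFK)(\Gamma^P_\tau)$ as a finite linear combination with uniformly bounded coefficients of quantities of the form $g(\nkaFK)(\hat\Gamma)$ for connected admissible subdiagrams $\hat\Gamma$ of $\Gamma^P_\tau$, and it remains to apply Corollary~\ref{cor:no-log-divergence} to each such $g(\nkaFK)(\hat\Gamma)$.

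For this, the main analytic input is that $\nkaFK$ is log-avoiding for $\hat\Gamma$ with respect to each of $\fdl_\snt$ and $\fdl_\snta$ in the sense of Definition~\ref{def:log-avoiding}. The required classification comes from Lemma~\ref{lem:closed:subtrees}: any proper connected full subgraph $\tilde\Gamma \subsetneq \hat\Gamma$ of degree zero with at least one kernel-type edge is of the form $\Gamma^{P'}_\tau(\sigma)$ for a closed subtree $\sigma \subsetneq \tau$ and an inherited pairing $P'$, so that $\sigma^{\tilde\fn}_\fe$ is a proper subtree of the zero-homogeneity tree $\tau$ with $\fancynorm{\sigma^{\tilde\fn}_\fe}_\sfs = 0$. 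Excluding the combinatorially trivial case where $\sigma$ meets its complement at a single node (which forces $|\sigma|_\sfs < 0$ and is therefore absent), this places $\sigma^{\tilde\fn}_\fe$ in $\CV$, and Assumption~\ref{ass:technical} yields $g^\eta(\sigma^{\tilde\fn}_\fe) = 0$. One then transfers this vanishing back to the Feynman-diagram side by reapplying Proposition~\ref{prop:trees:FD} to $\sigma^{\tilde\fn}_\fe$ and using the identification $\Gamma^P_\tau(\sigma) = \Gamma^{P|_\sigma}_\sigma$ established in the proof of Lemma~\ref{lem:closed:subtrees}. The remaining case where $\tilde\Gamma$ contains no kernel-type edge is handled via Lemma~\ref{lem:gamma:rep:var}: such $\tilde\Gamma$ is either the covariance diagram \eqref{eq:gamma:1}, in which case the hypothesis $\FK^\snt_\cm \in \CYzsimpp{\#\cm}$ (zero integral of the central factor) ensures its degree is strictly positive and the log-avoiding condition is vacuous, or a single edge, which is ruled out because every $\snt \in \sFLmz$ has $m_\snt \ge 2$ by the second bullet of the hypotheses.

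The main obstacle I anticipate is the passage from the sum-level vanishing $g^\eta(\sigma^{\tilde\fn}_\fe) = \sum_Q g_\trees(\nkaFK)(\Gamma^Q_{\sigma^{\tilde\fn}_\fe}) = 0$ granted by Assumption~\ref{ass:technical} to the \emph{individual} vanishing $g(\nkaFK)(\Gamma^P_\tau(\sigma)^{\tilde\fn}_\fe) = 0$ required by Definition~\ref{def:log-avoiding}, since a priori the Assumption only controls the sum over pairings. The resolution should come from running the argument of Lemma~\ref{lem:bound:character} backwards on the closed subdiagram $\Gamma^{P|_\sigma}_\sigma$: the identity $\Delta_-^{\fdcp}$ respects the closed-subtree decomposition, so $g(\nkaFK)$ factors through $g_\trees(\nkaFK)$ pairing by pairing on $\CHms$, allowing one to reduce the per-pairing statement to the per-tree statement that has just been proven to vanish. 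A secondary and largely technical point is that Assumption~\ref{ass:technical} is formulated for shifted smooth noises $\eta \in \SMsinf$ with $\groupD$-invariant kernels (Definition~\ref{def:CYshift}), while the hypothesis here only imposes the zero-integral condition; this is reconciled by noting that in the intended application in Lemma~\ref{lem:blowuprenormconstantzerohomo} the kernels $\Phi_{(\Xi,\tau)} \in \CYzsimpp{m(\tau)}$ are automatically $\groupD$-invariant, so that the corollary as used is consistent with the assumption as stated.
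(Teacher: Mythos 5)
Your proposal follows essentially the same route as the paper's proof: reduce to Feynman diagrams via Proposition~\ref{prop:trees:FD}, write $g_\trees(\nkaFK) = h(\nkaFK)^{-1}\circ g(\nkaFK)$ with $h(\nkaFK)$ uniformly bounded on $\CHms$ by Lemma~\ref{lem:bound:character}, and conclude with Corollary~\ref{cor:no-log-divergence} after verifying the log-avoiding property through Lemma~\ref{lem:closed:subtrees} and Assumption~\ref{ass:technical}.

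Three details of your write-up need repair before it becomes a proof. First, in $g_\trees(\nkaFK)\Gamma^P_\tau = \bigl(h(\nkaFK)^{-1}\otimes g(\nkaFK)\bigr)\Delta_-\Gamma^P_\tau$ the character $g(\nkaFK)$ lands on the \emph{contracted} diagrams $\Gamma^P_\tau/\tilde\Gamma$, not on subdiagrams. The point of the argument is that $h(\nkaFK)$ annihilates everything in $\CHms$ except products of Gaussian variance edges, so the surviving right factors form exactly the family $A(\tau,P)$ of contractions of $\Gamma^P_\tau$ along subsets $\tilde\CE$ of such edges, and it is to each $\Gamma\in A(\tau,P)$ that Corollary~\ref{cor:no-log-divergence} must be applied. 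Second, and as a consequence, the log-avoiding property has to be checked for divergent subgraphs of these contracted diagrams: when $\tilde\CE\neq\emptyset$ the relevant subgraph is $\Gamma^P_\tau(\sigma)|\hat\CE$ with $\hat\CE=\tilde\CE\cap E(\Gamma^P_\tau(\sigma))$ rather than $\Gamma^P_\tau(\sigma)$ itself, and the paper recovers its vanishing from the $\tilde\CE=\emptyset$ case by a limit argument in which the contracted Gaussian covariance kernels are replaced by mollifiers $\rho^\eps\to\delta_0$; this step is absent from your sketch and does not follow from Proposition~\ref{prop:trees:FD} alone. Third, the case where the closed subtree $\sigma$ meets its complement at a single node is not ``absent'' for homogeneity reasons as you claim; the paper disposes of it by observing that then $g(\nkaFK)\Gamma$ vanishes outright, so there is nothing to prove. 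On the other hand, your identification of the per-pairing versus summed-over-pairings tension as the main obstacle is apt, and the mechanism you gesture at is indeed the one the paper uses: the stability property (\ref{eq:AlgA}) of $\linspace{A(\sigma^{\tilde\fn}_\fe,P)}$ under $\Delta_-$ is what allows the tree-level vanishing supplied by Assumption~\ref{ass:technical} to be transported to the individual diagrams appearing in Definition~\ref{def:log-avoiding}.
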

\begin{proof}
Fix a tree $\tau \in \aCT$. We can assume that any noise type $\snt \in \ft(L(\tau))$ is unique, so that $\ft(L(\tau))$ is a proper set. 
Note that $\FK$ can be written as a finite sum $\FK = \sum_{\cm} [\FK]_\cm$, where the sum runs over all families $\cm = (\cm_\snt)$, $\snt \in \sFLm$, of multisets $\cm_\snt$ with values in $\FLm$ and such that $\#\cm_\snt \le N$, and where $[\FK]_\cm \in \CYN(\sFLm, \cm)$ (see Section~\ref{sec:application:to:trees}). Since one has $g^\eta\tau = \sum_\cm g^{[\eta]_\cm}\tau$, where $[\eta]_\cm := \FKtN ([\FK]_\cm)$, it suffices to assume that $\FK \in \CYN(\sFLm, \cm)$.
By (\ref{eq:characters}) it then suffices to bound $g_\trees(\nkaFK) \ttf \tau$, and by definition of $\ttf$ in (\ref{eq:CW}) is suffices to fix a pairing $P\in\pairings$ and bound $g_\trees(\nkaFK) \Gamma_\tau^P$.

We want to use the second statement of Corollary~\ref{cor:no-log-divergence}, which is formulated in terms of $g(\nkaFK)$ rather than $g_\trees(\nkaFK)$. We recall that one has
\[
g_\trees(\nkaFK) = h(\nkaFK)^{-1} \circ g(\nkaFK),
\]
and by Lemma~\ref{lem:bound:character} the character $h(\nkaFK)$ is uniformly bounded when restricted to $\CHms$. It follows from the proof of Lemma~\ref{lem:bound:character} that $h(\nkaFK)$ vanishes on $\CH^\trees_- \cap \CHms$. Moreover, $h(\nkaFK)\tilde\Gamma = 0$ for any subgraph $\tilde\Gamma \in \CHms \cap \CH^\noise_-$ which contains a type $\fdl_\snt \in \FDL$ with $\snt \in \sFLm \setminus \FLm$. 
To see this, recall from Lemma~\ref{lem:gamma:rep:var} that $\tilde\Gamma$ represents a variance as in (\ref{eq:gamma:1}), so that $\tilde\Gamma$ has no proper subdivergences, and $g(\tilde\Gamma) = 0 = g_\trees(\tilde\Gamma)$, where the first equality follows from $\FKernel^\snt_{\cm_\snt} \in \CYzsimpp{\#\cm_\snt}$ and the second equality follows from (\ref{eq:character:g:trees:noise}). 
It follows that the only subgraphs $\tilde\Gamma$ of $\Gamma^P_\tau$ on which $h(\nkaFK)\tilde\Gamma^{\tilde\fn}_\fe$ does not vanish have the property that every edge $e \in E(\tilde\Gamma)$ is of type $\ft(e) = (0,\Xi)$ for some $\Xi \in \FLm$.
We denote by $\CE \ssq E(\Gamma^P_\tau)$ the set of edges $e \in E(\Gamma^P_\tau)$ with the property,
such that $\{\ft(e)\} = (\Xi,0)$ with $\Xi \in \FLm$,
and we write $A(\tau,P)$ for the collection of diagrams of the form $\Gamma^\tau_P|\tilde\CE$ which are obtained from $\Gamma^\tau_P$ by fixing $\tilde\CE \ssq \CE$ and contracting each edge in $\tilde\CE$ to one vertex. Then, this paragraph implies in particular that one has 
\begin{align}\label{eq:AlgA}
(h(\nkaFK) \otimes \Id) \Delta_- \linspace{A(\tau,P)} \ssq \linspace{A(\tau,P)}.
\end{align}

It now remains to show that for any fixed $\Gamma \in A(\tau,P)$ one has that $g(\nkaFK) \Gamma $ is bounded by the right-hand side of (\ref{eq:log-av}).
For this we note first that $\CKtmnorm{\nkaFK} \lesssim \| \FKernel \|_\sfs \lesssim 1$. Fix now $\snt, \snta \in \sFLmz$ and let $\fdly := \fdl_\snt$ and $\fdlz := \fdl_{\snta}$, see \eqref{eq:fdl:trees}. Then, one has $\fddeg \fdly = \beta^\snt_\cm - |\fs|$ and $\fddeg \fdlz = \beta^\snta_\cm - |\fs|$ and by definition (\ref{eq:CKtmz:norm}) and (\ref{eq:CYsimp:norm:2}) one has
\[
\CKfdlnormm{ (\nkaFK)_{\fdly} } {\fddeg \fdly + \theta} 
= 
\| \FK^\snt_{\cm_\snt} \|_{\beta^\snt_{\cm_\snt} + \theta}
\quad\text{ and }\quad
\CKfdlnormm{ (\nkaFK)_{\fdlz} } {\fddeg \fdlz - \theta} 
= 
\| \FK^\snta_{\cm_\snta} \|_{\beta^\snta_{\cm_\snta} - \theta}.
\]

It remains to show that $\nkaFK$ is log-avoiding for $\Gamma$ and $\fdl_\snt \in \CL$ for any $\snt \in \sFLmz$. 
First note that $\fddeg \Gamma = \fddeg \Gamma^P_\tau = \fancynorm\tau_\sfs = 0$. (In the first equality we used that $\fddeg \ft(e) = -|\fs|$ for any $e \in \CE$, the second equality holds by construction and the third by assumption.)
Fix a type $\snt \in \sFLmz$ and let 
$\tilde\Gamma \ssq \Gamma$ be a subgraph with $\ft(E(\tilde\Gamma)) \cap \CL_{\fdl_\snt} \ne \emptyset$ and let $\tilde\fn \in \CN(\tilde\Gamma)$ 
be a polynomial decoration such that $\fddeg \tilde\Gamma^{\tilde\fn}_\fe = 0$. We have to show that $g(\nkaFK) \tilde\Gamma^{\tilde\fn}_\fe = 0$.
If $\tilde\Gamma \in \CHms \cap \CH_-^\noise$, then $g(\tilde\Gamma^{\tilde\fn}_\fe)=0$ with the same argument as above. 
Otherwise, note that $\tilde\Gamma$ can be written as $\hat\Gamma|(\tilde\CE \cap E(\hat\Gamma))$ for some subdiagram $\hat\Gamma$ of $\Gamma^P_\tau$ and $\tilde\CE$ as above, where $E(\hat\Gamma):=E(\tilde\Gamma) \sqcup\{ e\in \tilde\CE : e \ssq V(\tilde\Gamma) \}$.
By Lemma~\ref{lem:closed:subtrees} there exists a closed subtree $\sigma \ssq \tau$ such that $\hat\Gamma = \Gamma^P_\tau(\sigma)$, and it follows that $\tilde\Gamma^{\tilde\fn}_\fe \in A(\sigma^{\tilde\fn}_\fe,P)$. 
We can assume that $\sigma$ is connected to its complement in $\tau$ with at least two nodes (otherwise one has $g(\nkaFK)\Gamma=0$ and there is nothing to show).
 By (\ref{eq:AlgA}) it suffices to show that $g_\trees(\nkaFK)$ vanishes on $A(\sigma^{\tilde\fn}_\fe, P)$.
By (\ref{eq:characters}) one has $g_\trees(\nkaFK) \hat\Gamma^{\tilde\fn}_\fe = g^\eta \sigma^{\tilde\fn}_\fe = 0$, where the last equality follows from Assumption~\ref{ass:technical}, which shows the required identity for $\tilde\CE = \emptyset$. 
In general, we obtain the identity via a limit argument. Let $\hat\CE := \tilde\CE \cap E(\hat\Gamma)$ and let $\FK^\eps$ be defined by setting, for any $\Xi \in \FLm$ such that $(\Xi,0) \in \ft(\tilde\CE)$,
\[
(\FK^\eps)_{\cm_\Xi}^\Xi := \rho^\eps,
\]
where $\rho^\eps \to \delta_0$ as $\eps \to 0$ (note that $\#\cm_\Xi =1$ by assumption). For any $\snt \in \sFLm $ which is not of this type we set $(\FK^\eps)^{\snt}_{\cm_\snt} := \FK^{\snt}_{\cm_\snt}$.
Then one has 
\[
g_\trees(\nkaFK^\eps) \hat\Gamma^{\tilde\fn}_\fe = g^{\eta^\eps} \sigma^{\tilde\fn}_\fe = 0
\]
for any $\eps>0$, where $\eta^\eps := \FKtN (\FK^\eps)$, and on the other hand one has
\[
g_\trees(\nkaFK^\eps) \hat\Gamma^{\tilde\fn}_\fe \to 
g_\trees(\nkaFK)	\hat\Gamma^{\tilde\fn}_\fe|\hat\CE \,, \qquad\text{ as }\eps \to 0,
\]
which concludes the proof.
\end{proof}

\section{Proof of Proposition~\ref{prop:eps:beta:bound}}
\label{sec:proof:of:eps:beta:bound}

We will work with Proposition~\ref{prop:eps:beta:bound}, although this proposition is not formulated in the most natural way and not well adapted to the proof we will give below. We will now state a more general (but essentially equivalent) formulation. For this we start with the following definition.
\begin{definition}
Given a system $\LTsys \in \LTsyss$, a compact set $K \ssq \bar\domain$, and $\delta>0$ we define the set $\SN(\LTsys,K,\delta) \ssq \SN$ as the set of $\phi \in \SN(\LTsys,\delta)$ such that additionally one has $\supp \phi_\cw \ssq K$ for any $\cw \in \CW$.
On the space $\SN(\LTsys,K,\delta)$ we introduce the norm $\|\cdot\|_{\LTsys}$ given as the smallest constant such that
\begin{align} \label{eq:bound:SN}
|D^k \phi_\cw (x)| \le \|\phi\|_{\LTsys} \,|x|_\fs ^{\LTsysdeg\cw - |k|_\fs}
\end{align}
for any $\cw \in \CW$, $k \in \N^d$ with $|k|_\fs < r$, and $x \in \bar \domain$. 
\end{definition}

With this notation, we will show below the following proposition.

\begin{proposition}\label{prop:eps:beta:bound:2} 
Let $\tau \in \adCT$, let $\LTsys \in \LTsyss(\tau)$, let $K \ssq \bar\domain$ be a compact set, let $\delta>0$, and let $\LTsysdeg$ be the degree assignment defined in (\ref{eq:definition:deg:FJ}). 
Then one has for any $C>0$ the bound
\begin{align}\label{eq:bound:eps:beta:1}
\big|
\big(
	\LchR	\otimes
	\eval{\phi} {\tilde R}
\big)
	\cpmwi
	\tau
\big|
 \lesssim 
1
\end{align}
uniformly over $\phi \in \SN(\LTsys, K, \delta)$ and $R, \tilde R \in \CK^+_0$ with $\|\phi\|_{\LTsys} \lor \|R\|_{\CK^+} \lor \|\tilde R\|_{\CK^+} < C$.
\end{proposition}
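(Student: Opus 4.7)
\textbf{Proposal for the proof of Proposition~\ref{prop:eps:beta:bound:2}.}
The plan is to translate the tree-valued expression on the left-hand side of \eqref{eq:bound:eps:beta:1} into a BPHZ-renormalized valuation of a Feynman diagram, and then apply Theorem~\ref{thm:BPHZ:FD} to obtain the uniform bound. The construction that carries out such a translation has already been set up in Section~\ref{sec:application:to:trees}: each tree $\tau\in\adCT$ is mapped, together with its cutoffs, to a Feynman diagram $\ttf\tau\in\CH$ via the operator $\CU$ of \eqref{eq:CW}. I would extend this construction so that the family $\phi\in\SN(\LTsys,K,\delta)$ contributes, for every partner pair $\{\lt,\bar\lt\}$ in a properly typed set, one ``weak'' edge carrying the kernel $\phi_{\lt}$, while the kernels $K+R$ (respectively $K+\tilde R$) attached to internal kernel-type edges remain the ``strong'' edges. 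The partition $\FDL$ of the label set is chosen so that the weak types associated with the same block $\LTa\in\LTsys$ form one element $\fdl_\LTa$ of $\FDL$, with $\fddeg(\fdl_\LTa)$ equal to the sum $\sum_{\lt\in\LTa}\LTsysdeg\lt$ of the corresponding leg degrees. The bound \eqref{eq:bound:SN} translates directly into a uniform bound on the norm $\CKfdlnorm{\cdot}$ of the resulting weak-kernel assignment, the vanishing condition $\int\phi_\lt=0$ when $\LTsysdeg\lt\le -\shalf$ giving the extra integrability needed in \eqref{eq:CYsimp:norm:2}.

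The next step is to identify the expression $(\LchR\otimes\eval{\phi}{\tilde R})\cpmwi\tau$ as (up to sign) a BPHZ-renormalized Feynman-diagram evaluation. Recall from \eqref{eq:h:CI:phi:R} and the inclusion--exclusion identity used just above \eqref{eq:inclusion:CJ:hat:PFI} that
\[
\bigl(\LchR\otimes\eval{\phi}{\tilde R}\bigr)\cpmwi\tau
=\sum_{\LTsysa\subseteq\LTsys}(-1)^{\#\LTsysa}\bigl(\eval{\phi}{R}P_{\LTsysa}\otimes\eval{\phi}{\tilde R}\bigr)\cpmwi\tau\,.
\]
Using Lemma~\ref{lem:closed:subtrees} together with the characterization of the systems $\LTsys(\tau)$ in Definition~\ref{def:II:tau}, one checks that the subgraphs of $\ttf\tau$ that satisfy $\fddeg\tilde\Gamma\le0$ and contain at least one strong edge are precisely those of the form $\Gamma^P_\tau(\sigma)$ where $\sigma$ corresponds to a subtree of $\tau$ whose leg-type set is a subfamily $\LTsysa\subseteq\LTsys$. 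Consequently, the projections $P_{\LTsysa}$ exactly pick out the subgraphs that enter the BPHZ twisted antipode, and the alternating sum over $\LTsysa\subseteq\LTsys$ matches the sum over forests appearing in the definition of the twisted antipode (after checking that the only contributing forests consist of a single subgraph, since the conditions of Definition~\ref{def:SN:FP} rule out nested divergences). The remaining algebraic bookkeeping, analogous to the proof of Proposition~\ref{prop:trees:FD}, reduces the expression to $\Pi^{\ttf\FK,\tilde R}M^{g(\ttf\FK)}\ttf\tau$ evaluated at an appropriate test function.

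With this translation in place, the bound \eqref{eq:bound:eps:beta:1} becomes an instance of Theorem~\ref{thm:BPHZ:FD}, provided one verifies the hypothesis that $\deg_\infty\CP<0$ for every tight partition $\CP$ of the vertex set of $\ttf\tau$. This follows from Lemma~\ref{lem:super:regularity:implies:large:scale:bound}, once we have noted that $\tau\in\adCT$ is properly legged with $\CL(\tau)\cup\hat\CL(\tau)\subseteq\LTN(\tau)$, and that $\tau\notin\CV_0$ (since $\tau\in\FT_-$ rules out $\CV_0$ by Assumption~\ref{ass:CVz}; the exceptional case requires a separate direct argument using that $\evaA{\psi}$ vanishes on $\CV_0$). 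The assumption that $\phi_\lt$ vanishes in the $\delta$-ball of the origin for $\lt\notin\bigcup\LTsys$, together with the compact support imposed by $K$, ensures that the extension to $R,\tilde R\in\CK_0^+$ is justified.

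The main obstacle, as I see it, is matching the combinatorics of the alternating sum over $\LTsysa\subseteq\LTsys$ with the BPHZ forest expansion of Theorem~\ref{thm:BPHZ:FD}; this requires the careful verification that only non-nested divergent subgraphs can appear, a property that is built into Definition~\ref{def:SN:FP} via the requirement that $\phi_\lt$ vanishes in a $\delta$-neighbourhood of the origin for $\lt\notin\bigcup\LTsys$. Once this combinatorial identification is achieved, the analytic estimate is an application of the already-established BPHZ machinery. To finally recover Proposition~\ref{prop:eps:beta:bound} from Proposition~\ref{prop:eps:beta:bound:2}, one observes that the rescaling $\phi\mapsto\phi^\eps$ in \eqref{eq:phi:rescale}, together with the vanishing-integral property of Definition~\ref{def:SN:FP}, yields $\|\phi^\eps\|_{\LTsys}\lesssim\eps^\beta$ for some $\beta>0$, which produces the desired factor of $\eps^\beta$ on the right-hand side.
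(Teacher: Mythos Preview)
Your high-level strategy is right: translate to Feynman diagrams and invoke the BPHZ machinery of \cite{Hairer2017}. But there are two genuine gaps.

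\textbf{The combinatorial matching does not hold as stated.} You claim that the alternating sum $\sum_{\LTsysa\subseteq\LTsys}(-1)^{\#\LTsysa}(\eval{\phi}{R}P_{\LTsysa}\otimes\eval{\phi}{\tilde R})\cpmwi\tau$ coincides with a BPHZ-renormalised valuation, because ``the only contributing forests consist of a single subgraph''. This is false for the original kernel assignment $K$. The divergent subgraphs of the associated Feynman diagram are classified in Lemma~\ref{lem:CV:subgraphs}: those of type~\ref{item:CV:subgraphs:trees} arise from closed subtrees $\tilde\tau$ with $\CL(\tilde\tau)=\bigsqcup\bar\CM$ for some $\bar\CM\subseteq\CM$, and nothing forces $\#\bar\CM=1$. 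When $\#\bar\CM\ge2$ these subgraphs appear in the BPHZ character $g^\full$ but contribute nothing to the character $h(\CK,\CR)$ of \eqref{eq:eps:beta:bound:h}. The paper resolves this by replacing $K_\ft$ with a kernel $\bar K_\ft$ supported in a ball of radius $\delta/M$; then for any subgraph with $\#\bar\CM\ge2$ one finds an edge of type $\fl(e)\notin\bigsqcup\LTsys/_-$, and the corresponding $\phi_{\fl(e)}$ vanishes on the $\delta$-ball while the tree kernels force all distances to be at most $\delta$, so the evaluation is identically zero (Lemma~\ref{lem:g:full:h}). Only with this modified kernel does $g^\full(\bar\CK)$ agree with $h(\bar\CK,0)$ on the relevant image, and one then has to separately bound the difference $h(\bar\CK,\bar\CR)-h(\bar\CK,0)$ using the large-scale theory. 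You have not accounted for either step.

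\textbf{The derivation of the $\eps^\beta$ factor is incorrect.} You assert that $\|\phi^\eps\|_{\LTsys}\lesssim\eps^\beta$, but in fact the rescaling \eqref{eq:phi:rescale} is designed precisely so that $\|\phi^\eps\|_{\LTsys}\lesssim1$ uniformly in $\eps$ (this is the point of the exponent $2\deg(\lt)$). The paper instead applies Proposition~\ref{prop:eps:beta:bound:2} with a \emph{perturbed} degree assignment $\deg^{\LTsys,\beta}$ built from $\boxnorm\cdot_\fs-\beta$, multiplies a single test function $\phi^\eps_{\tilde\lt}$ by $\eps^{-\beta/2}$ to form $\tilde\phi^\eps$, verifies $\|\tilde\phi^\eps\|_{\LTsys,\beta}\lesssim1$, and then uses that the leg type $\tilde\lt$ appears exactly once in the expression to conclude that the original quantity is $\eps^{\beta/2}$ times a uniformly bounded one.

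A smaller point: the construction you invoke from Section~\ref{sec:application:to:trees} (the map $\ttf$ via pairings $P\in\pairings$) is tailored to noise-type edges and Wiener-chaos kernels. The relevant embedding here is the map $\CV$ of Section~B.2, which merges each leg with its partner into a single edge of type $(\lt,\bar\lt)$ carrying the kernel $\phi_\lt$; Lemmas~\ref{lem:identity:evaluations:trees:feynman:diagrams}--\ref{lem:CV:coproduct} then give the required identities between $\eval{\phi}{R}$, $\cpmwi$ and their Feynman-diagram counterparts.
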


The proof of this proposition is the content for the next three sections, see in particular Section~\ref{sec:proof:eps:beta:bound} below.
We end this section by showing that Proposition~\ref{prop:eps:beta:bound} follows from Proposition~\ref{prop:eps:beta:bound:2}.

\begin{proof}[of Proposition~\ref{prop:eps:beta:bound}]
We apply Proposition~\ref{prop:eps:beta:bound:2} for $\boxnorm\cdot_\fs^{(\beta)} := \boxnorm\cdot_\fs - \beta$, where $\beta>0$ is small enough so that one still has $\boxnorm\Xi_\fs^{(\beta)} > \homos\Xi$ for any $\Xi \in \FL_-$. We denote by $\deg^{\LTsys,\beta} : \CW \to \R_- \cup\{0\}$ the degree assignment defined as in (\ref{eq:definition:deg:I}) and (\ref{eq:definition:deg:FJ}) but with $\boxnorm\cdot_\fs$ replaced by $\boxnorm\cdot_\fs^{(\beta)}$, and we write $\| \phi \|_{\LTsys,\beta}$ for the norm defined as in (\ref{eq:bound:SN}) with $\deg^\LTsys$ replaced by $\deg^{\LTsys,\beta}$.
We choose a compact set $K \ssq \bar\domain$ that supports the functions $\phi_\cw^\eps$ for any $\cw \in \CW$ and $\eps>0$, so that one has $\phi^\eps \in \SN(\LTsys,K,\delta)$ for any $\eps \in (0,1]$.

Let now $\tilde \cw \in \CW$ be such that $\tilde\cw \in \ft(L_\Legtype(\tau))$ and $\tilde\cw \in \LTa \in \LTsys$, and define for $0<\eps\le 1$ the tuple $\tilde\phi^\eps \in \SN(\LTsys,K,\delta)$ by setting
$\tilde \phi^\eps_{\tilde\cw} := \eps^{-\frac{\beta}{2}} \phi^\eps_{\tilde\cw}$ and $\tilde \phi^\eps_{\cw} := \phi^\eps_{\cw}$ for any $\cw \in \CW \backslash \{ \tilde \cw \}$. It follows that $\|\tilde \phi^\eps \|_{\LTsys,\beta} \lesssim 1$. Since moreover $\deg_\infty$ was chosen in such a way that $\|\hat K - K\|_{\CK^+}$ is finite, it follows from (\ref{eq:bound:eps:beta:1}) that one has 
\begin{align}\label{eq:bound:eps:beta:tilde}
\big|
\big(
\LcheR
	\otimes
	\eva{\tilde\phi^\eps}
\big )
\cpmwi
	\tau
\big| 
\lesssim 1,
\end{align}
for $ R \in \{ 0 , \hat K - K \}$. 

It remains to show that the left-hand side of (\ref{eq:bound:eps:beta:tilde}) is equal to $\eps^{-\frac{\beta}{2}}$ times the left-hand side of (\ref{eq:bound:eps:beta}). For this let $\CF$ be a subforest of $\tau$ and choose decorations $\fn_\CF$ and $\ce_\CF$ as in (\ref{eq:iforest:decoration:convention}). We then distinguish two cases. In the first case, one has $\tilde\lt \in \ft(L_\Legtype( T/\CF ))$, and it follows that $\eva{\tilde\phi^\eps} (T/\CF)^{\fn- \fn_\CF}_{\fe + \fe_\CF} = \eps^{-\frac{\beta}{2}} \eva{\phi^\eps} (T/\CF)^{\fn- \fn_\CF}_{\fe + \fe_\CF}$. In the second case, there exists $S \in \bar\CF$ such that $\tilde\lt \in \ft(L_\Legtype(S))$, and in this case it follows that 
$
\LcheR S^{\fn_\CF}_{\fe_\CF}
=
\eps^{-\frac{\beta}{2}}
\LcheR S^{\fn_\CF}_{\fe_\CF}.
$
\end{proof}

\subsection{Feynman diagrams}

We are going to show Proposition~\ref{prop:eps:beta:bound:2} by applying the results of \cite{Hairer2017}. To this end we recall the notation of Section~\ref{sec:Feynman-diagrmas} about Feynman diagrams, which we are going to apply to the type set $\CL:= \ltfd \sqcup \FL_+$, where we define $\ltfd$ as the set of all $(\lt, \bar\lt) \in \Legtype\times \Legtype$ with $\lt < \bar \lt$. Fix  a system $\LTsys\in\LTsyss$. We then define a degree assignment $\LTsysdeg$ on $\CL$ by setting $\LTsysdeg(\lt,\bar \lt) :=2\LTsysdeg(\lt)$ for $(\lt,\bar\lt) \in \ltfd$ and $\LTsysdeg \ft:= |\ft|_\fs - |\fs|$ for any kernel type $\ft \in \FL_+$. 

Given  an element $\phi \in \SN$ and a large-scale kernel assignment $R \in \CK^+_0$ we define
\begin{align}\label{eq:feynman:diagram:kernel:assi}
\CK_{\ft} :=
\begin{cases}
	K_\ft 									&\quad\text{ if } \ft \in \FL_+ \\
	\phi_{\lt}	&\quad\text{ if } \ft=(\lt, \bar\lt) \in \ltfd
\end{cases}
\qquad \text{ and } \qquad
\CR_{\ft} :=
\begin{cases}
	R_\ft		&\quad\text{ if } \ft \in \FL_+ \\
	0						&\quad\text{ if } \ft \in \ltfd,
\end{cases}
\end{align}

In the notation of Proposition~\ref{prop:eps:beta:bound:2}, let $\CK=\CK(\phi)$ be defined as in (\ref{eq:feynman:diagram:kernel:assi}) from some tuple $\phi \in \SN(\LTsys, K, \delta)$, and let $\CR$ and $\tilde \CR$ be the large scale kernel assignments defined as in (\ref{eq:feynman:diagram:kernel:assi}) from $R$ and $\tilde R$. Then we have the following result, which is an immediate Corollary of \cite[Thm.~4.3]{Hairer2017}. 
\begin{theorem}
Assume that $\Gamma \in \hat\CH_-$ is a connected vacuum diagram that has the property described in Lemma~\ref{lem:renorm:large:scale:full} and let $\CK=\CK(\phi)$ be as above. Then for any $C>0$ one has the bound
\begin{align}\label{eq:Feynman:diagram:bound}
|( g^\full (\CK) \otimes \Pi^{\CK,\tilde \CR} ) \Delta_-^\full \Gamma | \lesssim 1
\end{align}
uniformly over $\phi \in \SN(\LTsys,K,\delta)$ and $\tilde \CR \in \CK_\infty^+$ with $\|\phi\|_{\LTsys} \lor \|\tilde \CR\|_{\CK^+} \le C$.
\end{theorem}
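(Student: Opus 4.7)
The plan is to recognise the statement as an essentially direct consequence of \cite[Thm.~4.3]{Hairer2017}, or equivalently of the slight generalisation Theorem~\ref{thm:BPHZ:FD} proven earlier in this appendix, so that the proof reduces to checking that all hypotheses of that theorem are met by the kernel assignment $\CK = \CK(\phi)$, the large-scale assignment $\tilde\CR$ (defined analogously to $\CR$ via (\ref{eq:feynman:diagram:kernel:assi})), the degree assignment $\LTsysdeg$, and the diagram $\Gamma$ itself. As advertised in the theorem statement, the proof is therefore really a bookkeeping exercise rather than any genuinely new analysis.

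First I would identify the quantity $\bigl(g^\full(\CK) \otimes \Pi^{\CK,\tilde\CR}\bigr)\Delta_-^\full \Gamma$ with the standard BPHZ-renormalised valuation $\hat\Pi^{\CK,\tilde\CR}\Gamma$. Since $\Gamma$ satisfies the chord-vanishing hypothesis of Lemma~\ref{lem:renorm:large:scale:full}, and since $\tilde\CR_\ft = 0$ on leg-type edges $\ft \in \ltfd$ by (\ref{eq:feynman:diagram:kernel:assi}), that lemma gives exactly this identification regardless of whether $\Delta_-^\full$ is read as $\Delta_-$ or as its variant $\tilde\Delta_-$ over non-full subgraphs.

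Second I would verify the analytic hypotheses of Theorem~\ref{thm:BPHZ:FD}. For leg-type edges $\ft=(\lt,\bar\lt)\in\ltfd$ the kernel $\CK_\ft = \phi_\lt$ is compactly supported in $K$ and its seminorm $\|\phi_\lt\|_{\LTsysdeg \ft}$ is controlled by $\|\phi\|_{\LTsys}\le C$ via (\ref{eq:bound:SN}), the convention $\LTsysdeg(\lt,\bar\lt) := 2\LTsysdeg(\lt)$ being precisely the one adapted to the fact that each partner pair contributes a single Feynman-diagram edge carrying the full decay budget of the pairing. For $\ft\in\FL_+$ the truncated kernel $K_\ft$ has the standard $\fs$-homogeneous decay by Assumption~\ref{ass:kernelhomo}, while $\tilde\CR_\ft=\tilde R_\ft$ is bounded in $\CK^+_0$ by assumption. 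Moreover the tight partition condition $\deg_\infty \CP < 0$ required to run the large-scale argument of \cite[Thm.~4.3]{Hairer2017} is automatic here: the convention $\deg_\infty(\lt,\bar\lt) = -\infty$ ensures that any tight partition separating endpoints of a leg-type edge is excluded from the supremum, so that only the kernel-type edges contribute to $\deg_\infty \CP$, and for those the hypothesis on $\Gamma$ together with Lemma~\ref{lem:renorm:large:scale:full} provides the needed bound.

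The only real obstacle is matching the various degree and coproduct conventions between the framework of \cite{Hairer2017} and the present setup, the most delicate point being the compatibility of the factor two in $\LTsysdeg(\lt,\bar\lt) = 2\LTsysdeg(\lt)$ with the power-counting of the BPHZ theorem, together with the identification of pairs of partner legs with single Feynman-diagram edges. Once this bookkeeping is carried out, the uniform estimate (\ref{eq:Feynman:diagram:bound}) is exactly the conclusion of Theorem~\ref{thm:BPHZ:FD} (equivalently of \cite[Thm.~4.3]{Hairer2017}) applied to $\Gamma$ with the assignments $\CK(\phi)$ and $\tilde\CR$, uniformly over the class of $\phi$ and $\tilde R$ specified in the statement.
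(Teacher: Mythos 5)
Your proposal follows essentially the same route as the paper: the result is read off from \cite[Thm.~4.3]{Hairer2017} (equivalently Theorem~\ref{thm:BPHZ:FD}), and the only substantive check is that the kernel assignment $\CK(\phi)$ is controlled by $\|\phi\|_{\LTsys}$ in the relevant kernel norm, which is exactly the one-line content of the paper's proof. Your additional remarks on the degree conventions and tight partitions are consistent with (and slightly more explicit than) what the paper leaves implicit, so there is nothing to correct.
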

\begin{proof}
Comparing this formulation to \cite[Thm.~4.3]{Hairer2017}, we only need to note that one has $\|\CK(\phi)\|_{	\CK^- } \lesssim \| \phi \|_{\LTsys}$ uniformly over all $\phi \in \SN(\LTsys, K, \delta)$, where the norm $\| \cdot \|_{\CK^-}$ is defined as the smallest constant such that
\[
|D^k \CK_\ft(x)| 
\le 
\| \CK \|_{\CK^-} 
|x| ^{\deg^\FJ \ft - |k|_\fs}
\]
for any $\ft \in \CL$ and $k \in \N^{d+1}$ with $|k|_\fs < r$ (c.f.\ \cite[Eq.~2.2]{Hairer2017}). 
\end{proof}

\subsection{Embedding the tree algebra into the Feynman diagram algebra}
We now construct for any properly legged tree $\tau \in \plCThat$ a Feynman vacuum diagram $\Gamma(\tau) := (V_\Gamma(\tau),E_\Gamma(\tau))$ together with the necessary decorations $\fl : E_\Gamma(\tau) \to \CL$ and $\fn : V_\Gamma(\tau) \to \N^d$. To this end, we first introduce the notation that for $e \in E_\Gamma(\tau)$ we write $e^+,e^- \in V_\Gamma(\tau)$ for the two vertices such that $e$ is an edge from $e^-$ to $e^+$\footnote{We do not identify an edge $e$ with the pair $(e^-,e^+)$, since we will have to consider multiple edges between the same pair of vertices.}. The total order $\le$ on $\Legtype$ induces a total order $\le$ on $L_\Legtype(\tau)$, and we define $\CE_\Legtype(\tau)$ as the set of all ordered pairs $(e, \bar e)$ with $e\le \bar e$ (recall that $\bar e$ denotes the partner of $e$). We interpret any $(e,\bar e) \in \CE_\Legtype(\tau)$ as an edge by setting $(e, \bar e)^- := e^\downarrow$ and $(e, \bar e)^+ := \bar e^\downarrow$, and with this notation we set
\[
V_\Gamma(\tau) := N(\tau)
\quad \text{ and } \quad
E_\Gamma(\tau) := \CE_\Legtype(\tau) \sqcup K(\tau).
\]
The decoration $\fn$ is then taken over from $\tau$, and the decoration $\fl : E_\Gamma(\tau) \to \CL$ is defined by setting $\fl(e):=\ft(e)^{(\fe(e))}$ for any $e\in K(\tau)$ and $\fl(e, \bar e):= (\ft(e),\ft(\bar e)) ^{(\fe(e)+\fe(\bar e))}$ for any $(e,\bar e) \in \CE_\Legtype(\tau)$. We finally specify that the distinguished vertex in $V_\Gamma(\tau)$ is given by $v_\star := \rho(\tau) \in V_\Gamma(\tau)$. This specifies a Feynman vacuum diagram, and we summarise this in the following lemma.

\begin{lemma}
For any properly legged tree $\tau \in \plCThat$ the vacuum diagram $\Gamma(\tau)=(\Gamma(\tau),\fn,\fl,v_\star)$ is a connected vacuum diagram and element of  the algebra $\hat\CH_-$. 
\end{lemma}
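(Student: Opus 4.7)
The plan is to check the structural requirements of a connected vacuum diagram element of $\hat\CH_-$ one by one; none of the verifications is deep but each uses one aspect of the construction. First I would confirm that $\Gamma(\tau)$ is a Feynman diagram in the sense of Section~\ref{sec:Feynman-diagrmas}: the edge set $E_\Gamma(\tau) = \CE_\Legtype(\tau) \sqcup K(\tau)$ is a disjoint union of internal edges with endpoints in $V_\Gamma(\tau) = N(\tau)$ by construction; the decoration $\fl$ takes values in $\CL = \ltfd \sqcup \FL_+$ since on $K(\tau)$ it records the kernel type with derivative decoration, and on $\CE_\Legtype(\tau)$ the pair $(\ft(e),\ft(\bar e))$ lies in $\ltfd$ because $e \le \bar e$ by the chosen total order and $e^\downarrow \ne \bar e^\downarrow$ by property~\ref{item:properly:legged:leg_coupling}.\ of Definition~\ref{def:propery:legged}; and $\fn : V_\Gamma(\tau) \to \N^d$ is inherited directly from $\tau$. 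The specification of $v_\star = \rho(\tau)$ as the unique distinguished vertex of the single connected component gives the vacuum diagram structure.

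For connectedness, I would use that $\tau$ is a rooted tree, so the sub-diagram $(N(\tau), K(\tau))$ is already connected (viewed as an undirected graph), and adjoining the edges in $\CE_\Legtype(\tau)$ cannot break this. The membership in $\hat\CD_-$ (rather than just in $\CD$) requires at least one internal edge per connected component: this follows because the only tree $\tau \in \plCThat$ with $E(\tau) = \emptyset$ would be a single decorated root, which by $\homofancyex\tau \le 0$ forces $\fn(\rho_\tau) = 0$ and hence $\tau = \one$, handled separately as the unit. For any genuinely non-trivial tree, either $K(\tau) \ne \emptyset$ or $\tau$ contains at least one leg (which by property~\ref{item:properly:legged:leg_coupling}. comes with a partner, producing a non-empty $\CE_\Legtype(\tau)$), so $E_\Gamma(\tau) \ne \emptyset$. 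The main (though still routine) obstacle is simply bookkeeping: making sure the definition of $\CE_\Legtype(\tau)$ via the chosen total order really yields a well-defined undirected graph without double-counting the pair $\{e,\bar e\}$, and that the resulting type in $\ltfd$ matches the conventions of Section~\ref{sec:Feynman-diagrmas}; both are immediate from the inequality $e < \bar e$ used in the definition. The assignment $\tau \mapsto \Gamma(\tau)$ extends to $\plCThat$ multiplicatively by taking disjoint unions of diagrams, under which the internal-edge requirement is inherited from each factor.
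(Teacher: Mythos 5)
The paper offers no proof of this lemma at all: it is stated as an immediate summary of the construction of $\Gamma(\tau)$, so your write-up supplies the routine verification the authors omit rather than diverging from an existing argument. Your checks of the Feynman-diagram structure, of connectedness (via the fact that $(N(\tau),K(\tau))$ is a tree, which implicitly uses normality of the rule to guarantee that every $u\in N(\tau)\setminus\{\rho_\tau\}$ is attached to its parent by a kernel-type edge, the tips of noise-type and leg-type edges having been removed from $N(\tau)$), and of multiplicativity are all sound. One small correction: what places $(\ft(e),\ft(\bar e))$ in $\ltfd$ is the strict inequality $\ft(e)<\ft(\bar e)$, which follows from $e<\bar e$ together with property~\ref{item:properly:legged:legtypeunique}.\ of Definition~\ref{def:propery:legged} (each leg type occurs at most once) and the standing assumption $\bar\legtype\ne\legtype$; the condition $e^\downarrow\ne\bar e^\downarrow$ from property~\ref{item:properly:legged:leg_coupling}.\ only rules out self-loops.

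The one genuine gap is in your argument that every connected component contains an internal edge. The dichotomy ``either $K(\tau)\ne\emptyset$ or $\tau$ contains at least one leg'' does not exhaust the non-trivial properly legged trees: a tree whose only edges are noise-type edges, e.g.\ a single edge of type $\Xi\in\FL_-$ attached to the root, satisfies all five conditions of Definition~\ref{def:propery:legged} vacuously (its $\CL(\tau)$ is a singleton and $\hat\CL(\tau)=\emptyset$, so conditions~\ref{item:properly:legged:leg_coupling}.--\ref{item:properly:legged:leaf_hat_coupling}.\ impose nothing), has $\homofancyex{\tau}\le 0$, and hence belongs to $\plCThat$; yet $\Gamma(\tau)$ is then a single isolated vertex with $E_\Gamma(\tau)=\emptyset$, which fails the defining requirement of $\hat\CD_-$. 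To close this you must either exclude such degenerate trees --- which is harmless for the paper, since the lemma is only ever applied to $\CV\wli\tau$ with $\tau\in\adCT$ and to the factors produced by the coproduct, all of which carry at least one leg pair or kernel-type edge --- or observe that any properly legged tree with $\#\CL(\tau)\ge 2$ has $\CE_\Legtype(\tau)\ne\emptyset$ by condition~\ref{item:properly:legged:leaf_coupling}.\ and any one with $\#N(\tau)\ge 2$ has $K(\tau)\ne\emptyset$. As written, your case analysis silently assumes the bare-noise case away.
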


In plain words, we can view $\Gamma(\tau)$ as the Feynman diagram obtained from $\tau$ by killing the noise types edge $e \in L(\tau)$ an marrying each leg of $\tau$ with its respective partner. 
We also set
\begin{align}\label{eq:CV}
\CV\tau := (-1)^{m(\tau)} \Gamma(\tau)
\end{align}
where we define for any properly legged tree $\tau \in \plCThat$ the quantity
\[
m(\tau) := \sum_{ (e,\bar e) \in \CE_\Legtype(\tau) } \fe(e).
\]
If we extend $\CV$  multiplicatively to a map on $\plCThat$, we obtain an algebra monomorphism
\[
\CV : \plCThat \to \hat\CH_-.
\]
We now have the following relation between the evaluations $\Pi$ on $\hat\CH_-$ and $\bar \Upsilon$ on $\plCThat$, respectively.
\begin{lemma}\label{lem:identity:evaluations:trees:feynman:diagrams}
For any $\phi \in \SN$ and any large-scale kernel assignment $R \in \CK^+_\infty$, one has the identity
\begin{align}\label{eq:identity:evaluations:trees:feynman}
	\Pi^{\CK,\CR} \CV = \eval{\phi}{R},
\end{align}
on $\plCThat$, where $\CK$ and $\CR$ are constructed from $\phi$ and $R$ as in (\ref{eq:feynman:diagram:kernel:assi}).
\end{lemma}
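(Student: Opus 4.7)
The plan is a direct computation: unpack both sides of \eqref{eq:identity:evaluations:trees:feynman} using the definitions and verify that they coincide term-by-term. Since both $\Pi^{\CK,\CR}\CV$ and $\eval{\phi}{R}$ are algebra homomorphisms on $\plCThat$ (the first because $\Pi^{\CK,\CR}$ is multiplicative over connected components and $\CV$ is an algebra monomorphism by construction; the second because it was multiplicatively extended in Section~\ref{sec:evaluation:characters}), it suffices to prove the identity for a single properly legged tree $\tau = (T^{\fn,\fo}_\fe,\ft)$.

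First I would write out the right-hand side. Using Definition~\ref{def:bar:Upsilon:large:scale} with $\eta = \one$ (so the cumulant factor is $1$) together with \eqref{eq:CK}, \eqref{eq:CK:ctr} and \eqref{eq:Upsilon:bar:phi:psi}, I would integrate out all variables associated to $L(\tau) \sqcup L_\Legtype(\tau)$ against the $\delta$-functions $\delta(x_e - x_{e^\downarrow})$. The noise-type variables disappear trivially. Each ordered partner pair $(e,\bar e) \in \CE_\Legtype(\tau)$ with $e<\bar e$ contributes, after using the assumed support property of $\bar\psi$ (which equals~$1$ on the relevant scaled ball so that it may be ignored) and distributing the derivatives, a single factor of the form
\begin{equation*}
D^{\fe(e)}_{x_e} D^{\fe(\bar e)}_{x_{\bar e}} \phi_{\ft(e)}(x_e - x_{\bar e}) \Big|_{x_e = x_{e^\downarrow},\, x_{\bar e} = x_{\bar e^\downarrow}} = (-1)^{|\fe(\bar e)|_\fs}\, D^{\fe(e)+\fe(\bar e)} \phi_{\ft(e)}(x_{e^\downarrow} - x_{\bar e^\downarrow}).
\end{equation*}
What remains is an integral over $\bar\domain^{N(\tau)}$ with $x_{\rho_\tau}$ pinned to the origin, an integrand that consists of one factor $D^{\fe(e)}(K+R)_{\ft(e)}(x_{e^\downarrow} - x_{e^\uparrow})$ per $e \in K(\tau)$, one factor of the displayed form per partner pair, and the polynomial factor $\prod_{u \in N(\tau)} x_u^{\fn(u)}$.

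Next I would write out $\Pi^{\CK,\CR}\Gamma(\tau)$. By construction $V_\Gamma(\tau) = N(\tau)$, $v_\star = \rho_\tau$, and $E_\Gamma(\tau) = K(\tau) \sqcup \CE_\Legtype(\tau)$. The kernel-type edges reproduce verbatim the factors appearing on the right-hand side. For a leg-pair edge $(e,\bar e) \in \CE_\Legtype(\tau)$ the decoration $\fl(e,\bar e) = (\ft(e),\ft(\bar e))^{(\fe(e)+\fe(\bar e))}$ together with $\CK_{(\ft(e),\ft(\bar e))} = \phi_{\ft(e)}$ and $\CR_{(\ft(e),\ft(\bar e))} = 0$ yields a factor
\begin{equation*}
D^{\fe(e)+\fe(\bar e)} \phi_{\ft(e)}(x_{(e,\bar e)^+} - x_{(e,\bar e)^-}) = D^{\fe(e)+\fe(\bar e)} \phi_{\ft(e)}(x_{\bar e^\downarrow} - x_{e^\downarrow}),
\end{equation*}
which differs from the right-hand side factor by the sign $(-1)^{|\fe(e)+\fe(\bar e)|_\fs}/(-1)^{|\fe(\bar e)|_\fs} = (-1)^{|\fe(e)|_\fs}$.

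Collecting these sign discrepancies over all partner pairs gives precisely the factor $(-1)^{|m(\tau)|_\fs}$ with $m(\tau) = \sum_{(e,\bar e) \in \CE_\Legtype(\tau)} \fe(e)$, which is absorbed by the definition $\CV\tau = (-1)^{m(\tau)}\Gamma(\tau)$ in \eqref{eq:CV} (interpreting $(-1)^{m(\tau)}$ as $(-1)^{|m(\tau)|_\fs}$). No step in this verification is genuinely hard; the only mild care required is in tracking the sign conventions for the two conventions of edge orientation (trees use $e^\uparrow \ge e^\downarrow$, Feynman diagrams use $(e,\bar e)^\pm$) and in handling derivatives landing on the second argument of the symmetric kernel $\phi_{\ft(e)}$, where the identity $\phi_\lt = \phi_{\bar\lt}(-\,\cdot)$ from Definition~\ref{def:FN} ensures that the expression is independent of the arbitrary choice of ordering $e < \bar e$ used to define $\CE_\Legtype(\tau)$.
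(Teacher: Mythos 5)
Your proposal is correct and follows essentially the same route as the paper's own proof: unpack $\Pi^{\CK,\CR}\Gamma(\tau)$ and $\bar\Upsilon^{\one,\hat\phi}_R\tau$ on a single properly legged tree, match the kernel-edge factors verbatim, and observe that the per-pair derivative/reflection signs accumulate to exactly the prefactor $(-1)^{m(\tau)}$ built into $\CV$. The only quibble is notational: the signs arising from differentiating the second argument of $\phi_{\ft(e)}(x_e-x_{\bar e})$ and from the reflection $\phi_{\lt}=\phi_{\bar\lt}(-\cdot)$ are $(-1)^{|k|}$ with $|k|=\sum_i k_i$ the unweighted length of the multi-index, not $(-1)^{|k|_\fs}$ — though this is the same convention the paper's own $(-1)^{m(\tau)}$ (with $m(\tau)$ a multi-index) leaves implicit.
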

\begin{proof}
Let $\tau \in \plCThat$ be a tree. We have to compare the definition of $\bar\Upsilon ^{\one,\phi} _R \tau$ in (\ref{eq:bar:Upsilon:large:scale}) with $\psi$ given by (\ref{eq:Upsilon:bar:phi:psi}) to the definition of $\Pi^{\CK,\CR}\CV \tau$ in \cite[Eqs~2.15, 4.3]{Hairer2017}. We re-write the integrand in \cite[Eqs~2.15, 4.3]{Hairer2017} as 
\begin{multline}
(-1)^{m(\tau)}
\delta_0(x_\rho)
\prod_{ e \in K(\tau) } 
	D^{\fe(e)}{(K+R)}_{\ft(e)}( x_{e^+} - x_{e^-} )
\\
\prod_{ \substack { e,\bar e \in \CE_\Legtype(\tau) } } 
	D^{\fe(e) + \fe(\bar e)} \phi_{ \ft(e) }  (x_{e^\downarrow} - x_{\bar e^\downarrow})
\prod_{ u \in N(\tau) } x_u^{\fn(u)}.
\end{multline}
Moreover, we have the identity
\begin{multline}
\prod_{ \substack { e,\bar e \in \CE_\Legtype(\tau) } } 
	D^{\fe(e) + \fe(\bar e)} \phi_{ \ft(e) }  (x_{v^\downarrow} - x_{u^\downarrow})
\\
=
(-1)^{m(\tau)}
\int_{L_\Legtype(\tau)} dx
\prod_{e \in L_\Legtype(\tau)}
	D^{ \fe( e ) } \delta_0(x_{u^\downarrow} - x_u )
\hat\phi_{L_\LT(\tau)} (x_{L_\Legtype(\tau)})
\end{multline}
where $\hat\phi_{L_\LT(\tau)}$ is as in (\ref{eq:Upsilon:bar:phi:psi}). Comparing this with (\ref{eq:Upsilon:bar:phi:psi}) the lemma follows at once.
\end{proof}

In a the next step we would like to understand the relation between the coproducts $\cpm$ and $\cpm$ on $\plCThat$ and $\hat\CH_-$, respectively. This in general quite messy, as for general trees $\tau \in \plCThat$ there is no obvious relation between the homogeneity $|\taua|_\fs$ and the degree $\LTdeg \Gamma(\taua)$ for subtrees $\taua$ of $\tau$. However, the situation is much nicer for trees of the form $\wli\tau$ for some $\tau \in \adCT$ with the property that $\LTsys  \in \LTsyss(\tau)$.

\begin{lemma}\label{lem:CV:subgraphs}
Let $\tau=T^\fn_\fe \in \adCT$, let $\LTsys \in \LTsyss(\tau)$, and let $(\Gamma, \fn, \fl) := \Gamma (\tau)$. Then, the set of full, connected subgraphs $\tilde \Gamma$ of  $\Gamma$ with $\LTsysdeg \tilde\Gamma^0_\fe<0$ coincides with the set of subgraphs $\tilde \Gamma$ of $\Gamma$ that satisfy one of the following two criteria.
\begin{enumerate}
\item \label{item:CV:subgraphs:trees}
There exists a subtree $\tilde \tau=\tilde T^0_\fe$ of $\tau$ with $|\tilde \tau|_\fs<0$ such that $\tilde \Gamma$ is the full subgraph of $\Gamma$ induced by $N(\tilde \tau)$.
\item \label{item:CV:subgraphs:legs}
The graph $\tilde \Gamma$ contains a single edge $e$ with the property that $\fl(e)=\cw^{(k)}$ for some $\cw \in \ltfd$ and $k \in \N^d$.
\end{enumerate}
In the first case one has $\boxnorm{\tilde T^0_\fe}_\fs = \LTsysdeg \CV \tilde \Gamma^0_\fe$ and $\tilde \Gamma = \Gamma (\forestlegs{\pi\tilde\tau})$, where $\pi$ is as above the projection that removes legs and $\forestlegs{\pi \tilde\tau}$ is as in (\ref{eq:forests:legs}). Finally,
 there exists $\bar\CM \ssq \CM$ such that $\CL(\tilde \tau) = \bigsqcup \bar\CM$, where $\CM$ is as in (\ref{eq:II:tau}) for $\LTsys$.
\end{lemma}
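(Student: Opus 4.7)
The plan is to exploit the canonical bijection $V_\Gamma(\tau) = N(\tau)$ to translate between full connected subgraphs of $\Gamma(\tau)$ and subforests of $\tau$ together with their induced leg structure, then to compare the Feynman-diagram degree $\LTsysdeg$ against the tree homogeneity computed with $\boxnorm{\cdot}_\fs$ on noise types.

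First I observe that any full connected subgraph $\tilde\Gamma$ of $\Gamma$ is determined by its vertex set $\tilde V \ssq N(\tau)$, and its edge set decomposes as $K(\sigma) \sqcup E_L$, where $\sigma \ssq \tau$ is the subforest of $\tau$ induced by $\tilde V$ and $E_L = \{(e,\bar e) \in \CE_\Legtype(\tau) : \{e^\downarrow, \bar e^\downarrow\} \ssq \tilde V\}$. Because $\tau \in \adCT$ has $\hat\CL(\tau) = \emptyset$ and contains only essential legs, $E_L$ is in bijection with the unordered pairs drawn from $\tilde V \cap \CL(\tau)$, and the set of leg types actually appearing in $\tilde\Gamma$ is precisely $\LT(\tau, \tilde V \cap \CL(\tau))$.

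The central computation is the degree formula. Let $N$ be the number of connected components of $\sigma$. Using $\sum_i \#K(\sigma_i) = \#\tilde V - N$, the $|\fs|(\#\tilde V - 1)$ term in $\LTsysdeg\tilde\Gamma^0_\fe$ absorbs one $-|\fs|$ per kernel edge, giving
\begin{equ}
\LTsysdeg \tilde\Gamma^0_\fe = \sum_{i=1}^N \sum_{e \in K(\sigma_i)}(|\ft(e)|_\fs - |\fe(e)|_\fs) + \sum_{(e,\bar e) \in E_L} 2\LTsysdeg(\ft(e)) + |\fs|(N-1).
\end{equ}
When $\tilde V \cap \CL(\tau) = \bigsqcup \bar\CM$ for some subsystem $\bar\CM \ssq \CM$, the disjointness of the blocks of $\CM$ means that on $\LT(\tau,M)$ the degree $\LTsysdeg$ restricts to $\LTdeg_{\LT(\tau,M)}$, so that applying \eqref{eq:definition:deg:I:explain} block-by-block yields $\sum_{(e,\bar e) \in E_L} 2\LTsysdeg(\ft(e)) = \sum_{e \in L(\sigma)} \boxnorm{\ft(e)}_\fs$. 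If in addition $\sigma$ is connected (so $N=1$ and $\sigma$ is a subtree $\tilde\tau$), the right-hand side collapses to $\boxnorm{\tilde\tau^0_\fe}_\fs$, which is the degree identity of criterion~1; the description $\tilde\Gamma = \Gamma(\forestlegs{\pi\tilde\tau})$ then follows by unwinding \eqref{eq:forests:legs} against the construction of $\Gamma$.

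For the forward direction I assume $\LTsysdeg\tilde\Gamma^0_\fe < 0$ and rule out everything except criteria~1 and~2. The key step is to decompose the leg contribution as $\sum_{M \in \CM}\sum_{\lt \in \LT(\tau, \tilde V \cap M)}\LTdeg_{\LT(\tau,M)}(\lt)$ and to show, using the strict concavity of the map $m \mapsto 1/(\sqrt{1/4+m(m-1)} - 1/2) = 1/(m-1)$ entering \eqref{eq:definition:deg:I}, that every block $M \in \CM$ which is only partially covered (i.e.\ $\emptyset \ne M \cap \tilde V \subsetneq M$) contributes strictly less negatively than it would if $M \ssq \tilde V$. Combined with the positive term $|\fs|(N-1)$ and the bound $\boxnorm{\sigma_i^0_\fe}_\fs > -\shalf$ for each component $\sigma_i$ containing at least one kernel edge (a consequence of Assumption~\ref{ass:main:reg} once $\kappa$ is small enough so that $\boxnorm{\cdot}_\fs$ is an $O(\kappa)$-perturbation of $\homos{\cdot}$), a direct case-count shows that negativity forces $N=1$, $\tilde V \cap \CL(\tau) = \bigsqcup\bar\CM$, and $\sigma$ connected (hence criterion~1). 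The remaining degenerate possibility is that $\sigma$ has no kernel edges at all: then either $\tilde V$ is a singleton (degree $0$, excluded) or $\tilde\Gamma$ reduces to a single leg-pair edge, which is criterion~2. The reverse direction is then immediate from the degree identity and the fact that $\boxnorm{\cdot}_\fs < \homos{\cdot}$.

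The main obstacle I anticipate is the concavity/convexity argument ruling out partially covered blocks $M \in \CM$; this requires carefully tracking how the denominator $\sqrt{1/4 + \#\LTa} - 1/2 = \#M - 1$ of $\LTdeg$ varies when only a proper subset of $M$ is included in $\tilde V$, and comparing against the $|\fs|(N-1)$ bonus obtained by splitting. Once this quantitative comparison is in place, the case analysis closes and both implications of the lemma follow.
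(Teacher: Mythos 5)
Your route is the same as the paper's: pass from the full connected subgraph $\tilde\Gamma$ to the kernel-edge subforest it induces, write $\LTsysdeg\tilde\Gamma^0_\fe$ as kernel contribution plus leg contribution plus $|\fs|(N-1)$, use the telescoping identity \eqref{eq:definition:deg:I:explain} block by block, and force $N=1$ via $\boxnorm{\cdot}_\fs>-\shalf$ for components containing a kernel edge. The degree formula and the backward direction are correct. The problem is that the forward direction hinges entirely on the step you yourself defer as ``the main obstacle'', and the plan you sketch for it does not close as stated. Showing that a partially covered block $M\in\CM$ ``contributes strictly less negatively than it would if $M\ssq\tilde V$'' only yields $\LTsysdeg\tilde\Gamma^0_\fe>\boxnorm{\tilde\tau^0_\fe}_\fs$, which is perfectly compatible with the degree still being negative. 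What is needed is a quantitative lower bound on the deficit: with $p=\#M$ and $k=\#(M\cap\CL(\tilde\tau))$, $1\le k\le p-1$, the missing leg mass is
\begin{equ}
-\frac{p-k}{p-1}\sum_{e}\boxnorm{\ft(e)}_\fs\;\ge\;-\frac{k(p-k)}{p-1}\,\alpha\;\ge\;-\alpha\;,
\qquad \alpha:=\max_e\boxnorm{\ft(e)}_\fs<0\;,
\end{equ}
using the elementary inequality $k(p-k)\ge p-1$ (equivalently $(k-1)(p-k-1)\ge 0$), and one then concludes $\LTsysdeg\tilde\Gamma^0_\fe\ge\boxnorm{\tilde\tau^0_\fe}_\fs-\alpha\ge 0$ only by invoking super-regularity, i.e.\ $\boxnorm{\tilde\tau^0_\fe}_\fs\ge\boxnorm{\ft(e)}_\fs$ for every noise edge $e$ of $\tilde\tau$, which follows from Assumption~\ref{ass:main:reg} for $\kappa$ small. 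Your proposal never invokes this at the relevant point, and the ``strict concavity of $m\mapsto 1/(m-1)$'' is not the input that makes the estimate work; the combination $k(p-k)/(p-1)\ge 1$ together with super-regularity is.

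A second, smaller omission: to force $\CL(\tilde\tau)=\bigsqcup\bar\CM$ you must also exclude the case of a noise node $u\in\CL(\tilde\tau)$ lying in no block of $\CM$ at all. This is not a partially covered block, so it is not caught by your block-by-block comparison: the legs at $u$ carry degree $\redef$ and contribute nothing, so the degree exceeds $\boxnorm{\tilde\tau^0_\fe}_\fs-\boxnorm{\ft(e)}_\fs\ge 0$ for the noise edge $e$ at $u$, again by super-regularity. Once these two estimates are supplied, your case analysis (including the purely leg-connected configurations, where only vertex pairs survive and give criterion~2) does close, and the argument coincides with the paper's.
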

\begin{proof}
Let $\tilde\Gamma$ be a connected, full subgraph of $\Gamma(\tau)$ such that $\LTsysdeg \tilde\Gamma_\fe^0<0$, and let $\hat \tau$ be the subgraph of $\tau$ induced by the edge set $E(\hat \tau):= K(\tau)\cap E(\tilde\Gamma)$. We first argue that either $\hat \tau$ is a subtree of $\tau$, or point~\ref{item:CV:subgraphs:legs}. above applies. 
For this we denote by $\hat\tau_1, \ldots , \hat \tau_m$ for some $m \ge 1$ the connected components of $\hat\tau$, so that $\hat\tau_i$ is a subtree of $\tau$ for any $i \le m$. We obtain another tree $\tilde \tau_i$ from $\tilde\tau_i$ by adding all noise type edges $e \in L(\tau)$ incident to $\hat\tau_i$, so that $\tilde \tau_i$ is the subtree of $\tau$ induced by the edge set $E(\tilde \tau_i) := E( \hat \tau _i ) \sqcup \{ e \in L(\tau) : e^\downarrow \in N(\hat \tau_i) \}$. It now follows from a counting argument identical to  (\ref{eq:definition:deg:I:explain}) that $\LTsysdeg \tilde \Gamma^0_\fe$ is given by
\begin{multline}\label{eq:CV:subgraphs:1}
\sum_{i=1}^m
\sum_{e \in K(\hat\tau_i)} (|\ft(e)|_\fs - |\fe(e)|_\fs) + (m-1)|\fs|
\\+
\sum_{
	\substack{
		(e,\bar e) \in \CE_\Legtype(\tau) \cap E(\hat\Gamma)
	}
}
\LTsysdeg(\ft(e)) + \LTsysdeg(\ft(\bar e))
+
\sum_{u \in V(\tilde\Gamma)} |\fn(u)|_\fs
\\
\ge
\sum_{i=1}^m \boxnorm{(\tilde\tau_i)^0_\fe}_\fs + (m-1)|\fs|,
\end{multline}
Now, by our assumption on the regularity structure one has $\boxnorm{(\tilde\tau_i)^0_\fe}_\fs>-\shalf$ unless $\tilde\tau_i = \Xi$ for some $\Xi \in \FL_-$ with $\homofancys\Xi = -\shalf$. It follows that this expression can only be negative for $m=1$ or for $m=2$, and in the second case one has necessarily that $\hat\tau_i$ is the trivial tree for $i=1,2$, so that point~\ref{item:CV:subgraphs:legs} above applies.

Assume for the rest of the proof that $m=1$ and hence $\tilde\tau$ is a subtree of $\tau$. It then follows that $\tilde\Gamma$ is the full subgraph of $\Gamma$ induced by set $N(\tilde \tau)$ of nodes of $\tilde \tau$. Moreover, from (\ref{eq:CV:subgraphs:1}) we infer that $\boxnorm{\tilde \tau^0_\fe}_\fs<0$. We are left to show that $\CL(\tilde\tau)$ can be written as a disjoint union of some $\bar\CM \ssq \CM$. Assume this does not hold. We distinguish two cases. In the first case there exists $u \in \CL(\tilde \tau)$ such that one has $u \notin \bigsqcup \CM$. Let $e \in L_\Legtype(\tilde\tau)$ be the noise type edge with $e^\downarrow = u$, and observe that one gets similarly to (\ref{eq:CV:subgraphs:1}) the estimate
\[
\LTsysdeg \tilde\Gamma^0_\fe \ge \boxnorm{\tilde \tau^0_\fe} _\fs - \boxnorm{\ft(e)}_\fs>0.
\]
The last inequality follows again from the assumptions made on the regularity structure.
In the second case there exists $M \in \CM$ such that $M \cap \CL(\tilde\tau) \ne \emptyset$, but $M \nsubseteq \CL(\tilde \tau)$. 
Then we set $\alpha := \max\{ \boxnorm{\ft(e)}_\fs : e \in L(\tilde \tau), e^\downarrow \in M \cap \CL(\tilde\tau)\}$ and we have similar to (\ref{eq:CV:subgraphs:1}) the estimate
\begin{align}\label{eq:inequality:deg:FJ}
\LTsysdeg \tilde \Gamma^0_\fe
&
\ge 
\boxnorm{\tilde\tau^0_\fe}_\fs - 
	\sum_{e \in L(\tilde\tau) , e^\downarrow \in M \cap \CL(\tilde\tau)} 
		\frac{\# M -  \# (M \cap \CL(\tilde\tau))}{\#M -1} \boxnorm{\ft(e)}_\fs
\\
&
\ge
\boxnorm{\tilde\tau_\fe^0}_\fs -  
		\frac{\# M -  \# (M \cap \CL(\tilde\tau))}{\#M -1}
			 \# (M \cap \CL(\tilde\tau)) \alpha.
\end{align}
Since $1\le \# (M \cap \CL(\tilde\tau)) \le \#M-1$ we can bound this expression by
\[
\boxnorm{\tilde\tau^0_\fe}_\fs -  \alpha \ge 0.
\]
The last inequality follows again from the assumption on the regularity structure and the fact that there exists a noise type edge $e \in L(\tilde\tau)$ such that $\boxnorm{\ft(e)}_\fs = \alpha$.
\end{proof}

We are now in a position to show an identity between the coproduct on the respective spaces.
For this we introduce the canonical projection $\p_- : \hat\CH_- \to \CH_-$, and we define the projection $\p_\trees : \CH_- \to \CH_-$ as the multiplicative projection onto the subalgebra  of $\CH_-$ generated by connected vacuum diagrams $(\Gamma^\fn_\fe,\fl) \in \CH$ with the property that there exists an edge $e \in E(\Gamma)$ such that $\fl(e) \in \FL_+$. With this notation, we have the following lemma.

\begin{lemma}\label{lem:CV:coproduct}One has the identity
\begin{align}\label{eq:CV:coproduct}
( \p_- \CV \otimes \CV)
	\Delta_-^\ex \wli
=
(\p_\trees \otimes \Id)
	\Delta_- \CV \wli
\end{align}
on $\adCT$.
\end{lemma}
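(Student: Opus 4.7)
The plan is to expand both sides of the claimed identity using the two explicit coproduct formulas, and then match the resulting sums term-by-term using Lemma~\ref{lem:CV:subgraphs} together with the construction of $\CV$ in \eqref{eq:CV}.

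First I would unfold the left-hand side using the formula \eqref{eq:coproduct:pl} from Lemma~\ref{lem:CT:pl}. For $\tau = T^\fn_\fe \in \adCT$, this expresses $\cpm \wli \tau$ as a sum over subforests $\CF \in \div(\pi \tau)$ with decorations $\fn_{\forestlegs\CF},\fe_{\forestlegs\CF}$, whose left tensor factor is $\prod_{S \in \forestlegs\CF} S^{\fn_{\forestlegs\CF}+\pi\ce_{\forestlegs\CF}}_\fe$ and whose right factor is $(T/\forestlegs\CF)^{\fn-\fn_{\forestlegs\CF},\,[\fo]_{\forestlegs\CF}}_{\fe+\fe_{\forestlegs\CF}}$. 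Applying $(\p_-\CV \otimes \CV)$ converts each factor into a Feynman diagram via \eqref{eq:CV}, with a global sign $(-1)^{m(S)}$ per connected component.

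Next I would unfold the right-hand side using the Feynman-diagram coproduct formula \eqref{eq:coproduct:fd} applied to $\CV\wli\tau = (-1)^{m(\tau)}\Gamma(\wli\tau)$. This produces a sum over full subgraphs $\tilde\Gamma \subseteq \Gamma(\wli\tau)$ whose connected components have negative $\fddeg$-degree, together with decorations $\tilde\fn,\tilde\fe$. Now invoke Lemma~\ref{lem:CV:subgraphs}: every connected component of such a $\tilde\Gamma$ is either of the form $\Gamma(\forestlegs{\pi\tilde\tau})$ for a divergent subtree $\tilde\tau\subseteq\tau$, or a single edge of leg type $(\lt,\bar\lt)$. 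The projection $\p_\trees$ kills any product containing a component of the latter type, so the surviving subgraphs are precisely the disjoint unions $\bigsqcup_i \Gamma(\forestlegs{\pi\tilde\tau_i})$ ranging over the same subforests $\CF=\bigsqcup_i \pi\tilde\tau_i \in \div(\pi\tau)$ that indexed the left-hand side.

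The heart of the proof is then a decoration-by-decoration matching, which splits into three checks: (i)~the polynomial decoration $\tilde\fn$ on the Feynman-diagram side corresponds bijectively to $\fn_{\forestlegs\CF}$ on the tree side, with matching binomial coefficient $\binom{\fn}{\fn_{\forestlegs\CF}}$; (ii)~the boundary-edge decoration $\tilde\fe$ supported on $\partial_{\tilde\Gamma} E(\Gamma(\wli\tau))$ corresponds to $\fe_{\forestlegs\CF}$ supported on $\partial_{\forestlegs\CF} E(\tau)$, where one must observe that boundary kernel-type half-edges of $\forestlegs\CF$ map to boundary kernel edges of $\tilde\Gamma$, while boundary half-edges sitting on legs contribute leg decorations that increment $m$ on the left tensor factor and decrement it on the right; (iii)~the extended decoration $[\fo]_{\forestlegs\CF}$ is precisely the one produced by the contraction operation $\Gamma/\tilde\Gamma$, so that $\CV$ applied to the right factor on the tree side agrees with the right factor on the Feynman-diagram side.

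The main obstacle will be step~(iii) together with the sign reconciliation in step~(ii): the sign $(-1)^{m(\tau)}$ contributed by $\CV$ must split as a product $\prod_i (-1)^{m(S_i)} \cdot (-1)^{m(T/\forestlegs\CF)}$, times the factor $(-1)^{|\tilde\fe_{\rm legs}|}$ coming from the Feynman-diagram coproduct's $(-1)^{|\tilde\fe|}$; counting how many leg edges cross the boundary of $\forestlegs\CF$ and checking that each one contributes exactly the right sign to both $m$ and to $\tilde\fe$ is the bookkeeping that makes the identity work. Once this is done, the summands match identically and the identity follows.
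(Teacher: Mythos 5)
Your proposal is correct and follows essentially the same route as the paper: expand both sides via \eqref{eq:coproduct:pl} and \eqref{eq:coproduct:fd}, use Lemma~\ref{lem:CV:subgraphs} to identify the subgraphs surviving $\p_\trees$ with the forests $\forestlegs{\CF}$ for $\CF\in\div(\pi\tau)$, and match decorations and signs term by term, with the $(-1)^{m(\cdot)}$ factors in the definition of $\CV$ absorbing the $(-1)^{|\out\tilde\fe|}$ from the Feynman-diagram coproduct. The only imprecision is in your step~(iii): Feynman diagrams carry no extended decoration at all, so the relevant observation is simply that $\CV$ forgets $\fo$, not that the contraction $\Gamma/\tilde\Gamma$ reproduces it.
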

\begin{proof}
Since the expression on both sides are multiplicative and linear, it suffices to show this identity for trees, and we fix for the entire proof a tree $\tau = T^\fn_\fe \in \adCT$. We start with the expression given by applying the right-hand side of (\ref{eq:CV:coproduct}) to $\tau$ and transform it into the left-hand side.

By definition one has
\[
\Delta_- \CV \wli \tau 
=
(-1)^{m(\tau)}
\sum_{\tilde\Gamma} \sum_{\tilde \fe, \tilde \fn}
	\frac{(-1)^{|\out \tilde \fe|}}{\tilde \fe !}
	\binom{\fn}{\tilde\fn}
	\tilde\Gamma^{\tilde\fn+\pi\tilde\fe}_{\fe} \otimes (\Gamma/\hat\Gamma)^{\fn-\tilde\fn}_{
	[\tilde \fe]+ \fe}
\]
where we use the convention that the first sum runs over subgraphs $\tilde\Gamma$ of $\Gamma^\fn_\fe := \CV \tau$ with the property that any connected component of $\tilde\Gamma^0_\fe$ is divergent, and the second sum runs over all decorations $\tilde\fe: \partial_{\tilde\Gamma} E(\Gamma) \to \N^d$ and $\tilde\fn : V(\Gamma) \to \N^d$ such that $\supp \tilde \fn \ssq V(\tilde \Gamma)$. Here, we write $\partial_{\tilde\Gamma} E(\Gamma)$ for the set of half-edges $(e,v)$ with $e \in E(\Gamma) \backslash E(\tilde\Gamma)$ and $v \in e \cap V(\tilde\Gamma)$, and we write $[\tilde\fe](e):=\sum_{u \in e}\tilde\fe(e,u)$.

After applying $\p_\trees\otimes \Id$ to this identity, we restrict the first sum to those subgraphs $\tilde\Gamma$ with the property that each connected component of $\tilde\Gamma$ is of the first type in Lemma~\ref{lem:CV:subgraphs}. In this case we can write this graph in the form $\tilde \Gamma = \prod_{S \in \forestlegs{\bar\CF}} \Gamma(S)$ for some forest $\CF \in \div^\star(\pi\tau)$, where we write $\div^\star(\pi\tau) \subseteq \div(\pi\tau)$ for the set of forests $\CF \in \div(\pi\tau)$ with the property that each tree $S \in \bar\CF$ satisfies the first condition of Lemma~\ref{lem:CV:subgraphs}. We can now write 
\begin{align*}
(\p_\trees &\otimes \Id)\Delta_- \CV \wli\tau 
\\
&=
(-1)^{m(\tau)}
\sum_{\CF \in \div^\star{(\pi\tau)}} 
	\sum_{\fe_{\forestlegs\CF},  \fn_{\forestlegs\CF}}
	\frac{ (-1) ^ { \langle\fe_\CF\rangle } }{\fe_{\forestlegs \CF} !}
	\binom{\fn}{\fn_{\forestlegs \CF}}
	\Gamma(\forestlegs{\CF}) ^{\fn_\CF + \pi \fe_{\forestlegs \CF}} _{ \fe } 
	\otimes 
	(\Gamma( T/\forestlegs\CF)) ^{\fn-\fn_{\forestlegs \CF}}
		_{\fe_\CF + \fe}
\\
&=
\sum_{\CF \in \div^\star{(\pi\tau)}} 
	\sum_{\fe_{\forestlegs\CF},  \fn_{\forestlegs\CF}}
	\frac{ 1 }{\fe_{\forestlegs \CF} !}
	\binom{\fn}{\fn_{\forestlegs \CF}}
	\CV(\forestlegs{\CF}) ^{\fn_{\forestlegs \CF} + \pi \fe_{\forestlegs \CF}} _{ \fe } 
	\otimes 
	(\CV( T/\forestlegs\CF)) ^{\fn-\fn_{\forestlegs \CF}}
		_{\fe_{\forestlegs \CF} + \fe},
\end{align*}
where $\langle \fe_\CF \rangle:=| \sum_ { ( u , v ) \in \CE_\Legtype ( \tau ) } \fe_\CF(u)|$.
The sums here run over all decorations $\fn_{\forestlegs \CF}$ and $\ce_{\forestlegs\CF}$ satisfying the condition that $\deg \Gamma ( S)^{\fn_{\forestlegs \CF} + \pi \fe_{\forestlegs \CF}} _{ \fe } <0$ for any $S \in \forestlegs{\bar\CF}$, which, due to Lemma~\ref{lem:CV:subgraphs}, is equivalent to $|S^{\fn_{\forestlegs \CF} + \pi \fe_{\forestlegs \CF}} _{ \fe } |_\fs<0$. Moreover, again with Lemma~\ref{lem:CV:subgraphs}, it follows that this condition is violated for any $\CF \in \div (\tau) \backslash \div^\star(\tau)$ for any choice of decoration, so that we can re-write this expression further into
\begin{equ}
(\p_\trees \otimes \Id)\Delta_- \CV \wli \tau 
=
(\p_- \CV \otimes \CV)
\sum_{\CF \in \div{\tau}} 
	\sum_{\fe_{\forestlegs\CF},  \fn_{\forestlegs\CF}}
	\frac{ 1 }{\fe_{\forestlegs \CF} !}
	\binom{\fn}{\fn_{\forestlegs \CF}}
	\forestlegs{\CF} ^{\fn_{\forestlegs \CF} + \pi \fe_{\forestlegs \CF}} _{ \fe } 
	\otimes 
	( T/\forestlegs\CF) ^{\fn-\fn_{\forestlegs \CF}}
		_{\fe_{\forestlegs \CF} + \fe}.
\end{equ}
Comparing this with the definition of the coproduct $\Delta_-^\ex$ in (\ref{eq:coproduct:pl}), and noting that the extended $\fo$-decoration is irrelevant due to the definition of the operator $\CV$, concludes the proof.
\end{proof}

We now construct a character $h(\CK,\CR)$ on $\CH_-$ in an analogous way to (\ref{eq:h:CI:phi:R}).
Given a set $\LTa \ssq \LT$ which is closed under conjugation, we write $\CH_-^\LTa \ssq \CH_- $ and $\hat\CH_-^\LTa \ssq \hat\CH_-$ for the linear sub-space of $\CH_-$ and $\hat\CH_-$ respectively, spanned by all connected Feynman diagrams $\Gamma = (\CV, \CE)$ with the property that for any $e \in \CE$ one has either $\ft(e) \in \FL_+$ or $\ft(e)=(\lt,\bar\lt) \in \LT/_-$ with $\lt \in \LTa$, and we write $P_\LTa : \CH_- \to \CH_-^\LTa$ for the canonical projection.

With this notation we define a character $h(\CK,\CR)$ on $\CH_-$ by setting
\begin{align}\label{eq:eps:beta:bound:h}
h(\CK,\CR)\Gamma: = 
	- \sum_{\LTa \in \LTsys} 
		\Pi^{\CK,\CR} P_\LTa \Gamma
\end{align}
for any connected vacuum Feynman diagram $\Gamma$, and extending this linearly and multiplicatively. We leave the set $\LTsys$ implicit in this notation, since it is fixed for the entire proof anyway.

Before we state the next Lemma, let us give an equivalent definition of the characters $h(\FJ,\phi,R)$ and $h(\CK,\CR)$ defined in (\ref{eq:h:CI:phi:R}) and (\ref{eq:eps:beta:bound:h}). 

First note that we introduced linear projections $P_\LTa : \plCT \to \plCT$ and $P_\LTa : \CH_- \to \CH_-$. We generalise this notation to systems $\LTsysa \in \LTsyss$ in the following way. We write $\plCT[\LTsysa] \ssq \plCT$ (resp. $\CH_-^\LTsysa \ssq \CH_-$) for the linear subspaces spanned by all products of trees $\prod_{\LTa \in \LTsysa}\tau_\LTa$ (resp. vacuum diagrams $\prod_{\LTa \in \LTsysa}\Gamma_\LTsysa$) with the property that $\ft(L_\LT(\tau_\LTa))=\LTa$ (resp. $\Gamma_\LTa \in \CH_-^\LTa$) for any $\LTa \in \LTsysa$. We then write $P_\LTsysa$ for the linear projection onto $\plCT[\LTsysa]$ and $\CH_-^\LTsysa$, respectively. We overload the notation $P_\LTsysa$ here since these projections are closely related, compare (\ref{eq:P:CV:commute}) below. 
With this notation, we have the following identities:
\begin{align}
\label{eq:h:trees:rewrite}
\LchR &:= \sum_{\LTsysa \ssq \LTsys} (-1)^{\# \LTsysa} 
	\barg \Upsilon^{\one, \phi}_R P_\LTsysa 
\\
\label{eq:h:feynman:rewrite}
h(\CK, \CR) &:= \sum_{\LTsysa \ssq \LTsys} (-1)^{\# \LTsysa} 
	\Pi^{\CK,\CR} P_\LTsysa
\end{align}
on $\plCT$ and $\CH_-$, respectively. 

\begin{lemma} 
Let $\CK$, $\CR$ and $\tilde \CR$ be constructed from $\phi$, $R$ and $\tilde R$ as in (\ref{eq:feynman:diagram:kernel:assi}). 
Then one has the identity
\begin{align}\label{eq:bound:eps:beta:2}
( \LchR
	\otimes
	\eval{\phi}{\tilde R} )
	\cpmwi
=
(h(\CK, \CR)
	\otimes
	\Pi^{\CK,\tilde \CR})
	\Delta_-^\full
	\CV
	\wli
\end{align}
on $\adCT$.
\end{lemma}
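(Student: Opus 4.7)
\emph{Plan of proof.} The strategy is to reduce the claimed identity to a combination of Lemma~\ref{lem:identity:evaluations:trees:feynman:diagrams} (which identifies $\Pi^{\CK,\CR}\CV$ with $\eval{\phi}{R}$) and Lemma~\ref{lem:CV:coproduct} (which intertwines $\CV$ with the two coproducts up to the projection $\p_\trees$), after first expanding both characters $\LchR$ and $h(\CK,\CR)$ into their defining sums over subsystems $\LTsysa\subseteq\LTsys$ via \eqref{eq:h:trees:rewrite} and \eqref{eq:h:feynman:rewrite}. Using bilinearity of $\otimes$ it then suffices to verify, for each fixed $\LTsysa\subseteq\LTsys$, the identity
\[
(\barg\Upsilon^{\one,\phi}_R P_\LTsysa \otimes \eval{\phi}{\tilde R})\Delta_-^{\mathrm{ex}}\wli
=
(\Pi^{\CK,\CR}P_\LTsysa \otimes \Pi^{\CK,\tilde\CR})\Delta_-\CV\wli
\]
on $\adCT$.

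First I would use Lemma~\ref{lem:identity:evaluations:trees:feynman:diagrams} on both tensor factors on the left, turning the left-hand side into $(\Pi^{\CK,\CR}\CV P_\LTsysa\otimes\Pi^{\CK,\tilde\CR}\CV)\Delta_-^{\mathrm{ex}}\wli$. Next I would commute $\CV$ past the projection $P_\LTsysa$, i.e.\ use $\CV P_\LTsysa = P_\LTsysa \CV$ on $\plCT$. This is the identity \eqref{eq:P:CV:commute} referenced (but not displayed) in the paragraph preceding the statement, and it holds because $\CV$ marries each leg with its partner without altering the multiset of leg-type pairs, so the subspace of $\plCT$ on which the leg types appearing form exactly $\LTa$ is mapped by $\CV$ into $\CH_-^\LTa$, and conversely every $\Gamma(\tau')$ with $\ft(L_\Legtype(\tau'))$ a disjoint union of elements of $\LTsysa$ is the image of such a tree. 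After this commutation the left-hand side reads $(\Pi^{\CK,\CR}P_\LTsysa\otimes\Pi^{\CK,\tilde\CR})(\CV\otimes\CV)\Delta_-^{\mathrm{ex}}\wli$.

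At this point I would invoke Lemma~\ref{lem:CV:coproduct}, which rewrites $(\p_-\CV\otimes\CV)\Delta_-^{\mathrm{ex}}\wli=(\p_\trees\otimes\Id)\Delta_-\CV\wli$ on $\adCT$. Since $\CV$ applied to any tree in $\plCT$ (i.e.\ a tree with non-positive homogeneity) lands in the subspace of $\hat\CH_-$ on which $\p_-$ acts as the identity, this amounts to $(\CV\otimes\CV)\Delta_-^{\mathrm{ex}}\wli=(\p_\trees\otimes\Id)\Delta_-\CV\wli$ modulo terms killed by $\Pi^{\CK,\CR}$. So the left-hand side becomes $(\Pi^{\CK,\CR}P_\LTsysa\p_\trees\otimes\Pi^{\CK,\tilde\CR})\Delta_-\CV\wli$, and the proof is concluded by the identity $P_\LTsysa\p_\trees=P_\LTsysa$ on $\CH_-$ (restricted to the range of the coproduct applied to $\CV\wli\adCT$).

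The main obstacle I expect is a careful bookkeeping step verifying $P_\LTsysa\p_\trees=P_\LTsysa$ on the relevant range. By Lemma~\ref{lem:CV:subgraphs}, divergent subgraphs of $\CV\tau$ for $\tau\in\adCT$ split into (1) tree-type subgraphs, which automatically carry at least one kernel-type edge and hence survive $\p_\trees$, and (2) single-edge leg-type diagrams with one edge of type $(\lt,\bar\lt)\in\ltfd$. A product of such components belongs to $\CH_-^\LTsysa$ only if each of its factors lies in some $\CH_-^\LTa$ for $\LTa\in\LTsysa$; since every $\LTa\in\LTsysa$ has the form $\LT(\tau,M)$ with $M\in\CM\subseteq\Lambda(\tau)$ containing at least two elements, such an $\LTa$ contains at least $2\binom{\#M}{2}\geq 2$ distinct leg-type pairs, which rules out the contribution of any single-edge leg-type factor. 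Hence the image of $P_\LTsysa$ is contained in the image of $\p_\trees$, yielding the desired identity and completing the proof.
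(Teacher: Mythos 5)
Your proposal follows the paper's route almost step for step up to the very last stage: expanding the characters via \eqref{eq:h:trees:rewrite} and \eqref{eq:h:feynman:rewrite}, converting tree evaluations into Feynman-diagram valuations with Lemma~\ref{lem:identity:evaluations:trees:feynman:diagrams}, commuting $\CV$ with $P_\LTsysa$ via \eqref{eq:P:CV:commute}, and invoking Lemma~\ref{lem:CV:coproduct}. The gap is in how you dispose of $\p_\trees$. The operator identity $P_\LTsysa\p_\trees=P_\LTsysa$ that you assert on the relevant range is false. Take $\LTa=\LT(\tau,M)\in\LTsysa$ with $\#M=2$: then $\LTa=\{\lt,\bar\lt\}$ consists of a \emph{single} conjugate pair (your count ``$2\binom{\#M}{2}\ge 2$ distinct leg-type pairs'' conflates leg types with conjugate pairs: there are $2\binom{\#M}{2}$ leg types but only $\binom{\#M}{2}$ pairs, which equals $1$ here). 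The single-edge vacuum diagram whose edge has type $(\lt,\bar\lt)$ then lies in $\CH_-^\LTa$, because membership in $\CH_-^\LTa$ only requires every edge to carry a type in $\FL_+$ or a pair from $\LTa$ — it does not require all of $\LTa$ to appear. This diagram is a genuine divergent type-2 subgraph in the sense of Lemma~\ref{lem:CV:subgraphs}, since its degree is $2\LTsysdeg\lt+|\fs|<0$ (using $\boxnorm{\cdot}_\fs<-\shalf$ on $\FLm$), so it survives $P_\LTsysa$ but is killed by $\p_\trees$, and your purely algebraic cancellation breaks down.

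The correct removal of $\p_\trees$ is analytic rather than algebraic: the offending type-2 contributions are annihilated by the valuation $\Pi^{\CK,\CR}$ itself. A decorated single leg-type edge is divergent only if either it carries a nonzero derivative decoration, in which case $\int D^k\phi_\lt=0$ trivially, or $\LTsysdeg\lt<-\shalf$, in which case the moment condition $\int\phi_\lt=0$ built into Definition~\ref{def:SN:FP} applies. This is exactly where the standing hypothesis $\phi\in\SN(\LTsys,K,\delta)$ enters; your argument never uses it, which is a warning sign, since the tree-side coproduct never produces a ``bare pair of legs'' in its left slot while the Feynman-diagram coproduct does, so without the moment condition the two sides of \eqref{eq:bound:eps:beta:2} would genuinely differ. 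With this replacement for your final step, the rest of the argument goes through as written.
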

\begin{proof}
We first claim that one has the identity
\begin{align}\label{eq:P:CV:commute}
P_\LTsysa \CV = \CV P_\LTsysa 
\end{align}
on $\adCT$ for any $\LTsysa \ssq \LTsys$. By definition of $P_\LTsysa$ it is clear that it is enough to show this identity for $\LTsysa=\{\LTa\}$ for any $\LTa \in \LTsys$. Let now $\tau \in \adCT$ be a tree, and observe that one has $\tau \in \rng P_\LTa$ if and only if $\ft(L_\Legtype(\tau)) = \LTa$. This implies in particular that $\CV \tau \in \CH_-^\LTa$, so that
\[
P_\LTa \CV \tau = \CV P_\LTa \tau = \CV  \tau.
\]  
Conversely, assume that $\ft(L_\LT(\tau))\ne \LTa$. Then this implies in particular that $\CV \tau \notin \CH_-^\LTa$ by construction, so that both sides of the claimed identity vanish. 

Now, using the expression (\ref{eq:h:trees:rewrite}) for $h(\LTsys,\phi,R)$ we can re-write the left-hand side of (\ref{eq:bound:eps:beta:2}) into
\[
\sum_{\LTsysa \subseteq \LTsys}
		(-1)^{\#\LTsysa}
\Big(
\eval{\phi}{R} P_\LTsysa 
\otimes
\eval{\phi}{\tilde R}
\Big)
\Delta_-^\ex \wli.
\]
Using Lemma~\ref{lem:identity:evaluations:trees:feynman:diagrams} and Lemma~\ref{lem:CV:coproduct}, we can re-write this into
\begin{multline}
\sum_{\LTsysa \subseteq \LTsys}
		(-1)^{\#\LTsysa}
\Big(
	\Pi ^{\CK,\CR} P_\LTsysa \p_-\CV 
	\otimes 
	\Pi ^{\CK,\tilde\CR} \CV 
\Big)
	\Delta_-^\ex \wli 
\\
=
\sum_{\LTsysa \subseteq \LTsys}
		(-1)^{\#\LTsysa}
\Big(
	\Pi ^{\CK,\CR} P_\LTsysa \p_\trees
	\otimes 
	\Pi ^{\CK,\tilde\CR} 
\Big)
	\Delta_-^\full \CV \wli.
\end{multline}
We now note that the projection $\p_\trees$ on the right-hand side is irrelevant, since the only divergent connected subgraphs of $\CV\iota\tau$ that get killed by $\p_\trees$ are of type 2 in Lemma~\ref{lem:CV:subgraphs} and thus get killed by $\Pi^{\CK,\CR}$ anyway. Using (\ref{eq:h:feynman:rewrite}) we see that this expression is equal to the right-hand side of (\ref{eq:bound:eps:beta:2}) as required.
\end{proof}

\subsection{Proof of Proposition~\ref{prop:eps:beta:bound:2}}
\label{sec:proof:eps:beta:bound}

For $\eps>0$ let $(\bar K^\eps_\ft)_{\ft \in \FL_+}$ be a kernel assignment such that $\bar K^\eps_\ft \in \CC_c^\infty(\bar\domain \backslash \{0\})$ for any $\ft \in \FL_+$ and $\eps>0$, and auch that $\bar K^\eps_\ft$ is equal to $\hat K^\eps_\ft$ in some neighbourhood of the origin, but compactly supported in a ball of radius $\frac{\delta}{M}$ around the origin, where $\delta$ is as in Proposition~\ref{prop:eps:beta:bound} and $M$ is the maximal number of edges appearing in some tree $\tau \in \TT_-$. Let also $\bar\CK$ be defined as in (\ref{eq:feynman:diagram:kernel:assi}) with $K$ replaced by $\bar K$.
We first have the following Lemma.
\begin{lemma}\label{lem:g:full:h}
Under the assumptions of Proposition~\ref{prop:eps:beta:bound}, one has 
\[
( g^\full (\bar \CK) \otimes \Pi^{\bar \CK, \CR} ) \Delta_-^\full \CV \wli  
= 
( h(\bar \CK,0) \otimes \Pi^{\bar \CK, \CR} ) \Delta_-^\full \CV \wli  
\]
on $\adCT$.
\end{lemma}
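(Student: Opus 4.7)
The plan is to compare the two characters $g^\full(\bar\CK)$ and $h(\bar\CK,0)$ on all connected divergent subgraphs of $\CV\wli\tau$ that contribute non-trivially through $\Delta_-^\full$. Since both are characters, I would reduce to matching them on connected vacuum subdiagrams, then proceed by induction on the number of edges of the subgraph. The key tool is Lemma~\ref{lem:CV:subgraphs}, which classifies connected divergent subgraphs of $\CV\wli\tau$ as either type-1 (of the form $\Gamma(\tilde\tau)$ for a subtree $\tilde\tau$ of $\tau$ with $\CL(\tilde\tau) = \bigsqcup \bar\CM$, $\bar\CM \ssq \CM$) or type-2 (a single leg-type edge with appropriate decoration).

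The crucial geometric ingredient is the localization property built into $\bar \CK$: because $\bar K^\eps_\ft$ is supported in a ball of radius $\delta/M$ and the number of kernel-type edges in $\pi\tau$ is at most $M$, any connected subgraph of $\CV\wli\tau$ of $\bar K$-edges lives inside a single $\delta$-ball (in the sense that differences of vertex positions contributing to $\Pi^{\bar\CK}$ lie in such a ball). Combined with Definition~\ref{def:SN:FP}, which forces $\phi_\lt \equiv 0$ on the $\delta$-ball of the origin whenever $\lt \notin \bigsqcup \LTsys$, this implies that $\Pi^{\bar\CK,\CR}$ vanishes on any connected subgraph containing a leg edge of type not in $\bigsqcup\LTsys$.

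With this I handle the three main cases. For a type-1 subgraph with $|\bar\CM| \ge 2$, the fact that $\tau \in \adCT$ forces $\tilde \Gamma$ to contain cross-$M$ leg edges, whose types lie outside $\bigsqcup\LTsys$. The localization step shows $\Pi^{\bar\CK}\tilde\Gamma = 0$ (and the same holds after contracting any nested divergent subgraph), and since $h(\bar\CK,0)\tilde\Gamma = 0$ trivially (no single $\LTa \in \LTsys$ captures all leg types of $\tilde\Gamma$), a direct induction using the recursion $g^\full(\bar\CK)\tilde\Gamma = -\Pi^{\bar\CK}\tilde\Gamma - \sum g^\full(\bar\CK)\tilde\Gamma_1 \cdot \Pi^{\bar\CK}(\tilde\Gamma/\tilde\Gamma_1)$ yields $g^\full(\bar\CK)\tilde\Gamma = 0$ as well. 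For a type-1 subgraph with $\bar\CM = \{M\}$, one has $\tilde\Gamma \in \CH_-^{\LT(\tau,M)}$ with $\LT(\tau,M) \in \LTsys$, so $h(\bar\CK,0)\tilde\Gamma = -\Pi^{\bar\CK}\tilde\Gamma$; the proper divergent subsubgraphs of $\tilde\Gamma$ either satisfy the same single-$M$ property (and reduce to a smaller inductive case) or fall under the $|\bar\CM|\ge 2$ case above, so the twisted-antipode recursion for $g^\full(\bar\CK)$ collapses to $-\Pi^{\bar\CK}\tilde\Gamma$, matching $h$. For type-2 single-edge subgraphs, divergence forces $\LTsysdeg \lt \le -\shalf$; the moment vanishing $\int \phi_\lt\, dx = 0$ imposed by Definition~\ref{def:SN:FP} together with the degree condition kills both $g^\full$ and $h$ applied to these graphs.

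The main obstacle will be Step~3, the case $|\bar\CM|\ge 2$: one must check that the localization argument propagates correctly through the full expansion of the twisted antipode, in particular that the proper divergent subgraphs of a cross-$M$ subdiagram either retain a non-$\LTsys$ leg after contraction or split into factors handled by the inductive hypothesis. This requires tracking how subsubtrees of $\tilde\tau$ partition $\bar\CM$, which is purely combinatorial but needs to be done carefully. Once this is settled, the full identity on $\adCT$ follows by multiplicatively extending the equality from connected components and invoking Lemma~\ref{lem:CV:coproduct} to translate from the tree-level coproduct to $\Delta_-^\full$ on $\CV\wli\tau$.
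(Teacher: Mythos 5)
Your overall strategy is the one the paper follows: reduce to matching the two characters on connected divergent full subgraphs of $\CV\wli\tau$, classify those via Lemma~\ref{lem:CV:subgraphs}, kill the cross-$M$ configurations using the $\delta/M$-support of $\bar\CK$ together with the vanishing of $\phi_\lt$ near the origin for $\lt\notin\bigsqcup\LTsys$, and invoke the moment condition of Definition~\ref{def:SN:FP} for single leg edges. Your type-2 case and your $\#\bar\CM\ge 2$ case are handled exactly as in the paper.

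There is, however, a genuine gap in your case $\bar\CM=\{M\}$. First, a proper divergent subgraph of such a $\tilde\Gamma$ can never ``fall under the $\#\bar\CM\ge 2$ case'': since the elements of $\CM$ are pairwise disjoint and $\CL(\hat\tau)\ssq\CL(\tilde\tau)=M$, any type-1 subgraph of $\tilde\Gamma$ again has noise set exactly $M$. Second, and more seriously, if such a proper single-$M$ type-1 subgraph $\hat\Gamma\subsetneq\tilde\Gamma$ did occur, your induction would give $g^\full(\bar\CK)\hat\Gamma=h(\bar\CK,0)\hat\Gamma=-\Pi^{\bar\CK}\hat\Gamma$, which is generically \emph{nonzero}; the twisted-antipode recursion would then contain the additional term $-g^\full(\bar\CK)\hat\Gamma\cdot\Pi^{\bar\CK}\bigl(\tilde\Gamma/\hat\Gamma\bigr)$ and would \emph{not} collapse to $-\Pi^{\bar\CK}\tilde\Gamma$, so the claimed match with $h(\bar\CK,0)\tilde\Gamma=-\Pi^{\bar\CK}\tilde\Gamma$ would fail. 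What actually makes the recursion collapse (and what the paper establishes) is that in this case \emph{every} proper connected full subgraph of $\tilde\Gamma$ of negative degree is of type 2 in Lemma~\ref{lem:CV:subgraphs}, i.e.\ a single leg edge --- a class you omit entirely from this part of your analysis. On a single leg edge $e$ one has $g^\full(\bar\CK)$ equal to minus the integral of the (possibly differentiated) test function $\phi_{\fl(e)}$, which vanishes: by integration by parts if the derivative decoration is nonzero, and by the moment condition of Definition~\ref{def:SN:FP} otherwise, since divergence of the single-edge graph forces $\LTsysdeg$ of the corresponding leg type strictly below $-\shalf$. So you need to replace your sentence about subsubgraphs by (i) an argument ruling out proper type-1 subgraphs with noise set $M$ inside $\tilde\Gamma$, and (ii) the observation that $g^\full(\bar\CK)$ vanishes on the remaining, type-2, subgraphs.
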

\begin{proof}
Let $\Gamma : = \CV \wli \tau$. It is sufficient to show that 
\begin{align}\label{eq:g:full:h:2}
g^\full(\bar \CK)\tilde\Gamma_{\fe}^{\tilde\fn}
=
h(\bar \CK, 0) \tilde\Gamma_\fe^{\tilde\fn}
\end{align}
for any connected, full subgraph $\tilde\Gamma$ of $\Gamma$ and any node decoration $\tilde \fn$ with the property that $\LTsysdeg \tilde\Gamma^{\tilde\fn}_\fe <0$.
Assume first that $\tilde \Gamma$ satisfies point~\ref{item:CV:subgraphs:legs}. of Lemma~\ref{lem:CV:subgraphs} and let $e$ be the unique edge of $\tilde \Gamma$. Then $h(\bar \CK,0)$ vanishes by definition, and one has 
$$
g^\full(\bar\CK)\tilde\Gamma 
	= -\Pi^{\bar \CK}\tilde \Gamma = -\int \phi_{\fl(e)}(x)dx = 0,
$$ 
where the last equality follows from the definition of $\SN(\LTsys,\delta)$.
Otherwise, one has that  $\tilde\Gamma$ satisfies~\ref{item:CV:subgraphs:trees}. in Lemma~\ref{lem:CV:subgraphs}, and we denote by $\tilde \tau$ be the subtree of $\tau$ such that $\tilde\Gamma$ is induced as a full subgraph of $\Gamma$ by $N(\tilde \tau)$. 
Then one has $\CL(\tilde \tau) = \bigsqcup \bar\CM$ for some $\bar\CM \ssq \CM$ with $\# \CM  \ge 1$. In case $\# \bar\CM = 1$ one has that all full subgraphs $\hat\Gamma$ of $\tilde\Gamma$ of negative degree are of type~\ref{item:CV:subgraphs:legs} in Lemma~\ref{lem:CV:subgraphs}, so that (\ref{eq:g:full:h:2}) follows from
\begin{align*}
g^\full(\bar\CK)\tilde \Gamma ^{\tilde\fn} _\fe
&= -(g^\full(\bar\CK) \otimes \Pi^{\bar\CK})
	\sum_{\hat \Gamma \subsetneq \tilde\Gamma} \sum_{\hat \fe, \hat \fn}
	\frac{(-1)^{|\out \hat \fe|}}{\hat \fe !}
	\binom{\fn}{\hat\fn}
	\hat\Gamma^{\hat\fn+\pi\hat\fe}_{\fe} \otimes \Gamma^{\fn-\hat\fn}_{\hat \fe}/(\hat\Gamma,\pi\hat\fe)\\
&= - \Pi^{\bar\CK} \tilde\Gamma^{\tilde\fn}_\fe
=h(\bar \CK,0) \tilde \Gamma ^{\tilde\fn} _\fe.
\end{align*} 
In case $\# \bar\CM  \ge 2$ one has $h(\bar \CK,0)\tilde\Gamma_\fe^{\tilde\fn}=0$ by definition. On the other hand, there exists distinct $M,N \in \bar\CM $ with $M\ne N$, and we can choose elements $u \in M$ and $v \in \N$. There exists a unique edge $e \in E(\tilde\Gamma)$ connecting $u$ and $v$. By definition of $\CM$ (c.f.\ (\ref{eq:II:tau})) one has $\fl(e) \notin \bigsqcup \LTsys/_-$, and by definition of $\SN(\LTsys,\delta)$ one has $\phi_{ \fl(e)}=0$ in a $\delta$-neighbourhood of the origin. Combined  with the support properties of the kernel assignment $\bar K$, we infer that one has $\Pi^{\bar \CK}\tilde \Gamma^{\tilde\fn}_{\fe} =0$. The same reasoning applies to any other Feynman diagram containing the edge of type $\fl(e)$. It follows thus from the definition of the coproduct that one has $g^\full(\bar\CK)\hat \Gamma^{\hat\fn}_{\hat\fe}=0$.
\end{proof}

With this lemma, comparing (\ref{eq:Feynman:diagram:bound}) and the right-hand side of (\ref{eq:bound:eps:beta:2}), we are left to compare the characters 
$$
h(\CK,\CR) = h(\bar \CK, \bar \CR) 
\qquad\text{ and }\qquad
h(\bar\CK,0),
$$
where we define $\bar \CR := \CR - \bar \CK + \CK$. We first claim that for any $\tau \in \adCT$ one has
\[
(f \circ h(\bar \CK,0)) (\CV \tau) = f(\CV \tau) + h(\bar \CK,0)(\CV \tau)
\]
for any character $f$ in the character group of $\CH_-$. This can been seen in a way very similar to the last step of the proof of Lemma~\ref{lem:g:full:h}, since whenever $\tilde\Gamma$ is a non-empty, proper subgraph of $\Gamma = \CV \tau$ of negative homogeneity then there exists an edge $e \in E(\Gamma / \tilde \Gamma)$ with $\fl(e) \notin \bigsqcup \LTsys/_-$, so that $h(\bar K,0)(\Gamma^\fn_\fe/(\tilde\Gamma, \tilde \fe))$ vanishes for any such subdiagram.

It remains to show that the expression
\[
f(\bar \CK, \bar \CR)\Gamma
:=
h(\bar \CK, \bar \CR) \Gamma
-
h(\bar \CK, 0) \Gamma
\]
is bounded by a constant uniformly over $\phi \in \SN(\LTsys, K, \delta)$ and $\bar \CR \in \CK^+_0$ such that $\|\phi\|_{\LTsys} \lor \|\bar \CR \|_{\CK^+} \le C$ for any $\Gamma \in \CH_-$. By definition, it is sufficient to show this for connected Feynman diagrams $\Gamma \in \CH^I_-$ for any $I \in \LTsys$. In this case one has
\[
f(\bar \CK, \bar \CR)\Gamma
=
(\Pi^{\bar \CK,\bar \CR}-\Pi^{\bar\CK,0})\Gamma,
\]
and since $\Gamma$ does not contain any sub-divergencies in this case, this expression is bounded in the required way as a consequence of \cite[Sec.~4]{Hairer2017} and Lemma~\ref{lem:super:regularity:implies:large:scale:bound}.

\section{Applications}

\subsection{The \texorpdfstring{$\Phi^4_{3}$}{Phi 4 3} equation}\label{sec:Phi43}

We show that our support theorem applies to the solution to $\Phi^4_3$ started at any deterministic initial condition 
$u_0 \in \CC^\eta(\T^3)$ with $\eta > -\frac{2}{3}$, which then concludes the proof of Theorem~\ref{thm:Phi4}.
While it is known that $u$ is a Markov process  which can be started from a deterministic initial condition and is a continuous function in time (see \cite[Sec.~9.4]{Hairer2014}), 
none of these statements follow immediately from \cite{BrunedChandraChevyrecHairer2017}. 
In case of $\Phi^4_3$, the process $\CS^-(\xi) = \lim_{\eps \to 0} \CS^-_\eps(\xi)$ is the stationary solution to the stochastic heat equation on $\T^3$, so that $\CS^-_\eps(\xi)(0,\cdot)$ is (in law)  a smooth approximation 
of the Gaussian free field. 
In order to see that one can start the equation at a deterministic initial condition, one has to use the fact that the critical regularity for the initial condition 
is $-\frac{2}{3}$, see \cite[Eqn.~9.13]{Hairer2014}, 
and hence lower than the regularity of the Gaussian free field. 
One can now choose the initial condition for the remainder $\eps$-dependent of the form $v^{(0)}-\CS^-_\eps(\xi)(0,\cdot)$, use the fact that this converges in probability in $\CC^{-\frac{2}{3} + \kappa}(\T^3)$ for any $\kappa < \frac{1}{6}$, and argue with the fact that the solution constructed in \cite[Thm.~2.13]{BrunedChandraChevyrecHairer2017} is almost surely 
continuous as a functional of the initial condition. The last statement follows from the second bullet in \cite[Thm.~2.13]{BrunedChandraChevyrecHairer2017} 
with $\CC^{\mathrm{ireg}} := \CC^{\eta}(\T^3)$ and $\eta \in (-\frac{3}{2}, -\half)$. 

While this procedure provides a robust interpretation of what we mean by a solution to $\Phi^4_3$ 
starting from a deterministic initial condition $v^{(0)} \in \CC^{\mathrm{ireg}}$, the process defined in this way
fails to be a continuous function of the model. 
Note that while $\CS^-_\eps(\xi)$ is a continuous function of the model with values 
in $\CC^{-\half-\kappa}_\fs(\spacetime)$, evaluating at a fixed time is not well defined on this space,
so that $\CS^-_\eps(\xi)(0,\cdot)$ fails to be a continuous function of the model.

To overcome this difficulty we work with a slightly stronger topology on the space of models, compare \cite[Prop.~9.8]{Hairer2014}, generated by the system of pseudo-metrics
\[
\fancynorm{Z, \tilde Z}_{\gamma,T} :=
\boxnorm{Z, \tilde Z}_{\gamma,[-T,T]\times \T^3} + 
\| K \star \PPi^Z \Xi - K \star \PPi^{\tilde Z} \Xi \|_{ \CC( [-T,T] , \CC^{\mathrm{ireg}})  }
\]
for any $T>0$.
Here $\boxnorm{Z, \tilde Z}_{\gamma,[-T,T]\times \T^3}$ denotes the usual metric on the model space as in \cite[Eqn.~2.17]{Hairer2014}.
With respect to this topology it is clear that $\CS^-_\eps(\xi)(0,\cdot) = (K \star  \PPi^{\hat Z^\eps} \Xi) (0,\cdot)$ is a continuous function of the model with values in $\CC^{\mathrm{ireg}}$. The fact that the BPHZ renormalised model converges in this stronger topology follows from \cite[Prop.~9.5]{Hairer2014}. 
To show that our support theorem holds for $\Phi^4_3$ 
it remains to argue that the proof of the support theorem for random models
 also applies in this stronger topology. 
For this we first note that once Proposition~\ref{prop:constantsInSupport} is proved, 
the arguments carried out in Section~\ref{sec:supp:models} 
only use the fact that the shift operator and the renormalisation group act continuously 
on the space of models, which is still true in this stronger topology.
As in $d=2$, Assumptions~\ref{ass:CJHopfIdeal} and~\ref{ass:CHBPHZcharacters} are trivial in this case, 
so that Section~\ref{sec:constraints} is not needed. 
\begin{remark}
We outline the proof that $\CJ$ is the ideal generated by $\Xi$. Recall that $\CJ$ is generated by linear combinations of trees with same number of leaves. From this we already infer that the only possible generated of $\CJ$ other than $\leafs$ must be a linear combination of
$\treePhia$ and $\treePhib$.
We can rule out that such a linear combination is element of $\CJ$ by choosing a sequence of test functions $\psi_\eps(x_1, x_2, x_3, x_4) = \prod_{1 \le i < j \le 4} \psi_{i,j,\eps}(x_i-x_j)$ for smooth symmetric functions $\psi_{i,j}$, where we set
$\psi_{i,j,\eps} := \psi_{i,j}^{(\eps)}$ if $\{i,j\} \in \{ \{1,2\},\{3,4\} \}$ and $\psi_{i,j,\eps} = \psi_{i,j}$ otherwise. The divergence structure of the two trees in question then implies the asymptotic behaviour
\[
\langle \CK_{\hat K} \treePhia, \psi_\eps \rangle \simeq \eps^{-2}
\, \text{ and } \qquad
\langle \CK_{\hat K} \treePhib, \psi_\eps \rangle \simeq \eps^{-1}.
\]
\end{remark}

Section~\ref{sec:shift} is formulated entirely at the level of the space of noises $\SM_0$ and 
never refers to the topology on the model space.
The remaining caveat is Section~\ref{sec:renormalisationgroupargument}. 
The topology on the model space enters explicitly in the final step of the proof of Proposition~\ref{prop:sequencetoconstant} via the identity
\[
\lim_{\delta\to 0}\lim_{\eps\to 0}\renorm{g^{\eps,\delta}}\Zcan(\xi^\eps + \srn_\delta)=\Zcan(0)\;,
\]
so we need to show that this convergence holds also with respect to the stronger topology defined above. Using \cite[Prop.~9.5]{Hairer2014}, which shows that $K \star \xi^\eps \to K\star \xi$ in $\CC( [-T,T] , \CC^{\mathrm{ireg}})$ almost surely, we need to provide an additional argument showing that
\[
K \star \zeta_\delta \to 0 \qquad \text{ in }\CC( [-T,T] , \CC^{\mathrm{ireg}})
\]
as $\delta \to 0$ in probability.
This can be shown with an argument very similar to the proof of \cite[Eqn.~9.15]{Hairer2014}.
Indeed, setting $\CX := \CC^{\frac{\tilde\kappa}{2}} ([-T,T] , \CC^{\eta+\tilde\kappa}(\T^3))$, where $\eta \in (-\frac{3}{2}, -\half)$ is as above and $\tilde\kappa>0$ is small enough such that $\eta+2\tilde\kappa<-\half$, it suffices to bound $K \star \zeta_\delta$ uniformly in $\CX$. 
Write $K = \sum_{n \ge 0} K_n$, where $K_n$ is supported in an annulus of order $2^{-n}$ as in 	\cite[Ass.~5.1]{Hairer2014}. 
By Kolmogorov's continuity criterion and the fact that, since $\zeta_\delta$ belongs to a Wiener chaos of fixed order and therefore
enjoys equivalence of moments, it suffices to show that for some $r>0$ one has 
\begin{align}\label{eq:CCTbound}
\E \Big( 
	\int \psi^\lambda(x) (K_n *\zeta_\delta(x,t) - K_n * \zeta_\delta(x,0)) dx 
\Big)^2
\lesssim
2^{-r n} |t|^{\tilde \kappa + r} \lambda^{2\eta + 2 \tilde \kappa + r}.
\end{align}
This expression is of the form \cite[Eqn.~9.17]{Hairer2014} with white noise $\xi$ replaced by $\zeta_\delta$. For the proof we can now proceed along the same lines as in \cite{Hairer2014}, noting that by definition $\zeta_\delta$ is linear combination (with uniformly bounded coefficients) of random stationary smooth functions $\eta^\delta_{(\Xi,\tau)}$ with the property that
\[
\rho^\delta_{(\Xi,\tau)}(x,t) := \E \eta^\delta_{(\Xi,\tau)}(x,t) \eta^\delta_{(\Xi,\tau)}(0,0)
\]
satisfies the scaling relation
\[
\rho^\delta_{(\Xi,\tau)} = 
(\lambda^\delta_{(\Xi,\tau)})^{-2 \fancynorm{\tau}_\fs - 2 \bar\kappa}
(\rho^1_{(\Xi,\tau)})^{(\lambda^\delta_{(\Xi,\tau)})}.
\]
The proof is now straightforward in case that $\fancynorm{\tau}_\fs<0$, where the right hand side can be estimated by an approximate $\delta_0$. In case $\fancynorm{\tau}_\fs = 0$ the fact that these covariances integrate to zero comes to rescue in the same way as in the proof of \eqref{eq:varnorm}.

\subsection{The \texorpdfstring{$\Phi^4_{4-\kappa}$}{Phi 4 4-e} equation}\label{sec:Phi44}

The $\Phi^4_{4- \kappa}$ with $\kappa$ irrational satisfies all our assumptions, except that the noise is not white. Recall that the space-time scaling is given by $\fs=(2,1,1,1,1)$ with $|\fs|=6$. We assume that $\xi = P \star \tilde \xi$, where $\tilde\xi$ is space-time white noise on $\T^4 \times \R$, the symbol $\star $ denotes spatial convolution, and $P \in \CC_c^\infty(\R^4 \backslash\{0\})$ is some integration kernel on $\R^4$ which is homogeneous on small scales $P(\lambda x) = \lambda^{-4+\kappa} P(x)$ for any $\lambda \in (0,1)$ and $x \in \R^4$ with $|x|\le \half$ (say). Here we assume that $\kappa>0$ is irrational (in order to avoid log-divergencies, which could destroy Assumption~\ref{ass:technical}). 
There exists a unique homogeneous kernel $\hat P:\T^4 \backslash\{0\} \to \R$ such that $\hat P = P$ in a neighbourhood of the origin. We denote by $\hat K$ the heat-kernel and we assume that $\hat P$ is such that $\hat K \star  \hat P$ can be decomposed as in Section~\ref{sec:kernels}. (This is certainly possible for $\hat P(x) = |x|^{-4+\kappa}$, which is a natural choice.)

We fix a set of two kernel types $\FL_+:=\{\ft, \fta\}$ representing heat kernel $\hat K$ and the convolution $\hat K\star  \hat P$ respectively, with $\fancynorm{\ft_K}:=2$ and $\fancynorm{\ft_P}:=2+\kappa$, and we fix a single noise type $\FL_-:=\{\Xi\}$ representing white noise with $\fancynorm\Xi = -3$ and $|\Xi|=-3-\bar\kappa$ for some $\bar\kappa>0$ small enough. A rule $R$ is given by the completion of $\bar R$, defined by setting
\[
\bar R(\ft) := \{ \emptyset, [\ftb_1], [\ftb_1,\ftb_2], [\ftb_1,\ftb_2,\ftb_3] : \ftb_i \in \{\ft,\fta\} \}
\quad \text{ and }\quad
\bar R(\fta) := \{ \emptyset, [\Xi] \}.
\] 
Provided that $\bar\kappa<\kappa$, the rule $\bar R$ (and hence $R$) is subcritical (see \cite[Def.~5.14]{BrunedHairerZambotti2016}). 
We fix a truncation $K_\ft$ and $K_\fta$ of $\hat K$ and $\hat K \star  \hat P$ as in Section~\ref{sec:kernels}, we denote by $\hat Z^\eps$ the BPHZ-renormalised canonical lift of the regularised white noise $\xi^\eps$, and we write $\hat Z := \lim_{\eps \to 0} \hat Z^\eps$. The existence of this limit follows from \cite{ChandraHairer2016} (see also \cite{CMW2019}). Moreover, the solution to $\Phi^4_{4-\kappa}$ equation is path-wise continuous in $\hat Z$. 

We now argue why Assumptions~\ref{ass:main:thm} to~\ref{ass:last} hold, which finalises the proof that Theorem~\ref{thm:main} can be applied to $\hat Z$.
Assumptions~\ref{ass:main:thm} to \ref{ass:noises:derivatives:products} are shown in 
\cite[Sec.~2.8.2]{BrunedChandraChevyrecHairer2017}. The heat kernel is homogeneous, so that Assumption~\ref{ass:kernelhomo} is satisfied.
Finally, for irrational $\kappa$ there are no trees of integer degree (and in particular no tree of zero degree), hence Assumptions \ref{ass:zero-homo} and \ref{ass:CVz} hold.

\subsection{Proof of Theorem~\ref{theo:gKPZ}}
\label{sec:gKPZproof}

Recall that we are interested in characterising the support of the solutions,
in the sense of \cite[Thm~1.2]{BrunedHairer2019}, to 
\begin{equ}\label{eq:kpz:main2}
\partial_t u^i
=
\partial_x^2 u^i
+
\Gamma^{i}_{j,k}(u) \partial_x u^j \partial_x u^k
+
h^i ( u )
+
\sigma_\mu^i ( u ) \xi^\mu\;.
\end{equ}
As before, $i,j,k=1,\ldots,n$, $\mu = 1,\ldots,m$, and Einstein's convention is used.
We also denote as in \eqref{e:connectionGamma} by $\nabla$ the connection on $\R^n$
given by $\Gamma$.

We first show that the generalised KPZ equation \eqref{eq:kpz:main2} satisfies Assumptions~\ref{ass:main:reg} to~\ref{ass:last}. Assumption~\ref{ass:main:reg} was shown in \cite{BrunedHairer2019}, Assumptions~\ref{ass:noises:derivatives:products} and~\ref{ass:kernelhomo} are clear. To see Assumption~\ref{ass:technical} we choose $\groupD := \groupD_2$ and we note that
\[
\CV = \big\{ \treeKPZVa, \treeKPZVb, \treeKPZVc , \treeKPZVd, \treeKPZVe, \treeKPZVf, \treeKPZVg \big\}.
\]
Here, thin black lines denote the heat kernel, while thick grey lines denote its spatial derivative.
We write $\leafs$ for an instance of white noise and polynomial label $\fn(\leafs) = (0,0)$, $\leafsx$ for a node with white noise and polynomial label $\fn(\leafsx) = (0,1)$, and $\leafsb$ for a node without noise and polynomial label $\fn(\leafsb) = (0,1)$. In the notation we drop the type decoration from the noises for simplicity, so that any tree in $\CV$ should be thought of as a finite collection of trees.
It is easy to see that the kernels $\CKhat\tau$ for $\tau \in \CV$ are all anti-symmetric under the transformation $(t,x) \mapsto (t,-x)$, and since the covariance of a shifted noise is symmetric under this transformation, one has $\E\PPi^\eta\tau (0)=0$, as required. Assumption~\ref{ass:last} is then trivially satisfied since in this example one
has $\CV_0 = \CV$.

Write now $\CS_\geo \subset \Vec{\TT_-}$ for the linear subspace of dimension $15$ generated by the ``geometric'' 
counterterms, as defined in 
\cite[Def.~3.2]{BrunedHairer2019} and characterised in \cite[Prop.~6.11 \&\ Rem.~6.17]{BrunedHairer2019}. 
We also write $\Upsilon_{\Gamma,\sigma} \colon \CS_\geo \to \CC^\infty(\R^n,\R^n)$ for the evaluation map
defined in \cite[Eq.~2.6]{BrunedHairer2019} (but note also the remark just before Eq.~6.2 in that article).
We can interpret $(\CS_\geo,+)$ as a subgroup of the renormalisation group $\CG_-$ and its action
on the space of right hand sides for \eqref{eq:kpz:main2} is given by $\tau \mapsto \Upsilon_{\Gamma,\sigma}\tau$.
As in \cite[Rem.~2.9]{BrunedHairer2019}, it will be convenient to introduce on $\CT_-$ (and therefore also on $\CS_\geo$) an
inner product by specifying that any two trees are orthogonal and their norm squared is given
by their symmetry factor.
We will use the suggestive notation of \cite{BrunedHairer2019} for elements of $\CS_\geo$, 
so that for example
\begin{equ}
 \Upsilon_{\Gamma,\sigma}\Nabla_{\<blue>}\<blue> = \sum_\mu \nabla_{\sigma_\mu}\sigma_\mu\;.
\end{equ}
We are now in a position to apply Theorem~\ref{thm:main:sv}.
First, we have the following result.
\begin{lemma}\label{lem:CH:Sgeo}
Let $\CH$ be the subspace of $\Vec{\TT_-}$ defined in Theorem~\ref{thm:main:sv}. Then one has $\CH \ssq \CS_\geo$.
\end{lemma}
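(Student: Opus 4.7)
The plan is to argue by duality. By Definition~\ref{def:CH}, $\CH \ssq \CGm$ is the annihilator of the ideal $\CJ$, so via the inner product of \cite[Rem.~2.9]{BrunedHairer2019} (which makes distinct trees orthogonal with squared norms equal to their symmetry factors and identifies $\CGm$ with a subspace of $\Vec{\TT_-}$) the inclusion $\CH \ssq \CS_\geo$ translates into $\CS_\geo^\perp \ssq \CJ \cap \Vec{\TT_-}$, the orthogonal complements being taken in $\Vec{\TT_-}$. It therefore suffices to produce a spanning set for $\CS_\geo^\perp$ and verify that each of its elements lies in $\CJ$ in the sense of Definition~\ref{def:CJcon}.

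By \cite[Prop.~6.11, Rem.~6.17]{BrunedHairer2019}, $\CS_\geo$ is cut out by two finite families of linear forms on $\Vec{\TT_-}$: one encoding the chain rule (invariance of the dynamics under diffeomorphisms of $\R^n$) and one encoding the It\^o isometry (the property that the counterterms depend on the noise vector fields $(\sigma_\mu)$ only through the symmetric tensor $\sum_\mu \sigma_\mu \otimes \sigma_\mu$). Dually, $\CS_\geo^\perp$ is spanned by two corresponding types of symmetry-breaking combinations of trees: (I) noise-permutation differences $\tau - \CS^g\tau$ for $g\in\Glegs$ preserving the assignment of noise types, and (II) integration-by-parts combinations $\sum_i c_i\tau_i$ whose joint kernel is a total spatial derivative of a single function of the leaf positions, exactly in the spirit of Example~\ref{ex:CJ:PAM}.

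For type~(I), membership in $\CJ$ is immediate from Definition~\ref{def:CJcon}, since elements $\psi\in\cutoffspacez$ are test functions on $\bar\domain^\cm$ for multisets $\cm$ and are therefore automatically invariant under permutations of arguments sharing the same noise type; hence $\langle \CKhat\tau,\psi\rangle = \langle \CKhat\,\CS^g\tau,\psi\rangle$ for any such $\psi$. For type~(II), the combined kernel $\CKhat\bigl(\sum_i c_i\tau_i\bigr)$ is, by construction, a total spatial derivative of a locally integrable function on $\bar\domain^{L(\tau)}$; pairing against $\psi\in\cutoffspacez$ and integrating by parts once yields zero, the boundary contributions vanishing thanks to the compact-support-in-differences condition built into $\cutoffspace$ together with the large-scale super-regularity bound provided by Lemma~\ref{lem:super:regularity:implies:large:scale:bound}.

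The main obstacle is not analytic but purely combinatorial: one must extract from the proof of \cite[Prop.~6.11]{BrunedHairer2019} an explicit finite list of identities of types~(I) and~(II) and check that together they span the full $\CS_\geo^\perp$, i.e.\ that they suffice to reproduce every relation imposed by the chain rule and the It\^o isometry at the level of the $\TT_-$-basis associated to generalised KPZ. This is essentially a translation of results already present in \cite{BrunedHairer2019} into the kernel-theoretic language of Definition~\ref{def:CJcon}, and requires no new analytic input beyond the verifications sketched above; the symmetrisation under $\groupD$ in Definition~\ref{def:CYshift} and the corresponding invariance of $\cutoffspacez$ are what allow the two dual characterisations to match without any residual parity obstruction.
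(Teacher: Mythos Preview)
Your duality reduction, namely that $\CH \ssq \CS_\geo$ is equivalent to $\CS_\geo^\perp \ssq \CJ \cap \Vec{\TT_-}$, is correct, and checking that a spanning set of $\CS_\geo^\perp$ consists of kernel identities is in principle a legitimate route. It is, however, an entirely different argument from the paper's, and as written it has genuine gaps.

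The paper never touches $\CJ$ directly. It argues dynamically: using \cite[Thm~1.2]{BrunedHairer2019}, one takes geometric characters $g^\eps \in \CS_\geo$ with $g^\eps - g^\eps_\BPHZ \to f_\circ$, sets $\hat Z^\eps := \CR^{g^\eps}\Zcan(\xi^\eps)$, and for any $h \in f_\circ + f^\xi + \CH$ invokes Proposition~\ref{prop:sequencetoconstant} to produce shifts $\zeta_\delta$ with $T_{\zeta_\delta}\hat Z^\eps$ converging to the deterministic model associated to $h$. Since each approximant carries a \emph{geometric} counterterm, solutions built from $(\Gamma,\sigma)$ and from $(\varphi\cdot\Gamma,\varphi\cdot\sigma)$ satisfy $w^{\eps,\delta} = \varphi(v^{\eps,\delta})$; passing to the limit gives $w=\varphi(v)$, which by \cite[Def.~3.2]{BrunedHairer2019} is exactly the statement $h\in\CS_\geo$. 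No explicit list of relations is ever needed.

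In your approach that list \emph{is} the content, and it is not supplied. Two specific issues. First, your type~(I) description is confused: the group $\Glegs$ and action $\CS^g$ of Definition~\ref{def:Glegs} permute \emph{leg} types, a technical device of Section~\ref{sec:ext:reg:strct} unrelated to noise-type symmetries of $\TT_-$. If you instead mean relabelings of distinct noise types $\Xi_\mu \leftrightarrow \Xi_\nu$, then the multiset $[L(\tau),\ft]$ changes, the two trees are tested in Definition~\ref{def:CJcon} against \emph{independent} functions $\psi_\cm$ and $\psi_{\cm'}$, and the difference is not in $\CJ$ ``immediately''; if you mean permutations within a single type, the trees coincide and the relation is vacuous. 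Second, for type~(II) you assert that every chain-rule relation of \cite[Prop.~6.11]{BrunedHairer2019} becomes a total-derivative kernel identity, but you provide no mechanism beyond the single KPZ example already in the paper; producing and verifying the full list is precisely the work required under your strategy, and you explicitly defer it.
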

\begin{remark}
The renormalisation group for the generalised KPZ equation is naturally isomorphic to $(\Vec{\TT_-}^*,+)$, so that it is more convenient to work with the linear space $\Vec{\TT_-}$ instead of the full algebra $\CT_-$. The scalar product introduced above provides an isomorphism $\Vec{\TT_-} \simeq \Vec{\TT_-}^*$ via Riesz identification. 
In the statement of the Lemma we made the slight abuse of notation and identify $\CH$ with a subset of $\Vec{\TT_-}$ given by the Riesz identification of the set $\CH$ viewed as a subspace of $\Vec{\TT_-}^*$. 
\end{remark}
\begin{proof}
Let $\sigma^i_l$ and $\Gamma^{i}_{j,k}$ be smooth functions on $\R^m$ for $i,j,k \le m$ and $l \le n$. Let furthermore $\vphi: \R^m \to \R^m$ be a diffeomorphism, and define $\vphi\cdot\Gamma$ and $\vphi\cdot T$ for any tensor $T$ by the usual transformation rules for Christoffel symbols and tensors under the diffeomorphism $\vphi$, see \cite[Eq.~1.6]{BrunedHairer2019}.
By \cite[Thm~1.2]{BrunedHairer2019} there exists a sequence of elements $g^\eps \in \CS_\geo$ such that $g^\eps-g^\eps_{\BPHZ}$ converges to a finite limit $f_\circ \in \Vec{\TT_-}$ as $\eps \to 0$. We let $\hat Z^\eps := \CR^{g^\eps} \Zcan(\xi^\eps)$ and $\hat Z := \lim_{\eps \to 0} \hat Z^\eps$.

Fix $h \in f_\circ + f^\xi + \CH$, where $f^\xi$ is the character defined in Assumption \ref{ass:CHBPHZcharacters} (which holds by Proposition~\ref{prop:ass_implies_ass}). It suffices to show that $h \in \CS_\geo$ (note that this proves furthermore that $f_\circ + f^\xi \in \CS_\geo$), 
which by \cite[Def.~3.2]{BrunedHairer2019} is equivalent to the property that 
$\Upsilon_{\vphi\cdot\Gamma, \vphi\cdot\sigma} h = \vphi\cdot\Upsilon_{\Gamma,\sigma}h$ 
for any $\Gamma$, $\sigma$ and $\vphi$ as above. 
Fix an initial condition $v_0 \in \CC^{\half^-}(\T)$ and let $v$ and $w$ denote the images of the model $\CZ(h)$ under the solution maps for $(\Gamma,\sigma)$ and $(\vphi\cdot\Gamma, \vphi\cdot\sigma)$, respectively, with initial condition $v(0) = v_0$ and $w(0) = w_0 := \vphi(v_0)$, so that
\begin{align}
\partial_t v &= \d_x^2 v + \Gamma(v) \partial_x v \partial_x v + (\Upsilon_{\Gamma,\sigma} h)(v)\;, \\
\label{eq:w}
\partial_t w &= \d_x^2 w + (\vphi\cdot\Gamma)(w) \partial_x w \partial_x w + 
(\Upsilon_{\vphi\cdot\Gamma, \vphi \cdot \sigma} h)(w)\;.
\end{align}
(Here we omit the indices for simplicity.) Note that $\vphi(v)$ satisfies an equation 
analogous to (\ref{eq:w}) but with counterterm given by $\vphi \cdot \Upsilon_{\Gamma,\sigma}h$, 
so that, by a simple special case of \cite[Thm~3.5]{BrunedHairer2019}, 
the proof is complete if we can show that $w = \vphi(v)$.
By Proposition~\ref{prop:sequencetoconstant} there exists a sequence of 
smooth random functions $\zeta_\delta$ such that $Z^{\eps,\delta} := T_{\zeta_\delta} \hat Z^\eps \to \CZ(h)$ as $\eps \to 0$ and $\delta \to 0$. Similar to above, denote by $v^{\eps,\delta}$ and $w^{\eps,\delta}$ the images of the (random) model $Z^{\eps,\delta}$ under the solution map for the data $(\Gamma,\sigma)$ and $(\vphi\cdot\Gamma, \vphi\cdot\sigma)$, so that $v^{\eps,\delta} \to v$ and $w^{\eps,\delta} \to w$ in probability. But since $g^\eps \in \CS_\geo$, one has $w^{\eps,\delta} = \vphi(v^{\eps,\delta})$, and this concludes the proof.
\end{proof}

Write $\Moll \subset \CC_0^\infty(\R^2)$ for the collection of test functions 
that are supported in the unit ball and integrate to $1$.
It then follows from \cite[Thm~1.2]{BrunedHairer2019} that there exists 
$\tau_\star \in \CS_\geo$
as well as maps $\Moll \ni \rho \mapsto \tau_\rho \in \CS_\geo$ and 
$\Moll \ni \rho \mapsto C_\rho \in \R$ such that, for every 
mollifier $\rho \in \Moll$ and for $\xi_\eps^\mu = \rho_\eps \star \xi^\mu$, one 
has $u = \lim_{\eps \to 0} u_\eps$ with
\begin{equs}\label{eq:kpz:eps}
\partial_t u_\eps
&=
\partial_x^2 u_\eps
+
\Gamma(u_\eps)(\partial_x u_\eps, \partial_x u_\eps)
+
h ( u_\eps ) +
\sigma_\mu ( u_\eps ) \xi_\eps^\mu\\
&\quad + (\Upsilon_{\Gamma,\sigma} \tau_\rho)(u_\eps) + (\Upsilon_{\Gamma,\sigma} \tau_\star)(u_\eps) \log \eps
+ \frac{C_\rho} {\eps} (\Upsilon_{\Gamma,\sigma}\Nabla_{\<blue>}\<blue>)(u_\eps)\;.
\end{equs}
Combining this with Theorem~\ref{thm:mainThm} and Remark~\ref{rmk:a-priori} we conclude that there exists $\bar \tau \in \CS_\geo$
such that the support $S_u$ of the law of $u$ is given by the closure in $\CC^\alpha$ of all solutions to
\begin{equs}
\partial_t u
&=
\partial_x^2 u
+
\Gamma(u_\eps)(\partial_x u, \partial_x u)
+
h ( u ) +
\sigma_\mu ( u ) \eta^\mu\\
&\quad + (\Upsilon_{\Gamma,\sigma} \bar \tau)(u_\eps) + K_\star (\Upsilon_{\Gamma,\sigma} \tau_\star)(u_\eps)
+ K_0 (\Upsilon_{\Gamma,\sigma}\Nabla_{\<blue>}\<blue>)(u_\eps)\;,
\end{equs}
for arbitrary smooth controls $\eta^\mu$ and arbitrary constants $K_\star$ and $K_0$.
Note that $\Upsilon_{\Gamma,\sigma}\Nabla_{\<blue>}\<blue>$ is nothing but the vector field
$V$ in Theorem~\ref{theo:gKPZ} while $\Upsilon_{\Gamma,\sigma}\tau_\star = V_\star$.
We also write $\hat \tau \in \CS_\geo$ for the element such that 
$\Upsilon_{\Gamma,\sigma}\hat \tau = \hat V$ with $\hat V$ as in Theorem~\ref{theo:gKPZ}, so
that
\begin{equ}
\hat \tau = \Nabla_{\Nabla_{\<blue>}\<dblue>}\Nabla_{\<blue>}\<dblue>\;.
\end{equ}
We also introduce the following notation. Given two collections 
$\CA, \bar \CA \subset \CC^\infty(\R^n, \R^n)$ and $H \in \CC^\infty(\R^n, \R^n)$, we write
$\CU(H,\CA,\bar \CA)$ for the closure in $\CC^\alpha$ of all solutions to 
\begin{equs}
\partial_t u
&=
\partial_x^2 u
+
\Gamma(u_\eps)(\partial_x u, \partial_x u)
+
H ( u ) +
\sum_{A \in \CA} \eta_A A  + \sum_{B \in \bar \CA} K_B B\;,
\end{equs}
where the $\eta_A$ are arbitrary smooth functions and the $K_B$ are arbitrary
real constants. An important remark is that one has the identity
\begin{equ}[e:trivialIdentity]
\CU(H,\CA,\bar \CA) = \CU(H + \bar H, \CA, \bar \CA \cup \CB)
\end{equ}
for any $\bar H \in \Vec{(\CA \cup \bar \CA)}$ and any $\CB \subset \Vec{\CA}$.

To complete the proof of Theorem~\ref{theo:gKPZ}, it then remains to show that,
for any $\bar \tau \in \CS_\geo$ there exists $\hat c$ such that 
\begin{equ}[e:wantedCU]
\CU(h + \bar \tau,\{\sigma_\mu\},\{\tau_\star, \Nabla_{\<blue>}\<blue>\}) = 
\CU(h + \hat c \hat \tau ,\{\sigma_\mu\},\{\tau_\star, \Nabla_{\<blue>}\<blue>\})\;,
\end{equ}
thus reducing the dimensionality of the unknown quantity from $15$ to $1$. 
Here, we implicitly identify elements of $\CS_\geo$ with elements of $\CC^\infty(\R^n,\R^n)$
via $\Upsilon_{\Gamma,\sigma}$ to shorten notations.  
To show \eqref{e:wantedCU}, we will make extensive use of the following result.

\begin{lemma}\label{lem:oscillating}
Let $\hat H \colon \R_+ \times \T \times \R^n \to \R^n$, $A, B \colon \R^n \to \R^n$, 
and $\zeta, \hat \zeta, \eta, \hat \eta  \colon \R_+ \times \T \to \R$
be smooth functions and let $C \in \R$. Then there 
exist $C_\eps \in \R$ and smooth functions 
$\eta_\eps, \hat \eta_\eps \colon \R_+ \times \T \to \R^m$ such that the solution
to 
\begin{equs}[e:oscillating]
\partial_t u_\eps
&=
\partial_x^2 u_\eps
+
\Gamma(u_\eps)(\partial_x u_\eps, \partial_x u_\eps)
+
\hat H(t,x,u_\eps) \\
&\quad + A(u_\eps) \eta_\eps + B(u_\eps) \hat \eta_\eps
+ {C_\eps} \bigl(\nabla_{A}A\bigr)(u_\eps)\;,
\end{equs}
converges in $\CC^\alpha$ as $\eps \to 0$ to the solution $u$ to 
\begin{equs}
\partial_t u
&=
\partial_x^2 u
+
\Gamma(u)(\partial_x u, \partial_x u)
+
\hat H (t,x,u) \\
&\quad + A(u) \eta + B(u) \hat \eta
+ \bigl(\nabla_A B\bigr)(u) \zeta + \bigl(\nabla_B A\bigr)(u) \hat \zeta
+ {C} \bigl(\nabla_{A}A\bigr)(u)\;.
\end{equs}
\end{lemma}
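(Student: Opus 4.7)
The plan is to generate the additional drift $\zeta\,\nabla_A B + \hat\zeta\,\nabla_B A$ by superimposing on $\eta,\hat\eta$ highly oscillating perturbations that vanish weakly as $\eps\to 0$ but whose nonlinear self- and cross-interactions through the parabolic Green's function produce, in the limit, these covariant expressions. The free parameter $C_\eps$ will then be tuned to absorb any residual contribution to the $\nabla_A A$ coefficient, while keeping $u_\eps - u$ small in $\CC^\alpha_\fs$ along the way.

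Concretely, I would combine two mechanisms. First, purely spatial oscillations of the form $\mu_A=\lambda^{-1}a(t,x)\phi(x/\lambda)$ and $\mu_B=\lambda^{-1}b(t,x)\phi(x/\lambda)$ with $\phi$ mean-zero periodic and $\lambda=\lambda(\eps)\to 0$, in the spirit of \eqref{e:oscillation}. Solving $(\partial_t-\partial_x^2)v_\eps = A(u)\mu_A + B(u)\mu_B$ to leading order gives $v_\eps = O(\lambda)$ in $L^\infty$ and $\partial_x v_\eps = O(1)$, and Taylor-expanding $A,B,\Gamma,h$ to second order in $v_\eps$ produces, after averaging over the fast oscillation, a counterterm forcing on $w_\eps := u_\eps - u - v_\eps$ that assembles into covariant form $\frac{1}{2(2\pi)^2}\bigl(a^2\nabla_A A + 2ab\cdot\frac{1}{2}(\nabla_A B+\nabla_B A) + b^2\nabla_B B\bigr)$: the partial-derivative contributions from $\partial A\cdot v_\eps\cdot\mu$ and the Christoffel contributions from $\Gamma(u)(\partial_x v_\eps,\partial_x v_\eps)$ combine into $\nabla$ by the coordinate-invariance of the underlying equation. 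Second, superimposing Wong--Zakai-type temporal rotating oscillations $\omega^{1/2}p(t,x)\cos(\omega t)$ in the $A$-channel and $\omega^{1/2}q(t,x)\sin(\omega t)$ in the $B$-channel (with $\omega=\omega(\eps)\to\infty$) produces a Lévy-area-type counterterm proportional to $pq\cdot[A,B]=pq(\nabla_A B-\nabla_B A)$ with a freely tunable coefficient and no contribution to $\nabla_A A$, $\nabla_B B$ or the symmetric combination. A careful combination of several such perturbations at multiple incommensurate scales (to separately kill the unwanted $\nabla_B B$ piece), together with the choice of $C_\eps$ to absorb the resulting $\nabla_A A$ piece, gives enough parametric freedom to match the prescribed coefficients $(C,\zeta,\hat\zeta,0)$ of $(\nabla_A A,\nabla_A B,\nabla_B A,\nabla_B B)$ in the limit equation.

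With this choice of $(\eta_\eps,\hat\eta_\eps,C_\eps)$ fixed, the remainder $w_\eps$ satisfies a classical parabolic PDE whose forcing converges in $\CC^\alpha_\fs(\R_+\times\T)$ to the forcing of the limit equation, so that classical well-posedness of the latter together with a Banach fixed-point argument for $w_\eps$ yields $u_\eps\to u$ in $\CC^\alpha_\fs$. The main obstacle will be verifying that the partial-derivative contributions from the cross terms $\partial A\cdot v_\eps\cdot\mu$ and the Christoffel contributions from $\Gamma(u)(\partial_x v_\eps,\partial_x v_\eps)$ combine into precisely the covariant derivatives $\nabla_A A$, $\nabla_A B$, $\nabla_B A$, $\nabla_B B$ with matching numerical constants: this is a direct reflection of the diffeomorphism invariance of the equation, but checking it explicitly requires a careful Fourier-space computation of the response $v_\eps$ on the oscillating mode $\phi(x/\lambda)$ and a matching of the resulting prefactors. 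The secondary technical issue is the simultaneous bookkeeping of the several oscillation scales $(\lambda,\lambda',\omega,\ldots)$ in $\CC^\alpha_\fs$, which is handled by standard but tedious Schauder estimates on each piece of the decomposition $u_\eps=u+v_\eps+w_\eps$ and a short fixed-point argument on the interval of existence of $u$.
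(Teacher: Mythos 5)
Your route is genuinely different from the paper's: you attempt a deterministic multiscale/homogenisation argument (explicit oscillating correctors, Taylor expansion, averaging, Schauder estimates for the remainder), whereas the paper realises the perturbations as three auxiliary \emph{random} noises built from a single space-time white noise, invokes the BPHZ convergence theorem of \cite{ChandraHairer2016}, and simply reads off the limiting counterterms from the finitely many BPHZ renormalisation constants, with the asymmetry between $\nabla_A B$ and $\nabla_B A$ engineered through time-asymmetric mollifiers as in \eqref{e:choicerho}. Your temporal ``L\'evy-area'' mechanism for the antisymmetric part $\nabla_AB-\nabla_BA$ is sound: purely temporal phase-shifted oscillations produce only the Lie bracket, with no Christoffel contribution and no self-interaction terms, so that piece of the argument is fine.

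The genuine gap is in your treatment of the symmetric part. With equal-strength spatial oscillations $\lambda^{-1}a\phi(x/\lambda)$ and $\lambda^{-1}b\phi(x/\lambda)$ in the $A$- and $B$-channels, the averaged counterterm is a positive multiple of $a^2\nabla_AA+ab(\nabla_AB+\nabla_BA)+b^2\nabla_BB$, and the coefficient of $\nabla_BB$ is pointwise a \emph{sum of squares} over all the perturbations you superimpose: the self-interaction of any real oscillation in the $B$-channel enters through a positive-definite quadratic form (essentially $\overline{(\Phi')^2}\,b^2\ge 0$). Adding further perturbations at incommensurate scales therefore cannot ``separately kill the unwanted $\nabla_BB$ piece'' --- it can only increase it --- and, unlike $\nabla_AA$, there is no $\nabla_BB$ counterterm available in \eqref{e:oscillating} to absorb it. This is exactly the one-sidedness phenomenon already flagged around \eqref{e:oscillation} in the introduction (one can emulate a decrease of a renormalisation constant but not an increase). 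The correct fix, which is what the paper's choice of regularities implements, is to break the symmetry of amplitudes between the two channels: take the $A$-channel oscillation supercritical (amplitude $\lambda^{-1-\theta}$) and the $B$-channel oscillations subcritical (amplitude $\lambda^{-1+\theta}$), so that all $B$-self-interactions vanish in the limit, only the $A$--$B$ cross term survives at order one, and the divergent $A$-self-interaction is proportional to $\nabla_AA$ and absorbed into $C_\eps$. (In the paper this is the statement that the trees built from two copies of the mild noises have positive degree and need no renormalisation, while the tree built from two copies of the singular noise produces the $-\eps^{-4\kappa}$ constant multiplying $\nabla_AA$.) A secondary, lesser concern is that your remainder estimate is not quite ``classical'': the forcing contains $\Gamma(u_\eps)(\partial_x v_\eps,\partial_x v_\eps)$ with $\partial_x v_\eps$ of order one and oscillating, and the remainder couples back into these products, so closing the fixed point in $\CC^\alpha$ requires a second-order (paracontrolled or regularity-structures) analysis rather than plain Schauder estimates --- this is precisely why the paper routes the whole computation through the model topology.
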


\begin{proof}
We  consider the singular SPDE given by
\begin{equs}
\partial_t u
&=
\partial_x^2 u
+
\Gamma(u)(\partial_x u, \partial_x u)
+
\hat H (t,x,u) \\
&\quad + A(u) \eta + B(u) \hat \eta
+ \bigl(\nabla_A B\bigr)(u) \zeta + \bigl(\nabla_B A\bigr)(u) \hat \zeta
+ {C} \bigl(\nabla_{A}A\bigr)(u)\\
&\quad + A(u) \tilde\xi_\eps + B(u) \bigl(\zeta \cdot \xi_\eps + \hat \zeta \cdot \hat \xi_\eps\bigr)\;,
\end{equs}
driven by the three ``noises'' $\xi_\eps \in \CC^{\kappa-1}$, $\hat \xi_\eps \in \CC^{\kappa-1}$ and $\tilde \xi_\eps \in \CC^{-1-3\kappa}$. If we choose $\kappa$ sufficiently small, the only symbols of 
negative degree appearing in the corresponding regularity structure (besides those representing the
noises themselves and the one representing the product of $\xi_\eps$ with the spatial coordinate) are
\begin{equ}
\<Xi2> ,\; \<I1Xitwo>,\; \<Xi2mixed>,\; \<Xi2mixedt>,\; \<Xi2mixed2>,\; \<Xi2mixed2t> ,\; \<I1Xitwomixed>,\; \<I1Xitwomixedt>\;,
\end{equ}
where we denote the symbol representing $\tilde \xi_\eps$ by $\<Xit>$, the one
representing $\xi_\eps$ by $\<Xi>$ and the one representing $\hat \xi_\eps$ by $\<Xih>$.
Thin lines represent the heat kernel and thick lines its spatial derivative as usual.

One then proceeds as follows: choose first a symmetric function $\tilde\rho \in \CC_0^\infty$ such that
$\int  (\tilde\rho\star \tilde\rho)(z)P(z)\,dz = 1$ for $P$ the heat kernel on the whole space and $z = (t,x) \in \R^2$
and set
$\tilde \xi_\eps = \eps^{-1-2\kappa} \tilde\rho_\eps \star \xi$, where $\xi$ is space-time white noise
and $\tilde\rho_\eps(t,x) = \tilde\rho(t/\eps^2,x/\eps)$.
One then fixes two asymmetric $\CC_0^\infty$ functions $\rho$ and $\hat \rho$ such that
 the following identities hold:
\begin{equs}[e:choicerho]
\int P(z) (\tilde \rho \star \rho)(z)\,dz &= 1\;,\quad &
\int P(-z) (\tilde \rho \star \rho)(z)\,dz &= 0\;, \\
\int P(z) (\tilde \rho \star \hat \rho)(z)\,dz &= 0\;,\quad &
\int P(-z) (\tilde \rho \star \hat \rho)(z)\,dz &= 1\;.
\end{equs}
With this choice, we then set
\begin{equ}
\xi_\eps = \eps^{2\kappa-1} \rho_\eps \star \xi\;,\quad 
\hat\xi_\eps = \eps^{2\kappa-1} \hat\rho_\eps \star \xi\;.
\end{equ}
Since all of these noises weakly converge to $0$, it is immediate from \cite{ChandraHairer2016}
(but in this case this is also a simple exercise along the lines of the examples treated in  \cite{Hairer2014})
that the BPHZ model associated to this choice converges to the canonical lift of $0$.

Furthermore, as a consequence of \eqref{e:choicerho}, the scaling of the noise, and the identity
\begin{equ}
\d_x P \star \d_x \bar P = \frac{1}{2}(P + \bar P)\;,
\end{equ}
where $\bar P(z) = P(-z)$, the BPHZ character $g^\eps$ for our choice of 
``noise'' is given by
\begin{equs}
g^\eps(\<Xi2>) &=  g^\eps(\<I1Xitwo>) = -\eps^{-4\kappa}\;,&\quad g^\eps(\<Xi2mixed>) &= g^\eps(\<Xi2mixed2t>) = -1\;, \\
g^\eps(\<Xi2mixedt>) &= g^\eps(\<Xi2mixed2>) = 0\;,&\quad
g^\eps(\<I1Xitwomixed>) &= g^\eps(\<I1Xitwomixedt>) = -\f12\;.
\end{equs}
It then suffices to apply the results of \cite{Hairer2014,BrunedChandraChevyrecHairer2017} to conclude that the BPHZ renormalised equation solves
\eqref{e:oscillating} with the choice $\hat \eta_\eps = \hat \eta + \zeta \cdot \xi_\eps + \hat \zeta \cdot \hat \xi_\eps$
and $\eta_\eps = \eta + \tilde \xi_\eps$, so that the claim follows.
\end{proof}

\begin{corollary}\label{cor:addFields}
One has the identity
\begin{equ}
\CU(H,\CA,\bar \CA) = \CU(H, \CA \cup \{\nabla_AB,\nabla_BA\}, \bar \CA)\;,
\end{equ}
for any $A, B \in \CA$ such that $\nabla_AA \in \bar A$.
\end{corollary}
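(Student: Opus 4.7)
The plan is to prove the two inclusions separately. The inclusion $\CU(H,\CA,\bar \CA) \subseteq \CU(H, \CA \cup \{\nabla_AB,\nabla_BA\}, \bar \CA)$ is immediate from the definition, since any solution contributing to the left-hand set is also a solution of the enlarged control problem with $\zeta = \hat\zeta = 0$ as the controls for $\nabla_A B$ and $\nabla_B A$.

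For the reverse inclusion, I would fix an arbitrary smooth solution $u$ in $\CU(H, \CA \cup \{\nabla_AB,\nabla_BA\}, \bar \CA)$, coming from some choice of smooth controls $\eta_{A'}$ for $A' \in \CA$, smooth controls $\zeta$ and $\hat\zeta$ for $\nabla_A B$ and $\nabla_B A$, and real constants $K_{B'}$ for $B' \in \bar\CA$. Since $\nabla_A A \in \bar\CA$ by assumption, there is a constant $K_{\nabla_AA} = C$ attached to that vector field. I would then absorb the remaining vector fields in $\CA \setminus \{A,B\}$ and $\bar\CA \setminus \{\nabla_AA\}$ into a space-time dependent nonlinearity
\[
\hat H(t,x,w) := H(w) + \sum_{A' \in \CA \setminus \{A,B\}} \eta_{A'}(t,x)\, A'(w) + \sum_{B' \in \bar\CA \setminus \{\nabla_AA\}} K_{B'}\, B'(w)\;,
\]
so that $u$ satisfies exactly the limiting equation appearing in Lemma~\ref{lem:oscillating} with $\eta = \eta_A$ and $\hat\eta = \eta_B$.

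Applying Lemma~\ref{lem:oscillating} directly then produces smooth functions $\eta_\eps, \hat\eta_\eps$ and a sequence of constants $C_\eps$ such that the solutions $u_\eps$ of the corresponding approximating equation converge to $u$ in $\CC^\alpha$. However, the approximating equation for $u_\eps$ is nothing but the control problem defining $\CU(H,\CA,\bar\CA)$ with the controls for $A$ and $B$ replaced respectively by $\eta_\eps$ and $\hat\eta_\eps$, and the constant for $\nabla_AA \in \bar\CA$ replaced by $C_\eps$, while the controls $\zeta$ and $\hat\zeta$ have disappeared. Hence each $u_\eps$ belongs to the set of solutions appearing before the closure in $\CU(H,\CA,\bar\CA)$, which shows $u \in \CU(H,\CA,\bar\CA)$. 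Since smooth controls are dense in the relevant sense, passing to closures gives the claimed equality.

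There is no real obstacle here beyond carefully matching the structure of Lemma~\ref{lem:oscillating}: the only point that requires attention is verifying that the constant $C_\eps$ produced by the lemma can legitimately be absorbed into the coefficient of $\nabla_A A$ in the $\bar\CA$ component, which is precisely where the hypothesis $\nabla_A A \in \bar\CA$ is used.
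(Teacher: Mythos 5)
Your proof is correct and is exactly the argument the paper intends: the corollary is stated without proof precisely because it is the immediate application of Lemma~\ref{lem:oscillating} that you spell out (absorb the remaining fields into $\hat H$, identify $C$ with the coefficient $K_{\nabla_AA}$, and note that the approximants $u_\eps$ solve the smaller control problem). The only superfluous remark is the final appeal to density of smooth controls — once each $u_\eps$ lies in the pre-closure set of $\CU(H,\CA,\bar\CA)$, closedness of that set already gives $u\in\CU(H,\CA,\bar\CA)$.
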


Define now a sequence of collections
of vector fields $\CA_k$ by setting (for $k \ge 1$)
\begin{equ}
\CA_1 = \{\sigma_\mu\,|\, \mu=1,\ldots,m\}\;,\quad
\CA_{k+1} = \{\nabla_{A} B, \nabla_B A\,|\, A\in \CA_1,\, B \in \CA_k\}\;.
\end{equ}
It now follows for the same reason as in Lemma~\ref{lem:oscillating} that for any two 
of the noises appearing in \eqref{eq:kpz:main2} (denote them by $\<Xi>$ and $\<Xit>$, say)
one has
\begin{equ}
2\<I1Xitwomixed> - \<Xi2mixed> - \<Xi2mixed2>  \in \CJ\;,
\end{equ}
while the kernels associated to $\<Xi2mixed2>$ and $\<Xi2mixed>$ are linearly independent. This 
shows in particular that 
\begin{equ}
\Big\{\<Xi2mixed> + \f12 \<I1Xitwomixed>,\<Xi2mixed2> + \f12 \<I1Xitwomixed>\Big\} = \{\Nabla_{\<smXit>}{\<smXi>},\Nabla_{\<smXi>}{\<smXit>}\} \in \CH\;,
\end{equ}
so that first applying
Theorem~\ref{thm:main:sv} (combined with Definition~\ref{def:CH}) 
and then Corollary~\ref{cor:addFields} implies that, for any vector field $H$ and
any finite collection of vector fields $\CB$, one has
\begin{equs}[e:corollarySupport]
\CU(H,\{\sigma_\mu\},\CB \cup \{\Nabla_{\<blue>}\<blue>\})
&=
\CU(H,\{\sigma_\mu\},\CB \cup \{\Nabla_{\<blue>}\<blue>\} \cup \CA_2) \\
&=
\CU(H,\{\sigma_\mu\} \cup \CA_2 \cup \CA_4, \CB \cup \CA_2)\;.
\end{equs}
(We could have added any of the $\CA_k$'s to the right hand side, but only $\CA_2$ and $\CA_4$
matter for the sequel.) 
Setting 
\begin{equ}
\CS_\geo^\star = \{\tau \in \CS_\geo\,:\, \Upsilon_{\Gamma,\sigma}\tau \in \Vec{(\CA_2 \cup \CA_4)}\}
\end{equ}
and combining the description of $\CS_\geo$ given in \cite[Eq.~1.8]{BrunedHairer2019} with the definition of the $\CA_i$ we
see that one has the decomposition
\begin{equ}
\CS_\geo = \CS_\geo^\star \oplus \Vec\big\{\Nabla_{\Nabla_{\<blue>}\<blue>}\Nabla_{\<dblue>}\<dblue>, 
\Nabla_{\Nabla_{\<blue>}\<dblue>}\Nabla_{\<blue>}\<dblue>,
\Nabla_{\Nabla_{\<blue>}\<dblue>}\Nabla_{\<dblue>}\<blue>\big\}\;.
\end{equ}
Similarly, it follows from \cite[Eq.~3.22]{BrunedHairer2019} that there exists a constant $c$ such that
\begin{equ}
\tau_\star - c\big(2 \Nabla_{\Nabla_{\<blue>}\<dblue>}\Nabla_{\<blue>}\<dblue>-\Nabla_{\Nabla_{\<blue>}\<dblue>}\Nabla_{\<dblue>}\<blue>\big)
\in \CS_\geo^\star\;.
\end{equ}
Setting $\tilde \tau = \Nabla_{\Nabla_{\<blue>}\<blue>}\Nabla_{\<dblue>}\<dblue>$ and combining this
with \eqref{e:trivialIdentity} and \eqref{e:corollarySupport}, we conclude that there exist constants 
$\hat c_0$ and $ \tilde c$ such that the support of $u$ is given by
\begin{equ}
\CU(h + \bar \tau,\{\sigma_\mu\},\{\tau_\star, \Nabla_{\<blue>}\<blue>\}) = 
\CU(h + \hat c_0 \hat \tau + \tilde c \tilde \tau,\{\sigma_\mu\}\cup \CA_2 \cup \CA_4,\{\tau_\star\} \cup \CA_2)\;.
\end{equ}
In order to eliminate $\tilde \tau$ we note that, as a consequence of \cite[Eq.~6.19]{BrunedHairer2019}, setting
\begin{equ}
\sigma_1 = \<2I1Xi4c2> \;,\qquad    \sigma_2 = \<2I1Xi4c1> \;,
\end{equ}
and writing $\pi \colon \CS_\geo \to \Vec\{\sigma_1,\sigma_2\}$ for the orthogonal projection,
we have
\begin{equ}
\pi \CS_\geo^\star = 0\;,\qquad \pi \tilde \tau = \sigma_1
\;,\qquad \pi \hat \tau = \pi \tau_\star = \sigma_2\;.
\end{equ}
Since furthermore $\sigma_1 \not \in \CJ$ by Definition~\ref{def:CH}, there exists $\tilde \sigma \in \CH$
with $\scal{\tilde \sigma, \sigma_1} \neq 0$ and therefore, by Lemma~\ref{lem:CH:Sgeo}, $\scal{\tilde \sigma, \tilde \tau} \neq 0$.
We conclude that there exists $\hat c$ such that the support of $u$ is given by 
\begin{equs}
\CU(h + \bar \tau&,\{\sigma_\mu\},\{\tau_\star, \Nabla_{\<blue>}\<blue>\}) =
\CU(h + \bar \tau,\{\sigma_\mu\},\{\tau_\star, \Nabla_{\<blue>}\<blue>, \tilde \sigma\}) \\
&= 
\CU(h + \hat c \hat \tau,\{\sigma_\mu\}\cup \CA_2 \cup \CA_4,\{\tau_\star, \tilde \sigma\} \cup \CA_2) 
=
\CU(h + \hat c \hat \tau,\{\sigma_\mu\},\{\tau_\star,  \Nabla_{\<blue>}\<blue>\} )\;,
\end{equs}
thus concluding the proof of Theorem~\ref{theo:gKPZ}. Here, the last identity follows from the fact that 
the preceding sequence of identities holds for any choice of $h$.

\section{Symbolic index}
\label{sec:index}

Here, we collect some of the most used symbols of the article, together
with their meaning and the page where they were first introduced.

\begin{center}
\renewcommand{\arraystretch}{1.1}
\begin{longtable}{lll}
\toprule
Symb. & Meaning & P.\\
\midrule
\endfirsthead
\toprule
Symb. & Meaning & P.\\
\midrule
\endhead
\midrule
\endfoot
\midrule
\endlastfoot
$|\cdot|_\fs$			& Homogeneity used to construct the regularity structure		& \pageref{idx:homo}\\
$\fancynorm{\cdot}_\fs$	& ``True'' homogeneity of the noise								& \pageref{idx:fancyhomo}\\
$\sim$		& Equivalence relation on $\TT_-$					& \pageref{def:sim}\\
$\preceq$	& Total order on $\TT_-$							& \pageref{idx:prec}\\
$[I,\vphi]$	& Multi-set, $[I,\varphi]_a = \#\{ i \in I : \varphi(i) = a \}$	& \pageref{idx:multiset[,]}\\
$\phantom{}^{\sfs}\fc$	& Cumulant homogeneity consistent with $\sfs$		& \pageref{idx:sfsfc}\\
$\alpha_{(\Xi,\tau)}$	& $\shift\fs(\Xi,\tau)- m(\tau) \shalf$					& \pageref{eq:alpha}\\
$\bar\CC^\infty_c$ & Functions invariant under translation of all arguments & \pageref{def:barCinfty}\\
$\td(\cm)$	& Set canonically associated to a multiset $\cm$ & \pageref{idx:multisetd}\\
$\domain$	& Domain of definition of the noise			& \pageref{idx:domain}\\
$\bar\domain$	& Whole space extension of $\domain$	& \pageref{idx:bardomain}\\
$\cpmh$		& Coproduct					& \pageref{idx:coproduct}\\
$\LTdeg$	& Degree assignment on $\Legtype$ 					& \pageref{eq:definition:deg:I}\\
$\LTsysdeg$	& Degree assignment on $\Legtype$ 					& \pageref{eq:definition:deg:FJ}\\
$\div(\tau)$  & Set of divergent subforests of $\tau$ 				& \pageref{idx:divtau}\\
$f^\eta$	& Character depending continuously on $\eta$					& \pageref{ass:CHBPHZcharacters}\\
$\zeta_\delta$	& Shift of the noise				& \pageref{idx:shift}\\
$\CG_-$ 	& Renormalisation group, character group of $\CTm$  	& \pageref{idx:RG} \\
$\wCG$		& Renormalisation group, character group of $\wCT$				& \pageref{idx:wCG}\\
$\groupD$	& Spatial symmetries of the equation	 & \pageref{symmetry} \\
$g^\eta$	& BPHZ character			& \pageref{idx:BPHZcharacter}\\
$\hat g^\eta$	& ``Tweaked'' BPHZ character $(f^\eta)^{-1} \circ g^\eta$	& \pageref{ass:CHBPHZcharacters}\\
$\wlchar{g^{\eta,\phi}_R}$	& Character on $\plCT$ and $\symCT$				& \pageref{eq:g:wl:eta:psi:R}\\
$G_\Legtype$	& Group of	permutations of $\Legtype$ consistent with $\imap$ and $\bar\cdot$	& \pageref{def:Glegs}\\
$\CH$		& Annihilator of $\CJ$					& \pageref{def:CH}\\
$h^\phi_{\LTsys,R}$		& Character on $\plCT$					& \pageref{eq:h:CI:phi:R}\\
$I_\cm(h)$	& Stochastic integration				& \pageref{idx:sint}\\
$\imap$		& Type map on $\Legtype$					& \pageref{idx:imap}\\
$\wli$		& Canonical embedding $\wCT \hookrightarrow \wCThat$ and $\plCT \hookrightarrow \plCThat$ &\pageref{idx:wli}\\
$\symfi$	& Canonical embedding $\symCT \hookrightarrow \symCThat$  &\pageref{idx:symfi}\\
$\iota$		& Admissible embedding $\CT_- \to \plCT$					& \pageref{def:admissible:embedding}\\
$\symiota$	& Admissible embedding $\CT_- \to \symCT$					& \pageref{lem:iotasym}\\
$\tilde\CJ$	& Ideal in $\CT_-$, kernel of $\evaA{}$	& \pageref{def:CJcon}\\
$\CJ$		& Ideal	in $\CT_-$ generated by $\tilde\CJ$ and trees with odd number of noises	& \pageref{def:CH}\\
$\sintf_\cm(h)$ & Stationary process in $\cm$th Wiener chaos with kernel $h$ &\pageref{idx:sint}\\
$\CKsimp^n$	& Space of smooth simple kernels in $n$ variables		& \pageref{idx:CKsimpn}\\
$K(\tau)$	& Kernel-type edges			& \pageref{idx:K(T)}\\
$\CK_G\tau$	& ``Kernel'' $\bar\domain^{L(\tau)} \to \R$ associated to $\tau$	& \pageref{ind:CK_G}\\
$\CK_\infty^+$	& Compactly supported large-scale kernel assignments					& \pageref{idx:CK+infty}\\
$\CK_0^+$	& Large-scale kernel assignments 					& \pageref{idx:CK+z}\\
$L(\tau)$	& Edges of noise type			& \pageref{idx:L(T)}\\
$L_\Legtype(\tau)$	& Edges of leg type					& \pageref{idx:LLegtype}\\
$\CL(\tau)$	& Nodes touching edges of noise type 					& \pageref{idx:CLtau}\\
$\hat\CL(\tau)$	& Nodes with non-vanishing extended decoration					& \pageref{idx:hatCL}\\
$\spacetime$	& Space-time domain			& \pageref{idx:lambda}\\
$\Lambda$	& Set of possible scales			& \pageref{eq:Lambda:indexset}\\
$\Lambda(\tau)$	& System of disjoint, non-empty subsets of $\CL(\tau)$			& \pageref{idx:Lambda(tau)}\\
$\Legtype$	& Set of leg types					& \pageref{idx:legtypes}\\
$\aFLm$		& Enlarged set of noise types 		& \pageref{idx:sFLm}\\
$[M]$ 		& Integers from $1$ to $M$ 	& \pageref{sec:notation} \\
$\CM_\infty$	& Space of smooth admissible models			& \pageref{idx:CMs}\\
$\tilde \cm$	& Map $\tilde\cm:[\#\cm] \to A$ associated to multiset $\cm$			& \pageref{idx:multisettilde}\\
$\CM_0$		& Space of admissible models			& \pageref{idx:CMz}\\
$M^g$		& Matrix acting on $\CT$				& \pageref{idx:Mg} \\
$\SMinf$	& Space of smooth noises				& \pageref{idx:SMinf}\\
$\SMz$		& Space	of singular noises				& \pageref{idx:SMz}\\
$\SMsinf$	& Space of shifted smooth noises		& \pageref{idx:SMsinf}\\
$\SMsz$	& Space of shifted singular noises			& \pageref{idx:SMzinf}\\
$\sSMinfty$	& Space of smooth noises for $\sFLm$				& \pageref{idx:sSMinfty}\\
$\cm(\Xi,\tau)$	& $[L(\tau),\ft]\setminus \{ \Xi \}$					& \pageref{idx:cmXi}\\
$\LTN(\tau)$	& Nodes of $\tau$ touching legs					& \pageref{idx:NLegtype}\\
$\SN$		& Families of smooth functions indexed by leg types					& \pageref{def:FN}\\
$\SN(\LTsys)$		& Elements of $\SN$ satisfying a constraint depending on $\LTsys$			& \pageref{def:SN:FP}\\
$\plZ$		& Projection on $\plCT$ killing trees with non vanishing $\fe$ on legs	& \pageref{idx:plZ}\\
$\plQ$		& Projection on $\plCT$ removing superfluous legs									& \pageref{idx:plQ}\\
$\plQz$		& Projection on $\plCT$, $\plQz=\plQ \plZ$					& \pageref{idx:plQz}\\
$\LTp$		& Projection on $\plCT$ onto trees $\tau$ with $\ft(L_\LT(\tau))=\LTa$				& \pageref{idx:LTp}\\
$\tilde\Psi$ 		& Set of families of test functions indexed by multisets		& \pageref{def:tildePsi}\\
$\pi$			& Projection that removes legs				& \pageref{idx:pi}\\
$\rho$		& Smooth mollifier						& \pageref{idx:rho} \\
$\CR^g$		& Action of $\CGm$ onto $\CM_0$			& \pageref{idx:CRg}\\
$\fs$		& Scaling on $\domain$ 					& \pageref{idx:fs}\\
$\sfs$		& Homogeneity assignment on $\sFLm$		& \pageref{def:en:homo} \\
$\SS$		& Shift operator $\SS:\CT \to \sCT$		& \pageref{e:defShiftOperator}\\
$\SS^\uparrow$		& ``Dominating'' part of the shift operator $\SS^\uparrow:\CT \to \sCT$		& \pageref{def:oSS}\\
$\SS^\downarrow$	& ``Non-dominating'' part of the shift operator $\SS^\downarrow:\CT \to \sCT$& \pageref{def:oSS}\\
$\CS(\lambda,\alpha)$	& Rescaling operator					& \pageref{eq:rescalingoperator}\\
$\CTex$		& Extended regularity structure			& \pageref{idx:extended:reg:str}\\
$\CT$		& Reduced regularity structure			& \pageref{idx:CT}\\
$\CTmex$	& Extended Hopf algebra		& \pageref{idx:CTmex}\\
$\CTm$		& Reduced Hopf algebra		& \pageref{idx:CTm}\\
$\CTmhatex$	& Algebra of extended trees	& \pageref{idx:CTmhatex}\\
$\CTmhat$	& Algebra of reduced trees	& \pageref{idx:CTmhat}\\
$\wTex$		& Extended regularity structure	with legs		& \pageref{idx:wTex}\\
$\wT$		& Reduced regularity structure with legs			& \pageref{idx:wT}\\
$\wCTex$	& Extended Hopf algebra with legs		& \pageref{idx:wCTex}\\
$\wCT$		& Reduced Hopf algebra with legs		& \pageref{idx:wCT}\\
$\wCThat$	& Algebra of extended trees	with legs 	& \pageref{idx:wCThat}\\
$\wlCT$		& Auxiliary Hopf algebra $\wCT/\CI$		& \pageref{def:wCT}\\
$\wlCThat$	& Auxiliary algebra $\wCThat/\hat\CI$	& \pageref{def:wCT}\\
$\plCT$		& Hopf algebra of properly legged trees		& \pageref{def:properly:legged:tree:algebra}\\
$\plCThat$	& Algebra of properly legged trees			& \pageref{def:properly:legged:tree:algebra}\\
$\adCT$		& Algebra of admissible trees				& \pageref{idx:adCT}\\
$\symCT$	& Symmetrised Hopf algebra of properly legged trees		& \pageref{def:symCT}\\
$\symCThat$	& Symmetrised algebra of properly legged trees			& \pageref{def:symCT}\\
$\pT$		& Hopf algebra isomorphic to $\CTm$						& \pageref{lem:CTadze:hopf:ideal}\\
$\sCTex$	& Enlarged regularity structure					& \pageref{idx:extended:reg:str}\\
$\TT$		& Set of trees				& \pageref{idx:TT}\\
$\TT_-$		& Set of trees of negative homogeneity			& \pageref{idx:TTmex}\\
$\wTT$		& Set of trees of negative homogeneity with legs					& \pageref{idx:wTT}\\
$\FT_-$		& Subset of $\TT_-$			& \pageref{def:FT}	\\
$T_h$		& Shift operator			& \pageref{thm:translationoperator}\\
$\CV$		& Set of trees appearing in Assumption~\ref{ass:technical}	& \pageref{idx:CV}\\
$\CV_0$	& Set of $\tau \in \TT_-$ with $\fancynorm{\tau}_\fs = 0$ and $\# L(\tau) = 2$ & \pageref{idx:CV0} \\
$\Psi$		& Set of functions indexed by typed sets 					& \pageref{def:Psi}\\
$\Upsilon^\eta\tau$	& $\E(\PPi^\eta \tau)(0)$								& \pageref{idx:Upsilon}\\
$\evalnA{\eta}{\psi}{R}\tau$	& Evaluation using large-scale kernel assignment $R$	& \pageref{idx:tildeUpsilon}\\
$\evaln{\eta}{\psi}{R}\tau$		& Like $\evalnA{\eta}{\psi}{R}\tau$, acting on trees with legs	& \pageref{eq:bar:Upsilon:large:scale}\\
$\hat\Upsilon^{\eta,\psi}_{R}\tau$		& Renormalised evaluation acting on trees with legs	& \pageref{eq:evaluation:largescale:renorm}\\
$\hopfiso$	& Hopf isomorphism $\CTm \to \pT$					& \pageref{lem:hopfiso}\\
$\CZ(\PPi)$	& Model constructed from $\PPi$			& \pageref{idx:CZ}\\
$\Zcan(f)$		& Canonical lift of $f$ to an admissible model	& \pageref{idx:canlift}\\
$\mfc(g)$	& $\CR^g Z(0)$				& \pageref{idx:mfc}\\
$\Omega_\infty$	& Space of smooth deterministic noises		& \pageref{idx:Omegainfty}\\
$\Omega$	& Space of rough deterministic noises			& \pageref{idx:Omega}
\end{longtable}
\end{center}

\section{Overview of Assumptions}
\label{app:assumptions}

\begin{center}
\renewcommand{\arraystretch}{1.1}
\begin{longtable}{lll}
\toprule
\# & Summary & P. \\
\midrule
\endfirsthead
\toprule
\# & Summary & P. \\
\midrule
\endhead
\midrule
\endfoot
\midrule
\endlastfoot
\ref{ass:main:thm}	& Ensures that the general theory of~\cite{BrunedChandraChevyrecHairer2017} 	applies	& \pageref{ass:main:thm}\\
\ref{ass:main:reg}	& Assumption necessary for the BPHZ theorem \cite{ChandraHairer2016}  		& \pageref{ass:main:reg}\\
\ref{ass:noises:derivatives:products}	& Rules out derivatives hitting noises, as well as direct products of noises & \pageref{ass:noises:derivatives:products}\\
\ref{ass:kernelhomo}	& The integration kernels are homogeneous		& \pageref{ass:kernelhomo}\\
\ref{ass:zero-homo}	& BPHZ character vanishes on zero-degree subtrees of zero-degree trees & \pageref{ass:zero-homo}\\
\ref{ass:CVz}	& BPHZ character vanishes on zero-degree trees with only two leaves	& \pageref{ass:CVz}\\
\ref{ass:CJHopfIdeal}	& The ideal $\CJ$ is a Hopf ideal	& \pageref{ass:CJHopfIdeal}\\
\ref{ass:CHBPHZcharacters}	& The BPHZ character is ``almost'' an element of $\CH$ 	& \pageref{ass:CHBPHZcharacters}\\
\end{longtable}
\end{center}
Assumptions \ref{ass:main:thm}, \ref{ass:main:reg}	and \ref{ass:noises:derivatives:products} are needed for the results from \cite{BrunedChandraChevyrecHairer2017} and \cite{ChandraHairer2016}  to apply. Assumption \ref{ass:kernelhomo} on the scale-invariance of kernels is crucial for our argument in Lemma~\ref{lem:dominatingpart} which gives \emph{lower} bounds on the blow-up of the certain renormalisation constants. Assumption \ref{ass:zero-homo} is needed for a technical argument in Lemma~\ref{lem:blowuprenormconstantzerohomo}. 
Finally, we show in Section~\ref{sec:constraints} that Assumption~\ref{ass:CVz} implies 
Assumptions~\ref{ass:CJHopfIdeal} and~\ref{ass:CHBPHZcharacters}. We believe that
the latter two assumptions are satisfied for all naturally occurring classes 
of SPDEs.

\begin{Backmatter}

\paragraph{Acknowledgments}

We are grateful to all three referees for their very careful reading
of the original manuscript, which lead to several improvements in the exposition.

\paragraph{Funding statement}

MH gratefully acknowledges financial support from the
 Leverhulme Trust through a leadership award,
from the European Research Council through a consolidator grant, project 615897,
and from the Royal Society through a research professorship.

\paragraph{Competing interests}

None

\bibliography{bibtex}
\bibliographystyle{Martin}

\end{Backmatter}

\end{document}